\definecolor{dark-red}{rgb}{0.5,0.15,0.15}
\definecolor{dark-blue}{rgb}{0.15,0.15,0.6}
\definecolor{dark-green}{rgb}{0.15,0.6,0.15}
\numberwithin{equation}{section}
\newtheorem{ThmAlpha}{Theorem}
\newtheorem{Thm}[equation]{Theorem}
\newtheorem*{Thm*}{Theorem}
\newtheorem*{MainThm*}{Main Theorem}
\newtheorem{Prop}[equation]{Proposition}
\newtheorem{Lem}[equation]{Lemma}
\newtheorem{Cor}[equation]{Corollary}
\newtheorem*{Que*}{Question}
\newtheorem*{Goal*}{Goal}
\theoremstyle{remark}
\newtheorem{Def}[equation]{Definition}
\newtheorem{Not}[equation]{Notation}
\newtheorem{Exa}[equation]{Example}
\newtheorem{Cons}[equation]{Construction}
\newtheorem{Rem}[equation]{Remark}
\newtheorem{War}[equation]{Warning}
\tikzset{
    labelrotatebelow/.style={anchor=north, rotate=90, inner sep=1.0mm}
}
\tikzset{
    labelrotateabove/.style={anchor=south, rotate=90, inner sep=1.0mm}
}
\newcommand{\nc}{\newcommand}
\nc{\dmo}{\DeclareMathOperator}
\renewcommand{\emptyset}{\varnothing}
\nc{\Beren}[1]{{\color{MidnightBlue}#1}}
\nc{\Drew}[1]{{\color{Orange}#1}}
\nc{\Tobi}[1]{{\color{Green}#1}}
\nc{\Natalia}[1]{{\color{Yellow}#1}}
\nc{\Dout}[1]{\Drew{\sout{#1}}}
\nc{\Bout}[1]{\Beren{\sout{#1}}}
\nc{\Tout}[1]{\Tobi{\sout{#1}}}
\nc{\Nout}[1]{\Natalia{\sout{#1}}}
\nc{\Greg}[1]{{\color{magenta}#1}}
\nc{\overbar}[1]{\mkern 1.5mu\overline{\mkern-1.5mu#1\mkern-1.5mu}\mkern 1.5mu}
\nc{\weaklyfinite}{weakly closed}
\nc{\finite}{closed}
\nc{\BCdual}[1]{{#1}^*}
\nc{\LCore}{\mathrm{LCore}}
\nc{\Stovicek}{\v{S}\v{t}ov\'{i}\v{c}ek}
\nc{\ftriple}{f_{\natural}}
\nc{\unitC}{\unit_{\cat C}}%
\nc{\unitD}{\unit_{\cat D}}%
\nc{\Pone}{{\mathbb{P}^1}}
\nc{\InvSupp}[1]{\Supp_{\cat T}^{-1}(#1)}
\nc{\InvCosupp}[1]{\Cosupp_{\cat T}^{-1}(#1)}
\nc{\closureP}{\overbar{\{\cat P\}}}
\nc{\closureQ}{\overbar{\{\cat Q\}}}
\nc{\singP}{\{\cat P\}}
\nc{\singQ}{\{\cat Q\}}
\nc{\singm}{\{\frak m\}}
\dmo{\Inj}{Inj}
\dmo{\tfib}{tfib}
\dmo{\tcof}{tcof}
\dmo{\Aut}{Aut}
\dmo{\tofib}{tofib}
\dmo{\surj}{surj}
\dmo{\Excs}{Exc}
\dmo{\Homog}{Homog}
\dmo{\PSh}{PSh}
\dmo{\Epis}{Epi}
\dmo{\Nat}{Nat}
\newcommand{\Exc}[1]{\Excs_{#1}}
\newcommand{\Epi}[1]{\Epis_{\le #1}}
\dmo{\KInjdmo}{K}
\dmo{\Dbdmo}{mod}
\dmo{\sur}{sur}
\nc{\KInj}[1]{\KInjdmo(\Inj #1)}
\nc{\Dbmod}[1]{\Der^b(\Dbdmo #1)}
\dmo{\Viss}{vis}
\nc{\Vis}{\Viss}
\nc{\vis}{\Vis}
\nc{\kappaaux}{g}
\nc{\kappaCh}{{\kappaaux(\cat C_h)}}
\nc{\kappam}{{\kappaaux({\frak m})}}
\nc{\kappaP}{{\kappaaux(\cat P)}}
\nc{\kappaQ}{{\kappaaux(\cat Q)}}
\nc{\kappaCP}{{\kappaaux_{\cat C}(\cat P)}}
\nc{\kappaDP}{{\kappaaux_{\cat D}(\cat P)}}
\nc{\kappaCQ}{{\kappaaux_{\cat C}(\cat Q)}}
\nc{\kappaDQ}{{\kappaaux_{\cat D}(\cat Q)}}
\nc{\kappaphiB}{{\kappaaux(\phi(\cat B))}}
\nc{\kappaphiQ}{{\kappaaux(\varphi(\cat Q))}}
\nc{\halfplus}{{\scriptscriptstyle\top}}
\dmo{\Sub}{Sub}
\nc{\SpEn}{\cat S_{E(n)}}
\nc{\SpEnf}{\cat S_n}
\nc{\Lcomp}{L^{\mathrm{com}}} 
\nc{\Ucomp}{U^{\mathrm{com}}}
\nc{\bbullet}{{\scriptscriptstyle\hspace{-1pt}\bullet}}
\nc{\bullett}{{\scriptscriptstyle\bullet}\hspace{-1pt}}
\nc{\LF}{L\hspace{-0.2ex}F}
\dmo{\StMod}{StMod}
\dmo{\Proj}{Proj}
\nc{\SpG}{\Sp^G}
\nc{\EG}{\bbE_G}
\nc{\DEG}{\Der(\EG)}
\dmo{\Perf}{Perf}
\nc{\DE}{\Der(\bbE)}
\nc{\Prst}{{\cat P}\mathrm{r^{st}}}
\nc{\Mack}[2]{\mathrm{Mack}_{#1}(#2)}
\nc{\SC}{S\cat C}
\dmo{\fin}{{fin}}
\dmo{\DM}{DM}
\dmo{\fp}{fp}
\nc{\DMQ}{\DM_Q}
\dmo{\DerKal}{DMack}
\dmo{\coh}{coh}
\dmo{\Der}{D}
\dmo{\DMot}{DMot}
\dmo{\rmH}{H}
\dmo{\piu}{\underline{\pi}}
\dmo{\Sphere}{\mathbb{S}}
\nc{\HA}{{\rmH \hspace{-0.2em}\bbA}}
\nc{\HZ}{{\rmH \hspace{-0.2em}\bbZ}}
\nc{\HZbar}{{\rmH \hspace{-0.2em}\underline{\bbZ}}}
\nc{\HbbF}{{\rmH \hspace{-0.15em}\mathbb{F}}}
\nc{\Fp}{{\bbF_{\hspace{-0.1em}p}}}
\nc{\HFp}{{\rmH \hspace{-0.15em}\bbF_{\hspace{-0.1em}p}}}
\nc{\HQ}{{\rm H \bbQ}}
\nc{\DHZG}{\Der(\HZ_G)}
\nc{\DHZH}{\Der(\HZ_H)}
\nc{\DHZK}{\Der(\HZ_K)}
\nc{\DHZGN}{\Der(\HZ_{G/N})}
\nc{\DHZGG}{\Der(\HZ_{G/G})}
\nc{\DHZCp}{\Der(\HZ_{C_p})}
\nc{\DHZGprime}{\Der(\HZ_{G'})}
\nc{\DHZ}{\Der(\HZ)}
\nc{\frakp}{\mathfrak{p}}
\nc{\frakq}{\mathfrak{q}}
\nc{\frakS}{\mathfrak{S}}
\nc{\frakT}{\mathfrak{T}}
\nc{\Z}{\mathbb{Z}}
\nc{\F}{\mathbb{F}}
\nc{\SSG}{\text{sSet}_*^G}
\nc{\sSet}{\text{sSet}}
\dmo{\csupp}{csupp}
\dmo{\Con}{Conj}
\dmo{\Id}{Id}
\dmo{\rmK}{\textrm{\rm K}}
\dmo{\Spc}{Spc}
\dmo{\thick}{thick}
\dmo{\thickid}{thickid}
\nc{\thicko}[1]{\thickid\langle #1 \rangle}
\nc{\thickt}[1]{\thick_\otimes\langle #1 \rangle}
\dmo{\cone}{cone}
\dmo{\End}{End}
\dmo{\Derperf}{D_{perf}}
\dmo{\Mor}{Mor}
\dmo{\id}{id}
\dmo{\incl}{incl}
\dmo{\Img}{Im}
\dmo{\im}{im}
\dmo{\Ker}{Ker}
\dmo{\ind}{ind}
\dmo{\CoInd}{coind}
\dmo{\GH}{GH}
\dmo{\idem}{e}
\dmo{\res}{res}
\dmo{\infl}{infl}
\dmo{\Derqc}{D_{qc}}
\nc{\DbcohX}{\Der^b(\coh X)}
\dmo{\triv}{triv}
\dmo{\Tel}{Tel} 
\dmo{\grMod}{grMod}%
\dmo{\Mod}{Mod}%
\dmo{\opname}{op}
\dmo{\SH}{SH}
\dmo{\smallb}{b}
\dmo{\Spec}{Spec}
\dmo{\supp}{supp}
\dmo{\Supp}{Supp}
\dmo{\crosseffec}{cr}
\newcommand{\crosseffect}{{\crosseffec}}
\dmo{\thofib}{tofib}
\dmo{\hofib}{hofib}
\dmo{\cosupp}{cosupp}
\dmo{\Cosupp}{Cosupp}
\nc{\SHc}{{\SH^c}}
\nc{\SHp}{{\SH_{(p)}}}
\nc{\SHcp}{{\SH^c_{(p)}}}
\nc{\SHG}{\SH(G)}
\nc{\SHGp}{\SH(G)_{(p)}}
\nc{\SHGc}{\SHG^c}
\nc{\SHGcp}{\SHG^c_{(p)}}
\nc{\quadtext}[1]{\quad\textrm{#1}\quad}
\nc{\qquadtext}[1]{\qquad\textrm{#1}\qquad}
\nc{\adj}{\dashv}
\nc{\adjto}{\rightleftarrows}
\nc{\bbL}{\mathbb{L}}
\nc{\bbS}{\mathbb{S}}
\nc{\bbA}{\mathbb{A}}
\nc{\bbE}{\mathbb{E}}
\nc{\bbN}{\mathbb{N}}
\nc{\bbQ}{\mathbb{Q}}
\nc{\bbZ}{\mathbb{Z}}
\nc{\bbF}{\mathbb{F}}
\nc{\cat}[1]{\mathscr{#1}}
\nc{\ie}{{\sl i.e.}, }
\nc{\into}{\mathop{\rightarrowtail}}
\nc{\inv}{^{-1}}
\nc{\isoto}{\mathop{\overset{\sim}\to}}
\nc{\isotoo}{\mathop{\overset{\sim}\too}}
\nc{\onto}{\mathop{\twoheadrightarrow}}
\nc{\too}{\mathop{\longrightarrow}\limits}
\nc{\mapstoo}{\longmapsto}
\nc{\adh}[1]{\overline{#1}}
\nc{\adhpt}[1]{\adh{\{#1\}}}
\nc{\aka}{{a.\,k.\,a.}\ }
\nc{\calF}{\mathcal{F}}
\nc{\eg}{{\sl e.\,g.}}
\nc{\hook}{\hookrightarrow}
\nc{\ideal}[1]{\langle #1\rangle}
\dmo{\red}{red}
\dmo{\Hom}{Hom}
\nc{\Homcat}[1]{\Hom_{\cat #1}}
\nc{\iHom}{\mathcal{H}\mathrm{om}}
\nc{\ihom}[1]{\mathsf{hom}(#1)}
\nc{\ihomC}[1]{\mathsf{hom}_{\cat C}(#1)}
\nc{\ihomD}[1]{\mathsf{hom}_{\cat D}(#1)}
\nc{\ihomsub}[2]{\mathsf{hom}_{#1}(#2)}
\nc{\Mid}{\,\big|\,}
\nc{\MMod}{\,\text{-}\Mod}%
\nc{\GrMMod}{\,\text{-}\grMod}%
\nc{\op}{^{\opname}}
\nc{\oto}[1]{\overset{#1}\to}
\nc{\otoo}[1]{\overset{#1}{\,\too\,}}
\nc{\sminus}{\!\smallsetminus\!}
\nc{\poplus}[1]{^{\oplus #1}}%
\nc{\potimes}[1]{^{\otimes #1}}
\nc{\sbull}{{\scriptscriptstyle\bullet}}
\nc{\SET}[2]{\big\{\,#1\Mid#2\,\big\}}
\nc{\SpcK}{\Spc(\cat K)}
\nc{\then}{\Rightarrow}
\nc{\unit}{\mathbb{1}}
\nc{\xra}{\xrightarrow}
\nc{\phigeom}[1]{\widetilde{\Phi}^{#1}}
\dmo{\Oname}{O}
\dmo{\proper}{proper}
\dmo{\lenormal}{\unlhd}
\dmo{\fib}{fib}
\dmo{\cofib}{cofib}
\dmo{\lnormal}{\lhd}
\nc{\normal}{\trianglelefteq}
\nc{\Op}{\Oname^p}
\nc{\Oq}{\Oname^q}
\newcommand{\Sp}{{\mathscr{S}p}}
\dmo{\Ho}{Ho}
\dmo{\CB}{CB}
\dmo{\Fin}{Fin}
\dmo{\add}{add}
\dmo{\Fun}{Fun}
\dmo{\Ext}{Ext}
\dmo{\CAlg}{CAlg}
\dmo{\CMon}{CMon}
\dmo{\CC}{\cat C} 
\dmo{\DD}{\cat D}
\dmo{\OO}{\mathcal{O}}
\dmo{\Map}{Map}
\dmo{\Span}{Span}
\dmo{\Tot}{Tot}
\dmo{\N}{N}
\dmo{\Cat}{Cat}
\dmo{\colim}{colim}
\dmo{\hocolim}{hocolim}
\dmo{\Ch}{Ch}
\dmo{\A}{\mathbb{A}^{eff}}
\nc{\AGeff}{\mathbb{A}_G^{\mathrm{eff}}}
\nc{\BGeff}{\mathcal{B}_G^{\mathrm{eff}}}
\nc{\BG}{{\mathcal{B}_G}}
\nc{\NBGeff}{{\N}{\BGeff}}
\dmo{\Ab}{Ab}
\nc{\Smith}{\mathsf{Smith}}
\nc{\Floyd}{\mathsf{Floyd}}
\nc{\blue}{\beth^{\mathrm{geom}}}
\dmo{\Set}{Set}
\dmo{\ev}{ev}
\dmo{\Spcl}{Spcl}
\nc{\Funadd}{\Fun_{\add}}
\dmo{\proj}{proj}
\dmo{\cof}{cof}
\nc{\cPd}{\cat P_{\hspace{-.1em}d}}
\nc{\cPm}{\cat P_{\hspace{-.1em}m}}
\nc{\Chp}{\mathsf{Ch}_p}
\nc{\Pp}{\mathsf{P}_p}
\dmo{\Coideal}{Coideal}
\dmo{\gen}{gen}
\nc{\auxcoidealsymb}{\vartriangleleft}
\dmo{\Loc}{Loc}
\dmo{\Ind}{Ind}
\dmo{\Coloc}{Coloc}
\dmo{\Locideal}{Locid}
\dmo{\Colocideal}{Colocid}
\nc{\LOCO}{\Locideal}
\nc{\COLOCO}{\Colocideal}
\nc{\Loco}[1]{\LOCO\langle #1 \rangle}
\nc{\Coloco}[1]{\COLOCO\langle #1 \rangle}
\nc{\LambdaP}{\Lambda^{\cat P}} 
\nc{\LambdaQ}{\Lambda^{\cat Q}} 
\nc{\GammaP}{\Gamma_{\cat P}} 
\nc{\GammaQ}{\Gamma_{\cat Q}} 
\nc{\LambdaW}{\Lambda^{\hspace{-0.3ex}W}} 
\nc{\GammaW}{\Gamma_{\hspace{-0.3ex}W}} 
\nc{\gW}{g_W}
\nc{\gP}{g_{\cat P}}
\nc{\gQ}{g_{\cat Q}}
\nc{\cC}{{\cat C}}
\nc{\cT}{{\cat T}}
\nc{\cD}{{\cat D}}
\nc{\mT}{\kern-0.5em\mod\kern-0.1em\text{-}\cat{T}^c}
\nc{\mTc}{\kern-0.5em\mod\kern-0.1em\text{-}\cat{T}^c}
\nc{\MTc}{\Mod\kern-0.1em\text{-}\cat{T}^c}
\nc{\MT}{\Mod\kern-0.1em\text{-}\cat{T}}
\newcounter{enum-resume-hack}
\Crefname{Thm}{Theorem}{Theorems}
\Crefname{Prop}{Proposition}{Propositions}
\providecommand*{\twoheadrightarrowfill@}{%
  \arrowfill@\relbar\relbar\twoheadrightarrow
}
\providecommand*{\twoheadleftarrowfill@}{%
  \arrowfill@\twoheadleftarrow\relbar\relbar
}
\providecommand*{\xtwoheadrightarrow}[2][]{%
  \ext@arrow 0579\twoheadrightarrowfill@{#1}{#2}%
}
\providecommand*{\xtwoheadleftarrow}[2][]{%
  \ext@arrow 5097\twoheadleftarrowfill@{#1}{#2}%
}
\DeclareMathOperator{\type}{type}
\DeclareMathOperator{\height}{height}
\nc{\cL}{\mathcal{L}}
\nc{\cP}{\mathcal{P}}
\nc{\tblue}{\beth^{\mathrm{Tate}}}
\nc{\cA}{\mathcal{A}}
\nc{\cF}{\mathcal{F}}
\nc{\Fnt}{\cF_{\mathrm{nt}}}
\nc{\Fall}{\cF_{\mathrm{all}}}
\nc{\Ftriv}{\cF_{\mathrm{triv}}}
\nc{\bE}{\underline{E}}
\newcommand{\num}[1]{[#1]}
\DeclareRobustCommand{\stirling}{\genfrac\{\}{0pt}{}}
\Crefname{Thm}{Theorem}{Theorems}
\Crefname{Prop}{Proposition}{Propositions}
\Crefname{Lem}{Lemma}{Lemmas}
\Crefname{Cor}{Corollary}{Corollaries}
\Crefname{Exa}{Example}{Examples}
\begin{document}

\title{The spectrum of excisive functors}

\author{Gregory Arone}
\author{Tobias Barthel}
\author{Drew Heard}
\author{Beren Sanders}
\date{\today}

\makeatletter
\patchcmd{\@setaddresses}{\indent}{\noindent}{}{}
\patchcmd{\@setaddresses}{\indent}{\noindent}{}{}
\patchcmd{\@setaddresses}{\indent}{\noindent}{}{}
\patchcmd{\@setaddresses}{\indent}{\noindent}{}{}
\makeatother

\address{Gregory Arone, Department of Mathematics, Stockholm University, SE-106 91 Stockholm, Sweden}
\email{gregory.arone@math.su.se}

\address{Tobias Barthel, Max Planck Institute for Mathematics, Vivatsgasse 7, 53111 Bonn, Germany}
\email{tbarthel@mpim-bonn.mpg.de}
\urladdr{https://sites.google.com/view/tobiasbarthel/home}

\address{Drew Heard, Department of Mathematical Sciences, Norwegian University of Science and Technology, Trondheim}
\email{drew.k.heard@ntnu.no}
\urladdr{https://folk.ntnu.no/drewkh/}

\address{Beren Sanders, Mathematics Department, UC Santa Cruz, 95064 CA, USA}
\email{beren@ucsc.edu}
\urladdr{http://people.ucsc.edu/$\sim$beren/}

\begin{abstract}
We prove a thick subcategory theorem for the category of \mbox{$d$-excisive} functors from finite spectra to spectra. This generalizes the Hopkins--Smith thick subcategory theorem (the $d=1$ case) and the \mbox{$C_2$-equivariant} thick subcategory theorem (the $d=2$ case). We obtain our classification theorem by completely computing the Balmer spectrum of compact $d$-excisive functors. A key ingredient is a non-abelian blueshift theorem for the generalized Tate construction associated to the family of non-transitive subgroups of products of symmetric groups. Also important are the techniques of tensor triangular geometry and striking analogies between functor calculus and equivariant homotopy theory. In particular, we introduce a functor calculus analogue of the Burnside ring and describe its Zariski spectrum \`{a} la Dress. The analogy with equivariant homotopy theory is strengthened further through two applications: We explain the effect of changing coefficients from spectra to $\HZ$-modules and we establish a functor calculus analogue of transchromatic Smith--Floyd theory as developed by Kuhn--Lloyd. Our work offers a new perspective on functor calculus which builds upon the previous approaches of Arone--Ching and Glasman.%
\end{abstract}

\vspace*{-1em}%
\maketitle
{
\hypersetup{linkcolor=black}
\tableofcontents
}

\newpage
\section{Introduction}

Goodwillie's calculus of functors \cite{Goodwillie90,Goodwillie91,Goodwillie03} provides  a powerful  tool  for studying functors between categories of homotopical origin. It approximates a homotopical functor $F$  via a tower of simpler functors
    \[
        F \to \ldots \to P_dF \to P_{d-1}F \to \ldots P_2F \to P_1F
    \]
analogous to the Taylor tower in ordinary calculus which approximates a smooth function $f\colon\mathbb{R}\to\mathbb{R}$ by polynomials of increasing degree $d$. In the calculus of functors, polynomials of degree~$d$ are replaced by  so-called \emph{$d$-excisive functors} which are homotopical functors satisfying a certain weak form of excision. The functor $P_d F$ is the best approximation of $F$ by a $d$-excisive functor.

A prominent domain for this theory is the study of (homotopical) functors  $\Sp \to \Sp$ from the category of spectra to itself. For technical reasons, we restrict attention to functors that preserve filtered colimits, which can be equivalently regarded  as  functors $\Sp^c \to \Sp$ from finite spectra to spectra. A fundamental problem  then  is to understand the structure  of the category $\Exc{d}(\Sp^c,\Sp)$ of $d$-excisive functors $\Sp^c \to \Sp$; in other words, to understand what ``polynomials'' are in this~context.

For example, a linear functor $\Sp^c\to\Sp$ has the form $X\mapsto C\otimes X$ where~$C\in \Sp$. In other words, a linear functor is essentially the same thing as a homology theory and we have an equivalence $\Exc{1}(\Sp^c,\Sp)\simeq \Sp$. In a similar vein, the functor $X\mapsto X^{\otimes d}$ is $d$-excisive, and one can show that $\Exc{d}(\Sp^c, \Sp)$ is the smallest category that contains each of the functors $X\mapsto X, X^{\otimes 2}, \ldots, X^{\otimes d}$ and is closed under colimits and finite limits~\cite[Corollary 6.1.4.15]{HALurie}. Nevertheless, the structure  of $\Exc{d}(\Sp^c,\Sp)$ is considerably more complicated for $d>1$. As in ordinary calculus, one can define the derivatives $\partial_1 F,\ldots,\partial_d F \in \Sp$  of a $d$-excisive functor $F\colon \Sp^c \to \Sp$ but, in contrast to ordinary polynomials, a $d$-excisive functor  is far from being determined by its derivatives when $d>1$. Previous attempts to understand the category $\Exc{d}(\Sp^c,\Sp)$ have focused on describing the extra structure possessed by the derivatives~\cite{AroneChing15,AroneChing16} or by the cross-effects~\cite{Glasman18pp} of a $d$-excisive functor.

Here we take a different approach to analyzing
$\Exc{d}(\Sp^c,\Sp)$ 
by regarding it as a tensor triangulated (``tt'') category under Day convolution. Our main result is a  computation of the \emph{Balmer spectrum} of the  subcategory of compact objects in $\Exc{d}(\Sp^c,\Sp)$ for any $d \ge 1$. This leads to a complete classification of  compact $d$-excisive functors \emph{up to tt-equivalence}. Two  compact functors $F$ and $G$ are \mbox{tt-equivalent} if they can be built from each other  using triangles, retracts and Day convolution with arbitrary compact functors; that is, $F$ and $G$ are tt-equivalent if they generate the same thick tensor-ideal.

We proceed to describe  our classification theorem. For simplicity, let us restrict attention to $p$-local spectra $\Sp_{(p)}$ for a given prime~$p$. The $\type(x) \in \bbN_{\infty} = \{0,1,2,\ldots\} \cup \{\infty\}$ of a $p$-local finite spectrum $x$ is the minimal $h$ with $K(p,h)_*(x) \neq 0$ if $x$ is nonzero and $\infty$ otherwise; here $K(p,h)$ is height $h$ Morava $K$-theory at $p$. This invariant extends to  functors $F\colon \Sp \to \Sp_{(p)}$ by setting
    \[
        \type(F) \colon \bbN \to \bbN_{\infty}, \quad l \mapsto \type(\partial_lF).
    \]
If $F$ is $d$-excisive, then we may restrict the domain to $\{1,\ldots, d\}$.  We now can state the $p$-local version of our main result (\cref{thm:tt-ideal-classification}):

\begin{MainThm*}\label{thmmain}
    The assignment $F \mapsto \type(F)$ induces a bijection
        \[
            \begin{tikzcd}[ampersand replacement=\&,column sep=small]
                {\begin{Bmatrix}
                    \text{tt-equivalence classes } \langle F \rangle:\\
                    F \in \Exc{d}(\Sp^c,\Sp_{(p)}) \text{ compact}
                \end{Bmatrix}}
                    \& 
                {\begin{Bmatrix}
                    \text{functions } f\colon \{1,\ldots,d\} \to \bbN_{\infty} \text{ such that }  \\
                    f(k) \leq \delta_p(k,l) + f(l) \text{ if } p-1\! \mid\! k-l \geq 0
                \end{Bmatrix}}
                    \arrow["\sim", from=1-1, to=1-2]
            \end{tikzcd}
        \]
    where $\delta_p(k,l)$ is an explicit function depending on $l$ and the  base $p$ expansion of $k$.
\end{MainThm*}

To put this result into perspective, let us consider two special cases. If $d=1$, then $\Exc{1}(\Sp^c,\Sp) \simeq \Sp$ and the above bijection is the content of the thick subcategory theorem of Devinatz, Hopkins and Smith \cite{DevinatzHopkinsSmith88,HopkinsSmith98} which we however take as an input to our theorem.  When $d=2$, there is an equivalence  $\Exc{2}(\Sp^c,\Sp)\simeq \Sp_{C_2}$ with the category of genuine $C_2$-equivariant spectra. In this case, our theorem recovers the classification of  compact $C_2$-spectra due to Balmer and Sanders \cite{BalmerSanders17}. Generalizing from cyclic groups of order $p$ to arbitrary finite abelian groups $G$, the analogous classification problem for  compact $G$-spectra has subsequently been settled by Barthel, Hausmann, Naumann, Nikolaus, Noel and Stapleton in \cite{BHNNNS19}, but  the general case of a non-abelian finite group $G$ remains elusive.

The theory we develop here in our study of $d$-excisive functors will demonstrate that the categories $\Exc{d}(\Sp^c,\Sp)$ behave a great deal like categories of $\Sigma_d$-equivariant spectra indexed on the family of non-transitive subgroups of $\Sigma_d$. In light of the ambiguity left in the tt-classification of  compact $G$-spectra for general $G$, we view the complete classification furnished by our main theorem as somewhat surprising. It requires precise control over the chromatic behaviour of generalized Tate constructions with respect to families of non-transitive subgroups of products of symmetric groups, i.e., non-abelian analogs of the blueshift theorems of %
\cite{Kuhn04} and~\cite{BHNNNS19}.

\subsection*{Further details} We will now say more about the techniques that go into proving the main theorem. As mentioned above, our starting point is the observation that $\Exc{d}(\Sp^c,\Sp)$ is a tensor triangulated category via (localized) Day convolution, and our first task is to show that it has excellent structural properties, making it amenable to the ample toolkit of tt-geometry. The following is a summary of results obtained in \Cref{prop:generators-dualizable-self-dual,prop:compact_generation,cor:rigid-compact}.

\begin{ThmAlpha}\label{thmx:ttcat}
    The category of $d$-excisive functors $\Exc{d}(\Sp^c,\Sp)$ is a compactly generated presentably symmetric monoidal stable $\infty$-category whose compact and dualizable objects coincide. A set of compact generators is given by the functors
    $
    P_dh_{\bbS}(i)
    $
    for $1 \le i \le d$, where $h_{\bbS}(i)(X) = (\Sigma^\infty\Omega^\infty X)^{\otimes i}$. 
    Moreover, these generators are self-dual separable commutative algebras.
\end{ThmAlpha}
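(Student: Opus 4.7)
The plan is to proceed in three main steps: establish the Day convolution structure and promote $P_d$ to a symmetric monoidal localization; prove compact generation and identify the asserted generators; and finally verify their algebra-theoretic properties, from which compact~$=$~dualizable follows formally.

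First, $\Fun(\Sp^c, \Sp)$ carries a presentably symmetric monoidal stable structure given by Day convolution for the smash product on $\Sp^c$, and is compactly generated by the stabilized representables $h_X = \Sigma^\infty \Map_*(X, -)$. The $d$-excisive approximation $P_d$ defines an accessible reflective localization onto $\Exc{d}(\Sp^c, \Sp)$, because $d$-excisive functors are closed under both limits and filtered colimits. To promote this to a symmetric monoidal Bousfield localization, I would verify that the class of $P_d$-equivalences is stable under Day convolution with an arbitrary functor. This reduces via compactness and the Yoneda lemma to showing that Day convolution with a representable $h_X$ preserves $d$-excisiveness, which in turn comes from the fact that precomposition with $(-) \otimes X \colon \Sp^c \to \Sp^c$ sends $d$-excisive functors to $d$-excisive functors.

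Next, since the inclusion $\Exc{d} \hookrightarrow \Fun(\Sp^c, \Sp)$ preserves filtered colimits, the left adjoint $P_d$ preserves compact objects, so $\Exc{d}$ is compactly generated by $\{P_d h_X : X \in \Sp^c\}$. To cut this down to the finite family $\{P_d h_\bbS(i) : 1 \le i \le d\}$, I would analyze the Taylor tower of $h_\bbS = \Sigma^\infty \Omega^\infty$: its successive layers and cross-effects are controlled by the pointwise tensor powers $h_\bbS(i)$, and any $P_d h_X$ can be assembled from these after $P_d$-localization since contributions beyond degree $d$ are annihilated. I expect this reduction to be the principal obstacle, since one must show that the asserted finite family actually exhausts $\Exc{d}$ under colimits and retracts, not merely under the tt-closure.

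Finally, each $P_d h_\bbS(i)$ inherits the structure of a self-dual separable commutative algebra by analogy with the role of $\Sigma^\infty_+(\Sigma_d/H)$ for suitable subgroups $H \le \Sigma_d$ in equivariant stable homotopy, an analogy the paper develops systematically later. Concretely, the algebra multiplication arises from the canonical diagonal on pointwise tensor powers, self-duality reflects the finiteness of the underlying combinatorial data, and separability follows from the splitting of the multiplication provided by the diagonal. Dualizability of the compact generators then yields the equality compact~$=$~dualizable in $\Exc{d}(\Sp^c, \Sp)$: in a compactly generated tt-category whose compact generators are dualizable, the thick subcategory of compacts coincides with the thick subcategory of dualizables.
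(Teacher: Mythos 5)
Your treatment of the symmetric monoidal structure (a smashing localization compatible with Day convolution, with compatibility reduced to the representable case via $h_x\circledast F\simeq F(\ihom{x,-})$) and your endgame (self-dual separable commutative algebras via the diagonal on $h_{\bbS}(i)$, and compact $=$ dualizable because the compact generators, including the unit, are dualizable) both match the paper's argument in substance. The genuine gap is exactly where you flag it: the reduction of the generating set from $\{P_dh_X\}_{X\in\Sp^c}$ to the finite family $\{P_dh_{\bbS}(i)\}_{1\le i\le d}$. Your proposed route --- ``assemble'' each $P_dh_X$ from the $P_dh_{\bbS}(i)$ under colimits and retracts --- is not carried out, and it faces a real obstruction: the assignment $X\mapsto h_X=\Sigma^{\infty}\Map_{\Sp}(X,-)$ is not exact in the contravariant variable (applying $\Sigma^{\infty}$ destroys the fiber sequences of mapping spectra), so the decomposition of $h_{\bbS^{\oplus n}}$ into the $h_{\bbS}(i)$ does not propagate to a general finite $X$ by a thick-subcategory argument in that variable. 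Making this route work would essentially require the classification of $i$-homogeneous functors together with control of the layers of $P_dh_X$ as objects of $\Fun(B\Sigma_i,\Sp)$, which is at least as hard as what it is meant to prove.

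The paper sidesteps the decomposition entirely. By the cross-effect Yoneda lemma, $\Hom_{\Exc{d}}(P_dh_{\bbS}(i),F)\simeq\crosseffect_iF(\bbS,\ldots,\bbS)$, so by the Schwede--Shipley generation criterion it suffices to show that the functors $\crosseffect_i(-)(\bbS,\ldots,\bbS)$ for $1\le i\le d$ are jointly conservative on $\Exc{d}(\Sp^c,\Sp)$. This conservativity is proved by induction on $d$ along the fiber sequence $D_dF\to F\to P_{d-1}F$: the classification of $d$-homogeneous functors reduces the vanishing of $D_dF$ to the vanishing of $\crosseffect_dF(\bbS,\ldots,\bbS)$, and a thick-subcategory argument in the covariant (source) variables --- where the cross-effect \emph{is} exact --- upgrades vanishing at $(\bbS,\ldots,\bbS)$ to vanishing at all finite spectra. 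You should either replace your assembly step with this corepresentability-plus-conservativity argument or supply the missing decomposition in detail; as written, the heart of the compact-generation claim is not proved.
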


We deduce most of this result from global statements about $\Fun(\Sp^c,\Sp)$ by applying the (smashing) localization functor $P_d\colon \Fun(\Sp^c,\Sp)\to\Exc{d}(\Sp^c,\Sp)$. This leads in particular to a strong compatibility among the given sets of compact generators for varying $d$. Indeed,  we observe that for $1 \le i \le d-1$ the generators $P_dh_{\bbS}(i)$ of $\Exc{d}(\Sp^c,\Sp)$ are mapped by $P_{d-1}$ to the generators $P_{d-1}h_{\bbS}(i)$ of $\Exc{d-1}(\Sp^c,\Sp)$. One is then led to wonder if $P_{d-1}$ is the \emph{finite} localization that kills~$P_dh_{\bbS}(d)$. We verify in \Cref{thm:finite-localization-p-n} that this is indeed the case: 

\begin{ThmAlpha}\label{thmx:openpiece}
    The induced functor $P_{d-1}\colon \Exc{d}(\Sp^c,\Sp) \to \Exc{d-1}(\Sp^c,\Sp)$ is a finite localization whose ideal of acyclics is generated by $P_dh_{\bbS}(d)$. In particular, the category $\Homog_{d}(\Sp^c,\Sp) \subseteq \Exc{d}(\Sp^c,\Sp)$ of $d$-homogeneous functors is the localizing tensor-ideal generated by $P_dh_{\bbS}(d)$.
\end{ThmAlpha}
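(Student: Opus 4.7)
The plan has two parts: show first that $P_{d-1}\colon \Exc{d}(\Sp^c,\Sp) \to \Exc{d-1}(\Sp^c,\Sp)$ is automatically a smashing Bousfield localization with kernel $\Homog_d(\Sp^c,\Sp)$, and then identify $P_dh_{\bbS}(d)$ as a compact generator of $\Homog_d$ as a localizing subcategory of $\Exc{d}(\Sp^c,\Sp)$. For the first part, the fully faithful inclusion $\iota\colon \Exc{d-1}(\Sp^c,\Sp) \hookrightarrow \Exc{d}(\Sp^c,\Sp)$ preserves all colimits: finite colimits because both sides are stable, and filtered colimits because the $(d-1)$-excisive condition is a finite-limit condition and filtered colimits commute with finite limits in $\Sp$. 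Consequently $P_{d-1}$, as the left adjoint to a colimit-preserving fully faithful inclusion, is a smashing Bousfield localization, and its kernel $\Homog_d$ is automatically a localizing tensor-ideal of $\Exc{d}(\Sp^c,\Sp)$.

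Next I would compute $P_dh_{\bbS}(d)$ explicitly. Each factor of $h_{\bbS}(d) = (\Sigma^{\infty}\Omega^{\infty}(-))^{\otimes d}$ vanishes at the zero object and has linear approximation equal to the identity, so a standard argument shows that the Goodwillie tower of $h_{\bbS}(d)$ has vanishing layers $D_k h_{\bbS}(d)$ for $k<d$. A chain-rule computation of the $d$-th cross-effect then yields $\partial_d h_{\bbS}(d) \simeq \bbS[\Sigma_d]$, the regular representation of $\Sigma_d$, whence $P_dh_{\bbS}(d)(X) \simeq (\bbS[\Sigma_d]\otimes X^{\otimes d})_{h\Sigma_d} \simeq X^{\otimes d}$. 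In particular $P_dh_{\bbS}(d) \in \Homog_d$, so the localizing tensor-ideal it generates is contained in $\Homog_d$.

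For the reverse containment I would invoke the classical Goodwillie-derivative equivalence $\partial_d \colon \Homog_d(\Sp^c,\Sp) \xrightarrow{\sim} \Fun(B\Sigma_d,\Sp)$. Under this equivalence, $P_dh_{\bbS}(d) = (X\mapsto X^{\otimes d})$ corresponds to the free $\Sigma_d$-spectrum $\bbS[\Sigma_d]$, which corepresents the conservative forgetful functor $\Fun(B\Sigma_d,\Sp) \to \Sp$ and therefore generates the target as a localizing subcategory. Pulling this back through the equivalence, $P_dh_{\bbS}(d)$ generates $\Homog_d$ as a localizing subcategory—hence a fortiori as a localizing tensor-ideal of $\Exc{d}(\Sp^c,\Sp)$—which, combined with the compactness of $P_dh_{\bbS}(d)$ from \Cref{thmx:ttcat}, yields the finite-localization statement. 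The principal obstacle is the explicit identification of $\partial_d h_{\bbS}(d)$ with $\bbS[\Sigma_d]$: in principle a bookkeeping exercise in Goodwillie calculus, but one that requires careful tracking of the $\Sigma_d$-action through the cross-effect computation and careful justification that the Goodwillie-derivative equivalence is available in this $\infty$-categorical context.
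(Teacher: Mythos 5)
Your proposal is correct, but the key step --- that $P_dh_{\bbS}(d)$ generates $\ker(P_{d-1})=\Homog_d(\Sp^c,\Sp)$ --- is argued by a genuinely different route than the paper's. The paper stays inside $\Exc{d}(\Sp^c,\Sp)$: given $H\in\ker(P_{d-1})$ (necessarily $d$-homogeneous) with $\Hom(P_dh_{\bbS}(d),H)=0$, the corepresentability of the $d$-th cross-effect gives $\crosseffect_dH(\bbS,\ldots,\bbS)=0$, and a direct thick-subcategory argument shows that a $d$-homogeneous functor with vanishing $d$-th cross-effect at the spheres is zero; this exhibits $P_dh_{\bbS}(d)$ as a compact generator of the kernel without ever identifying $\partial_dh_{\bbS}(d)$ or invoking the classification of homogeneous functors. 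You instead transport the generation statement across the Goodwillie--Lurie equivalence $\Homog_d(\Sp^c,\Sp)\simeq\Fun(B\Sigma_d,\Sp)$, under which $P_dh_{\bbS}(d)$ must be identified with the \emph{free} $\Sigma_d$-spectrum $\bbS[\Sigma_d]$. This is valid --- the classical classification is available independently and the equivalence preserves colimits, so there is no circularity with the paper's later \cref{prop:n-homogeneous-functors} (which is deduced \emph{from} this theorem) --- but it buys the conclusion at the cost of two heavier inputs: the full classification equivalence, and the computation of $\partial_dh_{\bbS}(d)$ \emph{with} its $\Sigma_d$-action (the action is essential: $\bbS^{\oplus d!}$ with, say, the trivial action would corepresent homotopy fixed points, which is not conservative on $\Fun(B\Sigma_d,\Sp)$). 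You correctly flag this identification as the main obstacle; the paper's route avoids it entirely for this theorem, needing only that $P_dh_{\bbS}(d)$ is $d$-homogeneous, which it gets from Goodwillie's general cross-effect results rather than from the explicit value $X^{\otimes d}$. Your first paragraph (the inclusion preserves colimits, hence $P_{d-1}$ is a smashing localization with localizing tensor-ideal kernel) agrees with the paper's setup.
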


The combination of \cref{thmx:ttcat} and \cref{thmx:openpiece} puts us in the position to study $\Exc{d}(\Sp^c,\Sp)$ via tt-geometry and, in particular, to reason inductively over $d$. Recall that tt-geometry as initiated by Balmer \cite{Balmer05a,BalmerICM} views a rigidly-compactly generated tt-category $\cat T$ as a sheaf of local tt-categories over a naturally associated spectral space $\Spc(\cat T^c)$, the Balmer spectrum of the full subcategory $\cat T^c$  of compact objects in $\cat T$. The spectrum (whose points are the prime ideals of $\cat T^c$) plays a dual role. On the one hand, it reveals the geometric structure of $\cat T$
and brings a geometric set of methods to its analysis.
On the other hand, its topology encodes the classification of thick tensor-ideals of $\cat T^c$. This leads us to:

\begin{Goal*}
    Compute the spectrum of prime ideals $\Spc(\Exc{d}(\Sp^c,\Sp)^c)$ for all $d \geq 1$.
\end{Goal*}

The first step is to determine its underlying set. We show that the Goodwillie derivatives $\partial_i \colon \Exc{d}(\Sp^c,\Sp) \to \Sp$ for $1 \le i \le d$ form a collection of symmetric monoidal and jointly conservative tt-functors (\Cref{lem:geometric-derivatives} and \Cref{lem:conservativity}). Therefore, one can find prime ideals of $\Exc{d}(\Sp^c,\Sp)^c$ by pulling back prime ideals of $\Sp^c$ along $\partial_i$. The prime ideals of $\Sp^c$ are well known by the tt-geometric form of the thick subcategory theorem \cite{HopkinsSmith98} as recalled in \Cref{fig:balmer-sp-of-sp}. In this way we obtain in  \cref{thm:spec-as-a-set} a description of $\Spc(\Exc{d}(\Sp^c,\Sp)^c)$ as a set:

\begin{ThmAlpha}\label{thmx:spcset}
    Every prime ideal in $\Spc(\Exc{d}(\Sp^c,\Sp)^c)$ is of the form  
        \[
            \cPd(\num{i},p,h) = \SET{F \in \Exc{d}(\Sp^c,\Sp)^c}{K(p,h-1)_*\partial_i(F) = 0}
        \]
    for some triple $(i,p,h)$ consisting of an integer $1 \le i \le d$, a prime number $p$ or $p = 0$, and a chromatic height $1 \le h \le \infty$. Moreover, we have $\cPd(\num{i},p,h) = \cPd(\num{j},p',h')$ if and only if $\num{i} = \num{j}$ and $h = h'$ and if $h = h' >1$, then also $p = p'$. 
\end{ThmAlpha}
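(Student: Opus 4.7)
The plan is to exploit joint conservativity of the Goodwillie derivatives $\partial_1,\dots,\partial_d$ together with the tt-geometric form of the Hopkins--Smith classification of $\Spc(\Sp^c)$. One first shows that every prime of $\Exc{d}(\Sp^c,\Sp)^c$ arises as a pullback along some $\partial_i$, and then analyses when two such pullbacks coincide.

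Since $\{\partial_i\}_{i=1}^d$ consists of symmetric monoidal tt-functors that are jointly conservative (\cref{lem:geometric-derivatives} and \cref{lem:conservativity}), a standard extension of Balmer's conservativity principle yields that the induced map
\[ \bigsqcup_{i=1}^d \Spc(\partial_i)\colon \bigsqcup_{i=1}^d \Spc(\Sp^c) \to \Spc(\Exc{d}(\Sp^c,\Sp)^c) \]
is surjective. Combined with the Hopkins--Smith description of $\Spc(\Sp^c)$ recalled in \cref{fig:balmer-sp-of-sp}, every prime of $\Exc{d}(\Sp^c,\Sp)^c$ has the form $\cPd(\num{i},p,h) = \partial_i^{-1}(\cat C_{p,h})$ for some allowable triple.

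The equality criterion splits into two cases. When $i = j$, I would first argue that each $\Spc(\partial_i)$ is injective. By Balmer's injectivity criterion, it suffices to produce, for every compact spectrum $A$, a compact $d$-excisive functor whose $i$-th derivative is $A$; the natural candidate is the $i$-homogeneous functor $D_i(A)\colon X \mapsto (A \otimes X^{\otimes i})_{h\Sigma_i}$, which satisfies $\partial_i D_i(A) \simeq A$ and should be compact as it is built from the generators of \cref{thmx:ttcat}. The equality $\cPd(\num{i},p,h) = \cPd(\num{i},p',h')$ then reduces via Hopkins--Smith to the criterion $h = h'$, together with $p = p'$ whenever $h > 1$ (the case $h = 1$ being the common generic prime of torsion spectra, independent of $p$).

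When $i \neq j$, I would distinguish $\cPd(\num{i},p,h)$ from $\cPd(\num{j},p',h')$ using $D_j(\bbS)$ as a test object. A standard feature of Goodwillie calculus is that a $j$-homogeneous functor has vanishing $k$-th derivative for all $k \neq j$, while $\partial_j D_j(\bbS) \simeq \bbS$. Hence $D_j(\bbS) \in \cPd(\num{i},p,h)$ for every $i \neq j$ (since $\partial_i$ kills it), but $D_j(\bbS) \notin \cPd(\num{j},p',h')$ for any $(p',h')$, since $\bbS$ lies in no prime of $\Sp^c$. The main technical hurdle I anticipate is making precise the compactness of the homogeneous test functors $D_i(A)$ and the identification of their derivatives within the tt-framework of $\Exc{d}(\Sp^c,\Sp)$; this should follow from the explicit description of the category in terms of the generators $P_d h_\bbS(i)$ together with the standard equivalence between $i$-homogeneous functors and spectra with $\Sigma_i$-action.
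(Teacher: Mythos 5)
Your overall architecture matches the paper's: surjectivity of $\bigsqcup_i\Spc(\partial_i)$ from joint conservativity of the derivatives (the paper cites an abstract surjectivity theorem for exactly this), and then a separation argument to decide when two pullbacks coincide. However, there is a genuine gap in your choice of test objects: the homogeneous functors $D_i(A)\colon X\mapsto (A\otimes X^{\otimes i})_{h\Sigma_i}$ are \emph{not} compact in $\Exc{d}(\Sp^c,\Sp)$ for $i\ge 2$. Under the equivalence $\Homog_i(\Sp^c,\Sp)\simeq\Fun(B\Sigma_i,\Sp)$ of \cref{prop:n-homogeneous-functors}, the compact objects of $\Homog_i$ viewed inside $\Exc{d}(\Sp^c,\Sp)^c$ form $\thickid\langle P_dh_{\bbS}(i)\rangle$, corresponding to the thick subcategory generated by the free module $\bbS[\Sigma_i]$; the trivial module $\bbS$ (which is what $D_i(\bbS)$ corresponds to) does not lie there. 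Concretely, $D_dh_{\bbS}$ is the \emph{left idempotent} $e_Y\otimes\unit$ of the finite localization $P_{d-1}$ (\cref{thm:finite-localization-p-n}), and such idempotents are essentially never compact. Since prime ideals live in the compact subcategory, a non-compact object cannot be used to test membership, and Balmer's ``surjective up to direct summands'' injectivity criterion likewise requires compact preimages, so neither of your two uses of $D_i(-)$ goes through as written.

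Both steps can be repaired with objects the paper already provides. For injectivity of each $\Spc(\partial_i)$, replace $D_i(A)$ by the inflation $i_d(A)=A\circledast P_dh_{\bbS}$, which is compact for $A\in\Sp^c$ and satisfies $\partial_i(i_d(A))\simeq A$ for every $i$ (\cref{lem:partial-splits-inflation}); equivalently, the splitting $\partial_i\circ i_d\simeq\Id$ gives a retraction of $\Spc(\partial_i)$ and hence injectivity directly, and applying $\Spc(i_d)$ to an equality $\cPd(\num{i},p,h)=\cPd(\num{j},p',h')$ yields $\cat C_{p,h}=\cat C_{p',h'}$. For separating distinct $i\neq j$, replace $D_j(\bbS)$ by the compact generator $P_dh_{\bbS}(j)$: by \cref{prop:hderivatives} one has $\partial_i(P_dh_{\bbS}(j))\simeq\bbS^{\oplus|\surj(i,j)|}$, which vanishes precisely when $i<j$, so $P_dh_{\bbS}(j)\in\cPd(\num{i},p,h)$ iff $i<j$. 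This only gives a one-sided separation (an inclusion $\cPd(\num{i},p,h)\subseteq\cPd(\num{j},p',h')$ forces $i\ge j$), but that is enough to conclude $i=j$ from an equality, which is all the statement requires.
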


It then remains to determine the topology of the space $\Spc(\Exc{d}(\Sp^c,\Sp)^c)$, which is a significantly more difficult problem. It turns out that it is enough to determine all the inclusions $\cat P_d(\num{i},p,h) \subseteq \cat P_d(\num{j},p',h')$ among the primes  (\Cref{prop:spc_posetreduction}). One general technique to get information about these inclusions is the \emph{comparison map} introduced by Balmer \cite{Balmer10b}. This is an inclusion-reversing continuous map
    \begin{equation}\label{eq:intro_comparisonmap}
        \rho \colon \Spc(\cat K) \to \Spec(\End_{\cat K}(\unit))
    \end{equation}
from the Balmer spectrum of any tt-category $\cat K$ to the Zariski spectrum of the endomorphism ring of the unit in $\cat K$. In order to apply this in our setting,  we introduce in \cref{sec:goodwillie-burnside} a commutative ring $A(d)$ for each $d \ge 1$ that we call the Goodwillie--Burnside ring. It is a combinatorially defined ring constructed from the category of finite sets of cardinality at most $d$ and surjections. More specifically, as an abelian group, it is free on $d$ generators $x_1,\ldots,x_d$ and its multiplication is determined by the formula
    \[
        x_ix_j = \sum_{1 \le l \le d}\mu(i,j,l)x_l,
    \]
where $\mu(i,j,l)$ counts the number of \emph{good subsets} (\Cref{def:good-subsets}) of $\num{i} \times \num{j}$ of cardinality $l$, where $\num{i}$ is the set with $i$ elements. Its relation to the category of $d$-excisive functors is expressed by the following result (\Cref{thm:burnside-ring-isomorphism}): 

\begin{ThmAlpha}\label{thmx:goodwillieburnsidering}
    The endomorphism ring $\pi_0\End(P_dh_{\bbS})$ of the unit in $\Exc{d}(\Sp^c,\Sp)$ is isomorphic to the Goodwillie--Burnside ring $A(d)$ introduced above. 
\end{ThmAlpha}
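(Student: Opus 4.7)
The plan is to reduce the endomorphism computation to an evaluation of $P_d h_{\bbS}$ at the sphere via Yoneda and the smashing adjunction, compute the underlying abelian group using the Goodwillie tower, and then identify the multiplicative structure through a combinatorial model that matches the definition of $A(d)$.

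First, using that $P_d$ is a smashing localization, the adjunction $P_d \dashv \mathrm{incl}$ together with the Yoneda lemma for the representable unit $h_{\bbS}$ of Day convolution on $\Fun(\Sp^c, \Sp)$ gives
\[
\End_{\Exc{d}(\Sp^c,\Sp)}(P_d h_{\bbS}) \simeq \Hom_{\Fun(\Sp^c, \Sp)}(h_{\bbS}, P_d h_{\bbS}) \simeq (P_d h_{\bbS})(\bbS).
\]
This reduces the problem to understanding a single spectrum together with the ring structure induced by the commutative algebra structure on the unit.

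Second, I would compute the underlying abelian group via the Goodwillie tower of $h_{\bbS}\colon X \mapsto \Sigma^{\infty}\Omega^{\infty}X$, whose derivatives are $\partial_k h_{\bbS} = \bbS$ with trivial $\Sigma_k$-action. Consequently the homogeneous layers satisfy $D_k h_{\bbS}(\bbS) \simeq \Sigma^{\infty}_+ B\Sigma_k$, with $\pi_0 \cong \bbZ$ and vanishing $\pi_{-1}$. Running the long exact sequences associated with the tower $P_d h_{\bbS}(\bbS) \to P_{d-1}h_{\bbS}(\bbS) \to \cdots \to P_1 h_{\bbS}(\bbS)$ then yields
\[
\pi_0(P_d h_{\bbS})(\bbS) \cong \bigoplus_{k=1}^{d} \bbZ \cdot e_k,
\]
matching the additive rank of $A(d)$.

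Third, and most critically, I would identify the multiplicative structure by exploiting a concrete model of $\Exc{d}(\Sp^c, \Sp)$ -- following or refining the approaches of Arone-Ching and Glasman -- in which objects are spectral presheaves on a category built from finite sets of cardinality at most $d$ and surjections. Under this identification, $P_d h_{\bbS}$ corresponds to a distinguished presheaf and its endomorphism ring admits a transparent description in terms of orbit counts. The product $e_i \cdot e_j$ is computed by analyzing how the commutative algebra structure of $P_d h_{\bbS}$ interacts with the Goodwillie tower: the resulting structure constants in the expansion $e_i \cdot e_j = \sum_l c_{ijl}\, e_l$ should be the $(\Sigma_i \times \Sigma_j)$-orbit counts of good subsets $S \subseteq \num{i} \times \num{j}$ of cardinality $l \leq d$, i.e.\ $c_{ijl} = \mu(i, j, l)$ by construction of the Goodwillie-Burnside ring.

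The main obstacle is the multiplicative step. While the additive calculation follows directly from the Goodwillie tower for $\Sigma^{\infty}\Omega^{\infty}$, the identification of the ring structure requires locating the generators $e_i$ in an explicit model and tracking how the transfer maps between symmetric groups -- encoded by Day convolution after $d$-excisive truncation -- decompose into the good-subset enumeration defining $A(d)$. Establishing this topological-to-combinatorial dictionary precisely is the crux of the proof.
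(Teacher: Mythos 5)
Your first two steps match the paper's proof: the identification $\End_{\Exc{d}(\Sp^c,\Sp)}(P_dh_{\bbS})\simeq P_dh_{\bbS}(\bbS)$ via Yoneda and the universal property of $P_d$, and the additive computation $\pi_0 P_dh_{\bbS}(\bbS)\cong\bbZ^d$ from the layers $D_kh_{\bbS}(\bbS)\simeq\Sigma^\infty_+B\Sigma_k$ (the paper does this both via the Snaith splitting and via the vanishing of $\pi_{-1}(\bbS_{h\Sigma_k})$), are both exactly as in the paper.

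The third step, however, is a genuine gap rather than an argument. You correctly identify it as the crux, but the route you sketch --- pass to a spectral Mackey functor model and read off the structure constants as decompositions of transfers --- is a restatement of the desired conclusion, not a mechanism for proving it; the paper never invokes Glasman's model here, and the generators $e_k$ produced by the tower filtration are only well-defined modulo lower filtration, so their products are not even well-defined until specific representatives are chosen. The paper's device for closing this gap is different and worth internalizing: it shows that the product-of-derivatives map $\theta_d=\prod_{i}\pi_0(\partial_i)\colon R(d)\to\prod_{i=1}^d\bbZ$ is an \emph{injective ring homomorphism}, then produces explicit elements $\lambda_m\in R(d)$ (the composites $h_{\bbS}\to h_{\bbS^{\oplus m}}\to h_{\bbS}(m)\to h_{\bbS^{\oplus m}}\to h_{\bbS}$) and computes $\deg(\partial_i\lambda_m)=|\surj(i,m)|$. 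Since $\theta_d(\lambda_m)$ then coincides with the Burnside homomorphism $\phi(x_m)$, and since \cref{thm:fundamental-ses-a(n)} shows there is a \emph{unique} ring structure on the image subgroup making $\phi$ a ring map (with structure constants $\mu(i,j,l)$), the multiplication in $R(d)$ is forced without ever computing a single product topologically. If you want to pursue your more direct route, note also a small error: the structure constants are the total number $\mu(i,j,l)$ of good subsets of $\num{i}\times\num{j}$ of cardinality $l$, not the number of $(\Sigma_i\times\Sigma_j)$-orbits of such subsets (e.g.\ $\mu(2,2,3)=4$, corresponding to the summand $h_{\bbS}(3)^{\oplus 4}$ in $h_{\bbS}(2)\circledast h_{\bbS}(2)$, whereas these four subsets form a single orbit).
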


In other words, \cref{thmx:goodwillieburnsidering} supplies a functor calculus analogue of the computation of the Burnside ring in equivariant homotopy theory  due to Segal \cite{Segal1971} and tom Dieck \cite{tomDieck1979}.  We are then able to give  (in \cref{thm:burnside-goodwillie-spectrum} and \cref{prop:comparison-map}) a complete description of the Zariski spectrum of $A(d)$ and the comparison map
\begin{equation}
\rho\colon \Spc(\Exc{d}(\Sp^c,\Sp)^c) \to
\Spec(A(d)).
\end{equation} 
This severely restricts which inclusions can possibly occur, and altogether, we show that to determine the topology, it suffices to compute the minimal $\beth \ge 0$, the so-called \emph{geometric blueshift}, such that
    \begin{equation}\label{eq:intro_inclusions}
	\cPd(\num{k},p,h+\beth) \subseteq \cPd(\num{l},p,h)
    \end{equation}
whenever $p-1 \mid k-l \ge 0$ and for any $1 \leq h \leq \infty$. 

To facilitate this computation, we proceed inductively on the ambient degree $d$ with the base $d=1$ case reducing to the classical thick subcategory theorem. At the level of spectra, \cref{thmx:openpiece} induces an open embedding
        \begin{equation}\label{eq:open-embedding}
            \Spc(P_{d-1}) \colon \Spc(\Exc{d-1}(\Sp^c,\Sp)^c) \hookrightarrow \Spc(\Exc{d}(\Sp^c,\Sp)^c)
        \end{equation}
which in particular implies that it suffices to consider the case $k = d$ in \eqref{eq:intro_inclusions}. The values of the corresponding geometric blueshift numbers are stated in the following theorem, which forms the heart of the description of the topology of $\Spc(\Exc{d}(\Sp^c,\Sp)^c)$.

\begin{ThmAlpha}\label{thmx:spctop}
    Let $p,q$ be prime numbers, $1\le k,l\le d$ integers, and suppose $1\leq h,h' \leq \infty$. There is an inclusion $\cPd(\num{k},p,h') \subseteq \cPd(\num{l},q,h)$ if and only if the following three conditions hold:
	\begin{equation*}
		\begin{split}
			\text{(a)}&\;\; p-1 \mid k-l \ge 0;\\
			\text{(b)}&\;\; h' \ge h +\delta_p(k,l);\\
			\text{(c)}&\;\; \text{if $h > 1$, then $p=q$,}\\
		\end{split}
		\text{ where }
		\begin{split}
	 \delta_p(k,l) \coloneqq \begin{cases}
			0 &  \text{if } k=l;\\
			1 &  \text{if } p-1\mid k-l > 0 \text{ and } l \ge s_p(k);\\
			2 & \text{if } p-1\mid k-l > 0 \text{ and } l < s_p(k).
		\end{cases}\end{split}
\end{equation*}	
    Here $s_p(k)$ denotes the sum of the coefficients of the base $p$ expansion of $k$.
\end{ThmAlpha}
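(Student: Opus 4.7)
My plan combines three ingredients: general tt-geometric constraints from the comparison map to the Goodwillie--Burnside ring, an inductive reduction via the open embedding of \cref{thmx:openpiece}, and a chromatic blueshift computation for a generalized Tate construction associated with the family of non-transitive subgroups of symmetric groups. The last of these is the central new input and the principal obstacle.

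I would first handle necessity of the arithmetic conditions. The case $k=l$ reduces to the Hopkins--Smith thick subcategory theorem, since both primes are pullbacks of prime ideals in $\Spc(\Sp^c)$ along the same tt-functor $\partial_k$ of \cref{lem:geometric-derivatives}. For $k\ne l$, the inclusion-reversing comparison map
\[
\rho\colon\Spc(\Exc{d}(\Sp^c,\Sp)^c)\to\Spec(A(d))
\]
of \eqref{eq:intro_comparisonmap} sends Balmer primes to explicit Zariski primes of the Goodwillie--Burnside ring. Since inclusions of Balmer primes descend to inclusions of Zariski primes, inspecting the multiplication $x_ix_j=\sum_l\mu(i,j,l)x_l$ supplied by \cref{thmx:goodwillieburnsidering} forces the congruence $p-1\mid k-l\ge 0$ of (a) and the characteristic matching $p=q$ of (c), and also forces $\delta_p(k,l)=\infty$ in the cases outside (b).

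To analyze (b) I would induct on $d$. The base $d=1$ is Hopkins--Smith, and the open embedding
\[
\Spc(P_{d-1})\colon \Spc(\Exc{d-1}(\Sp^c,\Sp)^c)\hookrightarrow\Spc(\Exc{d}(\Sp^c,\Sp)^c)
\]
of \cref{thmx:openpiece} identifies the subspace of primes $\cPd(\num{i},p,h)$ with $i<d$ with $\Spc(\Exc{d-1}(\Sp^c,\Sp)^c)$ and preserves and reflects inclusions among such primes. As the formula for $\delta_p(k,l)$ depends only on $k,l,p$ and not on the ambient degree, the inductive hypothesis settles every inclusion with $k<d$, so it suffices to treat the case $k=d$.

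The hard part is the chromatic evaluation of $\delta_p(d,l)$, which is the non-abelian extension of the blueshift theorems of \cite{Kuhn04} and \cite{BHNNNS19} highlighted in the abstract. Exploiting the analogy with $\Sigma_d$-equivariant homotopy theory emphasized in the introduction, I would reformulate the inclusion $\cPd(\num{d},p,h+\delta)\subseteq\cPd(\num{l},p,h)$ as a bound on the chromatic drop of a generalized Tate construction for the family $\Fnt$ of non-transitive subgroups of an appropriate product of symmetric groups, applied to the model of $\partial_l$ restricted to the $d$-homogeneous layer. Computing this bound should proceed by stratifying $\Fnt$ by partition type and analyzing the $p$-adic valuations of the transfer multinomials that appear on each stratum; the sum-of-digits threshold $s_p(d)$ should enter as the exact cutoff separating the $l$-values for which all relevant transfers are nilpotent modulo $K(p,h)$ (producing blueshift $2$) from those for which a single transfer survives (producing blueshift $1$). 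The sharpness of both bounds would be verified by testing the inclusion on finite spectra of prescribed type plugged into the generators $P_dh_{\bbS}(i)$ from \cref{thmx:ttcat}, thereby establishing the stated formula.
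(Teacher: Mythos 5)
Your overall architecture matches the paper's: necessity of (a) via the comparison map to $\Spec(A(d))$ together with the computation of the derivatives of the generators, necessity of (c) via $\Spc(i_d)$, reduction to the case $k=d$ via the open embedding $\Spc(P_{d-1})$, and a blueshift analysis of generalized Tate constructions for the family $\Fnt$. However, two steps of your plan would fail as described. The value $\delta_p(k,l)=2$ does \emph{not} arise from a single Tate construction exhibiting a chromatic drop of $2$. The paper shows (\cref{thm:tateblueshift}) that the Tate-derivative $\partial_l t_k i_d$ is identically zero unless $k$ admits a $p$-power partition of length $l$, which by \cref{lem:subclaimA} happens precisely when $p-1\mid k-l>0$ and $l\ge s_p(k)$; when it is nonzero, its blueshift is exactly $1$, sandwiched between Kuhn's theorem (via the ring map from the honest $\Sigma_\lambda$-Tate construction) and the abelian blueshift theorem of \cite{BHNNNS19} (via restriction to the cyclic subgroup generated by long cycles). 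So in the regime $l<s_p(k)$ your proposed stratification of $\Fnt$ by partition type with an analysis of $p$-adic valuations of transfers has nothing to detect: the relevant Tate construction vanishes outright and yields no direct inclusion. The shift-$2$ inclusions are instead obtained by \emph{chaining} two elementary shift-$1$ inclusions through an intermediate layer $i$ with $i\ge s_p(k)$ and $l\ge s_p(i)$ (\cref{lem:subclaimB}, \cref{prop:sum-inclusion}, \cref{prop:ppowerchains}); the sum-of-digits threshold is the purely combinatorial existence criterion for a $p$-power partition, not a statement about nilpotence of transfers modulo $K(p,h)$.

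Second, your sharpness check---testing the inclusion on finite spectra of prescribed type plugged into the generators $P_dh_{\bbS}(i)$---cannot rule out the forbidden inclusions. By \cref{prop:hderivatives} one has $\partial_i\bigl(P_dh_{\bbS}(j)\circledast i_d(x)\bigr)\simeq \bbS^{\oplus|\surj(i,j)|}\otimes x$, so every nonvanishing derivative of such an object has the same type; these objects therefore never lie in $\cPd(\num{k},p,h+\beth)\setminus\cPd(\num{l},p,h)$ and cannot witness the failure of an inclusion. Establishing the lower bound $\blue_p(k,l;h)\ge\delta_p(k,l)$ is the hardest part of the theorem and is carried out in \cref{thm:formula-for-geombluetake2} by strong induction on $d$: one constructs a compact auxiliary object $z$ whose support in a chromatically truncated category is exactly the closure of $\cPd(\num{l},p,h)$, proves the Tate vanishing $t_d(\cL^f_{h+n-1}\circledast z)=0$ using the inductive hypothesis in lower degrees, and then invokes the disconnectedness criterion of Patchkoria--Sanders--Wimmer to contradict the putative inclusion. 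Your proposal contains no substitute for this step, and without it the formula for $\delta_p(k,l)$ is only an upper bound on the geometric blueshift.
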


In particular, this result shows that, somewhat surprisingly, the geometric blueshift~$\beth$ is \emph{never} greater than 2. Its proof occupies the bulk of \cref{part:III} and is summarized in the following subsection. It completes our computation of the Balmer spectrum of compact $d$-excisive functor for all $d \ge 1$. The $d=3$ case is depicted in \cref{fig:exc3} below.

    \begin{figure}[b]
            \includegraphics{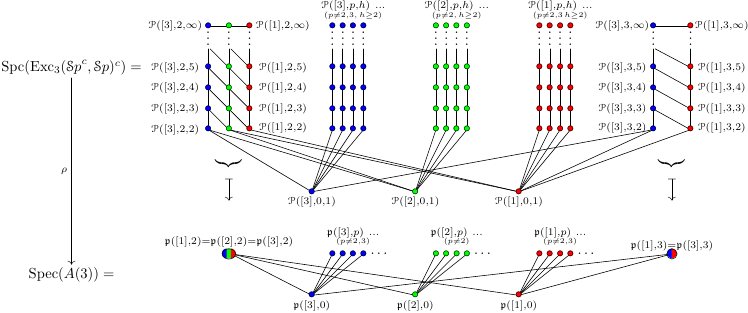}\caption{The Balmer spectrum of 3-excisive functors, along with the comparison map to the Zariski spectrum of $A(3)$.}\label{fig:exc3}
    \end{figure}

\subsection*{Outline of the proof of \cref{thmx:spctop}}

The tt-geometric classification of compact $d$-excisive functors stated at the beginning of the introduction is equivalent to \cref{thmx:spctop}; the translation between the two formulations follows from general tt-geometric principles. In particular, the tt-classification theorem is extracted from \cref{thmx:spctop} in \cref{thm:tt-ideal-classification}; see also \cref{cor:plocal-tt-ideal-classification}.

The proof of \Cref{thmx:spctop} takes a number of steps, whose overall strategy is modelled on the work of Balmer and Sanders \cite{BalmerSanders17} in equivariant homotopy theory. First of all, the condition that $p - 1 \mid k-l$ already appears in the Zariski spectrum of the Goodwillie--Burnside ring while the requirement that $k-l \ge 0$ follows from a straightforward computation;  thus it remains to determine the values of $\delta_p(k,l)$, as explained above. This geometric blueshift function measures the chromatic shift that  occurs in inclusions among prime ideals, and it is essentially controlled by two interacting pieces of structure, to be explained in more detail below:
    \begin{itemize}
        \item the chromatic blueshift behaviour of the Tate-derivatives; and
        \item the combinatorics of $p$-power partitions. 
    \end{itemize}
In order to explain our strategy, recall from \eqref{eq:open-embedding} that $P_{d-1}$ induces an open embedding on spectra. Reinterpreted tt-geometrically, this results in a categorical open-closed decomposition of $\Exc{d}(\Sp^c,\Sp)$, which on objects decategorifies to the Kuhn--McCarthy pullback square \cite{McCarthyRy2001Dual,Kuhn04} for $d$-excisive functors $F$:
    \begin{equation}\label{eq:intro_square}
        \begin{tikzcd}[ampersand replacement=\&]
        {P_dF(-)} \& {(\partial_dF \otimes (-)^{\otimes d})^{h\Sigma_d}} \\
        {P_{d-1}F(-)} \& {(\partial_dF \otimes (-)^{\otimes d})^{t\Sigma_d}.}
        \arrow[from=1-1, to=2-1]
        \arrow[from=1-1, to=1-2]
        \arrow[from=2-1, to=2-2]
        \arrow[from=1-2, to=2-2]
        \end{tikzcd}
    \end{equation}
Here, the Tate construction appearing in the lower right corner governs the gluing data between $(d-1)$-excisive and $d$-homogeneous functors needed to reconstruct a \mbox{$d$-excisive} functor. The idea is then to argue inductively on $d$, where the induction step requires us to glue $\Spc(\Sp^c)$ and $\Spc(\Exc{d-1}(\Sp^c,\Sp)^c)$ via the Tate construction.

In order to understand the gluing, we first reduce to the $p$-local situation for a given $p$ and then consider the ($p$-local) \emph{Tate-derivatives} defined as 
\[
    \partial_lt_di_d\colon \Sp_{(p)} \to \Sp_{(p)}, \quad A \mapsto \partial_l\left(X \mapsto (A \otimes X^{\otimes d})^{t\Sigma_d}\right)
\]
for any $l<d$. These functors exhibit a chromatic height-shifting behaviour---a phenomenon generally known as \emph{blueshift}---which determines what we call \emph{elementary} inclusions among prime ideals.

\begin{ThmAlpha}\label{thmx:blueshift}
    The $p$-local Tate-derivatives $\partial_lt_di_d$ are contractible unless $d>l$ and there exists a partition of $d$ by powers of $p$ of length $l$. If such a partition exists, then the $p$-local Tate-derivatives exhibit a blueshift by 1, i.e., the kernel of the functor 
        \[
			\begin{tikzcd}
            \partial_l\left ((L_{p,h}^f\bbS \otimes (-)^{\otimes d})^{t\Sigma_d}\right ) \colon \Sp_{(p)}^{c} \ar[r] & \Sp
			\end{tikzcd}
        \]
    consists precisely of the type $h$ finite $p$-local spectra. 
\end{ThmAlpha}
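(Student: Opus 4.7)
The plan is to prove \Cref{thmx:blueshift} by reducing the computation of the Tate-derivatives to an equivariant Tate construction for a family of non-transitive subgroups of $\Sigma_d$, then invoking the classical blueshift theorem of Kuhn together with a combinatorial analysis of $p$-power partitions of $d$.

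First, I would express the $l$-fold cross-effect of $X \mapsto (A \otimes X^{\otimes d})^{t\Sigma_d}$ at $X = \bbS$ as a Tate construction indexed by $\Sigma_d$-orbits of surjections $\num{d} \onto \num{l}$. Each orbit is labelled by an unordered partition $\lambda = (d_1,\ldots,d_l)$ of $d$ into $l$ non-empty blocks, whose stabiliser (up to conjugation) is $H_\lambda \cong \Sigma_{d_1} \times \cdots \times \Sigma_{d_l}$. Standard functor calculus identifies $\partial_l(t_d i_d)(A)$ with a sum over such partitions of Tate constructions of the shape $(A \otimes V_\lambda)^{tW_{\Sigma_d}(H_\lambda)}$, where $V_\lambda$ is a certain $W_{\Sigma_d}(H_\lambda)$-spectrum and $W_{\Sigma_d}(H_\lambda) = N_{\Sigma_d}(H_\lambda)/H_\lambda$ permutes the blocks; these maximal non-transitive isotropy groups are exactly those appearing in the family $\Fnt$ used to build $(-)^{t\Sigma_d}$.

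Second, invoking the Tate vanishing theorem after $p$-localization, only those partitions $\lambda$ whose Weyl quotient $W_{\Sigma_d}(H_\lambda)$ has nontrivial Sylow $p$-subgroup contribute. A direct combinatorial analysis shows this forces $\lambda$ to be a $p$-power partition, i.e., each $d_i$ is a power of $p$; when no such partition exists, the Tate-derivative is contractible, proving the first half of the theorem.

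Third, for each $p$-power partition $\lambda$, the surviving Tate construction involves a Weyl quotient which is a product of symmetric groups $\Sigma_{m_j}$, where $m_j$ counts the number of blocks of a given $p$-power size. A Sylow-theoretic reduction expresses the relevant Tate construction as an iterated $C_p$-Tate construction, and Kuhn's blueshift theorem \cite{Kuhn04} applied to $L_{p,h}^f\bbS$ then yields the claimed height shift. The decisive point is that the $l$-fold cross-effect singles out only the top layer of the Tate filtration, so the total blueshift collapses to exactly $1$. Combined with the Hopkins--Smith thick subcategory theorem, this identifies the kernel of $\partial_l((L_{p,h}^f\bbS \otimes (-)^{\otimes d})^{t\Sigma_d})$ on $\Sp_{(p)}^c$ with the type-$h$ finite $p$-local spectra.

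The main obstacle is the third step: showing that the iterated wreath-product Sylow structure produces a total blueshift of exactly $1$, rather than the larger value naively suggested by the $p$-rank of the Sylow $p$-subgroup. This requires carefully matching the filtration of the Tate construction along the family of non-transitive subgroups with the partition-theoretic combinatorics, and it is precisely here that the non-abelian blueshift input alluded to in the introduction becomes essential.
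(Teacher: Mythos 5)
Your overall strategy---translate the Tate-derivative into equivariant Tate constructions for families of non-transitive subgroups, kill most terms by a transfer argument, and then invoke known blueshift theorems---is the right one, but both the decomposition and the vanishing criterion are off, and the step you flag as the main obstacle is precisely the content of the theorem. On the decomposition: the Arone--Ching formula (\cref{prop:tatederivatives_equivariantformula}) expresses $\partial_l\bigl((A\otimes X^{\otimes d})^{h\Sigma_d}\bigr)$ as a product over partitions $\lambda=(d_1,\ldots,d_l)\vdash d$ of the \emph{generalized} Tate constructions $A^{t\Fnt(\lambda)}=\Phi^{\Fnt(\lambda)}(\underline{\infl A})$, taken over the Young subgroup $\Sigma_\lambda=\Sigma_{d_1}\times\cdots\times\Sigma_{d_l}$ itself with respect to the family of subgroups having at least one non-transitive projection --- not an ordinary Tate construction over the Weyl group $N_{\Sigma_d}(H_\lambda)/H_\lambda$. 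Your criterion ``$W_{\Sigma_d}(H_\lambda)$ has nontrivial Sylow $p$-subgroup'' is therefore neither necessary nor sufficient: for $d=3$, $l=2$, $p=2$ and $\lambda=(2,1)$ the Weyl group is trivial, so your step 2 would discard this term, yet $(2,1)$ is a $2$-power partition and the theorem asserts $\partial_2 t_3 i_3$ is non-contractible. The correct vanishing argument is: if some $d_i$ is not a $p$-power, then a Sylow $p$-subgroup of $\Sigma_{d_i}$ is non-transitive, so $\Fnt(\lambda)$ contains a subgroup of index prime to $p$ and the standard transfer argument (\cref{rem:transferargument}) kills the $p$-local Tate construction.

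The genuine gap is your third step, which you leave unresolved. No iterated wreath-product or Sylow-filtration analysis is required; the height is pinned down by a sandwich. For the upper bound, the inclusion of the trivial family into $\Fnt(\lambda)$ gives a ring map $(L^f_{p,h}\bbS)^{t\Sigma_\lambda}\to(L^f_{p,h}\bbS)^{t\Fnt(\lambda)}$, and Kuhn's blueshift theorem bounds the height of the source by $h-1$; since height can only drop along ring maps, the target has height $\le h-1$. For the lower bound, let $\sigma=(\sigma_1,\ldots,\sigma_l)\in\Sigma_\lambda$ be the product of the long cycles $\sigma_i\in\Sigma_{p^{e_i}}$. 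Every proper subgroup of the cyclic $p$-group $\langle\sigma\rangle$ projects non-transitively to each factor while $\langle\sigma\rangle$ itself does not, so $\Fnt(\lambda)\cap\langle\sigma\rangle$ is exactly the family of proper subgroups of $\langle\sigma\rangle$; restriction then gives a ring map to $\Phi^{\langle\sigma\rangle}(\underline{\infl L^f_{p,h}\bbS})$, whose height is exactly $h-1$ by the abelian blueshift theorem of \cite{BHNNNS19}. This cyclic-subgroup restriction is the missing idea: without it (or an equivalent lower bound) your argument cannot rule out a blueshift larger than $1$.
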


The proof of \cref{thmx:blueshift} proceeds by using work of Arone--Ching \cite{AroneChing15} to translate the problem to stable equivariant homotopy theory. More precisely, we express the Tate-derivatives in terms of generalized Tate constructions with respect to families $\Fnt$ of non-transitive subgroups of symmetric groups: 
    \[
        \partial_l\left ((L_{p,h}^f\bbS \otimes (-)^{\otimes d})^{t\Sigma_d}\right ) \simeq (L_{p,h}^f\bbS)^{t\Fnt(\lambda)} \coloneqq \Phi^{\Fnt(\lambda)}(\underline{\infl_e^{\Sigma_\lambda}L_{p,h}^f\bbS}).
    \]
We are then able to determine the corresponding blueshift behaviour using the main theorem of previous work \cite{BHNNNS19} on the topology of Balmer spectrum of finite abelian groups, see \cref{thm:tateblueshift}, which in turn extended Kuhn's blueshift theorem~\cite{Kuhn04}. Since it involves generalized Tate constructions based on symmetric groups, we may view \cref{thmx:blueshift} as an instance of \emph{non-abelian blueshift}. We note that, in general, determining the blueshift behaviour for arbitrary families of subgroups of symmetric groups would suffice to compute the Balmer spectrum of compact \mbox{$G$-spectra} for all finite groups $G$, a problem that remains open for every~$\Sigma_d$ with~$d \geq 8$.

Returning to the problem of computing the blueshift function $\delta_p(k,l)$, \cref{thmx:blueshift} accounts only for part of the possible relations; indeed, in general there can (and will!) be chains of inclusions
    \[
        \cPd(\num{k},p,h) = \cPd(\num{k_1},p,h_1) \subseteq \cPd(\num{k_2},p,h_2) \subseteq \ldots \subseteq \cPd(\num{k_j},p,h_j) = \cPd(\num{l},p,h')
    \]
in which only the adjacent inclusions are captured by the Tate-derivatives. One outcome of our analysis is that all inclusions among prime ideals are formed by such chains, so it remains to find the minimal shift among all such chains, which is precisely the value of $\delta_p(k,l)$. This is then translated into a combinatorial problem on $p$-power partitions, which we are able to solve using some elementary number theory (\cref{prop:ppowerchains}).

\subsection*{Parerga and paralipomena}

We conclude our summary of the paper by mentioning some applications and further directions.

\subsubsection*{Change of categories}
In this paper we focus on functors from the category of spectra to itself. But the methods of functor calculus apply much more broadly. Indeed, one can apply functor calculus to study the $\infty$-category of functors $\Fun(\cat C, \cat D)$ where $\cat C$ and $\cat D$ are general $\infty$-categories, perhaps satisfying some mild technical assumptions. It is natural to wonder how our methods work for other categories. In order to use tt-geometric techniques, we should require our categories to be symmetric monoidal and stable; in addition, the theory is particularly effective and well-developed in the rigidly-compactly generated case.

It is relatively straightforward to extend our methods from $\Exc{d}(\Sp^c, \Sp)$ to the study of $\Exc{d}(\Sp^c, \cat D)$, where $\cat D$ is a  rigidly-compactly generated symmetric monoidal stable $\infty$-category. Indeed, we show in \Cref{prop:setspc_coefficients} that, as a set, the primes are always obtained by pulling back primes along the derivatives  as in the case $\cat D=\Sp$. In contrast, it is likely that new ideas are needed to replace $\Sp^c$ in the source with a more general category. In particular \Cref{lem:partial-splits-inflation} and \Cref{lem:injectivity-of-partial-k} seem to depend on the source being $\Sp^c$. 

As a proof of concept, when the target is $\Mod_{\HZ}$, we  completely determine the topology of $\Spc(\Exc{d}(\Sp^c, \Mod_{\HZ})^c)$ in \Cref{thm:spc_integral}, a computation analogous to the work of Patchkoria--Sanders--Wimmer in equivariant homotopy theory \cite{PatchkoriaSandersWimmer22}. In this case, all primes are of the form $\cPd^{\Z}(\num{k},\mathfrak p)$ for $1 \le k \le d$ and $\mathfrak p \in \Spec(\bbZ)$ and the topology is given by:

\begin{ThmAlpha}\label{thmx:integralcoefficients}
    Let $d \geq k,l \geq 1$ be integers and consider two prime ideals $\mathfrak p, \mathfrak q \in \Spec(\Z)$. Then there is an inclusion $\cPd^{\Z}(\num{k},\mathfrak p) \subseteq \cPd^{\Z}(\num{l},\mathfrak q)$ if and only if one of the following two conditions is satisfied:
        \begin{enumerate}
            \item $\mathfrak p = (p)$ for some prime $p$, $\mathfrak q = (p)$ or $\mathfrak q = (0)$, and $p-1 \mid k-l \geq 0$;
            \item $\mathfrak p = (0) = \mathfrak q$ and $k=l$.
        \end{enumerate}
    Moreover, $\Spc(\Exc{d}(\Sp^c,\Mod_{\HZ})^c)$ is noetherian, so that the topology is determined by these inclusions. Base-change $\Sp\to\Mod_{\HZ}$ induces a map
    \[
        \Spc(\Exc{d}(\Sp^c,\Mod_{\HZ})^c) \to \Spc(\Exc{d}(\Sp^c,\Sp)^c)
    \] 
    which is a homeomorphism onto its image. It maps $\cPd^{\Z}(\num{k},(p))$ to $\cPd(\num{k},p,\infty)$ and maps $\cPd^{\Z}(\num{k},(0))$ to $\cPd(\num{k},0,1)$. Finally, $\Exc{d}(\Sp^c,\Mod_{\HZ})$ is stratified and costratified over its spectrum in the sense of \cite{bhs1} and satisfies the telescope conjecture.
\end{ThmAlpha}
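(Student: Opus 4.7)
The plan is to follow the strategy of \cref{thmx:spctop}, drastically simplified by the near-trivial chromatic complexity of $\Mod_{\HZ}$. First, I would apply \cref{prop:setspc_coefficients} to establish that every prime of $\Exc{d}(\Sp^c,\Mod_{\HZ})^c$ has the form $\cPd^{\Z}(\num{k},\mathfrak p)$ for a unique pair $(k,\mathfrak p)$ with $1\leq k\leq d$ and $\mathfrak p\in\Spec(\Z)=\Spc(\Mod_{\HZ}^c)$, realized as the preimage of $\mathfrak p$ along the $k$-th Goodwillie derivative; distinctness for varying $(k,\mathfrak p)$ would be verified by evaluation on the generators $P_dh_{\bbS}(i)$ as in \cref{thmx:spcset}. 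Next, I would compute the base-change map $f$ induced by the symmetric monoidal left adjoint $\HZ\otimes(-)\colon\Sp\to\Mod_{\HZ}$. Since $\partial_k$ commutes with base-change, for $F\in\Exc{d}(\Sp^c,\Sp)^c$ we have $F\in f^{-1}(\cPd^{\Z}(\num{k},(p)))$ if and only if $H_*(\partial_k F;\F_p)=0$, so $f(\cPd^{\Z}(\num{k},(p)))=\cPd(\num{k},p,\infty)$; rationalization gives $f(\cPd^{\Z}(\num{k},(0)))=\cPd(\num{k},0,1)$, and in particular $f$ is injective.

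The hard part will be computing the specialization poset. For the necessary direction of conditions (i)--(ii), continuity of $f$ combined with \cref{thmx:spctop} applied to each possible pair of image primes immediately yields the stated constraints. For the sufficient direction---realizing the required specializations directly in the source---I would re-run the inductive Tate-gluing strategy of \cref{thmx:spctop} in $\Mod_{\HZ}$-coefficients, using the $\HZ$-linear version of the Kuhn--McCarthy square \eqref{eq:intro_square}. The corresponding blueshift analysis of \cref{thmx:blueshift} must be re-verified in this linear setting, but collapses dramatically: only the residue fields $\HFp$ and $\HQ$ persist, so the elementary inclusions take only two forms---a jump $\cPd^{\Z}(\num{k},(p))\subseteq\cPd^{\Z}(\num{l},(p))$ whenever $p-1\mid k-l\geq 0$, together with the degeneration $\cPd^{\Z}(\num{k},(p))\subseteq\cPd^{\Z}(\num{k},(0))$ within each fixed derivative index. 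Composing these via the combinatorial argument of \cref{thmx:spctop}---now drastically simplified since no nontrivial chromatic shifts occur---gives exactly (i) and (ii).

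Finally, noetherianity is immediate, as $\Spc(\Exc{d}(\Sp^c,\Mod_{\HZ})^c)$ is a finite union of copies of $\Spec(\Z)$ with the additional finite specialization data of (i)--(ii). Since $f$ is then a continuous injection between noetherian spectral spaces and the inclusion relations in source and image match, $f$ is a homeomorphism onto its image. For stratification and costratification I would appeal to the main results of \cite{bhs1}: the noetherian criterion applies, with residue fields identified as composites of $\partial_k$ with the residue functors $\HFp$ and $\HQ$ of $\Mod_{\HZ}$. The telescope conjecture in turn follows from noetherianity, which forces every smashing localization to be finite and hence compactly generated.
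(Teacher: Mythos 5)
Your treatment of the spectrum itself follows the paper's proof of this result essentially verbatim: the underlying set comes from the derivative functors as in \cref{prop:setspc_coefficients}, the necessity of (a)/(b) is read off by pushing inclusions forward along $\Spc(F_d)$ into $\Spc(\Exc{d}(\Sp^c,\Sp)^c)$, and the sufficiency is produced by the $\HZ$-linear Tate construction together with the non-vanishing of $\Fp\otimes\partial_lt_ki_d(\bbS)$ when a $p$-power partition of $k$ of length $l$ exists, then chained using \cref{rem:p-power-and-divisibility}; noetherianity and the homeomorphism onto the image are handled the same way. (One small caveat: the ``jump'' $\cPd^{\Z}(\num{k},(p))\subseteq\cPd^{\Z}(\num{l},(p))$ is \emph{elementary} only when $\cP_p(k;l)\neq\emptyset$, not for all $p-1\mid k-l\geq 0$; you do say you will chain, so this is only a phrasing issue.)

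The genuine gap is in your last paragraph. There is no ``noetherian criterion'' in \cite{bhs1} that yields stratification: a noetherian spectrum gives the local-to-global principle, but one must still verify minimality of each local category $\Gamma_{\cat P}\cat T$, and costratification requires a separate (colocalizing) argument. Likewise, the telescope conjecture does \emph{not} follow from noetherianity of the spectrum alone; the correct implication is that stratification \emph{together with} a noetherian spectrum forces every smashing localization to be finite. The paper's route is to observe that $P_{d-1}^{\Z}$ is a finite localization and $\partial_d^{\Z}$ is finite \'etale of finite degree, and then to descend stratification and costratification from the base case $\Mod_{\HZ}$ along this open--closed/\'etale decomposition using the machinery of \cite{bhs1} and \cite{BCHS2023pp}; the telescope conjecture then follows as above. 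Your residue-field remark points in a workable direction (a Quillen-stratification-style argument as in \cite{BCHNP2023Quillen}), but as written the justification for stratification, costratification, and the telescope conjecture does not go through.
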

We refer the reader to \Cref{fig:zarizki_a3z} (in \Cref{sec:applications}) for an illustration of the case $d = 3$. More generally, it is possible to deduce an abstract description of the resulting spectrum of the category $\Spc(\Exc{d}(\Sp^c,\cat D)^c)$ for any rigidly-compactly generated symmetric monoidal stable $\infty$-category $\cat D$ from \cref{thmx:spctop}, but we will not pursue this here.

Another important example of change of coefficients is that of functors from spectra to telescopically localized spectra, studied in depth by Kuhn in \cite{Kuhn04}. Using the aforementioned telescopic blueshift theorem proven in the same paper, Kuhn deduced from \eqref{eq:intro_square} that the Taylor tower of any $F \in \Exc{d}(\Sp^c,\Sp)$ splits completely after telescopic localization. From this perspective, our main result is a far-reaching delocalization of Kuhn's splitting theorem.

\subsubsection*{Smith--Floyd theory in functor calculus}

The geometric blueshift occurring between inclusions of prime ideals in \eqref{eq:intro_inclusions}, as determined in \cref{thmx:spctop}, is equivalent to the implication:
    \begin{equation}\label{eq:intro_smithfloyd}
        K(p,h-1+\beth)_*\partial_kF = 0 \implies K(p,h-1)_*\partial_lF = 0 
    \end{equation}
whenever $F \in \Exc{d}(\Sp^c,\Sp)^c$. Instead of a conditional vanishing result, it is desirable to have quantitative control over the transchromatic relation between different derivatives.

In equivariant homotopy theory, the analogous problem is to find relations between the dimensions of Morava $K$-theories of geometric fixed points $\Phi^H(x),\Phi^K(x)$ of a given finite $G$-spectrum $x$, for various subgroups $H,K$ of $G$. On the one hand, since the geometric $H$-fixed points of a $G$-suspension spectrum of a $G$-space $X$ coincide with the $G$-suspension spectrum of the $H$-fixed points of $X$, at height~$\infty$ this is the topic of classical Smith \cite{Smith41} and Floyd \cite{Floyd52} theory for mod $p$ homology. On the other hand, non-equivariantly, Ravenel \cite{Ravenel84} showed that the dimension of $K(h)_*(x)$ provides an upper bound for the dimension of $K(n)_*(x)$ for all $n\leq h$ and all finite spectra $x$; this is a transchromatic version of Floyd theory for the trivial action. Simultaneously generalizing these theorems, Hausmann and Kuhn--Lloyd~\cite{KuhnLloyd2024} reinterpreted the results of \cite{BalmerSanders17,BHNNNS19} as transchromatic Smith--Floyd theory.

In light of the strong analogy between excisive functors $\Sp^c \to \Sp$ and stable equivariant homotopy theory, we can view the sought-after quantitative version of~\eqref{eq:intro_smithfloyd} as a functor calculus analogue of transchromatic Smith--Floyd theory. As one application of our main theorem, we show that this story indeed carries over to the calculus context, see \cref{cor:floydinequality}:

\begin{ThmAlpha}\label{thmx:smith}
    For any compact $F \in \Exc{d}(\Sp^c,\Sp)^c$, the following inequality holds:
        \begin{equation}\label{eq:intro_goodwilliefloyd}
            \dim_{K(p,n)_*}K(p,n)_*(\partial_kF) \geq \dim_{K(p,h)_*}K(p,h)_*(\partial_lF)
        \end{equation}
    whenever $p-1 \mid k-l \geq 0$ and $n \geq h +\delta_p(k,l)$.
\end{ThmAlpha}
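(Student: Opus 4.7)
The result is the functor-calculus counterpart of the transchromatic Smith--Floyd inequalities established by Kuhn--Lloyd in the equivariant setting, and I would follow their general strategy translated to our context. The starting point is \cref{thmx:spctop}: unpacking the definition $\cPd(\num{i},p,h) = \{F : K(p,h-1)_*\partial_i F = 0\}$ of the prime ideals, the tt-inclusion $\cPd(\num{k},p,n+1) \subseteq \cPd(\num{l},p,h+1)$ afforded by that theorem under our hypotheses is precisely the conditional vanishing \eqref{eq:intro_smithfloyd}, namely $K(p,n)_*\partial_k F = 0 \implies K(p,h)_*\partial_l F = 0$ for every compact $F$. What remains is to upgrade this qualitative statement to the quantitative dimension inequality.

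For the upgrade I would exploit the action of $\Sp$ on $\Exc{d}(\Sp^c,\Sp)$ by post-composition, which preserves compactness and interacts compatibly with derivatives: $\partial_i(V \otimes F) \simeq V \otimes \partial_i F$ for any compact spectrum $V$. Since Morava K-theories are graded fields and so their dimensions multiply under the tensor product, applying the conditional vanishing of the first step to the family of compact functors $\{V \otimes F\}_V$ for a carefully chosen range of test spectra $V$ and counting graded dimensions on both sides of the implication refines the containment of primes to the desired dimension inequality. This is the tensor-triangular method employed by Hausmann in the equivariant setting and in \cite{KuhnLloyd20pp}, and it adapts to our context thanks to \cref{thmx:ttcat}, which provides the rigid-compactly generated tt-structure and the symmetric-monoidal derivative functors needed for the argument.

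The main obstacle is making the dimension count tight in all cases of \cref{thmx:spctop}. When $k = l$ and $\delta_p(k,l) = 0$, the inequality reduces to Ravenel's classical Floyd-type inequality applied to the spectrum $\partial_k F$; when $k \neq l$, the argument genuinely mixes distinct derivatives, and the calibration of test spectra must be informed by the non-abelian blueshift behaviour of the Tate-derivatives of \cref{thmx:blueshift}, together with the combinatorics of $p$-power partitions encoded in $\delta_p(k,l)$. Producing a family of test spectra whose $K(p,n)$- and $K(p,h)$-ranks calibrate correctly with $\delta_p(k,l) \in \{0,1,2\}$, and handling these cases uniformly across the four cases in the definition of $\delta_p$, is where the argument requires the most care.
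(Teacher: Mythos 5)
Your first step is correct and is exactly what the paper does: under the stated hypotheses, \cref{thmx:spctop} gives the inclusion $\cPd(\num{k},p,n+1)\subseteq\cPd(\num{l},p,h+1)$, which unwinds to the conditional vanishing $K(p,n)_*\partial_kF=0\Rightarrow K(p,h)_*\partial_lF=0$ for all compact $F$ (this is \cref{prop:smith-topology}). The gap is in your proposed upgrade from this qualitative statement to the dimension inequality. Tensoring with external test spectra $V\in\Sp^c$ cannot work: since $K(p,n)$ is a graded field, $K(p,n)_*(V\otimes\partial_kF)\cong K(p,n)_*(V)\otimes_{K(p,n)_*}K(p,n)_*(\partial_kF)$, so its vanishing only records whether one of the two factors vanishes and carries no dimension information. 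Worse, by the thick subcategory theorem the type of a finite spectrum is monotone in the height, so any $V$ with $K(p,n)_*V=0$ automatically has $K(p,h)_*V=0$ for $h\le n$; hence the implication applied to the family $\{V\otimes F\}_V$ yields nothing beyond the implication for $F$ itself. Your attribution of this "test object" method to Hausmann and Kuhn--Lloyd misremembers what they do.

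The device that actually works (and that the paper uses in \cref{prop:smithfloyd_equivalence}) is J.~Smith's idempotent construction: one forms the Day convolution powers $F^{\circledast m}$, lets $\Sigma_m$ act by permuting the factors, and splits off the summand $y=e_mF^{\circledast m}$ for a suitable idempotent $e_m\in\Z_{(p)}[\Sigma_m]$. Writing $V_*=K(p,h)_*(\partial_lF)$ and $W_*=K(p,n)_*(\partial_kF)$, monoidality of $\partial_l$ and of Morava $K$-theory gives $K(p,h)_*(\partial_l y)\cong e_mV_*^{\otimes m}$ and likewise for $W_*$, and the linear algebra of \cite[Section 6.4]{KuhnLloyd20pp} produces $m$ and $e_m$ with $e_mW_*^{\otimes m}=0$ but $e_mV_*^{\otimes m}\neq 0$ whenever $\dim W_*<\dim V_*$. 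Thus a failure of the Floyd inequality for $F$ produces a compact $y$ falsifying the Smith implication, which is the contrapositive you need. Note also that your final paragraph is a red herring: no further input from the blueshift of Tate-derivatives or the combinatorics of $\delta_p$ is needed in the upgrade step, since all of that is already encoded in the topology of the spectrum via \cref{thmx:spctop}; the passage from Smith to Floyd is purely formal given the rigid tt-structure and the monoidal derivatives.
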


We refer to \cref{prop:smithfloyd_equivalence} for the precise equivalence between transchromatic Smith theory \eqref{eq:intro_smithfloyd} and transchromatic Floyd theory \eqref{eq:intro_goodwilliefloyd} in $\Exc{d}(\Sp^c,\Sp)^c$.

\subsubsection*{Spectral Mackey functors}

As witnessed repeatedly above, the abstract tt-geometry of $\Exc{d}(\Sp^c,\Sp)$ closely resembles that of $\Sp_G$ for $G$ a finite group. The underlying reason for why the two stories are parallel is given in the setting of spectral Mackey functors on epiorbital categories developed by Barwick and Glasman \cite{Barwick17,Glasman15pp}. Specifically, $G$-spectra can be modelled as spectral Mackey functors on the category of finite $G$-sets \cite{Barwick17}, while $d$-excisive functors can be modelled as spectral Mackey functors on the free coproduct completion of the category of finite sets of cardinality at most $d$ and surjections \cite{Glasman18pp}. In fact, when $d = 2$, this induces the aforementioned equivalence between $2$-excisive functors and $C_2$-spectra, as the indexing categories are equivalent; however, the stories bifurcate as soon as $d > 2$.
For the benefit of the reader conversant in equivariant homotopy theory, we will often indicate when a construction in Goodwillie calculus is the analog of a well-known construction in equivariant homotopy theory; for example, the Goodwillie derivatives correspond to geometric fixed points. We have also tabulated the most prominent instances of this correspondence in \Cref{appendix}. 

\subsubsection*{Spaces vs spectra}
One aspect of this work that we find intriguing is that it is particularly adapted to the study of functors from spectra to spectra --- and, more generally, to functors between rigidly-compactly generated symmetric monoidal stable $\infty$-categories --- but it does not readily apply to functors from {\it spaces} to spectra, because 
the category of such functors is not rigidly-compactly generated. This is in contrast to previous approaches to classification of excisive functors, where functors from spaces to spectra tended to be easier to understand than functors from spectra to spectra.

\subsection*{Frequently used notations}\label{ssec:notation}
Here is some notation that will be used throughout:
\begin{itemize}
\item For any $\infty$-category $\cat C$, $\cat C^c$ denotes the category of compact objects in $\cat C$.
\item Coproducts and smash products of spectra are denoted by $\oplus$ and $\otimes$ respectively. The smash product of pointed \emph{spaces} is denoted by $\wedge$. Day convolution is denoted by $\circledast$. Internal homs are denoted $\ihom{-,-}$.
\item In a tensor triangulated category $\cat T$, the localizing ideal (resp.~localizing subcategory) generated by a subcategory $\cat E \subset \cat T$ is denoted $\Locideal\langle\cat E\rangle$ (resp.~$\Loc\langle\cat E\rangle$) while the thick ideal generated by $\cat E$ is denoted $\thickid\langle \cat E\rangle$.
\item If $\cat C$ and $\cat D$ are presentably symmetric monoidal stable $\infty$-categories, then a \emph{geometric functor} $F\colon \cat C \to \cat D$ is a symmetric monoidal functor which preserves all colimits.
A \emph{geometric equivalence} is a geometric functor which is an equivalence.
    \item For an integer $n\ge 0$, the standard set with $n$ elements will be denoted by $\num{n}\coloneqq \{1, \ldots, n\}$
    \item $\surj(m, n)$ will denote the set of surjective functions from $\num{m}$ to $\num{n}$. Accordingly, $|\surj(m, n)|$ denotes the number of surjections from $\num{m}$ to $\num{n}$.
    \item We write $\bbN_{\infty} = \{0,1,2,\ldots\} \cup \{\infty\}$.
\end{itemize}

\subsection*{Acknowledgements}\label{ssec:thanks}

We thank Torgeir Aamb\o, Kaif Hilman, Nick Kuhn, Marius Verner Bach Nielsen, Irakli Patchkoria, and Maxime Ramzi for helpful discussions. We are also grateful to Achim Krause for helpful discussions and in particular for pointing out \cref{lem:summationformula}. Moreover, we would like to thank Niall Taggart and an anonymous referee for useful comments on an earlier version of this paper.

TB is supported by the European Research Council (ERC) under Horizon Europe (grant No.~101042990) and would like to thank the Max Planck Institute for its hospitality. DH is supported by grant number TMS2020TMT02 from the Trond Mohn Foundation. 
The authors would also like to thank the Hausdorff Research Institute for Mathematics for its hospitality and support during the trimester program `Spectral Methods in Algebra, Geometry, and Topology', funded by the Deutsche Forschungsgemeinschaft under Germany's Excellence Strategy – EXC-2047/1 – 390685813.

\clearpage
\part{Categorical foundations}

\section{The tensor triangulated category of \texorpdfstring{$d$}{d}-excisive functors}

We begin with  a treatment of Goodwillie calculus for functors from spectra to spectra and introduce the tensor triangulated category of $d$-excisive functors. Most, if not all, of the definitions are due to Goodwillie \cite{Goodwillie03}. For an $\infty$-categorical treatment of Goodwillie's work we refer the reader to \cite[Chapter 6]{HALurie}. A survey of Goodwillie calculus can be found in \cite{AroneChing20}.

\subsection*{\texorpdfstring{$d$}{d}-cubes and \texorpdfstring{$d$}{d}-excisive functors}

\begin{Def}
    Let $\cat C$ be an  $\infty$-category with finite limits and finite colimits. The category of $d$-cubes in  $\cat C$ is the functor category $\Fun(\cat P(\num{d}),\cat C)$ where $\cat P(\num{d})$ denotes the poset of subsets of a finite set of cardinality $d$.  A $d$-cube $\cat X$ is said to be
    \begin{enumerate}[label=(\roman*)]
    \item \emph{cartesian}, if the canonical map 
    \[
        \cat X(\varnothing) \to \lim_{\varnothing \ne S \subseteq \num{d}} \cat X(S)
    \]
    is an equivalence, and
    \item \emph{cocartesian}, if the canonical map 
    \[
        \colim_{S \subsetneq \num{d}} \cat X(S) \to \cat X(\num{d})
    \]
    is an equivalence.
    \end{enumerate}
    Finally, a $d$-cube $\cat X$ is \emph{strongly cocartesian} if it is left Kan extended from subsets of cardinality at most 1 (equivalently, any 2-face in the cube is a pushout).
\end{Def}

\begin{Exa}
    When $d = 2$ the conditions above reduce to the usual notions of pushout and pullback squares. A 3-cube is of the form
    \[\begin{tikzcd}[column sep=1em,row sep=1em,ampersand replacement=\&]
    	\& {\cat X_1} \&\& {\cat X_{12}} \\
    	{\cat X_{\emptyset}} \&\& {\cat X_2} \\
    	\& {\cat X_{13}} \&\& {\cat X_{123}} \\
    	{\cat X_3} \&\& {\cat X_{23}}
    	\arrow[from=2-1, to=4-1]
    	\arrow[from=4-1, to=4-3]
    	\arrow[from=1-2, to=1-4]
    	\arrow[from=1-2, to=3-2]
    	\arrow[from=3-2, to=3-4]
    	\arrow[from=1-4, to=3-4]
    	\arrow[from=4-1, to=3-2]
    	\arrow[from=2-1, to=1-2]
    	\arrow[from=2-3, to=1-4]
    	\arrow[from=4-3, to=3-4]
     	\arrow[crossing over, from=2-1, to=2-3]
     	\arrow[crossing over, from=2-3, to=4-3]
    \end{tikzcd}\]
    It is strongly cocartesian if every face is a pushout or, equivalently, if it is left Kan extended from the diagram
    \[\begin{tikzcd}[ampersand replacement=\&]
    	\& {\cat X_1} \\
    	{\cat X_{\emptyset}} \&\& {\cat X_2} \\
    	\\
    	{\cat X_3}
    	\arrow[from=2-1, to=4-1]
    	\arrow[from=2-1, to=2-3]
    	\arrow[from=2-1, to=1-2]
    \end{tikzcd}\]
\end{Exa}

\begin{Rem}\label{rem:pushout=pullback}
    When $\cat C$ is a \emph{stable} $\infty$-category, a cubical diagram in $\cat C$ is cocartesian if and only if it is cartesian. 
    For $d=2$ this is essentially the definition of a stable $\infty$-category, see \cite[Proposition 1.1.3.4]{HALurie},
    and for $d>2$ it is proved by induction on $d$, see \cite[Proposition~1.2.4.13]{HALurie}.
\end{Rem}

\begin{Def}\label{def:fun}
    A functor $F \colon \Sp^c \to \Sp$ is \emph{reduced} if it preserves the zero object. We let $\Fun(\Sp^c,\Sp)$ denote the stable $\infty$-category of \emph{reduced} functors from $\Sp^c$ to $\Sp$.  
\end{Def}

\begin{Rem}\label{Rem:reduction}
    The category $\Fun(\Sp^c,\Sp)$ is a full subcategory of the category of \emph{all} functors from $\Sp^c$ to $\Sp$. In fact, $\Fun(\Sp^c,\Sp)$ is strongly reflexive, i.e., it is presentable, stable under equivalence in the category of all functors, and the inclusion admits a left adjoint; see \cite[Remark 1.4.2.4]{HALurie}. Furthermore, for any functor~$F$ there is a natural equivalence $F\simeq \overline{F}\times F(0)$ where $\overline{F}(X)\coloneqq\fib(F(X)\to F(0))$ is the reduced part of $F$. It follows that there is no real loss of information in focusing on reduced functors. 
\end{Rem}

\begin{Def}\label{def:dexcisive}
    Let $d \geq 1$ be an integer and let $F \colon \Sp^c \to \Sp$ be a functor from finite spectra to spectra.
    We say that 
    $F$ is \emph{$d$-excisive} if it takes strongly cocartesian~\mbox{$(d+1)$-cubes} in $\Sp^c$ to cartesian (or, equivalently, cocartesian by \Cref{rem:pushout=pullback}) \mbox{$(d+1)$-cubes} in $\Sp$. We let $\Exc{d}(\Sp^c,\Sp) \subseteq \Fun(\Sp^c,\Sp)$ denote the full stable subcategory of reduced $d$-excisive functors from $\Sp^c$ to $\Sp$.
\end{Def}

\begin{Exa}\label{exa:1-excisive-functors}
    A functor $F \colon \Sp^c \to \Sp$ is 1-excisive precisely when it carries pushout squares in $\Sp^c$ to pushout squares in $\Sp$. Reduced $1$-excisive functors are called \emph{linear}. Evaluation at the sphere spectrum gives an equivalence $\Exc{1}(\Sp^c,\Sp) \xrightarrow{\sim} \Sp$ between the category of linear functors and the category of spectra, whose inverse sends a spectrum~$A$ to the functor $X\mapsto A \otimes X$.
\end{Exa}

\begin{Rem}
    By taking Ind-completions, there is an equivalence $\Exc{d}(\Sp^c,\Sp) \simeq \Exc{d}^c(\Sp,\Sp)$ between $d$-excisive functors $\Sp^c \to \Sp$ and $d$-excisive functors $\Sp \to \Sp$ that preserve filtered colimits; see \cite[Proposition 6.1.5.4]{HALurie}. 
\end{Rem}

\begin{Prop}
    There are inclusions
    \[
        \Exc{1}(\Sp^c,\Sp)\subseteq \cdots \subseteq \Exc{d}(\Sp^c,\Sp) \subseteq \Exc{d+1}(\Sp^c,\Sp) \subseteq \cdots \subseteq \Fun(\Sp^c,\Sp).
    \]
\end{Prop}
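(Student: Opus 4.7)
The final inclusion $\Exc{d+1}(\Sp^c,\Sp) \subseteq \Fun(\Sp^c,\Sp)$ holds by \cref{def:dexcisive}, so the substantive content is that every reduced $d$-excisive functor is $(d+1)$-excisive. To prove $\Exc{d} \subseteq \Exc{d+1}$, I would fix a reduced $d$-excisive functor $F$ together with an arbitrary strongly cocartesian $(d+2)$-cube $\cat X$ in $\Sp^c$, and aim to show that $F(\cat X)$ is a cartesian $(d+2)$-cube in $\Sp$.

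The central idea is to slice $\cat X$ along its last coordinate, obtaining two $(d+1)$-cubes
\[
    \cat X_0(S) \coloneqq \cat X(S), \qquad \cat X_1(S) \coloneqq \cat X(S \cup \{d+2\}) \qquad \text{for } S \subseteq \num{d+1},
\]
together with a natural transformation $\cat X_0 \to \cat X_1$ whose mapping $(d+2)$-cube recovers $\cat X$. The first task is to verify that both $\cat X_0$ and $\cat X_1$ are themselves strongly cocartesian. For $\cat X_0$ this is immediate, since any face of a strongly cocartesian cube that contains the initial vertex is again strongly cocartesian. For $\cat X_1$ I would invoke the alternative characterisation of strongly cocartesian cubes in terms of $2$-faces being pushouts: every $2$-face of $\cat X_1$ is a $2$-face of the original cube $\cat X$ with the coordinate $d+2$ adjoined, hence is a pushout by assumption on $\cat X$.

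Applying the $d$-excisive functor $F$ to these two strongly cocartesian $(d+1)$-cubes then produces cartesian $(d+1)$-cubes $F(\cat X_0)$ and $F(\cat X_1)$ in $\Sp$. To conclude, I would invoke the standard stable-categorical fact that for a cube $\cat Y$ in a stable $\infty$-category presented as a map of cubes $\cat Y_0 \to \cat Y_1$, the total fibre satisfies $\tfib(\cat Y) \simeq \fib(\tfib(\cat Y_0) \to \tfib(\cat Y_1))$, and $\cat Y$ is cartesian precisely when $\tfib(\cat Y) \simeq 0$. Since both $\tfib(F(\cat X_0))$ and $\tfib(F(\cat X_1))$ vanish, so does $\tfib(F(\cat X))$, whence $F(\cat X)$ is cartesian as required.

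The main point I expect to require care is the verification that the top half-cube $\cat X_1$ is strongly cocartesian; the $2$-face reformulation renders this fairly transparent, but a direct argument from the left Kan extension characterisation is also possible using the associativity of iterated pushouts. An alternative route would be to induct on $d$ starting from the $d=1$ case of linear functors preserving pushouts, but the uniform slicing argument outlined here avoids such inductive bookkeeping.
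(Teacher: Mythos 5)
Your proof is correct, but it takes a different route from the paper, which does not argue directly at all: the paper's entire proof is a citation of Goodwillie's Corollary~1.11 (equivalently Lurie's Corollary~6.1.1.14) for the fact that a $d$-excisive functor is $m$-excisive for every $m \ge d$. What you have written out is essentially the standard proof of that cited result. All three of your steps are sound: slicing a strongly cocartesian $(d+2)$-cube $\cat X$ along the last coordinate into $\cat X_0$ and $\cat X_1$; checking that both slices are strongly cocartesian via the $2$-face criterion (for $\cat X_1$ this is indeed immediate, since each of its $2$-faces is literally a $2$-face of $\cat X$, namely the one obtained by adjoining $d+2$ to all four vertices); and concluding from $\tfib(F(\cat X)) \simeq \fib\bigl(\tfib(F(\cat X_0)) \to \tfib(F(\cat X_1))\bigr)$ together with the fact that, in a stable target, a cube is cartesian precisely when its total fibre vanishes. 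The one caveat worth recording is that this last step uses stability of $\Sp$; the cited references prove the statement for functors into any $\infty$-category with finite limits, where one argues instead that when the top slice is cartesian, the whole cube is cartesian if and only if the bottom slice is. For the purposes of this paper, where the target is $\Sp$, your argument is a complete, self-contained replacement for the citation, at the cost of reproving a standard lemma rather than quoting it.
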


\begin{proof}
    Any $d$-excisive functor is $m$-excisive for each $m \ge d$ by \cite[Corollary~1.11]{Goodwillie03} or \cite[Corollary 6.1.1.14]{HALurie}.
\end{proof}

\begin{Thm}\label{thm:d-excisive-localization}
    The inclusion $\Exc{d}(\Sp^c,\Sp) \hookrightarrow \Fun(\Sp^c,\Sp)$ admits an exact left adjoint $P_d \colon \Fun(\Sp^c,\Sp) \to \Exc{d}(\Sp^c,\Sp)$. 
\end{Thm}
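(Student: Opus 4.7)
The plan is to follow Goodwillie's classical construction \cite{Goodwillie03}, in the $\infty$-categorical formulation of \cite[\S 6.1]{HALurie}. First I would observe that $\Fun(\Sp^c,\Sp)$ is presentable, since $\Sp^c$ is essentially small and $\Sp$ is presentable, and that $d$-excisiveness amounts to carrying a small set of strongly cocartesian $(d+1)$-cubes to cartesian cubes in $\Sp$. Hence $\Exc{d}(\Sp^c,\Sp)$ is an accessible reflective subcategory of $\Fun(\Sp^c,\Sp)$, which already produces a left adjoint $P_d$ by abstract nonsense; the actual content is exactness.

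To get exactness, I would produce $P_d$ explicitly. For each $X \in \Sp^c$ one naturally associates a strongly cocartesian $(d+1)$-cube $\cat X_X \colon \cat P(\num{d+1}) \to \Sp^c$ with $\cat X_X(\varnothing) = X$, for example via a join-type construction obtained by left Kan extending the diagram sending $\varnothing$ to $X$ and each singleton to $0$. Define an endofunctor on $\Fun(\Sp^c,\Sp)$ by
\[
    T_d F(X) \coloneqq \tfib\bigl(\, S \mapsto F(\cat X_X(S)) \,\bigr)_{\varnothing \neq S \subseteq \num{d+1}},
\]
and let $t\colon F \to T_d F$ be the tautological natural transformation from the initial vertex to the total fibre. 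Set
\[
    P_d F \coloneqq \colim\bigl( F \xrightarrow{t} T_d F \xrightarrow{T_d t} T_d^2 F \to \cdots \bigr).
\]
The two technical assertions are that $P_d F$ lies in $\Exc{d}(\Sp^c,\Sp)$ and that the unit $F \to P_d F$ is initial among maps from $F$ to a $d$-excisive functor. The second assertion is easy: if $G$ is $d$-excisive then $t$ is an equivalence on $G$ by \cref{def:dexcisive}, so any map $F \to G$ factors uniquely through each stage of the telescope and hence through~$P_d F$.

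Exactness is then formal from this explicit description. Being a left adjoint, $P_d$ preserves all colimits. For finite limits, each $T_d$ is exact as a functor of $F$, since $\tfib$ over a finite shape commutes with exact functors in the stable $\infty$-category $\Sp$, and sequential colimits commute with finite limits in $\Sp$; hence the filtered colimit $P_d$ of exact endofunctors is exact. The main obstacle is the first technical assertion above, namely that the telescope is actually $d$-excisive: for any strongly cocartesian $(d+1)$-cube $\cat Y$ in $\Sp^c$, one must show that the failure of $F(\cat Y)$ to be cartesian is strictly ``improved'' by applying $T_d$ (for instance, that its total fibre becomes arbitrarily connective after sufficient iteration), so that the sequential colimit yields a cartesian cube. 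This connectivity control is the technical heart of Goodwillie's construction and is the step where the explicit cube calculus of \cite[\S 6.1]{HALurie} is genuinely needed.
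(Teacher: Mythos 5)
Your proposal is correct and is essentially the approach the paper takes: the paper simply cites Goodwillie \cite[Section 1]{Goodwillie03} and \cite[Theorem 6.1.1.10]{HALurie}, whose proofs are exactly the $T_d$-telescope construction you outline, with the connectivity estimate for strongly cocartesian cubes as the technical core. One small notational point: $T_dF(X)$ should be the limit $\lim_{\varnothing\neq S\subseteq\num{d+1}}F(\cat X_X(S))$ over the punctured cube (so that $t\colon F(X)\to T_dF(X)$ is the canonical map into that limit), rather than a total fibre, but this is clearly what you intend.
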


\begin{proof}
	This is one of the main theorems of Goodwillie \cite[Section 1]{Goodwillie03} and is established for $\infty$-categories in \cite[Theorem 6.1.1.10]{HALurie}.
\end{proof}

\begin{Rem}\label{rem:pd}
    By slight abuse of notation we will also  write $P_d$  for the composition of $P_d$ with the inclusion functor:
    \[
        \Fun(\Sp^c, \Sp)\xrightarrow{P_d} \Exc{d}(\Sp^c, \Sp)\hookrightarrow \Fun(\Sp^c, \Sp).
    \]  
    This agrees with the notation used in~\cite{Goodwillie03}. Note that since $\Sp$ is stable, the category $\Exc{d}(\Sp^c,\Sp)$ is closed under all colimits; see~\cite[Remark 6.1.5.10]{HALurie}. This means that the inclusion functor $\Exc{d}(\Sp^c,\Sp) \hookrightarrow \Fun(\Sp^c,\Sp)$ commutes with colimits. It follows that $P_d$ commutes with colimits both when regarded as a functor $\Fun(\Sp^c,\Sp)\to\Exc{d}(\Sp^c,\Sp)$ and as an endofunctor on $\Fun(\Sp^c,\Sp)$.
\end{Rem}

\begin{Def}
    For all $k \ge 0$, we have $P_d P_{d+k} \simeq P_d$ as functors $ \Fun(\Sp^c,\Sp) \to \Exc{d}(\Sp^c,\Sp)$. Indeed, just observe that they are both left adjoint to the inclusion. We therefore obtain natural transformations $P_d \to P_{d-1}$. The \emph{Taylor tower} (or \emph{Goodwillie tower}) of $F \colon \Sp^c \to \Sp$ is the following sequence of natural transformations of functors $\Sp^c \to \Sp$:
    \[
        F \to \cdots \to P_{d+1}F \to P_d F \to P_{d-1} F \to \cdots  \to P_1F \to P_0F\simeq 0. 
    \]
\end{Def}

\begin{Def}\label{Def:n-homogeneous}
    The \emph{$d$-th layer} in the Taylor tower is $D_dF \coloneqq \fib(P_dF \to P_{d-1}F)$. The functor $D_dF$ is both $d$-excisive ($P_dF \simeq F)$ and \emph{$d$-reduced} ($P_{d-1}F \simeq 0$). We call such functors \emph{$d$-homogeneous}. These functors form a full stable subcategory $\Homog_{d}(\Sp^c,\Sp) \subseteq \Exc{d}(\Sp^c,\Sp)$; see \cite[Corollary 6.1.2.8]{HALurie}.
\end{Def}

\begin{Rem}[Multi-variable calculus]\label{rem:multi-variable}
    There is also a multi-variable version of Goodwillie calculus. In particular, we say that a functor $F \colon (\Sp^c)^{\times n} \to \Sp$ is \mbox{\emph{$\vec{d}$-excisive}} for $\vec{d} = (d_1,\ldots,d_n)$ if, for all $1 \le i \le n$ and every sequence of objects $\{X_j \in \Sp^c \}_{j \ne i}$, the composite
    \[
        \Sp^c \hookrightarrow \Sp^c \times \prod_{j \ne i}\{X_j\} \hookrightarrow(\Sp^c)^{\times n} \xrightarrow{F} \Sp^c
    \]
    is $d_i$-excisive. The inclusion of $\vec{d}$-excisive functors into the category of all functors also has a left adjoint (see \cite[Section 1]{AroneKankaanrintaLie} or \cite[Proposition 6.1.3.6]{HALurie}). We will only need the following generalization of \cref{exa:1-excisive-functors}: There is an equivalence 
    \begin{equation}\label{eq:multi-variable}
        \Exc{(1,\ldots,1)}((\Sp^c)^{\times d},\Sp) \xrightarrow{\sim} \Sp
    \end{equation}
    between multi-linear functors (that is, functors of $d$ variables that are reduced and $1$-excisive in each variable) and spectra, given by evaluating at $(\mathbb{S},\ldots,\mathbb{S})$. This is shown in \cite[Section 5.2]{Goodwillie03} at the level of homotopy categories and in~\cite[Remark 6.1.3.3]{HALurie} at the level of $\infty$-categories.
\end{Rem}

\subsection*{The symmetric monoidal structure on \texorpdfstring{$d$}{d}-excisive functors}

We now explain how to construct a symmetric monoidal structure on the category of $d$-excisive functors. We begin with \emph{all} functors from finite spectra to spectra. 

\begin{Cons}\label{rem:sym-mon-functors}
   Recall that the category of functors from finite spectra to spectra obtains a symmetric monoidal structure via Day convolution (see \cite{Glasman16} or \cite[Remark 4.8.1.13]{HALurie}), which we denote by $-\circledast - $. In fact, in this case, using \cite[Corollary 2.2.6.14 and Remark 2.2.6.15]{HALurie}, we see that given two functors $F$ and $G$, we can compute $F \circledast G$ as the left Kan extension in the following diagram:
    \[\begin{tikzcd}[ampersand replacement=\&]
    	{\Sp^c \times \Sp^c} \& {\Sp \times \Sp} \& \Sp \\
    	{\Sp^c}
    	\arrow["\otimes"', from=1-1, to=2-1]
    	\arrow["{F \times G}", from=1-1, to=1-2]
    	\arrow["\otimes", from=1-2, to=1-3]
    	\arrow["{F \circledast G}"', dashed, from=2-1, to=1-3]
    \end{tikzcd}\]
    Informally, we have
    \begin{equation}\label{eq:day-convolution}
       (F \circledast G)(C) = \colim_{C_0 \otimes C_1 \to C} F(C_0) \otimes G(C_1). 
    \end{equation}
	The unit is the functor $\Sigma^\infty_+ \mathrm{Map}^{\circ}_{\Sp}(\Sphere,-)$ where $\mathrm{Map}^{\circ}_{\Sp}(-,-)$ denotes the (unpointed) mapping space. The Day convolution comes with a natural transformation of functors $\Sp^c \times \Sp^c \to \Sp$
    \[
    F(-) \otimes G(-) \to (F \circledast G)(- \otimes -)
    \]
    inducing an equivalence
    \[
    \Hom_{\Fun(\Sp^c,\Sp)}(F \circledast G,H) \xrightarrow{\sim} \Hom_{\Fun(\Sp^c \times \Sp^c,\Sp)}(F(-) \otimes G(-),H(- \otimes -))
    \]
    for all $H \in \Fun(\Sp^c,\Sp)$. See also the discussion in \cite[Section 2]{Ching21}. 

	The category of reduced functors $\Fun(\Sp^c,\Sp)$ inherits a symmetric monoidal structure from the category of all functors via the localization of \cref{Rem:reduction}. This follows, for example, by applying~\cite[Lemma 5.3.4]{CDHHLMNNS23} to the constant diagram on $0$. In fact, \eqref{eq:day-convolution} implies that the Day convolution of two reduced functors is reduced. It follows that the localized Day convolution on $\Fun(\Sp^c,\Sp)$ actually coincides with the ordinary Day convolution --- all that changes is that the unit of $\Fun(\Sp^c,\Sp)$ is the localization of the unit for ordinary Day convolution. This is the reduced functor $\Sigma^{\infty}\Map_{\Sp}(\bbS,-)$ corepresented by the sphere spectrum, where $\Map_{\Sp}(-,-)$ denotes the pointed mapping space; see \cite[Lemma 2.10]{Ching21}. That is, it is the functor 
	\[
		\Sigma^{\infty}\Omega^{\infty} \colon \Sp^c \to \Sp, \quad  M \mapsto \Sigma^{\infty}\Omega^{\infty}M. 
	\]
\end{Cons}

\begin{Rem}
	The symmetric monoidal structure on $\Fun(\Sp^c,\Sp)$ is closed, since the category $\Fun(\Sp^c,\Sp)$ is presentably symmetric monoidal by \cite[Proposition 3.3]{Nikolaus16pp}, that is, it is a presentable symmetric monoidal $\infty$-category whose tensor product commutes with colimits in both variables. In particular, it has an internal hom object which can be computed via
	\[
	\begin{split}
		\ihomsub{\Fun(\Sp^c,\Sp)}{F,G}(x)  & \simeq \int_{d \in \Sp^c} \ihomsub{\Sp}{F(d),G(x\otimes d)}  \\
		& \simeq \Hom_{\Fun(\Sp^c,\Sp)}(F,G(x \otimes -)). 
	\end{split}
	\]
	See \cite[Proposition 3.11]{Nikolaus16pp} and \cite[Proposition 5.1]{GepnerHaugsengNikolaus17}. 
\end{Rem}

\begin{Not}\label{Not:corepresentable_functor}
	For any $x \in \Sp^c$ let 
	\[
		h_x \coloneqq \Sigma^{\infty}\Map_{\Sp}(x,-)  \in \Fun(\Sp^c,\Sp)
	\]
	be the corresponding corepresentable functor (where again $\Map_{\Sp}(-,-)$ denotes the pointed mapping space). Note that
	\begin{equation}\label{eq:internel-hom-representable}
		\ihomsub{\Fun(\Sp^c,\Sp)}{h_x,F} \simeq F(x \otimes -).
	\end{equation}
    by \cite[Lemma 2.10]{Ching21}.
\end{Not} 

\begin{Rem}
	There is a formula for Day convolution with a corepresentable functor that we now describe. For any reduced functor $F\colon \Sp^c\to\Sp$, there is a natural assembly map
	\[
		h_x(y_1)\otimes F(y_2)\to F\!\left(\ihomsub{\Sp}{x, y_1\otimes y_2}\right)
	\]
	which, by the universal property of Day convolution, induces a natural transformation
	\begin{equation}\label{eq: assembly}
		(h_x\circledast F)(-)\to F(\ihomsub{\Sp}{x, -})
	\end{equation}
	of reduced functors $\Sp^c \to \Sp$. 
\end{Rem}

\begin{Lem}\label{lem:assembly}
	The map~\eqref{eq: assembly} is an equivalence. 
\end{Lem}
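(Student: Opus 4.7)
The plan is to observe that both sides of~\eqref{eq: assembly}, viewed as functors of $F$, preserve colimits, and then to verify the assembly map is an equivalence when $F=h_y$ is a corepresentable functor.

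First I would note that $F\mapsto h_x\circledast F$ preserves colimits because $\Fun(\Sp^c,\Sp)$ is presentably symmetric monoidal under localised Day convolution, while $F\mapsto F(\ihomsub{\Sp}{x,-})$ is precomposition with the endofunctor $\ihomsub{\Sp}{x,-}$ of $\Sp^c$ (this lands in $\Sp^c$ since $x$ is dualisable, and it preserves the zero object), so it too preserves colimits as colimits of functors are computed pointwise. The corepresentables $\{h_y\}_{y\in\Sp^c}$ generate $\Fun(\Sp^c,\Sp)$ under colimits: the standard corepresentables $\Sigma^{\infty}_+\mathrm{Map}^{\circ}_{\Sp}(y,-)$ generate the category of all functors, and the left adjoint to the inclusion of reduced functors (\cref{Rem:reduction}) carries them to the $h_y$. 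Hence it suffices to verify~\eqref{eq: assembly} when $F=h_y$.

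For $F=h_y$, the right-hand side simplifies via the tensor--hom adjunction in $\Sp$:
\[
    h_y\bigl(\ihomsub{\Sp}{x,-}\bigr)=\Sigma^{\infty}\Map_{\Sp}\bigl(y,\ihomsub{\Sp}{x,-}\bigr)\simeq\Sigma^{\infty}\Map_{\Sp}(x\otimes y,-)=h_{x\otimes y}.
\]
For the left-hand side, the universal property of Day convolution combined with~\eqref{eq:internel-hom-representable} and the Yoneda lemma yields, for all $G\in\Fun(\Sp^c,\Sp)$,
\[
    \Map(h_x\circledast h_y,G)\simeq\Map\bigl(h_x,\ihomsub{\Fun(\Sp^c,\Sp)}{h_y,G}\bigr)\simeq\ihomsub{\Fun(\Sp^c,\Sp)}{h_y,G}(x)\simeq G(x\otimes y),
\]
so $h_x\circledast h_y\simeq h_{x\otimes y}$ by Yoneda. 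To conclude I would unwind the defining assembly map on the generating tensor $h_x(y_1)\otimes h_y(y_2)$: it is the smash product of pointed mapping spaces $\Map_{\Sp}(x,y_1)\wedge\Map_{\Sp}(y,y_2)\to\Map_{\Sp}(x\otimes y,y_1\otimes y_2)$ followed by $\Sigma^{\infty}$, which manifestly matches the equivalence just derived.

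I do not anticipate any serious obstacle: the colimit-preservation of both sides is formal, the double-Yoneda calculation giving $h_x\circledast h_y\simeq h_{x\otimes y}$ is standard, and the final identification of the assembly map with this equivalence is routine unwinding. The one subtle point is the passage between reduced and unreduced corepresentable generators, but this is controlled by the reflective localization of~\cref{Rem:reduction}.
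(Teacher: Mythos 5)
Your proof is correct, but it takes a genuinely different route from the paper's. The paper argues for arbitrary $F$ in one stroke: it shows that for every reduced $G$ the assembly map induces an equivalence $\Hom_{\Fun(\Sp^c,\Sp)}(F(\ihomsub{\Sp}{x,-}),G)\to\Hom_{\Fun(\Sp^c,\Sp)}(h_x\circledast F,G)$ by factoring it as three adjunction equivalences — the adjunction $x\otimes(-)\dashv\ihomsub{\Sp}{x,-}$ on $\Sp^c$, the identification $\ihomsub{\Fun(\Sp^c,\Sp)}{h_x,G}\simeq G(x\otimes-)$ of \eqref{eq:internel-hom-representable}, and the closed monoidal adjunction — and concludes by Yoneda. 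Your reduction to corepresentable $F$ via colimit-preservation of both sides is legitimate (the generation statement you need is Ching's result quoted later, in the proof of \cref{prop:compact_generation}), and your double-Yoneda identification $h_x\circledast h_y\simeq h_{x\otimes y}$ is precisely the independent argument recorded in \cref{rem:stable-yoneda}, so there is no circularity with \cref{Cor:conv-representables}. What the paper's route buys is that it needs no generation input and, more importantly, never has to reconcile an abstractly produced equivalence with the specific assembly map: its three equivalences are precomposition with that map by construction. In your argument that reconciliation is the one non-formal step — you must check that the map $h_x\circledast h_y\to h_{x\otimes y}$ induced by the assembly transformation is the one corepresenting the identity under $\Hom(h_x\circledast h_y,G)\simeq G(x\otimes y)\simeq\Hom(h_{x\otimes y},G)$ — and while this is routine, it deserves the extra line rather than the word ``manifestly.''
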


\begin{proof}
	It is enough to show that for any reduced functor $G\colon \Sp^c \to \Sp$, the map~\eqref{eq: assembly} induces an equivalence 
	\[
		\Hom_{\Fun(\Sp^c,\Sp)}(F(\ihomsub{\Sp}{x, -}),G)\to\Hom_{\Fun(\Sp^c,\Sp)}(h_x\circledast  F, G).
	\]
	This map factors as a composition of equivalences
	\begin{align*}
		\Hom_{\Fun(\Sp^c,\Sp)}(F(\ihomsub{\Sp}{x, -}),G)&\xrightarrow{\simeq} \Hom_{\Fun(\Sp^c,\Sp)}(F,G(x\otimes -))\\ &\xrightarrow{\simeq} \Hom_{\Fun(\Sp^c,\Sp)}\left(F,\ihomsub{\Fun(\Sp^c,\Sp)}{h_x, G}\right)\\&\xrightarrow{\simeq}\Hom_{\Fun(\Sp^c,\Sp)}(F\circledast  h_x, G)
	\end{align*}
	and so is itself an equivalence. The first equivalence follows from the following observation: Given an adjunction $L \colon \cat C \leftrightarrows \cat D \colon R$ and functors $F \colon \cat C \to \cat Z$ and $G \colon \cat D \to \cat Z$, there is a natural equivalence
	\[
		\Hom_{\Fun(\cat D,\cat Z)}(FR,G) \simeq \Hom_{\Fun(\cat C,\cat Z)}(F,GL).
	\]
	The second equivalence uses \eqref{eq:internel-hom-representable} while the third equivalence is the closed monoidal adjunction.
\end{proof}

\begin{Rem}\label{rem:unit}
	It follows from \cref{lem:assembly} that $h_{\bbS}$ is indeed the unit of $\Fun(\Sp^c,\Sp)$ under the Day convolution monoidal structure of \Cref{rem:sym-mon-functors}.
\end{Rem}

The following corollary is a variant of~\cite[Lemma 2.18]{Ching21}.

\begin{Cor}\label{Cor:conv-representables}
	The canonical natural transformation
	\[
		h_{x_1}(y_1)\otimes h_{x_2}(y_2)\to h_{x_1\otimes x_2}(y_1\otimes y_2)
	\]
	induces an equivalence
	\[
		h_{x_1}\circledast h_{x_2}\xrightarrow{\simeq}h_{x_1\otimes x_2}.
	\]
\end{Cor}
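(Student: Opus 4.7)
The plan is to deduce this corollary directly from the previous lemma (the assembly map identification) by a short chain of natural equivalences, using compactness/dualizability of objects in $\Sp^c$.

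First, I would apply the lemma establishing that the assembly map \eqref{eq: assembly} is an equivalence, taking $F = h_{x_2}$. This yields a natural equivalence
\[
    (h_{x_1} \circledast h_{x_2})(y) \simeq h_{x_2}\!\left(\ihomsub{\Sp}{x_1, y}\right) = \Sigma^{\infty}\Map_{\Sp}\!\left(x_2, \ihomsub{\Sp}{x_1, y}\right).
\]
Next, since $x_1 \in \Sp^c$ is dualizable with dual $x_1^\vee$, there is a natural equivalence $\ihomsub{\Sp}{x_1, y} \simeq x_1^\vee \otimes y$, and then by the $\otimes$-$\ihom$ adjunction (i.e., Spanier--Whitehead duality)
\[
    \Map_{\Sp}\!\left(x_2, x_1^\vee \otimes y\right) \simeq \Map_{\Sp}(x_1 \otimes x_2, y).
\]
Composing these gives the desired equivalence $(h_{x_1} \circledast h_{x_2})(y) \simeq h_{x_1 \otimes x_2}(y)$, natural in $y$.

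The only remaining point is to check that the composite equivalence just produced agrees with the map induced by the stated canonical natural transformation. For this, I would argue by the universal property of Day convolution: both maps are adjoint to the evident pairing $h_{x_1}(y_1) \otimes h_{x_2}(y_2) \to h_{x_1 \otimes x_2}(y_1 \otimes y_2)$ coming from smashing mapping spectra, so they must coincide. This compatibility check is the only nontrivial bookkeeping; the essential content is the assembly lemma together with dualizability of compact spectra.
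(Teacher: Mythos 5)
Your proposal is correct and matches the paper's intended argument: the corollary is deduced from \cref{lem:assembly} by taking $F = h_{x_2}$ and then identifying $\Map_{\Sp}(x_2, \ihomsub{\Sp}{x_1,y}) \simeq \Map_{\Sp}(x_1\otimes x_2, y)$ via the tensor--hom adjunction (your detour through the dual $x_1^\vee$ is harmless but unnecessary, since the adjunction holds without dualizability). The paper also records an alternative proof in \cref{rem:stable-yoneda} via monoidality of the stable Yoneda embedding, but your route is the primary one.
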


\begin{Rem}\label{rem:stable-yoneda}
    This corollary can also be proved by observing that $x \mapsto h_x$ under the stable Yoneda embedding
    \begin{equation}\label{eq:stable-yoneda}
		\Sp^c \to \Fun(\Sp^c,\Sp)\op
    \end{equation}
    which is the composite of the usual space-valued Yoneda embedding, followed by composition with $\Sigma^{\infty}_+$ and then followed by the localization functor of \cref{Rem:reduction}. The usual Yoneda embedding is symmetric monoidal by \cite[Section 3]{Glasman16}, as is post-composition with the symmetric monoidal functor $\Sigma^{\infty}_+$. The localization functor is symmetric monoidal by construction. This is another way of appreciating that $h_{\Sphere}$ is the monoidal unit of $\Fun(\Sp^c,\Sp)$.
\end{Rem}
\begin{Rem}\label{rem:rigid-compact-fun}
 Let $\mathcal{G} \subseteq \Fun(\Sp^c,\Sp)$ denote the full subcategory of corepresentable functors. Ching \cite[Lemma 4.14]{Ching21} establishes that $\Fun(\Sp^c,\Sp)$ is compactly generated, that its compact objects are precisely the retracts of finite colimits of diagrams in~$\mathcal G$, and that these compact objects are closed under the objectwise smash product of functors. In particular, by \Cref{rem:stable-yoneda}, these are all dualizable objects, and so $\Fun(\Sp^c,\Sp)$ is compactly generated by dualizable objects. 
\end{Rem}
\subsection*{Day convolution of excisive functors}

We now return to $d$-excisive functors. Our goal is to show that $P_d \colon \Fun(\Sp^c,\Sp) \to \Exc{d}(\Sp^c,\Sp)$ is a \emph{smashing} localization which is compatible with the Day convolution monoidal structure on $\Fun(\Sp^c,\Sp)$. 

\begin{Def}
    A functor $F \colon \cat C \to \cat D$ between $\infty$-categories is a \emph{localization} if $F$ has a fully faithful right adjoint $G$.
\end{Def}

\begin{Rem}
    It follows that there is an equivalence between $\cat D$ and the full subcategory of $\cat C$ given by the essential image of $G$. We will sometimes abuse terminology and refer to the endofunctor $L \coloneqq G\circ F$ as the localization. A map $f\colon X \to Y$ in $\cat C$ is said to be an \emph{$L$-local equivalence} if $Lf\colon LX\to LY$ is an equivalence. For example, the unit $X \to LX$ of the adjunction is an $L$-local equivalence for each $X \in \cat C$.
\end{Rem}

\begin{Exa}
	The $d$-excisive approximation $P_d\colon \Fun(\Sp^c,\Sp)\to\Exc{d}(\Sp^c,\Sp)$ is a localization by \cref{thm:d-excisive-localization}; cf.~\cref{rem:pd}.
\end{Exa}

\begin{Def}
    Suppose that $\cat C$ is a symmetric monoidal $\infty$-category. We say that a localization $L \colon \cat C \to \cat C$ is \emph{compatible} with the symmetric monoidal structure if, whenever $f \colon X \to Y$ is an $L$-local equivalence and $Z \in \cat C$ is any object, the map $f \otimes \text{id} \colon X \otimes Z \to Y \otimes Z$ is also an $L$-local equivalence.
\end{Def}

The following follows from \cite[Proposition 2.2.1.9]{HALurie}:

\begin{Prop}\label{prop:compatible-localizations}
    Let $(\cat C,\otimes,\unit_{\cat C})$ be a symmetric monoidal $\infty$-category. Suppose that $L \colon \cat C \to \cat C$ is a localization which is compatible with the symmetric monoidal structure. Then $L\cat C$ inherits the structure of a symmetric monoidal $\infty$-category with unit $L(\unit_{\cat C})$ and monoidal product $L(- \otimes -)$. In particular, the localization $\cat C \to  L\cat C$ is a symmetric monoidal functor.
\end{Prop}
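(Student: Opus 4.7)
The plan is to invoke \cite[Proposition~2.2.1.9]{HALurie} directly. That result states that given a symmetric monoidal $\infty$-category $(\cat C^{\otimes}, \otimes, \unit_{\cat C})$ and a localization $L \colon \cat C \to \cat C$ for which the class of $L$-local equivalences is stable under tensoring with arbitrary objects of $\cat C$, the full subcategory $L\cat C \subseteq \cat C$ carries an (essentially unique) symmetric monoidal structure making the reflector $L \colon \cat C \to L\cat C$ symmetric monoidal. The compatibility condition in the statement is precisely this hypothesis, so the proposition applies verbatim.

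Having transferred the symmetric monoidal structure, the next step is to identify the unit and binary tensor product of $L\cat C$ explicitly. Symmetric monoidality of the reflector forces the unit of $L\cat C$ to be $L(\unit_{\cat C})$. For any $X, Y \in L\cat C$, the unit map $X \otimes Y \to L(X \otimes Y)$ of the adjunction is an $L$-local equivalence whose target already lies in $L\cat C$, so the tensor product of $X$ and $Y$ in $L\cat C$ is computed by $L(X \otimes Y)$; more generally, the monoidal product on $L\cat C$ is $L(- \otimes -)$ as claimed.

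No genuine obstacle arises because this is a direct appeal to a foundational theorem; the only subtlety is matching the informal compatibility condition used in the excerpt with Lurie's structural formulation in terms of coCartesian fibrations over $N(\mathrm{Fin}_*)$, but this translation is routine and made explicit in \cite[\S 2.2.1]{HALurie}.
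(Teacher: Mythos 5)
Your proposal is correct and matches the paper exactly: the paper's entire "proof" is the remark that the statement follows from \cite[Proposition 2.2.1.9]{HALurie}, which is precisely the citation you invoke. Your additional identification of the unit as $L(\unit_{\cat C})$ and the product as $L(-\otimes -)$ via the adjunction unit is the standard unwinding and is consistent with how the paper uses the result.
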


\begin{Def}
     Let $(\cat C,\otimes,\unit_{\cat C})$ be a presentably symmetric monoidal stable $\infty$-category. A localization $L \colon \cat C \to \cat C$ is \emph{smashing} if it preserves colimits.
\end{Def}

\begin{Rem}
    If a localization $L\colon \cat C \to \cat C$ is compatible with the symmetric monoidal structure, then the map $X \to L(\unit_{\cat C})\otimes X$ induced by the canonical map $\unit_{\cat C} \to L(\unit_{\cat C})$ is a natural $L$-local equivalence. Hence we obtain a natural map $\alpha_X \colon L(\unit_{\cat C}) \otimes X \to L(X)$ as the composite
    \[
		L(\unit_{\cat C}) \otimes X \to L(L(\unit_{\cat C}) \otimes X) \simeq L(X). 
    \]
\end{Rem}

\begin{Prop}\label{prop:smashing}
    Let $(\cat C,\otimes,\unit_{\cat C})$ be a presentably symmetric monoidal stable \mbox{$\infty$-category} which is compactly generated by dualizable objects. Suppose ${L \colon \cat C \to \cat C}$ is a smashing localization which is compatible with the symmetric monoidal structure. Then:
    \begin{enumerate}
        \item	The natural map $\alpha_X \colon L(\unit_{\cat C}) \otimes X \to L(X)$ is an equivalence for all $X \in \cat C$. 
        \item	$L\cat C$ inherits a symmetric monoidal structure with unit~$L(\unit_{\cat C})$ and monoidal product $- \otimes -$. 
        \item	There is a symmetric monoidal equivalence of stable $\infty$-categories
				\[
					\Mod_{\cat C}(L(\unit_{\cat C})) \simeq L\cat C
				\]
				that exhibits $L\cat C$ as base change along the map $\unit \to L(\unit_{\cat C})$. 
    \end{enumerate}
\end{Prop}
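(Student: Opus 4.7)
The plan is to establish (1) first and then deduce (2) and (3) from it by invoking the correspondence between smashing localizations and idempotent algebras developed in \cite[Section 4.8.2]{HALurie}.

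For (1), I would first observe that the map $\eta_{\unit_{\cat C}} \otimes X \colon X \to L(\unit_{\cat C}) \otimes X$ is an $L$-equivalence by compatibility (as it is obtained by tensoring the $L$-equivalence $\eta_{\unit_{\cat C}}$ with $X$). Applying $L$ yields an equivalence $L(X) \xrightarrow{\sim} L(L(\unit_{\cat C}) \otimes X)$, under which $\alpha_X$ is identified with the localization unit $L(\unit_{\cat C}) \otimes X \to L(L(\unit_{\cat C}) \otimes X)$. Hence $\alpha_X$ is an equivalence if and only if $L(\unit_{\cat C}) \otimes X$ is already $L$-local. A useful preliminary observation is that the class of $L$-acyclic objects (those $W$ with $L(W) = 0$) forms a tensor-ideal in $\cat C$: compatibility applied to the $L$-equivalence $0 \to W$ tensored with any $Y$ shows that $W \otimes Y$ is again $L$-acyclic. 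In particular, letting $N \coloneqq \fib(\eta_{\unit_{\cat C}})$, which is $L$-acyclic, we have that $N \otimes X$ is $L$-acyclic for every $X$. The core step is then to establish that $L(\unit_{\cat C})$ is an idempotent commutative algebra in $\cat C$, meaning its multiplication $L(\unit_{\cat C}) \otimes L(\unit_{\cat C}) \to L(\unit_{\cat C})$ is an equivalence; this is essentially the content of \cite[Proposition 4.8.2.9]{HALurie}. Once idempotency is in hand, $L(\unit_{\cat C}) \otimes X$ acquires an $L(\unit_{\cat C})$-module structure, and one deduces it is $L$-local.

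For (2), \cref{prop:compatible-localizations} already endows $L\cat C$ with a symmetric monoidal structure whose unit is $L(\unit_{\cat C})$ and whose tensor product is a priori $L(- \otimes -)$. To show this tensor product coincides with $- \otimes -$ inherited from $\cat C$, apply (1) to $X \otimes Y$ for $X, Y \in L\cat C$: one finds $L(X \otimes Y) \simeq L(\unit_{\cat C}) \otimes X \otimes Y \simeq X \otimes Y$, where the final equivalence uses (1) together with the $L$-locality of $X$ (so that $L(\unit_{\cat C}) \otimes X \simeq L(X) \simeq X$). In particular, $X \otimes Y$ is already $L$-local, and the a priori tensor product $L(X \otimes Y)$ coincides with $X \otimes Y$.

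For (3), the combination of (1) with the idempotency of $L(\unit_{\cat C})$ fits into Lurie's general framework identifying smashing localizations of a presentably symmetric monoidal stable $\infty$-category with module categories over idempotent commutative algebras; applying \cite[Proposition 4.8.2.10]{HALurie} then yields a symmetric monoidal equivalence $\Mod_{\cat C}(L(\unit_{\cat C})) \simeq L\cat C$ exhibiting $L\cat C$ as base change along the unit map $\unit_{\cat C} \to L(\unit_{\cat C})$. The main obstacle in the proof is the verification of idempotency of $L(\unit_{\cat C})$: while a formal consequence of the smashing and compatibility hypotheses, establishing it directly requires a careful diagram chase involving the cofiber sequence $N \to \unit_{\cat C} \to L(\unit_{\cat C})$ after tensoring with $L(\unit_{\cat C})$, combined with the tensor-ideal property of $L$-acyclic objects.
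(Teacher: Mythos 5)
Your reduction of part (a) to the claim that $L(\unit_{\cat C})\otimes X$ is $L$-local, and your observation that the $L$-acyclics form a tensor-ideal, are correct and match the beginning of the standard argument. However, the proof has a genuine gap at what you yourself call the core step. The results of \cite[\S 4.8.2]{HALurie} run in the opposite direction: they show that an idempotent object \emph{gives rise to} a smashing localization, not that the unit of an arbitrary smashing compatible localization is idempotent. The latter is essentially the statement being proved: idempotency of $L(\unit_{\cat C})$ amounts to $L(\unit_{\cat C})\otimes L(\unit_{\cat C})$ being $L$-local, i.e.\ to part (a) for $X=L(\unit_{\cat C})$. The diagram chase you sketch cannot close this circle: tensoring $N\to\unit_{\cat C}\to L(\unit_{\cat C})$ with $L(\unit_{\cat C})$ shows only that $\cofib\bigl(L(\unit_{\cat C})\to L(\unit_{\cat C})\otimes L(\unit_{\cat C})\bigr)\simeq \Sigma N\otimes L(\unit_{\cat C})$ is $L$-\emph{acyclic}, and an acyclic object need not vanish; to conclude that it vanishes you would need to know it is also $L$-local, which is again the statement to be proved. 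The same circularity occurs in your last step: an $L(\unit_{\cat C})$-module structure identifies $L(\unit_{\cat C})\otimes X$ as local for the localization $L(\unit_{\cat C})\otimes -$, but identifying \emph{those} local objects with the $L$-local ones is once more part (a). Tellingly, your argument for (a) never invokes the hypothesis that $L$ preserves colimits; since the conclusion fails for compatible localizations that are not smashing (e.g.\ $L_{K(n)}$ on $\Sp$, where $L_{K(n)}\bbS\otimes X\not\simeq L_{K(n)}X$), no argument omitting that hypothesis can be complete.

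The missing input is exactly the one in the argument of \cite{HoveyPalmieriStrickland97} that the paper's proof invokes. Smashingness implies that the $L$-local objects are closed under colimits (the localization map of a colimit of local objects is the colimit of the localization maps, hence an equivalence). For \emph{dualizable} $X$ and $L$-acyclic $A$ one has $\Hom_{\cat C}(A,L(\unit_{\cat C})\otimes X)\simeq\Hom_{\cat C}(A\otimes X^{\vee},L(\unit_{\cat C}))=0$ because $A\otimes X^{\vee}$ is acyclic, so $L(\unit_{\cat C})\otimes X$ is local. The class of $X$ for which $L(\unit_{\cat C})\otimes X$ is local is therefore a localizing subcategory containing all dualizable objects, and one concludes because the categories to which the proposition is applied (such as $\Fun(\Sp^c,\Sp)$) are generated by dualizable objects; some such generation hypothesis is genuinely used here, not merely a convenience. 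Parts (b) and (c) are fine modulo (a): your treatment of (b) coincides with the paper's, and for (c) your route through idempotent algebras and \cite[\S 4.8.2]{HALurie} is a legitimate alternative to the paper's appeal to \cite[Proposition 5.29]{MathewNaumannNoel17} applied to the adjunction $L\dashv\iota$, once (a) and the idempotency of $L(\unit_{\cat C})$ have actually been established.
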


\begin{proof}
For $(a)$, we note that the natural map $\alpha_X$ is an equivalence for all dualizable $X$ \cite[Lemma 3.3.1]{HoveyPalmieriStrickland97}.\footnote{Note that the definition of localization given in \cite{HoveyPalmieriStrickland97} automatically assumes compatibility with the symmetric monoidal structure.} Moreover, the collection of $X$ for which $\alpha_X$ is an equivalence is localizing. Together, our assumptions on $\cat C$ then imply that $\alpha_X$ is an equivalence for all $X \in \cat C$. Part~(b) then follows from part (a) and \Cref{prop:compatible-localizations}. Part~(c) is a well-known consequence: it follows, for example, by applying \cite[Proposition 5.29]{MathewNaumannNoel17} to the adjunction $L \colon \cat C \leftrightarrows L\cat C \colon \iota$ where $\iota \colon L\cat C \hookrightarrow \cat C$ is the inclusion. 
\end{proof}

\begin{Rem}
   We now turn our attention back to $d$-excisive functors. In fact, we work a little more generally, following \cite{CDHHLMNNS23} and \cite{HorelRamzi21}. To that end, let $\mathcal{J} = \{ \overline p_{\alpha} \colon K_{\alpha}^{\triangleright} \to \Sp^c\}$ be a small collection of diagrams in $\Sp^c$. Let $\Fun_{\mathcal{J}}(\Sp^c,\Sp) \subseteq \Fun(\Sp^c,\Sp)$ denote the full subcategory spanned by those functors which send every diagram in $\mathcal J$ to a limit diagram. As explained in \cite[p.~159]{CDHHLMNNS23} or \cite[Theorem 4.2]{HorelRamzi21}, the subcategory $\Fun_{\mathcal J}(\Sp^c,\Sp)$ is a localization of $\Fun(\Sp^c,\Sp)$. In fact, it is the collection of $S$-local objects for some set of maps in $\Fun(\Sp^c,\Sp)$ which implies that  $\Fun_{\mathcal J}(\Sp^c,\Sp)$ is presentable and that the inclusion has a left adjoint~$L_{\mathcal{J}}$ \cite[Proposition 5.5.4.15]{Lurie09pp}.
\end{Rem}

\begin{Prop}\label{prop:sub-functor-category}
	Suppose that the small set of diagrams $\mathcal{J}$ is closed under post composition with $x \otimes (-) \colon \Sp^c \to \Sp^c$ for all $x \in \Sp^c$. Then the localization 
	\[
		L_{\mathcal J} \colon \Fun(\Sp^c,\Sp) \to \Fun_{\mathcal J}(\Sp^c,\Sp)
	\]
	is a smashing localization which is compatible with Day convolution. In particular, $\Fun_{\mathcal J}(\Sp^c,\Sp)$ is a presentably symmetric monoidal $\infty$-category with symmetric monoidal structure given by Day convolution $F \circledast  G$ and with tensor unit $L_{\mathcal J}h_{\bbS}$. 
\end{Prop}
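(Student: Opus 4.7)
My plan is to prove the proposition in two steps: (I) $L_{\mathcal{J}}$ is compatible with Day convolution, and (II) $L_{\mathcal{J}}$ is smashing. The ``in particular'' conclusion then follows immediately from Propositions \ref{prop:compatible-localizations} and \ref{prop:smashing}, with the unit of $\Fun_{\mathcal{J}}(\Sp^c,\Sp)$ identified as $L_{\mathcal{J}}(h_{\bbS})$.

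For (I), I use the closed monoidal adjunction to reduce compatibility to the claim that $\ihom{Z,F}$ is $\mathcal{J}$-local whenever $F$ is, for every $Z \in \Fun(\Sp^c,\Sp)$. When $Z = h_x$ for $x \in \Sp^c$, formula \eqref{eq:internel-hom-representable} gives $\ihom{h_x, F} \simeq F(x \otimes -)$; since $\mathcal{J}$ is closed under post-composition with $x \otimes -$, the functor $F$ sends each composite $(x \otimes -)\circ \overline p_\alpha$ to a limit diagram, so $\ihom{h_x, F}$ is $\mathcal{J}$-local. For general $Z$, I write $Z \simeq \colim_i h_{x_i}$ as a colimit of corepresentables, so that $\ihom{Z,F} \simeq \lim_i \ihom{h_{x_i}, F}$ is a limit of $\mathcal{J}$-local objects. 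Since $\Fun_{\mathcal{J}}$ is reflective in $\Fun(\Sp^c,\Sp)$ it is closed under all limits, hence $\ihom{Z,F}$ is $\mathcal{J}$-local, completing (I).

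For (II), I aim to prove $L_{\mathcal{J}}(X) \simeq X \circledast U$ for all $X$, where $U \coloneqq L_{\mathcal{J}}(h_{\bbS})$; this implies smashing because Day convolution preserves colimits in each variable. Compatibility from (I) ensures the natural map $X \simeq X \circledast h_{\bbS} \to X \circledast U$ is an $L_{\mathcal{J}}$-equivalence, so it suffices to show $X \circledast U$ is $\mathcal{J}$-local for every $X$. For corepresentable $X = h_x$ this is immediate from Lemma \ref{lem:assembly}: $h_x \circledast U \simeq U(\ihomsub{\Sp}{x,-})$ is $\mathcal{J}$-local, since $\ihomsub{\Sp}{x,-}$ is again post-composition with an object of $\Sp^c$ (using that finite spectra are dualizable, so that the dual $x^{\vee}$ lies in $\Sp^c$) and $U$ is local.

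The main obstacle is extending this to arbitrary $X$, which amounts to showing $\Fun_{\mathcal{J}}$ is closed under all colimits in $\Fun(\Sp^c,\Sp)$. I would write $X \simeq \colim_i h_{x_i}$ and use that Day convolution commutes with colimits in each variable to get $X \circledast U \simeq \colim_i (h_{x_i} \circledast U)$, reducing to the assertion that pointwise colimits of $\mathcal{J}$-local functors remain $\mathcal{J}$-local. In the principal application to Goodwillie calculus this is automatic from \cref{rem:pushout=pullback}: the cartesian condition on cubes in $\Sp$ coincides with the cocartesian one by stability, and the cocartesian condition is manifestly preserved under arbitrary colimits; in the general setting one appeals to the reflective localization framework for presentably symmetric monoidal functor categories developed in \cite{HorelRamzi21,CDHHLMNNS23}. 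Once this is established, the equivalence $L_{\mathcal{J}}(X) \simeq X \circledast U$ simultaneously delivers smashing and lets us identify the unit, concluding the proof.
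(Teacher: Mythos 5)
Your argument is correct in outline but takes a genuinely different route from the paper, whose proof consists of citing \cite[Lemma 5.3.4]{CDHHLMNNS23} and \cite[Theorem 4.5]{HorelRamzi21} for the compatibility with Day convolution and then disposing of the smashing claim in one sentence. Your step (I) is a clean, self-contained replacement for that citation: reducing compatibility to the statement that $\ihom{Z,F}$ is $\mathcal{J}$-local for $F$ local, verifying it on corepresentables via \eqref{eq:internel-hom-representable} and the closure hypothesis on $\mathcal{J}$, and passing to general $Z$ by a colimit of (shifts of) corepresentables together with closure of local objects under limits. This is a worthwhile addition, since it makes visible exactly where the hypothesis on $\mathcal{J}$ is used.

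The soft spot is in step (II). Both your route (establishing $L_{\mathcal J}X \simeq X \circledast U$ directly) and the paper's (showing $L_{\mathcal J}$ preserves colimits and then invoking \cref{prop:smashing}) reduce to the same crux: the $\mathcal{J}$-local objects must be closed under colimits in $\Fun(\Sp^c,\Sp)$. The paper's justification is the one-liner you are missing: the locality condition is a collection of finite limit conditions computed pointwise in $\Sp$, and filtered colimits in $\Sp$ are left exact, so the inclusion $\Fun_{\mathcal J}(\Sp^c,\Sp)\hookrightarrow\Fun(\Sp^c,\Sp)$ preserves filtered colimits; since a stable reflective subcategory is automatically closed under finite colimits, the inclusion preserves all colimits, and the localization endofunctor is a composite of colimit-preserving functors. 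Your stability argument (cartesian equals cocartesian for cubes, and cocartesianness is visibly preserved by colimits) is a perfectly valid substitute in the Goodwillie application, but your fallback for the general case---appealing to \cite{HorelRamzi21,CDHHLMNNS23}---is exactly backwards: the smashing property is the one assertion those references do \emph{not} supply. As written, the general case of (II) therefore rests on an unproved (though true, given the implicit finiteness of the indexing categories $K_{\alpha}$) closure statement; inserting the left-exactness argument above closes it.
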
  

\begin{proof}
    
	This is all contained in \cite[Lemma 5.3.4]{CDHHLMNNS23} and \cite[Theorem 4.5]{HorelRamzi21} except 
 for the fact that the localization is smashing. Indeed, the inclusion $\Fun_{\mathcal J}(\Sp^c,\Sp)\hookrightarrow \Fun(\Sp^c,\Sp)$ preserves filtered colimits (and hence all colimits), since filtered colimits in $\Sp$ are left exact. We can thus apply \cref{prop:smashing} keeping in mind \cref{rem:rigid-compact-fun}.
\end{proof}

\begin{Thm}\label{thm:properties-n-excisive}
	The $d$-excisive approximation $P_d:\Fun(\Sp^c,\Sp) \to \Exc{d}(\Sp^c,\Sp)$ is a smashing localization on $\Fun(\Sp^c,\Sp)$ which is compatible with the Day convolution. It follows that the category $\Exc{d}(\Sp^c,\Sp)$ is a presentably symmetric monoidal stable $\infty$-category with the tensor product and internal hom both computed in $\Fun(\Sp^c,\Sp)$. The monoidal unit is the functor $P_dh_{\bbS}\colon \Sp^c \to \Sp$. Moreover, $P_dh_{\bbS}=P_d\Sigma^\infty\Omega^\infty$.
\end{Thm}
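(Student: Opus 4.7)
The plan is to exhibit $P_d$ as the localization $L_{\mathcal J}$ associated to a suitable collection $\mathcal J$ of diagrams in $\Sp^c$, and then to invoke \cref{prop:sub-functor-category}. Take $\mathcal J$ to be (a set of representatives for) the strongly cocartesian $(d+1)$-cubes in $\Sp^c$, regarded as cones over the corresponding punctured cubes; this collection is essentially small because $\Sp^c$ is. By \cref{def:dexcisive} and \cref{rem:pushout=pullback}, a reduced functor $F\colon \Sp^c\to\Sp$ is $d$-excisive precisely when it carries every such cube to a cartesian—equivalently, cocartesian—cube in $\Sp$; phrased via the cone, this says that $F$ sends each diagram in $\mathcal J$ to a limit diagram. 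Hence $\Exc{d}(\Sp^c,\Sp)=\Fun_{\mathcal J}(\Sp^c,\Sp)$, and $P_d$ is identified with $L_{\mathcal J}$.

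Next I would verify that $\mathcal J$ is closed under post-composition with $x\otimes(-)\colon \Sp^c\to\Sp^c$ for every $x\in\Sp^c$. Since tensoring with $x$ is an exact endofunctor of the stable $\infty$-category $\Sp^c$, it preserves pushouts, and therefore sends strongly cocartesian cubes to strongly cocartesian cubes (recall that a cube is strongly cocartesian iff every $2$-face is a pushout). With this in hand, \cref{prop:sub-functor-category} applies and yields at once that $P_d$ is a smashing localization of $\Fun(\Sp^c,\Sp)$ compatible with Day convolution, and that $\Exc{d}(\Sp^c,\Sp)$ is a presentably symmetric monoidal stable $\infty$-category with monoidal unit $P_dh_{\bbS}$.

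It remains to see that the tensor product and internal hom in $\Exc{d}(\Sp^c,\Sp)$ are computed in $\Fun(\Sp^c,\Sp)$. For the tensor product, smashness and \cref{prop:smashing}(a) give a natural equivalence $P_dh_{\bbS}\circledast X\simeq P_d X$; for any two $d$-excisive functors $F,G$ one therefore has $F\circledast G\simeq P_dh_{\bbS}\circledast F\circledast G\simeq P_d(F\circledast G)$, so the Day convolution of two $d$-excisive functors is already $d$-excisive. For the internal hom, compatibility of the localization with the monoidal structure means that $G\circledast(-)$ preserves $P_d$-local equivalences for any $G\in\Fun(\Sp^c,\Sp)$; by the closed monoidal adjunction, the right adjoint $\ihom{G,-}$ preserves $P_d$-local objects, so $\ihom{G,F}$ is $d$-excisive whenever $F$ is. Finally, since $h_{\bbS}=\Sigma^\infty\Map_{\Sp}(\bbS,-)=\Sigma^\infty\Omega^\infty$ (\cref{rem:sym-mon-functors}), we deduce $P_dh_{\bbS}=P_d\Sigma^\infty\Omega^\infty$.

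The only genuine piece of work here is the closure condition on $\mathcal J$, which is immediate from the exactness of $x\otimes(-)$. Everything else is a formal consequence of \cref{prop:sub-functor-category} and \cref{prop:smashing} applied to this localization, once one recognizes the $d$-excisive condition as precisely the $\mathcal J$-local condition.
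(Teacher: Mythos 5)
Your proposal is correct and follows essentially the same route as the paper: identify $\Exc{d}(\Sp^c,\Sp)$ as $\Fun_{\mathcal J}(\Sp^c,\Sp)$ for the collection $\mathcal J$ of strongly cocartesian $(d+1)$-cubes, check that $x\otimes(-)$ preserves such cubes, and apply \cref{prop:sub-functor-category}. Your extra verification that the tensor and internal hom are computed in $\Fun(\Sp^c,\Sp)$ is a correct unwinding of what \cref{prop:sub-functor-category} and \cref{prop:smashing} already provide.
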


\begin{proof}
	This follows from \Cref{prop:sub-functor-category}, since $\Exc{d}(\Sp^c,\Sp) \subseteq \Fun(\Sp^c,\Sp)$ is of the form $\Fun_{\mathcal{J}}(\Sp^c,\Sp)$ for an appropriate choice of $\mathcal J$, namely the collection corresponding to all strongly cocartesian cubes. The condition in \Cref{prop:sub-functor-category} is satisfied because the functor $x \otimes (-)$ preserves strongly cocartesian cubes (which follows, for example, from the characterization in \cite[Proposition 6.1.1.15]{HALurie}).
\end{proof}

\begin{Rem}
	In \Cref{cor:pm-finite-localization} we show that $\Exc{d}(\Sp^c,\Sp)$ is a \emph{finite} localization of $\Exc{d+k}(\Sp^c,\Sp)$ for all $k \ge 0$ and provide an explicit set of compact generators for the kernel of the localization functor. However, we suspect that the smashing localization $\Fun(\Sp^c,\Sp)\to\Exc{d}(\Sp^c,\Sp)$ is not a finite localization, i.e., its kernel is not generated by compact objects. In \Cref{example:not finite} we show that the ``obvious'' set of compact generators does not work.
\end{Rem}
\begin{Rem}
    The following results follow from \Cref{prop:smashing,thm:properties-n-excisive} using \Cref{rem:rigid-compact-fun}.
\end{Rem}
\begin{Cor}\label{prop:assembly}
	For any $F \in \Fun(\Sp^c,\Sp)$, there is a natural equivalence
    \begin{equation*}
	F\circledast  P_d \Sigma^\infty\Omega^\infty \xrightarrow{\simeq}  P_d F.
    \end{equation*}
\end{Cor}

\begin{Cor}
   Suppose $F$ and $G$ are $m$-excisive and $d$-excisive, respectively. Then $F\circledast  G$ is $\min(m, d)$-excisive. 
\end{Cor}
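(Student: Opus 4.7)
The plan is to reduce the claim to a manipulation with the smashing monoidal localization $P_m$, where without loss of generality $m = \min(m,d) \leq d$. Since Day convolution is symmetric in its two arguments, it suffices to prove the stronger statement: if $F$ is $m$-excisive, then $F \circledast G$ is $m$-excisive for \emph{any} reduced functor $G$ (and in particular when $G$ is $d$-excisive). The conclusion for the original statement then follows by applying this observation to the argument with the smaller excisive degree.

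To carry this out, I would invoke \Cref{thm:properties-n-excisive}, which asserts that $P_m \colon \Fun(\Sp^c,\Sp)\to\Fun(\Sp^c,\Sp)$ is a smashing localization compatible with Day convolution, with monoidal unit $P_m h_{\bbS} = P_m\Sigma^{\infty}\Omega^{\infty}$. Combined with \Cref{prop:assembly}, which yields a natural equivalence $P_m H \simeq H \circledast P_m h_{\bbS}$ for every reduced $H$, the proof is the following chain of equivalences:
\[
    P_m(F \circledast G) \;\simeq\; (F \circledast G) \circledast P_m h_{\bbS} \;\simeq\; (F \circledast P_m h_{\bbS}) \circledast G \;\simeq\; P_m F \circledast G \;\simeq\; F \circledast G,
\]
where the last step uses the hypothesis that $F$ is $m$-excisive, so that $P_m F \simeq F$. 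This exhibits $F \circledast G$ as an $m$-excisive functor.

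There is no real obstacle here: once the smashing property of $P_m$ and the assembly equivalence $P_m H \simeq H \circledast P_m h_{\bbS}$ are in hand, the conclusion is a two-line manipulation using associativity of $\circledast$. The only conceptual subtlety worth highlighting is that $P_m h_{\bbS}$ plays a dual role, as both the monoidal unit of $\Exc{m}(\Sp^c,\Sp)$ and the ``smashing idempotent'' realizing the $m$-excisive approximation via Day convolution; the argument is essentially the standard observation that a smashing localization preserves the entire monoidal ideal generated by its unit.
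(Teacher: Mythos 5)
Your proof is correct and takes essentially the same route as the paper: both arguments rest on the assembly equivalence $P_kH \simeq H \circledast P_k h_{\bbS}$ from the smashing property of $P_k$, together with associativity and commutativity of $\circledast$. The only cosmetic difference is that the paper keeps the argument symmetric in $F$ and $G$ by first computing $P_m h_{\bbS}\circledast P_d h_{\bbS}\simeq P_{\min(m,d)}h_{\bbS}$, whereas you reduce without loss of generality to $m\le d$ and use only the hypothesis on $F$.
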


\begin{proof}
	It follows from \Cref{prop:assembly} that 
	\[
		P_m\Sigma^\infty \Omega^\infty\circledast P_d\Sigma^\infty\Omega^\infty \simeq P_{\min(m,d)}\Sigma^\infty \Omega^\infty
	\]
	and hence
	\begin{align*}
		F\circledast G\simeq P_m F\circledast  P_dG&\simeq F\circledast P_m\Sigma^\infty\Omega^\infty \circledast G\circledast P_d\Sigma^\infty\Omega^\infty \\ &\simeq F\circledast G \circledast  P_{\min(m,d)}\Sigma^\infty \Omega^\infty \\
		&\simeq P_{\min(m,d)} (F\circledast  G).\qedhere
	\end{align*}
\end{proof}

\begin{Rem}\label{rem: F_A}
	Recall that if $\cat C$ is a symmetric monoidal stable $\infty$-category then there is an essentially unique symmetric monoidal colimit-preserving functor $\Sp \to \cat C$ given by $A \mapsto A \otimes \unit_{\cat C}$; see \cite[Corollary 4.8.2.19]{HALurie}. For example, if $\cat C = \Fun(\Sp^c,\Sp)$, then this is the functor $A \mapsto F_A$, where $F_A\in \Fun(\Sp^c,\Sp)$ sends $X \in \Sp^c$ to $A \otimes \Sigma^{\infty}\Omega^{\infty}X$. Applying $P_d$ (or appealing to the universal property directly) we can make the following definition:
\end{Rem}

\begin{Def}\label{def:inflation}
	The \emph{inflation functor} $i_d\colon \Sp \to \Exc{d}(\Sp^c,\Sp)$ is the (essentially unique) symmetric monoidal colimit-preserving functor. It is given by ${A \mapsto P_d(F_A)}$.
\end{Def}

\begin{Exa}\label{exa:inflation}
	Suppose $d= 1$. In this case, it is well-known that $P_1(F_A)$ is equivalent to the functor $X \mapsto A \otimes X$. For example, see~\cite[Example 6.1.1.28]{HALurie} for a proof in the language of $\infty$-categories (or see \Cref{rem:adjoint-to-cross-effects}). In particular, the equivalence $\Exc{1}(\Sp^c,\Sp) \simeq \Sp$ of \Cref{exa:1-excisive-functors} is an equivalence of symmetric monoidal stable $\infty$-categories. In fact, the same statement is true for the equivalence of \eqref{eq:multi-variable} which is given by the canonical symmetric monoidal functor $\Sp \to \Exc{(1,\ldots,1)}((\Sp^c)^{\times n},\Sp)$ that sends $A \in \Sp$ to the functor $(X_1,\ldots,X_n) \mapsto A \otimes X_1 \otimes \ldots \otimes X_n$. 
\end{Exa}

\begin{Rem}[Change of target category]\label{rem:targetcat}
	If $\cat D$ is a presentable symmetric monoidal stable $\infty$-category, then we could also consider the category of reduced \mbox{$d$-excisive} functors $\Exc{d}(\Sp^c,\cat D)$. Much of what we have said thus far can be repeated for this category. Our present goal is to explain how this construction can be viewed in terms of the Lurie tensor product of presentable $\infty$-categories.

    Indeed, let $\Pr^L$ denote the $\infty$-category of presentable $\infty$-categories with the colimit-preserving functors as morphisms. This category has a symmetric monoidal structure constructed in \cite[Section 4.8.1]{HALurie} with the $\infty$-category of spaces $\cat S$ serving as the unit. The full subcategory $\Pr^L_{st}$ consisting of the presentable \emph{stable} \mbox{$\infty$-categories} inherits the monoidal structure from $\Pr^L$ with the category of spectra~$\Sp$ now serving as the unit.  Moreover, if $\cat C$ and $\cat D$ are symmetric monoidal, then the Lurie tensor product $\cat C \otimes \cat D$ also inherits a symmetric monoidal structure. 

    We claim there is an equivalence of symmetric monoidal stable $\infty$-categories 
    \[
		\Exc{d}(\Sp^c,\cat D) \simeq \Exc{d}(\Sp^c,\Sp) \otimes \cat D
    \]
    where the left-hand side has the localized Day convolution monoidal structure, and the right-hand side has the symmetric monoidal structure coming from the Lurie tensor product. Indeed, because $\cat D$ is stable, there is an equivalence $\cat D \simeq \cat D \otimes \Sp \otimes \cat S$. Then we have an equivalence of presentable stable $\infty$-categories
    \[
		\Exc{d}(\Sp^c,\cat D) \simeq \Exc{d}(\Sp^c,\cat D \otimes \Sp \otimes \cat S) \simeq \Exc{d}(\Sp^c,\cat S) \otimes \cat D \otimes \Sp \simeq \Exc{d}(\Sp^c,\Sp) \otimes \cat D 
    \]
    by applying \cite[Theorem 4.2(2)]{HorelRamzi21} twice. Moreover, it follows from \cite[Lemma~4.8]{HorelRamzi21} that this is an equivalence of symmetric monoidal $\infty$-categories. 
\end{Rem}

\begin{Exa}\label{ex:integralcoefficients}
    Taking $\cat D = \Mod_{\HZ}$ in the previous example, we see that
    \[
		\Exc{d}(\Sp^c,\Mod_{\HZ}) \simeq \Exc{d}(\Sp^c,\Sp) \otimes \Mod_{\HZ}.
    \]
    This is the $d$-excisive version of Kaledin's derived Mackey functors; cf.~\cite{PatchkoriaSandersWimmer22}. 
\end{Exa}

\section{Cross-effects and idempotents}\label{sec:cross-idempotents}

The cross-effects play an important role in the study of polynomial and excisive functors. In this section we will review several notions of cross-effect and show that for functors with values in a stable $\infty$-category, the definition of the cross-effect via idempotents, in the style of Eilenberg--Mac Lane~\cite{EilenbergMacLane54}, is equivalent to the definition of the cross-effect via total homotopy (co)fibers introduced by Goodwillie~\cite{Goodwillie03}. The idempotent model is convenient for analyzing certain operations with cross-effects that we will need. In particular, it is useful for calculating natural transformations and Day convolutions of cross-effects.

\begin{Not}
	Throughout this section $\cat{D}$ denotes a stable $\infty$-category and $F$ denotes a (not necessarily reduced) functor from $\Sp^c$ to $\cat{D}$. 
\end{Not}

\begin{Rem}
	If $\cat{D}$ is a stable $\infty$-category, then so is $\cat{D}\op$. This means that the material in this section applies both to covariant and contravariant functors from $\Sp^c$ to $\cat{D}$. In subsequent sections we will apply the cross-effect construction to a contravariant functor (a version of the Yoneda embedding) from $\Sp^c$ to $\Fun(\Sp^c, \Sp)$. 
\end{Rem}

\begin{Def}[Goodwillie]
	The \emph{$d$-th cross-effect functor} of $F$ is the functor $\crosseffect_d F\colon (\Sp^c)^{\times d} \to \cat{D}$ defined by
	\[
		\crosseffect_dF(x_1,\ldots,x_d) \coloneqq \tofib_{S \subseteq \num{d}} \left \{ F\left(\bigoplus_{i \not \in S}x_i \right)\right\},
	\]
	the iterated (or total) fiber of a $d$-cube formed by applying $F$ to the direct sums of subsets of $x_1 \ldots, x_d$. The morphisms in the cube are given by the relevant collapse maps $x_i \to 0$. See \cite[Construction 6.1.3.20]{HALurie}.

	Dually, we define the \emph{$d$-th co-cross-effect} $\crosseffect^d F\colon (\Sp^c)^{\times d}\to\cat{D}$ by
	\[
		\crosseffect^dF(x_1,\ldots,x_d) \coloneqq \tcof_{S \subseteq \num{d}}\left \{ F\left(\bigoplus_{i \not \in S}x_i \right)\right\},
	\]
	the total cofiber of the cubical diagram with the same objects as before, but where the morphisms are induced by the inclusions $0 \to x_i$.
\end{Def}

\begin{Rem}\label{rem:sigma-n-action}
	By permutation of variables, the (co-)cross-effect has a (naive) action of~$\Sigma_d$. At the $\infty$-categorical level, this corresponds to the fact that the $d$-th cross-effect is a symmetric $d$-ary functor; see \cite[Section 6.1.4]{HALurie}. Using the coreduction functor defined in \cite[Construction 6.2.3.6]{HALurie} one can also check that the $d$-th co-cross-effect is a symmetric $d$-ary functor.
\end{Rem}

\begin{Exa}
	The first cross-effect of $F$ is the fiber
	\[
		\crosseffect_1F(x) = \fib(F(x) \to F(0))
	\]
	and the first co-cross-effect is the cofiber
	\[
		\crosseffect^1F(x) = \cofib(F(0) \to F(x)).
	\]
	The second cross-effect is the total fiber
	\[
		\crosseffect_2F(x_1,x_2) = \thofib\left(\begin{tikzcd}[ampersand replacement=\&]
		{F(x_1 \oplus x_2)} \& {F(x_2)} \\
		{F(x_1)} \& {F(0)}
		\arrow[from=1-1, to=2-1]
		\arrow[from=1-1, to=1-2]
		\arrow[from=2-1, to=2-2]
		\arrow[from=1-2, to=2-2] \end{tikzcd}\right)
	\]
	and the second co-cross-effect is the total cofiber
	\[
		\crosseffect^2F(x_1,x_2) = \tcof\left(\begin{tikzcd}[ampersand replacement=\&]
		{F(x_1 \oplus x_2)} \& {F(x_2)} \\
		{F(x_1)} \& {F(0)}
		\arrow[from=2-1, to=1-1]
		\arrow[from=1-2, to=1-1]
		\arrow[from=2-2, to=2-1]
		\arrow[from=2-2, to=1-2] \end{tikzcd}\right).
	\]
\end{Exa}

\begin{Lem}\label{lem:crosstococross}
	For any functor $F\colon \Sp^c\to\cat D$, there are natural transformations of functors of $d$ variables
	\begin{equation}\label{eq:crosstococross}
		\crosseffect_dF(x_1, \ldots, x_d) \to F(x_1\oplus\cdots\oplus x_d) \to \crosseffect^d F(x_1, \ldots, x_d)
	\end{equation}
	whose composition is an equivalence.
\end{Lem}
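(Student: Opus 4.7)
The plan is to construct the natural maps directly from the cubical definitions and then show their composition is an equivalence by induction on $d$, with the inductive step reducing to a splitting argument ultimately coming from the identity $0 \to x_d \to 0 = \id_0$. For the two maps: since $F(x_1 \oplus \cdots \oplus x_d)$ is the initial vertex (at $S = \emptyset$) of the cube whose total fiber computes $\crosseffect_d F$, there is a canonical map $\crosseffect_d F(x_1,\ldots,x_d) \to F(x_1 \oplus \cdots \oplus x_d)$; dually, it is the terminal vertex of the cube whose total cofiber computes $\crosseffect^d F$, giving the second map. Both are natural in $(x_1,\ldots,x_d)$ by construction.

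For the base case $d=1$, the composition $0 \to x \to 0 = \id_0$ forces $F(0) \to F(x) \to F(0)$ to be $\id_{F(0)}$, so $F(0) \to F(x)$ is a split monomorphism. Stability of $\cat D$ then gives a splitting $F(x) \simeq F(0) \oplus C$ in which $C$ is canonically identified with both $\fib(F(x) \to F(0)) = \crosseffect_1 F(x)$ and $\cofib(F(0) \to F(x)) = \crosseffect^1 F(x)$; the composition in question is the identity on $C$. For the inductive step, I would split each defining cube along the last coordinate into parallel $(d-1)$-cubes according to whether $x_d$ contributes to the direct sum. Setting $G(y) \coloneqq F(x_d \oplus y)$, this yields the standard recursive identifications
\[
    \crosseffect_d F \simeq \fib\bigl(\crosseffect_{d-1} G \to \crosseffect_{d-1} F\bigr) \quad \text{and} \quad \crosseffect^d F \simeq \cofib\bigl(\crosseffect^{d-1} F \to \crosseffect^{d-1} G\bigr),
\]
where the horizontal maps come respectively from the collapse $x_d \to 0$ and the inclusion $0 \to x_d$. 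Since $\id_0 = (0 \to x_d \to 0)$ lifts to a retraction $F \to G \to F$ of functors, applying $\crosseffect_{d-1}$ and $\crosseffect^{d-1}$ functorially produces sections of the relevant collapse maps, which stability promotes to natural splittings
\[
    \crosseffect_{d-1} G \simeq \crosseffect_{d-1} F \oplus \crosseffect_d F \quad \text{and} \quad \crosseffect^{d-1} G \simeq \crosseffect^{d-1} F \oplus \crosseffect^d F.
\]

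To conclude, the inductive hypothesis applied to both $F$ and $G$ yields equivalences $\crosseffect_{d-1} F \simeq F(x_{\num{d-1}}) \simeq \crosseffect^{d-1} F$ and $\crosseffect_{d-1} G \simeq G(x_{\num{d-1}}) = F(x_1 \oplus \cdots \oplus x_d) \simeq \crosseffect^{d-1} G$. The main technical point I expect to need to check is that these inductive equivalences are natural in the \emph{input functor} (not merely in the variables $x_i$), so that they intertwine the functorial inclusion and collapse maps $F \leftrightarrows G$; this should follow from the cubical construction but requires explicit bookkeeping. Granting it, the equivalences respect the two splittings above, thereby inducing an equivalence of complementary summands $\crosseffect_d F \simeq \crosseffect^d F$, and a diagram chase identifies this with the desired composition, factored as
\[
    \crosseffect_d F \hookrightarrow \crosseffect_{d-1} G \xrightarrow{\sim} F(x_1 \oplus \cdots \oplus x_d) \xrightarrow{\sim} \crosseffect^{d-1} G \twoheadrightarrow \crosseffect^d F.
\]
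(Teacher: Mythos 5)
The paper does not prove this lemma itself: its proof is the single line ``This is shown in~\cite[Lemma B.1]{Heuts21}.'' Your proposal is therefore a genuinely different route in the sense that it is a self-contained argument where the paper defers to the literature. The structure is sound: the two maps are indeed the canonical maps from the total fiber to the initial vertex and from the terminal vertex to the total cofiber; the recursive identifications $\crosseffect_dF\simeq\fib(\crosseffect_{d-1}G\to\crosseffect_{d-1}F)$ and $\crosseffect^dF\simeq\cofib(\crosseffect^{d-1}F\to\crosseffect^{d-1}G)$ with $G=F(x_d\oplus-)$ are the standard face decompositions of the cube; the retraction $F\to G\to F$ splits both comparison maps; and the naturality you flag (of the canonical maps $\crosseffect_{d-1}H\to H(x_1\oplus\cdots\oplus x_{d-1})\to\crosseffect^{d-1}H$ in the functor $H$) holds because total (co)fibers are functorial in the diagram. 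Your argument is close in spirit to the idempotent calculus the paper develops immediately afterwards (\cref{lemma:basic-idempotents}--\cref{lem:cross-effect}), which in effect reproves the statement; your version is organized instead as an induction peeling off one variable at a time.

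One slip needs correcting. You write that the inductive hypothesis ``yields equivalences $\crosseffect_{d-1}F\simeq F(x_1\oplus\cdots\oplus x_{d-1})\simeq\crosseffect^{d-1}F$,'' and your final displayed factorization decorates both middle arrows with $\sim$. Neither individual map is an equivalence in general (already for $d-1=1$, $\crosseffect_1F(x)=\fib(F(x)\to F(0))$ is a proper summand of $F(x)$ unless $F(0)=0$); only the \emph{composite} $\crosseffect_{d-1}G\to G(x_1\oplus\cdots\oplus x_{d-1})\to\crosseffect^{d-1}G$ is an equivalence by induction. That is all you need: the degree-$d$ composite factors as the summand inclusion $\crosseffect_dF\hookrightarrow\crosseffect_{d-1}G$, followed by that composite equivalence $\alpha_G$, followed by the projection $\crosseffect^{d-1}G\twoheadrightarrow\crosseffect^dF$; the naturality square for $F\to G$ shows $\alpha_G$ carries the summand $\crosseffect_{d-1}F$ onto $\crosseffect^{d-1}F$ via $\alpha_F$, so it induces an equivalence on the complementary summands, which is exactly the map in question. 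With that correction the proof is complete.
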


\begin{proof}
	This is shown in~\cite[Lemma B.1]{Heuts21}.
\end{proof}

\begin{Rem}\label{rem:crd-idem}
	It follows from \Cref{lem:crosstococross} that there is an idempotent endomorphism in the homotopy category of functors of $d$ variables:
	\begin{equation}\label{eq:crosseffectidemptotent}
		F(x_1\oplus\cdots\oplus x_d) \to \crosseffect^d F(x_1, \ldots, x_d) \xleftarrow{\simeq} \crosseffect_d F(x_1, \ldots, x_d) \to F(x_1\oplus \cdots \oplus x_d).
	\end{equation}
\end{Rem}

\begin{Rem}\label{rem:idempotents-lift}
	The notion of an idempotent endomorphism in an $\infty$-category is somewhat involved, since it has to incorporate higher coherences; see \cite[Definition 4.4.5.4]{HTTLurie}. However, for a stable $\infty$-category, every idempotent in the homotopy category lifts to an idempotent in the $\infty$-category~\cite[Proof of Lemma 1.2.4.6 and Warning 1.2.4.8]{HALurie}; in other words, the existence of higher coherences comes for free. In the sequel, when we say ``idempotent'', we will mean ``idempotent endomorphism'' in the relevant $\infty$-category, but when we say that idempotents ``commute'' or are ``orthogonal'', we will mean that they do so in the homotopy category.
\end{Rem}

\begin{Rem}\label{rem:idempotents-split}
	Suppose $e\colon X\to X$ is an idempotent in a stable $\infty$-category which admits sequential colimits. We define
	\[
		eX\coloneqq \colim (X\xrightarrow{e}X\xrightarrow{e}\cdots).
	\]
	The map $(1-e)$ is an idempotent as well, and there is a direct sum decomposition
	\[
		X \simeq eX \oplus (1-e)X
	\]
	which is natural with respect to maps that commute with $e$; see \cite[Proposition~1.6.8]{Neeman01}, for example. More generally, if $e_1, \ldots, e_d\colon X \to X$ are idempotents that are pairwise orthogonal in the homotopy category and such that $e_1+\cdots+e_d\simeq 1_X$ then there is an equivalence
	\[
		X \simeq e_1X\oplus\cdots\oplus e_dX.
	\]
\end{Rem}

\begin{Rem}
	It follows from \cref{rem:idempotents-lift} that the idempotent map~\eqref{eq:crosseffectidemptotent} can be thought of as an idempotent in the $\infty$-category of functors $(\Sp^c)^{\times d}\to \cat D$. We denote this idempotent by $\crosseffect(d)$. It follows that the $d$-th cross-effect $\crosseffect_dF(x_1, \ldots, x_d)$ is a direct summand of $F(x_1\oplus\cdots\oplus x_d)$, split off by the idempotent $\crosseffect(d)$. In other words, $\crosseffect_dF(x_1, \ldots, x_d)\simeq \crosseffect(d)F(x_1, \ldots, x_d)$ using the notation of \cref{rem:idempotents-split}.
\end{Rem}

\begin{Rem}
	Our next goal is to give a ``formula'' for the idempotent $\crosseffect(d)$ which expresses it in terms of more elementary idempotents. This formula was first given in~\cite{EilenbergMacLane54} for functors between abelian categories. We will adapt their constructions to the setting of stable $\infty$-categories. 
\end{Rem}

\begin{Def}
	Let $x_1, \ldots, x_d\in\Sp^c$ and $U\subseteq \num{d}$. Define
	\[
		\psi_U \colon x_1\oplus \cdots \oplus x_d\to x_1\oplus \cdots \oplus x_d
	\]
	to be the sum of the identity morphisms $x_i\xrightarrow{1} x_i$ for all $i\in U$ and the zero morphisms $x_i\xrightarrow{0} x_i$ for $i\notin U$. When $U=\{i\}$ is a singleton, we denote $\psi_U$ by $\psi_i$.
\end{Def}

\begin{Rem}
	We regard $\psi_U$ as a natural endomorphism of the functor 
	\[
		\bigoplus\nolimits_{\hspace{-0.1em}d}\colon (\Sp^c)^{\times d}\to \Sp^c
	\]
	which sends $(x_1, \ldots, x_d)$ to $x_1\oplus\cdots\oplus x_d$. For any functor $F\colon \Sp^c\to \cat D$, we then obtain a natural endomorphism $F\ast \psi_U$ of $F\circ \bigoplus_d$. %
\end{Rem}

\begin{Lem}\label{lem:pairwise}
	The morphisms $\{\psi_U\mid U\subseteq\num{d}\}$ are pairwise commuting idempotents. For every $U, V\subseteq \num{d}$, $\psi_U\circ\psi_V=\psi_{U\cap V}$. For every $U\subseteq \num{d}$, there is an equivalence
	\[
		\psi_U \simeq \sum_{i\in U} \psi_i.
	\]
\end{Lem}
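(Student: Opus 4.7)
The plan is to reduce all three assertions to routine ``matrix'' calculus for morphisms between biproducts in a stable $\infty$-category. Writing $\iota_j\colon x_j\to x_1\oplus\cdots\oplus x_d$ for the canonical inclusion and $\pi_i\colon x_1\oplus\cdots\oplus x_d\to x_i$ for the canonical projection, the fundamental biproduct relations are
\[
\pi_i\iota_j \simeq \delta_{ij}\,\mathrm{id}_{x_i} \quad\text{and}\quad \sum_{j=1}^d \iota_j\pi_j \simeq \mathrm{id},
\]
and composition is bilinear with respect to the canonical additive structure on mapping spectra; this is standard for stable $\infty$-categories, see for instance \cite[\S1.1.2]{HALurie}.

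Under this framework I would first observe that by its very definition
\[
\psi_U \;\simeq\; \sum_{i\in U}\iota_i\pi_i,
\]
since $\psi_U$ is precisely the endomorphism of the biproduct whose $i$-th diagonal component is $\mathrm{id}_{x_i}$ for $i\in U$ and $0$ otherwise. Specializing to singletons gives $\psi_i \simeq \iota_i\pi_i$, so that the third claim $\psi_U\simeq\sum_{i\in U}\psi_i$ is immediate.

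For the composition law I would compute, using bilinearity of composition and the orthogonality relation $\pi_i\iota_j\simeq\delta_{ij}\,\mathrm{id}_{x_i}$,
\[
\psi_U\circ\psi_V \;\simeq\; \sum_{i\in U,\,j\in V}\iota_i(\pi_i\iota_j)\pi_j \;\simeq\; \sum_{i\in U\cap V}\iota_i\pi_i \;\simeq\; \psi_{U\cap V}.
\]
Setting $V=U$ yields idempotence, while the symmetry of $\cap$ yields pairwise commutation in the homotopy category as required by \cref{rem:idempotents-lift}. All three parts of the lemma follow at once, and the naturality of $\psi_U$ as an endomorphism of the functor $\bigoplus_d$ is automatic because every ingredient in the above identities is natural in $(x_1,\ldots,x_d)$.

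No real obstacle is anticipated; the only subtle point is ensuring that the ``matrix'' calculus for biproducts is valid at the $\infty$-categorical level, which is handled once and for all by the stability of $\Sp^c$ and $\cat{D}$.
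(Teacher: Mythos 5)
Your proof is correct and supplies exactly the routine verification that the paper leaves to the reader: identifying $\psi_U\simeq\sum_{i\in U}\iota_i\pi_i$ and using the biproduct orthogonality relations gives all three claims at once. The remark about the validity of the matrix calculus at the $\infty$-categorical level (and that commutation is only needed in the homotopy category, per the paper's conventions) is the right level of care here.
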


\begin{proof}
	These are routine verifications which we leave to the reader.
\end{proof}

\begin{Rem}
	Given a functor $F\colon \Sp^c \to \cat{D}$, it follows from \Cref{lem:pairwise} that 
	\[
		\SET{F*\psi_U}{U\subseteq \num{d}}
	\]
	are pairwise commuting idempotents of the functor $(x_1,\ldots,x_d) \mapsto {F(x_1\oplus\cdots \oplus x_d)}$. Furthermore, $(F*\psi_U)(F*\psi_V)=F*\psi_{U\cap V}$ for all $U, V\subseteq \num{d}$.
\end{Rem}

\begin{Def}\label{def:f_U}
	For each subset $U\subseteq \num{d}$, define the natural transformation
	\[
		\crosseffect(U)\colon F\circ \bigoplus\nolimits_{\hspace{-0.1em}d} \to F\circ \bigoplus\nolimits_{\hspace{-0.1em}d}
	\]
	as follows:
	\begin{align*}
		\crosseffect(U)&\coloneqq F*\psi_U\circ\left(\underset{i\in U}{\bigcirc}(1-F*\psi_{U\setminus\{i\}})\right) \\ 
		&\simeq \underset{i\in U}{\bigcirc}(F*\psi_U-F*\psi_{U\setminus\{i\}}) \\
		&\simeq \sum_{V\subseteq U}(-1)^{|U|-|V|}F*\psi_V.
	\end{align*}
	Here ${\circ}$ and $\bigcirc$ denote composition. All maps of the form $F*\psi_U$ and $1-F*\psi_{U\setminus\{i\}}$ commute with each other, so the order of composition is not important. That the different formulas for $\crosseffect(U)$ are equivalent is an exercise.
\end{Def}

\begin{Exa}\label{exa:crossd}
	When $U=\num{d}$, we denote $\crosseffect(U)$ by $\crosseffect[d]$. Thus we have
	\[
		\crosseffect\num{d}=\underset{i\in \num{d}}{\bigcirc}(1-F*\psi_{[d]\setminus\{i\}})\simeq \sum_{V\subseteq \num{d}}(-1)^{d-|V|}F*\psi_V.
	\]
\end{Exa}

\begin{Rem}
    The $\crosseffect(U)$ are analogous to the maps $D_\sigma$ in~\cite[(9.7), page 77]{EilenbergMacLane54}.
\end{Rem}

\begin{Lem}[\cite{EilenbergMacLane54}, Proof of Theorem 9.1]\label{lemma:basic-idempotents}
	The maps $\{\crosseffect(U)\mid U\subseteq \num{d}\}$ are pairwise orthogonal idempotents whose sum is the identity.
\end{Lem}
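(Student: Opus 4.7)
The plan is to work entirely inside the commutative subring of endomorphisms of $F\circ\bigoplus_d$ generated by the idempotents $e_V \coloneqq F*\psi_V$ for $V\subseteq \num{d}$. By \Cref{lem:pairwise} these commute and satisfy $e_V\cdot e_W = e_{V\cap W}$; moreover $e_{\num{d}}$ is the identity, since $\psi_{\num{d}}$ is the identity of $x_1\oplus\cdots\oplus x_d$. The expanded formula
\[
\crosseffect(U) = \sum_{V\subseteq U}(-1)^{|U|-|V|}\, e_V
\]
from \Cref{def:f_U} places $\crosseffect(U)$ in this commutative ring, so the lemma becomes a purely algebraic statement about the $e_V$'s. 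Since $\cat D$ is stable, any idempotent obtained at the homotopy level lifts uniquely to one in the $\infty$-category by \Cref{rem:idempotents-lift}, so nothing is lost by arguing in the homotopy category.

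The heart of the plan is the auxiliary identity
\begin{equation}\label{eq:planaux}
\crosseffect(U)\cdot e_W = \begin{cases}\crosseffect(U) & \text{if }U\subseteq W,\\ 0 & \text{otherwise.}\end{cases}
\end{equation}
Expanding and using $e_V e_W = e_{V\cap W}$ rewrites the left-hand side as $\sum_{V\subseteq U}(-1)^{|U|-|V|} e_{V\cap W}$. When $U\subseteq W$ we have $V\cap W = V$ for every $V\subseteq U$, and the sum collapses to $\crosseffect(U)$. When $U\not\subseteq W$, fix $i\in U\setminus W$: then $V\cap W = (V\setminus\{i\})\cap W$, so pairing each $V\subseteq U$ with $V\triangle\{i\}$ groups the terms into pairs with cancelling signs.

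Everything else then falls out of \eqref{eq:planaux}. Expanding the second factor in $\crosseffect(U)\cdot \crosseffect(U')$ restricts the sum to indices $W$ with $U\subseteq W\subseteq U'$, and the standard identity $\sum_{U\subseteq W\subseteq U'}(-1)^{|U'|-|W|} = \delta_{U,U'}$ collapses it to $\delta_{U,U'}\,\crosseffect(U)$, giving pairwise orthogonality; taking $U=U'$ gives idempotence. For the sum-to-identity claim, swap the order of summation in $\sum_{U\subseteq\num{d}}\crosseffect(U) = \sum_V e_V \sum_{U\supseteq V}(-1)^{|U|-|V|}$ and observe that the inner sum is $\delta_{V,\num{d}}$, so the total equals $e_{\num{d}} = \id$. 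The only real obstacle here is keeping the subset-sum bookkeeping straight; beyond \Cref{lem:pairwise} no further homotopical input is needed.
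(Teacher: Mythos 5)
Your proof is correct. It differs from the paper's in its organizing principle, though both are elementary computations in the commutative ring generated by the $e_V=F*\psi_V$. The paper proves idempotence by observing that each $\crosseffect(U)$ is a composite of commuting idempotents, and proves orthogonality by a factoring trick using the \emph{product} form of $\crosseffect(U)$: for $i\in U\setminus U'$ one absorbs $F*\psi_{\num{d}\setminus\{i\}}$ into $\crosseffect(U')$ and peels the factor $F*\psi_U-F*\psi_{U\setminus\{i\}}$ off $\crosseffect(U)$, so the product visibly vanishes. You instead run everything through the \emph{sum} (M\"obius) form via the single absorption identity $\crosseffect(U)\cdot e_W=[U\subseteq W]\,\crosseffect(U)$, from which idempotence and orthogonality drop out together by the standard alternating-sum-over-an-interval identity; the sum-to-identity step is then literally the same swap of summation as in the paper. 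Your auxiliary identity is essentially equivalent to formula \eqref{eq:F(e_U)} of \Cref{lem:product-to-sum} combined with orthogonality, but you derive it directly from \Cref{lem:pairwise} without circularity, so your route in effect front-loads that later computation. What your approach buys is a uniform treatment of all three claims from one identity; what the paper's buys is that idempotence is immediate and the orthogonality argument avoids any sign bookkeeping.
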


\begin{proof}
	The fact that the $\psi_U$ are pairwise commuting homotopy idempotents implies that all maps of the form $F*\psi_U$ commute with each other and also with maps of the  form $1-F*\psi_V$. Therefore, the maps $\crosseffect(U)$ commute with each other. Each map $\crosseffect(U)$ is a composition of commuting idempotents, so it is itself an idempotent.

	To prove that the idempotents $\crosseffect(U)$ are pairwise orthogonal, let $U, U'$ be distinct subsets of $\num{d}$. Without loss of generality we may assume that there exists an element $i\in U\setminus U'$. Then $U'\subseteq \num{d}\setminus \{i\}$ and thus $\crosseffect(U')=F*\psi_{\num{d}\setminus\{i\}} \crosseffect(U')$. On the other hand, since $i\in U$, $F*\psi_U-F*\psi_{U\setminus\{i\}}$ is a factor of $\crosseffect(U)$. So we may write $\crosseffect(U)=\bar \crosseffect(U)\circ (F*\psi_U-F*\psi_{U\setminus\{i\}})$ for some map $\bar \crosseffect(U)$. It follows that there are equivalences
	\begin{align*}
		\crosseffect(U) \circ \crosseffect(U')&\simeq \bar \crosseffect(U)\circ (F*\psi_U-F*\psi_{U\setminus\{i\}})F*\psi_{\num{d}\setminus\{i\}} \circ \crosseffect(U')\\
		&\simeq \bar \crosseffect(U)\circ (F*\psi_{U\setminus\{i\}}-F*\psi_{U\setminus\{i\}}) \circ \crosseffect(U') \\
		&\simeq 0.
	\end{align*}
	Finally, we want to prove that $\sum_{U\subseteq \num{d}}\crosseffect(U)\simeq 1$. If we adopt the formula $\crosseffect(U)=\sum_{V\subseteq U}(-1)^{|U|-|V|}F*\psi_V$, then it is clear that 
	\[
		\sum_{U\subseteq \num{d}}\crosseffect(U) \simeq \sum_{V\subseteq \num{d}}\sum_{V\subseteq U\subseteq \num{d}} (-1)^{|U|-|V|}F*\psi_V.
	\]
	For all $V\subsetneq \num{d}$, $\sum_{V\subseteq U \subseteq \num{d}}(-1)^{|U|-|V|}=0$. So $\sum_U \crosseffect(U)\simeq F*\psi_{\num{d}}$, which is the identity. 
\end{proof}

\begin{Not}\label{notation:cruf}
	We will write $\crosseffect(U)F(x_1\oplus\cdots\oplus x_d)$ for the direct summand of $F(x_1\oplus\cdots\oplus x_d)$ split off by the idempotent $\crosseffect(U)\colon F(x_1\oplus \cdots\oplus x_d) \to F(x_1 \oplus\cdots \oplus x_d)$.
\end{Not}

\begin{Rem}
	If $U=\{i_1, \ldots, i_s\}\subseteq \num{d}$, then $\crosseffect(U) F(x_1\oplus \cdots \oplus x_d)$ is denoted by $F(x_{i_1}|\cdots|x_{i_s})$ in~\cite{EilenbergMacLane54}. We will see in \cref{lem:equivalent} below that indeed $\crosseffect(U) F(x_1\oplus \cdots \oplus x_d)$ really only depends on $x_{i_1}, \ldots, x_{i_s}$.
\end{Rem}

\begin{Cor}\label{cor:splitting}
	Let $F\colon\Sp^c \to \cat D$ be a functor and let $x_1, \ldots, x_d\in \Sp^c$. There is an equivalence
	\[
		F(x_1\oplus\cdots\oplus x_d)\simeq \prod_{U\subseteq \num{d}}  \crosseffect(U)F(x_1\oplus\cdots\oplus x_d)
	\]
	which is natural in $x_1,\ldots,x_d$.
\end{Cor}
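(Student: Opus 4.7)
The plan is to deduce this corollary as a direct consequence of the splitting of a finite family of pairwise orthogonal idempotents in a stable $\infty$-category, applied to the idempotents $\crosseffect(U)$ just constructed. Since \Cref{lemma:basic-idempotents} already establishes that $\{\crosseffect(U) \mid U \subseteq \num{d}\}$ is a finite family of pairwise orthogonal homotopy idempotents of the functor $F \circ \bigoplus_d$ whose sum is the identity, essentially all the nontrivial work has been done in the previous results.

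More concretely, I would argue inside the functor category $\Fun((\Sp^c)^{\times d}, \cat D)$, which is itself stable (since $\cat D$ is) and admits sequential colimits. By \Cref{rem:idempotents-lift}, each homotopy idempotent $\crosseffect(U)$ lifts to a genuine $\infty$-categorical idempotent on the object $F \circ \bigoplus_d$ in this functor category. Applying \Cref{rem:idempotents-split} (in its ``more general'' form for a finite collection of pairwise orthogonal idempotents summing to the identity) then yields a direct sum decomposition
\[
    F \circ \bigoplus\nolimits_{\hspace{-0.1em}d} \;\simeq\; \bigoplus_{U \subseteq \num{d}} \crosseffect(U)\bigl(F \circ \bigoplus\nolimits_{\hspace{-0.1em}d}\bigr)
\]
in $\Fun((\Sp^c)^{\times d}, \cat D)$. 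Evaluating at $(x_1, \ldots, x_d)$ and using that finite direct sums and finite products agree in the stable $\infty$-category $\cat D$ then produces the displayed formula, with naturality in $x_1, \ldots, x_d$ automatic from having worked at the level of functors throughout.

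The only potential subtlety --- what might look like the main obstacle before unpacking it --- is the passage from the homotopy-level orthogonality and summation relations of \Cref{lemma:basic-idempotents} to a coherent $\infty$-categorical splitting. But this is resolved for free by \Cref{rem:idempotents-lift}, which guarantees that homotopy idempotents in a stable $\infty$-category lift canonically to genuine idempotents. I therefore anticipate no serious technical difficulty beyond assembling the ingredients that have already been developed in the preceding pages.
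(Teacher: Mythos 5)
Your proposal is correct and matches the paper's proof, which simply cites \Cref{lemma:basic-idempotents} as giving the result immediately; your elaboration via \Cref{rem:idempotents-lift} and \Cref{rem:idempotents-split} is exactly the intended chain of reasoning that the paper set up in those remarks.
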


\begin{proof}
	This is an immediate consequence of \Cref{lemma:basic-idempotents}. It also is~\cite[Theorem~9.1]{EilenbergMacLane54} lifted to functors between stable $\infty$-categories. 
\end{proof}

The next two lemmas record some elementary properties of the idempotents~$\crosseffect(U)$.

\begin{Lem}\label{lem:trivial}
	Let $x_1, \ldots, x_d\in \Sp^c$ and suppose that $x_i\simeq 0$ for some $1 \le i \le d$. Then for all subsets $\{i_1, \ldots, i_s\}\subseteq\num{d}$ that contain $i$, $\crosseffect(U)F(x_1\oplus\cdots\oplus x_d)\simeq 0$.
\end{Lem}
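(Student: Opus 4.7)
My approach is to exploit the explicit formula
\[
\crosseffect(U) \simeq \underset{j \in U}{\bigcirc}\bigl(F*\psi_U - F*\psi_{U \setminus \{j\}}\bigr)
\]
from \cref{def:f_U} and argue that, when $x_i \simeq 0$ and $i \in U$, the single factor corresponding to $j = i$ is already nullhomotopic when evaluated at $(x_1, \ldots, x_d)$.

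The key observation will be that the natural transformations $\psi_U$ and $\psi_{U \setminus \{i\}}$ agree as endomorphisms of $x_1 \oplus \cdots \oplus x_d$ whenever $x_i \simeq 0$. Indeed, both restrict to the identity on each summand $x_j$ with $j \in U \setminus \{i\}$ and to the zero map on each summand $x_j$ with $j \notin U$; they differ only on the $i$-th summand, where $\psi_U$ is the identity and $\psi_{U \setminus \{i\}}$ is zero. But any two endomorphisms of a zero object are canonically equivalent (the endomorphism space is contractible), so these two maps coincide in the stable $\infty$-category. Whiskering with $F$ then yields $F*\psi_U \simeq F*\psi_{U \setminus \{i\}}$ as endomorphisms of $F(x_1 \oplus \cdots \oplus x_d)$.

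From this it follows that the factor $F*\psi_U - F*\psi_{U \setminus \{i\}}$ in the composition defining $\crosseffect(U)$ is null at the tuple $(x_1, \ldots, x_d)$. Since by \cref{lem:pairwise} all of the factors commute with one another in the homotopy category, the whole composite $\crosseffect(U)$ factors through this nullhomotopic map, and hence $\crosseffect(U)F(x_1 \oplus \cdots \oplus x_d) \simeq 0$ as claimed. I do not anticipate a genuine obstacle here: the only subtle point is the $\infty$-categorical identification $\psi_U \simeq \psi_{U \setminus \{i\}}$ at a zero coordinate, and this is immediate from the contractibility of endomorphism spaces of the zero object in a stable $\infty$-category; the remainder is a formal manipulation of the defining expression for $\crosseffect(U)$.
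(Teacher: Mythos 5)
Your proof is correct and follows essentially the same route as the paper: both arguments observe that $\psi_U \simeq \psi_{U\setminus\{i\}}$ when $x_i \simeq 0$, so the factor $F*\psi_U - F*\psi_{U\setminus\{i\}}$ of $\crosseffect(U)$ is nullhomotopic and hence $\crosseffect(U)$ acts trivially on $F(x_1\oplus\cdots\oplus x_d)$. Your elaboration on why the two idempotents agree (contractibility of the endomorphism space of a zero object) is a fine justification of a step the paper treats as immediate.
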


\begin{proof}
    Suppose $x_i\simeq 0$ and $i\in U$. Then $\psi_U= \psi_{U\setminus\{i\}}$ and therefore
    \[
		F*\psi_U-F*\psi_{U\setminus\{i\}}\colon F(x_1\oplus\cdots\oplus x_d)\to F(x_1\oplus\cdots\oplus x_d)
	\]
	is (homotopic to) the trivial map. But $F*\psi_U-F*\psi_{U\setminus\{i\}}$ is a factor of $\crosseffect(U)$. It follows that $\crosseffect(U)$ acts trivially on $F(x_1\oplus\cdots\oplus x_d)$ in this case, and therefore $\crosseffect(U)F(x_1\oplus\cdots\oplus x_d)\simeq 0$.
\end{proof}

\begin{Lem}\label{lem:equivalent}
	Let $y_1, \ldots y_d\in \Sp^c$ and suppose we are given maps $x_i\to y_i$ for $1\le i \le d$. Suppose there is a set $U\subseteq \num{d}$ such that the map $x_i\to y_i$ is an equivalence for all $i\in U$. Then the following induced map is an equivalence:
	\[
		\crosseffect(U)F(x_1\oplus\cdots\oplus x_d)\xrightarrow{\simeq} \crosseffect(U)F(y_1\oplus \cdots \oplus y_d).
	\]
\end{Lem}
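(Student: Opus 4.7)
The plan is to show that the summand $\crosseffect(U) F(x_1\oplus\cdots\oplus x_d)$ depends, up to natural equivalence, only on the summands $x_i$ with $i\in U$. Writing $x_U \coloneqq \bigoplus_{i\in U} x_i$, let $\iota_x\colon x_U \to x_1\oplus\cdots\oplus x_d$ and $p_x\colon x_1\oplus\cdots\oplus x_d \to x_U$ be the canonical inclusion and projection. These satisfy $p_x \iota_x = \id_{x_U}$ and $\iota_x p_x = \psi_U$, and for each $V\subseteq U$ a direct check shows that $\psi_V$ factors as $\iota_x\,\tilde\psi_V\, p_x$, where $\tilde\psi_V\colon x_U\to x_U$ is the analogous endomorphism defined using only the summands of $x_U$. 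Substituting this factorization into the formula of \Cref{def:f_U} yields
\[
\crosseffect(U) = F(\iota_x)\,\widetilde\crosseffect(U)\,F(p_x),
\]
where $\widetilde\crosseffect(U)$ denotes the analogous idempotent on $F(x_U)$.

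Next, I would verify that $F(\iota_x)$ and $F(p_x)$ restrict to mutually inverse equivalences between $\widetilde\crosseffect(U) F(x_U)$ and $\crosseffect(U) F(x_1\oplus\cdots\oplus x_d)$. One direction is immediate from $F(p_x)F(\iota_x) = F(\id) = \id$; for the other, $F(\iota_x) F(p_x) = F(\psi_U)$, and the identity $\psi_U\psi_V = \psi_V$ for $V\subseteq U$ together with the formula for $\crosseffect(U)$ imply that $F(\psi_U)$ restricts to the identity on $\crosseffect(U) F(x_1\oplus\cdots\oplus x_d)$. The same argument applied to $y$ yields an analogous equivalence $\crosseffect(U) F(y_1\oplus\cdots\oplus y_d) \simeq \widetilde\crosseffect(U) F(y_U)$.

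Finally, the given component maps assemble into a map $f\colon x_1\oplus\cdots\oplus x_d \to y_1\oplus\cdots\oplus y_d$ satisfying $\psi_V^y\circ f = f\circ \psi_V^x$ for each $V\subseteq U$, so $F(f)$ commutes with the idempotents $\crosseffect(U)$ and restricts to the map in question. Since $p_y\circ f\circ \iota_x = f_U \coloneqq \bigoplus_{i\in U}f_i$ and each $f_i$ with $i\in U$ is an equivalence by hypothesis, $f_U$ is an equivalence, so $\widetilde\crosseffect(U)F(f_U)$ induces an equivalence between $\widetilde\crosseffect(U) F(x_U)$ and $\widetilde\crosseffect(U) F(y_U)$. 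Transporting along the identifications established above then shows that $\crosseffect(U) F(f)$ is an equivalence, as desired. The main technical point is unwinding the factorization $\psi_V = \iota_x\tilde\psi_V p_x$ and checking the associated compatibilities; both are routine and reduce to tracking the evident matrix descriptions of the maps involved.
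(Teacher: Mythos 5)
Your proof is correct and rests on the same key observation as the paper's: since $\crosseffect(U)=\sum_{V\subseteq U}(\pm1)F*\psi_V$ involves only the idempotents $\psi_V$ with $V\subseteq U$, the summand it splits off depends only on the $x_i$ with $i\in U$, where the given maps are equivalences. The paper's own argument is a terser version of this (it notes directly that $f$ induces equivalences on the images of the $\psi_V$ for $V\subseteq U$ and concludes); your explicit factorization of $\crosseffect(U)$ through $F\bigl(\bigoplus_{i\in U}x_i\bigr)$ just fills in the details, and all the compatibilities you defer to a "direct check" do indeed check out.
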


\begin{proof}
	Note that our assumptions imply that the map 
	\[
		x_1\oplus\cdots\oplus x_d \to y_1\oplus\cdots\oplus y_d
	\]
	induces an equivalence
	\[
		\psi_V(x_1\oplus\cdots\oplus x_d) \xrightarrow{\simeq} \psi_V(y_1\oplus\cdots\oplus y_d)
	\]
	for all $V\subseteq U$. It follows that $\crosseffect(U)=\sum_{V\subseteq U}(-1)^{|U|-|V|} F(\psi_V)$ also induces an equivalence
	\[
		\crosseffect(U)F(x_1\oplus\cdots\oplus x_d) \xrightarrow{\simeq} \crosseffect(U)F(y_1\oplus\cdots\oplus y_d). \qedhere
	\]
\end{proof}

\begin{Rem}
	Now we are ready to express the cross-effect of a functor $F$ in terms of the idempotents $\crosseffect(U)$. %
\end{Rem}

\begin{Prop}\label{prop:cubesplitting}
	Let $x_1,\ldots,x_d\in\Sp^c$ and let $F$ be a functor from~$\Sp^c$ to $\cat D$. Consider the two $d$-dimensional cubical diagrams that both send a subset $S\subseteq \num{d}$ to $F\left(\bigoplus_{i\notin S} x_i \right)$ and whose morphisms are induced by the relevant collapse maps $x_i\to 0$ for the first diagram and the relevant inclusion maps $0\to x_i$ for the second diagram. These diagrams are equivalent to the diagrams that send $S$ to the product
	\[
		\prod_{U\subseteq \num{d}\setminus S}\crosseffect(U)F(x_1\oplus\cdots \oplus x_d)
	\]
	and whose maps are the obvious projections (for the first diagram) and the obvious inclusions (for the second diagram).
\end{Prop}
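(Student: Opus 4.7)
My plan is to apply \Cref{cor:splitting} pointwise and read off the structure maps from the naturality of the idempotents $\crosseffect(U)$. Fix $S\subseteq\num{d}$ and write $\bigoplus_{i\notin S}x_i \simeq y_1^S\oplus\cdots\oplus y_d^S$ with $y_i^S = x_i$ for $i\notin S$ and $y_i^S = 0$ for $i\in S$. Applying \Cref{cor:splitting} to the tuple $(y_1^S,\ldots,y_d^S)$ gives a natural equivalence
\[
F\!\left(\bigoplus_{i\notin S}x_i\right) \simeq \prod_{U\subseteq\num{d}} \crosseffect(U) F(y_1^S\oplus\cdots\oplus y_d^S).
\]
The summands with $U\cap S\neq\varnothing$ vanish by \Cref{lem:trivial}, and the remaining ones, indexed by $U\subseteq\num{d}\setminus S$, are canonically identified with $\crosseffect(U)F(x_1\oplus\cdots\oplus x_d)$ via \Cref{lem:equivalent}, applied to the morphism of $d$-tuples $(y_1^S,\ldots,y_d^S)\to(x_1,\ldots,x_d)$ which is the identity on $i\notin S$. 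This furnishes the pointwise splitting asserted in the proposition.

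To promote this to an equivalence of cubes, I will use the fact that each $\crosseffect(U)$ is a natural endomorphism of $F\circ\bigoplus_d$ on $(\Sp^c)^{\times d}$. Given an edge $S\subseteq S'$ of the first cube, the collapse map $\bigoplus_{i\notin S}x_i \to \bigoplus_{i\notin S'}x_i$ is the image under $F$ of the morphism of $d$-tuples $(y_1^S,\ldots,y_d^S)\to(y_1^{S'},\ldots,y_d^{S'})$ that is the identity on $i\notin S'$ and the zero map $x_i\to 0$ on $i\in S'\setminus S$. Naturality of $\crosseffect(U)$ forces this map to split as a product indexed by $U\subseteq\num{d}$. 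For $U\subseteq\num{d}\setminus S'$ both summands are identified with $\crosseffect(U)F(x_1\oplus\cdots\oplus x_d)$, and because the relevant tuple-morphism factors through $(x_1,\ldots,x_d)$ after restriction to the coordinates $i\notin S'$, the induced map is literally the identity on that summand; for $U\subseteq\num{d}\setminus S$ with $U\cap(S'\setminus S)\neq\varnothing$ the target summand already vanishes by \Cref{lem:trivial}, so the induced map is zero. This is exactly the projection described in the statement. For the second cube, the same argument applies after replacing the collapse tuple-morphism by its counterpart built from the inclusions $0\to x_i$; the induced map between summands is then the evident inclusion.

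The delicate point is verifying that the induced map on a summand $\crosseffect(U)F(x_1\oplus\cdots\oplus x_d)$ (regarded at two different vertices of the cube) is literally the identity rather than some other self-equivalence. This boils down to a coherence of three morphisms of $d$-tuples which fit into a commuting triangle in $(\Sp^c)^{\times d}$; the triangle is coordinatewise trivial on $i\notin S'$ because each leg is the identity on $x_i$ there, so the required compatibility is automatic from the functoriality of $F$ and the naturality of $\crosseffect(U)$. Beyond this careful bookkeeping, no further input is needed past \Cref{cor:splitting}, \Cref{lem:trivial}, and \Cref{lem:equivalent}.
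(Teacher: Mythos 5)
Your proof is correct and follows essentially the same route as the paper's: apply \Cref{cor:splitting} pointwise, kill the summands meeting $S$ via \Cref{lem:trivial}, identify the survivors with $\crosseffect(U)F(x_1\oplus\cdots\oplus x_d)$ via \Cref{lem:equivalent}, and use naturality of the idempotents to handle the structure maps. The extra bookkeeping you supply for the edge maps (and the observation that the induced map on a $\crosseffect(U)$-summand depends only on the coordinates in $U$) is a correct elaboration of the paper's terser remark that the maps of the cube are compatible with the splitting.
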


\begin{proof}
	We can identify $\bigoplus_{i\notin S} x_i$ with $\bigoplus_{i=1}^d y_i$ where $y_i=0$ for $i\in S$ and $y_i=x_i$ for $i\notin S$. It follows that we may identify either one of the two versions of the cubical diagram $S\mapsto F(\bigoplus_{i\notin S} x_i)$ with a cubical diagram where $F$ is evaluated at a wedge sum of $d$ spectra, and where the maps are induced by a wedge sum of $d$ maps of spectra.

	By \cref{cor:splitting}, $F(y_1\oplus\cdots\oplus y_d)$ splits as a product of $\crosseffect(U)F(y_1\oplus\cdots\oplus y_d)$, where $U$ ranges over subsets of $\num{d}$, and the maps in either version of the cubical diagram are compatible with the splitting. By \Cref{lem:trivial}, for each $U$ that is not contained in $\num{d}\setminus S$, $\crosseffect(U)F(y_1\oplus\cdots\oplus y_d)\simeq 0$, and so can be dropped from the product. By \Cref{lem:equivalent}, whenever $S\subseteq S'\subseteq \num{d}$, for any $U\subseteq \num{d}\setminus S'$ the factors $\crosseffect(U)F(\bigoplus_{i\notin S} x_i)$ and $\crosseffect(U) F(\bigoplus_{i\notin S'} x_i)$ are equivalent, and can be identified with $\crosseffect(U)F(x_1\oplus \cdots\oplus x_d)$.
\end{proof}

We now can state the precise relationship between $\crosseffect_d$, $\crosseffect^d$, $\crosseffect\num{d}$ and $\crosseffect(d)$.

\begin{Lem}\label{lem:cross-effect}
	The idempotent $\crosseffect\num{d}\colon F(x_1\oplus\cdots\oplus x_d)\to F(x_1\oplus\cdots\oplus x_d)$ of \cref{exa:crossd} is equivalent to the idempotent $\crosseffect{(d)}\colon F(x_1\oplus\cdots\oplus x_d)\to F(x_1\oplus\cdots\oplus x_d)$ of \cref{rem:crd-idem}. For any functor $F$ there are natural equivalences
	\[
		\crosseffect_d F(x_1, \ldots, x_d)\simeq \crosseffect^d F(x_1, \ldots, x_d)\simeq \crosseffect\num{d} F(x_1\oplus \cdots\oplus x_d).
	\]
\end{Lem}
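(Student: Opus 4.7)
The strategy is to use \cref{prop:cubesplitting} to rewrite the defining cubes of $\crosseffect_d F$ and $\crosseffect^d F$ as product cubes, and then to identify only the $U=\num{d}$ summand as contributing nontrivially. So I would first identify $\crosseffect_d F(x_1,\ldots,x_d)$ and $\crosseffect^d F(x_1,\ldots,x_d)$ with $\crosseffect\num{d} F(x_1\oplus\cdots\oplus x_d)$, and then trace through the natural maps in \cref{lem:crosstococross} to conclude that the idempotent $\crosseffect(d)$ of \cref{rem:crd-idem} coincides with $\crosseffect\num{d}$.

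By \cref{prop:cubesplitting}, the cube defining $\crosseffect_d F$ is equivalent to the cube whose vertex at $S$ is $\prod_{U\subseteq\num{d}\setminus S}\crosseffect(U)F(x_1\oplus\cdots\oplus x_d)$ with the obvious projection maps. Since total fibers commute with products in a stable $\infty$-category, this total fiber splits as a product indexed by $U\subseteq\num{d}$ of total fibers of sub-cubes. The $U$-sub-cube takes the value $\crosseffect(U)F(x_1\oplus\cdots\oplus x_d)$ at vertices $S$ with $S\cap U=\emptyset$ (with identity maps between them) and the value $0$ elsewhere. I would compute its total fiber by iterating one-variable fibers in the directions $i\in U$; each such step strips off one dimension and preserves the shape of the sub-cube. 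After all $|U|$ such steps one is left with a $(d-|U|)$-dimensional cube of identities on $\crosseffect(U)F(x_1\oplus\cdots\oplus x_d)$. Such a cube of identities has vanishing total fiber whenever $d-|U|>0$, so the sole surviving summand is $U=\num{d}$, yielding $\crosseffect_d F\simeq \crosseffect\num{d} F(x_1\oplus\cdots\oplus x_d)$. The dual argument with total cofibers gives $\crosseffect^d F\simeq \crosseffect\num{d} F(x_1\oplus\cdots\oplus x_d)$.

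To identify the idempotents, I would unwind the above computation to see that the natural map $\crosseffect_d F\to F(x_1\oplus\cdots\oplus x_d)$ to the $S=\emptyset$ vertex corresponds, under the splitting of \cref{cor:splitting}, to the inclusion of $\crosseffect\num{d} F$ as the $U=\num{d}$ direct summand, and dually that $F(x_1\oplus\cdots\oplus x_d)\to \crosseffect^d F$ corresponds to the projection onto that summand. Their composition along the path in \eqref{eq:crosseffectidemptotent}, which by construction defines $\crosseffect(d)$, is therefore the inclusion-after-projection for the $\crosseffect\num{d} F$ summand, i.e., the idempotent $\crosseffect\num{d}$ itself.

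The main point requiring care is the compatibility of the canonical total fiber and cofiber maps with the direct sum splitting of \cref{cor:splitting}: one must verify that the equivalence provided by \cref{prop:cubesplitting} sends the canonical maps between the (co)total (co)fibers and the $S=\emptyset$ vertex to the evident inclusion and projection for the single nonvanishing summand. Once this bookkeeping is in place, both halves of the lemma follow immediately from the orthogonal idempotent decomposition of \cref{lemma:basic-idempotents}.
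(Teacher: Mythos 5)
Your proposal is correct and follows essentially the same route as the paper's proof: both apply \cref{prop:cubesplitting} to split the defining cube, observe that all factors $\crosseffect(U)F$ with $U\subsetneq\num{d}$ cancel in the total (co)fiber, and identify the resulting maps to and from $F(x_1\oplus\cdots\oplus x_d)$ with the inclusion and projection of the $\crosseffect\num{d}$-summand so that the composite in \eqref{eq:crosseffectidemptotent} is $\crosseffect\num{d}$ itself. Your argument merely makes explicit the ``straightforward calculation'' the paper leaves implicit, by iterating one-variable fibers to see that each proper-$U$ sub-cube is a positive-dimensional constant cube.
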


\begin{proof}
    We already saw in \Cref{lem:crosstococross} that $\crosseffect_d F(x_1, \ldots, x_d)\simeq \crosseffect^d F(x_1, \ldots, x_d)$, but this also follows from \Cref{prop:cubesplitting}. The proposition says that $\crosseffect_d F(x_1, \ldots, x_d)$ (respectively, $\crosseffect^d F(x_1, \ldots, x_d)$) is equivalent to the total fiber (respectively, cofiber) of the cubical diagram that sends a subset $S\subseteq \num{d}$ to $\prod_{U\subseteq \num{d}\setminus S} \crosseffect(U)F(\bigoplus_{i\notin S}x_i)$ where the maps are given by projections (respectively, inclusions). A straightforward calculation shows that the factors corresponding to $\crosseffect(U)F$ with $U$ a proper subset of $\num{d}$ ``cancel out'', and the total fiber/cofiber is equivalent to $\crosseffect\num{d}F(x_1\oplus\cdots\oplus x_d)$. It also follows that the map $F(x_1\oplus\cdots\oplus x_d) \to \crosseffect^dF(x_1, \ldots, x_d)$ is equivalent to the projection $F(x_1\oplus\cdots\oplus x_d)\to \crosseffect\num{d}F(x_1\oplus\cdots\oplus x_d)$, and the map $\crosseffect_dF(x_1, \ldots, x_d)\to F(x_1, \ldots, x_d)$ is equivalent to the section of the projection map. It follows that $\crosseffect[d]$ is equivalent to the idempotent $\crosseffect(d)$ of line~\eqref{eq:crosseffectidemptotent} in \Cref{rem:crd-idem}.
\end{proof}

We have one final lemma, which gives a kind of generalized converse of the formula for $\crosseffect(U)$ in \Cref{def:f_U}.

\begin{Lem}\label{lem:product-to-sum}
	For any $U\subseteq \num{d}$, there are equivalences
	\begin{equation}\label{eq:F(e_U)}
		F*\psi_U\simeq \sum_{V\subseteq U} \crosseffect(V)
	\end{equation}
	and
	\begin{equation}\label{eq:1-f(e_U)}
		1-F*\psi_U\simeq \sum_{V\not\subseteq U} \crosseffect(V).
	\end{equation}
	Moreover, for any collection of subsets $U_1, \ldots, U_d\subseteq \num{d}$, there is an equivalence
	\begin{equation}\label{eq:product-to-sum}
		\bigcirc_{i=1}^d(1-F*\psi_{U_i})\simeq \sum_{V\not\subseteq U_1, \ldots, U_d} \crosseffect(V)
	\end{equation}
	where the sum on the right-hand side is over all subsets $V\subseteq \num{d}$ that are not contained in any of the sets $U_1, \ldots, U_d$.
\end{Lem}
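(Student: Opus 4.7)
\emph{Proof strategy.} The plan is to prove the three identities as statements about endomorphisms of $F \circ \bigoplus_d$, exploiting Möbius inversion on the Boolean lattice of subsets of $\num{d}$ together with the orthogonality and completeness of the idempotents $\crosseffect(V)$ supplied by \Cref{lemma:basic-idempotents}. Since the formulas defining $\crosseffect(V)$ in \Cref{def:f_U} are stated up to homotopy, it suffices to verify the claimed identities in the homotopy category, where $\pm$-signed formal sums of morphisms are well defined because $\cat D$ is stable.

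For equivalence~\eqref{eq:F(e_U)}, I substitute the formula $\crosseffect(V) \simeq \sum_{W \subseteq V}(-1)^{|V|-|W|} F*\psi_W$ into the right-hand side and interchange the order of summation:
\[
\sum_{V \subseteq U} \crosseffect(V) \;\simeq\; \sum_{W \subseteq U} \Bigl( \sum_{W \subseteq V \subseteq U} (-1)^{|V|-|W|} \Bigr) F*\psi_W.
\]
Reindexing the inner sum as $\sum_{S \subseteq U \setminus W}(-1)^{|S|}$ shows it vanishes unless $U \setminus W = \varnothing$, in which case it equals $1$. Hence the right-hand side collapses to $F*\psi_U$, proving~\eqref{eq:F(e_U)}. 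For~\eqref{eq:1-f(e_U)}, I subtract \eqref{eq:F(e_U)} from the identity $1 \simeq \sum_{V \subseteq \num{d}} \crosseffect(V)$ provided by \Cref{lemma:basic-idempotents}.

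For~\eqref{eq:product-to-sum}, I apply~\eqref{eq:1-f(e_U)} to each of the $d$ factors and expand the composition:
\[
\bigcirc_{i=1}^d (1 - F*\psi_{U_i}) \;\simeq\; \sum_{(V_1, \ldots, V_d)} \crosseffect(V_d) \circ \cdots \circ \crosseffect(V_1),
\]
where the sum is taken over tuples $(V_1, \ldots, V_d)$ with $V_i \not\subseteq U_i$ for each $i$. By the pairwise orthogonality and idempotency of the $\crosseffect(V)$ recorded in \Cref{lemma:basic-idempotents}, the composite $\crosseffect(V_d) \circ \cdots \circ \crosseffect(V_1)$ vanishes unless all the $V_i$ coincide with a common $V$, in which case it equals $\crosseffect(V)$. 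The surviving terms are indexed exactly by those $V \subseteq \num{d}$ with $V \not\subseteq U_i$ for every $i$, yielding the claimed formula.

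I do not anticipate any serious obstacle: once the orthogonality and completeness of the $\crosseffect(V)$ as well as the defining formula in \Cref{def:f_U} are in hand, the argument reduces to purely combinatorial Möbius inversion followed by an orthogonality collapse. The only mild subtlety is ensuring that manipulations with signed formal sums of morphisms are legitimate, which is automatic in the additive homotopy category of the stable $\infty$-category.
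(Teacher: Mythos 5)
Your proof is correct and, for the second and third identities, coincides with the paper's argument (subtracting from $1\simeq\sum_V\crosseffect(V)$, then expanding the composite and collapsing via orthogonality and idempotency). For the first identity the paper instead expands $1\simeq\bigcirc_{i\in U}\bigl(F*\psi_{\num{d}\setminus\{i\}}+(1-F*\psi_{\num{d}\setminus\{i\}})\bigr)$ and multiplies by $F*\psi_U$, whereas you substitute the alternating-sum formula for $\crosseffect(V)$ from \Cref{def:f_U} and run M\"obius inversion; this is a cosmetic reversal of the same inclusion--exclusion computation, and your remark that all manipulations take place with commuting elements of the endomorphism ring in the (additive) homotopy category correctly disposes of the only subtlety.
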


\begin{proof}
	Observe that we have equivalences
	\begin{align*}
		1&\simeq\bigcirc_{i\in U}\left(F*\psi_{\num{d}\setminus\{i\}}+\left(1-F*\psi_{\num{d}\setminus\{i\}}\right)\right)\\ 
		&\simeq\sum_{V\subseteq U} F*\psi_{\num{d}\setminus V}\circ (\bigcirc_{i\in U\setminus V}(1-F*\psi_{\num{d}\setminus\{i\}})).
	\end{align*}
	Therefore we have 
	\begin{align*}
		F*\psi_U&\simeq F*\psi_U\sum_{V\subseteq U} F*\psi_{\num{d}\setminus V}\circ (\bigcirc_{i\in U\setminus V}(1-F*\psi_{\num{d}\setminus\{i\}})) \\ 
		&\simeq\sum_{V\subseteq U}  \bigcirc_{i\in U\setminus V}(F*\psi_{U\setminus V}-F*\psi_{(U\setminus V)\setminus \{i\}})) \\ 
		&\simeq \sum_{V\subseteq U} \crosseffect(U\setminus V)= \sum_{V\subseteq U} \crosseffect(V)
	\end{align*}
	which proves~\eqref{eq:F(e_U)}. The equality~\eqref{eq:1-f(e_U)} follows from~\eqref{eq:F(e_U)} and the fact that $\sum_{U\subseteq \num{d}} \crosseffect(U)\simeq 1.$ Finally~\eqref{eq:product-to-sum} follows from~\eqref{eq:1-f(e_U)} and the fact that the $\crosseffect(U)$ are pairwise orthogonal idempotents.
\end{proof}

\section{The cross-effects of representable functors}\label{sec:cross-rep}

We now investigate the cross-effects of representable functors and show that they provide a convenient set of generators for $\Exc{d}(\Sp^c,\Sp)$. These generators will play an important role in this work. The material in the previous section will help us analyze the Day convolutions and natural transformations between these generators.

\begin{Rem}
	Recall from \Cref{Not:corepresentable_functor} that $h_x\in \Fun(\Sp^c,\Sp)$ is the reduced functor corepresented by $x\in \Sp^c$, i.e., $h_x(y)=\Sigma^\infty \Map_{\Sp}(x, y).$ In this section we will view the assignment $x\mapsto h_x$ as a contravariant functor from $\Sp^c$ to $\Fun(\Sp^c, \Sp)$, or equivalently as a functor from $\Sp^c$ to $\Fun(\Sp^c, \Sp)\op$. Let us formalize this in a definition:
\end{Rem}

\begin{Def}
	Let $F\colon \Sp^c\to \Fun(\Sp^c, \Sp)\op$ be the functor defined by the formula
	\[
		F(x)\coloneqq h_x.
	\]
	One can view $F$ as a stable and reduced variant of the Yoneda embedding.
\end{Def}

\begin{Def}\label{def:hxi}
	Fix a finite spectrum $x \in \Sp^c$. For each integer $i \ge 1$, we define $h_x(i)\in \Fun(\Sp^c,\Sp)$ by
	\[
		h_x(i) \coloneqq \crosseffect_i F(x,\ldots,x).
	\]
	More generally, given a finite set $U$, we define
	\[
		h_x(U)\coloneqq\crosseffect(U) F\left(\bigoplus_U x\right)
	\]
	where the right-hand side uses \cref{notation:cruf}.
\end{Def}

\begin{Rem}
	For each $i\ge 1$, we thus have a bi-functor $(x, y)\mapsto h_x(i)(y)$ which is contravariant in $x$ and covariant in $y$.
\end{Rem}

\begin{Rem}\label{rem:h_x(i)}
	By \Cref{lem:cross-effect}, there are three equivalent models for $h_x(i)$: as the cross-effect, the co-cross-effect, and as the image of the idempotent $\crosseffect(i)$ acting on 
	\[
		h_{\underbrace{x\oplus\cdots\oplus x}_i}.
	\] 
	\noindent The co-cross-effect leads to a particularly simple description of $h_x(i)$. Recall that the co-cross-effect of $h_x$ in the variable $x$ is the total cofiber of the cubical diagram 
	\[
		S \mapsto h_{\left(\underset{{j\in \num{i}\setminus S}}{\bigvee} x_j\right)}(-)=\Sigma^{\infty}\prod_{j\in\num{i}\setminus S }\Map_{\Sp}\left(x_j,-\right)
	\] 
	where the maps are inclusions of factors. The total cofiber of this diagram is $h_{x_1}\otimes \cdots\otimes h_{x_i}$. This is just saying that $n$-fold smash product of spaces is equivalent to the total cofiber of a cube of cartesian products of spaces, indexed by all subsets of $\num{n}$. After substituting $x$ for $x_1, \ldots, x_i$, we conclude that
	\begin{equation}\label{eq:smash decomposition}
		h_x(i)(y) \simeq h_x(y)\otimes\cdots\otimes h_x(y). 
	\end{equation}
	In other words, $h_x(i)$ is given by the pointwise tensor product of $i$ copies of $h_x$.
\end{Rem}  

\begin{Rem}\label{rem:same-cross-effect}
	There is a natural equivalence $h_x(y\oplus y)\xrightarrow{\sim} h_{x\oplus x}(y)$. It follows that we could equivalently define $h_x(i)$ to be the $i$-th cross-effect in the covariant variable. However, it suits our goals better to take the cross-effects in the contravariant variable, as in \cref{def:hxi}.
\end{Rem}

\begin{Rem}
	From now on we will mostly take $x=\bbS$ and consider the functors~$h_\bbS(i)$ given by
	\[
		h_{\bbS}(i)(y)\simeq h_{\bbS}(y)^{\otimes i} \simeq (\Sigma^\infty \Omega^\infty y)^{\otimes i}\simeq \Sigma^\infty (\Omega^\infty y)^{\wedge i}.
	\] 
	Our next task is to calculate the Day convolutions of the functors $h_{\bbS}(i)$ for various~$i$. We will use the notation
	\[
		\num{i}_+\wedge x \coloneqq\underbrace{x\oplus\ldots \oplus x}_{i}.
	\]
	We also need the following definition, which will show up again later in the paper.
\end{Rem}

\begin{Def}\label{def:good-subsets}
	Let $i$ and $j$ be positive integers, and write $\num{i} = \{1,\ldots,i\}$ for the finite set with $i$ elements. A subset of $\num{i} \times \num{j}$ is said to be \emph{good}  if its projections onto $\num{i}$ and $\num{j}$ are both surjective.
\end{Def}

\begin{Rem}
	Note that in order to be good, a subset must have cardinality between $\max(i,j)$ and $ij$.
\end{Rem}

\begin{Prop}\label{prop:day-convolution}
	There is an equivalence
	\[
		h_{\bbS}(i) \circledast  h_{\bbS}(j) \simeq \bigoplus_{\substack{U \subseteq \num{i} \times \num{j} \\ \
		U \text{ good}}} h_\bbS(|U|)
	\]
	in $\Fun(\Sp^c,\Sp)$. 
\end{Prop}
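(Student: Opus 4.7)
The plan is to realize $h_\bbS(i) \circledast h_\bbS(j)$ as a direct summand of the convolution of the representables $h_{\num{i}_+\wedge \bbS} \circledast h_{\num{j}_+\wedge \bbS}$, which by \cref{Cor:conv-representables} is $h_{(\num{i}\times\num{j})_+\wedge\bbS}$, and then use the idempotent decomposition of Section~\ref{sec:cross-idempotents} to read off which summands appear. Recall from \cref{lem:cross-effect,def:hxi} that $h_\bbS(i)$ is the direct summand of $h_{\num{i}_+\wedge\bbS}$ split off by the idempotent $\crosseffect(\num{i})$; hence $h_\bbS(i) \circledast h_\bbS(j)$ is the summand of $h_{(\num{i}\times\num{j})_+\wedge\bbS}$ cut out by the idempotent $\crosseffect(\num{i}) \circledast \crosseffect(\num{j})$, and the task is to identify this combined idempotent inside the full decomposition of $h_{(\num{i}\times\num{j})_+\wedge\bbS}$ afforded by \cref{cor:splitting}.

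The first key step is to track the elementary idempotents $\psi_U$ across the monoidal equivalence of \cref{Cor:conv-representables}. Since the stable Yoneda embedding is symmetric monoidal (\cref{rem:stable-yoneda}), the endomorphism $h_{\psi_{\num{i}\setminus\{k\}}} \circledast \mathrm{id}$ of $h_{\num{i}_+\wedge\bbS}\circledast h_{\num{j}_+\wedge\bbS}$ corresponds to $h_\phi$, where $\phi$ is the idempotent of $(\num{i}\times\num{j})_+\wedge\bbS$ collapsing the row indexed by $k$; in the notation of \Cref{sec:cross-idempotents} this is precisely $\psi_{(\num{i}\setminus\{k\})\times \num{j}}$. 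Similarly, $\mathrm{id}\circledast h_{\psi_{\num{j}\setminus\{l\}}}$ becomes $h_{\psi_{\num{i}\times(\num{j}\setminus\{l\})}}$. Plugging these identifications into the formula $\crosseffect(\num{i})=\bigcirc_{k=1}^i(1-h_{\psi_{\num{i}\setminus\{k\}}})$ of \cref{exa:crossd} (and its analogue for $\crosseffect(\num{j})$), we obtain, as endomorphisms of $h_{(\num{i}\times\num{j})_+\wedge\bbS}$,
\[
\crosseffect(\num{i})\circledast\crosseffect(\num{j}) \simeq \bigcirc_{k=1}^{i}\bigl(1-h_{\psi_{(\num{i}\setminus\{k\})\times\num{j}}}\bigr)\circ\bigcirc_{l=1}^{j}\bigl(1-h_{\psi_{\num{i}\times(\num{j}\setminus\{l\})}}\bigr).
\]

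Now \cref{lem:product-to-sum}, applied to the family of subsets $U_k=(\num{i}\setminus\{k\})\times\num{j}$ and $U_l'=\num{i}\times(\num{j}\setminus\{l\})$ of $\num{i}\times\num{j}$, rewrites this composition as $\sum_V \crosseffect(V)$ with $V$ ranging over subsets of $\num{i}\times\num{j}$ not contained in any $U_k$ or any $U_l'$. Since $V\subseteq U_k$ is equivalent to the projection of $V$ to $\num{i}$ missing $k$ (and similarly for the other side), the condition is precisely that both projections of $V$ are surjective, i.e.~that $V$ is good in the sense of \cref{def:good-subsets}. Because the idempotents $\crosseffect(V)$ are pairwise orthogonal (\cref{lemma:basic-idempotents}), their sum acts as a direct-sum decomposition on $h_{(\num{i}\times\num{j})_+\wedge\bbS}$; combined with \cref{lem:equivalent} (which identifies $\crosseffect(V) h_{(\num{i}\times\num{j})_+\wedge\bbS}$ with $h_\bbS(|V|)$, since all coordinates are $\bbS$), this yields the desired equivalence
\[
h_\bbS(i)\circledast h_\bbS(j) \simeq \bigoplus_{V\text{ good}} h_\bbS(|V|).
\]

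The only genuine subtlety is the bookkeeping in the first step: making sure that after transporting the idempotents $\crosseffect(\num{i})\circledast\crosseffect(\num{j})$ across the monoidal equivalence $h_{\num{i}_+\wedge\bbS}\circledast h_{\num{j}_+\wedge\bbS}\simeq h_{(\num{i}\times\num{j})_+\wedge\bbS}$ they decompose into the $\psi_U$'s on the product indexing set in the expected way. Once that is in hand, \cref{lem:product-to-sum} does all the combinatorial work and the ``good subset'' condition emerges automatically.
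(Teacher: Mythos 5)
Your proposal is correct and follows essentially the same route as the paper's proof: pass to the idempotent model, transport the elementary idempotents $h_{\psi_{\num{i}\setminus\{s\}}}$ across the equivalence $h_{\num{i}_+\wedge\bbS}\circledast h_{\num{j}_+\wedge\bbS}\simeq h_{(\num{i}\times\num{j})_+\wedge\bbS}$ to the collapse idempotents on rows and columns, and then invoke \cref{lem:product-to-sum} to see that the resulting composite of $(1-h_{\psi_U})$'s is the sum of $\crosseffect(V)$ over exactly the good subsets $V$. The only cosmetic difference is that you route the final identification of $\crosseffect(V)h_{(\num{i}\times\num{j})_+\wedge\bbS}$ with $h_\bbS(|V|)$ through \cref{lem:equivalent}, whereas the paper reads it off directly from \cref{def:hxi}.
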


\begin{proof}
	To prove the proposition, we will use the idempotent model of the cross-effect. By \Cref{lem:cross-effect}, $h_{\bbS}(i)$ is equivalent to $\crosseffect(i)h_{\num{i}_+\wedge\bbS}$ where 
	\begin{equation}\label{eq:idempotent h_S}
		\crosseffect(i)=\underset{1\le s\le i}{\bigcirc}(1-h_{\psi_{\num{i}\setminus\{s\}}}).
	\end{equation}
	Here $\psi_{\num{i}\setminus\{s\}}\colon \num{i}_+\wedge\bbS\to \num{i}_+\wedge \bbS$ is the idempotent map obtained by collapsing copy number $s$ of $\bbS$ to a point. Thus 
	\[
		h_{\psi_{\num{i}\setminus\{s\}}}\colon h_{\num{i}_+\wedge\bbS}\to h_{\num{i}_+\wedge\bbS}
	\]
	is the idempotent map induced by $\psi_{\num{i}\setminus\{s\}}$.

	Day convolution commutes with colimits in each variable, so there are equivalences
	\[
		h_{\bbS}(i) \circledast  h_{\bbS}(j)\simeq \crosseffect(i)h_{\num{i}_+ \wedge \bbS} \circledast \crosseffect(j) h_{\num{j}_+\wedge \bbS } \simeq\crosseffect(i)\crosseffect(j)\left(h_{\num{i}_+\wedge \bbS}\circledast h_{\num{j}_+\wedge \bbS}\right)
	\]
	where $\crosseffect(i)$ and $\crosseffect(j)$ act on $\num{i}_+\wedge \bbS$ and $\num{j}_+\wedge \bbS$ respectively. Next we need to figure out the effect of $\crosseffect(i)$ and $\crosseffect(j)$ on $h_{\num{i}_+\wedge \bbS}\circledast h_{\num{j}_+\wedge \bbS}$. Recall that $h_x=\Sigma^\infty \Map_{\Sp}(x, -)$. By \Cref{Cor:conv-representables} there is a natural equivalence
	\[
		h_{\num{i}_+\wedge \bbS}\circledast h_{\num{j}_+\wedge \bbS} \xrightarrow{\simeq} h_{\num{i}\times\num{j}_+\wedge \bbS}.
	\]
	It follows that given, say, an $s\in \num{i}$, the action of $h_{\psi_{\num{i}\setminus\{s\}}}$ on $h_{\num{i}_+\wedge \bbS}\circledast h_{\num{j}_+\wedge \bbS}$ can be identified with the action of $h_{\psi_{(\num{i}\setminus\{s\})\times \num{j}}}$ on $h_{\num{i}\times\num{j}_+\wedge \bbS}$. Here 
	\[
		\psi_{(\num{i}\setminus\{s\})\times \num{j}}\colon \num{i}\times \num{j}_+ \wedge\bbS \to \num{i}\times \num{j}_+ \wedge\bbS
	\]
	is the idempotent that collapses $\{s\}\times \num{j}$ to the basepoint.

	It follows that $h_{\bbS}(i) \circledast  h_{\bbS}(j)$ is equivalent to
	\[
		\underset{s\in \num{i}}{\bigcirc} (1-h_{\psi_{(\num{i}\setminus\{s\})\times \num{j}}})\circ \underset{t\in \num{j}}{\bigcirc}(1-h_{\psi_{\num{i}\times (\num{j}\setminus\{t\})}})h_{\num{i}\times \num{j}_+ \bbS}.
	\]
	By formula~\eqref{eq:product-to-sum} of \Cref{lem:product-to-sum}, this is equivalent to
	\[
		\sum_{U} \crosseffect(U)h_{\num{i}\times\num{j}_+\wedge \bbS}
	\]
	where $U$ ranges over subsets $U\subseteq \num{i}\times\num{j}$ that are not contained in a subset of the form $(\num{i}\setminus \{s\}) \times \num{j}$ or $\num{i}\times (\num{j}\setminus\{t\})$. But these are precisely the good subsets of $\num{i}\times \num{j}$, and for each good subset $U$, $\crosseffect(U)h_{\num{i}\times\num{j}_+\wedge \bbS}$ is precisely $h_{\bbS}(|U|)$.
\end{proof}

\begin{Exa}
	We have that 
	\[
		h_{\bbS}(2) \circledast  h_{\bbS}(2) \simeq h_{\bbS}(2)^{\oplus 2} \oplus  h_{\bbS}(3)^{\oplus 4} \oplus h_{\bbS}(4). 
	\]
\end{Exa}

As a consequence of \eqref{eq:smash decomposition}, we obtain the following.

\begin{Lem}\label{lem:hSi-is-algebra}
    For each $i \ge 1$ the functor $h_{\bbS}(i)$ is a commutative algebra in $\Fun(\Sp^c,\Sp)$. 
\end{Lem}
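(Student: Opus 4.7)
My plan is to use the standard correspondence (as in \cite[Example 2.2.6.9]{HALurie}) between commutative algebra objects in the Day convolution monoidal category $(\Fun(\Sp^c,\Sp),\circledast)$ and lax symmetric monoidal functors $\Sp^c\to\Sp$. Under this dictionary, it suffices to equip $h_{\bbS}(i)\colon \Sp^c\to\Sp$ with a lax symmetric monoidal structure.

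The key observation is that equation~\eqref{eq:smash decomposition} identifies $h_{\bbS}(i)$ with the composite
\[
\Sp^c\xrightarrow{h_{\bbS}}\Sp\xrightarrow{(-)^{\otimes i}}\Sp,
\]
and I would derive the desired monoidal refinement from this factorization. The first factor $h_{\bbS}=\Sigma^{\infty}\Omega^{\infty}$ is lax symmetric monoidal: this is equivalent, under the correspondence above, to the observation recorded in \cref{rem:unit} that $h_{\bbS}$ is the unit of the Day convolution (and hence trivially a commutative algebra), and can be seen concretely from the factorization through the strong symmetric monoidal $\Sigma^{\infty}$ and the lax symmetric monoidal $\Omega^{\infty}$. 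The second factor $(-)^{\otimes i}\colon \Sp\to\Sp$ is strong symmetric monoidal via the natural equivalence $A^{\otimes i}\otimes B^{\otimes i}\simeq(A\otimes B)^{\otimes i}$ induced by the symmetry of the smash product. Since the composition of lax symmetric monoidal functors is lax symmetric monoidal, the composite inherits a canonical lax symmetric monoidal structure, hence a commutative algebra structure under Day convolution.

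The only point deserving attention is that~\eqref{eq:smash decomposition} must be promoted to an equivalence of functors (rather than merely an equivalence of pointwise values), but this is already implicit in the statement recorded in \cref{rem:h_x(i)}, where the equivalence is given naturally in $y$; one can then simply transport the lax symmetric monoidal structure along it to define the algebra structure on $h_{\bbS}(i)$. Alternatively, and more concretely, one can construct the multiplication $h_{\bbS}(i)\circledast h_{\bbS}(i)\to h_{\bbS}(i)$ directly by projecting the decomposition of \cref{prop:day-convolution} onto the summand indexed by the diagonal good subset $\Delta\num{i}\subseteq \num{i}\times\num{i}$, with unitality, associativity, and commutativity following from the manifest symmetries of that decomposition.
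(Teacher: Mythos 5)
Your proof is correct, but it takes a genuinely different route from the paper's. The paper also invokes the dictionary between commutative algebras under Day convolution and lax symmetric monoidal functors, but it then realizes $h_{\bbS}(i)$ as the total fiber of the cubical diagram $U\mapsto h_{U_+\wedge\bbS}$, observes that each $h_{U_+\wedge\bbS}$ is lax symmetric monoidal (coming from the cocommutative coalgebra structure on $U_+\wedge\bbS$) and that the maps in the cube are symmetric monoidal natural transformations, and concludes that the limit is lax symmetric monoidal. You instead use the pointwise tensor-power identification~\eqref{eq:smash decomposition} to factor $h_{\bbS}(i)$ as $(-)^{\otimes i}\circ h_{\bbS}$ and compose lax symmetric monoidal functors; this is shorter and perfectly rigorous at the $\infty$-categorical level (the diagonal $\Sp\to\Sp^{\times i}$ and the $i$-fold tensor product $\Sp^{\times i}\to\Sp$ are both symmetric monoidal, and $h_{\bbS}$ is lax symmetric monoidal as the Day convolution unit). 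What the paper's construction buys is direct compatibility with the idempotent/cubical model of $h_{\bbS}(i)$, which is exploited immediately afterwards: the identification of the multiplication with the projection onto the diagonal summand of \cref{prop:day-convolution}, and the construction of the bimodule section in \cref{lem:hSi-is-separable}, are both phrased in terms of that model, so with your construction one would owe a comparison of the two structures before using those results. One caution about your closing ``more concrete'' alternative: specifying the multiplication as the diagonal projection and then asserting that unitality, associativity, and commutativity ``follow from the manifest symmetries'' only produces a commutative monoid in the homotopy category; promoting this to a commutative algebra object in the $\infty$-category requires additional input (e.g., separability via \cite[Theorem~B]{ramzi2023separability}, which is proved later and would be circular here), so that sketch should not be taken as a standalone proof.
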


\begin{proof}
	We recall that commutative algebras with respect to Day convolution are exactly the lax symmetric monoidal functors, as proven in the $\infty$-categorical setting in \cite[Example 2.2.6.9]{HALurie} or \cite[Proposition 2.12]{Glasman16}. If $x$ is a cocommutative coalgebra spectrum, then the functor $h_x$ is lax symmetric monoidal via the composition
	\[\begin{tikzcd}[column sep=tiny]
		h_x(y_1)\otimes h_x(y_2)&=&
		\Sigma^{\infty}\Map_{\Sp}(x,y_1)\otimes \Sigma^{\infty}\Map_{\Sp}(x,y_2)\ar[d]\\ 
		&&\Sigma^\infty \Map_{\Sp}(x\otimes x, y_1\otimes y_2)\ar[d]\\
		&& \Sigma^\infty \Map_{\Sp}(x, y_1\otimes y_2)\ar[r,phantom,"="] &h_x(y_1\otimes y_2).
	\end{tikzcd}\]
	In particular, it follows that the functor $h_{[i]_+\wedge\bbS}$ is lax symmetric monoidal, and thus a commutative algebra with respect to Day convolution. Furthermore, if we have an inclusion of finite sets $U\hookrightarrow V$, the induced map $U_+\wedge \bbS\to V_+\wedge \bbS$ is a map of cocommutative coalgebra spectra. It follows that the induced natural transformation $h_{V_+\wedge \bbS}\to h_{U_+\wedge \bbS}$ is symmetric monoidal. This implies that the total fiber of the contravariant cubical diagram 
	\[
		U\mapsto h_{U_+\wedge \bbS}
	\]
	as $U$ ranges over the subsets of $[i]$, being a limit of a diagram of lax symmetric monoidal functors and symmetric monoidal natural transformations, is lax symmetric monoidal. This total fiber is equivalent to $h_{\bbS}(i)$ and thus we conclude that $h_{\bbS}(i)$ is a commutative algebra with respect to Day convolution.
\end{proof}

\begin{Cor}\label{lem:commutative-algebra-structure}
    Each $P_dh_{\bbS}(i)$ is a commutative algebra in $\Exc{d}(\Sp^c,\Sp)$. 
\end{Cor}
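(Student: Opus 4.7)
The plan is to deduce this directly from the previous lemma by transporting the commutative algebra structure along the localization $P_d$. Concretely, the preceding lemma supplies $h_{\bbS}(i)$ with the structure of a commutative algebra in $\Fun(\Sp^c,\Sp)$ with respect to Day convolution, and \Cref{thm:properties-n-excisive} tells us that $P_d\colon \Fun(\Sp^c,\Sp)\to \Exc{d}(\Sp^c,\Sp)$ is a smashing localization compatible with Day convolution. By \Cref{prop:smashing}(b) (or equivalently \Cref{prop:compatible-localizations}), such a localization is a symmetric monoidal functor with respect to the induced symmetric monoidal structure on $\Exc{d}(\Sp^c,\Sp)$.

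The only remaining ingredient is the general fact that a symmetric monoidal functor between symmetric monoidal $\infty$-categories sends commutative algebras to commutative algebras; this is standard and recorded, for example, in \cite[Remark 2.1.3.10]{HALurie}. Applying this to $P_d$ and $h_{\bbS}(i)$ produces the desired commutative algebra structure on $P_dh_{\bbS}(i)$ in $\Exc{d}(\Sp^c,\Sp)$.

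No obstacle is expected here: the proof is essentially a one-line invocation of functoriality of $\CAlg$ with respect to symmetric monoidal functors, once one has the symmetric monoidality of $P_d$ and the commutative algebra structure on $h_{\bbS}(i)$, both of which have already been established in the excerpt.
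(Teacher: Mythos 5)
Your proof is correct and is essentially identical to the paper's: both deduce the statement from the commutative algebra structure on $h_{\bbS}(i)$ in $\Fun(\Sp^c,\Sp)$ together with the fact that $P_d$ is symmetric monoidal by \cref{thm:properties-n-excisive}. The extra remark that symmetric monoidal functors preserve commutative algebras is left implicit in the paper but is exactly the step being used.
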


\begin{proof}
	This follows from \cref{lem:hSi-is-algebra} since $P_d\colon \Fun(\Sp^c,\Sp)\to\Exc{d}(\Sp^c,\Sp)$ is symmetric monoidal by \cref{thm:properties-n-excisive}.
\end{proof}

The following lemma describes the multiplication on $h_{\bbS}(i)$ in terms of the splitting of \Cref{prop:day-convolution}.

\begin{Lem}
	The multiplication map $h_{\bbS}(i)\circledast h_{\bbS}(i)\to h_{\bbS}(i)$ corresponds, under the splitting of \Cref{prop:day-convolution}, to projection onto the summand corresponding to the diagonal $[i]\subseteq [i]\times [i]$.  
\end{Lem}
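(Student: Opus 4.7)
My plan is to exploit the idempotent description of cross-effects developed in \cref{sec:cross-idempotents}, which reduces the lemma to an identity among elementary idempotents. First, by \cref{lem:hSi-is-algebra}, the commutative algebra structure on $h_\bbS(i)$ is obtained by restricting the one on $h_{\num{i}_+ \wedge \bbS}$; and under the identification $h_{\num{i}_+\wedge\bbS}\circledast h_{\num{i}_+\wedge\bbS}\simeq h_{(\num{i}\times\num{i})_+\wedge\bbS}$ of \cref{Cor:conv-representables}, the multiplication on $h_{\num{i}_+ \wedge \bbS}$ becomes $h_\Delta$, where $\Delta\colon\num{i}_+\wedge\bbS\to(\num{i}\times\num{i})_+\wedge\bbS$ is the set-theoretic diagonal smashed with $\bbS$. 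Writing $h_\bbS(i) = \crosseffect(i)h_{\num{i}_+\wedge\bbS}$ and using the splitting
\[
h_\bbS(i)\circledast h_\bbS(i)\simeq\bigoplus_{V \text{ good}}\crosseffect(V)h_{(\num{i}\times\num{i})_+\wedge\bbS}
\]
from the proof of \cref{prop:day-convolution}, the multiplication becomes the restriction of $\crosseffect(i)\circ h_\Delta$ to this direct sum. Thus the lemma reduces to showing that $\crosseffect(i)\circ h_\Delta\circ\crosseffect(V)$ vanishes for every good $V\neq\Delta(\num{i})$ and is an equivalence for $V = \Delta(\num{i})$, where $\Delta(\num{i})\subseteq\num{i}\times\num{i}$ denotes the image of the diagonal.

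The key observation, easily verified on each wedge summand, is that for every $W\subseteq\num{i}\times\num{i}$,
\[
\psi_W\circ\Delta = \Delta\circ\psi_{\Delta^{-1}(W)}, \qquad \text{where } \Delta^{-1}(W)\coloneqq\{s\in\num{i}:(s,s)\in W\},
\]
and hence $h_\Delta\circ h_{\psi_W} = h_{\psi_{\Delta^{-1}(W)}}\circ h_\Delta$. Substituting this into the M\"obius-type formula $\crosseffect(V) = \sum_{W\subseteq V}(-1)^{|V|-|W|}h_{\psi_W}$ of \cref{def:f_U} and grouping terms according to $U\coloneqq\Delta^{-1}(W)$ (writing $W = \Delta(U)\sqcup W_0$ with $W_0\subseteq V\setminus\Delta(\num{i})$), a standard alternating-sum cancellation yields
\[
h_\Delta\circ\crosseffect(V) = \begin{cases}\crosseffect(\Delta^{-1}(V))\circ h_\Delta & \text{if } V\subseteq\Delta(\num{i}),\\ 0 & \text{otherwise}.\end{cases}
\]
For a good $V$ contained in $\Delta(\num{i})$, both coordinate projections of $V$ coincide with $\Delta^{-1}(V)$, and goodness forces this common subset to be $\num{i}$; hence $V=\Delta(\num{i})$ is the only surviving summand, and on it one has $\crosseffect(i)\circ h_\Delta\circ\crosseffect(\Delta(\num{i})) = \crosseffect(i)\circ h_\Delta$.

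To finish, I would argue that the restriction of $h_\Delta$ to $\crosseffect(\Delta(\num{i}))h_{(\num{i}\times\num{i})_+\wedge\bbS}$ is an equivalence onto $\crosseffect(i)h_{\num{i}_+\wedge\bbS} = h_\bbS(i)$. This follows from \cref{lem:equivalent} applied to the factorization of $h_\Delta$ through the collapse $(\num{i}\times\num{i})_+\wedge\bbS\to\Delta(\num{i})_+\wedge\bbS\cong\num{i}_+\wedge\bbS$, whose inside components on $V = \Delta(\num{i})$ are identities; under the resulting identification of this summand of $h_\bbS(i)\circledast h_\bbS(i)$ with $h_\bbS(i)$, the composite becomes the identity. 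The main obstacle is the combinatorial identity expressing $h_\Delta\circ\crosseffect(V)$, but once the symmetry $\psi_W\circ\Delta = \Delta\circ\psi_{\Delta^{-1}(W)}$ is in hand this amounts to routine inclusion-exclusion.
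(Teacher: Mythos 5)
Your proof is correct and follows essentially the same route as the paper's: both identify the multiplication with (the retract of) the restriction map $h_{[i]\times[i]_+\wedge\bbS}\to h_{[i]_+\wedge\bbS}$ induced by the diagonal inclusion, decompose via the idempotent splitting, and conclude from the fact that the only good subset of $[i]\times[i]$ contained in the diagonal is the diagonal itself. The only difference is one of detail: where the paper appeals to ``the proof of \cref{cor:splitting}'' for the claim that the restriction map preserves summands indexed by subsets of the diagonal and kills the rest, you verify this explicitly via the commutation relation $\psi_W\circ\Delta=\Delta\circ\psi_{\Delta^{-1}(W)}$ and inclusion--exclusion, which is a legitimate (and slightly more self-contained) way to fill in that step.
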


\begin{proof}
	The map  $h_{\bbS}(i)\circledast h_{\bbS}(i)\to h_{\bbS}(i)$ is a retract of the restriction map
	\begin{equation}\label{eq:restriction}
		h_{[i]\times [i]_+\wedge \bbS}\to h_{[i]_+\wedge \bbS}
	\end{equation}
	induced by the diagonal inclusion $[i]\hookrightarrow [i]\times [i]$. By \Cref{cor:splitting}, there are equivalences
	\[
		h_{[i]\times [i]_+\wedge \bbS} \simeq \prod_{U\subset [i]\times [i]} h_{\bbS}(U)
	\]
	and 
	\[
		h_{[i]_+\wedge \bbS} \simeq \prod_{U\subset [i]} h_{\bbS}(U).
	\]
	It follows from the proof of \Cref{cor:splitting} that under these splittings the restriction map~\eqref{eq:restriction} sends a summand $h_{\bbS}(U)$ of $h_{[i]\times [i]_+\wedge \bbS}$ to itself if $U\subseteq [i]\times [i]$ is a subset of the diagonal, and sends $h_{\bbS}(U)$ to zero otherwise. By \Cref{prop:day-convolution}, $h_{\bbS}(i)\circledast h_{\bbS}(i)$ is the wedge of those summands of $h_{[i]\times [i]_+\wedge \bbS}$ which correspond to good subsets of $[i]\times [i]$. The only good subset of $[i]\times [i]$ that is contained in the diagonal is the diagonal itself, and it gets mapped to $h_{\bbS}(i)$.
\end{proof}

\begin{Rem}
	An algebra $A$ in a symmetric monoidal category $\cat C$ is {\it separable} if the multiplication map $A\otimes A\to A$ has an $A$-$A$-bilinear section. Separable algebras have been studied by Balmer \cite{Balmer11} in the context of additive and triangulated categories motivated by the close connection between commutative separable algebras and the notion of \'{e}tale morphism in tensor triangular geometry; see \cite{Balmer16b,BalmerDellAmbrogioSanders15,Sanders22}. Recent work of Ramzi \cite[Theorem~B]{ramzi2023separability} shows that a commutative separable algebra in the homotopy category of  an additive symmetric monoidal $\infty$-category $\cat C$ admits an essentially unique lift to a commutative algebra object in $\cat C$ itself. 
\end{Rem}

\begin{Lem}\label{lem:hSi-is-separable}
	The commutative algebra structure on $h_{\bbS}(i)$ is separable.    
\end{Lem}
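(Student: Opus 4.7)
Our approach is to exhibit an explicit $h_\bbS(i)$-bilinear section of the multiplication
\[
\mu\colon h_\bbS(i) \circledast h_\bbS(i) \longrightarrow h_\bbS(i).
\]
By the preceding lemma, $\mu$ is the projection onto the summand indexed by the diagonal $\Delta \subseteq \num{i} \times \num{i}$ in the decomposition of \Cref{prop:day-convolution}. Accordingly, we take $\sigma\colon h_\bbS(i) \to h_\bbS(i) \circledast h_\bbS(i)$ to be the inclusion of this same summand, so that $\mu \circ \sigma = \mathrm{id}$ is immediate. It remains to verify that $\sigma$ is bilinear, which by commutativity of $h_\bbS(i)$ reduces to the equality
\[
\sigma \circ \mu \;=\; (\mu \circledast \mathrm{id}) \circ (\mathrm{id} \circledast \sigma)
\]
of endomorphisms of $h_\bbS(i) \circledast h_\bbS(i)$.

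To verify this, I would iterate \Cref{prop:day-convolution} to obtain a decomposition of the triple Day convolution,
\[
h_\bbS(i)^{\circledast 3} \;\simeq\; \bigoplus_{\substack{T \subseteq \num{i}^3 \\ T \text{ good}}} h_\bbS(|T|),
\]
where a subset of $\num{i}^3$ is \emph{good} if it surjects onto each of the three factors. Under the associativity isomorphism, $\mathrm{id} \circledast \sigma$ is the inclusion of the summands with $\pi_{23}(T) = \Delta_{23}$ (the diagonal of the last two factors), while $\mu \circledast \mathrm{id}$ is the projection onto those with $\pi_{12}(T) = \Delta_{12}$. The simultaneous imposition of both conditions forces $T$ to be the full triple diagonal $\Delta_3 = \{(a,a,a) : a \in \num{i}\}$. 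Tracing through the identifications, the composite $(\mu \circledast \mathrm{id}) \circ (\mathrm{id} \circledast \sigma)$ picks out precisely the $U = \Delta$ summand of the source $h_\bbS(i) \circledast h_\bbS(i)$ and sends it isomorphically to the $U = \Delta$ summand of the target, matching $\sigma \circ \mu$.

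By Ramzi's theorem \cite[Theorem~B]{ramzi2023separability}, commutative separable algebras in an additive symmetric monoidal $\infty$-category admit essentially unique lifts from the homotopy category, so the above homotopy-level verification suffices to conclude that $h_\bbS(i)$ is separable in $\Fun(\Sp^c,\Sp)$. The principal obstacle is the careful bookkeeping of summand identifications under the associativity isomorphism of the triple Day convolution; the key combinatorial input, however, is the straightforward observation that $\Delta_3$ is the unique good subset of $\num{i}^3$ whose projections onto both the first two and the last two factors are diagonal.
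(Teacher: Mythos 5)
Your proof is correct, and the section you construct is the same one the paper uses; but your verification of bilinearity runs along a genuinely different track, so let me compare. The paper first shows that the ambient algebra $h_{\num{i}_+\wedge\bbS}$ is separable by writing down an explicit retraction of pointed sets $\num{i}\times\num{i}_+\to\num{i}_+$ (the partial-diagonal collapse, sending $(x,y)$ to $x$ if $x=y$ and to the basepoint otherwise) and checking it is a map of bi-comodules; it then descends to $h_{\bbS}(i)$ by realizing the multiplication as a map of total fibers of cubes of representables and inducing the section from the poset functor $(U,V)\mapsto U\cap V$ together with the collapse maps $U\times V_+\to (U\cap V)_+$. You instead stay entirely on the split side of \Cref{prop:day-convolution}: $\sigma$ is the inclusion of the diagonal summand, and bilinearity is checked by decomposing $h_{\bbS}(i)^{\circledast 3}$ into good subsets of $\num{i}^3$ and observing that the triple diagonal is the unique good subset whose $\pi_{12}$- and $\pi_{23}$-projections are both diagonal. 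That combinatorial observation is right, and the deferred bookkeeping is genuinely manageable: the decompositions of \Cref{prop:day-convolution} arise from the pairwise orthogonal idempotents $\crosseffect(U)$ of Section 2, which are natural in maps of the underlying coalgebras $\num{i}^k_+\wedge\bbS$, so $\mathrm{id}\circledast\sigma$ and $\mu\circledast\mathrm{id}$ do act on summands exactly as you claim (via \Cref{lem:product-to-sum} and \Cref{lem:equivalent}). What the paper's route buys is that this compatibility is visible by inspection, since everything is induced by explicit maps of finite pointed sets; what your route buys is that $\mu\circ\sigma=\mathrm{id}$ and the vanishing of all off-diagonal matrix entries are immediate.

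Two small remarks. First, reducing bilinearity to left-linearity via commutativity implicitly uses that $\sigma$ is invariant under the swap of the two factors; this holds because the swap permutes the summands of $h_{\bbS}(i)\circledast h_{\bbS}(i)$ according to transposition of good subsets and acts as the identity on the diagonal summand (again visible from the pointed-set description of $\sigma$), but it deserves a sentence. Second, the appeal to Ramzi's theorem is unnecessary for this lemma: separability in Balmer's sense is by definition a condition on the multiplication in the homotopy category, so a homotopy-level bilinear section is already the full statement. Ramzi's result is relevant elsewhere in the paper, for lifting the commutative algebra structure itself, not for certifying separability.
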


\begin{proof}
	We begin by noting that the algebra $h_{[i]_+\wedge \bbS}$ is separable. Indeed, the algebra structure is induced by the coalgebra structure on the spectrum $[i]_+\wedge \bbS$ which is determined by the diagonal map $[i]_+\wedge \bbS\to [i]\times [i]_+\wedge \bbS$. This map has a retraction $[i]\times [i]_+\wedge \bbS\to [i]_+ \wedge \bbS$. It is induced by the map of pointed sets $[i]\times [i]_+\to [i]_+$ which sends $(x, y)$ to $x$ when $x=y\in [i]$ and sends $(x, y)$ to the basepoint if $x\ne y$. It is straightforward to check that this retraction is a map of bi-comodules over $[i]_+\wedge \bbS$. The retraction induces a map $h_{[i]_+\wedge \bbS}\to h_{[i]\times [i]_+\wedge \bbS}$ which is the desired section of the multiplication map.

	Next, consider the multiplication map $h_{\bbS}(i)\circledast h_{\bbS}(i)\to h_{\bbS}(i)$. By the proof of \Cref{lem:hSi-is-algebra}, this multiplication can be identified with a map between homotopy fibers of the following form
	\[
		\tofib h_{U\times V_+ \wedge\bbS} \to \tofib h_{W_+ \wedge\bbS}
	\]
	where $U, V, W$ range over subsets of $[i]$, and the map is induced by the diagonal functor $W\mapsto W\times W$. This map has a section
	\[
		\tofib h_{W_+ \wedge\bbS}\to \tofib h_{U\times V_+ \wedge\bbS}.
	\]
	The section is induced by the functor from the poset of subsets of $[i]\times [i]$ to the poset of subsets of $[i]$ that sends $(U, V)$ to $U\cap V$, and the collapse maps $U\times V_+ \to (U\cap V)_+$. It is straightforward to check that the section is well-defined and that it is indeed a map of bimodules as required.
\end{proof}

\section{Compact generation of the category of \texorpdfstring{$d$}{d}-excisive functors}\label{sec:compact-generation}

In this section we prove that the functors $P_d h_{\bbS}(i)$, $1 \le i \le d$, form a set of compact generators for $\Exc{d}(\Sp^c,\Sp)$. To this end, we first record some features of cross-effects of excisive functors.

\begin{Rem}\label{Rem:vanishing-cross-effects}
	Recall that the $m$-th cross-effect $\crosseffect_m(F)$ of a functor $F$ is a functor of $m$ variables (see \cref{sec:cross-idempotents}). If $F$ is $d$-excisive then for each $1 \le m \le d+1$, the cross-effect $\crosseffect_{m}(F)$ is $(d-m+1)$-excisive in each variable; see \cite[Proposition~3.3]{Goodwillie03} or \cite[Proposition~6.1.3.22]{HALurie}. In particular, the $d$-th cross-effect of a $(d-1)$-excisive functor is trivial. Therefore, by applying $\crosseffect_d$ to the fiber sequence $D_dF \to P_dF \to P_{d-1}F$, we obtain
	\[
		\crosseffect_d(D_dF) \simeq \crosseffect_d(P_dF)
	\]
	for any functor $F$.
\end{Rem}

\begin{Rem}\label{rem:rep-rem}
	Recall that $h_x$ is the corepresentable functor $h_x(y)=\Sigma^\infty \Map_{\Sp}(x, y)$ and that the $i$-th cross-effect of the contravariant functor $x\mapsto h_x$ is equivalent to $h_{x_1}\otimes \cdots \otimes h_{x_i}$ (see \cref{sec:cross-rep}, in particular \cref{rem:h_x(i)}). The following lemma is well-known; for example, see \cite[Lemma 2.11]{Ching21}:
\end{Rem}

\begin{Lem}\label{lem:ching-yoneda}
    Let $F \in \Fun(\Sp^c,\Sp)$ and $x_1,\ldots,x_i \in \Sp^c$. There is an equivalence
    \[
		\Hom_{\Fun(\Sp^c,\Sp)}(h_{x_1}(-) \otimes \cdots \otimes h_{x_i}(-),F(-)) \simeq \crosseffect_iF(x_1,\ldots,x_i).
    \]
\end{Lem}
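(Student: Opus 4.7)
The plan is to combine two ingredients. First, the \emph{stable Yoneda lemma} for reduced functors: for every $x \in \Sp^c$ and $F \in \Fun(\Sp^c,\Sp)$, there is a natural equivalence
\[
\Hom_{\Fun(\Sp^c,\Sp)}(h_x,F) \simeq F(x).
\]
This can be derived from the adjunction $\Sigma^\infty \dashv \Omega^\infty$ applied pointwise together with the ordinary enriched Yoneda lemma for the pointed-space-valued corepresentable $\Map_{\Sp}(x,-)$; the symmetric monoidal stable Yoneda embedding of \Cref{rem:stable-yoneda} packages the same underlying fact. Second, the argument of \Cref{rem:h_x(i)}, carried out with possibly distinct $x_j$ rather than a single $x$, identifies the pointwise tensor product
\[
h_{x_1}(-) \otimes \cdots \otimes h_{x_i}(-) \simeq \tcof_{S \subseteq \num{i}} \, h_{\bigoplus_{j \notin S} x_j}(-)
\]
as the total cofiber of a cube of corepresentable functors, whose structure maps are induced by the collapse maps $\bigoplus_{j \notin S} x_j \to \bigoplus_{j \notin S'} x_j$ for $S \subseteq S'$.

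Given these ingredients, the proof is then formal. Applying the contravariant functor $\Hom_{\Fun(\Sp^c,\Sp)}(-,F)$ to the total-cofiber presentation converts it into a total fiber,
\[
\Hom_{\Fun(\Sp^c,\Sp)}(h_{x_1} \otimes \cdots \otimes h_{x_i}, F) \simeq \tofib_{S \subseteq \num{i}} \Hom_{\Fun(\Sp^c,\Sp)}\!\left(h_{\bigoplus_{j \notin S} x_j}, F\right),
\]
and the stable Yoneda lemma identifies each term with $F\!\left(\bigoplus_{j \notin S} x_j\right)$. The result is
\[
\tofib_{S \subseteq \num{i}} F\!\left(\bigoplus_{j \notin S} x_j\right),
\]
which is precisely $\crosseffect_i F(x_1, \ldots, x_i)$ by the definition of the cross-effect. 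One need only check that the structure maps of the Yoneda-identified cube agree with those of the cross-effect cube; since both are induced by the same collapse maps applied to $F$, this is automatic.

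The main obstacle is verifying the stable Yoneda lemma for corepresentables in the reduced setting; once this is in place, everything else follows formally from the fact that $\Hom(-,F)$ converts colimits to limits. The result is standard --- it appears, for example, as \cite[Lemma 2.11]{Ching21}, which our strategy recapitulates.
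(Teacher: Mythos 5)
Your argument is correct, and it is essentially the standard proof: the paper itself offers no argument for this lemma beyond the citation to \cite[Lemma 2.11]{Ching21}, and your combination of the stable Yoneda lemma for $h_x$ with the total-cofiber presentation of $h_{x_1}\otimes\cdots\otimes h_{x_i}$ (exactly the decomposition recorded in \cref{rem:h_x(i)}) is the same route Ching takes. The only point requiring care --- upgrading the space-level Yoneda identification to an equivalence of mapping spectra --- is correctly flagged and handled by running the adjunction argument through all desuspensions of $F$.
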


In $d$-excisive functors, this implies:

\begin{Lem}\label{lem:cross-effect-yoneda-lemma}
	Let $F \in \Exc{d}(\Sp^c,\Sp)$, $1\le i \le d$ and $x_1,\ldots,x_i \in \Sp^c$. There is an equivalence
	\[
		\Hom_{\Exc{d}(\Sp^c,\Sp)}(P_d(h_{x_1}(-) \otimes \cdots \otimes h_{x_i}(-)),F(-)) \simeq \crosseffect_iF(x_1,\ldots,x_i). 
	\]
\end{Lem}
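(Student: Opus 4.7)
The plan is to reduce the statement to the previously established \cref{lem:ching-yoneda} by invoking the adjunction between $P_d$ and the inclusion $\Exc{d}(\Sp^c,\Sp)\hookrightarrow \Fun(\Sp^c,\Sp)$ given by \cref{thm:d-excisive-localization}.

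First, since $F$ is $d$-excisive, it lies in the essential image of the inclusion, so the adjunction $P_d \dashv \iota$ provides a natural equivalence
\[
\Hom_{\Exc{d}(\Sp^c,\Sp)}(P_d(h_{x_1}(-)\otimes\cdots\otimes h_{x_i}(-)), F(-)) \simeq \Hom_{\Fun(\Sp^c,\Sp)}(h_{x_1}(-)\otimes\cdots\otimes h_{x_i}(-), F(-)).
\]
Second, I would apply \cref{lem:ching-yoneda} to the right-hand side to conclude that it is equivalent to $\crosseffect_i F(x_1,\ldots,x_i)$, which gives the desired equivalence.

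There is no real obstacle here — both ingredients (the adjunction and the representability of the cross-effect on tensor products of corepresentables) are already in place, so the argument is a direct two-step composition. The only mild subtlety worth a sentence is the naturality of the adjunction unit, which ensures that the equivalence is natural in the variables $x_1,\ldots,x_i$ (so that both sides can be regarded as equivalent functors of $i$ variables, compatible with the $\Sigma_i$-action of \cref{rem:sigma-n-action}); this follows formally from the naturality of the $P_d \dashv \iota$ adjunction together with the naturality in \cref{lem:ching-yoneda}.
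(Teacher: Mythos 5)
Your proof is correct and is essentially identical to the paper's: both use the universal property of the localization $P_d$ (i.e., that $F$ being $d$-excisive lets you drop the $P_d$ and compute the mapping spectrum in $\Fun(\Sp^c,\Sp)$) and then conclude by \cref{lem:ching-yoneda}. The remark on naturality is a harmless addition not present in the paper's proof.
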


\begin{proof}
	Since $F$ is $d$-excisive, there is an equivalence
	\[
	\begin{split}
		\Hom_{\Exc{d}(\Sp^c,\Sp)}(P_d(h_{x_1}(-) &\otimes \cdots \otimes h_{x_i}(-)),F(-))  \\&\simeq \Hom_{\Fun(\Sp^c,\Sp)}(h_{x_1}(-) \otimes \cdots \otimes h_{x_i}(-),F(-))
	\end{split}
	\]
	and the lemma then follows from \cref{lem:ching-yoneda}.
\end{proof}

\begin{Rem}\label{rem:dict1}
    In terms of the analogy with equivariant stable homotopy theory, we may regard the functors $\crosseffect_i(-)(\bbS,\ldots,\bbS)\colon \Exc{d}(\Sp^c,\Sp)\to\Sp$ as functor calculus analogs of the categorical fixed point functors $(-)^H\colon \Sp_G \to \Sp$ and we think of the functors $P_d h_{\bbS}(i)\in\Exc{d}(\Sp^c,\Sp)$ as analogous to the $G$-spectra $\Sigma_G^\infty G/H_+ \in \Sp_G$. From this perspective, \Cref{lem:ching-yoneda} is a functor calculus version of the fact that the $G$-spectrum $\Sigma_G^\infty G/H_+$ represents $H$-fixed points in $G$-equivariant stable homotopy theory. See \Cref{appendix} for a dictionary between functor calculus and equivariant stable homotopy theory.
\end{Rem}

\begin{Exa}[The first cross-effect]\label{exa:first-cross-effect}
	Let $\crosseffect_1(-)(\bbS) \colon \Exc{d}(\Sp^c,\Sp) \to \Sp$ be the functor that takes the first cross-effect and evaluates it at $\bbS$. Since our $\Exc{d}(\Sp^c,\Sp)$ consists of reduced functors, this is the same as just evaluating a functor at $\bbS$. We observe that this functor is also right adjoint to the inflation functor $i_d \colon \Sp \to \Exc{d}(\Sp^c,\Sp)$ of \Cref{def:inflation}:
\end{Exa}

\begin{Lem}
	The functor $\crosseffect_1(-)(\bbS)$ is right adjoint to $i_d \colon \Sp \to \Exc{d}(\Sp^c,\Sp)$. In particular, the functor $\crosseffect_1(-)(\bbS)$ is lax symmetric monoidal. 
\end{Lem}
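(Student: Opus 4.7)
The plan is to recognize $i_d$ as the functor ``tensor with the monoidal unit $P_dh_{\bbS}$'' and $\crosseffect_1(-)(\bbS)$ as ``map out of the monoidal unit,'' so that the required adjunction becomes a special case of the canonical tensoring of the presentably symmetric monoidal stable $\infty$-category $\Exc{d}(\Sp^c,\Sp)$ over $\Sp$. The real content has already been supplied by \cref{lem:cross-effect-yoneda-lemma}; the rest is formal bookkeeping.

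First I would observe that $i_d(A) \simeq A \otimes P_dh_{\bbS}$, where the right-hand tensor refers to the canonical $\Sp$-action on $\Exc{d}(\Sp^c,\Sp)$. By \cref{def:inflation}, $i_d$ is the essentially unique symmetric monoidal colimit-preserving functor from $\Sp$, so it must send $A$ to $A \otimes \unit$. Alternatively, one may check this directly: $F_A \simeq A \otimes h_{\bbS}$ in $\Fun(\Sp^c,\Sp)$ by inspection (cf.~\cref{rem: F_A}), and applying the symmetric monoidal smashing localization $P_d$ from \cref{thm:properties-n-excisive} gives $i_d(A) = P_d(F_A) \simeq A \otimes P_dh_{\bbS}$; this last identification can also be read off from \cref{prop:assembly}.

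Next I would apply \cref{lem:cross-effect-yoneda-lemma} in the case $i = 1$, $x_1 = \bbS$, which yields the natural equivalence
\[
\Hom_{\Exc{d}(\Sp^c,\Sp)}(P_dh_{\bbS}, F) \simeq \crosseffect_1(F)(\bbS).
\]
Combining this with the tensor-hom adjunction for the $\Sp$-action (valid in any presentably symmetric monoidal stable $\infty$-category) produces
\[
\Hom_{\Exc{d}(\Sp^c,\Sp)}(i_d(A), F) \simeq \Hom_{\Sp}(A, \crosseffect_1(F)(\bbS))
\]
naturally in $A \in \Sp$ and $F \in \Exc{d}(\Sp^c,\Sp)$. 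Since both functors in question are exact, this identification of mapping spaces upgrades automatically to an identification of mapping spectra, and hence to a genuine adjunction of $\infty$-functors.

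Finally, the lax symmetric monoidal structure on $\crosseffect_1(-)(\bbS)$ is automatic: since $i_d$ is symmetric monoidal by construction, its right adjoint inherits a canonical lax symmetric monoidal structure by the standard yoga of adjunctions between symmetric monoidal $\infty$-categories, e.g.\ \cite[Corollary 7.3.2.7]{HALurie}. There is no serious obstacle in the argument: the substantive step is \cref{lem:cross-effect-yoneda-lemma}, after which everything reduces to recognizing the relevant functors as standard constructions.
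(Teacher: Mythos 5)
Your proposal is correct and follows essentially the same route as the paper: both arguments hinge on the identification $\Hom_{\Exc{d}(\Sp^c,\Sp)}(P_dh_{\bbS},F)\simeq \crosseffect_1(F)(\bbS)$ from the cross-effect Yoneda lemma and conclude lax monoidality from the same corollary in Lurie. The only cosmetic difference is that the paper first invokes the adjoint functor theorem and then identifies the right adjoint by corepresentability at $A=\bbS$, whereas you exhibit the adjunction directly via $i_d(A)\simeq A\otimes P_dh_{\bbS}$ and the tensor-hom adjunction for the $\Sp$-action.
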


\begin{proof}
	Let $\alpha \colon \Exc{d}(\Sp^c,\Sp) \to \Sp$ be the right adjoint to $i_d$, which exists by the adjoint functor theorem. Note that we have
	\[
		\Hom_{\Exc{d}(\Sp^c,\Sp)}(i_d\bbS,F) \simeq \Hom_{\Sp}(\bbS,\alpha(F)) \simeq \alpha(F)
	\]
	by adjunction and the Yoneda lemma. Since $i_d\bbS \simeq P_dh_{\bbS}$ (as $i_d$ is symmetric monoidal), this shows that $\alpha$ is corepresented by $P_dh_{\bbS}$. By \Cref{lem:ching-yoneda} this identifies~$\alpha$ with $\crosseffect_1(-)(\bbS)$.  Finally, as the right adjoint of a symmetric monoidal functor, $\crosseffect_1(-)(\bbS)$ is lax symmetric monoidal \cite[Corollary 7.3.2.7]{HALurie}.
\end{proof}

\begin{Rem}
	This matches the perspective of \cref{rem:dict1} in which the categorical fixed point functor $(-)^G$ is right adjoint to the canonical geometric functor $\Sp\to\Sp_G$ which equips a spectrum with the ``trivial'' $G$-action.
\end{Rem}

\begin{Def}
	For any $F\colon \Sp^c\to\Sp$ and $i \ge 1$, let $c_iF\colon \Sp^c \to \Sp$ denote the composite
	\[
		\Sp^c \xrightarrow{\Delta} (\Sp^c)^{\times i} \xrightarrow{\crosseffect_i F} \Sp
	\]
	where $\Delta$ is the diagonal. That is, $c_i F$ is the functor $X \mapsto \crosseffect_i F(X,\ldots,X)$. In particular, note that there is an equivalence of functors $c_ih_x=h_x(i)$ by \cref{rem:rep-rem}.
\end{Def}

\begin{Lem}\label{lem:n-homogeneous-functors}
	If $F \colon \Sp^c \to \Sp$ is $d$-excisive, then $c_d F\colon \Sp^c \to \Sp$ is $d$-homogeneous.
\end{Lem}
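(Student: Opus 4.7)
The plan is to realize $c_d F$ as the diagonal of a multi-linear functor of $d$ variables, and then invoke the classical fact that such a diagonal is automatically $d$-homogeneous. The argument has three steps.

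First, I would establish that $\crosseffect_d F$ is multi-linear, i.e., reduced and $1$-excisive in each of its $d$ arguments. The $1$-excisiveness in each variable is immediate from \cref{Rem:vanishing-cross-effects} applied with $m = d$, which yields $(d-d+1) = 1$-excisiveness in each variable. For reducedness in each variable, suppose $x_i = 0$; then every edge in the $i$-th direction of the defining cube
\[
	S \mapsto F\!\left(\bigoplus_{j \notin S} x_j\right)
\]
connects the same object to itself by the identity map (collapsing the zero summand changes nothing), so the total fiber of such a ``degenerate'' cube is contractible.

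Second, invoking the equivalence \eqref{eq:multi-variable} of \cref{rem:multi-variable} between multi-linear functors $(\Sp^c)^{\times d} \to \Sp$ and spectra via evaluation at $(\bbS, \ldots, \bbS)$, I would identify
\[
	\crosseffect_d F(x_1, \ldots, x_d) \simeq A \otimes x_1 \otimes \cdots \otimes x_d,
\]
where $A \coloneqq \crosseffect_d F(\bbS, \ldots, \bbS)$. Restricting to the diagonal then yields $c_d F(X) \simeq A \otimes X^{\otimes d}$.

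Finally, it remains to verify that $X \mapsto A \otimes X^{\otimes d}$ is $d$-homogeneous. Since $A \otimes (-)$ is an exact colimit-preserving endofunctor of $\Sp$ which commutes with the Taylor constructions $P_k$, this reduces to the statement that $X \mapsto X^{\otimes d}$ is $d$-homogeneous, a standard fact in Goodwillie calculus (see \cite[Section 3]{Goodwillie03} or \cite[Section 6.1.4]{HALurie}). This last ingredient, which is classical, is the only step appealing to external results and is the main (but very mild) obstacle; everything else amounts to unwinding the definition of the cross-effect.
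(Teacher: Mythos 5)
Your argument is correct and matches the paper's proof in substance: the paper simply cites Goodwillie's Propositions 3.1 and 3.3, which encode exactly your two ingredients, namely that the top cross-effect of a $d$-excisive functor is multilinear and that the diagonal of a multilinear functor is $d$-homogeneous. Your detour through the classification \eqref{eq:multi-variable} to write $c_dF(X)\simeq A\otimes X^{\otimes d}$ is harmless (and your degenerate-cube check of reducedness in each variable is essentially \cref{lem:trivial}), but note that the final step — homogeneity of $X\mapsto X^{\otimes d}$ — is itself an instance of the same classical fact about diagonals of multilinear functors that you could have invoked directly.
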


\begin{proof}
	This follows from Propositions 3.1 and 3.3 of \cite{Goodwillie03}. 
\end{proof}

\begin{Lem}\label{lem:ci-pn-commute}
	For each $1 \le i \le d$, we have $P_dc_iF \simeq c_iP_dF$.
\end{Lem}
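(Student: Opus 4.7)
The plan is to reduce the statement to the compatibility of $P_d$ with precomposition along the $k$-fold direct sum functor $\sigma_k\colon\Sp^c\to\Sp^c$, $X\mapsto X^{\oplus k}$, and then to establish that compatibility via a self-adjointness argument.

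Unpacking the definition of the cross-effect, we have
\[
c_iF(X)\simeq\crosseffect_iF(X,\ldots,X)\simeq\tofib_{S\subseteq\num{i}}F(X^{\oplus(i-|S|)}),
\]
so $c_iF$ is the total fiber in $\Fun(\Sp^c,\Sp)$ of the $i$-cubical diagram $S\mapsto F\circ\sigma_{i-|S|}$, whose structure maps are induced by collapses of summands. Since $P_d$ is exact by \cref{thm:properties-n-excisive} and \cref{rem:pd}, it commutes with this total fiber, and the lemma reduces to the claim that $P_d(F\circ\sigma_k)\simeq(P_dF)\circ\sigma_k$ for every $k\ge 0$.

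The crux of this reduced claim is that $\sigma_k$ is self-adjoint in $\Sp^c$: because $\Sp^c$ is stable, finite coproducts and finite products coincide, yielding natural equivalences
\[
\Hom_{\Sp^c}(X^{\oplus k},Y)\simeq\Hom_{\Sp^c}(X,Y)^{\oplus k}\simeq\Hom_{\Sp^c}(X,Y^{\oplus k}).
\]
By the standard principle that an adjunction $L\dashv R$ of functors induces an adjunction $R^*\dashv L^*$ on precomposition, it follows that $\sigma_k^*\coloneqq(-)\circ\sigma_k$ is self-adjoint as an endofunctor of $\Fun(\Sp^c,\Sp)$; it preserves the reduced subcategory since $\sigma_k(0)\simeq 0$, and it restricts to an endofunctor of $\Exc{d}(\Sp^c,\Sp)$ because $\sigma_k$ preserves strongly cocartesian cubes (being exact). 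In particular, $(P_dF)\circ\sigma_k$ is $d$-excisive.

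With these ingredients the reduced claim follows by Yoneda applied to the chain of natural equivalences
\[
\Hom((P_dF)\circ\sigma_k,G)\simeq\Hom(P_dF,G\circ\sigma_k)\simeq\Hom(F,G\circ\sigma_k)\simeq\Hom(F\circ\sigma_k,G)\simeq\Hom(P_d(F\circ\sigma_k),G)
\]
for arbitrary $G\in\Exc{d}(\Sp^c,\Sp)$, combining the self-adjointness of $\sigma_k^*$ (first and third equivalences) with the $P_d\dashv\iota$ adjunction (second equivalence applied to $G\circ\sigma_k$, fourth applied to $G$). The main conceptual content of the proof is the recognition that $\sigma_k$ is self-adjoint; once this is observed, the rest is routine adjunction calculus.
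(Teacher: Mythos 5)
Your proof is correct and follows the same outline as the paper's: write $c_iF$ as the total fiber of the cube $S\mapsto F\circ\sigma_{i-|S|}$ and use that $P_d$ preserves finite limits. The one substantive step the paper delegates to Goodwillie's proof of his Theorem 6.1 --- namely that $P_d(F\circ\sigma_k)\simeq (P_dF)\circ\sigma_k$ --- you instead prove directly via the self-adjointness of $\sigma_k$ and hence of $\sigma_k^*$, which is a clean, self-contained replacement for that citation; the only detail worth adding is that the equivalence produced by Yoneda should be identified with the canonical comparison map $P_d(F\circ\sigma_k)\to (P_dF)\circ\sigma_k$ (induced by $\eta_F$ and the universal property of $P_d$), so that the equivalences for varying $k$ are compatible with the collapse maps and assemble into an equivalence of cubes before passing to total fibers.
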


\begin{proof}
	In the case $i = d$ this is shown in the proof of \cite[Theorem 6.1]{Goodwillie03} (in Goodwillie's notation $\lambda \crosseffect_d$ is what we write as $c_d$); the same proof works for arbitrary $i$. Indeed, $c_iF(x)$ is the total fiber of the $i$-dimensional cubical diagram that sends a subset $U\subseteq \num{i}$ to $F(\bigoplus_{\num{i}\setminus U} x)$. The functor $P_d$ commutes with finite limits and in particular commutes with total fibers of cubical diagrams~\cite[Proposition~1.7]{Goodwillie03}.
\end{proof}

As a consequence of \Cref{lem:ci-pn-commute} we obtain the following. 

\begin{Lem}\label{lem:cn-functor}
	For each $1 \le i \le d$, there are natural equivalences 
	\[
		c_iP_dh_{\bbS} \simeq P_dc_ih_{\bbS} \simeq P_dh_{\bbS}(i)
	\]
	of functors $\Sp^c \to \Sp$.
\end{Lem}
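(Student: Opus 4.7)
The plan is to deduce both equivalences by combining \cref{lem:ci-pn-commute} with a direct comparison of cross-effects taken in the covariant versus contravariant variables. The first equivalence $c_i P_d h_{\bbS} \simeq P_d c_i h_{\bbS}$ is an immediate instance of \cref{lem:ci-pn-commute} applied to $F = h_{\bbS}$, since that lemma asserts precisely that $c_i$ and $P_d$ commute for $1 \le i \le d$. So the real content is the second equivalence.

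For $P_d c_i h_{\bbS} \simeq P_d h_{\bbS}(i)$, it suffices to exhibit a natural equivalence $c_i h_{\bbS} \simeq h_{\bbS}(i)$ of reduced functors $\Sp^c \to \Sp$ and then apply $P_d$. By definition, $c_i h_{\bbS}(y) = \crosseffect_i h_{\bbS}(y,\ldots,y)$ is the $i$-th cross-effect of the covariant functor $y \mapsto h_{\bbS}(y) = \Sigma^\infty \Omega^\infty y$ taken at the diagonal. By \cref{lem:crosstococross}, in the stable target $\Sp$ this cross-effect is naturally equivalent to the corresponding co-cross-effect, namely the total cofiber of the cube $S \mapsto h_{\bbS}(\bigoplus_{j \notin S} y)$. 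The standard identification of a total cofiber of such cubes of products as an iterated smash product (recalled in \cref{rem:h_x(i)}) then yields $c_i h_{\bbS}(y) \simeq h_{\bbS}(y)^{\otimes i} = (\Sigma^\infty \Omega^\infty y)^{\otimes i}$, naturally in $y$. But by \eqref{eq:smash decomposition} this pointwise tensor power is precisely $h_{\bbS}(i)(y)$, yielding the required equivalence $c_i h_{\bbS} \simeq h_{\bbS}(i)$.

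The argument is essentially formal bookkeeping once the cross-effect machinery of Sections~2 and~3 is in place, and there is no substantive obstacle. The only subtlety worth keeping track of is that the comparison $c_i h_{\bbS} \simeq h_{\bbS}(i)$ genuinely switches the role of the variable: $c_i$ operates on the covariant variable of $h_{\bbS}$, whereas $h_{\bbS}(i)$ was defined in \cref{def:hxi} via the contravariant variable. The bridge between the two descriptions is the symmetric role played by the covariant and contravariant variables of the bifunctor $(x,y) \mapsto h_x(y)$ as noted in \cref{rem:same-cross-effect}, together with the fact that both viewpoints produce the same pointwise tensor power $(\Sigma^\infty \Omega^\infty y)^{\otimes i}$.
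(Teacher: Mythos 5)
Your proposal is correct and matches the paper's (essentially implicit) argument: the paper derives the lemma directly as a consequence of \cref{lem:ci-pn-commute} together with the identification $c_ih_x = h_x(i)$ already recorded in the definition of $c_i$ (via \cref{rem:rep-rem} and \cref{rem:same-cross-effect}). Your extra care with the covariant/contravariant variable switch and the smash-power identification from \eqref{eq:smash decomposition} just spells out what the paper leaves to those earlier remarks.
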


\begin{Prop}\label{prop:pnhsn-homogeneous}
	The functor $P_dh_{\bbS}(d)$ is $d$-homogeneous.
\end{Prop}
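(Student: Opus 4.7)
The plan is to combine the two lemmas immediately preceding the proposition. By definition, a functor is $d$-homogeneous if it is both $d$-excisive and $d$-reduced. So we need to show both properties for $P_d h_{\bbS}(d)$, and the strategy is to exhibit this functor as $c_d$ applied to a $d$-excisive functor, then invoke \Cref{lem:n-homogeneous-functors}.

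First I would observe that $P_d h_{\bbS}$ is tautologically $d$-excisive, being in the essential image of $P_d \colon \Fun(\Sp^c,\Sp) \to \Exc{d}(\Sp^c,\Sp)$. Therefore, by \Cref{lem:n-homogeneous-functors} applied with $F = P_d h_{\bbS}$, the functor $c_d(P_d h_{\bbS})$ is $d$-homogeneous.

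Next I would use \Cref{lem:cn-functor} to identify this $d$-homogeneous functor with $P_d h_{\bbS}(d)$. That lemma gives natural equivalences
\[
c_d P_d h_{\bbS} \simeq P_d c_d h_{\bbS} \simeq P_d h_{\bbS}(d),
\]
which together with the previous paragraph yields the claim.

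There is no real obstacle here; the proof is essentially the concatenation of the two preparatory lemmas. The only thing to verify is that one has set things up so that the commutation of $c_d$ with $P_d$ (\Cref{lem:ci-pn-commute}) applies in the case $i = d$, which was already noted to follow from the argument in \cite[Theorem 6.1]{Goodwillie03}.
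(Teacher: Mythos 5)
Your proof is correct and is exactly the argument the paper gives: apply \Cref{lem:n-homogeneous-functors} to the $d$-excisive functor $P_dh_{\bbS}$ to see that $c_dP_dh_{\bbS}$ is $d$-homogeneous, then identify $c_dP_dh_{\bbS}$ with $P_dh_{\bbS}(d)$ via \Cref{lem:cn-functor}. No gaps; the remark about \Cref{lem:ci-pn-commute} in the case $i=d$ is also consistent with how the paper sets things up.
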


\begin{proof}
	Since $P_dh_{\bbS}$ is $d$-excisive, \cref{lem:n-homogeneous-functors} implies that $c_dP_dh_{\bbS}$ is \mbox{$d$-homogeneous.} The result then follows from \Cref{lem:cn-functor}. 
\end{proof}

\begin{Cor}\label{cor:value-of-pnhsn}
	We have $P_dh_{\bbS}(d)(X) \simeq X^{\otimes d}$ where $\Sigma_d$ acts on $X^{\otimes d}$ by permuting the factors. 
\end{Cor}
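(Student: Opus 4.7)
The plan is to compute $P_dh_{\bbS}(d)(X)$ by passing to the multi-variable Taylor tower. By \cref{lem:cn-functor}, $P_dh_{\bbS}(d)(X) \simeq c_d(P_dh_{\bbS})(X) = \crosseffect_d(P_dh_{\bbS})(X,\ldots,X)$, so it suffices to understand the multi-variable functor $\crosseffect_d(P_dh_{\bbS})\colon (\Sp^c)^{\times d}\to\Sp$. Since $P_dh_{\bbS}$ is $d$-excisive, this cross-effect is multilinear (reduced and $1$-excisive in each variable) by \cref{Rem:vanishing-cross-effects}. Combining the equivalence $\Exc{(1,\ldots,1)}((\Sp^c)^{\times d},\Sp)\simeq\Sp$ from \cref{rem:multi-variable} with the identification of its inverse in \cref{exa:inflation}, it follows that
\[
\crosseffect_d(P_dh_{\bbS})(X_1,\ldots,X_d) \simeq A \otimes X_1\otimes\cdots\otimes X_d
\]
for $A \coloneqq \crosseffect_d(P_dh_{\bbS})(\bbS,\ldots,\bbS)$, and therefore $P_dh_{\bbS}(d)(X) \simeq A \otimes X^{\otimes d}$.

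To identify $A \simeq \bbS$, I would invoke the (standard) multi-variable extension of \cref{lem:ci-pn-commute}: the natural map $\crosseffect_d F \to \crosseffect_d(P_dF)$ induced by $F\to P_dF$ exhibits the target as the multilinearization $P_{(1,\ldots,1)}\crosseffect_d F$. Applying this with $F = h_{\bbS}$, and using the co-cross-effect description of \cref{rem:h_x(i)} in the multi-variable setting (an iterated version of the calculation that the total cofiber of products of pointed spaces is their smash product), we obtain
\[
\crosseffect_d h_{\bbS}(X_1,\ldots,X_d) \simeq \Sigma^{\infty}\Omega^{\infty}X_1 \otimes \cdots\otimes \Sigma^{\infty}\Omega^{\infty}X_d.
\]
Linearizing each variable using $P_1\Sigma^{\infty}\Omega^{\infty}\simeq \mathrm{id}$ from \cref{exa:inflation}, together with the fact that $P_1$ commutes with tensoring against a fixed spectrum (both are colimits and $\otimes$ preserves colimits in each variable), yields $P_{(1,\ldots,1)}\crosseffect_d h_{\bbS}(X_1,\ldots,X_d)\simeq X_1\otimes\cdots\otimes X_d$. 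Evaluating at $(\bbS,\ldots,\bbS)$ then gives $A\simeq\bbS$.

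For the $\Sigma_d$-equivariance, recall from \cref{rem:sigma-n-action} that the $d$-th cross-effect carries a natural permutation action coming from its symmetric $d$-ary structure. Under the multilinear identification above, this action corresponds to permutation of the tensor factors in $X_1\otimes\cdots\otimes X_d$, which on the diagonal becomes the permutation action on $X^{\otimes d}$. The main obstacle is the multi-variable commutation $\crosseffect_d(P_dF)\simeq P_{(1,\ldots,1)}\crosseffect_d F$: while this follows essentially formally from Goodwillie's construction of $P_d$ as a sequential colimit of an iterated total-fiber construction (each step of which commutes with multi-variable cross-effects), it requires care to formulate $\infty$-categorically and to track $\Sigma_d$-equivariance through the diagonal restriction.
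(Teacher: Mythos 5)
Your argument is correct in outline and reaches the right answer, but it takes a genuinely different route from the paper's. The paper also starts from $P_dh_{\bbS}(d)(X)\simeq\crosseffect_d(P_dh_{\bbS})(X,\ldots,X)$ via \cref{lem:cn-functor}, but then replaces $P_dh_{\bbS}$ by $D_dh_{\bbS}$ using \cref{Rem:vanishing-cross-effects}, invokes the known Taylor tower of $\Sigma^{\infty}\Omega^{\infty}$ (the Snaith splitting, giving $D_dh_{\bbS}(X)\simeq (X^{\otimes d})_{h\Sigma_d}$), and computes the $d$-th cross-effect of $X\mapsto (X^{\otimes d})_{h\Sigma_d}$ directly, following the method of \cite[Proposition 5.2]{AroneChing15}. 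You instead use multilinearity of $\crosseffect_d(P_dh_{\bbS})$ and the classification of multilinear functors to reduce everything to the coefficient spectrum $A=\crosseffect_d(P_dh_{\bbS})(\bbS,\ldots,\bbS)$, which you then identify by multilinearizing the external cross-effect of $h_{\bbS}$. Your route makes the multilinear structure and the $\Sigma_d$-equivariance more transparent and isolates exactly where $P_1\Sigma^{\infty}\Omega^{\infty}\simeq\mathrm{id}$ enters; the paper's route trades this for the Snaith splitting plus a direct cross-effect calculation.

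The one step you should not call ``essentially formal'' is the commutation $\crosseffect_d(P_dF)\simeq P_{(1,\ldots,1)}\crosseffect_dF$. Applying the finite limit $\crosseffect_d$ to Goodwillie's construction of $P_d$ does commute with the limits and sequential colimits involved, but what falls out is the \emph{diagonal} $T_d$-construction applied to $\crosseffect_dF$, not the iterated single-variable $T_1$-constructions defining $P_{(1,\ldots,1)}$; identifying the two is precisely the comparison between $d$-homogeneous functors and symmetric multilinear functors (see \cite[\S\S 3, 5]{Goodwillie03} or \cite[Section 6.1.4]{HALurie}). The statement is true and standard, and citing it would close your argument, but it is a theorem rather than a formality --- and it carries essentially the same content as the cross-effect computation the paper outsources to Arone--Ching. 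Your treatment of the $\Sigma_d$-action is at the same level of precision as the paper's own proof, so no complaint there.
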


\begin{proof}
	We have equivalences 
	\[
		P_dh_{\bbS}(d)(X) \simeq c_dP_dh_{\bbS}(X) \simeq \crosseffect_d(P_dh_{\bbS})(X,\ldots,X)\simeq \crosseffect_d(D_dh_{\bbS})(X,\ldots,X).
	\]
	The functor $h_{\bbS}$ is the functor $\Sigma^{\infty}\Omega^{\infty}$ whose Goodwillie tower is well-known, see for example \cite[Corollary~1.3]{AhearnKuhn2002}. In particular, we have $D_dh_{\bbS}(X) \simeq (X^{\otimes d})_{h\Sigma_d}$. Computing $d$-th cross-effects (for example, by a minor modification of the method used in the proof of \cite[Proposition 5.2]{AroneChing15}), we obtain the desired result. 
\end{proof}

\begin{Cor}\label{cor:vanishing}
	If $u > d$, then $P_dh_{\bbS}(u) = 0$.
\end{Cor}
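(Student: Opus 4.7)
The plan is to combine the identification of $h_{\bbS}(u)$ as a diagonal cross-effect, the fact that $P_d$ preserves finite limits, and the vanishing of higher cross-effects of excisive functors.

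First, I would observe that $h_{\bbS}(u) \simeq c_u h_{\bbS}$: by Definition \ref{def:hxi} and \Cref{lem:cross-effect}, $h_{\bbS}(u)$ is the $u$-th cross-effect of the contravariant functor $x \mapsto h_x$ evaluated at $(\bbS,\ldots,\bbS)$, and by \cref{rem:same-cross-effect} this agrees with the $u$-th cross-effect in the covariant variable, i.e.\ with $c_u h_{\bbS}$.

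Second, I would apply $P_d$ and commute it past the cross-effect. The argument of \Cref{lem:ci-pn-commute} shows $P_d c_u F \simeq c_u P_d F$ for $1 \le u \le d$, but the only input it uses is that $P_d$ commutes with total fibers of cubes, i.e.\ with finite limits (\cite[Proposition 1.7]{Goodwillie03}). This input is available regardless of the relation between $u$ and $d$, so the same argument gives $P_d c_u h_{\bbS} \simeq c_u P_d h_{\bbS}$ in the present case where $u > d$.

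Finally, I would invoke \cref{Rem:vanishing-cross-effects}. Since $P_d h_{\bbS}$ is $d$-excisive and every $d$-excisive functor is $(u-1)$-excisive for any $u > d$, the cross-effect $\crosseffect_u(P_d h_{\bbS})$ is $((u-1)-u+1) = 0$-excisive, i.e.\ reduced and $0$-excisive, in each variable. A reduced $0$-excisive functor is zero, so $c_u P_d h_{\bbS} \simeq 0$, whence $P_d h_{\bbS}(u) \simeq 0$.

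There is no real obstacle here: the only minor subtlety is to record that \Cref{lem:ci-pn-commute} in fact holds for all $u \ge 1$, not just $u \le d$, which is immediate from its proof.
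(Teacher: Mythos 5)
Your proof is correct, but it takes a genuinely different route from the paper's. The paper's argument is a one-liner: since $P_dP_u \simeq P_d$ for $u > d$, one has $P_dh_{\bbS}(u) \simeq P_dP_uh_{\bbS}(u)$, and $P_uh_{\bbS}(u)$ is $u$-homogeneous by \cref{prop:pnhsn-homogeneous}, hence is killed by $P_{u-1}$ and a fortiori by $P_d$. You instead identify $h_{\bbS}(u)$ with the diagonal cross-effect $c_u h_{\bbS}$, commute $P_d$ past $c_u$, and invoke the vanishing of the $u$-th cross-effect of a $(u-1)$-excisive functor via \cref{Rem:vanishing-cross-effects}. The underlying inputs substantially overlap --- Goodwillie's theorem that $\crosseffect_m$ of an $n$-excisive functor is $(n-m+1)$-excisive in each variable, and the commutation of $P_d$ with total fibers of cubes --- but your route requires the small extension of \cref{lem:ci-pn-commute} to indices $i > d$, which, as you correctly note, is immediate from its proof since that proof only uses that $P_d$ preserves finite limits. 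The paper avoids this extension by routing through the larger localization $P_u$ and the homogeneity statement already on record. Your version is slightly more self-contained at the level of the corollary; the paper's is shorter given what precedes it. Both are valid.
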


\begin{proof}
	Note that $P_dh_{\bbS}(u) \simeq P_dP_uh_{\bbS}(u)$, but $P_uh_{\bbS}(u)$ is $u$-homogeneous (\cref{prop:pnhsn-homogeneous}). 
\end{proof}

For the next statement, recall the notion of a ``good'' subset from \cref{def:good-subsets}.

\begin{Prop}\label{prop:local-day-convolution}
	For any $i,j\ge 1$, there is an equivalence
	\begin{equation}\label{eq:mackey-formula-goodwillie}
		P_dh_{\bbS}(i) \circledast P_dh_{\bbS}(j) \simeq \bigoplus_{\substack{\mathcal{U} \subseteq \num{i} \times \num{j} \\ \
		\mathcal{U} \text{ good} \\|\mathcal{U}| \le d}} P_dh_{\bbS}(|\mathcal{U}|)
	\end{equation}
	in $\Exc{d}(\Sp^c,\Sp)$. 
\end{Prop}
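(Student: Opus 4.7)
The plan is to deduce this proposition from \cref{prop:day-convolution} by applying the localization $P_d$ and exploiting its monoidal properties established in \cref{thm:properties-n-excisive}.

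First, I would invoke \cref{thm:properties-n-excisive}, which asserts that $P_d \colon \Fun(\Sp^c,\Sp) \to \Exc{d}(\Sp^c,\Sp)$ is a smashing symmetric monoidal localization. Consequently, $P_d$ carries Day convolution in $\Fun(\Sp^c,\Sp)$ to the Day convolution in $\Exc{d}(\Sp^c,\Sp)$, giving a natural equivalence
\[
    P_d h_{\bbS}(i) \circledast P_d h_{\bbS}(j) \simeq P_d\bigl(h_{\bbS}(i) \circledast h_{\bbS}(j)\bigr).
\]
Next, I would apply \cref{prop:day-convolution} to rewrite the right-hand side as $P_d\bigl(\bigoplus_{\mathcal{U}} h_{\bbS}(|\mathcal{U}|)\bigr)$, where the sum ranges over good subsets $\mathcal{U} \subseteq \num{i}\times\num{j}$.

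Since $P_d$ is a left adjoint (cf.\ \cref{thm:d-excisive-localization} and \cref{rem:pd}), it preserves direct sums, so this becomes $\bigoplus_{\mathcal{U} \text{ good}} P_d h_{\bbS}(|\mathcal{U}|)$. Finally, \cref{cor:vanishing} implies that each summand with $|\mathcal{U}| > d$ vanishes, so the decomposition reduces to the desired formula
\[
    \bigoplus_{\substack{\mathcal{U} \subseteq \num{i} \times \num{j} \\ \mathcal{U} \text{ good} \\ |\mathcal{U}| \le d}} P_d h_{\bbS}(|\mathcal{U}|).
\]

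There is no real obstacle here: the combinatorial content of the statement is entirely packaged in \cref{prop:day-convolution}, and the present proposition is essentially a formal consequence of the fact that the $d$-excisive approximation is a smashing symmetric monoidal localization that annihilates $h_{\bbS}(u)$ for $u > d$. The only point requiring any care is to ensure that the symmetric monoidal structure on $\Exc{d}(\Sp^c,\Sp)$ being used is the one transported from $\Fun(\Sp^c,\Sp)$ under $P_d$, which is precisely the content of \cref{thm:properties-n-excisive}.
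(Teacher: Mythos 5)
Your proof is correct and is exactly the argument the paper intends: the paper's own proof simply cites \cref{prop:day-convolution} and \cref{cor:vanishing}, and your write-up fills in the routine details (applying the symmetric monoidal, coproduct-preserving localization $P_d$ from \cref{thm:properties-n-excisive} and discarding the summands with $|\mathcal{U}|>d$).
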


\begin{proof}
	This follows from \Cref{prop:day-convolution} and \Cref{cor:vanishing}.
\end{proof}

\begin{Rem}
    We view this proposition as a functor calculus version of the Mackey decomposition
    \[
		\Sigma_G^{\infty}G/H_+ \otimes \Sigma_G^{\infty}G/K_+ \simeq \bigoplus_{g \in H \backslash G/K} \Sigma_G^{\infty}G/(H^g \cap K)_+
    \]
    that holds in the category of $G$-spectra. 
\end{Rem}

\begin{Exa}\label{exa:day-convolution-n}
	We have
	\[
		P_dh_{\bbS}(i) \circledast P_dh_{\bbS}(d) \simeq \bigoplus_{\left|\surj(d,i)\right|}P_dh_{\bbS}(d),
	\]
	where $\left|\surj(d,i)\right|$ denotes the number of surjections from a set of cardinality $d$ to a set of cardinality $i$. This follows from the observation that the only summands in \eqref{eq:mackey-formula-goodwillie} correspond to those good subsets of $[i] \times [d]$ of cardinality exactly $d$, and there are exactly ${|\surj(d,i)|}$ of them. 
\end{Exa}

\begin{Rem}
	It is useful to recall, for example using inclusion-exclusion (or alternatively using the table on page 73 along with Equation 1.94(a) of \cite{Stanley12}), that
	\begin{equation}\label{eq:number_or_surjections}
		\left|\surj(i,j)\right| \cong j^i+\sum_{s=1}^{j-1}(-1)^s{\binom{j}{s}}(j-s)^i.
	\end{equation}
\end{Rem}

\begin{Rem}
	The $G$-spectra $\Sigma_G^\infty G/H_+$ are dualizable and in fact (for finite $G$) self-dual in the category of $G$-spectra. Our next goal is to show that the same holds for their functor calculus analogs $P_dh_{\bbS}(i)$; see \cref{rem:dict1}. In fact, this holds before applying $P_d$. 
\end{Rem}

\begin{Prop}\label{prop:self-dual}
    For each $i \ge 1$ the functor $h_{\bbS}(i)$ is a self-dual dualizable object of $\Fun(\Sp^c,\Sp)$.
\end{Prop}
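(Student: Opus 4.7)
The plan is to prove dualizability by exhibiting $h_{\bbS}(i)$ as a retract of a corepresentable functor, and then to identify its dual as $h_{\bbS}(i)$ itself via a direct computation of the internal hom.

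For dualizability: by \cref{lem:cross-effect}, $h_{\bbS}(i)$ is a direct summand of $h_{\num{i}_+\wedge\bbS}$ via the idempotent $\crosseffect(i)$. By \cref{Cor:conv-representables}, $h_{x\otimes y}\simeq h_x\circledast h_y$ naturally, so the assignment $x\mapsto h_x$ carries dualizable objects of $(\Sp^c,\otimes)$ to dualizable objects of $(\Fun(\Sp^c,\Sp),\circledast)$. Since $\bbS^{\oplus i}=\num{i}_+\wedge\bbS$ is (self-)dualizable in $\Sp^c$, $h_{\num{i}_+\wedge\bbS}$ is therefore dualizable in $\Fun(\Sp^c,\Sp)$, and so is its retract $h_{\bbS}(i)$.

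To identify the dual, fix $y\in\Sp^c$ and use the internal hom formula together with the pointwise description $h_{\bbS}(i)\simeq h_{\bbS}(-)\otimes\cdots\otimes h_{\bbS}(-)$ from \eqref{eq:smash decomposition}; applying \cref{lem:ching-yoneda} with $x_1=\cdots=x_i=\bbS$ and $G(-)=h_{\bbS}(y\otimes -)$ yields
\[
    \ihom{h_{\bbS}(i),h_{\bbS}}(y) \simeq \Hom_{\Fun(\Sp^c,\Sp)}\bigl(h_{\bbS}(i),h_{\bbS}(y\otimes -)\bigr) \simeq \crosseffect_i\bigl(h_{\bbS}(y\otimes -)\bigr)(\bbS,\ldots,\bbS).
\]
Since $h_{\bbS}(y\otimes\bbS^{\oplus k})=\Sigma^\infty(\Omega^\infty y)^k$ (as $\Omega^\infty$ turns finite direct sums into products), the rightmost expression is the total fiber of the cube $S\mapsto\Sigma^\infty(\Omega^\infty y)^{i-|S|}$ with structure maps given by collapsing factors. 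Passing from the cross-effect to the co-cross-effect via \cref{lem:crosstococross} and invoking the total cofiber/smash-product identification recalled in \cref{rem:h_x(i)}, this is precisely $\Sigma^\infty(\Omega^\infty y)^{\wedge i}=h_{\bbS}(i)(y)$. Promoting this equivalence naturally in $y$ gives $\ihom{h_{\bbS}(i),h_{\bbS}}\simeq h_{\bbS}(i)$, so $h_{\bbS}(i)$ is self-dual.

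The main subtle point will be assembling the above pointwise equivalences into an equivalence of functors in $y$; this should follow from the naturality of the internal hom formula, of \cref{lem:ching-yoneda}, and of the total (co)fiber identification, but requires some care with higher coherences.
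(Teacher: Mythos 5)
Your proof is correct, and while the dualizability step coincides with the paper's (both exhibit $h_{\bbS}(i)$ as an idempotent summand of the dualizable corepresentable $h_{\bbS^{\oplus i}}$, using that $x\mapsto h_x$ is monoidal), your identification of the dual is a genuinely different argument. The paper proves self-duality structurally: it invokes the general fact that for a dualizable $F$ with idempotent $e$, the dual of $eF$ is the summand of $\mathbb{D}F$ split off by the dual idempotent $\mathbb{D}e$, and then checks that the specific idempotent $\crosseffect\num{i}$ is self-dual because it is built from the maps $h_{\psi_{\num{i}\setminus\{s\}}}$ induced by self-dual collapse maps $\bbS^{\oplus i}\to\bbS^{\oplus i}$. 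You instead compute the dual $\ihom{h_{\bbS}(i),h_{\bbS}}$ directly: the internal hom formula plus \cref{lem:ching-yoneda} converts it into the $i$-th cross-effect of $h_{\bbS}(y\otimes-)$ at $(\bbS,\ldots,\bbS)$, which the total-(co)fiber identification of \cref{rem:h_x(i)} evaluates to $\Sigma^\infty(\Omega^\infty y)^{\wedge i}=h_{\bbS}(i)(y)$. Both routes are sound. The paper's argument avoids any naturality bookkeeping (everything happens at the level of objects and idempotents in the functor category), whereas yours produces an explicit formula for the duality equivalence but, as you correctly flag, requires assembling pointwise equivalences into an equivalence of functors in $y$; this does go through since each step (the internal hom formula, \cref{lem:ching-yoneda}, and the total cofiber computation) is natural in $y$, but it is the one place where your write-up leans on unchecked coherences. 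A minor point: to conclude dualizability from \cref{Cor:conv-representables} you should also note that the unit is preserved ($h_{\bbS}$ is the monoidal unit), i.e., that $x\mapsto h_x$ is strong symmetric monoidal as in \cref{rem:stable-yoneda}, which is exactly what the paper cites.
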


\begin{proof}
	We first establish that $h_{\bbS}(i)$ is dualizable in $\Fun(\Sp^c,\Sp)$ for each $i \ge 1$. Indeed, by \Cref{rem:stable-yoneda} if $x \in \Sp^c$ is dualizable, then $h_{x} \in \Fun(\Sp^c,\Sp)$ is dualizable, and the dual of $h_{x}$ is $h_{\mathbb {D}x}$, where $\mathbb D x$ is the Spanier--Whitehead dual of $x$. In particular, if $x\in \Sp^c$ is self-dual, then $h_x$ is self-dual.  This implies that $h_{\oplus_i \bbS}$ is self-dual, and in particular $h_{\bbS}(i)$ is dualizable, as it is a summand of a dualizable object.

	Moreover, we will show that $h_{\bbS}(i)$ is itself self-dual. Indeed, suppose $F$ is a dualizable object of a symmetric monoidal stable $\infty$-category and $e\colon F\to F$ is an idempotent map. Recall that $eF$ is the summand of $F$ split off by $e$. Then the object $eF$ is dualizable, and furthermore the dual of $eF$ is equivalent to $\mathbb De(\mathbb DF)$, i.e., the summand of $\mathbb DF$ split off by the dual idempotent $\mathbb De$. In particular, if $F$ and $e$ are both self-dual, then~$eF$ is self-dual. Now recall from \eqref{eq:idempotent h_S} that $h_{\bbS}(i)$ is split off the self-dual object~$h_{\oplus_i \bbS}$ by the idempotent 
    \[
        \crosseffect\num{i}=\underset{1\le s\le i}{\bigcirc}(1-h_{\psi_{\num{i}\setminus\{s\}}}).
    \] 
    Here $h_{\psi_{\num{i}\setminus\{s\}}}\colon h_{\oplus_i \bbS}\to h_{\oplus_i \bbS}$ is the idempotent induced by the map $\oplus_i \bbS\to \oplus_i \bbS$ that collapses summand $s$ to a point. This map is self-dual and therefore $h_{\psi_{\num{i}\setminus\{s\}}}$ is self-dual. It follows that $\crosseffect\num{i}$ is self-dual and thus $h_{\bbS}(i)$ is self-dual. 
\end{proof}

\begin{Rem}
	In fact, since the commutative algebra $h_{\bbS}(i)$ is separable (\cref{lem:hSi-is-separable}), dualizability of $h_{\bbS}(i)$ actually forces $h_{\bbS}(i)$ to be self-dual; see \cite[Section~2]{Sanders22}.
\end{Rem}

\begin{Cor}\label{prop:generators-dualizable-self-dual}
    Each $P_dh_{\bbS}(i)$ is a self-dual dualizable object of $\Exc{d}(\Sp^c,\Sp)$.
\end{Cor}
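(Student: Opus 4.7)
The plan is to deduce the corollary directly from \cref{prop:self-dual} by exploiting the fact that the localization functor $P_d$ is symmetric monoidal. Recall from \cref{thm:properties-n-excisive} that $P_d\colon\Fun(\Sp^c,\Sp)\to\Exc{d}(\Sp^c,\Sp)$ is a smashing localization compatible with Day convolution, hence is a symmetric monoidal functor (with the monoidal unit of the target being $P_d h_{\bbS}$). In particular, $P_d$ preserves dualizable objects and sends a dual pair to a dual pair.

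The first step is to spell this out concretely. Since $h_{\bbS}(i)$ is self-dual dualizable in $\Fun(\Sp^c,\Sp)$ by \cref{prop:self-dual}, there exist coevaluation and evaluation maps
\[
\eta\colon h_{\bbS} \to h_{\bbS}(i)\circledast h_{\bbS}(i), \qquad \varepsilon\colon h_{\bbS}(i)\circledast h_{\bbS}(i)\to h_{\bbS}
\]
satisfying the triangle identities. Applying $P_d$ and using the symmetric monoidal structure (so that $P_d(h_{\bbS}(i)\circledast h_{\bbS}(i))\simeq P_d h_{\bbS}(i)\circledast P_d h_{\bbS}(i)$ and $P_d h_{\bbS}$ is the tensor unit of $\Exc{d}(\Sp^c,\Sp)$) yields coevaluation and evaluation maps for $P_d h_{\bbS}(i)$ against itself. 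The triangle identities are preserved by any symmetric monoidal functor, so these exhibit $P_d h_{\bbS}(i)$ as self-dual dualizable in $\Exc{d}(\Sp^c,\Sp)$.

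There is essentially no obstacle here: the entire content is that $P_d$ is symmetric monoidal, which has already been established, combined with \cref{prop:self-dual}. The only point worth emphasizing is that one must invoke the symmetric monoidality of $P_d$ (rather than merely the fact that it is a left adjoint) in order to transport the duality data; this is what \cref{thm:properties-n-excisive} provides.
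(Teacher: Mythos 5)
Your proposal is correct and is essentially identical to the paper's own proof, which likewise deduces the corollary from \cref{prop:self-dual} together with the symmetric monoidality of $P_d$ established in \cref{thm:properties-n-excisive}. Spelling out the transport of the coevaluation, evaluation, and triangle identities is just an explicit unwinding of the standard fact that symmetric monoidal functors preserve dualizable objects and their duals.
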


\begin{proof}
	This follows from \cref{prop:self-dual} and \cref{thm:properties-n-excisive} since symmetric monoidal functors preserve dualizable objects and their duals.
\end{proof}

\begin{Rem}
	In equivariant homotopy theory, restricting to the trivial subgroup $\res^G_1\colon \Sp_G \to \Sp$ is strong symmetric monoidal, whereas $(-)^H\colon \Sp_G \to \Sp$ is only lax symmetric monoidal for $e \neq H \le G$. The corresponding result in functor calculus is the following: 
\end{Rem}

\begin{Prop}\label{prop:monoidal-cross-effect}
	The functor 
	\[
		\crosseffect_d(-)(\bbS,\ldots,\bbS) \colon \Exc{d}(\Sp^c,\Sp) \to \Sp
	\]
	admits a symmetric monoidal refinement. 
\end{Prop}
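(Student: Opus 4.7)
Plan: My strategy is to exhibit the functor as corepresented by the commutative separable algebra $A \coloneqq P_dh_{\bbS}(d)$ and then to upgrade the resulting lax symmetric monoidal structure to a strong one.

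First, combining \cref{lem:cross-effect-yoneda-lemma} with \cref{lem:cn-functor} identifies $e_d \coloneqq \crosseffect_d(-)(\bbS,\ldots,\bbS)$ with the corepresentable functor $\Hom_{\Exc{d}(\Sp^c,\Sp)}(A,-)$. Since $A$ admits a canonical commutative algebra structure by \cref{lem:commutative-algebra-structure}, the functor $\Hom(A,-)$ inherits a canonical lax symmetric monoidal refinement. The remaining task is then to verify the unit and multiplicativity axioms. The unit condition is immediate: combining \cref{lem:cn-functor} with \cref{cor:value-of-pnhsn}, we obtain
\[
    e_d(\unit) \simeq c_d P_dh_{\bbS}(\bbS) \simeq P_dh_{\bbS}(d)(\bbS) \simeq \bbS^{\otimes d} \simeq \bbS.
\]
For the multiplicativity constraint, since $A$ is compact by \cref{prop:generators-dualizable-self-dual} the functor $e_d$ preserves all colimits, and so it suffices to verify the comparison $e_d(F)\otimes e_d(G) \xrightarrow{\simeq} e_d(F\circledast G)$ on the compact generators $F = P_dh_{\bbS}(i)$ and $G = P_dh_{\bbS}(j)$ for $1 \le i, j \le d$. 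Here \cref{prop:local-day-convolution} gives an explicit decomposition of $F\circledast G$ indexed by good subsets of $\num{i}\times\num{j}$, and the target decomposes analogously, reducing the question to a concrete combinatorial identification of summands.

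The main obstacle I anticipate is carrying out the lax-to-strong upgrade at the full $\infty$-categorical level, rather than merely checking the underlying monoidal constraint object-by-object. Here the separability of $A$ (\cref{lem:hSi-is-separable}) together with its self-duality (\cref{prop:generators-dualizable-self-dual}) is crucial: these collectively endow $A$ with a Frobenius-algebra structure, supplying in particular a comultiplication $A \to A\otimes A$ splitting the multiplication, from which the strong symmetric monoidal refinement can be coherently assembled. An alternative and perhaps more conceptual route is to interpret $A$, via the decomposition $A\otimes A \simeq A^{\oplus d!}$ (\cref{exa:day-convolution-n}), as a ``$\Sigma_d$-Galois-like'' extension of the unit, and then to appeal to descent theory for commutative separable algebras to produce a symmetric monoidal equivalence between $\Mod_A(\Exc{d}(\Sp^c,\Sp))$ and $\Sp$ through which $e_d$ factors as the base-change functor $A \otimes (-)$ followed by this equivalence.
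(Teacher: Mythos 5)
Your route (corepresentability by the separable algebra $A=P_dh_{\bbS}(d)$, then a lax-to-strong upgrade) is genuinely different from the paper's, but it has a real gap at precisely the point you flag as the main obstacle. Two preliminary wrinkles: a commutative algebra structure on $A$ does not by itself make $\Hom(A,-)$ lax symmetric monoidal — the lax constraint requires a map $A\to A\otimes A$, i.e.\ a (cocommutative) comultiplication, which you must extract from self-duality or the Frobenius structure of the separable algebra, so this has to be set up before anything else. The serious issue is the strong-monoidality check. Your reduction to generators is legitimate (a transformation between functors preserving colimits in each variable is invertible iff it is so on generators), but what you then verify is only that $e_d(P_dh_{\bbS}(i))\otimes e_d(P_dh_{\bbS}(j))$ and $e_d(P_dh_{\bbS}(i)\circledast P_dh_{\bbS}(j))$ are \emph{abstractly} equivalent: both are finite sums of spheres, and the counting identity $|\surj(d,i)|\,|\surj(d,j)|=\sum_l\mu(i,j,l)\,|\surj(d,l)|$ from the proof of \cref{thm:fundamental-ses-a(n)} matches the number of summands. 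What must be shown is that the \emph{specific} lax comparison map — a square integer matrix built from whatever comultiplication you chose — is invertible; nothing in your argument addresses this, and it is not automatic. The same issue afflicts the unit constraint: $\bbS\to e_d(\unit)\simeq\bbS$ is a self-map of the sphere whose degree must be checked to be $\pm1$ (a priori it could be $d!$ or $0$; compare the transfer computation in the proof of \cref{prop:injective}). Finally, your ``more conceptual'' alternative is circular within the paper's architecture: the symmetric monoidal equivalence $\Mod_{\Exc{d}(\Sp^c,\Sp)}(A)\simeq\Sp$ through which you want $e_d$ to factor is exactly \cref{thm:barr-beck}, whose proof uses the present proposition as input in order to apply \cite[Proposition 5.29]{MathewNaumannNoel17}; you would need an independent proof of that equivalence.

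The paper sidesteps all of this by constructing the strong monoidal structure directly rather than upgrading a lax one: the $d$-th cross-effect is exhibited as restriction along the direct sum functor $q\colon(\Sp^c)^{\times d}\to\Sp^c$ — which is symmetric monoidal for Day convolution, being left Kan extension along the symmetric monoidal diagonal — followed by multilinearization (a monoidally compatible localization, via the universal property of $P_d$) and the symmetric monoidal evaluation equivalence $\Exc{(1,\ldots,1)}((\Sp^c)^{\times d},\Sp)\simeq\Sp$. To complete your approach you would need either an explicit verification that your comparison matrices are invertible, or a non-circular proof of the module-category equivalence; as written, the proposal does not establish the proposition.
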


\begin{proof}
	The following is essentially the proof given for the $d=2$ case on page 108 of \cite{CDHHLMNNS23}. Let $\Delta \colon \Sp^c \to (\Sp^c)^{\times d}$ denote the diagonal functor, and consider the induced functor $\Delta^{\ast}\colon \Fun((\Sp^c)^{\times d},\Sp) \to \Fun(\Sp^c,\Sp)$.  This functor admits a left adjoint, via the left Kan extension along $\Delta$. Since the diagonal is symmetric monoidal, so is the left Kan extension. Moreover, since the diagonal has a two-sided adjoint given by the direct sum functor $q \colon (\Sp^c)^{\times d} \to \Sp$, this left Kan extension is just given by restriction along $q^*$.   

	We note that by \cite[Corollary 6.1.3.5]{HALurie} the functor $\Delta^*$ restricts to a functor $\Exc{(1,\ldots,1)}((\Sp^c)^{\times d},\Sp) \to \Exc{d}(\Sp^c,\Sp)$. Moreover, as in \Cref{exa:1-excisive-functors,exa:inflation} we can identify
	\begin{equation}\label{eq:multivariable-d}
		\Exc{(1,\ldots,1)}((\Sp^c)^{\times d},\Sp) \simeq \Sp
	\end{equation}
	as symmetric monoidal $\infty$-categories, given by evaluation at $(\mathbb{S},\ldots,\mathbb{S})$. Using the universal property of the localization $P_{d}$ (see \cite[Proposition 5.5.4.20]{HTTLurie}), there then exists a symmetric monoidal functor $\overline{q} \colon \Exc{d}(\Sp^c,\Sp) \to \Exc{(1,\ldots,1)}((\Sp^c)^{\times d},\Sp)$ which fits in the commutative diagram:
	\[\begin{tikzcd}[ampersand replacement=\&]
		{\Fun(\Sp^c,\Sp)} \& {\Exc{d}(\Sp^c,\Sp)} \\
		{\Exc{(1,\ldots,1)}((\Sp^c)^{\times d},\Sp)} \\
		\Sp
		\arrow["{P_d}", from=1-1, to=1-2]
		\arrow["{p_{1,\ldots,1}\circ q^*}"', from=1-1, to=2-1]
		\arrow["{\overline{q}}", from=1-2, to=2-1]
		\arrow["{\ev_{(\bbS,\ldots,\bbS)}}", "\sim"', from=2-1, to=3-1]
		\arrow[curve={height=-18pt}, dashed, from=1-2, to=3-1]
	\end{tikzcd}\]
	The composite ${p_{1,\ldots,1}\circ q^*}$ is the $d$-th cross-effect by \cite[Remark 6.1.3.23]{HALurie}. Unwinding the definitions, we see that the dashed arrow $\Exc{d}(\Sp^c,\Sp) \to \Sp$ is a symmetric monoidal functor whose underlying functor is exactly $\crosseffect_d(-)(\bbS,\ldots,\bbS)$. 
\end{proof}

\begin{Rem}
	For $i <d$ the functor 
	\[
		\crosseffect_i(-)(\bbS,\ldots,\bbS) \colon \Exc{d}(\Sp^c,\Sp) \to \Sp
	\]
	is \emph{lax} symmetric monoidal (for example, because it is corepresented by the commutative algebra object $P_dh_{\bbS}(i)$) but not symmetric monoidal. For example, one has $\crosseffect_1P_dh_{\bbS}(d) \simeq \mathbb{S}$ but $\crosseffect_1(P_dh_{\bbS}(d) \circledast P_dh_{\bbS}(d)) \simeq \mathbb{S}^{\oplus d!}$ as follows from \Cref{exa:day-convolution-n}. 
\end{Rem}

\begin{Rem}
	In equivariant homotopy theory, the categorical fixed point functors $\{(-)^H\}_{H \le G}$ are jointly conservative on  $G$-spectra. Our next goal is to show that the same is true for the collection $\{ \crosseffect_i(-)(\bbS,\ldots,\bbS) \}_{1 \le i \le d}$ on the category of $d$-excisive functors. We begin by working with $d$-homogeneous functors (\Cref{Def:n-homogeneous}). 
\end{Rem}

\begin{Lem}\label{lem:cross-effects-homogeneous}
	Let $F$ be a $d$-homogeneous functor. Then 
	\[
		F = 0 \iff \crosseffect_dF(\bbS,\ldots,\bbS) = 0.
	\]
\end{Lem}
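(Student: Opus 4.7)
The forward implication is immediate, so the content lies in showing that vanishing of $\crosseffect_d F(\bbS,\ldots,\bbS)$ forces $F$ itself to vanish when $F$ is $d$-homogeneous. My plan is to combine two facts already recorded in the paper: first, that the $d$-th cross-effect of a $d$-excisive functor is multi-linear, and second, Goodwillie's reconstruction theorem for $d$-homogeneous functors in terms of their top cross-effect.

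To begin, I would invoke \cref{Rem:vanishing-cross-effects}: since $F$ is $d$-excisive, $\crosseffect_d F\colon (\Sp^c)^{\times d}\to\Sp$ is $1$-excisive in each variable, i.e., it is a multi-linear (reduced, $(1,\ldots,1)$-excisive) functor. By the equivalence
\[
    \Exc{(1,\ldots,1)}((\Sp^c)^{\times d},\Sp)\xrightarrow{\;\sim\;}\Sp
\]
recalled in \eqref{eq:multi-variable} (see also \eqref{eq:multivariable-d}), multi-linear functors are determined by their value at $(\bbS,\ldots,\bbS)$. Hence the hypothesis $\crosseffect_d F(\bbS,\ldots,\bbS)\simeq 0$ forces $\crosseffect_d F\simeq 0$ identically.

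It then remains to see that a $d$-homogeneous functor with vanishing $d$-th cross-effect is itself zero. This is Goodwillie's classification of homogeneous functors: any $d$-homogeneous $F$ is equivalent to
\[
    F(X)\simeq \bigl(\crosseffect_d F(X,\ldots,X)\bigr)_{h\Sigma_d},
\]
where $\Sigma_d$ acts by permuting the inputs (see \cite[Section~3]{Goodwillie03} or \cite[Theorem~6.1.4.7]{HALurie}). Since the right-hand side is built functorially from $\crosseffect_d F$, its vanishing gives $F\simeq 0$, completing the proof.

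The main (and essentially only) subtlety is the appeal to Goodwillie's reconstruction of $d$-homogeneous functors; everything else is a direct consequence of facts already established in the paper. Since both cited ingredients are standard and referenced in the running text, the argument should be short.
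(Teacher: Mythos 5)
Your proof is correct, and it takes a somewhat different route from the paper's for the key step. Both arguments reduce to showing that $\crosseffect_dF(\bbS,\ldots,\bbS)=0$ forces $\crosseffect_dF$ to vanish on all inputs. You get this by observing that $\crosseffect_dF$ is multi-linear (via \cref{Rem:vanishing-cross-effects}) and then invoking the equivalence \eqref{eq:multi-variable} between multi-linear functors and spectra via evaluation at $(\bbS,\ldots,\bbS)$; the paper instead argues directly, one variable at a time, that $\SET{X \in \Sp^c}{\crosseffect_dF(X,\bbS,\ldots,\bbS)=0}$ is a thick subcategory of $\Sp^c$ containing $\bbS$ and hence is all of $\Sp^c$. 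These are morally the same fact (the thick subcategory argument is essentially the proof that a reduced linear functor is determined by its value at $\bbS$), but the paper's version is more elementary and self-contained, while yours leans on the cited multi-variable classification. For the final step, you appeal to the full reconstruction $F(X)\simeq(\crosseffect_dF(X,\ldots,X))_{h\Sigma_d}$, which is more than needed; the paper uses only the detection statement \cite[Proposition~3.4]{Goodwillie03} that a $d$-homogeneous functor vanishes iff its $d$-th cross-effect does. Since both of your ingredients are external citations to Goodwillie and Lurie, there is no circularity with the paper's later results (\cref{prop:n-homogeneous-functors}, \cref{lem:goodwillie-classification}), which in the paper's logical order depend on this lemma. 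One small point worth making explicit: multi-linearity requires reducedness in each variable as well as $1$-excisiveness; this follows from \cref{lem:trivial} (or directly from the total-fiber definition of the cross-effect).
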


\begin{proof}
    By \cite[Proposition 3.4]{Goodwillie03} (see also \cite[Corollary 6.1.4.11]{HALurie}) we have that $F = 0 \iff \crosseffect_dF(X_1,\ldots,X_d)= 0$ for all $X_1,\ldots,X_d \in \Sp^c$. Thus, it suffices to show that
    \[
		\crosseffect_dF(\bbS,\ldots,\bbS) = 0 \implies \crosseffect_dF(X_1,\ldots,X_n) = 0.
    \]
    We first show that 
    \[
		\crosseffect_dF(\bbS,\ldots,\bbS) = 0 \implies \crosseffect_dF(X_1,\bbS,\ldots,\bbS) = 0.
    \]
    Indeed, the collection $\cat C \coloneqq \SET{X \in \Sp^c }{ \crosseffect_dF(X,\bbS,\ldots,\bbS) = 0}$ is a thick subcategory (this uses that $F$ is reduced to show that it is closed under extensions) which contains~$\bbS$ by assumption. Therefore, $\cat C = \Sp^c$. Repeating the argument in each variable then gives the desired result. 
\end{proof}

\begin{Prop}\label{prop:cross-effect-conservative}
	The functor 
	\[
		\prod_{1 \le i \le d} \crosseffect_i(-)(\bbS,\ldots,\bbS) \colon \Exc{d}(\Sp^c,\Sp) \to \prod_{1 \le i \le d} \Sp
	\]
	is conservative. In other words, a functor $F \in \Exc{d}(\Sp^c,\Sp)$ is trivial if and only if $\crosseffect_i(F)(\bbS,\ldots,\bbS) = 0$ for $1 \le i \le d$. 
\end{Prop}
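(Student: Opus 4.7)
The plan is to proceed by induction on $d$, leveraging the Goodwillie tower to reduce to the homogeneous case already handled by \cref{lem:cross-effects-homogeneous}.

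For the base case $d=1$, a linear functor $F \in \Exc{1}(\Sp^c,\Sp)$ is determined by its value at $\bbS$ via the equivalence of \cref{exa:1-excisive-functors}, and since $F$ is reduced we have $\crosseffect_1(F)(\bbS)\simeq F(\bbS)$. Thus vanishing of this cross-effect forces $F \simeq 0$.

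For the inductive step, suppose the result holds in degree $d-1$ and let $F \in \Exc{d}(\Sp^c,\Sp)$ satisfy $\crosseffect_i(F)(\bbS,\ldots,\bbS) = 0$ for all $1\le i \le d$. Consider the fiber sequence
\[
	D_dF \too F \too P_{d-1}F,
\]
where we use $P_dF \simeq F$ since $F$ is $d$-excisive. By \cref{Rem:vanishing-cross-effects}, the top cross-effect is preserved by passing to the homogeneous layer, so $\crosseffect_d(D_dF)(\bbS,\ldots,\bbS) \simeq \crosseffect_d(F)(\bbS,\ldots,\bbS) = 0$. Since $D_dF$ is $d$-homogeneous, \cref{lem:cross-effects-homogeneous} yields $D_dF \simeq 0$, and therefore $F \simeq P_{d-1}F$ is actually $(d-1)$-excisive.

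It remains to observe that the cross-effects $\crosseffect_i(F)(\bbS,\ldots,\bbS)$ for $1\le i \le d-1$ are intrinsic to $F$ as a functor $\Sp^c \to \Sp$ and do not depend on the ambient category of excisive functors in which we view it. Hence the inductive hypothesis, applied to $F \in \Exc{d-1}(\Sp^c,\Sp)$ together with the hypothesis that $\crosseffect_i(F)(\bbS,\ldots,\bbS) = 0$ for $1 \le i \le d-1$, gives $F \simeq 0$. There is no real obstacle here: the main content has already been carried out in \cref{lem:cross-effects-homogeneous}, and the induction simply packages it using the Goodwillie tower.
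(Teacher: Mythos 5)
Your proof is correct and follows essentially the same route as the paper's: induction on $d$, using the fiber sequence $D_dF \to F \to P_{d-1}F$, \cref{Rem:vanishing-cross-effects}, and \cref{lem:cross-effects-homogeneous} to kill the top layer and then invoke the inductive hypothesis. The closing observation that the cross-effects are intrinsic to $F$ is a harmless extra remark that the paper leaves implicit.
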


\begin{proof}
	We prove this by induction on $d$. For $d = 1$, under the identification $\Exc{1}(\Sp^c,\Sp) \simeq \Sp$ of \Cref{exa:1-excisive-functors}, $\crosseffect_1(-)(\bbS)$ is the identity functor, and so the claim becomes a tautology. Next assume inductively that the statement is true for $(d-1)$-excisive functors, and let $F$ be an $d$-excisive functor whose cross-effects vanish. Recall that we have a fiber sequence
	\[
		D_dF \to F \to P_{d-1}F
	\]
	where $D_dF$ is $d$-homogeneous and $P_{d-1}F$ is $(d-1)$-excisive. By \Cref{Rem:vanishing-cross-effects} and our hypothesis we have $\crosseffect_d(D_dF) \simeq \crosseffect_d(F) = 0$, so that $D_dF = 0$ by \Cref{lem:cross-effects-homogeneous}. In particular, $F \simeq P_{d-1}F$ and $\crosseffect_iP_{d-1}F(\bbS,\ldots,\bbS) = 0$ for $1 \le i \le d-1$. By induction, $F \simeq P_{d-1}F = 0$, and we are done. 
\end{proof}

\begin{Thm}\label{prop:compact_generation}
	The category $\Exc{d}(\Sp^c,\Sp)$ is compactly generated. Moreover, the functors $P_d(h_{\bbS}(i))$ for $1 \le i \le d$ form a set of compact generators of $\Exc{d}(\Sp^c,\Sp)$. 
\end{Thm}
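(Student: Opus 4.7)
The plan breaks into two parts: first, I would show that each $P_d h_{\bbS}(i)$ is compact in $\Exc{d}(\Sp^c,\Sp)$; second, that the collection $\{P_d h_{\bbS}(i)\}_{1 \le i \le d}$ jointly generates the category under suspensions, desuspensions, retracts, and colimits. Combined with the presentability of $\Exc{d}(\Sp^c,\Sp)$ (which holds since it is a reflective localization of the presentable category $\Fun(\Sp^c,\Sp)$ by \cref{thm:d-excisive-localization}), this will yield the desired conclusion.

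For compactness, I would invoke \cref{lem:cross-effect-yoneda-lemma}, which provides a natural equivalence
\[
    \Hom_{\Exc{d}(\Sp^c,\Sp)}(P_d h_{\bbS}(i), F) \simeq \crosseffect_i F(\bbS,\ldots,\bbS).
\]
The right-hand side is a finite iterated fiber built out of values of $F$ on finite spectra. Since $\Exc{d}(\Sp^c,\Sp) \subseteq \Fun(\Sp^c,\Sp)$ is closed under all colimits (see \cref{rem:pd}), filtered colimits in $\Exc{d}(\Sp^c,\Sp)$ coincide with the pointwise filtered colimits computed in $\Sp$, where they commute with finite limits. Therefore $\crosseffect_i(-)(\bbS,\ldots,\bbS)$ preserves filtered colimits, so $P_d h_{\bbS}(i)$ is compact.

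For joint generation, it suffices to verify that if $F \in \Exc{d}(\Sp^c,\Sp)$ satisfies $\Hom(\Sigma^n P_d h_{\bbS}(i), F) = 0$ for every $n \in \mathbb Z$ and every $1 \le i \le d$, then $F \simeq 0$. By \cref{lem:cross-effect-yoneda-lemma}, this hypothesis translates into the vanishing of all homotopy groups of $\crosseffect_i F(\bbS,\ldots,\bbS)$, and hence the vanishing of $\crosseffect_i F(\bbS,\ldots,\bbS)$ itself, for each $1 \le i \le d$. The conclusion $F \simeq 0$ then follows immediately from the joint conservativity statement \cref{prop:cross-effect-conservative}.

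I do not foresee any substantive obstacle: the argument is essentially a packaging of the corepresentability formula \cref{lem:cross-effect-yoneda-lemma} together with the joint conservativity of cross-effects \cref{prop:cross-effect-conservative}, both of which are already in hand. If anything, the only point that deserves verification is the pointwise nature of filtered colimits in $\Exc{d}(\Sp^c,\Sp)$, which is immediate from \cref{thm:properties-n-excisive} (or from the fact that $P_d$ preserves colimits, per \cref{rem:pd}).
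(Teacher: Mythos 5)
Your proposal is correct. The generation half is exactly the paper's argument: combine the corepresentability formula \cref{lem:cross-effect-yoneda-lemma} with the joint conservativity of the cross-effects \cref{prop:cross-effect-conservative}, and conclude via the standard criterion (the paper cites \cite[Lemma 2.2.1]{SchwedeShipley03}) that a set of compact objects in a presentable stable $\infty$-category whose right orthogonal vanishes is a set of compact generators.

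Where you diverge is in the compactness half. The paper does not use \cref{lem:cross-effect-yoneda-lemma} here at all: it quotes Ching's result that $\Fun(\Sp^c,\Sp)$ is compactly generated with compacts the retracts of finite colimits of corepresentables, closed under objectwise smash product, so that each $h_{\bbS}(i)$ is compact upstairs; it then invokes \cite[Corollary 5.5.7.3]{HTTLurie} to transport compactness along the colimit-preserving localization $P_d$. You instead argue directly: $\Hom(P_d h_{\bbS}(i),-) \simeq \crosseffect_i(-)(\bbS,\ldots,\bbS)$ is a finite limit of evaluation functors, filtered colimits in $\Exc{d}(\Sp^c,\Sp)$ are computed pointwise (since the inclusion into $\Fun(\Sp^c,\Sp)$ preserves colimits and colimits there are pointwise), and finite limits commute with filtered colimits in $\Sp$. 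This is a valid and more self-contained argument; it avoids both Ching's lemma and Lurie's corollary as inputs, at the cost of not establishing the paper's auxiliary byproduct that the compact objects of $\Exc{d}(\Sp^c,\Sp)$ are precisely the retracts of $P_dF$ for $F$ compact in $\Fun(\Sp^c,\Sp)$ --- a fact the paper's phrasing makes available for later use but which is not needed for the theorem itself.
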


\begin{proof}
 We recall that Ching has shown that $\Fun(\Sp^c,\Sp)$ is compactly generated (\Cref{rem:rigid-compact-fun}). By \Cref{rem:pd}, $P_d$ is a smashing localization, so \cite[Corollary 5.5.7.3]{HTTLurie} implies that the category $\Exc{d}(\Sp^c,\Sp)$ is also compactly generated. In fact, the same corollary shows that the functor $P_d$ preserves compact objects and that a functor $G \in \Exc{n}(\Sp^c,\Sp)$ is compact if and only if there exists $F \in \Fun(\Sp^c,\Sp)^c$ such that $G$ is a retract of~$P_dF$. 

	We now show that $\SET{P_d(h_{\Sphere}(i))}{1\le i\le d}$ is a set of compact generators. Since the objectwise smash product of compact functors in $\Fun(\Sp^c,\Sp)$ is still compact, each $h_{\bbS}(i)$ is compact in $\Fun(\Sp^c,\Sp)$ and hence is compact in $\Exc{d}(\Sp^c,\Sp)$ after applying $P_d$, as explained above. To show that this set of compact objects is a set of generators, it suffices by \cite[Lemma 2.2.1]{SchwedeShipley03} to show that if
	\[
		\Hom_{\Exc{d}(\Sp^c,\Sp)}(P_d(h_{\mathbb{S}}(i)),F) = 0 \quad \text{ for all } 1 \le i \le d
	\]
	then $F = 0$. Applying \Cref{lem:cross-effect-yoneda-lemma}, we see that this is equivalent to the statement that if $\crosseffect_iF(\bbS,\ldots,\bbS) = 0$ for $1 \le i \le d$, then $F = 0$, which is precisely the content of \Cref{prop:cross-effect-conservative}. 
\end{proof}

\begin{Rem}\label{rem:BCR}
    Ideas related to the proof of \cref{prop:compact_generation} have also appeared in work of Biedermann--Chorny--R\"ondigs \cite[Section~8]{BiedermannChornyRoendigs2007}.
\end{Rem}

\begin{Cor}\label{cor:rigid-compact}
	The category $\Exc{d}(\Sp^c,\Sp)$ is rigidly-compactly generated; that is, it is compactly generated and the compact and dualizable objects coincide. 
\end{Cor}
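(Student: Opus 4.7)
The plan is to deduce rigid-compact generation from the results already assembled, since compact generation is supplied by \cref{prop:compact_generation} with explicit generators $\{P_dh_{\bbS}(i)\}_{1 \le i \le d}$, and \cref{prop:generators-dualizable-self-dual} tells us that every one of these generators is (self-)dualizable. It therefore remains to verify that the class of compact objects coincides with the class of dualizable objects in $\Exc{d}(\Sp^c,\Sp)$.

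For the inclusion \emph{dualizable $\Rightarrow$ compact}, I would use the standard argument in a closed symmetric monoidal stable $\infty$-category. The unit $P_d h_{\bbS} = P_d h_{\bbS}(1)$ is itself a compact generator, hence compact. If $X \in \Exc{d}(\Sp^c,\Sp)$ is dualizable with dual $X^\vee$, then for any filtered diagram $\{Y_\alpha\}$ we have natural equivalences
\[
\Hom(X,\colim_\alpha Y_\alpha) \simeq \Hom(\unit, X^\vee \otimes \colim_\alpha Y_\alpha) \simeq \Hom(\unit,\colim_\alpha(X^\vee\otimes Y_\alpha)) \simeq \colim_\alpha \Hom(X,Y_\alpha),
\]
using that $X^\vee \otimes (-)$ preserves colimits (by presentability of $\Exc{d}(\Sp^c,\Sp)$ from \cref{thm:properties-n-excisive}) and that $\unit$ is compact. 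So $X$ is compact.

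For the inclusion \emph{compact $\Rightarrow$ dualizable}, the dualizable objects form a thick subcategory: they are closed under retracts, finite (co)limits, shifts, and tensor products (this is a formal property of any closed symmetric monoidal stable $\infty$-category). Since $\Exc{d}(\Sp^c,\Sp)$ is compactly generated by a set of dualizable objects (namely the $P_dh_{\bbS}(i)$), a standard fact about compactly generated triangulated categories \cite[Lemma~2.2 and Theorem~2.1(3)]{Neeman01} says that the subcategory of compact objects is precisely the thick subcategory generated by any set of compact generators. Combining this with the dualizability of the $P_dh_{\bbS}(i)$ gives that every compact object lies in the thick subcategory of dualizable objects, hence is dualizable.

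The two inclusions together furnish compact $=$ dualizable, completing the proof. I do not anticipate a genuine obstacle here: both directions are formal consequences of the structural results already in place, the only input being the existence of a set of compact generators that are simultaneously dualizable and include the tensor unit, which is exactly what \cref{prop:compact_generation} and \cref{prop:generators-dualizable-self-dual} provide.
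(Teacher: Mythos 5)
Your proof is correct and follows essentially the same route as the paper: both rest on \cref{prop:compact_generation} (compact generation by the $P_dh_{\bbS}(i)$, which include the unit) and \cref{prop:generators-dualizable-self-dual} (their dualizability). The only difference is that the paper then simply invokes \cite[Theorem 2.1.3(d)]{HoveyPalmieriStrickland97} for the formal fact that a compactly generated category with dualizable compact generators has compact $=$ dualizable, whereas you spell out that standard argument explicitly; both are fine.
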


\begin{proof}
	We have already seen in \Cref{prop:compact_generation} that $\Exc{d}(\Sp^c,\Sp)$ is compactly generated by $\SET{P_d(h_{\bbS}(i))}{1\le i \le d}$. Moreover, each of these compact generators is dualizable by \Cref{prop:generators-dualizable-self-dual}. In the language of \cite{HoveyPalmieriStrickland97}, we have shown that $\Exc{d}(\Sp^c,\Sp)$ is a unital algebraic stable homotopy category. It then follows from \cite[Theorem 2.1.3(d)]{HoveyPalmieriStrickland97} that the compact and dualizable objects coincide.
\end{proof}

\begin{Rem}\label{rem:adjoint-to-cross-effects}
	We can also deduce from the proof of \Cref{prop:monoidal-cross-effect} that 
	\[
		\crosseffect_d(-)(\bbS,\ldots,\bbS) \colon \Exc{d}(\Sp^c,\Sp) \to \Sp
	\]
	has equivalent left and right adjoints, given by the composite $\Sp \xrightarrow{i} \Exc{(1,\ldots,1)} \xrightarrow{\Delta^*} \Exc{d}(\Sp^c,\Sp)$, i.e., sending a spectrum $A$ to the $d$-excisive functor $X \mapsto A \otimes X^{\otimes d}$. (This follows since the diagonal $\Delta \colon \Sp^c \to (\Sp^c)^{\times d}$ has equivalent left and right adjoints.) Let us denote this adjoint by $\kappa_d$. We then obtain the following, which is the functor calculus analog of \cite[Theorem 1.1]{BalmerDellAmbrogioSanders15}:
\end{Rem}

\begin{Thm}\label{thm:barr-beck}
	There is an equivalence of symmetric monoidal $\infty$-categories
	\[
		\Mod_{\Exc{d}(\Sp^c,\Sp)}\bigl(P_dh_{\bbS}(d)\bigr) \simeq \Sp
	\]
	under which $\crosseffect_d(-)(\bbS,\ldots,\bbS)$ is identified with extension of scalars along the commutative algebra $P_dh_{\bbS}(d)$. That is, we have a commutative diagram
	\[\begin{tikzcd}[column sep=tiny]
		& \Exc{d}(\Sp^c,\Sp) \ar[dl,shift right,"{\crosseffect_d(-)(\bbS,\ldots,\bbS)}"'] \ar[dl,leftarrow,shift left,"\kappa_d"]
		\ar[dr,shift right,end anchor=north west,"F"']\ar[dr,shift left,leftarrow,end anchor=north west,"U"]& \\
		\Sp \ar[rr,"\simeq"]& &  \Mod_{\Exc{d}(\Sp^c,\Sp)}\bigl(P_dh_{\bbS}(d)\bigr)
	\end{tikzcd}\]
	where $F$ and $U$ denote extension and restriction of scalars along $P_dh_{\bbS} \to P_dh_{\bbS}(d)$.
\end{Thm}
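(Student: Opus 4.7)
The plan is to deduce the theorem from \cite[Proposition 5.29]{MathewNaumannNoel17} (already invoked in the proof of \cref{prop:smashing}) applied to the adjunction $\crosseffect_d(-)(\bbS,\ldots,\bbS) \dashv \kappa_d$ of \cref{rem:adjoint-to-cross-effects}. That proposition asserts that for a symmetric monoidal left adjoint $L\colon \cat C \to \cat D$ with right adjoint $R$, there is a canonical symmetric monoidal equivalence
\[
\Mod_{\cat C}(R(\unit_{\cat D})) \xrightarrow{\sim} \cat D
\]
whenever $R$ preserves colimits and is conservative, and moreover this equivalence intertwines the extension/restriction of scalars adjunction with the adjunction $L \dashv R$. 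In our setting, $L = \crosseffect_d(-)(\bbS,\ldots,\bbS)$ is symmetric monoidal by \cref{prop:monoidal-cross-effect}, and its right adjoint is $R = \kappa_d$.

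Both hypotheses on $\kappa_d$ will be easy to verify. Colimit-preservation is immediate from \cref{rem:adjoint-to-cross-effects}, since the left and right adjoints of $\crosseffect_d(-)(\bbS,\ldots,\bbS)$ are both equivalent to $\kappa_d$; in particular, $\kappa_d$ is itself a left adjoint and so preserves colimits. Conservativity will follow from a direct calculation: for any spectrum $A$, one has $\kappa_d(A)(\bbS) \simeq A \otimes \bbS^{\otimes d} \simeq A$, so $\kappa_d(A) \simeq 0$ forces $A \simeq 0$. The cited proposition will then produce a symmetric monoidal equivalence $\Mod_{\Exc{d}(\Sp^c,\Sp)}(\kappa_d(\bbS)) \simeq \Sp$ fitting into the commutative diagram of the statement, with $F$ and $U$ corresponding to $\crosseffect_d(-)(\bbS,\ldots,\bbS)$ and $\kappa_d$ respectively.

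It then remains to identify $\kappa_d(\bbS)$, equipped with its algebra structure as the image of the monoidal unit under the lax symmetric monoidal functor $\kappa_d$, with the commutative algebra $P_dh_{\bbS}(d)$ of \cref{lem:commutative-algebra-structure}. As objects, $\kappa_d(\bbS)$ is the functor $X \mapsto \bbS \otimes X^{\otimes d} \simeq X^{\otimes d}$, which agrees with $P_dh_{\bbS}(d)$ by \cref{cor:value-of-pnhsn}. For the algebra structures, we note that $P_dh_{\bbS}(d)$ is separable by \cref{lem:hSi-is-separable} combined with the symmetric monoidality of $P_d$, so \cite[Theorem B]{ramzi2023separability} will reduce the compatibility question to a check in the homotopy category. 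That check is carried out via the Mackey-type splitting of \cref{prop:local-day-convolution}: one verifies that both multiplications on $P_dh_{\bbS}(d) \otimes P_dh_{\bbS}(d)$ correspond, under the decomposition indexed by good subsets of $\num{d} \times \num{d}$, to projection onto the diagonal summand indexed by the identity element of $\surj(d,d) = \Sigma_d$. I foresee no serious obstacle here --- the main monadicity argument is a direct application of existing machinery, and the remaining algebra-structure identification is essentially bookkeeping.
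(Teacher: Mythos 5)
Your proposal follows essentially the same route as the paper: both deduce the equivalence from \cite[Proposition 5.29]{MathewNaumannNoel17} applied to the adjunction $\crosseffect_d(-)(\bbS,\ldots,\bbS)\dashv \kappa_d$, after checking that $\kappa_d$ preserves colimits and is conservative. One caveat: that proposition also requires the projection formula for the adjunction, which you have omitted from your statement of its hypotheses; it does hold here, formally, by \cite[Proposition 2.16]{BalmerDellAmbrogioSanders16} together with rigid-compact generation (\cref{cor:rigid-compact}), and this is exactly how the paper verifies it, so you should add that verification. Your conservativity argument (evaluate $\kappa_d(A)$ at $\bbS$) is simpler and more direct than the paper's, which instead observes that $\kappa_d$ is conservative because the essential image of $\crosseffect_d(-)(\bbS,\ldots,\bbS)$ contains a family of generators. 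Your final paragraph on matching the commutative algebra structure of $\kappa_d(\bbS)$ with that of $P_dh_{\bbS}(d)$ addresses a point the paper passes over silently (it simply cites \cref{cor:value-of-pnhsn}); the separability argument via Ramzi's theorem is a reasonable way to pin this down, though it goes beyond the level of detail the paper itself supplies.
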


\begin{proof}
	Note that by \Cref{cor:value-of-pnhsn} the functor $\kappa_d$ sends the unit $\mathbb{S}$ to~$P_dh_{\bbS}(d)$. Moreover, $\crosseffect_d(-)(\bbS,\ldots,\bbS)$ is symmetric monoidal by \cref{prop:monoidal-cross-effect}. The theorem will then  be a consequence of \cite[Proposition 5.29]{MathewNaumannNoel17} if we can show the following:
	\begin{enumerate}
		\item The adjunction $\crosseffect_d(-)(\bbS,\ldots,\bbS)\dashv \kappa_d$ satisfies the projection formula; 
		\item $\kappa_d$ preserves colimits; and
		\item $\kappa_d$ is conservative.
	\end{enumerate}
	The first follows formally from \cite[Proposition 2.16]{BalmerDellAmbrogioSanders16} using \cref{cor:rigid-compact}; the second follows from the fact that $\kappa_d$ is both a left and right adjoint; and the third follows from the observation that $\crosseffect_d(-)(\bbS,\ldots,\bbS)$ is essentially surjective (as the $d$-th cross-effect of the functor $X \mapsto P_dh_{\bbS}(A \otimes X)$ evaluated at $(\bbS,\ldots,\bbS)$ is $A$). Here we are using the observation that $\kappa_d$ is conservative if and only if the essential image of $\crosseffect_d(-)(\bbS,\ldots,\bbS)$ contains a family of generators. 
\end{proof}

\begin{Rem}\label{rem:separableextension}
    The commutative algebra $P_dh_{\bbS}(d)$ is separable, as a consequence of \Cref{lem:hSi-is-separable}. In other words, up to equivalence, the functor 
    \[
		\crosseffect_d(-)(\bbS,\ldots,\bbS) \colon \Exc{d}(\Sp^c,\Sp) \to \Sp
    \]
	is given by extension of scalars along a compact commutative separable algebra. Such extensions are called finite \'etale; see \cite{Balmer16b,Sanders22}.
\end{Rem}

\section{Tensor idempotents and Goodwillie derivatives}

We now recall some facts about tensor idempotents and their relation with completeness, torsion and the Tate construction. We will utilize the geometric perspective afforded by the Balmer spectrum. In the context of $d$-excisive functors, these constructions will also lead naturally to the Goodwillie derivatives.

\subsection*{Tensor idempotents and the Tate construction}

\begin{Def}[The Balmer spectrum]
	Let $\cat T$ be a rigidly-compactly generated tensor triangulated category such as (the homotopy category of) $\Exc{d}(\Sp^c,\Sp)$. We write $\cat T^c$ for the tensor triangulated subcategory of compact ($=$dualizable) objects. The \emph{Balmer spectrum} of $\cat T^c$ is the topological space $\Spc(\cat T^c)$ whose points are the prime tt-ideals of $\cat T^c$ and whose topology has $\{\supp(a)\}_{a \in \cat T^c}$ as a basis of closed sets, where
	\[
		\supp(a) \coloneqq \SET{\cat P \in \Spc(\cat T^c)}{a \not \in \cat P}.
	\]
	This topology is spectral in the sense of \cite{DickmannSchwartzTressl19} and the closure of a prime tt-ideal $\cat P \in \Spc(\cat T^c)$ is given by $\overline{\{\cat P\}} = \SET{\cat Q}{\cat Q \subseteq \cat P}$. See \cite{Balmer05a} for more details.
\end{Def}

\begin{Rem}\label{Rem:basic-balmer-properties}
	The spectrum is contravariantly functorial. In particular, if $F\colon \cat T \to \cat U$ is a tt-functor of rigidly-compactly generated tt-categories, then there is an induced continuous map $\Spc(F)\colon \Spc(\cat U^c)\to\Spc(\cat T^c)$ given by $\cat P \mapsto \SET{x \in \cat T^c}{F(x)\in \cat P}$. Note that $\Spc(F)$ preserves inclusions $\cat Q \subseteq \cat P$ of prime tt-ideals by construction.
\end{Rem}

\begin{Def}
	A subset $Y \subseteq \Spc(\cat T^c)$ is a \emph{Thomason subset} if it can be written as a union $\cup_{\alpha} Y_{\alpha}$ in which each $Y_{\alpha}$  is a closed set whose complement is quasi-compact.
\end{Def}

\begin{Rem}\label{rem:balmer-classification}
	By \cite[Theorem 4.10]{Balmer05a} there is a bijection
	\[\begin{tikzcd}
		\left\{ \begin{matrix}\text{thick ideals} \\\text{of $\cat T^c$}\end{matrix} \right\}
		\ar[leftrightarrow,r,"\sim"] & 
		\left\{
		\begin{matrix}
		\text{Thomason subsets}\\
		\text{of $\Spc(\cat T^c)$}
		\end{matrix}
		\right\}
	\end{tikzcd}\]
	which sends a thick ideal $\cat J$ to $\supp(\cat J) \coloneqq \bigcup_{a \in \cat J}\supp(a)$ and with inverse given by $Y \mapsto \cat T_Y^c \coloneqq \SET{a \in \cat T^c}{\supp(a) \subseteq Y}$. 
\end{Rem}

\begin{Exa}\label{exa:balmer-spectrum-spectra}
	The Balmer spectrum of $\Sp^c$ is described in \cite[Section 9]{Balmer10b} by reinterpreting the work of Hopkins--Smith~\cite{HopkinsSmith98}. For a prime number $p$ and chromatic height $1 \le h \le \infty$, we have a prime ideal $\cat C_{p,h} \in \Spc(\Sp^c)$ which consists of those finite spectra annihilated by the $p$-local Morava $K$-theory $K(p,h-1)$:
	\[
		\cat C_{p,h} \coloneqq \SET{x \in \Sp^c}{K(p,h-1)_*(x)=0}.
	\]
	In particular, $\cat C_{p,\infty}$ is the kernel of $\Fp$-homology $\HFp$, while $\cat C_{p,1}$ is the kernel of rational homology $\HQ$ (that is, the finite torsion spectra), independently of $p$. Consequently, we also write $\cat C_{0,1}\coloneqq \cat C_{p,1}$ (for all $p$). With these definitions in hand, $\Spc(\Sp^c)$ is the topological space depicted in \Cref{fig:balmer-sp-of-sp} below. In this figure, inclusion goes downwards, while closure goes upwards; in particular, the points $\cat C_{p,\infty}$ are closed points, while $\cat C_{0,1}$ is a generic point. A more detailed explanation of the topology may be found in \cite[Corollary 9.5]{Balmer10b}. For later use, we also introduce the notation $\cat C_{p,0} \coloneqq \Sp_{(p)}^c$ for the category of finite $p$-local spectra.
	\begin{figure}[h!]
	\[\begin{tikzcd}[ampersand replacement=\&,row sep=small]
		{\cat{C}_{2,\infty}} \& {\cat{C}_{3,\infty}} \& \ldots \& {\cat{C}_{p,\infty}} \& \ldots \\
		\vdots \& \vdots \&\& \vdots \\
		{\cat{C}_{2,h}} \& {\cat{C}_{3,h}} \& \ldots \& {\cat{C}_{p,h}} \& \cdots \\
		{\vdots } \& \vdots \&\& \vdots \\
		{\cat{C}_{2,2}} \& {\cat{C}_{3,2}} \& \cdots \& {\cat{C}_{p,2}} \& \cdots \\
		\&\& {\cat{C}_{0,1}}
		\arrow[no head, from=5-1, to=6-3]
		\arrow[no head, from=5-2, to=6-3]
		\arrow[no head, from=6-3, to=5-4]
		\arrow[no head, from=5-1, to=4-1]
		\arrow[no head, from=5-2, to=4-2]
		\arrow[no head, from=5-4, to=4-4]
		\arrow[no head, from=3-1, to=2-1]
		\arrow[no head, from=3-2, to=2-2]
		\arrow[no head, from=3-4, to=2-4]
		\arrow[no head, from=4-1, to=3-1]
		\arrow[no head, from=4-2, to=3-2]
		\arrow[no head, from=4-4, to=3-4]
		\arrow[no head, from=2-1, to=1-1]
		\arrow[no head, from=2-2, to=1-2]
		\arrow[no head, from=2-4, to=1-4]
	\end{tikzcd}\]
	\caption{The Balmer spectrum $\Spc(\Sp^c)$ of the category of finite spectra}\label{fig:balmer-sp-of-sp}
	\end{figure}
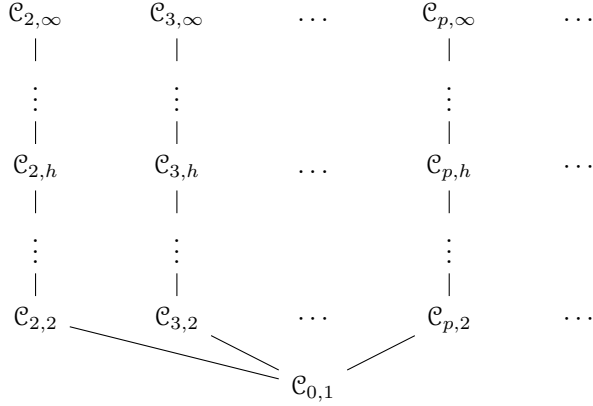
\end{Exa}

\begin{Rem}\label{rem:balmer-favi-properties}
	We now recall some constructions from \cite{BalmerFavi11} and \cite[Section 5]{BalmerSanders17}. Let $Y \subseteq \Spc(\cat T^c)$ be a Thomason subset and let $V \coloneqq \Spc(\cat T^c) \setminus Y$ be its complement. Then there is an associated \emph{idempotent triangle} 
	\begin{equation}\label{eq:basic-idempotent-triangle}
		e_Y \to \unit \to f_Y \to \Sigma e_Y
	\end{equation}
	in $\cat T$, i.e., an exact triangle with $e_Y \otimes f_Y  = 0$. Moreover, we have
	\[
		\cat T_Y \coloneqq e_Y \otimes \cat T = \ker(f_Y \otimes -) = \Loco{e_Y} = \Loc \langle \cat T_Y^c\rangle
	\]
	and
	\[
		\cat T(V) \coloneqq f_Y\otimes \cat T \simeq \cat T/\Loco{e_Y} = \cat T/\Loc \langle \cat T^c_Y \rangle 
	\]
	and
	\[
		\cat T^Y \coloneqq \ihom{e_Y,\cat T} = \Coloc\langle \cat T_Y^c \otimes \cat T\rangle.
	\]
	The three categories $\cat T_Y, \cat T(V)$ and $\cat T^Y$ are related by taking right orthogonals:
	\[
		\cat T(V) = (\cat T_Y)^{\perp}  = \SET{t \in \cat T}{\ihom{s,t} = 0 \text{ for all } s \in \cat T_Y }
	\]
	and 
	\[
		\cat T^Y = (\cat T(V))^{\perp}  = \SET{t \in \cat T}{\ihom{s,t} = 0 \text{ for all } s \in \cat T(V)}.
	\]
	Moreover, the categories $\cat T_Y$ and $\cat T^Y$ are equivalent via the functors $\ihom{e_Y,-}$ and~$e_Y \otimes -$. Note in particular that $\cat T \to \cat T(V)$ is a finite localization whose compactly generated subcategory of acyclic objects is $\cat T_Y=\Loc\langle \cat T_Y^c\rangle \subseteq \cat T$. On Balmer spectra, this finite localization induces the embedding
	\[
		\Spc(\cat T(V)^c) \cong V \hookrightarrow \Spc(\cat T^c)
	\]
	which explains the notation $\cat T(V)$; see \cite[Remark 1.23]{bhs1}.
\end{Rem}

\begin{Rem}
	We can represent the situation in the previous remark as follows: 
	\begin{equation}\label{eq:abstractdiagram}
	\begin{tikzcd}[ampersand replacement=\&,row sep=0.4in ]
		\& \cat T(V) = f_Y \otimes \cat T \\
		\& {\cat T} \\
		{\cat T_Y = e_Y \otimes \cat T } \&\& {\ihom{e_Y,\cat T}=\cat T^Y}
		\arrow[shift left=1, hook, from=3-1, to=2-2]
		\arrow["e_Y \otimes -", shift left=1, from=2-2, to=3-1]
		\arrow[shift left=1, hook, from=1-2, to=2-2]
		\arrow["{f_Y \otimes -}", shift left=1, from=2-2, to=1-2]
		\arrow[shift right=1, hook, from=3-3, to=2-2]
		\arrow["\ihom{e_Y,-}"', shift right=1, from=2-2, to=3-3]
		\arrow[bend left = 40, dashed, from=3-1, to=1-2]
		\arrow[shift right=1, bend left = 40, dashed, from=1-2, to=3-3]
		\arrow["\sim"', shift right=1, from=3-1, to=3-3]
		\arrow["\sim"', shift right=1, from=3-3, to=3-1]
	\end{tikzcd}
	\end{equation}
	where the dashed arrows indicate orthogonality. In other words, we have a recollement; see \cite[Section 5]{BalmerSanders17}. 
\end{Rem}	

\begin{Exa}\label{exa:completeness-wrt-an-object}
	For any compact object $A \in \cat T^c$, we can consider the Thomason closed subset $Y\coloneqq \supp(A)$. Then the categories $\cat T_{\supp(A)},\cat T(\supp(A)^{\complement})$ and $\cat T^{\supp(A)}$ are precisely the categories of $A$-torsion, $A$-local, and $A$-complete objects respectively, in the sense of \cite[Theorem 3.35]{HoveyPalmieriStrickland97}, \cite[Theorem 2.21]{bhv1}, or \cite[Theorem~3.9]{MathewNaumannNoel17}. In particular, we have that $\cat T_{\supp(A)} = \Loco{A} \subseteq \cat T$. However, these categories only depend on the thick ideal in $\cat T^c$ generated by $A$ or, equivalently by \cref{rem:balmer-classification}, the Thomason subset $\supp(A)$. Our approach emphasizes the geometric perspective by regarding these constructions as associated to a Thomason subset of the Balmer spectrum rather than to a collection of objects of the category. Finally, note that $\cat T \to \cat T(\supp(A)^{\complement})$ is the finite localization away from the object~$A$. Geometrically it restricts to the open complement of $\supp(A)$.
\end{Exa}

The following definition is due to Greenlees \cite{Greenlees01} in the axiomatic setting. 

\begin{Def}\label{def:tate-functor}
	Let $Y \subseteq \Spc(\cat T^c)$ be a Thomason subset. We define the \emph{Tate functor} with respect to $Y$ to be
	\[
		t_Y \coloneqq f_Y \otimes \ihom{e_Y,-}\colon \cat T \to \cat T.
	\]
\end{Def}

\begin{Rem}\label{rem:warwick}
	Greenlees' Warwick duality \cite[Corollary 2.5]{Greenlees01} shows that the Tate construction satisfies $t_Y \simeq \ihom{f_Y,\Sigma e_Y \otimes -}$. Moreover, there is a pullback square
	\begin{equation}\label{eq:tate-square}
	\begin{tikzcd}[ampersand replacement=\&]
		X \& {\ihom{e_Y,X}} \\
		{f_Y \otimes X} \& {t_Y (X)}
		\arrow[from=1-1, to=2-1]
		\arrow[from=1-1, to=1-2]
		\arrow[from=2-1, to=2-2]
		\arrow[from=1-2, to=2-2]
	\end{tikzcd}\end{equation} 
	for any $X \in \cat T$.
\end{Rem}

The following result is due to Balmer and Sanders~\cite[Proposition~5.11]{BalmerSanders17}.

\begin{Prop}[Balmer--Sanders]\label{prop:idempotents-under-tt-functors}
	Let $F \colon \cat T \to \cat U$ be a coproduct-preserving tensor triangulated functor and let $\phi \colon \Spc(\cat U^c) \to \Spc(\cat T^c)$ be the induced map. Let $Y \subseteq \Spc(\cat T^c)$ be a Thomason subset and set $Y' \coloneqq \phi^{-1}(Y) \subseteq \Spc(\cat U^c)$. Then $\cat U_{Y'}^c  = \thickid\langle F(\cat T^c_Y)\rangle$ and there is a unique isomorphism of idempotent triangles
	\[
		F(e_Y \to \unit \to f_Y \to \Sigma e_Y) \simeq (e_{Y'} \to \unit \to f_{Y'} \to \Sigma e_{Y'})
	\]
	in $\cat U$. If moreover $F$ is closed monoidal, then $F \circ t_Y \simeq t_{Y'} \circ F$. 
\end{Prop}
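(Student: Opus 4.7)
The plan is to handle the three assertions in sequence, using the characterization of the recollement data by Thomason subsets of the spectrum and the fact that $F$ preserves all the relevant categorical operations.

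I would begin by checking that $Y' = \phi^{-1}(Y) \subseteq \Spc(\cat U^c)$ is itself Thomason, so that the idempotent triangle $e_{Y'} \to \unit_{\cat U} \to f_{Y'}$ is defined. Writing $Y = \bigcup Y_\alpha$ with each $Y_\alpha$ Thomason closed (i.e., closed with quasi-compact complement) and using that $\phi = \Spc(F)$ is a spectral continuous map between spectral spaces, each $\phi^{-1}(Y_\alpha)$ is Thomason closed, and $Y'$ is therefore Thomason. Turning to part~(1), any tt-functor between rigidly-compactly generated tt-categories preserves duals and hence compact objects, so $F$ restricts to $F^c\colon \cat T^c \to \cat U^c$. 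The defining property of $\Spc(F)$ gives the identity $\supp_{\cat U}(F(a)) = \phi^{-1}(\supp_{\cat T}(a))$ for every $a \in \cat T^c$, which yields the inclusion $F(\cat T_Y^c) \subseteq \cat U_{Y'}^c$, hence $\thickid\langle F(\cat T_Y^c)\rangle \subseteq \cat U_{Y'}^c$. For the reverse inclusion I would invoke Balmer's classification of thick tensor-ideals by Thomason subsets recalled in \cref{rem:balmer-classification}: it suffices to show that both sides have support equal to $Y'$, which follows from
\[
    \supp\bigl(\thickid\langle F(\cat T_Y^c)\rangle\bigr) = \bigcup_{a \in \cat T_Y^c}\supp(F(a)) = \bigcup_{a \in \cat T_Y^c}\phi^{-1}(\supp(a)) = \phi^{-1}(Y) = Y'.
\]

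For part~(2), I would exploit the fact that an idempotent triangle on $\unit_{\cat U}$ is determined, up to unique isomorphism, by the localizing tensor-ideal of its torsion objects. Because $F$ is symmetric monoidal, exact, and sends $\unit_{\cat T}$ to $\unit_{\cat U}$, the triangle $F(e_Y) \to \unit_{\cat U} \to F(f_Y)$ is a triangle on the unit with $F(e_Y) \otimes F(f_Y) \simeq F(e_Y \otimes f_Y) = 0$; hence it is an idempotent triangle. It remains to identify $\Loco{F(e_Y)} \subseteq \cat U$ with $\cat U_{Y'}$. Since $F$ preserves coproducts (and all colimits, being exact), one has $F(\Loco{\cat E}) \subseteq \Loco{F(\cat E)}$ for any $\cat E \subseteq \cat T$; combined with the identities $e_Y \in \Loco{\cat T_Y^c}$ and $a \simeq a \otimes e_Y$ for $a \in \cat T_Y^c$, this gives $\Loco{F(e_Y)} = \Loco{F(\cat T_Y^c)}$. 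By part~(1) this equals $\Loco{\cat U_{Y'}^c} = \cat U_{Y'}$. The uniqueness of idempotent triangles then yields the required unique isomorphism.

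Part~(3) becomes formal once $F$ is also assumed to preserve internal homs. By definition $t_Y(X) = f_Y \otimes \ihom{e_Y, X}$, so
\[
    F(t_Y(X)) \simeq F(f_Y) \otimes F\ihom{e_Y, X} \simeq f_{Y'} \otimes \ihom{F(e_Y), F(X)} \simeq f_{Y'} \otimes \ihom{e_{Y'}, F(X)} = t_{Y'}(F(X)),
\]
where the first step uses strong monoidality, the second the closed monoidal hypothesis, and the third part~(2). The only genuine subtlety I anticipate is ensuring naturality throughout, and making sure the identifications of part~(2) are compatible with the evaluation maps entering the internal-hom adjunction so that the composite isomorphism is natural in $X$; this will follow because the isomorphism in~(2) is an isomorphism of triangles under $\unit$, so it intertwines all the structure maps in sight.
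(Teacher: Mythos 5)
Your proof is correct. The paper itself does not prove this statement but simply cites \cite[Proposition~5.11]{BalmerSanders17}, and your argument is essentially a faithful reconstruction of that proof: the support computation $\supp(F(a))=\phi^{-1}(\supp(a))$ plus Balmer's classification for part (1), the uniqueness of idempotent triangles with a prescribed localizing tensor-ideal of torsion objects (Balmer--Favi) for part (2), and strong/closed monoidality for part (3).
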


\begin{Exa}[Chromatic truncation]\label{exa:chromatic-truncation}
	Suppose that $\mathcal C$ is a symmetric monoidal stable $\infty$-category, so that it admits a unique symmetric monoidal colimit preserving functor $i \colon \Sp \to \mathcal C$ as in \Cref{rem: F_A}. Let $1 \le h \le \infty$ and consider the Thomason subset
	\[
		Y_{p,h} \coloneqq \SET{\cat C_{p,m}}{m > h} \cup \SET{\cat C_{q,m}}{q \ne p, m>1} \subseteq \Spc(\Sp^c)
	\]
	with associated idempotent triangle $e_{p,h} \to \unit \to f_{p,h} \to \Sigma e_{p,h}$ in $\Sp$, as in \cite[Example 5.12]{BalmerSanders17}. The right idempotent $f_{p,h}$ is the finite localization $L_{h-1}^f\mathbb{S}$ of the sphere spectrum. By \Cref{prop:idempotents-under-tt-functors}, the Thomason subset
	\[
		Y'_{p,h} \coloneqq \Spc(i)^{-1}(Y_{p,h})\subseteq \Spc(\mathcal C^c)
	\]
	has associated idempotent triangle
	\[
		i(e_{p,h}) \to \unit \to i(f_{p,h}) \to \Sigma i(e_{p,h})
	\]
	in $\mathcal C$.
	The \emph{chromatic truncation} of $\mathcal C$ below height
    $h$ (at the prime $p$) is the finite localization
	\[
		\mathcal C_{p,\le h} \coloneqq i(f_{p,h}) \otimes \mathcal C.
	\]
	It has the property that $\Spc((\mathcal C_{p,\le h})^c) \cong \Spc(\mathcal C^c) \setminus Y'_{p,h}$. Note that $\mathcal C_{p,\le \infty}$ is simply the $p$-localization of $\mathcal C$.
\end{Exa}

\begin{War}
	Because of the way the primes $\cat C_{p,h}$ of $\Spc(\Sp^c)$ are indexed (as the kernel of $K(p,h-1)$), there can be an off-by-one discrepancy in different usages of the term ``height'' which can lead to confusion. For example, note that the prime~$\cat C_{p,h}$ has height $h-1$ in the sense of Krull dimension. Geometrically, truncation below height $h$ is restriction to the open piece
	\[\begin{tikzcd}[ampersand replacement=\&,row sep=small,column sep=tiny]
		{\cat{C}_{p,h}} \\
		{\cat{C}_{p,h-1}} \\
		{\vdots } \\
		{\cat{C}_{p,2}} \\
		\& {\cat{C}_{0,1}}.
		\arrow[no head, from=1-1, to=2-1]
		\arrow[no head, from=2-1, to=3-1]
		\arrow[no head, from=3-1, to=4-1]
		\arrow[no head, from=4-1, to=5-2]
	\end{tikzcd}\]
	This explains the notation $\Sp \to \Sp_{p,\le h}$. However, note that this is the finite localization associated to Morava $E$-theory $E(h-1)$ which is a ring spectrum of height $h-1$ in the sense of chromatic homotopy theory; see \cref{def:height} below. Nevertheless, it is convenient to use language like ``consider a chromatic height $1 \le h \le \infty$'' even if the construction associated to this choice of $h$ results in something that really has ``height''~$h-1$.
\end{War}

\begin{Rem}\label{rem:Utate}
	In the situation of \cref{prop:idempotents-under-tt-functors}, if $G$ denotes the right adjoint of $F\colon \cat T \to \cat U$ then one readily checks that
	\[
		Gt_{\phi^{-1}(Y)}(Ft) \simeq t_Y(t \otimes G\unit)
	\]
	for all $t \in \cat T$ by using the standard isomorphisms of \cite{BalmerDellAmbrogioSanders16}.
\end{Rem}

\subsection*{Completeness and torsion in \texorpdfstring{$d$}{d}-excisive functors}

We now specialize the discussion to the category of $d$-excisive functors. Although we have not yet determined its spectrum, we can proceed as in \cref{exa:completeness-wrt-an-object} with respect to the compact $d$-excisive functor $A = P_dh_{\bbS}(d)$, that is, with respect to the Thomason closed subset $Y = \supp(P_dh_{\bbS}(d))$. Our goal is to understand the recollement~\eqref{eq:abstractdiagram} and the associated Tate construction in this example.

\begin{Not}\label{not:t_d}
	We will write $t_d\colon \Exc{d}(\Sp^c,\Sp)\to\Exc{d}(\Sp^c,\Sp)$ for the Tate functor associated to the Thomason closed subset $Y\coloneqq \supp(P_d h_{\bbS}(d))$. That is, $t_d \coloneqq t_{\supp(P_d h_{\bbS}(d))}$ in the notation of \cref{def:tate-functor}.
\end{Not}

\begin{Thm}\label{thm:finite-localization-p-n}
	Let $Y \coloneqq \supp(P_dh_{\bbS}(d)) \subseteq \Spc(\Exc{d}(\Sp^c,\Sp)^c)$. Then 
	\[
		e_{Y} \otimes \Exc{d}(\Sp^c,\Sp) = \Homog_d(\Sp^c,\Sp)
	\]
	and the associated idempotent triangle in $\Exc{d}(\Sp^c,\Sp)$ is 
	\[
		D_dh_{\bbS} \to P_dh_{\bbS} \to P_{d-1}h_{\bbS} \to \Sigma D_dh_{\bbS}.
	\]
	In particular, $P_{d-1}\colon \Exc{d}(\Sp^c,\Sp) \to \Exc{d-1}(\Sp^c,\Sp)$ is the finite localization away from~$P_dh_{\bbS}(d)$. 
\end{Thm}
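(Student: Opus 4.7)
The strategy is to reduce everything to one key computation: that the localizing tensor-ideal $\Loco{P_dh_{\bbS}(d)}$ equals $\Homog_d(\Sp^c,\Sp)$. This is the main step; once it is established, the identification of the Goodwillie fiber sequence with the idempotent triangle for $Y = \supp(P_dh_{\bbS}(d))$, and the characterization of $P_{d-1}$ as the associated finite localization, follow by the general recollement formalism of \cref{rem:balmer-favi-properties} together with the uniqueness of idempotent triangles. So the plan is: (i) compute $\Loco{P_dh_{\bbS}(d)}$; (ii) match the Taylor-tower fiber sequence with the idempotent triangle for $Y$; (iii) conclude that $P_{d-1}$ is the finite localization.

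The main obstacle is step (i). The containment $\Loco{P_dh_{\bbS}(d)}\subseteq\Homog_d$ is straightforward: $P_dh_{\bbS}(d)$ is $d$-homogeneous by \cref{prop:pnhsn-homogeneous}, and $\Homog_d=\ker(P_{d-1})$ is a localizing tensor-ideal because $P_{d-1}$ is a colimit-preserving symmetric monoidal functor. For the reverse containment, I would use the left adjoint $\kappa_d\colon \Sp\to \Exc{d}(\Sp^c,\Sp)$ from \cref{rem:adjoint-to-cross-effects}, which sends a spectrum $A$ to $X\mapsto A\otimes X^{\otimes d}$. By \cref{cor:value-of-pnhsn} we have $\kappa_d(\bbS)\simeq P_dh_{\bbS}(d)$; since $\kappa_d$ preserves colimits and $\bbS$ generates $\Sp$, the essential image of $\kappa_d$ lies in $\Loco{P_dh_{\bbS}(d)}$. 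Now invoke Goodwillie's classification of $d$-homogeneous functors: every $F\in\Homog_d$ is of the form $F(X)\simeq (\partial_dF \otimes X^{\otimes d})_{h\Sigma_d}$, which exhibits $F$ as the $B\Sigma_d$-colimit (in $\Exc{d}$) of the diagram $\kappa_d(\partial_dF)$ equipped with its natural $\Sigma_d$-action. Closure of the localizing ideal under colimits then forces $F\in \Loco{P_dh_{\bbS}(d)}$, giving equality.

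Given this, in step (ii) the Taylor-tower fiber sequence $D_dh_{\bbS} \to P_dh_{\bbS} \to P_{d-1}h_{\bbS}$ is the idempotent triangle for $Y$: the left term lies in $\Loco{P_dh_{\bbS}(d)} = \cat T_Y$ by construction, while $P_{d-1}h_{\bbS}\in (\cat T_Y)^{\perp}$ because for any $G\in\Homog_d$,
\[
\Hom_{\Exc{d}(\Sp^c,\Sp)}(G, P_{d-1}h_{\bbS}) \simeq \Hom_{\Exc{d-1}(\Sp^c,\Sp)}(P_{d-1}G, P_{d-1}h_{\bbS})= 0.
\]
Uniqueness of idempotent triangles with these orthogonality properties (\cref{rem:balmer-favi-properties}) identifies this fiber sequence with $e_Y\to\unit\to f_Y$, so in particular $e_Y\otimes\Exc{d}(\Sp^c,\Sp)=\Homog_d(\Sp^c,\Sp)$. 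Finally, for step (iii), smashing with $f_Y\simeq P_{d-1}h_{\bbS}$ realizes the finite localization $\Exc{d}(\Sp^c,\Sp)\to\Exc{d}(\Sp^c,\Sp)(V)$ away from the compact object $P_dh_{\bbS}(d)$ (\cref{rem:balmer-favi-properties}); and since $P_{d-1}$ is itself a smashing localization whose kernel is $\Homog_d=\Loco{P_dh_{\bbS}(d)}$ (\cref{thm:d-excisive-localization} and \cref{rem:pd}), it must coincide with this finite localization.
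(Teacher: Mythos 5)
Your proposal is correct, and its overall skeleton matches the paper's: both arguments come down to proving $\ker(P_{d-1}) = \Homog_d(\Sp^c,\Sp) = \Loco{P_dh_{\bbS}(d)}$ and then invoking the general recollement formalism. The easy containment $\Loco{P_dh_{\bbS}(d)} \subseteq \ker(P_{d-1})$ is handled identically (via \cref{prop:pnhsn-homogeneous}). Where you genuinely diverge is the reverse containment. The paper runs an orthogonality argument: since $P_dh_{\bbS}(d)$ is compact and corepresents $\crosseffect_d(-)(\bbS,\ldots,\bbS)$ (\cref{lem:cross-effect-yoneda-lemma}), any $H\in\ker(P_{d-1})$ right-orthogonal to it has vanishing $d$-th cross-effect at the spheres and is therefore zero by \cref{lem:cross-effects-homogeneous}; compact generation then forces $\ker(P_{d-1})=\Loco{P_dh_{\bbS}(d)}$. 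You instead argue by explicit generation: using Goodwillie's classification $F(X)\simeq(\partial_dF\otimes X^{\otimes d})_{h\Sigma_d}$, every homogeneous functor is a $B\Sigma_d$-colimit of a diagram whose values are $\kappa_d(\partial_dF)\in\Loc\langle\kappa_d(\bbS)\rangle = \Loc\langle P_dh_{\bbS}(d)\rangle$, and closure of localizing subcategories under colimits does the rest. Both are valid; the paper's route stays inside the machinery it has already built (conservativity of cross-effects on homogeneous functors) and needs only the triviality of a right orthogonal, while yours imports Goodwillie's classification of homogeneous functors as an external input but yields the slightly stronger, more concrete statement that $\Homog_d$ is generated by the image of $\kappa_d$ under colimits alone (not just as a localizing ideal). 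One cosmetic caveat in your step: the $\Sigma_d$-action on $X\mapsto\partial_dF\otimes X^{\otimes d}$ is diagonal and does not factor through $\kappa_d$ applied to the action on $\partial_dF$; this is harmless because closure under colimits only requires the individual values of the diagram to lie in the localizing subcategory. Your steps (ii) and (iii) — identifying the Taylor-tower fiber sequence with the idempotent triangle via the uniqueness clause of \cref{rem:balmer-favi-properties} and the adjunction $P_{d-1}\dashv\iota$ — are spelled out more explicitly than in the paper, which leaves this translation implicit, but they are correct.
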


\begin{proof}
	We need to show that $\ker(P_{d-1}) = \Locideal(P_dh_{\bbS}(d))$. To see this, observe first that $P_{d-1}P_dh_{\bbS}(d) \simeq P_{d-1}h_{\bbS}(d) = 0$ because $P_dh_{\bbS}(d)$ is $d$-homogeneous (\Cref{prop:pnhsn-homogeneous}). It follows that $P_dh_{\bbS}(d) \in \ker(P_{d-1})$, so that $\Locideal(P_dh_{\bbS}(d)) \subseteq \ker(P_{d-1})$. On the other hand, let $H \in \ker(P_{d-1})$ and note that $H$ is $d$-homogeneous. Suppose then that
	\[
		\Hom_{\Exc{d}(\Sp^c,\Sp)}(P_dh_{\mathbb{S}}(d),H) = 0.
	\]
	By \Cref{lem:cross-effect-yoneda-lemma} we have that $\crosseffect_d(H)(\bbS,\ldots,\bbS) = 0$ and hence $H = 0$ by \Cref{lem:cross-effects-homogeneous}. We deduce that $\ker(P_{d-1})$ coincides with the localizing ideal generated by the compact object~$P_dh_{\bbS}(d)$. Hence the localization $P_{d-1}\colon\Exc{d}(\Sp^c,\Sp) \to \Exc{d}(\Sp^c,\Sp)$ coincides with the finite localization away from $P_dh_{\bbS}(d)$.
\end{proof}

\begin{Rem}
	This result shows that the category of $d$-homogeneous functors in Goodwillie calculus is analogous to the category of free $G$-spectra in equivariant homotopy theory. 
\end{Rem}

\begin{Cor}\label{cor:pm-finite-localization}
	The functor $P_{d-i}\colon \Exc{d}(\Sp^c,\Sp) \to \Exc{d-i}(\Sp^c,\Sp)$ is a finite localization whose ideal of acyclics is generated by $\SET{P_d(h_{\mathbb{S}}(j))}{d-i+1 \leq j \leq d}$.
\end{Cor}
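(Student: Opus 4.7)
I would prove this by induction on $i \geq 1$, with base case $i=1$ being \cref{thm:finite-localization-p-n}. The idea is to factor $P_{d-i} \simeq P_{d-i} \circ P_{d-i+1}$ as the composition
\[
\Exc{d}(\Sp^c,\Sp) \xrightarrow{P_{d-i+1}} \Exc{d-i+1}(\Sp^c,\Sp) \xrightarrow{P_{d-i}} \Exc{d-i}(\Sp^c,\Sp),
\]
where the first functor is a finite localization by the inductive hypothesis, with kernel $\cat K_{i-1} \coloneqq \Locideal\langle P_dh_{\bbS}(j) : d-i+2 \leq j \leq d\rangle$, and the second is a finite localization by \cref{thm:finite-localization-p-n} (applied with $d$ replaced by $d-i+1$), with kernel generated by $P_{d-i+1}h_{\bbS}(d-i+1)$. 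Since a composition of smashing localizations is smashing, $P_{d-i}$ is itself a smashing localization, and the problem reduces to identifying its kernel with $\cat K_i \coloneqq \Locideal\langle P_dh_{\bbS}(j) : d-i+1 \leq j \leq d\rangle$.

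One inclusion, $\cat K_i \subseteq \ker P_{d-i}$, follows at once from \cref{cor:vanishing}: for each $j \geq d-i+1$ we have $P_{d-i}(P_dh_{\bbS}(j)) \simeq P_{d-i}h_{\bbS}(j) = 0$. For the reverse inclusion, take $F \in \ker P_{d-i}$ and use the smashing fiber sequence $F' \to F \to P_{d-i+1}F$ in $\Exc{d}(\Sp^c,\Sp)$; its fiber $F'$ lies in $\ker P_{d-i+1} = \cat K_{i-1} \subseteq \cat K_i$, so it suffices to show $P_{d-i+1}F \in \cat K_i$. The identity $P_{d-i}P_{d-i+1}F \simeq P_{d-i}F = 0$ together with \cref{thm:finite-localization-p-n} places $P_{d-i+1}F$ in the localizing ideal of $\Exc{d-i+1}(\Sp^c,\Sp)$ generated by $P_{d-i+1}h_{\bbS}(d-i+1)$. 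Since the inclusion $\Exc{d-i+1}(\Sp^c,\Sp) \hookrightarrow \Exc{d}(\Sp^c,\Sp)$ preserves colimits and (by smashingness of $P_{d-i+1}$) tensor products, it is then enough to verify that $P_{d-i+1}h_{\bbS}(d-i+1)$ lies in $\cat K_i$ when regarded in $\Exc{d}(\Sp^c,\Sp)$. For this I would use one more fiber sequence
\[
F'' \to P_dh_{\bbS}(d-i+1) \to P_{d-i+1}h_{\bbS}(d-i+1)
\]
arising from the localization unit, whose fiber $F''$ lies in $\ker P_{d-i+1} = \cat K_{i-1}$; since $P_dh_{\bbS}(d-i+1)$ is a declared generator of $\cat K_i$, the third term is also in $\cat K_i$, completing the induction.

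The only real subtlety worth highlighting is the bookkeeping comparison between localizing ideals computed inside the subcategory $\Exc{d-i+1}(\Sp^c,\Sp)$ and inside the ambient $\Exc{d}(\Sp^c,\Sp)$; smashingness of the successive Goodwillie approximations ensures that the inclusions of ideals go in the direction one needs, so this is a careful naming issue rather than a genuine obstacle. Everything else is a direct assembly of \cref{thm:finite-localization-p-n} and \cref{cor:vanishing} along the tower of approximations $P_d \to P_{d-1} \to \cdots \to P_{d-i}$.
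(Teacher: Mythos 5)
Your proof is correct and follows essentially the same route as the paper: induction on $i$, factoring $P_{d-i}$ through $\Exc{d-i+1}(\Sp^c,\Sp)$ and invoking \cref{thm:finite-localization-p-n} for the last stage. The only difference is that the paper compresses your careful kernel identification (the two fiber sequences and the passage between localizing ideals in the subcategory and the ambient category) into a one-line appeal to "the evident third isomorphism theorem for localizations," which your argument correctly unpacks.
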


\begin{proof}
	We prove this by induction on $i$. One can either start the induction at $i = 0$, where the statement is vacuous, or at $i = 1$, where it is the content of \Cref{thm:finite-localization-p-n}. In either case suppose the statement of the corollary holds for the functor $P_{d-i}$. The localization $P_{d-i-1} \colon \Exc{d-i}(\Sp^c,\Sp) \to \Exc{d-i-1}(\Sp^c,\Sp)$ is the finite localization that kills $P_{d-i}(h_{\bbS}(d-i))$. Noting that this is the image of the generator $P_d(h_{\bbS}(d-i))$ under the localization $P_{d-i}$, the inductive hypothesis and the evident `third isomorphism theorem' for localizations gives the result.  
\end{proof}

\begin{Exa}\label{example:not finite}
	The obvious extension of \Cref{cor:pm-finite-localization} obtained by letting $d$ go to~$\infty$ is false. By this we mean that the kernel of the functor
	\[
		P_k\colon \Fun(\Sp^c, \Sp)\to \Exc{k}(\Sp^c, \Sp)
	\]
	is not generated by $\SET{h_{\mathbb{S}}(j)}{k < j}$. To see this, let us consider the case $k=0$. In this case $P_0$ is the trivial functor and the kernel of $P_0$ is all of $\Fun(\Sp^c, \Sp)$. This category is not generated by $\SET{h_{\mathbb{S}}(j)}{0 < j}$. Indeed, consider the functor $F(X)=\Sigma^\infty\Omega^{\infty+1} (H\mathbb Q \wedge X)$. This functor has the property that $F(X)\simeq 0$ whenever~$X$ is a wedge of finitely many copies of the sphere spectrum. It follows that all the cross-effects of $F$ are trivial, i.e., $\crosseffect_jF(\bbS, \ldots, \bbS)\simeq 0$ for all $j>0$. By \Cref{lem:ching-yoneda}, this means that for all $j>0$ we have
	\[
		\Hom_{\Fun(\Sp^c,\Sp)}(h_{\mathbb{S}}(j), F)\simeq 0
	\]
	but $F$ is clearly not a trivial functor.

	Going further, we suspect that $\Exc{k}(\Sp^c, \Sp)$ is {\it not} a finite localization of $\Fun(\Sp^c, \Sp)$, i.e., that the kernel of the localization is not generated by compact objects.
\end{Exa}

With the torsion categories now identified, let us turn to the complete category. 

\begin{Prop}\label{prop:n-homogeneous-functors}
	In the situation of \Cref{thm:finite-localization-p-n}, we have an equivalence of symmetric monoidal $\infty$-categories
	\[
		\ihom{e_Y,\Exc{d}(\Sp^c,\Sp)} \simeq \Fun(B\Sigma_d,\Sp).
	\]
	In particular, there is an equivalence of symmetric monoidal stable $\infty$-categories $\Homog_{d}(\Sp^c,\Sp) \simeq \Fun(B\Sigma_d,\Sp)$.
\end{Prop}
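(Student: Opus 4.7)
The plan is to combine the recollement structure of~\eqref{eq:abstractdiagram} with Goodwillie's classical classification of $d$-homogeneous functors. By the discussion in \cref{rem:balmer-favi-properties}, the functors $e_Y \otimes -$ and $\ihom{e_Y,-}$ give mutually inverse equivalences $\cat T_Y \simeq \cat T^Y$. The localizing subcategory $\cat T_Y = e_Y \otimes \Exc{d}(\Sp^c,\Sp)$ is a tensor-ideal with unit $e_Y = D_d h_{\bbS}$, and hence inherits a natural symmetric monoidal structure from $\Exc{d}(\Sp^c,\Sp)$; this structure transports along the equivalence and thereby endows $\cat T^Y = \ihom{e_Y,\Exc{d}(\Sp^c,\Sp)}$ with its symmetric monoidal structure. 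By \cref{thm:finite-localization-p-n}, $\cat T_Y$ is identified with $\Homog_{d}(\Sp^c,\Sp)$, so it suffices to exhibit a symmetric monoidal equivalence $\Homog_{d}(\Sp^c,\Sp) \simeq \Fun(B\Sigma_d,\Sp)$, where the tensor product on $\Homog_{d}(\Sp^c,\Sp)$ is $F \otimes G \coloneqq D_d(F \circledast G)$ with unit $D_d h_{\bbS}$.

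At the level of underlying stable $\infty$-categories, this is Goodwillie's classification theorem (see \cite[Section~3]{Goodwillie03} and \cite[Theorem~6.1.4.7]{HALurie}), implemented by the $d$-th derivative
\[
    \partial_d F \coloneqq \crosseffect_d F(\bbS, \ldots, \bbS)
\]
equipped with the natural $\Sigma_d$-action from \cref{rem:sigma-n-action}, with inverse sending $A \in \Fun(B\Sigma_d,\Sp)$ to the functor $X \mapsto (A \otimes X^{\otimes d})_{h\Sigma_d}$. To upgrade this to a symmetric monoidal equivalence, I will verify compatibility with the unit and the tensor product. For the unit, $\partial_d(D_d h_{\bbS})$ computes the top derivative of $\Sigma^{\infty}\Omega^{\infty}$, which by the standard Snaith-type analysis of its Taylor tower is $\bbS$ with trivial $\Sigma_d$-action, matching the unit of $\Fun(B\Sigma_d,\Sp)$. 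For the tensor product, since both sides preserve colimits in each variable and $\Homog_{d}(\Sp^c,\Sp)$ is generated as a localizing subcategory by the objects $D_d h_x$ for $x \in \Sp^c$, it suffices to check on such generators: using \cref{Cor:conv-representables} we compute $D_d h_x \otimes D_d h_y \simeq D_d(h_x \circledast h_y) \simeq D_d h_{x \otimes y}$, whose derivative is $(Dx \otimes Dy)^{\otimes d} \simeq (Dx)^{\otimes d} \otimes (Dy)^{\otimes d}$ with diagonal $\Sigma_d$-action, which matches $\partial_d(D_d h_x) \otimes \partial_d(D_d h_y)$ in $\Fun(B\Sigma_d,\Sp)$.

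The main obstacle is upgrading these pointwise compatibilities into a genuine equivalence of symmetric monoidal $\infty$-categories, rather than an equivalence of homotopy categories together with a matching of binary tensor products. For this one can either invoke an $\infty$-categorical enhancement of Goodwillie's classification (which coherently tracks the $\Sigma_d$-equivariance), or construct the symmetric monoidal refinement directly from the symmetric monoidal multi-variable cross-effect functor of \cref{prop:monoidal-cross-effect}, using that its restriction to $\Homog_{d}(\Sp^c,\Sp)$ factors through evaluation at $(\bbS,\ldots,\bbS)$ and lands in the full subcategory $\Fun(B\Sigma_d,\Sp) \subseteq \Fun((\Sp^c)^{\times d},\Sp)$ of symmetric multilinear functors.
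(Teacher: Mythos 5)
Your route is genuinely different from the paper's. The paper never touches Goodwillie's classification at this point: it applies descent along the compact separable commutative algebra $P_dh_{\bbS}(d)$, namely \cite[Theorem~2.30]{MathewNaumannNoel17}, to write $\ihom{e_Y,\Exc{d}(\Sp^c,\Sp)}$ as the totalization of the cosimplicial diagram of module categories $\Mod(P_dh_{\bbS}(d)^{\circledast \bullet})$. By \cref{thm:barr-beck} the first term is $\Sp$, and by \cref{exa:day-convolution-n} one has $P_dh_{\bbS}(d)\circledast P_dh_{\bbS}(d)\simeq\bigoplus_{d!}P_dh_{\bbS}(d)$, so the cosimplicial diagram is the standard one computing $\Fun(B\Sigma_d,\Sp)$ from the bar resolution of $B\Sigma_d$. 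This produces the equivalence \emph{together with} its symmetric monoidal structure in one stroke, since each stage of the totalization is symmetric monoidal; the identification with $\Homog_d(\Sp^c,\Sp)$ is then the last step, via \cref{rem:balmer-favi-properties}. You run the argument in the opposite direction, starting from the torsion category.

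The gap you flag in your last paragraph is real and is exactly where the content of the statement lies: the underlying equivalence of stable $\infty$-categories is Goodwillie's theorem (\cref{lem:goodwillie-classification}), and your unit and binary-tensor checks on the generators $D_dh_x$ are fine, but they only give an equivalence of homotopy categories matching tensor products up to unspecified homotopy. Neither of your two proposed fixes is carried out, and the second does not work as stated: the symmetric monoidal refinement of the $d$-th cross-effect constructed in \cref{prop:monoidal-cross-effect} factors through $\Exc{(1,\ldots,1)}((\Sp^c)^{\times d},\Sp)\simeq\Sp$ and \emph{forgets} the $\Sigma_d$-action, so it cannot be restricted to land in $\Fun(B\Sigma_d,\Sp)$ without first building a coherently $\Sigma_d$-equivariant symmetric monoidal enhancement of the multilinearization functor (and identifying the category of symmetric multilinear functors, which is a homotopy fixed-point category, with $\Fun(B\Sigma_d,\Sp)$). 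That enhancement is precisely the missing construction, and it is what the descent argument lets one avoid. To complete your proof you would either need to supply it, or fall back on the paper's descent strategy.
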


\begin{proof}
	Our proof is modeled on \cite[Proposition 6.17]{MathewNaumannNoel17}, which is the corresponding statement in equivariant homotopy theory. For brevity, let us write $\cC $ for $\Exc{d}(\Sp^c,\Sp)$. Because $P_dh_{\bbS}(d)$ is dualizable, we can apply \cite[Theorem~2.30]{MathewNaumannNoel17} to deduce that 
	\[
		\ihom{e_Y,\cat C} \simeq \Tot\left(\xymatrix{\Mod_{\cC}(P_dh_{\bbS}(d)) \ar@<-.5ex>[r] \ar@<.5ex>[r] & \Mod_{\cC}(P_dh_{\bbS}(d) \circledast P_dh_{\bbS}(d)) \ar@<-1ex>[r] \ar@<1ex>[r] \ar[r] & \cdots}  \right).
	\]
	We have seen in \Cref{thm:barr-beck} that $\Mod_{\cC}(P_dh_{\bbS}(d)) \simeq \Sp$. Using \Cref{exa:day-convolution-n} we deduce moreover that 
	\[
		\Mod_{\cC}(P_dh_{\bbS}(d)^{\circledast k}) \simeq \prod_{(\Sigma_d)^{\times(k-1)}}\Sp.
	\]
	Unwinding the definitions, we find that  $\ihom{e_Y,\cat C}$ is identified with a totalization 
	\[
		\ihom{e_Y,\cat C} \simeq \Tot\left(\xymatrix{\Sp\ar@<-.5ex>[r] \ar@<.5ex>[r] & \displaystyle \prod_{\Sigma_d} \Sp \ar@<-1ex>[r] \ar@<1ex>[r] \ar[r] & \cdots} \right)
	\]
	which recovers the functor category $\Fun(B\Sigma_d,\Sp)$ for the standard simplicial decomposition of $B\Sigma_d$. The final claim then follows from the symmetric monoidal equivalence of $\infty$-categories
	\[
		e_Y \otimes \cat C \simeq \ihom{e_Y,\cat C}
	\]
	as noted in \Cref{rem:balmer-favi-properties}.
\end{proof}

\begin{Rem}
	The local duality diagram can then be written in the following form:
	\begin{equation}\label{eq:local-duality}
	\begin{tikzcd}[ampersand replacement=\&,row sep=0.4in ]
		\& {\Exc{d-1}(\Sp^c,\Sp)} \\
		\& {\Exc{d}(\Sp^c,\Sp)} \\
		{\Homog_d(\Sp^c,\Sp)} \&\& {\Fun(B\Sigma_d,\Sp)}
		\arrow[shift left=1, hook, from=3-1, to=2-2]
		\arrow["D_d", shift left=1, from=2-2, to=3-1]
		\arrow[shift left=1, hook, from=1-2, to=2-2]
		\arrow["{P_{d-1}}", shift left=1, from=2-2, to=1-2]
		\arrow[shift right=1, hook, from=3-3, to=2-2]
		\arrow["", shift right=1, from=2-2, to=3-3]
		\arrow[bend left = 40, dashed, from=3-1, to=1-2]
		\arrow[shift right=1, bend left = 40, dashed, from=1-2, to=3-3]
		\arrow["\sim"', shift right=1, from=3-1, to=3-3]
		\arrow["\sim"', shift right=1, from=3-3, to=3-1]
	\end{tikzcd}
	\end{equation}
\end{Rem}

The equivalence $\Homog_d(\Sp^c,\Sp) \xrightarrow{\sim} \Fun(B\Sigma_d,\Sp)$ is Goodwillie's classification of $d$-homogeneous functors \cite[Theorem 3.5 and \S5]{Goodwillie03}:

\begin{Lem}[Goodwillie]\label{lem:goodwillie-classification}
	The equivalence $\Homog_d(\Sp^c,\Sp) \xrightarrow{\sim} \Fun(B\Sigma_d,\Sp)$ sends a $d$-homogeneous functor $F$ to $\crosseffect_d(F)(\bbS,\ldots,\bbS)$. Its inverse sends $A \in \Fun(B\Sigma_d,\Sp)$ to the functor $G_A$ with $G_A(X) = (A \otimes X^{\otimes d})_{h\Sigma_d}$. 
\end{Lem}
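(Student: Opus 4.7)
The plan is to verify that the abstract symmetric monoidal equivalence produced in \Cref{prop:n-homogeneous-functors} coincides with Goodwillie's explicit classification of $d$-homogeneous functors from \cite{Goodwillie03}. First, I would invoke Goodwillie's classification (Prop.~3.1 and \S5 of \cite{Goodwillie03}), which gives an equivalence between $d$-homogeneous reduced functors $\Sp^c \to \Sp$ and symmetric multilinear functors $(\Sp^c)^{\times d} \to \Sp$: the forward direction sends $F$ to its $d$-th cross-effect, and the inverse sends a symmetric multilinear $C$ to the functor $X \mapsto (C(X,\ldots,X))_{h\Sigma_d}$. Composing with the evaluation equivalence $\Exc{(1,\ldots,1)}((\Sp^c)^{\times d},\Sp) \xrightarrow{\sim} \Sp$ from \Cref{rem:multi-variable}, and converting the permutation $\Sigma_d$-action on symmetric multilinear functors (see \Cref{rem:sigma-n-action}) into a $\Sigma_d$-action on the resulting spectrum, yields an equivalence $\Homog_d(\Sp^c,\Sp) \simeq \Fun(B\Sigma_d,\Sp)$ of exactly the form stated in the lemma.

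It then remains to identify this with the equivalence of \Cref{prop:n-homogeneous-functors}. Tracing through its proof, the latter is built as a totalization of the cosimplicial object $\Mod_{\cC}(P_d h_\bbS(d)^{\circledast(\bullet+1)})$, whose level zero identifies with $\Mod_{\cC}(P_d h_\bbS(d)) \simeq \Sp$. Under the identification of \Cref{thm:barr-beck}, the functor $\cC \to \Mod_{\cC}(P_d h_\bbS(d)) \simeq \Sp$ is precisely $\crosseffect_d(-)(\bbS,\ldots,\bbS)$. The higher cosimplicial levels, indexed by powers of $\Sigma_d$, encode descent data for a $\Sigma_d$-action, and the totalization recovers $\Fun(B\Sigma_d, \Sp)$ via the standard bar model of $B\Sigma_d$. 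Hence the underlying spectrum of the image of a $d$-homogeneous $F$ is $\crosseffect_d F(\bbS,\ldots,\bbS)$, matching Goodwillie's equivalence.

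The main obstacle will be the careful identification of the $\Sigma_d$-action: the descent-theoretic action produced by \Cref{prop:n-homogeneous-functors} must be shown to coincide with the permutation action on the cross-effect variables. For this I would verify agreement on the generator $P_dh_\bbS(d)$, which under both constructions must go to the free $\Sigma_d$-spectrum $\Sigma_d \cdot \bbS$, and then invoke rigid-compact generation (\Cref{cor:rigid-compact}) together with the fact that both equivalences are symmetric monoidal and colimit-preserving. Alternatively, I would bypass the descent-theoretic description entirely by directly checking that $G_A$ is $d$-homogeneous (it is a composition of the diagonal, a symmetric multilinear functor, and $\Sigma_d$-homotopy orbits) and computing $\crosseffect_d G_A(X_1,\ldots,X_d) \simeq A \otimes X_1 \otimes \cdots \otimes X_d$ by isolating the summand of $A \otimes (X_1 \oplus \cdots \oplus X_d)^{\otimes d}$ indexed by bijections $[d] \to [d]$ before taking coinvariants; setting $X_i = \bbS$ recovers $A$ with its original $\Sigma_d$-action, and the essential surjectivity and fully faithfulness then follow from Goodwillie's theorem.
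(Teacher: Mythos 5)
The paper gives no proof of this lemma at all: it is stated purely as a citation to \cite[Theorem 3.5 and \S 5]{Goodwillie03}, with the preceding sentence simply asserting that the descent-theoretic equivalence of \cref{prop:n-homogeneous-functors} \emph{is} Goodwillie's classification. Your proposal therefore does strictly more than the paper, and it correctly isolates the point the paper glosses over: one must check that the abstract equivalence produced by totalizing $\Mod_{\cC}(P_dh_{\bbS}(d)^{\circledast(\bullet+1)})$ agrees, formulas and $\Sigma_d$-action included, with Goodwillie's explicit one. Your identification of the underlying-spectrum part with $\crosseffect_d(-)(\bbS,\ldots,\bbS)$ via \cref{thm:barr-beck} is exactly right. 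One caveat on your first route: agreement of two colimit-preserving functors on the single generator $P_dh_{\bbS}(d)$ does not by itself identify the functors --- you need a comparison natural transformation, or agreement on endomorphism algebras, or an appeal to a universal property (e.g., that both are extension of scalars along $P_dh_{\bbS}\to P_dh_{\bbS}(d)$ in the sense of \cref{thm:barr-beck}), so as stated this step is incomplete. Your second route is the clean one and is fully convincing: $G_A$ is visibly $d$-excisive and $d$-reduced, and the computation of $\crosseffect_d G_A(X_1,\ldots,X_d)\simeq A\otimes X_1\otimes\cdots\otimes X_d$ by isolating the free $\Sigma_d$-orbit of summands of $(X_1\oplus\cdots\oplus X_d)^{\otimes d}$ indexed by bijections before passing to coinvariants is precisely the kind of argument the paper itself invokes in \cref{cor:value-of-pnhsn} (via \cite[Proposition 5.2]{AroneChing15}). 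So the proposal is correct, modulo tightening the generator argument or simply committing to the direct computation.
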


\begin{Def}
	Let $F \colon \Sp^c \to \Sp$ be a reduced functor. The $d$-th \emph{Goodwillie derivative} of $F$ is defined by
	\[
		\partial_dF \coloneqq \crosseffect_d(D_dF)(\bbS,\ldots,\bbS).  
	\]
\end{Def}

\begin{Rem}\label{rem:derivative-depends-on-pnf}
	Note that \Cref{Rem:vanishing-cross-effects} implies that $\crosseffect_d(D_dF) \simeq \crosseffect_d(P_dF)$, so we could have alternatively defined $\partial_d$ in terms of its $d$-excisive approximation. In particular, if $F$ is $d$-excisive, then $\partial_d(D_dF) \simeq \partial_d(F)$. More generally, for any $d$-excisive functor~$F$, we have a natural equivalence
	\begin{equation}\label{eq:shifting-derivatives}
		\partial_d(D_dF(X \otimes -)) \simeq \partial_d(F) \otimes X^{\otimes d}.
	\end{equation}
	Indeed there are equivalences
	\[
	\begin{split}
		\partial_d(D_dF(X \otimes -)) &\simeq \crosseffect_dD_dF(X \otimes -)(\bbS,\ldots,\bbS) \\ &\simeq \crosseffect_dD_dF(X,\ldots,X) \\
		& \simeq \partial_d(F) \otimes X^{\otimes d}.
	\end{split}
	\]
\end{Rem}

\begin{Rem}
	As a special case of the previous definition, we have $d$-derivatives defined on the category of $d$-excisive functors: 
\end{Rem}

\begin{Def}\label{def:goodwillie-derivatives}
	For a $d$-excisive functor $F$, we define the $k$-th \emph{Goodwillie derivative}~$\partial_kF$ for each $1 \le k \le d$ by
	\[
		\partial_kF \coloneqq \crosseffect_k(P_kF)(\bbS,\ldots,\bbS) \simeq \crosseffect_k(D_kF)(\bbS,\ldots,\bbS) .
	\]
\end{Def}

\begin{Lem}\label{lem:geometric-derivatives}
	For each $1 \le k \le d$, the functor $\partial_k\colon \Exc{d}(\Sp^c,\Sp)\to \Sp$ is symmetric monoidal and preserves colimits.
\end{Lem}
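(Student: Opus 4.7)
The plan is to factor the Goodwillie derivative $\partial_k$ as the composite
\[
\Exc{d}(\Sp^c,\Sp) \xrightarrow{P_k} \Exc{k}(\Sp^c,\Sp) \xrightarrow{\crosseffect_k(-)(\bbS,\ldots,\bbS)} \Sp
\]
and show that each factor is symmetric monoidal and preserves colimits. The factorization itself is immediate from \cref{def:goodwillie-derivatives}, since $\partial_k F = \crosseffect_k(P_k F)(\bbS, \ldots, \bbS)$.

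For the first factor, I would argue that $P_k \colon \Exc{d}(\Sp^c,\Sp) \to \Exc{k}(\Sp^c,\Sp)$ is symmetric monoidal and colimit-preserving by restricting the corresponding statement for $P_k$ on $\Fun(\Sp^c,\Sp)$. More precisely, by \cref{thm:properties-n-excisive}, $P_k \colon \Fun(\Sp^c,\Sp) \to \Exc{k}(\Sp^c,\Sp)$ is a smashing, symmetric monoidal localization, and by \cref{rem:pd} the inclusion $\Exc{d}(\Sp^c,\Sp) \hookrightarrow \Fun(\Sp^c,\Sp)$ preserves colimits. Since $P_k P_d \simeq P_k$ (both are left adjoint to the inclusion of $\Exc{k}$), the restriction of $P_k$ to $\Exc{d}(\Sp^c,\Sp)$ coincides with the composite $P_k \circ \iota$, which is thus colimit-preserving. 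Symmetric monoidality of this restriction follows from the fact that the tensor product in $\Exc{d}$ is $P_d$ of the Day convolution, that $P_k P_d \simeq P_k$, and that $P_k$ on $\Fun(\Sp^c,\Sp)$ is symmetric monoidal for the Day convolution.

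For the second factor, I would invoke \cref{thm:barr-beck} applied with $k$ in place of $d$: it identifies $\crosseffect_k(-)(\bbS,\ldots,\bbS) \colon \Exc{k}(\Sp^c,\Sp) \to \Sp$ with extension of scalars along the commutative algebra map $P_k h_{\bbS} \to P_k h_{\bbS}(k)$. Extension of scalars is tautologically symmetric monoidal and a left adjoint, so it preserves all colimits.

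Composing the two factors then yields the claim. The only potentially subtle point is the verification that the restriction of $P_k$ to $\Exc{d}(\Sp^c,\Sp)$ is symmetric monoidal for the \emph{localized} Day convolutions on source and target (rather than merely the ambient Day convolution on $\Fun(\Sp^c,\Sp)$); this is essentially a formal consequence of the universal property of symmetric monoidal localizations and the identity $P_k P_d \simeq P_k$, and should not require any new ideas beyond what is already set up in \cref{prop:compatible-localizations,thm:properties-n-excisive}.
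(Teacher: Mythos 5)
Your proof is correct and follows essentially the same route as the paper's: factor $\partial_k$ as $\crosseffect_k(-)(\bbS,\ldots,\bbS)\circ P_k$ and verify each factor is symmetric monoidal and colimit-preserving. The only cosmetic differences are that you invoke \cref{thm:barr-beck} for the monoidality of the cross-effect where the paper cites \cref{prop:monoidal-cross-effect} directly (the former is built on the latter, so there is no circularity), and that you re-derive the monoidality of the restricted $P_k$ from the smashing-localization structure rather than citing \cref{cor:pm-finite-localization}.
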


\begin{proof}
	By definition, $\partial_k$ is the composite of $P_k \colon \Exc{d}(\Sp^c,\Sp) \to \Exc{k}(\Sp^c,\Sp)$ and $\crosseffect_k(-)(\bbS,\ldots,\bbS) \colon \Exc{k}(\Sp^c,\Sp) \to \Sp$, which are symmetric monoidal by \cref{cor:pm-finite-localization} and \cref{prop:monoidal-cross-effect}, respectively. That $\partial_k$ preserves colimits is already clear from Goodwillie's original constructions; alternatively, it also follows from the fact that $P_k$ and $\crosseffect_k(-)(\bbS,\ldots,\bbS)$ both have right adjoints. For the latter, recall \cref{rem:adjoint-to-cross-effects}.
\end{proof}

\begin{Rem}\label{rem:top-derivative-etale}
	The top derivative $\partial_d \colon \Exc{d}(\Sp^c,\Sp) \to \Sp$ given by $\crosseffect_d(-)(\bbS,\ldots,\bbS)$ is finite \'etale (\Cref{rem:separableextension}). 
\end{Rem}

\begin{Lem}\label{lem:conservativity}
	The collection of functors $\{ \partial_k \}_{1 \le k \le d}$ are jointly conservative on $\Exc{d}(\Sp^c,\Sp)$. 
\end{Lem}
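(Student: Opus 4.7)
The plan is to proceed by induction on $d$, mirroring the argument used for Proposition~\ref{prop:cross-effect-conservative} but adapted to the derivatives $\partial_k$ in place of the raw cross-effects $\crosseffect_k(-)(\bbS,\ldots,\bbS)$. The base case $d=1$ is immediate: under the identification $\Exc{1}(\Sp^c,\Sp) \simeq \Sp$ of Example~\ref{exa:1-excisive-functors}, the first derivative $\partial_1$ is (essentially) the evaluation at $\bbS$, so it is the identity and hence conservative.

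For the inductive step, suppose the claim holds on $\Exc{d-1}(\Sp^c,\Sp)$ and let $F \in \Exc{d}(\Sp^c,\Sp)$ satisfy $\partial_k F = 0$ for all $1 \le k \le d$. First I would exploit the fiber sequence $D_dF \to F \to P_{d-1}F$ coming from the Taylor tower. Since $F$ is $d$-excisive we have $P_d F \simeq F$, so $\partial_d F = \crosseffect_d(F)(\bbS,\ldots,\bbS)$, and by Remark~\ref{Rem:vanishing-cross-effects} this agrees with $\crosseffect_d(D_dF)(\bbS,\ldots,\bbS)$. The vanishing hypothesis at $k=d$ combined with Lemma~\ref{lem:cross-effects-homogeneous} (applied to the $d$-homogeneous functor $D_dF$) therefore forces $D_dF \simeq 0$, so $F \simeq P_{d-1}F$.

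It then remains to feed $P_{d-1}F$ into the induction. Since $P_k P_{d-1} \simeq P_k$ for $k \le d-1$ and $P_k F \simeq P_k P_{d-1} F$, the identity $\partial_k(P_{d-1}F) = \crosseffect_k(P_k P_{d-1} F)(\bbS,\ldots,\bbS) = \partial_k F = 0$ holds for all $1 \le k \le d-1$. The inductive hypothesis applied in $\Exc{d-1}(\Sp^c,\Sp)$ gives $P_{d-1}F \simeq 0$, and hence $F \simeq 0$.

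The only genuinely non-formal input is the base case of the argument that eliminates $D_dF$, namely Lemma~\ref{lem:cross-effects-homogeneous}, together with the identification $\crosseffect_d(F) \simeq \crosseffect_d(D_dF)$ from Remark~\ref{Rem:vanishing-cross-effects} (which is what bridges the gap between ``the $d$-th cross-effect of $F$ vanishes at $(\bbS,\ldots,\bbS)$'' and ``$\partial_d F = 0$''). Everything else is bookkeeping with the Taylor tower, so I do not anticipate any real obstacle; essentially this is Proposition~\ref{prop:cross-effect-conservative} reorganized to show that the \emph{diagonal} cross-effects of the homogeneous layers $D_k F$ at $\bbS$ already detect triviality.
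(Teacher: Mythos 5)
Your proof is correct and follows essentially the same route as the paper's: induction on $d$ via the Taylor tower, using that $\partial_d F=0$ forces $D_dF=0$ so that $F\simeq P_{d-1}F$. The only cosmetic difference is that you justify $D_dF=0\iff\partial_dF=0$ by combining \cref{Rem:vanishing-cross-effects} with \cref{lem:cross-effects-homogeneous}, whereas the paper cites the equivalence $\Homog_d(\Sp^c,\Sp)\simeq\Fun(B\Sigma_d,\Sp)$ of \cref{prop:n-homogeneous-functors}; both are valid.
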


\begin{proof}
	The equivalence of categories $\Homog_d(\Sp^c,\Sp) \simeq \Fun(B\Sigma_d,\Sp)$ of \Cref{prop:n-homogeneous-functors} implies that $D_dF = 0 \iff \partial_dF = 0$. Therefore, if $\partial_dF = 0$, then $F$ is $(d-1)$-excisive, and the result follows by induction. 
\end{proof}

\begin{Rem}
	It is a consequence of \cref{lem:conservativity} that every compact separable algebra in $\Exc{d}(\Sp^c,\Sp)$ has finite degree in the sense of \cite{Balmer14}. This follows from the corresponding statement for $\Sp$ established in \cite[Corollary 4.8]{Balmer14}. Indeed, $d_k \coloneqq \deg(\partial_k(A)) < \infty$ for each $1\le k \le d$ implies that for $d \coloneqq \max_{1\le k \le d} d_k$ we have $\partial_k(A^{[d+1]})=\partial_k(A)^{[d+1]}=0$ for all $1 \le k\le d$ by \cite[Theorem 3.7(a)]{Balmer14}. Hence $A^{[d+1]}=0$ by \cref{lem:conservativity} so that $\deg(A) \le d$. 
\end{Rem}

\begin{Exa}\label{exa:top-derivative-etale-degree}
	The finite \'{e}tale map $\partial_d\colon \Exc{d}(\Sp^c,\Sp)\to \Sp$ of \cref{rem:top-derivative-etale} has finite degree.
\end{Exa}

\begin{Rem}
	We now give an explicit description of the completion functor $\Exc{d}(\Sp^c,\Sp) \to \ihom{e_Y,\Exc{d}(\Sp^c,\Sp)}$ associated to $Y=\supp(P_dh_{\bbS}(d))$. In other words, bearing in mind \cref{thm:finite-localization-p-n}, we compute the internal hom $\ihom{D_dh_{\bbS},F}$ for a $d$-excisive functor $F$.
\end{Rem}

\begin{Prop}\label{prop:ihom-dn}
    Suppose that $F$ is $d$-excisive. Then 
    \[
		\ihomsub{\Exc{d}(\Sp^c,\Sp)}{D_dh_{\bbS},F}(X) \simeq (\partial_dF \otimes X^{\otimes d})^{h\Sigma_d}.
    \]
\end{Prop}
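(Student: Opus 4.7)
The plan is to identify the internal hom $\ihom{D_dh_{\bbS}, F}(X)$ with a mapping spectrum in $\Homog_d(\Sp^c,\Sp)$ and then transport the computation across Goodwillie's classification. First I would combine the standard formula for internal homs recorded after \cref{Not:corepresentable_functor}, namely
\[
    \ihomsub{\Fun(\Sp^c,\Sp)}{G,H}(X) \simeq \Hom_{\Fun(\Sp^c,\Sp)}(G, H(X\otimes -)),
\]
with the fact (\cref{thm:properties-n-excisive}) that the internal hom in $\Exc{d}(\Sp^c,\Sp)$ is computed in $\Fun(\Sp^c,\Sp)$, using also that $F(X\otimes -)$ remains $d$-excisive because $X\otimes -\colon \Sp^c \to \Sp^c$ is colimit-preserving and in particular preserves strongly cocartesian cubes. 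This gives
\[
    \ihomsub{\Exc{d}(\Sp^c,\Sp)}{D_dh_{\bbS},F}(X) \simeq \Hom_{\Exc{d}(\Sp^c,\Sp)}(D_dh_{\bbS},F(X\otimes -)).
\]

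Next, since $D_dh_{\bbS}$ is $d$-homogeneous and $P_{d-1}$ annihilates $d$-homogeneous functors, the fiber sequence $D_dG \to G \to P_{d-1}G$ applied to $G = F(X\otimes -)$ shows that this mapping spectrum coincides with $\Hom_{\Homog_d}(D_dh_{\bbS}, D_d F(X\otimes -))$. Under the symmetric monoidal equivalence $\Homog_d(\Sp^c,\Sp) \simeq \Fun(B\Sigma_d,\Sp)$ given by $\partial_d$ (\cref{prop:n-homogeneous-functors} and \cref{lem:goodwillie-classification}), the latter becomes $\Hom_{\Fun(B\Sigma_d,\Sp)}(\partial_d D_dh_{\bbS}, \partial_d D_d F(X\otimes -))$.

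Finally, I would identify the two arguments explicitly. The classical description of the Goodwillie tower of $h_{\bbS} = \Sigma^{\infty}\Omega^{\infty}$ gives $D_dh_{\bbS}(Y) \simeq (Y^{\otimes d})_{h\Sigma_d}$, so $\partial_d D_dh_{\bbS} \simeq \bbS$ with the trivial $\Sigma_d$-action. On the other hand, equation \eqref{eq:shifting-derivatives} gives $\partial_d D_d F(X\otimes -) \simeq \partial_d F \otimes X^{\otimes d}$ with the diagonal $\Sigma_d$-action. Mapping out of the trivial $\Sigma_d$-spectrum $\bbS$ in $\Fun(B\Sigma_d,\Sp)$ is homotopy fixed points, yielding $(\partial_d F \otimes X^{\otimes d})^{h\Sigma_d}$, naturally in $X$. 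The main subtlety I anticipate is keeping the $\Sigma_d$-equivariance straight through the classification --- in particular that $\partial_d D_dh_{\bbS}$ really carries the trivial action --- but this is forced by the symmetric monoidality of the equivalence together with the fact that $h_{\bbS}$ itself carries no $\Sigma_d$-structure to begin with.
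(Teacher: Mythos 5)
Your proposal is correct and follows essentially the same route as the paper: reduce the internal hom to $\Hom_{\Exc{d}}(D_dh_{\bbS},F(X\otimes -))$, pass to $\Homog_d(\Sp^c,\Sp)\simeq\Fun(B\Sigma_d,\Sp)$, and identify $\partial_dD_dh_{\bbS}\simeq\bbS$ (trivial action) and $\partial_dD_dF(X\otimes -)\simeq\partial_dF\otimes X^{\otimes d}$ via \eqref{eq:shifting-derivatives}. The only cosmetic difference is that you justify replacing $F(X\otimes -)$ by $D_dF(X\otimes -)$ using the fiber sequence and the vanishing of maps into the $(d-1)$-excisive part, whereas the paper invokes directly that $D_d$ is right adjoint to the inclusion $\Homog_d\hookrightarrow\Exc{d}$ — two formulations of the same recollement fact.
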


\begin{proof}
	Suppose that $F$ and $G$ are $d$-excisive functors. We recall from \Cref{thm:properties-n-excisive} that the internal hom in $\Exc{d}(\Sp^c,\Sp)$ is computed in $\Fun(\Sp^c,\Sp)$ where it is given by 
	\[
	\begin{split}
		\ihomsub{\Fun(\Sp^c,\Sp)}{G,F}(X) & \simeq \Hom_{\Fun(\Sp^c,\Sp)}(G,F(X \otimes -)) \\
		& \simeq \Hom_{\Exc{d}(\Sp^c,\Sp)}(G,F(X \otimes -))
	\end{split}
	\]
	where the last step follows because $\Exc{d}(\Sp^c,\Sp) \subseteq \Fun(\Sp^c,\Sp)$ is a full subcategory.  Therefore,
	\[
	\begin{split}
		\ihomsub{\Exc{d}(\Sp^c,\Sp)}{D_dh_{\bbS},F}(X) &\simeq \Hom_{\Exc{d}(\Sp^c,\Sp)}(D_dh_\bbS,F(X \otimes -)) \\& \simeq
		\Hom_{\Homog_d(\Sp^c,\Sp)}(D_dh_\bbS,D_dF(X \otimes -))
	\end{split}
	\]
	where we have used that $D_d \colon \Exc{d}(\Sp^c,\Sp) \to \Homog_d(\Sp^c,\Sp)$ is right adjoint to the inclusion.

	By \Cref{prop:n-homogeneous-functors}, there is an equivalence 
	\[
		\Homog_d(\Sp^c,\Sp) \leftrightarrows \Fun(B\Sigma_d, \Sp)
	\]
	given by assigning to a $d$-homogeneous functor $F$ the Goodwillie derivative $\partial_dF$ with $\Sigma_d$-action. Now we observe that $\partial_d(D_dh_{\bbS}) \simeq \bbS$ with trivial $\Sigma_d$-action, and $\partial_d(D_dF(X \otimes -)) \simeq (\partial_d(F) \otimes X^{\otimes d})$ by \eqref{eq:shifting-derivatives}.

	We deduce from the above discussion that 
	\[
	\begin{split}
		\ihomsub{\Exc{d}(\Sp^c,\Sp)}{D_dh_{\bbS},F}(X) &\simeq \Hom_{\Sp}(\partial_dD_dh_{\bbS}, \partial_dD_dF(X \otimes -))^{h\Sigma_d} \\
		&\simeq \Hom_{\Sp}(\bbS,\partial_dF \otimes X^{\otimes d})^{h\Sigma_d} \\
		& \simeq (\partial_dF \otimes X^{\otimes d})^{h\Sigma_d},
	\end{split}
	\]
	as required. 
\end{proof}

\begin{Rem}
	We can also identify the Tate square of~\eqref{eq:tate-square}. The existence of such a pullback square was originally shown by McCarthy \cite[Proposition 4]{McCarthyRy2001Dual}; see also \cite[Proposition 1.9]{Kuhn04}.
\end{Rem}

\begin{Prop}[Kuhn--McCarthy]\label{prop:tatesquare}
	For any reduced functor $F \colon \Sp^c \to \Sp$ there is a pullback square of the form
	\[\begin{tikzcd}[ampersand replacement=\&]
		{P_dF(X)} \& {(\partial_dF \otimes X^{\otimes d})^{h\Sigma_d}} \\
		{P_{d-1}F(X)} \& {(\partial_dF \otimes X^{\otimes d})^{t\Sigma_d}}.
		\arrow[from=1-1, to=2-1]
		\arrow[from=1-1, to=1-2]
		\arrow[from=2-1, to=2-2]
		\arrow[from=1-2, to=2-2]
	\end{tikzcd}\]
	The fiber of the vertical maps is $D_dF(X) \simeq (\partial_dF \otimes X^{\otimes d})_{h\Sigma_d}$. In particular, the Tate construction associated to the Thomason closed subset $Y = \supp(P_dh_{\bbS}(d)) \subseteq \Spc(\Exc{d}(\Sp^c,\Sp)^c)$ is given by
	\[
		t_Y(F)(X) \simeq {(\partial_dF \otimes X^{\otimes d})^{t\Sigma_d}}.
	\]
\end{Prop}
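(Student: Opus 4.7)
My plan is to identify the claimed square with the abstract Tate pullback square of Remark~\ref{rem:warwick} applied to the Thomason closed subset $Y = \supp(P_dh_{\bbS}(d)) \subseteq \Spc(\Exc{d}(\Sp^c,\Sp)^c)$. Since each corner of the desired square depends on $F$ only through $P_dF$ (indeed $\partial_dF \simeq \partial_dP_dF$ by Remark~\ref{rem:derivative-depends-on-pnf} and $P_{d-1}F \simeq P_{d-1}P_dF$), I may assume without loss of generality that $F$ is $d$-excisive. Remark~\ref{rem:warwick} then provides a natural pullback square
\[
\begin{tikzcd}[ampersand replacement=\&]
F \& {\ihom{e_Y, F}} \\
{f_Y \otimes F} \& {t_Y(F)}
\arrow[from=1-1, to=2-1]
\arrow[from=1-1, to=1-2]
\arrow[from=2-1, to=2-2]
\arrow[from=1-2, to=2-2]
\end{tikzcd}
\]
in $\Exc{d}(\Sp^c,\Sp)$, in which $t_Y = f_Y \otimes \ihom{e_Y,-}$.

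By Theorem~\ref{thm:finite-localization-p-n} the idempotent triangle for $Y$ is $D_dh_{\bbS} \to P_dh_{\bbS} \to P_{d-1}h_{\bbS}$, so $e_Y \simeq D_dh_{\bbS}$, $f_Y \otimes F \simeq P_{d-1}F$, and the fiber of $F \to f_Y\otimes F$ is $D_dF$. Proposition~\ref{prop:ihom-dn} identifies the top-right corner at $X \in \Sp^c$ as $(\partial_dF \otimes X^{\otimes d})^{h\Sigma_d}$. To identify the bottom-right corner, I would combine the norm cofiber sequence of $d$-excisive functors of $X$,
\[
(\partial_dF \otimes X^{\otimes d})_{h\Sigma_d} \xrightarrow{N} (\partial_dF \otimes X^{\otimes d})^{h\Sigma_d} \to (\partial_dF \otimes X^{\otimes d})^{t\Sigma_d},
\]
with the exact finite localization $P_{d-1}$. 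By Lemma~\ref{lem:goodwillie-classification} the leftmost term is $d$-homogeneous and thus killed by $P_{d-1}$. Since $t_Y(F) = P_{d-1}\bigl((\partial_dF \otimes (-)^{\otimes d})^{h\Sigma_d}\bigr)$, it then suffices to show that the Tate construction is already $(d-1)$-excisive.

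This last step is the essential technical obstacle. I plan to verify that $\partial_d N$ is an equivalence by a direct polarization computation: writing $X = X_1 \oplus \cdots \oplus X_d$ and expanding the tensor power, the $d$-th cross-effect of $X \mapsto (A \otimes X^{\otimes d})^{h\Sigma_d}$ at $(X_1,\ldots,X_d)$ picks out the multilinear summand $\bigl(A \otimes \bigoplus_{\sigma \in \Sigma_d} X_{\sigma(1)} \otimes \cdots \otimes X_{\sigma(d)}\bigr)^{h\Sigma_d}$, on whose indexing set $\Sigma_d$ acts by regular (hence free) translation. Consequently the norm map on this multilinear piece is an equivalence (homotopy fixed points and homotopy orbits coincide for a free action), and the analogous computation for homotopy orbits gives the same answer. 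Hence $\partial_d N$ is an equivalence, so $\partial_d$ of the Tate construction vanishes and the Tate is indeed $(d-1)$-excisive. This completes the identification of all four corners of the Tate square; the fiber $D_dF(X) \simeq (\partial_dF \otimes X^{\otimes d})_{h\Sigma_d}$ of the vertical maps follows from Goodwillie's classification (Lemma~\ref{lem:goodwillie-classification}), and the formula for $t_Y$ in the final assertion is simply the right-hand column.
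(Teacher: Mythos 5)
Your proposal is correct and follows essentially the same route as the paper: identify the square with the abstract Tate square of \cref{rem:warwick} for $Y=\supp(P_dh_{\bbS}(d))$, use \cref{thm:finite-localization-p-n} and \cref{prop:ihom-dn} to name the left column and top-right corner, and recognize the bottom-right corner as the $(d-1)$-excisive approximation of the homotopy fixed point functor. The only difference is that the paper outsources the final step --- that this approximation is the $\Sigma_d$-Tate construction --- to Kuhn and McCarthy, whereas you supply the standard free-action/norm argument on the multilinear summand yourself, which is a valid filling-in of that cited computation.
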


\begin{proof}
	By \Cref{prop:n-homogeneous-functors,lem:goodwillie-classification} we have $D_dF(X) \simeq (\partial_dF \otimes X^{\otimes d})_{h\Sigma_d}$. Using \Cref{rem:warwick}, we can identify the bottom right-hand corner of the Tate square as the $(d-1)$-excisive approximation to the functor $X \mapsto (\partial_dF \otimes X^{\otimes d})^{h\Sigma_d}$. A direct computation shows that this functor is $X \mapsto (\partial_dF \otimes X^{\otimes d})^{t\Sigma_d}$ (see, for example, \cite[Lemma 5.2]{Kuhn07} or \cite[Proof of Proposition 4]{McCarthyRy2001Dual}) and the result follows. 
\end{proof}

\begin{Rem}
	We have already introduced the notation
	\[
		t_d \colon \Exc{d}(\Sp^c,\Sp) \to \Exc{d}(\Sp^c,\Sp)
	\]
	for the Tate construction associated to $\supp(P_dh_{\bbS}(d))\subseteq \Spc(\Exc{d}(\Sp^c,\Sp)^c)$; see \cref{not:t_d}. By \cref{prop:tatesquare}, the $d$-excisive functor $t_d(F)$ has the following explicit description:
    \[
		t_d(F)(X) \simeq (\partial_dF \otimes X^{\otimes d})^{t\Sigma_d}.
    \]
\end{Rem}

\begin{Rem}
    The proof of \Cref{prop:tatesquare} shows that $t_d(F)$ is actually $(d-1)$-excisive, but we are considering it as a $d$-excisive functor via the natural inclusion. 
\end{Rem}

\begin{Rem}
	Combining the Tate construction $t_d$ with the inflation functor $i_d$ (\cref{def:inflation}) and the Goodwillie derivatives~$\partial_k$ (\cref{def:goodwillie-derivatives}) yields:
\end{Rem}

\begin{Def}\label{def:tate-derivatives}
    The \emph{Tate-derivatives} on the category of $d$-excisive functors  are the functors $\Sp \to \Sp$ defined for each  $1 \le k \le d$ as the composite
    \[
		\Sp \xrightarrow{i_d} \Exc{d}(\Sp^c,\Sp) \xrightarrow{t_d} \Exc{d}(\Sp^c,\Sp) \xrightarrow{\partial_k} \Sp.
    \]
    Explicitly, a spectrum $A$ is sent to the $k$-th derivative of the functor
    \[
		X \mapsto (A \otimes X^{\otimes d})^{t\Sigma_d}.
    \]
\end{Def}

\begin{Rem}
	Understanding the chromatic shifting behaviour of the Tate-derivatives will play a crucial role in \cref{part:III}.
\end{Rem}

\newpage
\part{The primes of \texorpdfstring{$d$}{d}-excisive functors}

\section{The Balmer spectrum of \texorpdfstring{$d$}{n}-excisive functors as a set}

We begin by computing the underlying set of the Balmer spectrum of $\Exc{d}(\Sp^c,\Sp)$.

\begin{Rem}
	For each $1 \le k \le d$, the $k$-th Goodwillie derivative
	\[
		\partial_k\colon \Exc{d}(\Sp^c,\Sp)\to\Sp
	\]
	is a colimit-preserving symmetric monoidal functor (a.k.a.~\emph{geometric functor}) by \cref{lem:geometric-derivatives}. Since $\Exc{d}(\Sp^c,\Sp)$ is rigidly-compactly generated (\cref{cor:rigid-compact}), it restricts to a functor
	\[
		\partial_k \colon \Exc{d}(\Sp^c,\Sp)^c \to \Sp^c
	\]
	between compact objects. By pulling back prime ideals of $\Sp^c$ (whose spectrum was described in \Cref{exa:balmer-spectrum-spectra}), we produce a family of prime ideals of $\Exc{d}(\Sp^c,\Sp)$: 
\end{Rem}

\begin{Def}\label{def:prime-in-exc}
	Let $\cPd(\num{k},p,h) \coloneqq \{ x \in \Exc{d}(\Sp^c,\Sp)^c \mid \partial_k(x) \in \cat {C}_{p,h} \text{ in } \Sp^c \}$.
\end{Def}

The following lemma shows that these are all the prime ideals in $\Exc{d}(\Sp^c,\Sp)$. 

\begin{Lem}\label{lem:cover}
	Let $\partial \coloneqq \prod_{1 \le k \le d}\partial_k$. The induced map
	\begin{equation}\label{eq:Spcdelta}
		\Spc(\partial) \colon \coprod_{1 \le k \le d} \Spc(\Sp^c) \to \Spc(\Exc{d}(\Sp^c,\Sp)^c)
	\end{equation}
	is surjective. 
\end{Lem}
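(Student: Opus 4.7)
The strategy is to package the individual derivatives $\partial_k$ into a single geometric functor landing in the product category and then invoke a standard surjectivity principle of tt-geometry. Concretely, form the combined functor
\[
    \partial = \prod_{k=1}^{d} \partial_k \colon \Exc{d}(\Sp^c, \Sp) \longrightarrow \prod_{k=1}^{d}\Sp,
\]
which is geometric since each $\partial_k$ is so by \Cref{lem:geometric-derivatives} and products of geometric functors are geometric. Joint conservativity of the family $\{\partial_k\}_{1\le k\le d}$, established in \Cref{lem:conservativity}, translates immediately into conservativity of $\partial$.

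Next, I would appeal to the tt-geometric principle that any conservative geometric functor between rigidly-compactly generated tt-categories induces a surjection on the Balmer spectra of compact objects. This is a consequence of Balmer's nil-faithful surjectivity criterion: conservativity on the big category forces the induced functor on compacts (equivalently, on dualizables) to detect $\otimes$-nilpotence, which is equivalent to surjectivity on Balmer spectra. The rigid-compact generation hypothesis holds in our setting because $\Exc{d}(\Sp^c,\Sp)$ is rigidly-compactly generated by \Cref{cor:rigid-compact}, and a finite product of rigidly-compactly generated tt-categories inherits the same property coordinatewise.

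To conclude, I would combine this with the standard identification
\[
    \Spc\Bigl(\Bigl(\prod_{k=1}^d \Sp\Bigr)^{c}\Bigr) \;\cong\; \coprod_{k=1}^d \Spc(\Sp^c),
\]
which follows from the fact that compact objects in a finite product are componentwise compact and that $\Spc$ sends finite products to disjoint unions. Under this identification, $\Spc(\partial)$ becomes precisely the map in the statement, and its preimage description recovers the primes $\cPd(\num{k},p,h)$ of \Cref{def:prime-in-exc}. There is no serious obstacle here: the substantive input is the already-established joint conservativity of the derivatives, and the remainder is formal tt-geometric bookkeeping.
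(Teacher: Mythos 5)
Your proposal is correct and follows essentially the same route as the paper: the paper's proof simply cites \Cref{lem:geometric-derivatives}, \Cref{lem:conservativity}, and the surjectivity criterion of \cite[Theorem 1.3]{barthel2023surjectivity}, which is exactly the principle you unpack (joint conservativity of a family of geometric functors between rigidly-compactly generated tt-categories implies nil-faithfulness on compacts, hence surjectivity on Balmer spectra, with $\Spc$ of the finite product decomposing as the disjoint union). The only difference is presentational --- you spell out the nilpotence-detection mechanism that the cited theorem packages as a black box.
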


\begin{proof}
	This follows from \Cref{lem:conservativity,lem:geometric-derivatives} and \cite[Theorem 1.3]{barthel2023surjectivity}. 
\end{proof}

\begin{Lem}\label{lem:partial-splits-inflation}
	For each $1 \le k \le d$, the composite
	\[ 
		\Sp \xrightarrow{i_d} \Exc{d}(\Sp^c,\Sp) \xrightarrow{\partial_k} \Sp
	\]
	is equivalent to the identity functor.
\end{Lem}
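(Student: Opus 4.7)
The plan is to deduce this from the universal property of $\Sp$ as the unit of the symmetric monoidal $\infty$-category of presentably symmetric monoidal stable $\infty$-categories, following \cite[Corollary 4.8.2.19]{HALurie}. Concretely, the composite $\partial_k\circ i_d$ is, by construction, a colimit-preserving symmetric monoidal functor $\Sp\to\Sp$: the functor $i_d$ is such by its very definition (\Cref{def:inflation}), and the Goodwillie derivative $\partial_k$ is symmetric monoidal and colimit-preserving by \Cref{lem:geometric-derivatives}. Since the identity is also a colimit-preserving symmetric monoidal endofunctor of $\Sp$, the essential uniqueness supplied by the universal property yields a natural equivalence $\partial_k\circ i_d\simeq\id_{\Sp}$.

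In more detail, first I would recall that both $i_d$ and $\partial_k$ send the monoidal unit to the monoidal unit: $i_d(\bbS)\simeq P_dh_{\bbS}$ is the unit of $\Exc{d}(\Sp^c,\Sp)$ (\Cref{thm:properties-n-excisive}), and $\partial_k(P_dh_{\bbS})\simeq\bbS$ because symmetric monoidal functors preserve units. Combined with the fact that both functors preserve all colimits, this identifies $\partial_k\circ i_d$ with the unique colimit-preserving symmetric monoidal functor $\Sp\to \Sp$ sending $\bbS$ to itself, which is the identity.

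There is really no serious obstacle here: the content is entirely encoded in the universal property of $\Sp$ together with the previously established geometric nature of $\partial_k$ (\Cref{lem:geometric-derivatives}) and the construction of $i_d$ as the essentially unique geometric functor from spectra to $\Exc{d}(\Sp^c,\Sp)$ (\Cref{def:inflation}). If one prefers a more concrete argument, one can also verify the equivalence by direct computation: for any $A\in\Sp$, the functor $i_d(A)$ is $P_d(F_A)$ where $F_A(X)=A\otimes\Sigma^\infty\Omega^\infty X$, so by \Cref{lem:ci-pn-commute} and \Cref{cor:value-of-pnhsn} applied to $F_A$ (whose cross-effects are computed from those of $\Sigma^\infty\Omega^\infty$ after smashing with $A$), one obtains $\partial_k(i_d A)\simeq A$ naturally in $A$; but the abstract universal-property argument is cleaner and more conceptual.
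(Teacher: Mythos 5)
Your proof is correct and rests on the same underlying fact as the paper's: that $\partial_k\circ i_d$ is a colimit-preserving symmetric monoidal endofunctor of $\Sp$, hence the identity. The paper merely unpacks the universal property by hand --- reducing to $k=d$ via $P_k\circ i_d\simeq i_k$, checking the unit case $\partial_d(i_d\bbS)\simeq\bbS$ by monoidality, and then concluding $\partial_d(i_dA)\simeq A\otimes\partial_d(i_d\bbS)\simeq A$ for general $A$ --- so your direct appeal to \cite[Corollary 4.8.2.19]{HALurie} is just a cleaner packaging of the same argument.
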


\begin{proof}
	It suffices to prove the claim when $k = d$, since the composite 
	\[
		\Sp \xrightarrow{i_d} \Exc{d}(\Sp^c,\Sp) \xrightarrow{P_k} \Exc{k}(\Sp^c,\Sp)
	\]
	is equivalent to $i_k$. In other words, we must show that $\partial_d(i_d(A)) \simeq A$. Recall that $i_d(A) \simeq P_d(F_A)$, where $F_A(X) = A \otimes h_{\bbS}(X) = A \otimes \Sigma^{\infty}\Omega^{\infty}X$ (\Cref{def:inflation}). When $A = \bbS$ the claim is clear using, for example, that both functors are symmetric monoidal. One then computes $\partial_d(P_dF_A) \simeq A \otimes \partial_d(P_dF_{\bbS}) \simeq A$ and the result follows.
\end{proof}

\begin{Lem}\label{lem:Spc(i_d)}
	The functor $i_d \colon \Sp \to \Exc{d}(\Sp^c,\Sp)$ induces a map
	\[
		\Spc(i_d)\colon \Spc(\Exc{d}(\Sp^c,\Sp)^c)\to\Spc(\Sp^c)
	\]
	which sends $\cPd(\num{k},p,h)$ to $\cat C_{p,h}$ for all $1 \le k \le d$. 
\end{Lem}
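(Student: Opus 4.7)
The plan is to unfold the definition of $\Spc(i_d)$ and directly reduce the statement to the fact that $\partial_k\circ i_d\simeq \id_{\Sp}$ established in \cref{lem:partial-splits-inflation}. Recall from \cref{Rem:basic-balmer-properties} that for a tt-functor $F\colon \cat T\to \cat U$, the induced map sends a prime $\cat P\in \Spc(\cat U^c)$ to $\{t\in \cat T^c\mid F(t)\in \cat P\}$. Applied to $i_d\colon \Sp\to \Exc{d}(\Sp^c,\Sp)$, this gives
\[
    \Spc(i_d)(\cPd(\num{k},p,h)) = \SET{x\in \Sp^c}{i_d(x)\in \cPd(\num{k},p,h)}.
\]

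Next I would rewrite the membership condition using \cref{def:prime-in-exc}:
\[
    i_d(x)\in \cPd(\num{k},p,h) \iff \partial_k(i_d(x))\in \cat C_{p,h}.
\]
By \cref{lem:partial-splits-inflation}, the composite $\partial_k\circ i_d$ is equivalent to the identity functor on $\Sp$, so $\partial_k(i_d(x))\simeq x$ in $\Sp^c$. Hence the right-hand side is simply the condition $x\in \cat C_{p,h}$, and we conclude
\[
    \Spc(i_d)(\cPd(\num{k},p,h)) = \cat C_{p,h},
\]
as desired. Since this argument is direct, there is essentially no obstacle: the whole point of \cref{lem:partial-splits-inflation} was to make the computation of $\Spc(i_d)$ on the specific primes $\cPd(\num{k},p,h)$ immediate, and no genuine case analysis is required. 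The only minor thing worth noting is that $\Spc(i_d)$ is well-defined on these primes precisely because $i_d$ preserves compact objects (which follows since it is a symmetric monoidal left adjoint whose target is compactly generated by dualizable objects, so $i_d$ sends the compact unit $\bbS$ to the compact unit $P_dh_{\bbS}$ and commutes with finite colimits and retracts).
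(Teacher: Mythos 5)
Your proof is correct and is exactly the argument the paper intends: the paper's proof reads ``This follows from \cref{lem:partial-splits-inflation} and the definitions,'' and your write-up simply spells out that unwinding, including the (correct) observation that $i_d$ preserves compacts so that $\Spc(i_d)$ is defined.
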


\begin{proof}
	This follows from \cref{lem:partial-splits-inflation} and the definitions.
\end{proof}

\begin{Lem}\label{lem:injectivity-of-partial-k}
	For each $1 \le k \le d$, the functor $\partial_k \colon \Exc{d}(\Sp^c,\Sp) \to \Sp$ induces an embedding
	\[
		\Spc(\partial_k)\colon \Spc(\Sp^c)\hookrightarrow \Spc(\Exc{d}(\Sp^c,\Sp)^c)
	\]
	which sends $\cat C_{p,h}$ to $\cPd(\num{k},p,h)$.
\end{Lem}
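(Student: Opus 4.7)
The plan is to exploit \Cref{lem:partial-splits-inflation}, which tells us that the $k$-th Goodwillie derivative admits a one-sided inverse after composing with the inflation functor. First, I would observe that $\partial_k \circ i_d \simeq \id_{\Sp}$ by \Cref{lem:partial-splits-inflation}. Applying the contravariant functor $\Spc(-)$ and using its functoriality yields the composition
\[
\Spc(i_d) \circ \Spc(\partial_k) = \Spc(\partial_k \circ i_d) = \id_{\Spc(\Sp^c)}.
\]
Thus $\Spc(\partial_k)$ is a continuous section of $\Spc(i_d)$, and any such section is automatically a topological embedding onto its image (with continuous inverse given by the restriction of $\Spc(i_d)$).

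Next, I would verify the identification of the image on points by a direct unwinding of definitions. By \Cref{Rem:basic-balmer-properties}, for any prime $\cat C \in \Spc(\Sp^c)$ we have
\[
\Spc(\partial_k)(\cat C) = \SET{x \in \Exc{d}(\Sp^c,\Sp)^c}{\partial_k(x) \in \cat C},
\]
so taking $\cat C = \cat C_{p,h}$ recovers exactly $\cPd(\num{k},p,h)$ as defined in \Cref{def:prime-in-exc}. This completes the proof since both claims follow.

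There is essentially no obstacle here: the entire content of the lemma is packaged in the retraction identity $\partial_k \circ i_d \simeq \id$ established in \Cref{lem:partial-splits-inflation}, which reflects the geometric fact that $\Sp$ sits inside $\Exc{d}(\Sp^c,\Sp)$ as a symmetric monoidal colimit-preserving subcategory via inflation. It is worth remarking, for use in later arguments, that this also shows $\Spc(\partial_k)$ identifies $\Spc(\Sp^c)$ with a retract of $\Spc(\Exc{d}(\Sp^c,\Sp)^c)$, which combined with \Cref{lem:cover} implies that every prime of $\Exc{d}(\Sp^c,\Sp)^c$ is of the form $\cPd(\num{k},p,h)$ for some $1 \le k \le d$ and some $(p,h)$—setting the stage for the later analysis of when two such presentations coincide.
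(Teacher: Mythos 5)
Your proof is correct and follows essentially the same route as the paper: both rest on the splitting $\partial_k \circ i_d \simeq \id_{\Sp}$ from \cref{lem:partial-splits-inflation} together with functoriality of $\Spc(-)$, the paper phrasing the conclusion via preservation and reflection of inclusions of primes while you phrase it via the (equivalent) observation that a continuous map admitting a continuous retraction is an embedding. The identification of the image of $\cat C_{p,h}$ with $\cPd(\num{k},p,h)$ is, as you say, immediate from \cref{Rem:basic-balmer-properties} and \cref{def:prime-in-exc}.
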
 

\begin{proof}
	The splitting $\partial_k \circ i_d \simeq \Id_{\Sp}$ established by \cref{lem:partial-splits-inflation} implies by \Cref{Rem:basic-balmer-properties} that $\cat C_{p,n} \subseteq \cat C_{q,m}$ if and only if $\cPd(\num{k},p,n)\subseteq \cPd(\num{k},q,m$). It follows that the map $\Spc(\partial_k)$ is injective and, in fact, an embedding.
\end{proof}

\begin{Prop}\label{prop:pn-induced-spectra}
	Let $1 \le m \le d$. The functor $P_{m} \colon \Exc{d}(\Sp^c,\Sp) \to \Exc{m}(\Sp^c,\Sp)$ induces an open embedding
	\[
		\Spc(P_m) \colon \Spc(\Exc{m}(\Sp^c,\Sp)^c) \hookrightarrow \Spc(\Exc{d}(\Sp^c,\Sp)^c)
	\]
	which sends $\cPm(\num{k},p,h)$ to $\cPd(\num{k},p,h)$ for each $1 \le k \le m$.
\end{Prop}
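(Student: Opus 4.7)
The plan is to deduce both claims from \Cref{cor:pm-finite-localization}, which exhibits $P_m \colon \Exc{d}(\Sp^c,\Sp)\to\Exc{m}(\Sp^c,\Sp)$ as a finite localization whose ideal of acyclics $\cat J_m \subseteq \Exc{d}(\Sp^c,\Sp)^c$ is generated by the compact set $\{P_d h_{\bbS}(j) : m+1 \le j \le d\}$. By general tt-geometry, any finite localization at a thick $\otimes$-ideal $\cat J$ in the compact part of a rigidly-compactly generated tt-category (which $\Exc{d}(\Sp^c,\Sp)$ is, by \Cref{cor:rigid-compact}) induces a homeomorphism between the spectrum of the localized category and $\Spc(\cat T^c) \setminus \supp(\cat J)$; this is a standard consequence of the classification of thick ideals recalled in \Cref{rem:balmer-classification} together with the recollement picture of \eqref{eq:abstractdiagram} applied to the Thomason subset $\supp(\cat J)$. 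In our situation,
\[
\supp(\cat J_m) = \bigcup_{j=m+1}^{d}\supp\bigl(P_d h_{\bbS}(j)\bigr)
\]
is a \emph{finite} union of Thomason closed subsets and is therefore itself closed. Hence $\Spc(P_m)$ will realize $\Spc(\Exc{m}(\Sp^c,\Sp)^c)$ as an open subspace of $\Spc(\Exc{d}(\Sp^c,\Sp)^c)$, namely the complement of this closed set.

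For the identification of $\Spc(P_m)(\cPm(\num{k},p,h))$ with $\cPd(\num{k},p,h)$ in the range $1 \le k \le m$, I will unwind \Cref{Rem:basic-balmer-properties}: the image prime is the preimage
\[
\bigl\{F \in \Exc{d}(\Sp^c,\Sp)^c : P_m F \in \cPm(\num{k},p,h)\bigr\} = \bigl\{F : \partial_k(P_m F) \in \cat C_{p,h}\bigr\}.
\]
Since both $P_k$ and $P_k \circ P_m$ are left adjoint to the inclusion of $\Exc{k}(\Sp^c,\Sp)$ (through $\Exc{m}$) into $\Exc{d}(\Sp^c,\Sp)$, we have $P_k \simeq P_k \circ P_m$ for $k \le m$, and hence by \Cref{def:goodwillie-derivatives} also $\partial_k \simeq \partial_k \circ P_m$ on $\Exc{d}(\Sp^c,\Sp)$. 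The displayed preimage then coincides with $\cPd(\num{k},p,h)$ by definition.

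I do not foresee any genuine obstacle: the essential content is already packaged in \Cref{cor:pm-finite-localization}, and the remainder is a formal application of the dictionary between finite localizations and open subsets of the Balmer spectrum, combined with the elementary compatibility $\partial_k \circ P_m \simeq \partial_k$ valid for $k \le m$.
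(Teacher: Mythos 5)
Your proposal is correct and follows essentially the same route as the paper: invoke \Cref{cor:pm-finite-localization} to see $P_m$ as a finite localization, use the standard fact that finite localizations induce embeddings on Balmer spectra with open image equal to the complement of the (closed, since finite union of closed sets) support of the ideal of acyclics, and identify the primes via $\partial_k \circ P_m \simeq \partial_k$ for $k \le m$. The paper simply cites \cite[Proposition 3.11]{Balmer05a} for the embedding statement and says the identification of primes ``follows directly from the definitions,'' which you have spelled out correctly.
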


\begin{proof}
	By \cref{cor:pm-finite-localization} the functor $P_m$ is the finite localization on $\Exc{d}(\Sp^c,\Sp)$ whose kernel is the ideal generated by $\SET{P_dh_{\bbS}(i)}{m+1 \le i \le d}$. Any finite localization induces an embedding on spectra by \cite[Proposition 3.11]{Balmer05a}. Moreover, the complement of its image is the closed set 
	\[
		\bigcup_{m+1 \le i \le d} \supp(P_d h_{\bbS}(i)) \subseteq \Spc(\Exc{d}(\Sp^c,\Sp)^c).
	\]
	This establishes that $\Spc(P_m)$ is an open embedding. The fact that $\cPm(\num{k},p,h)$ maps to $\cPd(\num{k},p,h)$ follows directly from the definitions.
\end{proof}

The following is our key computational result for determining the underlying set of the spectrum of $\Exc{d}(\Sp^c,\Sp)$. 

\begin{Prop}\label{prop:hderivatives}
	For $1 \le i \le d$ and $j\ge 1$ we have
	\[
		\partial_i(P_d(h_{\mathbb{S}}(j))) \cong \mathbb{S}^{\oplus \left|\surj(i,j)\right|}
	\]
	where the right-hand side is to be interpreted as $0$ if $j>i$.
\end{Prop}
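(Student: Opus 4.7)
The plan is to unwind $\partial_i$ via its corepresentability and then use the Day convolution and self-duality of the generators. Since $P_iP_d\simeq P_i$ for $i\le d$, \Cref{def:goodwillie-derivatives} gives
\[
\partial_i(P_dh_{\bbS}(j)) \simeq \crosseffect_i(P_ih_{\bbS}(j))(\bbS,\ldots,\bbS).
\]
If $j>i$ then $P_ih_{\bbS}(j)=0$ by \Cref{cor:vanishing}, and the right-hand side vanishes, matching $|\surj(i,j)|=0$. For $j\le i$, I will apply \Cref{lem:cross-effect-yoneda-lemma} (with all arguments equal to $\bbS$, so that the iterated tensor product of $h_{\bbS}$'s becomes $h_{\bbS}(i)$ by \cref{rem:h_x(i)}) to reinterpret the cross-effect as the mapping spectrum
\[
\partial_i(P_dh_{\bbS}(j)) \simeq \Hom_{\Exc{i}(\Sp^c,\Sp)}(P_ih_{\bbS}(i),\, P_ih_{\bbS}(j)).
\]

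Next, I would use the self-duality of $P_ih_{\bbS}(i)$ established in \Cref{prop:generators-dualizable-self-dual} to rewrite this mapping spectrum as a hom out of the unit:
\[
\partial_i(P_dh_{\bbS}(j)) \simeq \Hom_{\Exc{i}(\Sp^c,\Sp)}(\unit,\, P_ih_{\bbS}(i)\circledast P_ih_{\bbS}(j)).
\]
The Day convolution is then decomposed by \Cref{prop:local-day-convolution} into a direct sum of $P_ih_{\bbS}(|\mathcal{U}|)$ over good subsets $\mathcal{U}\subseteq\num{i}\times\num{j}$ with $|\mathcal{U}|\le i$. But by \Cref{def:good-subsets} every good subset has cardinality at least $\max(i,j)=i$, so only subsets of size exactly $i$ contribute. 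Such a subset projects bijectively onto $\num{i}$, so it is the graph of a function $\num{i}\to\num{j}$; goodness then amounts to surjectivity of that function. Hence size-$i$ good subsets of $\num{i}\times\num{j}$ are in canonical bijection with $\surj(i,j)$, and the Day convolution collapses to $\bigoplus_{|\surj(i,j)|}P_ih_{\bbS}(i)$.

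It remains to identify $\Hom_{\Exc{i}(\Sp^c,\Sp)}(\unit,\,P_ih_{\bbS}(i))$, which is precisely the value of $P_ih_{\bbS}(i)$ at $\bbS$ by \Cref{lem:cross-effect-yoneda-lemma} applied with a single variable (since $\unit_{\Exc{i}}\simeq P_ih_{\bbS}$ by \Cref{thm:properties-n-excisive}). Finally \Cref{cor:value-of-pnhsn} gives $P_ih_{\bbS}(i)(\bbS)\simeq \bbS^{\otimes i}\simeq\bbS$. Stringing the equivalences together produces $\partial_i(P_dh_{\bbS}(j))\simeq \bbS^{\oplus|\surj(i,j)|}$, as claimed.

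There is no serious obstacle here: every input is already in place earlier in the paper, and the argument is essentially a careful assembly of corepresentability, self-duality and the Day convolution formula. The only combinatorial point worth verifying is the identification of size-$i$ good subsets of $\num{i}\times\num{j}$ with $\surj(i,j)$, which is immediate from the cardinality constraint on good subsets.
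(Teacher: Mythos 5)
Your proof is correct, but it takes a genuinely different route from the paper's. The paper first observes that $\partial_i$ is insensitive to $P_d$, then imports the identification $\partial_i(h_x)\simeq\mathbb{D}(x^{\otimes i})$ from Arone--Ching, writes $h_{\bbS}(j)$ as the total fiber of a $j$-cube of corepresentables, and extracts $\left|\surj(i,j)\right|$ by an inclusion--exclusion count using \eqref{eq:number_or_surjections}. You instead stay entirely inside the tt-category: corepresentability of the cross-effect (\cref{lem:cross-effect-yoneda-lemma}), self-duality of the generators (\cref{prop:generators-dualizable-self-dual}), and the Day convolution decomposition (\cref{prop:local-day-convolution}), with the combinatorics entering through the count of good subsets of $\num{i}\times\num{j}$ of cardinality exactly $i$ rather than through inclusion--exclusion; this is essentially the observation already recorded in \cref{exa:day-convolution-n}. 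All the inputs you invoke do precede \cref{prop:hderivatives} in the paper, so there is no circularity, and your case analysis ($j>i$ via \cref{cor:vanishing}, $j\le i$ via the cardinality bound $|\mathcal{U}|\ge\max(i,j)=i$ on good subsets) is complete. What your approach buys is independence from the external formula for derivatives of corepresentables; what it costs is that it only computes the derivative of these particular objects, whereas the paper's method (via $\partial_i(h_x)\simeq\mathbb{D}(x^{\otimes i})$) generalizes to $h_x(j)$ for arbitrary $x$.
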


\begin{proof}
	The map $h_\bbS(j)\to P_d(h_{\mathbb{S}}(j))$ induces an equivalence of $\partial_i$ for $i\le d$. So it is enough to prove that 
	\[
		\partial_i(h_{\mathbb{S}}(j)) \cong \mathbb{S}^{\oplus \left|\surj(i,j)\right|}.
	\] 
	Recall that $h_{\bbS}(j)$ is the $j$-th cross-effect of the functor $x\mapsto h_x$, evaluated at $\bbS, \bbS, \ldots, \bbS$. It is well-known that for $x \in \Sp^c$ there is an equivalence $\partial_i(h_x) \simeq \mathbb{D}(x^{\otimes i})$, with $\Sigma_i$ action given by permuting the factors of $x$; see, for example, \cite[Lemma~5.10]{AroneChing15}. In particular, we deduce that 
    \begin{equation}\label{eq:derivative-representable}
		\partial_i(h_{\bbS^{\oplus j}}) \simeq \bbS^{\oplus j^i}.
    \end{equation}
	The case of $h_{\bbS}(j)$ follows by writing $h_{\bbS}(j)$ as the total fiber of a $j$-cube.  Let us demonstrate this in the case $i = j = 3$, showing that $\partial_3(h_{\bbS}(3)) \simeq \bbS^{\oplus 6}$. We can write $h_{\bbS}(3)$ as the total fiber of the $3$-cube
	\[\begin{tikzcd}[row sep={40,between origins}, column sep={40,between origins}]
		& h_{\bbS \oplus \bbS}  \ar{rr}\ar{dd} & & h_{\bbS} \ar{dd}\\
		  h_{\bbS \oplus \bbS \oplus \bbS}  \ar[crossing over]{rr} \ar{dd} \ar{ur} & & h_{\bbS \oplus \bbS} \ar{ur} \\
		& h_{\bbS} \ar{rr} & &  0   \\
		  h_{\bbS \oplus \bbS} \ar{rr} \ar{ur} && h_{\bbS} \ar[from=uu,crossing over] \ar{ur}
	\end{tikzcd}\]
	Applying $\partial_3$ and using~\eqref{eq:derivative-representable} gives the 3-cube
	\[\begin{tikzcd}[row sep={40,between origins}, column sep={40,between origins}]
		& \bbS^{\oplus 8} \ar{rr}\ar{dd} & & \bbS \ar{dd}\\
		  \bbS^{\oplus 27} \ar[crossing over]{rr} \ar{dd} \ar{ur} & & \bbS^{\oplus 8} \ar{ur} \\
		& \bbS \ar{rr} & &  0   \\
		  \bbS^{\oplus 8}  \ar{rr} \ar{ur} && \bbS \ar[from=uu,crossing over] \ar{ur}
	\end{tikzcd}\]
	The total fiber of this diagram, which is $\partial_3(h_{\bbS}(3))$, is $\bbS^{\oplus(27-3\cdot8 + 3\cdot 1)} = \bbS^{\oplus 6}$. In the general case, the same type of argument gives that $\partial_i(h_{\bbS}(j))$ is a sum of
	\[
		j^i - \binom{j}{j-1}(j-1)^i + \binom{j}{j-2}(j-2)^i + \cdots + (-1)^{j-1}j = \sum_{k=0}^{j-1} (-1)^k \binom{j}{j-k}(j-k)^i
	\]
	copies of the sphere spectrum. But this is precisely the number of surjections from $i$ to $j$; see~\eqref{eq:number_or_surjections}. The result then easily follows.
\end{proof}

\begin{Rem}
	Armed with \cref{prop:hderivatives}, we have analogs of \cite[Propositions 4.11 and~4.12]{BalmerSanders17}:
\end{Rem}

\begin{Prop}\label{prop:representable-inclusion}
	Let $1 \le i,j \le d$. Then $P_d(h_{\mathbb{S}}(j)) \in \cPd(\num{i},p,h)$ if and only if $i < j$. 
\end{Prop}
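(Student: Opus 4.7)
The plan is to combine the definition of the prime $\cPd(\num{i},p,h)$ with the explicit computation of the derivatives carried out in \cref{prop:hderivatives}. By \cref{def:prime-in-exc}, membership $P_d(h_{\bbS}(j)) \in \cPd(\num{i},p,h)$ is equivalent to the condition $\partial_i(P_d(h_{\bbS}(j))) \in \cat C_{p,h}$, i.e., that $K(p,h-1)_*\partial_i(P_d(h_{\bbS}(j))) = 0$. So the task reduces to testing when this Morava $K$-theory vanishes.

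Invoking \cref{prop:hderivatives}, we have an equivalence $\partial_i(P_d(h_{\bbS}(j))) \simeq \bbS^{\oplus |\surj(i,j)|}$. Thus the question becomes purely arithmetic: we need to determine when $|\surj(i,j)| = 0$. This happens precisely when there are no surjections from a set of size $i$ onto a set of size $j$, which occurs if and only if $i < j$.

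If $i < j$, then the derivative is the zero spectrum, which lies in every prime ideal $\cat C_{p,h}$, so $P_d(h_{\bbS}(j)) \in \cPd(\num{i},p,h)$. Conversely, if $i \geq j$, then $|\surj(i,j)| \geq 1$, and $\bbS^{\oplus |\surj(i,j)|}$ is a nonzero finite wedge of sphere spectra; since $K(p,h-1)_*(\bbS) \neq 0$ for all primes $p$ and heights $1 \leq h \leq \infty$, this wedge is not in $\cat C_{p,h}$, giving $P_d(h_{\bbS}(j)) \notin \cPd(\num{i},p,h)$. There is no real obstacle here---this proposition is essentially a direct corollary of the derivative computation in \cref{prop:hderivatives}, which is where the substantive work occurs.
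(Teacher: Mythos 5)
Your argument is correct and is essentially the paper's own proof: both reduce membership in $\cPd(\num{i},p,h)$ to the vanishing of $\partial_i(P_d(h_{\bbS}(j)))$, invoke \cref{prop:hderivatives} to identify this derivative with $\bbS^{\oplus |\surj(i,j)|}$, and conclude using that $0$ lies in every prime of $\Sp^c$ while a nonzero finite wedge of spheres lies in none. No issues.
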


\begin{proof}
	By definition, $\cPd(\num{i},p,h) = (\partial_i)^{-1}(\cat C_{p,h})$. Now recall that $\mathbb{S} \not \in \cat P$ and $0 \in \cat P$ for every prime $\cat P = \cat C_{p,h}$ in $\Spc(\Sp^c)$ and invoke \cref{prop:hderivatives}.
\end{proof}

\begin{Cor}\label{cor:inclusion-layers}
	If $\cPd(\num{i},p,h) \subseteq \cPd(\num{j},p',h')$ then $i \ge j$. 
\end{Cor}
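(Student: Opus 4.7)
The plan is to deduce the corollary directly from \cref{prop:representable-inclusion} by a short contrapositive argument. The key observation is that \cref{prop:representable-inclusion} exactly characterizes when the compact generator $P_d(h_{\mathbb{S}}(j))$ belongs to a prime of the form $\cPd(\num{i},p,h)$, namely precisely when $i < j$, and this characterization is independent of the prime $p$ and the height $h$.

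More concretely, I would argue as follows. Suppose, for contradiction, that $\cPd(\num{i},p,h) \subseteq \cPd(\num{j},p',h')$ but $i < j$. Applying \cref{prop:representable-inclusion} to the smaller prime, the condition $i < j$ gives $P_d(h_{\mathbb{S}}(j)) \in \cPd(\num{i},p,h)$. The assumed inclusion then forces
\[
P_d(h_{\mathbb{S}}(j)) \in \cPd(\num{j},p',h').
\]
But applying \cref{prop:representable-inclusion} now to the larger prime (with the first index equal to $j$), membership would require $j < j$, which is absurd. Hence $i \geq j$, as claimed.

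There is no real obstacle here: the work has already been done in \cref{prop:hderivatives} and \cref{prop:representable-inclusion}, which turned the computation of $\partial_i(P_d(h_{\mathbb{S}}(j)))$ into a clean combinatorial vanishing/non-vanishing dichotomy governed by whether $i < j$. The present corollary is just the tt-geometric extraction of that dichotomy, and it will serve later as one of the basic monotonicity constraints on inclusions among the primes $\cPd(\num{k},p,h)$ when analyzing the topology of $\Spc(\Exc{d}(\Sp^c,\Sp)^c)$.
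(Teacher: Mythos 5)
Your argument is correct and is essentially the paper's own proof: both invoke \cref{prop:representable-inclusion} twice, once to place $P_d(h_{\mathbb{S}}(j))$ in (or out of) each of the two primes, and then use the hypothesized inclusion to compare. The paper phrases it contrapositively ($P_d(h_{\mathbb{S}}(j)) \notin \cPd(\num{j},p',h')$ forces $P_d(h_{\mathbb{S}}(j)) \notin \cPd(\num{i},p,h)$, hence $i \ge j$), while you run the same logic by contradiction; the content is identical.
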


\begin{proof}
	Invoking \cref{prop:representable-inclusion} twice, $P_d(h_{\mathbb{S}}(j)) \not\in \cPd(\num{j},p',h')$ implies that $P_d(h_{\mathbb{S}}(j)) \not\in \cPd(\num{i},p,h)$ by the hypothesis, so that $i \ge j$.
\end{proof}

\begin{Thm}\label{thm:spec-as-a-set}
	The map $\Spc(\partial)$ in \eqref{eq:Spcdelta} is a bijection: Every prime ideal in $\Spc(\Exc{d}(\Sp^c,\Sp)^c)$ is of the form $\cPd(\num{i},p,h)$ for some triple $(i,p,h)$ consisting of an integer $1 \le i \le d$, a prime number $p$ or $p = 0$, and a chromatic height $1 \le h \le \infty$. Moreover, we have $\cPd(\num{i},p,h) = \cPd(\num{j},p',h')$ if and only if $i = j$ and $\cat C_{p,h} = \cat C_{p',h'}$ in $\Sp^c$ (i.e., $h = h'$ and if $h = h' >1$, then also $p = p'$). 
\end{Thm}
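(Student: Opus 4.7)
The plan is to extract this classification result directly from the preceding lemmas, so the proof will be short. Surjectivity of the map $\Spc(\partial)$ is already recorded in \cref{lem:cover}, which combined with the definition $\cPd(\num{i},p,h) \coloneqq (\partial_i)^{-1}(\cat C_{p,h})$ shows that every prime ideal of $\Exc{d}(\Sp^c,\Sp)^c$ must be of the displayed form. So the remaining task is to establish the ``only if'' direction of the equality criterion, i.e.\ to prove injectivity of the composite assignment $(i, \cat C_{p,h}) \mapsto \cPd(\num{i},p,h)$.

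Suppose then that $\cPd(\num{i},p,h) = \cPd(\num{j},p',h')$. First I would prove that $i = j$. Viewing this equality as an inclusion $\cPd(\num{i},p,h) \subseteq \cPd(\num{j},p',h')$ and invoking \cref{cor:inclusion-layers} yields $i \geq j$; swapping the roles of the two sides gives $j \geq i$, hence $i = j$. Next I would prove that $\cat C_{p,h} = \cat C_{p',h'}$ in $\Spc(\Sp^c)$. For this, recall from \cref{lem:partial-splits-inflation} that $\partial_k \circ i_d \simeq \Id_{\Sp}$, so by \cref{lem:Spc(i_d)} the induced map $\Spc(i_d)$ sends $\cPd(\num{i},p,h)$ to $\cat C_{p,h}$ and $\cPd(\num{j},p',h')$ to $\cat C_{p',h'}$. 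Since the domains agree, so do the images, giving $\cat C_{p,h} = \cat C_{p',h'}$. The description of the equality $\cat C_{p,h} = \cat C_{p',h'}$ in terms of $h,h'$ and $p,p'$ is then immediate from the explicit picture of $\Spc(\Sp^c)$ recalled in \cref{exa:balmer-spectrum-spectra}.

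Conversely, the ``if'' direction is trivial: if $i = j$ and $\cat C_{p,h} = \cat C_{p',h'}$, then by definition $\cPd(\num{i},p,h) = (\partial_i)^{-1}(\cat C_{p,h}) = (\partial_j)^{-1}(\cat C_{p',h'}) = \cPd(\num{j},p',h')$. Combined with the surjectivity from \cref{lem:cover}, this shows that $\Spc(\partial)$ factors through the required bijection on the coproduct, completing the proof.

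There is no serious obstacle here; all the work has already been done in assembling the geometric functors $\partial_k$, the inflation $i_d$ splitting them, and the joint conservativity statement used in \cref{lem:cover}. The only mild subtlety is that the ``only if'' direction uses both the splitting $\partial_k \circ i_d \simeq \Id_{\Sp}$ (to separate the chromatic data $(p,h)$) and the representable-detection result \cref{prop:representable-inclusion} packaged as \cref{cor:inclusion-layers} (to separate the ``layer'' index~$i$).
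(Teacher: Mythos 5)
Your proof is correct and follows exactly the same route as the paper: surjectivity from \cref{lem:cover}, $i = j$ from applying \cref{cor:inclusion-layers} in both directions, and $\cat C_{p,h} = \cat C_{p',h'}$ from \cref{lem:Spc(i_d)}. The paper's own proof is just a more terse version of what you wrote.
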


\begin{proof}
	We established that $\Spc(\partial)$ is surjective in \cref{lem:cover}, so it remains to show that it is injective, i.e., if $\cPd(\num{i},p,h) = \cPd(\num{j},p',h')$ then $i=j$ and $\cat C_{p,h}=\cat C_{p',h'}$. Indeed, \cref{cor:inclusion-layers} implies $i=j$ while \cref{lem:Spc(i_d)} implies $\cat C_{p,h} = \cat C_{p',h'}$.
\end{proof}

\begin{Cor}\label{lem:support-representables}
	Let $1 \le k \le d$. Then
	\[
		\supp(P_d(h_{\mathbb{S}}(k))) = \SET{\cPd(\num{i},p,h)}{i \ge k}.
	\]
\end{Cor}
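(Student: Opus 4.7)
The plan is to combine the two preceding results directly. By definition of support,
\[
\supp(P_d(h_{\mathbb{S}}(k))) = \SET{\cat P \in \Spc(\Exc{d}(\Sp^c,\Sp)^c)}{P_d(h_{\mathbb{S}}(k)) \notin \cat P}.
\]
Theorem \ref{thm:spec-as-a-set} tells us that every such prime $\cat P$ is of the form $\cPd(\num{i},p,h)$ for some $1 \le i \le d$ and some $(p,h)$, so we only need to determine for which triples $(i,p,h)$ the object $P_d(h_{\mathbb{S}}(k))$ fails to lie in $\cPd(\num{i},p,h)$.

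This is exactly the content of \cref{prop:representable-inclusion}: $P_d(h_{\mathbb{S}}(k)) \in \cPd(\num{i},p,h)$ if and only if $i < k$. Equivalently, $P_d(h_{\mathbb{S}}(k)) \notin \cPd(\num{i},p,h)$ if and only if $i \ge k$, and crucially this condition depends only on $i$ and not on the pair $(p,h)$. Substituting this characterization into the description of the support yields
\[
\supp(P_d(h_{\mathbb{S}}(k))) = \SET{\cPd(\num{i},p,h)}{i \ge k},
\]
as claimed. There is no real obstacle here: the computation of the derivatives in \cref{prop:hderivatives} did all the work, and the corollary is simply a reformulation of \cref{prop:representable-inclusion} in the light of the set-level classification of primes in \cref{thm:spec-as-a-set}.
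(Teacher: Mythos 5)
Your argument is correct and is essentially identical to the paper's proof: both unwind the definition of support, use \cref{thm:spec-as-a-set} to know that every prime has the form $\cPd(\num{i},p,h)$, and then apply \cref{prop:representable-inclusion} to decide membership. Nothing further is needed.
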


\begin{proof}
	By definition,
	\[
		\supp(P_d(h_\mathbb{S}(k))) = \SET{\cat P \in \Spc(\Exc{d}(\Sp^c,\Sp)^c)}{P_d(h_\mathbb{S}(k)) \not \in \cat P}.
	\]
	By \Cref{thm:spec-as-a-set} we know every prime is of the form $\cat P(\num{i},p,h)$ and so the result follows from \Cref{prop:representable-inclusion}. 
\end{proof}

\begin{Rem}
	It follows from  \cref{thm:spec-as-a-set} and \cref{lem:injectivity-of-partial-k} that $\Spc(\Exc{d}(\Sp^c,\Sp)^c)$ consists of $d$ disjoint embedded copies of $\Spc(\Sp^c)$. Understanding the topological interactions between these $d$ pieces will be the topic of \cref{part:III}.
\end{Rem}

\begin{Rem}\label{rem:p-local-derivative}
	The $p$-localization $\Exc{d}(\Sp^c,\Sp)\to\Exc{d}(\Sp^c,\Sp)_{(p)}$ is the finite localization associated to $i_d(f_{p,\infty})$ in the notation of \cref{exa:chromatic-truncation}. It induces an identification
	\[
		\Spc(\Exc{d}(\Sp^c,\Sp)_{(p)}^c) \xrightarrow{\sim} \SET{\cPd(\num{k},p,h)}{\text{all } k, h} \subseteq \Spc(\Exc{d}(\Sp^c,\Sp)^c)
	\]
	on Balmer spectra. This readily follows from \cref{lem:Spc(i_d)} and the definitions. Moreover, it follows from \cref{lem:partial-splits-inflation} that if $F$ is a $p$-local $d$-excisive functor then its derivative $\partial_k(F) \in \Sp$ is also $p$-local for each $1\le k\le d$. This is almost tautological if one makes the identification $\Exc{d}(\Sp^c,\Sp)_{(p)} \simeq \Exc{d}(\Sp^c,\Sp_{(p)})$ provided by \cref{rem:targetcat}. From this perspective, $p$-localization is just changing coefficients along $\Sp\to \Sp_{(p)}$.
\end{Rem}

\begin{Rem}
	Via the abstract nilpotence theorem of \cite[Theorem 2.25]{BCHNP2023Quillen}, the joint conservativity of the derivatives on $\Exc{d}(\Sp^c,\Sp)$ established in \cref{lem:conservativity} implies the following nilpotence theorem:
\end{Rem}

\begin{Thm}\label{thm:nilpotence}
	The collection of functors $\{ \partial_k \}_{1 \le k \le d}$ detects tensor-nilpotence of morphisms with compact source in $\Exc{d}(\Sp^c,\Sp)$: A map $\alpha\colon F \to G$ in $\Exc{d}(\Sp^c,\Sp)$ with $F$ compact satisfies $\alpha^{\circledast m} = 0$ for some $m \ge 0$ if and only if for all $1 \leq k \leq d$ there exists $m_k \geq 0$ such that $\partial_k(\alpha)^{\otimes m_k} = 0$.
\end{Thm}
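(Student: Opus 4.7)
The plan is to deduce the theorem directly from the abstract nilpotence theorem cited in the statement, using the structural properties of the Goodwillie derivatives established in the preceding sections.

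The forward implication is essentially formal. Since each $\partial_k$ is symmetric monoidal by \cref{lem:geometric-derivatives}, if $\alpha^{\circledast m} = 0$ then
\[
    \partial_k(\alpha)^{\otimes m} \simeq \partial_k(\alpha^{\circledast m}) = 0,
\]
so we may take $m_k = m$ uniformly in $k$. The content of the theorem therefore lies entirely in the reverse implication.

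For the reverse implication, we apply \cite[Theorem 2.25]{BCHNP2023Quillen}. In the typical formulation of such an abstract nilpotence theorem, one is given a rigidly-compactly generated tt-category $\cat T$ together with a jointly conservative family $\{F_i \colon \cat T \to \cat T_i\}$ of geometric functors into rigidly-compactly generated tt-categories, and the conclusion is that tensor-nilpotence of any morphism $\alpha \colon F \to G$ in $\cat T$ with compact source is detected by tensor-nilpotence of the images $F_i(\alpha)$ in each $\cat T_i$. We apply this to $\cat T = \Exc{d}(\Sp^c,\Sp)$, which is rigidly-compactly generated by \cref{cor:rigid-compact}, with the family $\{\partial_k\}_{1 \le k \le d}$ of functors to $\Sp$; this family consists of geometric functors by \cref{lem:geometric-derivatives} and is jointly conservative by \cref{lem:conservativity}. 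Since $\partial_k$ is a geometric functor between rigidly-compactly generated tt-categories, it preserves compact objects, so $\partial_k(F) \in \Sp^c$ is a finite spectrum; this is the hypothesis under which the Devinatz--Hopkins--Smith nilpotence theorem applies in $\Sp$, ensuring that nilpotence is well-behaved in the targets. The desired conclusion then follows at once.

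There is no substantial obstacle here: all the work has already been done in the preceding sections, and the proof amounts to checking that the derivatives satisfy the hypotheses of the cited abstract theorem. The only minor subtlety worth highlighting is the observation that $\partial_k$ preserves compact objects, so that the compactness hypothesis on $F$ in $\Exc{d}(\Sp^c,\Sp)$ transfers to compactness of $\partial_k(F)$ in $\Sp$, which is precisely what is needed to invoke the nilpotence theorem in the target.
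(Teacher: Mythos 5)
Your proof is correct and follows the same route as the paper, which likewise deduces the theorem immediately from the abstract nilpotence theorem of \cite[Theorem 2.25]{BCHNP2023Quillen} together with the joint conservativity (\cref{lem:conservativity}) and geometricity (\cref{lem:geometric-derivatives}) of the derivatives. One small remark: the abstract nilpotence theorem is a purely tt-categorical statement requiring only compactness of the source in $\Exc{d}(\Sp^c,\Sp)$, so your aside invoking Devinatz--Hopkins--Smith in the target $\Sp$ is unnecessary (though harmless).
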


\begin{Rem}
	The computation of \Cref{thm:spec-as-a-set} along with \Cref{thm:nilpotence} show that Balmer's homological spectrum $\Spc^h(\Exc{d}(\Sp^c,\Sp)^c)$ (see \cite{Balmer20_nilpotence}) agrees with the tensor triangular spectrum. (Balmer has conjectured that this holds for any rigidly-compactly generated category; see \cite[Remark 5.15]{Balmer20_nilpotence}.) More specifically, we have the following:
\end{Rem}

\begin{Cor}\label{cor:bijhyp}
	The natural comparison map 
	\[
		\phi\colon\Spc^h(\Exc{d}(\Sp^c,\Sp)^c) \to \Spc(\Exc{d}(\Sp^c,\Sp)^c)
	\]
	of \cite[Remark 3.4]{Balmer20_nilpotence} is a homeomorphism.
\end{Cor}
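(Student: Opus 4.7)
The plan is to deduce the corollary from the joint nilpotence theorem \cref{thm:nilpotence}, combined with the known bijectivity of the comparison map for $\Sp^c$ itself. Once bijectivity of $\phi$ is established, the homeomorphism statement is formal: both sides carry spectral topologies generated by the closed sets $\supp(-)$ and $\supp^h(-)$ for compact objects, and $\phi$ satisfies $\phi^{-1}(\supp(a)) = \supp^h(a)$ by construction, so any continuous bijection $\phi$ with this property is automatically a homeomorphism. Surjectivity of $\phi$ moreover holds in general by Balmer's foundational work on the comparison map for rigidly-compactly generated tt-categories (\cref{cor:rigid-compact}). So the substance of the claim is injectivity of $\phi$.

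For injectivity, I would invoke the homological analogue of \cref{lem:cover}, which is obtained by applying Balmer's covering theorem for the homological spectrum: the hypothesis that the tt-functors $\{\partial_k\}_{1 \le k \le d}$ jointly detect tensor-nilpotence of morphisms with compact source is exactly what \cref{thm:nilpotence} supplies, and it implies that the induced maps
\[
\Spc^h(\partial_k)\colon \Spc^h(\Sp^c) \to \Spc^h(\Exc{d}(\Sp^c,\Sp)^c), \quad 1 \le k \le d,
\]
jointly cover the target. Functoriality of $\phi$ fits this into the commutative square
\[
\begin{tikzcd}[ampersand replacement=\&]
\coprod_{k}\Spc^h(\Sp^c) \ar[two heads,r] \ar[d,"\cong"'] \& \Spc^h(\Exc{d}(\Sp^c,\Sp)^c) \ar[d,"\phi"] \\
\coprod_{k}\Spc(\Sp^c) \ar[two heads,r] \& \Spc(\Exc{d}(\Sp^c,\Sp)^c)
\end{tikzcd}
\]
in which the bottom map is the surjection of \cref{lem:cover} and the left vertical is a bijection. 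The latter fact is the case $\cat K = \Sp^c$ of the very statement we are proving, a known result which follows from the Hopkins--Smith thick subcategory theorem together with the identification of the homological residue fields of $\Sp^c$ with (variants of) Morava $K$-theories.

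Injectivity of $\phi$ now follows by a diagram chase. Given $\cat B_1, \cat B_2 \in \Spc^h(\Exc{d}(\Sp^c,\Sp)^c)$ with $\phi(\cat B_1) = \phi(\cat B_2)$, lift each $\cat B_i$ along the top surjection to some $\cat B'_i \in \Spc^h(\Sp^c)$ sitting in the $k_i$-th copy. Commutativity of the square maps these to $(k_i, \phi_{\Sp^c}(\cat B'_i))$ in the bottom-right, and the identification of Balmer primes in \cref{thm:spec-as-a-set} forces both $k_1 = k_2$ and $\phi_{\Sp^c}(\cat B'_1) = \phi_{\Sp^c}(\cat B'_2)$. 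Injectivity of $\phi_{\Sp^c}$ then gives $\cat B'_1 = \cat B'_2$, and hence $\cat B_1 = \cat B_2$. The main obstacle in the argument is the invocation of Balmer's covering theorem for the homological spectrum, whose precise statement and proof live in the foundational literature on homological residue fields; once granted, everything else reduces to the classification of primes already obtained in \cref{thm:spec-as-a-set} and the known case of $\Sp^c$.
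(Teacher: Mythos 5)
Your argument is correct, but it takes a genuinely different route from the paper's. The paper proves bijectivity of $\phi$ pointwise by verifying the three conditions of Balmer's criterion \cite[Theorem~5.6]{Balmer20_nilpotence} at each prime $\cPd(\num{k},p,h)$, using the explicit homological tensor functor $K(p,h)_\bullet\circ\partial_k$ into graded $K(p,h)_*$-modules; conditions (1) and (2) are routine, and condition (3) is exactly where \cref{thm:nilpotence} together with the classical nilpotence theorem of \cite{DevinatzHopkinsSmith88,HopkinsSmith98} enters. The upgrade from bijection to homeomorphism is then quoted from \cite[Theorem~A]{bhs2} rather than argued via supports as you do (your support argument is fine, since $\phi^{-1}(\supp(a))=\supp^h(a)$ holds by construction and a bijection takes basic closed sets to basic closed sets). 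Your alternative bootstraps injectivity from the known case of $\Sp^c$ via a covering of the homological spectrum and the injectivity of $\Spc(\partial)$ from \cref{thm:spec-as-a-set}; the one step you leave as a black box --- that a jointly nil-conservative family of geometric functors induces a jointly surjective map on homological spectra --- is genuinely available, being essentially the content of the reference already invoked for \cref{lem:cover} (and its hypothesis is supplied by \cref{lem:conservativity} even without appealing to \cref{thm:nilpotence}). What your approach buys is that it sidesteps the explicit construction and verification of residue-field functors, reducing everything to the base case $\cat K=\Sp^c$ and the set-level computation of the spectrum; what the paper's approach buys is that it exhibits the homological residue fields of $\Exc{d}(\Sp^c,\Sp)^c$ explicitly as Morava $K$-theories of derivatives, which is of independent interest. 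Both proofs ultimately rest on the same two inputs: the joint conservativity/nilpotence detection of the derivatives and the Devinatz--Hopkins--Smith nilpotence theorem.
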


\begin{proof}
    We will verify the conditions of \cite[Theorem 5.6]{Balmer20_nilpotence}. This will establish that~$\phi$ is a bijection and hence a homeomorphism by \cite[Theorem A]{bhs2}. The argument is analogous to the corresponding argument for the equivariant stable homotopy category given in \cite[Corollary 5.10]{Balmer20_nilpotence}. We must show that for every $\cat P = \cPd(\num{k},p,h) \in \Spc(\Exc{d}(\Sp^c,\Sp)^c)$ there exists a coproduct-preserving homological symmetric monoidal functor to a tensor abelian category $\cat A_{\cat P}$ satisfying the three conditions of \cite[Theorem 5.6]{Balmer20_nilpotence}. From the definition of the prime ideal $\cPd(\num{k},p,h)$ we take this to be the composite
    \[
		\Exc{d}(\Sp^c,\Sp) \xrightarrow{\partial_k} \Sp \xrightarrow{K(p,h)_{\bullet}} \cat A_{p,h} \coloneqq \Fp[v_{h}^{\pm 1}]\GrMMod
    \]
    where $K(p,h)$ is Morava $K$-theory at the prime $p$ and height $h$ and the graded field $K(p,h)_*(\mathbb{S})=\Fp[v_h^{\pm 1}]$ has $v_h$ in degree $2(p^h-1)$. When $h = 0$ we take $K(p,h)$ to be rational homology (for all primes $p$) and the target to be graded rational vector spaces, while for $h = \infty$ we take $K(p,h)$ to be mod $p$ homology and the target to be graded $\Fp$-vector spaces. Conditions (1) and (2) of \cite[Theorem 5.6]{Balmer20_nilpotence} are then straightforward to verify, while (3) follows from the nilpotence theorem of \cref{thm:nilpotence} together with the classical nilpotence theorem \cite{DevinatzHopkinsSmith88,HopkinsSmith98} for the stable homotopy category.
\end{proof}

\section{The Goodwillie--Burnside ring}\label{sec:goodwillie-burnside}

In this section we introduce, for each integer $d \ge 1$, a commutative ring $A(d)$ which we call the Goodwillie--Burnside ring. The name is justified by \Cref{thm:burnside-ring-isomorphism} in the next section which identifies $A(d)$ with the endomorphism ring of the unit in the category $\Exc{d}(\Sp^c,\Sp)$. It is the analog in Goodwillie calculus of the Burnside ring in equivariant homotopy theory and, following Dress \cite{Dress69}, we are able to completely describe its Zariski spectrum.

\begin{Rem}
	Yoshida \cite{Yoshida87} introduces a Burnside ring for any finite category $\cat C$ satisfying certain assumptions. For example, if $\cat C$ is the orbit category of a finite group, then Yoshida's abstract Burnside ring is the usual Burnside ring of a finite group. Our construction of the Goodwillie--Burnside ring can be regarded as a special case of Yoshida's construction, namely as the abstract Burnside ring associated to the following category.
\end{Rem}

\begin{Def}
    Let $\Epi{d}$ denote the category of non-empty sets of cardinality at most $d$ and surjective maps. 
\end{Def}

\begin{Rem}\label{rem:spectral-mackey}
    Glasman \cite{Glasman18pp} has shown that the category of $d$-excisive functors from finite spectra to spectra is equivalent to the category of spectral Mackey functors on a category built from $\Epi{d}$. This provides a conceptual reason for the appearance of $\Epi{d}$ in the study of $\Exc{d}(\Sp^c,\Sp)$. 
\end{Rem}

\begin{Rem}
	Note that when $d = 2$, there is an equivalence of categories 
	\[
		\Epi{2} \simeq \mathcal{O}_{C_2}
	\]
	where the latter denotes the orbit category of the cyclic group $C_2$. This is reflected in the fact that the ring $A(2)$ we construct in this section is isomorphic to $A(C_2)$, the Burnside ring of the group $C_2$. 
\end{Rem}

\begin{Cons}\label{cons-burnside-homomorphism}
	Let $d \ge 1$ be an integer. Let $A(d) \coloneqq \langle x_1, \ldots, x_d \rangle$ denote the free abelian group on generators $x_1, \ldots, x_d$ and let $\bbZ^d \coloneqq \langle y_1, \ldots, y_d \rangle$ denote the free abelian group on generators $y_1, \ldots, y_d$. Thus $A(d)$ and $\bbZ^d$ are isomorphic as abelian groups, but they will be endowed with different ring structures. The ring $\bbZ^d=\prod_{k=1}^d\bbZ$ is the product of $d$ copies of $\bbZ$. Thus the product structure on $\bbZ^d$ is determined by the relations 
    \begin{equation}\label{eq:product_in_Z^n}
		y_iy_j=\left\{\begin{array}{cc} y_i & i=j \\ 0 & i\ne j\end{array}\right. .
	\end{equation}
	To describe the ring structure on $A(d)$ we first define a group homomorphism $\phi\colon A(d) \to \bbZ^d$ by the following formula:
	\begin{equation}\label{eq:burnside}
		\phi(x_i)\coloneqq\sum_{k=1}^d|\surj(k, i)|y_k=\sum_{k=i}^d|\surj(k, i)|y_k. 
	\end{equation}
	We call $\phi$ the \emph{Burnside homomorphism}.
\end{Cons}

\begin{Rem}
	Recall from \Cref{def:good-subsets} that a subset of $\num{i}\times \num{j}$ is called good if it projects surjectively onto $\num{i}$ and $\num{j}$.
\end{Rem}

\begin{Def}\label{def:mu}
	Let $\mu(i,j,l)$ be the number of good subsets of $\num{i}\times \num{j}$ of cardinality~$l$. 
\end{Def}

\begin{Thm}\label{thm:fundamental-ses-a(n)}
	The homomorphism $\phi$ of \eqref{eq:burnside} is injective. Moreover, there is an exact sequence
    \[
		0 \to A(d) \xrightarrow{\phi} \prod_{1 \le i \le d}\bbZ \xrightarrow{\psi} \prod_{1 \le i \le d} \bbZ/i!\bbZ \to 0.
    \]
    The abelian group $A(d)$ possesses a unique ring structure which makes $\phi$ a ring homomorphism. The multiplication in $A(d)$ is determined by the formula 
	\begin{equation}\label{eq:defining_relations}
		x_i x_j = \sum_{1 \le l \le d} \mu(i,j,l)x_l.
	\end{equation}
\end{Thm}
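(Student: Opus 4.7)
My plan is to dispatch the three claims in turn, leveraging a single matrix presentation of $\phi$ throughout. Representing $\phi$ with respect to the bases $\{x_i\}$ of $A(d)$ and $\{y_k\}$ of $\bbZ^d$ gives a matrix $M$ with $M_{ki} = |\surj(k, i)|$. Since $|\surj(k,i)| = 0$ for $k < i$ while $|\surj(i,i)| = i!$, this matrix is lower triangular with nonzero determinant $\prod_{i=1}^d i!$, so $\phi$ is injective. For the cokernel, the key observation is the factorization $|\surj(k, i)| = i!\,S(k, i)$, where $S(k, i)$ is the Stirling number of the second kind (the number of partitions of $\num{k}$ into $i$ nonempty blocks). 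This means $M = N D$, where $D$ is the diagonal matrix with entries $1!, 2!, \ldots, d!$ and $N$ is the lower-triangular matrix $\bigl(S(k,i)\bigr)$. Since $S(i,i) = 1$, $N$ is unipotent and hence lies in $GL_d(\bbZ)$. Consequently the cokernel of $\phi$ is isomorphic to the cokernel of $D$, namely $\prod_{i=1}^d \bbZ/i!\bbZ$. Taking $\psi$ to be the projection $\bbZ^d \twoheadrightarrow \bbZ^d/\phi(A(d))$ followed by this identification yields the desired exact sequence.

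For the ring structure, injectivity of $\phi$ implies both that any compatible ring structure on $A(d)$ is unique and that such a structure exists precisely when $\phi(A(d)) \subseteq \bbZ^d$ is closed under multiplication. Using $y_k y_l = \delta_{kl} y_k$, I have
\[
\phi(x_i)\phi(x_j) = \sum_{k=1}^d |\surj(k,i)|\,|\surj(k,j)|\, y_k.
\]
The product $|\surj(k,i)|\,|\surj(k,j)|$ counts maps $\num{k} \to \num{i} \times \num{j}$ whose projections to both factors are surjective; classifying such a map by its image $U$, which is necessarily a good subset of $\num{i} \times \num{j}$, one obtains
\[
|\surj(k,i)|\,|\surj(k,j)| = \sum_{l \geq 1} \mu(i,j,l)\,|\surj(k,l)|.
\]
Since $|\surj(k,l)| = 0$ whenever $l > k$, and we only care about $k \leq d$, the sum truncates to $1 \leq l \leq d$. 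Comparing with the definition of $\phi$ then gives $\phi(x_i)\phi(x_j) = \phi\bigl(\sum_{l=1}^d \mu(i,j,l)\, x_l\bigr)$, proving simultaneously that $\phi(A(d))$ is a subring of $\bbZ^d$ and that the induced multiplication on $A(d)$ is given by~\eqref{eq:defining_relations}.

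No step is conceptually hard: the only nonroutine inputs are the Stirling-number factorization, which reduces the cokernel computation to a diagonal matrix, and the classification-by-image argument, which exposes the good subsets of $\num{i}\times\num{j}$ in the expression for $\phi(x_i)\phi(x_j)$ immediately.
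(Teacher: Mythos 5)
Your proof is correct and follows essentially the same route as the paper: injectivity and the cokernel from the lower-triangular matrix of $\phi$, uniqueness of the ring structure from injectivity, and the identity $|\surj(k,i)|\,|\surj(k,j)|=\sum_l \mu(i,j,l)\,|\surj(k,l)|$ proved by classifying maps $\num{k}\to\num{i}\times\num{j}$ with both projections surjective according to their (necessarily good) image. One point where your write-up is actually more careful than the paper's: the paper deduces $\coker(\phi)\cong\prod_i\bbZ/i!\bbZ$ directly from the matrix being lower triangular with diagonal entries $i!$, but triangularity alone only pins down the order of the cokernel, not its isomorphism type (e.g.\ $\bigl(\begin{smallmatrix}2&0\\1&2\end{smallmatrix}\bigr)$ has cokernel $\bbZ/4$). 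Your factorization $M=ND$ with $N=(S(k,i))$ unipotent, hence in $GL_d(\bbZ)$, supplies exactly the missing justification; the paper records this $LU$-decomposition only later, in the remark on computing $M^{-1}$ via Stirling numbers. Two minor things you could add for completeness: the truncation of $\sum_{l\ge 1}\mu(i,j,l)x_l$ to $l\le d$ in the ring $A(d)$ should be flagged as a genuine definition (the identity in $\bbZ^d$ only sees $l\le d$ because $|\surj(k,l)|=0$ for $l>k$, which you do note), and the subring $\phi(A(d))$ contains the unit since $\phi(x_1)=\sum_k y_k=1$.
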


\begin{proof}
    This is a consequence of Yoshida's work on abstract Burnside rings \cite{Yoshida87}, but we will give a direct proof.

	The formula~\eqref{eq:burnside} means that as a homomorphism from $\bbZ^d$ to itself, with our chosen bases, $\phi$ is represented by a lower triangular matrix, whose diagonal entries are $\surj(i, i)=i!$ for $i=1, \ldots, d$. It follows that $\phi$ is injective, and the cokernel of $\phi$ is isomorphic to $\prod_{1 \le i \le d} \bbZ/i!\bbZ$.

	From the injectivity of $\phi$ it follows that there exists at most one product structure on $A(d)$ for which $\phi$ is a ring homomorphism. It remains to find a formula for multiplication in $A(d)$ that is respected by $\phi$. Since $x_1, \ldots, x_d$ form an abelian group basis of $A(d)$, for any ring structure on $A(d)$ there exist unique integers $\nu(i,j,l)$ such that 
	\begin{equation}\label{eq:product_in_A(n)}
		x_i x_j=\sum_{l=1}^d\nu(i,j,l)x_l.
	\end{equation}
	The map $\phi$ is a ring homomorphism if and only if 
	\[
		\phi(x_i)\phi(x_j)=\phi(x_ix_j)
	\]
	for all $1\le i, j\le n$. Substituting the formula for $\phi$, and using the formulas for the product in $\bbZ^d$~\eqref{eq:product_in_Z^n} and the product in $A(d)$~\eqref{eq:product_in_A(n)}, we obtain the relations
	\[
		\sum_{k=1}^d |\surj(k, i)||\surj(k, j)| y_k=\sum_{k=1}^d\left(\sum_{l=1}^d\nu(i,j,l)|\surj(k,l)|\right) y_k.
	\]
	It follows that $\phi$ is a ring homomorphism if and only if the numbers $\nu(i,j,l)$ satisfy the following relations, for all $1\le i, j, k\le d$:
	\[
		|\surj(k, i)||\surj(k, j)|=\sum_{l=1}^d\nu(i,j,l)|\surj(k,l)|.
	\]
	Since the multiplication on $A(d)$ is unique (if it exists), there is at most one set of numbers $\nu(i,j,k)$ that satisfies these relations. So if we show that the numbers $\mu(i,j,k)$ satisfy these relations, then $\mu(i,j,l)=\nu(i,j,l)$ for all $i, j, l$, and we are done.

	We have to prove that for all $i, j, k$ the following equality holds
	\[
		|\surj(k, i)||\surj(k, j)|=\sum_{l=1}^d\mu(i,j,l)|\surj(k,l)|.
	\]
	The number $|\surj(k, i)||\surj(k, j)|$ is the same as $|\surj(\num{k}, \num{i})\times\surj(\num{k}, \num{j})|$. Let $\num{i}^{\num{k}}$ denote the set of all functions from $\num{k}$ to $\num{i}$. Then $\surj(\num{k}, \num{i})\subseteq \num{i}^{\num{k}}$ and
	\[
		\surj(\num{k}, \num{i})\times\surj(\num{k}, \num{j})\subseteq \num{i}^{\num{k}}\times \num{j}^{\num{k}}\cong \left(\num{i}\times \num{j}\right)^{\num{k}}.
	\]
	A function $\alpha\colon \num{k}\to \num{i}\times \num{j}$, considered as an element of $\left(\num{i}\times \num{j}\right)^{\num{k}}$, is in the image of $\surj(\num{k}, \num{i})\times\surj(\num{k}, \num{j})$ if and only if the image of $\alpha$ is a good subset of $\num{i}\times \num{j}$. It follows that the number of elements of $\surj(\num{k}, \num{i})\times\surj(\num{k}, \num{j})$ is the sum, indexed by good subsets of $\num{i}\times \num{j}$, of the numbers of surjections from~$\num{k}$ onto each good subset. Grouping the good subsets by cardinality, and recalling that $\mu(i,j,l)$ is the number of good subsets with $l$ elements, we obtain the desired formula. 
\end{proof}

\begin{Cor}\label{cor:polynomial}
    The ring $A(d)$ is isomorphic to the quotient of the polynomial ring $\bbZ[x_1, \ldots, x_d]$ by the ideal $I$ generated by the relations~\eqref{eq:defining_relations}. Furthermore, $x_1=1$, so $A(d)$ is also the quotient of $\bbZ[x_2, \ldots, x_d]$ by the relations~\eqref{eq:defining_relations}.
\end{Cor}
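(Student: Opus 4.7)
The proof rests on the observation that $x_1$ is the multiplicative identity in $A(d)$. My plan is first to establish this directly, then to deduce the ring presentation.

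\emph{Step 1: $x_1 = 1$ in $A(d)$.} I will compute $\mu(1, j, l)$ from \Cref{def:mu}: any good subset of $\num{1} \times \num{j}$ must project surjectively onto $\num{j}$, forcing it to equal the entire set of cardinality $j$. Hence $\mu(1, j, l) = \delta_{l, j}$, and relation~\eqref{eq:defining_relations} gives $x_1 \cdot x_j = x_j$ for each $j$. Since $\{x_1, \ldots, x_d\}$ is a $\bbZ$-basis of $A(d)$ by \Cref{thm:fundamental-ses-a(n)}, the element $x_1$ is a two-sided multiplicative identity, so $x_1 = 1$. (Alternatively, $\phi(x_1) = \sum_{k=1}^d |\surj(k,1)|\, y_k = \sum_k y_k = 1_{\bbZ^d}$, and the same conclusion follows from injectivity of $\phi$.)

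\emph{Step 2: Ring presentation.} The ring map $\pi\colon \bbZ[x_1, \ldots, x_d] \to A(d)$ sending $x_i \mapsto x_i$ satisfies $\pi(1) = 1_{A(d)} = x_1 = \pi(x_1)$, so both $x_1 - 1$ and the relations~\eqref{eq:defining_relations} (which hold in $A(d)$ by \Cref{thm:fundamental-ses-a(n)}) lie in $\ker\pi$. To show these generate $\ker\pi$, I will verify that modulo them every polynomial reduces to a $\bbZ$-linear combination of $1, x_2, \ldots, x_d$: replace each occurrence of $x_1$ by $1$, then rewrite any degree-$\ge 2$ monomial in $x_2, \ldots, x_d$ iteratively via~\eqref{eq:defining_relations}. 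These residues map bijectively onto the basis $x_1 = 1, x_2, \ldots, x_d$ of $A(d)$, so the induced map is an isomorphism.

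The second claim is then immediate: eliminating $x_1$ via $x_1 = 1$ yields $A(d) \cong \bbZ[x_2, \ldots, x_d]/\tilde I$, with $\tilde I$ obtained from~\eqref{eq:defining_relations} by this substitution. The only point requiring attention is the role of $x_1 = 1$ alongside \eqref{eq:defining_relations}: without it, the quotient $\bbZ[x_1, \ldots, x_d]/I$ would split as $A(d) \times \bbZ$ via the idempotent $1 - x_1$ (annihilated by every $x_i$ but not by $1$), so the first presentation must be understood as quotienting by both \eqref{eq:defining_relations} and $x_1 - 1$. The inductive monomial reduction is then routine and no step presents a serious obstacle.
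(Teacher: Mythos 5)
Your proof is correct and follows essentially the same route as the paper: produce the surjective ring homomorphism $\bbZ[x_1,\ldots,x_d]/I \twoheadrightarrow A(d)$, observe that the source is generated by $d$ elements as an abelian group while the target is free of rank $d$, and conclude. The one place you go beyond the paper is your closing observation about the idempotent, and it is a genuine improvement rather than a digression: the paper's proof asserts that $x_1,\ldots,x_d$ generate $\bbZ[x_1,\ldots,x_d]/I$ as an abelian group, but this is not quite true, since $I$ lies inside the augmentation ideal $(x_1,\ldots,x_d)$ and hence $1$ survives to a nonzero class in $\bbZ[x_1,\ldots,x_d]/(I+(x_1,\ldots,x_d))\cong\bbZ$; as you note, the quotient by $I$ alone is $A(d)\times\bbZ$, split by the idempotent $1-x_1$ (which kills every $x_j$ because $\mu(1,j,l)=\delta_{l,j}$). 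So the first presentation in the statement must indeed be read as imposing $x_1=1$ in addition to \eqref{eq:defining_relations}, exactly as you say, and your Step 1 supplies the verification that $x_1$ really is the unit of $A(d)$. Your argument is complete as written.
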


\begin{proof}
    It follows from \Cref{thm:fundamental-ses-a(n)} that there is a surjective ring homomorphism $\bbZ[x_1, \ldots, x_d]/I\twoheadrightarrow A(d).$ It is easy to see from the definition of $I$ that $x_1, \ldots, x_d$ generate $\bbZ[x_1, \ldots, x_d]/I$ as an abelian group. Since $A(d)$ is a free abelian group of rank $d$, it follows that the surjective homomorphism is in fact an isomorphism.
\end{proof}

\begin{Rem}
	There does not seem to be a closed formula for $\mu(i,j,k)$. Nevertheless, we can make a few observations about these numbers, and also write some summation formulas.

	To begin with, clearly $\mu(i,j,k)\ne 0$ if and only if $\max(i, j)\le k\le ij$. Moreover, if $i\le j$ then $\mu(i,j,j)=|\surj(j, i)|$. At the other end of the range, $\mu(i, j, ij)=1$.

	Note that $\mu(i,j,k)$ can be characterized as the number of labelled bipartite graphs with $k$ edges and no isolated vertices, and with parts of size $i$ and $j$. In this guise, there are some formulas for this number, or closely related ones, scattered in the literature~\cite{Harary58, Lee61, Atmaca18}. For example, one can use the inclusion-exclusion principle to write a summation formula for $\mu(i, j, k)$. We learned the following formula from Achim Krause. It can be found in~\cite{Lee61} as the Corollary to Lemma~3 on page 220.
\end{Rem}

\begin{Lem}\label{lem:summationformula}
	There is an equality
	\[
		\mu(i,j,k)=\sum_{s \ge 0, t \ge 0}(-1)^{s+t}\binom{(i-s)(j-t)}{k}  \binom{i}{s}  \binom{j}{t}.
	\]
\end{Lem}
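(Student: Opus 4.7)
The plan is to prove the formula by a direct inclusion–exclusion argument on the set of $k$-subsets of $\num{i}\times\num{j}$.

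First, I would fix the ambient set $\num{i}\times\num{j}$ and let $\Omega_k$ denote the collection of all $k$-element subsets of $\num{i}\times\num{j}$, so that $|\Omega_k|=\binom{ij}{k}$. For each $a\in\num{i}$, let $R_a\subseteq\Omega_k$ consist of those $k$-subsets whose projection to $\num{i}$ misses $a$ (equivalently, contain no element of the row $\{a\}\times\num{j}$). Similarly, for each $b\in\num{j}$, let $C_b\subseteq\Omega_k$ consist of those $k$-subsets whose projection to $\num{j}$ misses $b$. By definition of \emph{good} (\Cref{def:good-subsets}), the quantity $\mu(i,j,k)$ is the number of $k$-subsets that lie in none of the $R_a$ and none of the $C_b$.

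Next, I would apply the standard inclusion–exclusion formula to the finite family $\{R_a\}_{a\in\num{i}}\cup\{C_b\}_{b\in\num{j}}$. For any subsets $S\subseteq\num{i}$ and $T\subseteq\num{j}$, the intersection $\bigl(\bigcap_{a\in S}R_a\bigr)\cap\bigl(\bigcap_{b\in T}C_b\bigr)$ consists precisely of the $k$-subsets of $\num{i}\times\num{j}$ that avoid every row in $S$ and every column in $T$, i.e., of the $k$-subsets of $(\num{i}\setminus S)\times(\num{j}\setminus T)$. There are therefore $\binom{(i-|S|)(j-|T|)}{k}$ such subsets. Inclusion–exclusion then yields
\[
\mu(i,j,k)=\sum_{S\subseteq\num{i}}\sum_{T\subseteq\num{j}}(-1)^{|S|+|T|}\binom{(i-|S|)(j-|T|)}{k},
\]
and grouping terms according to $s=|S|$ and $t=|T|$ gives exactly the claimed identity, with the convention $\binom{n}{k}=0$ for $n<k$ handling the terms where $(i-s)(j-t)<k$.

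There is no real obstacle: the only mild point to check is that inclusion–exclusion applies cleanly to the union of the two families $\{R_a\}$ and $\{C_b\}$ simultaneously, which is immediate since they are just subsets of a common finite set $\Omega_k$. The non-negativity of the summation bounds and the vanishing of $\binom{n}{k}$ for $n<k$ mean the sum can be written equivalently over $s\ge 0, t\ge 0$ as in the statement.
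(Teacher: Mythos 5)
Your proof is correct and is essentially the same argument as the paper's: the paper's sets $B_{S,T}$ are exactly your intersections $\bigl(\bigcap_{a\in S}R_a\bigr)\cap\bigl(\bigcap_{b\in T}C_b\bigr)$, and both arguments conclude by inclusion--exclusion over pairs $(S,T)$ with the count $\binom{(i-|S|)(j-|T|)}{k}$.
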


\begin{proof}
    There are altogether $\binom{ij}{k}$ subsets of $\num{i}\times \num{j}$ of cardinality $k$. Let us say that a subset of $\num{i}\times \num{j}$ is \emph{bad} if it is not good. We are going to apply the inclusion-exclusion principle to analyze the number of bad subsets, and ultimately arrive at the number of good ones. 
    
    A subset $U\subseteq \num{i}\times \num{j}$ is bad if there exists a row or a column of the array $\num{i}\times \num{j}$ that is disjoint from $U$. Suppose we have subsets $S\subseteq \num{i}$ and $T\subseteq \num{j}$. Let $B_{S, T}$ be the number of bad subsets of $\num{i}\times \num{j}$ with $k$ elements with the property that their projections onto $\num{i}$ and $\num{j}$ are disjoint from $S$ and $T$ respectively. In other words, $B_{S, T}$ is the number of subsets of $(\num{i}\setminus S)\times (\num{j}\setminus T)$ of cardinality $k$. It follows that $B_{S, T}$ has $\binom{(i-|S|)(j-|T|)}{k}$ elements. 

	It is clear that the union of $B_{S, T}$ where $S, T$ range over all subsets of $\num{i}$ and~$\num{j}$ with $S\cup T\ne \emptyset$ is the set of bad subsets of $\num{i}\times \num{j}$ of cardinality $k$. It is also clear that $B_{S, T}\cap B_{S', T'}=B_{S\cup S', T\cup T'}$. The maximal elements of the family of subsets $B_{S, T}$ are those where $S\cup T$ consists of a single element. The set $B_{S, T}$ is the intersection of $|S|+|T|$ maximal elements. 

	Applying the inclusion-exclusion principle to the family of subsets $B_{S, T}$ we obtain that the number of elements in the complement of the union of all the sets $B_{S, T}$, which is $\mu(i,j,k)$, is given by the following formula:
	\begin{align*}
		\mu(i,j,k)&=\sum_{S\subseteq \num{i}, T\subseteq \num{j}} (-1)^{|S|+|T|}|B_{S, T}| \\ &=\sum_{S\subseteq \num{i}, T\subseteq \num{j}} (-1)^{|S|+|T|} \binom{(i-|S|)(j-|T|)}{ k}\\&=\sum_{s\ge 0, t\ge 0} (-1)^{s+t} \binom{(i-s)(j-t)}{k}\binom{i}{s}\binom{j}{t}.\qedhere
	\end{align*}
\end{proof}

\begin{Rem}\label{rem:tom}
	For an explicit way to compute the ring structure, we define a $d\times d$ matrix $M$ by
	\[
		M_{ij}  \coloneqq |\surj(i,j)|.
	\]
	Then one can show that 
	\begin{equation}
		\mu(i,j,k) = \sum_{m=1}^d M_{mi}\cdot M_{mj}\cdot M^{-1}_{km},
	\end{equation}
	see for example \cite[Equation (1.6)]{YoshidaOdaTakegahara18}. Moreover, we can give an explicit description of the inverse matrix $M^{-1}$. Indeed, recall that the number of surjections from $\num{i}$ to $\num{j}$ can be expressed in terms of Stirling numbers of the second kind:
	\[
		M_{ij} = |\surj(i,j)| = j!\stirling{i}{j},
	\]
	see for example \cite[Theorem 5.3.1]{Biggs89}. We can therefore write $M = LU$ where $L_{ij} = \stirling{i}{j}$ and $U$ is the diagonal matrix with $U_{ii} = i!$. The inverse of $L$ is the matrix $L_{ij}^{-1} = s(i,j)$ where $s(-,-)$ denotes a Stirling number of the first kind (this follows from \cite[Proposition 1.4.1]{Stanley12}). Therefore, 
	\[
		M^{-1}_{ij} = \frac{1}{i!}s(i,j).
	\]
	This gives the formula
	\begin{equation}
		\mu(i,j,k) = \sum_{m=1}^d i!\stirling{m}{i}j!\stirling{m}{j}\frac{1}{k!}s(k,m).
	\end{equation}
	Note that the terms in the sum can only be non-zero for $\max(i,j)  \le m \le k$. 
\end{Rem}

\begin{Exa}
	We now give an explicit example of how to compute the ring structure in the $d=3$ case.
 
	We can write this ring as a quotient of $\bbZ[x_2,x_3]$ by \cref{cor:polynomial}. The matrix of \Cref{rem:tom} for $d = 3$ is:
	\[
	\begin{tabular}{@{}lll@{}}
	\toprule
	1     & 0     & 0     \\
	1     & 2     & 0     \\
	1     & 6     & 6     \\
	\bottomrule
	\end{tabular}\label{table:toma3}
	\]
	\noindent
	The generators $x_2,x_3$ correspond to the second and third columns of the matrix, and the product of two generators is given by component-wise multiplication of column vectors, which are decomposed as a linear combination of the columns. In particular, we see that $A(3)$ is the quotient of $\bbZ[x_2,x_3]$ by the following relations: 
	\[
	\begin{split}
		(0,0,6) \cdot (0,0,6) = (0,0,36) = 6 \cdot (0,0,6) &\implies x_3^2 = 6x_3 \\
		(0,2,6) \cdot (0,0,6) = (0,0,36) = 6 \cdot (0,0,6) &\implies x_2x_3 = 6x_3 \\
		(0,2,6) \cdot (0,2,6) = (0,4,12) = 2 \cdot (0,2,6) + 4 \cdot (0,0,6) & \implies x_2^2 = 2x_2 + 4x_3.
	\end{split}
	\]
\end{Exa}

\begin{Rem}
	We will now determine all the primes ideals in $A(d)$ and describe the Zariski spectrum $\Spec(A(d))$. These ideas closely follow those for the classical Burnside ring \cite{Dress69,Dress73}. 
\end{Rem}

\begin{Def}\label{def:burnside-homomorphism}
    Let $\phi_i \colon A(d) \to \bbZ$ denote the homomorphism defined by $\phi_i(x_j) \coloneqq |\surj(i,j)|$.
\end{Def}

\begin{Rem}
    In other words, the map $\phi \colon A(d) \to \prod_{1 \le i \le d} \bbZ$  from \Cref{cons-burnside-homomorphism} is the product of the maps $\phi_i$ for $1 \le i \le d$. In particular, each $\phi_i$ is a ring homomorphism by \cref{thm:fundamental-ses-a(n)}.
\end{Rem}

\begin{Def}\label{def:prime_ideals}
	Let $\mathfrak p(\num{i},p)$ (for $p$ a prime or 0 and $1 \le i \le d$) be the preimage of the prime ideal $(p)$ under the map $\phi_i \colon A(d) \to \bbZ$. 
\end{Def}

\begin{Prop}
	Every prime ideal of $A(d)$ is of the form $\mathfrak p(\num{i},p)$ for some~$p$ and $1 \le i \le d$.
\end{Prop}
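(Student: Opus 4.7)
The plan is to argue that $\Spec(\phi) \colon \Spec(\bbZ^d) \to \Spec(A(d))$ is surjective, and then observe that the primes of $\bbZ^d$ are exactly the preimages $\pi_i^{-1}((p))$ for projections $\pi_i$ and $\mathfrak q = (p) \in \Spec(\bbZ)$. Combining these two facts immediately yields the claim.

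For surjectivity, I would invoke \Cref{thm:fundamental-ses-a(n)}, which says that $\phi$ is injective with finite cokernel $\prod_{i=1}^d \bbZ/i!\bbZ$. In particular, $\bbZ^d$ is a finitely generated $A(d)$-module via $\phi$: both are free of rank $d$ over $\bbZ$, so if $N \coloneqq \prod_{i=1}^d i!$ annihilates the cokernel then $N \cdot \bbZ^d \subseteq \phi(A(d))$, and any $\bbZ$-basis of $\bbZ^d$ together with this inclusion makes $\bbZ^d$ module-finite over $\phi(A(d)) \cong A(d)$. Module-finite ring maps are integral, so by lying-over the induced map $\Spec(\phi)$ is surjective.

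Next, the primes of $\bbZ^d = \prod_{i=1}^d \bbZ$ are classified by the standard product decomposition: they are precisely the ideals of the form $\bbZ \times \cdots \times (p) \times \cdots \times \bbZ$ for some $1 \le i \le d$ and some prime $p$ of $\bbZ$ (including $p=0$) placed in the $i$-th slot. Such an ideal is the kernel of $\pi_i \colon \bbZ^d \twoheadrightarrow \bbZ \twoheadrightarrow \bbZ/p\bbZ$, so its preimage under $\phi$ is the kernel of the composite $\phi_i$ followed by reduction mod $p$, i.e.\ exactly $\mathfrak p(\num{i},p)$ as in \cref{def:prime_ideals}. Therefore every prime of $A(d)$, being of the form $\phi^{-1}(\mathfrak Q)$ for some $\mathfrak Q \in \Spec(\bbZ^d)$, is one of the $\mathfrak p(\num{i},p)$.

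I do not expect any real obstacle here: the only input is the structural result \Cref{thm:fundamental-ses-a(n)}, which already identifies $\phi$ as an injection with finite cokernel, and the rest is a standard application of lying-over together with the classification of primes in a finite product of $\bbZ$'s. (Determining \emph{when} two different pairs $(\num{i},p)$ and $(\num{j},q)$ give the same prime ideal of $A(d)$ — i.e.\ computing the topology of $\Spec(A(d))$ — is the genuinely interesting question, but that is addressed separately in the sequel.)
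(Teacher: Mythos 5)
Your proof is correct and follows essentially the same route as the paper: establish that $\phi\colon A(d)\hookrightarrow \bbZ^d$ is integral, apply lying-over to get surjectivity of $\Spec(\bbZ^d)\to\Spec(A(d))$, and identify the pullbacks of the primes of the product ring with the $\mathfrak p(\num{i},p)$. The only (cosmetic) difference is how integrality is justified — the paper notes that $\bbZ^d$ is additively generated by idempotents, whereas you deduce module-finiteness from the finite cokernel (in fact module-finiteness is immediate since $\bbZ^d$ is a finitely generated abelian group and $\phi(A(d))\supseteq\bbZ\cdot 1$).
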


\begin{proof}
	We observe that the injection $\phi \colon A(d) \hookrightarrow \prod_{1 \le k \le d}\bbZ$ of \Cref{thm:fundamental-ses-a(n)} is an integral extension since $\prod_{1 \le k \le d}\bbZ$ is additively generated by idempotent elements. It then follows from the going-up theorem that $\Spec(\prod_{1 \le k \le d}\bbZ) \to \Spec(A(d))$ is surjective; see e.g.~\cite[Section 1.6]{Kaplansky74}. 
\end{proof}

\begin{Lem}
	The maximal ideals of $A(d)$ are $\mathfrak p(\num{i},p)$ for $p>0$, while the minimal prime ideals are $\mathfrak p(\num{i},0)$. In particular, $A(d)$ has Krull dimension 1. Moreover, we have $\mathfrak p(\num{i},0) \subseteq \mathfrak p(\num{j},p)$ if and only if $\mathfrak p(\num{i},p) = \mathfrak p(\num{j},p)$. 
\end{Lem}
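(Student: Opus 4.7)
The plan is to exploit the fact that each Burnside homomorphism $\phi_i \colon A(d) \to \bbZ$ is \emph{surjective}. This is immediate from the identity $\phi_i(x_1) = |\surj(i,1)| = 1$ (or from $x_1 = 1$ as noted in \cref{cor:polynomial}). Consequently, $A(d)/\mathfrak p(\num{i},0) \cong \bbZ$ and $A(d)/\mathfrak p(\num{i},p) \cong \bbZ/p$ for $p > 0$. The first isomorphism shows that $\mathfrak p(\num{i},0)$ is prime but not maximal (since $\bbZ$ is a domain but not a field), while the second shows that $\mathfrak p(\num{i},p)$ is maximal for every prime $p > 0$.

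Next I would identify the minimal primes. Tensoring the exact sequence of \cref{thm:fundamental-ses-a(n)} with $\bbQ$ (the cokernel $\prod_i \bbZ/i!\bbZ$ is torsion) yields an isomorphism $A(d) \otimes \bbQ \xrightarrow{\sim} \bbQ^d$. In particular $A(d)$ is reduced with exactly $d$ minimal primes, corresponding to the $d$ factors; the minimal prime corresponding to the $i$-th factor is visibly $\mathfrak p(\num{i},0) = \ker \phi_i$. To see these are all distinct, I would observe that $x_i$ lies in $\mathfrak p(\num{j},0)$ for $j < i$ (since $\phi_j(x_i) = |\surj(j,i)| = 0$) but not in $\mathfrak p(\num{i},0)$ (since $\phi_i(x_i) = i! \neq 0$). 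Combined with the preceding paragraph, every prime of $A(d)$ is either one of the $d$ minimal primes $\mathfrak p(\num{i},0)$ or one of the maximal primes $\mathfrak p(\num{i},p)$ with $p > 0$. Chains of prime ideals therefore have length at most $1$, and the strict inclusion $\mathfrak p(\num{i},0) \subsetneq \mathfrak p(\num{i},p)$ (e.g., $p \in \mathfrak p(\num{i},p) \setminus \mathfrak p(\num{i},0)$) shows the Krull dimension equals $1$.

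For the final equivalence, the ``if'' direction is trivial since $\mathfrak p(\num{i},0) \subseteq \mathfrak p(\num{i},p) = \mathfrak p(\num{j},p)$. For the ``only if'' direction, suppose $\mathfrak p(\num{i},0) \subseteq \mathfrak p(\num{j},p)$ with $p > 0$. Passing to quotients, I get an induced surjective ring map
\[
    \bbZ \cong A(d)/\mathfrak p(\num{i},0) \twoheadrightarrow A(d)/\mathfrak p(\num{j},p) \cong \bbZ/p,
\]
which is forced to be the canonical reduction mod $p$ since this is the unique ring homomorphism $\bbZ \to \bbZ/p$. Pulling back $(p) \subset \bbZ$ through $\phi_i$ therefore recovers $\mathfrak p(\num{j},p)$, so that $\mathfrak p(\num{j},p) = \phi_i^{-1}((p)) = \mathfrak p(\num{i},p)$ as required.

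There is no serious obstacle: the whole statement is essentially formal once one notices that the $\phi_i$ are surjective and uses the exact sequence of \cref{thm:fundamental-ses-a(n)}. The only place one must be a little careful is in verifying that the $d$ ideals $\mathfrak p(\num{i},0)$ are pairwise distinct, which is handled by the lower-triangular structure of the Burnside homomorphism.
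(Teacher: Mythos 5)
Your proof is correct and follows essentially the same route as the paper's: both arguments rest on the surjectivity of the $\phi_i$ and the resulting identifications $A(d)/\mathfrak p(\num{i},0)\cong\bbZ$ and $A(d)/\mathfrak p(\num{i},p)\cong\bbZ/p$ for $p>0$. The only cosmetic differences are that the paper establishes minimality of the $\mathfrak p(\num{i},0)$ by noting that an inclusion among them would force a surjective ring homomorphism $\bbZ\to\bbZ$ (rather than tensoring the exact sequence of \cref{thm:fundamental-ses-a(n)} with $\bbQ$), and it proves the final ``only if'' direction by exhibiting the explicit separating element $b=a-\phi_i(a)\cdot 1$ rather than invoking the uniqueness of the ring map $\bbZ\to\bbZ/p$ --- which is the same fact in element-level form.
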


\begin{proof}
	This is the same as the classical proof for the Burnside ring: If $p>0$, then the quotient of $A(d)$ by $\mathfrak p(\num{i},p)$ is $\bbZ/p$, so that $\mathfrak p(\num{i},p)$ is maximal. If $p = 0$, then the quotient is $\bbZ$, and there cannot be any containment among the ideals $\mathfrak p(\num{i},0)$ for varying $i$, as such an inclusion would correspond to a surjective ring homomorphism $\bbZ \to \bbZ$. We deduce that $\mathfrak p(\num{i},0)$ is minimal for each $i$. This then shows that $A(d)$ has Krull dimension~1. 

	Suppose then that $\mathfrak p(\num{i},p) = \mathfrak p(\num{j},p)$. Then clearly $\mathfrak p(\num{i},0) \subseteq \mathfrak p(\num{i},p) = \mathfrak p(\num{j},p)$. Conversely, we will show that if $\mathfrak p(\num{i},p) \ne \mathfrak p(\num{j},p)$, then $\mathfrak p(\num{i},0) \not \subseteq \mathfrak p(\num{j},p)$. From the previous paragraph, both $\mathfrak p(\num{j},p)$ and $\mathfrak p(\num{i},p)$ are maximal ideals; in particular, there exists $a \in A(d)$ with $a \in \mathfrak p(\num{i},p)$ and $a \not \in \mathfrak p(\num{j},p)$. Now consider the element $b = a - \phi_i(a) \cdot 1$. We see that $b \in \mathfrak p(\num{i},0)$ but $b \not \in \mathfrak p(\num{j},p)$, and so $\mathfrak p(\num{i},0) \not \subseteq \mathfrak p(\num{j},p)$, as claimed.
\end{proof}

\begin{Prop}
	Suppose $p, q$ are primes and $1\le i, j\le n$. We have $\mathfrak p(\num{i},p) = \mathfrak p(\num{j},q)$ in $A(d)$ if and only if $p = q$ and $p-1 \mid j-i$. 
\end{Prop}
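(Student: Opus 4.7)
The plan is to interpret each ideal $\mathfrak p(\num{i},p)$ as the kernel of a ring map $\bar\phi_i\colon A(d)\to \F_p$, reduce equality of such kernels to a system of congruences on the integers $|\surj(i,k)|$, and then recognise those congruences via Fermat's little theorem.

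The equality $p=q$ is immediate: the element $p\cdot 1 = p\cdot x_1$ lies in $\mathfrak p(\num{i},p)$ since $\phi_i(p\cdot 1)=p\in(p)$, so $\mathfrak p(\num{i},p)=\mathfrak p(\num{j},q)$ forces $p\in\mathfrak p(\num{j},q)$, hence $q\mid p$ and thus $p=q$ as both are prime. With $p=q$ fixed, reducing $\phi_i$ and $\phi_j$ modulo $p$ yields surjective ring maps $\bar\phi_i,\bar\phi_j\colon A(d)\to\F_p$ (surjective because $\phi_i(x_1)=1$) with kernels $\mathfrak p(\num{i},p)$ and $\mathfrak p(\num{j},p)$. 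Two surjective maps to the same field have equal kernels if and only if they coincide, and since $A(d)$ is generated as a ring by $x_1,\ldots,x_d$ (\cref{cor:polynomial}), this reduces to verifying
\[
    |\surj(i,k)|\equiv |\surj(j,k)|\pmod p \qquad\text{for all }1\le k\le d.
\]

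Next I would translate these congruences into the symmetric condition $m^i\equiv m^j\pmod p$ for all $m\in\bbZ$. The forward direction uses the elementary identity $m^i=\sum_{k}\binom{m}{k}|\surj(i,k)|$ (obtained by partitioning functions $\num{i}\to\num{m}$ by image size), together with the vanishing $|\surj(i,k)|=0$ for $k>i$, which ensures that $k$ need only range up to $\max(i,j)\le d$. The reverse direction uses the inclusion--exclusion formula $|\surj(i,k)|=\sum_{s=0}^{k}(-1)^s\binom{k}{s}(k-s)^i$ (a one-variable specialisation of \cref{lem:summationformula}), which expresses each $|\surj(i,k)|$ as a fixed $\bbZ$-linear combination of the powers $m^i$.

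Finally, $m^i\equiv m^j\pmod p$ for all $m\in\bbZ$ is equivalent to $(p-1)\mid(j-i)$ by Fermat's little theorem: the case $p\mid m$ is automatic, while on $(\bbZ/p)^\times$ the condition reads $m^{|j-i|}\equiv 1\pmod p$ for all $m$, which holds precisely when $|j-i|$ is a multiple of the order $p-1$ of the cyclic group $(\bbZ/p)^\times$. No step poses a serious obstacle; the only substantive content is the inversion between the two $\bbZ$-bases $\{|\surj(i,k)|\}_k$ and $\{m^i\}_m$ in the middle paragraph.
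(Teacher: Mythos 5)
Your proof is correct, and while it shares the paper's overall skeleton (reduce equality of the ideals to the congruences $|\surj(i,k)|\equiv|\surj(j,k)|\pmod p$ for $1\le k\le d$, relate these to power congruences $m^i\equiv m^j\pmod p$, and finish with Fermat), it handles the harder ``only if'' direction by a genuinely different and cleaner mechanism. The paper proves that direction by exhibiting an explicit separating element $x_\ell-|\surj(i,\ell)|$, where $\ell$ is the smallest positive integer with $\ell^i\not\equiv\ell^j\pmod p$; this forces a somewhat delicate case analysis to verify that $\ell\le\max(i,j)\le d$, so that $x_\ell$ actually exists in $A(d)$. You sidestep that entirely: since $\F_p$ has no nontrivial automorphisms, equal kernels of the surjections $\bar\phi_i,\bar\phi_j\colon A(d)\to\F_p$ force the maps themselves to agree on all generators, and the binomial identity $m^i=\sum_{k}\binom{m}{k}|\surj(i,k)|$ (with all nonzero terms having $k\le i\le d$) then upgrades the finitely many congruences on surjection numbers to $m^i\equiv m^j\pmod p$ for \emph{every} $m$, after which the primitive-root argument applies with no size constraint. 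Your ``if'' direction via inclusion--exclusion is essentially the paper's, except that the paper needlessly splits into the cases $\ell\ge p$ and $\ell<p$, whereas the termwise congruence argument works uniformly, as you note. The trade-off is that the paper's argument produces a concrete witness separating the two ideals, while yours buys a shorter proof by exploiting the basis change between $\{|\surj(i,k)|\}_k$ and $\{m^i\}_m$ in both directions.
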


\begin{proof}
	($\Leftarrow$) Suppose first that $p=q$ and $p-1$ divides $j-i$. We claim that in this case $\phi_i \equiv \phi_j \text{ mod } (p)$, so that $\mathfrak p(\num{i},p) = \mathfrak p(\num{j},p)$. To prove that $\phi_i \equiv \phi_j \text{ mod } (p)$, we need to prove that $\phi_i(x_\ell) \equiv \phi_j(x_\ell) \text{ mod } (p)$, for all $\ell$. This in turn means that we need to prove that $\left| \surj(i, \ell)\right| \equiv \left|\surj(j, \ell)\right| \text{ mod } (p)$ for all $\ell$. Note that the symmetric group $\Sigma_\ell$ acts freely on the set of surjections $\surj(-, \ell)$. It follows that both numbers $\left|\surj(i, \ell)\right|$ and $\left|\surj(j, l)\right|$ are divisible by $\ell !$. So if $\ell$ is at least $p$ then both numbers are divisible by $p$ and we are done. It remains to prove the case when $1 \le \ell \le p$. In this case we use~\eqref{eq:number_or_surjections} to see that:
	\begin{equation}\label{eq:sur}
	\begin{split}
		\left|\surj(i, \ell)\right| & = \ell^i - \binom{\ell}{\ell - 1}(\ell-1)^i +\cdots \pm \binom{\ell}{k} k^i \cdots \\
		\left|\surj(j, \ell)\right| & = \ell^j - \binom{\ell}{\ell - 1}(\ell-1)^j + \cdots \pm \binom{\ell}{k} k^j \cdots 
	\end{split}
	\end{equation}
	But if $p-1$ divides $j-i$ then $k^i \equiv k^j \text{ mod } (p)$ for all $k$ and we are done.

	($\Rightarrow$) For the converse, suppose first that $i$ and $j$ are arbitrary, and $p\ne q$ are distinct primes. Then $\phi_i(p)=\phi_j(p)=p\not \equiv 0 \text{ mod } (q)$. It follows that $p\in \mathfrak p(\num{i},p)$ but $p \notin \mathfrak p(\num{j},q)$, so $\mathfrak p(\num{i},p)\ne \mathfrak p(\num{j},q)$.

	Now suppose that $p=q$ and $p-1 \nmid j-i$. Then there exists a positive integer $a$ such that $a^i \not \equiv a^j \text{ mod } (p)$. For example, a primitive root modulo $p$ satisfies this. Let $\ell$ be the smallest positive integer for which $\ell^i \not \equiv \ell^j \text{ mod } (p)$. 

	We claim that $\ell\le\max(i, j)$ and therefore $\ell\le d$. Indeed, suppose first that $p\le \max(i, j)$. Then $\ell$ is bounded above by the primitive roots modulo $p$, which are all smaller than $\max(i, j)$. Now suppose that $p>\max(i, j)$. Without loss of generality, suppose $i<j$. For integers $0<x<p$, the condition $x^i\equiv x^j \text{ mod } (p)$ is equivalent to $x^{j-i}\equiv 1 \text{ mod } (p)$. This equation has $\gcd(j-i, p-1)$ solutions between $1$ and $p$. It follows that among the numbers $1, \ldots, j-i+1$ there exists at least one $x$ for which $x^{j-i}\not \equiv 1 \text{ mod } (p)$ and thus $x^i\not \equiv x^j \text{ mod } (p)$. It follows that $l\le j-i+1\le j$. This proves the claim. It follows that $x_\ell\in A(d)$.

	Since $\ell$ is the smallest positive integer for which $\ell^i \not \equiv \ell^j \text{ mod } (p)$, it follows, using the formulas in~\eqref{eq:sur}, that 
	\[
		\left|\surj(i, l)\right| \not \equiv \left| \surj(j, l)\right| \text{ mod } (p).
	\]
	Consider the expression $x_\ell - |\surj(i, l)|\in A(n)$. Then
	\[
		\phi_i\left(x_\ell - |\surj(i, l)|\right)=|\surj(i, l)|-|\surj(i, l)| \equiv 0 \text{ mod } (p),
	\]
	while
	\[
		\phi_j\left(x_\ell - |\surj(i, l)|\right)=|\surj(j, l)|-|\surj(i, l)| \not\equiv 0 \text{ mod } (p).
	\]
	It follows that $\mathfrak p(\num{i},p)\ne \mathfrak p(\num{j},p)$.
\end{proof}

In summary:

\begin{Thm}\label{thm:burnside-goodwillie-spectrum}
	Let $1 \le i,j \le d$. Then:
	\begin{enumerate}
		\item The prime ideals of $A(d)$ are precisely the $\mathfrak p(\num{i},p)$ defined in \Cref{def:prime_ideals}. The prime ideals $\mathfrak p(\num{i},p)$ for $p>0$ are maximal prime ideals, and the prime ideals $\mathfrak p(\num{i},0)$ are minimal prime ideals. 
		\item For $p,q >0$ and $i \ne j$ we have  $\mathfrak p(\num{i},p) = \mathfrak p(\num{j},q)$ if and only if $p = q$ and $p-1 \mid j-i$. 
		\item We have an inclusion $\mathfrak p(\num{i},p) \subseteq \mathfrak p(\num{j},q)$ if and only if any of the following are satisfied:
		\begin{enumerate}[label=(\roman*)]
			\item $p=q>0$ and $\mathfrak p(\num{i},p) = \mathfrak p(\num{j},p)$
			\item $p = q = 0$ and $i = j$
			\item $p = 0,q > 0$ and $\mathfrak p(\num{i},q) = \mathfrak p(\num{j},q)$.
		\end{enumerate}
	\end{enumerate}
\end{Thm}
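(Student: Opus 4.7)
My plan is to assemble the theorem from results already established in the section, with part (c) being the only place requiring fresh case analysis. For part (a), the classification of all prime ideals as $\mathfrak{p}(\num{i},p)$ is the content of the Proposition following \cref{def:prime_ideals}, while the maximal/minimal dichotomy (and the Krull dimension bound) is the content of the next Lemma. Part (b) is precisely the last Proposition of the section preceding the theorem.

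For part (c), I will perform case analysis on the signs of $p$ and $q$. The decisive input is that $A(d)$ has Krull dimension $1$, so every inclusion of primes is either an equality or an ascent from a minimal prime $\mathfrak{p}(\num{i},0)$ to a maximal prime $\mathfrak{p}(\num{j},p)$ with $p>0$. If $p,q>0$, both ideals are maximal and containment forces equality, which by (b) is exactly case (i). If $p=q=0$, both ideals are minimal, so containment again forces equality; to conclude $i=j$ I will use that $\mathfrak{p}(\num{i},0)=\ker(\phi_i\colon A(d)\to\bbZ)$, so equality of the kernels forces $\phi_i=\phi_j$ (both maps are surjections sending $1$ to $1$, and the only ring automorphism of $\bbZ$ is the identity), and then evaluating on $x_{\max(i,j)}$ gives $|\surj(i,\max(i,j))|=|\surj(j,\max(i,j))|$, which forces $i=j$ because $|\surj(a,b)|=0$ when $a<b$. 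If $p=0$ and $q>0$, the result is exactly the final implication of the preceding Lemma, yielding case (iii). The remaining combination $p>0$, $q=0$ cannot occur: the maximal prime $\mathfrak{p}(\num{i},p)$ would have to equal the minimal prime $\mathfrak{p}(\num{j},0)$, but $p\in\mathfrak{p}(\num{i},p)$ while $\phi_j(p)=p\ne 0$, so $p\notin\mathfrak{p}(\num{j},0)$.

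The main (though minor) obstacle is the justification in case (ii) that equality of minimal primes forces $i=j$, since this is not explicitly stated in the preceding lemmas. The argument above addresses it directly via the values of the Burnside homomorphism on generators, and I anticipate no deeper difficulty in executing this plan.
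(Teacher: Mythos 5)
Your proposal is correct and follows essentially the same route as the paper, which presents this theorem as a summary of the immediately preceding Proposition and Lemmas and leaves the case analysis for part (c) implicit. Your explicit argument that $\mathfrak p(\num{i},0)=\mathfrak p(\num{j},0)$ forces $i=j$ (via $\ker\phi_i=\ker\phi_j\Rightarrow\phi_i=\phi_j$ and evaluation at $x_{\max(i,j)}$) is a worthwhile supplement, since the paper's lemma only rules out proper containments among the minimal primes and does not explicitly establish their distinctness.
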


\begin{Rem}
	Since the ring $A(d)$ is noetherian (\cref{cor:polynomial}), the Zariski topology is completely determined by the inclusions among prime ideals. Thus, \cref{thm:burnside-goodwillie-spectrum} provides a complete description of the Zariski spectrum of $A(d)$. It consists of~$d$ copies of $\Spec(\bbZ)$ with certain closed points glued together, namely, the closed point~$p$ in the $i$-th copy is glued together with the closed point $p$ in the $j$-th copy precisely when $p-1 \mid j-i$.
\end{Rem}

\begin{Exa}
	The Zariski spectrum of $A(3)$ is shown in \Cref{fig:zarizki_a3}.
	\begin{figure}[htbp]
	\includegraphics{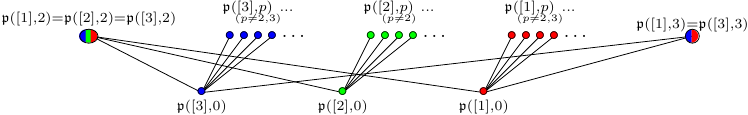}\caption{The Zariski spectrum of the ring $A(3)$.}\label{fig:zarizki_a3}
	\end{figure}
\end{Exa}

\section{The comparison map to the Goodwillie--Burnside ring}\label{sec:comparison-map}

We now investigate the endomorphism ring of the unit object in $\Exc{d}(\Sp^c,\Sp)$ and show that it is isomorphic to the Goodwillie--Burnside ring defined in \cref{sec:goodwillie-burnside}. This identification is significant due to the following comparison map constructed by Balmer \cite{Balmer10b}:

\begin{Def}\label{def:comparison-map}
    For an essentially small tensor triangulated category $\cat K$, the \emph{comparison map}
    \[
		\rho_{\cat K} \colon \Spc(\cat K) \to \Spec(\End_{\cat K}(\unit))
    \]
    is the continuous map defined by $\cat P \mapsto \SET{f \colon \unit \to \unit}{\cofib(f) \not \in \cat P}$.
\end{Def}

\begin{Rem}
    This map is always inclusion-\emph{reversing}:  If $\cat P \subseteq \cat Q$ are two primes of $\cat K$, then $\rho_{\cat K}(\cat Q) \subseteq \rho_{\cat K}(\cat P)$. 
\end{Rem}

\begin{Rem}
	Applied to (the homotopy category of) a symmetric monoidal stable $\infty$-category $\cat C$, note that $\End_{h\cat C}(\unit) = \pi_0(\Hom_{\cat C}(\unit,\unit))$.
\end{Rem}

\begin{Exa}\label{exa:comparison-map-for-spectra}
	The comparison map
    \[
		\Spc(\Sp^c) \to \Spec(\pi_0\Hom_{\Sp}(\bbS,\bbS)) \cong \Spec(\bbZ)
    \]
    sends $\cat C_{p,h}$ to $(p)$ and $\cat C_{0,1}$ to $(0)$; see \cref{exa:balmer-spectrum-spectra}. 
\end{Exa}

\begin{Rem}
	Recall from \cref{thm:properties-n-excisive} that the monoidal unit of $\Exc{d}(\Sp^c,\Sp)$ is the $d$-excisive functor $P_d h_\bbS$. We are therefore interested in the following ring:
\end{Rem}

\begin{Def}
    We let
	\[
		R(d) \coloneqq \pi_0\Hom_{\Exc{d}(\Sp^c,\Sp)}(P_dh_{\bbS},P_dh_{\bbS})
	\]
    denote the endomorphism ring of the unit object in the category of $d$-excisive functors from finite   spectra to spectra. 
\end{Def}

\begin{Rem}
	For typesetting reasons, we'll sometimes drop or simplify the subscript indicating the ambient category and just write, for example, $\Hom(P_d h_{\bbS},P_d h_{\bbS})$  or $\Hom_{\Exc{d}}(P_d h_{\bbS},P_d h_{\bbS})$ instead of $\Hom_{\Exc{d}(\Sp^c,\Sp)}(P_dh_{\bbS},P_dh_{\bbS})$.
\end{Rem}

\begin{Rem}
    By \Cref{lem:cross-effect-yoneda-lemma} and \Cref{exa:first-cross-effect} we have
    \[
		\Hom_{\Exc{d}(\Sp^c,\Sp)}(P_dh_{\bbS},P_dh_{\bbS}) \simeq P_d\Sigma^{\infty}\Omega^{\infty}(\mathbb{S}).
    \]
	A well-known theorem of Goodwillie shows that the Goodwillie tower of the functor $X \mapsto \Sigma^{\infty}\Omega^{\infty}(X)$ splits when evaluated on connected suspension spectra \cite[Example 6.1]{Kuhn07}, with $(D_d\Sigma^{\infty}\Omega^{\infty})(X) = X^{\otimes d}_{h\Sigma_d}$. In particular, we have the following analog of the tom Dieck splitting in the functor calculus world:
    \[
	   P_d\Sigma^{\infty}\Omega^{\infty}(\mathbb{S}) \cong \bigoplus_{1 \le i \le d}(\Sigma^{\infty}S^0)_{h\Sigma_i}. 
    \]
    This implies that $R(d) \cong \prod_{1 \le i \le d}\mathbb{Z}$ as an abelian group. Our next goal is to analyze how this interacts with the derivatives, and then use this to determine $R(d)$ as a commutative ring.
\end{Rem}

\begin{Rem}
	Recall that $\partial_i(P_dh_{\mathbb S}) \simeq \mathbb S$ for $1\le i\le d$ by \Cref{prop:hderivatives}. Hence, the $i$-th derivative functor induces a map, which we also denote by $\partial_i$:
	\[
		\partial_i\colon \Hom_{\Exc{d}(\Sp^c, \Sp)}(P_dh_{\bbS}, P_dh_{\bbS})\to \Hom_{\Sp}(\partial_i h_{\bbS}, \partial_i h_{\bbS})\simeq \bbS.
	\]
	Applying $\pi_0$ we obtain a ring homomorphism $\theta_d^i\colon R(d)\to \bbZ$. Taken together, we obtain a ring homomorphism $\theta_d \coloneqq \prod_{i=1}^d \theta_d^i$:
	\[
		\theta_d\colon R(d)\to \prod_{i=1}^d \bbZ.
	\]
	Similarly, the functor $P_{d-1}\colon \Exc{d}(\Sp^c,\Sp)\to\Exc{d-1}(\Sp^c,\Sp)$ induces a map, which we also denote simply by $P_{d-1}$:
	\[
		P_{d-1}\colon \Hom_{\Exc{d}(\Sp^c,\Sp)}(P_dh_{\bbS}, P_dh_{\bbS})\to \Hom_{\Exc{d-1}(\Sp^c,\Sp)}(P_{d-1}h_{\bbS}, P_{d-1}h_{\bbS}).
	\]
	Applying $\pi_0$ we obtain a ring homomorphism $\pi_0(P_{d-1})\colon R(d)\to R(d-1).$ 
\end{Rem}

\begin{Prop}\label{prop:injective}
	There is an isomorphism of abelian groups $R(d)\cong \bbZ^d$. The ring homomorphism $\theta_d$ is injective and the ring homomorphism $\pi_0(P_{d-1})$ is surjective for all $d$. Furthermore, there is a commutative diagram, where the columns are split short exact sequences of abelian groups:
	\begin{equation}\label{diag:R(n)}
	\begin{tikzcd}[ampersand replacement=\&]
		{\mathbb Z} \& {\mathbb Z} \\
		{R(d)} \& {\displaystyle\prod_{i=1}^d\mathbb Z} \\
		{R(d-1)} \& {\displaystyle\prod_{i=1}^{d-1}\mathbb Z}
		\arrow["{d!}", from=1-1, to=1-2]
		\arrow[from=1-2, to=2-2]
		\arrow[from=2-2, to=3-2]
		\arrow[from=1-1, to=2-1]
		\arrow["{\pi_0(P_{d-1})}"', from=2-1, to=3-1]
		\arrow["{\theta_d}", from=2-1, to=2-2]
		\arrow["{\theta_{d-1}}"', from=3-1, to=3-2]
	\end{tikzcd}
	\end{equation}
	Here the vertical maps on the right are the inclusion of the last factor and projection onto the first $d-1$ factors respectively.
\end{Prop}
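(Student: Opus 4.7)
I proceed by induction on $d$. When $d=1$, $\Exc{1}(\Sp^c,\Sp)\simeq \Sp$, so $R(1)=\bbZ$ and $\theta_1=\id_\bbZ$, making all claims trivial. For the inductive step the key tool is the fiber sequence of $d$-excisive functors $D_dh_\bbS\to P_dh_\bbS\to P_{d-1}h_\bbS$. Applying $\Hom_{\Exc{d}(\Sp^c,\Sp)}(P_dh_\bbS,-)$ and using that $P_dh_\bbS$ is the monoidal unit (so $\Hom_{\Exc{d}}(P_dh_\bbS,F)\simeq F(\bbS)$ by \cref{lem:cross-effect-yoneda-lemma} with $i=1$) converts it into the fiber sequence of spectra $D_dh_\bbS(\bbS)\to P_dh_\bbS(\bbS)\to P_{d-1}h_\bbS(\bbS)$. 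The Snaith-type splitting recalled just above the proposition shows this fiber sequence is split, so passing to $\pi_0$ yields a split short exact sequence $0\to \pi_0 D_dh_\bbS(\bbS)\to R(d)\to R(d-1)\to 0$; identifying $D_dh_\bbS(\bbS)\simeq \bbS_{h\Sigma_d}\simeq \Sigma^\infty_+B\Sigma_d$ via \cref{lem:goodwillie-classification}, this is $0\to\bbZ\to R(d)\to R(d-1)\to 0$. This establishes the left-hand column of~\eqref{diag:R(n)} and also proves surjectivity of $\pi_0(P_{d-1})$.

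The right-hand column is manifestly a split SES, and the bottom square commutes by naturality of $\partial_i$ under $P_{d-1}$. To identify the top square, I unpack $\theta_d$ on the kernel: for the generator $\alpha\in\pi_0 D_dh_\bbS(\bbS)$, Yoneda produces a natural transformation $P_dh_\bbS\to D_dh_\bbS$, and its post-composition with the fiber inclusion $D_dh_\bbS\to P_dh_\bbS$ is the endomorphism $f_\alpha$ representing $\alpha$ in $R(d)$. Since $D_dh_\bbS$ is $d$-homogeneous we have $\partial_i D_dh_\bbS=0$ for $i<d$, so $\theta_d^i(\alpha)=0$ and $\theta_d(\alpha)$ sits in the last factor. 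For the remaining value I need $\theta_d^d(\alpha)=d!$: I plan to derive this via the closed monoidal adjunction $\partial_d\dashv\kappa_d$ (\cref{rem:adjoint-to-cross-effects}), the identification $\Sp\simeq\Mod_{\Exc{d}(\Sp^c,\Sp)}(P_dh_\bbS(d))$ (\cref{thm:barr-beck}), and the equivalence $\Homog_d(\Sp^c,\Sp)\simeq\Fun(B\Sigma_d,\Sp)$ (\cref{prop:n-homogeneous-functors}). Together these reduce the question to the map on $\Sigma_d$-coinvariants induced by the unit $\bbS\to\bbS^{\oplus d!}$ of the regular representation in $\Fun(B\Sigma_d,\Sp)$, which on $\pi_0$ is manifestly multiplication by $|\Sigma_d|=d!$.

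Once the diagram is established, injectivity of $\theta_d$ follows from a standard 4-lemma argument applied to~\eqref{diag:R(n)}: $\theta_{d-1}$ is injective by induction, the left-hand vertical composition $\bbZ\to R(d)\to\prod_{i=1}^d\bbZ$ factors as $\bbZ\xrightarrow{d!}\bbZ$ followed by inclusion of the last factor (both injective), and the rows of the diagram are exact. Both columns automatically split as short exact sequences since the right-hand terms $R(d-1)$ and $\prod_{i=1}^{d-1}\bbZ$ are free abelian. I expect the main obstacle to be the $d!$-calculation in the second paragraph, since it encodes the classical fact that the composite of the transfer $\bbS_{h\Sigma_d}\to \bbS^{h\Sigma_d}$ with restriction to the trivial subgroup is multiplication by the group order, now translated into the functor-calculus setting via the Goodwillie/equivariant dictionary; every other step is either direct or a formal diagram chase.
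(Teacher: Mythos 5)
Your proposal follows essentially the same architecture as the paper's proof: identify $R(d)\cong\pi_0(P_dh_{\bbS}(\bbS))$ via Yoneda, use the fiber sequence $D_dh_{\bbS}(\bbS)\to P_dh_{\bbS}(\bbS)\to P_{d-1}h_{\bbS}(\bbS)$ together with $D_dh_{\bbS}(\bbS)\simeq \Sigma^\infty B\Sigma_{d+}$ and the Snaith splitting to get the split short exact sequence, identify the top horizontal map as a transfer (hence $d!$ on $\pi_0$), and conclude injectivity of $\theta_d$ by induction. The one place where you diverge, and where your write-up is a plan rather than a proof, is the identification $\theta_d^d(\alpha)=d!$. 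The paper gets this by applying the Kuhn--McCarthy square (\cref{prop:tatesquare}) to $P_dh_{\bbS}$ at $\bbS$ and citing \cite[Proposition~4.14]{AroneChing15} together with its proof, which is exactly what certifies that the map $\bbS_{h\Sigma_d}\to \Hom(P_dh_{\bbS},P_dh_{\bbS})\xrightarrow{\partial_d}\bbS$ is the transfer. Your proposed route through \cref{thm:barr-beck} and \cref{prop:n-homogeneous-functors} would have to establish the same nontrivial identification --- namely that the composite $D_dh_{\bbS}(X)=(X^{\otimes d})_{h\Sigma_d}\to P_dh_{\bbS}(X)\to X^{\otimes d}$ arising from the unit of the $\partial_d\dashv\kappa_d$ adjunction is the norm/transfer --- and your phrase ``together these reduce the question to the map on $\Sigma_d$-coinvariants induced by the unit of the regular representation'' is precisely the step that needs an argument rather than an assertion; once that identification is in hand, the $\pi_0$-computation is indeed immediate, and everything else in your outline (commutativity of the bottom square via $\partial_iP_{d-1}\simeq\partial_i$ for $i<d$ and $\partial_dP_{d-1}\simeq 0$, the four-lemma/induction for injectivity, splitness from freeness of $R(d-1)$) matches the paper and is correct.
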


\begin{proof}
	The homomorphism $\theta_d$ is defined by taking the first $d$ derivatives of a natural transformation $P_dh_{\bbS}\to P_dh_{\bbS}$. Since $\partial_iP_{d-1}\simeq \partial_i$ for $i\le d-1$, and $\partial_dP_{d-1}\simeq 0$, it follows that the bottom square of \eqref{diag:R(n)} commutes. We will see that the top row of the diagram is the kernel of the vertical homomorphisms in the bottom square, so the entire diagram commutes.

	For each $d\ge 1$, consider the following commutative diagram, where the vertical maps are induced by the natural transformation $p_{d-1}\colon P_d\to P_{d-1}$:
	\[\begin{tikzcd}[ampersand replacement=\&]
		{P_dh_{\bbS}(\bbS)} \& {\Hom(P_dh_{\bbS},P_dh_{\bbS})} \& {\Hom(h_{\bbS},P_dh_{\bbS})} \\
		{P_{d-1}h_{\bbS}(\bbS)} \& {\Hom(P_{d-1}h_{\bbS},P_{d-1}h_{\bbS})} \& {\Hom(h_{\bbS},P_{d-1}h_{\bbS}).}
		\arrow["\simeq", from=1-1, to=1-2]
		\arrow["\simeq", from=1-2, to=1-3]
		\arrow[from=1-1, to=2-1]
		\arrow["\simeq"', from=2-1, to=2-2]
		\arrow[from=1-2, to=2-2]
		\arrow["\simeq"', from=2-2, to=2-3]
		\arrow[from=1-3, to=2-3]
	\end{tikzcd}\]
	The horizontal maps are equivalences by the Yoneda lemma and the universal property of $P_d$. In particular, $R(d)\cong \pi_0\left(P_dh_{\bbS}(\bbS)\right)$, and the homomorphism $\pi_0(P_{d-1})\colon R(d) \to R(d-1)$ can be identified with the natural homomorphism $\pi_0(p_{d-1})\colon \pi_0(P_{d}h_{\bbS}(\bbS))\to \pi_0(P_{d-1}h_{\bbS}(\bbS)).$

	Now consider the fiber sequence
	\begin{equation}\label{eq:split}
		D_d h_{\bbS}(\bbS)\to P_{d}h_{\bbS}(\bbS)\to P_{d-1}h_{\bbS}(\bbS).
	\end{equation}
	Recall (for example via \Cref{prop:n-homogeneous-functors}) that $D_d h_{\bbS}(X)\simeq X^{\otimes d}_{h\Sigma_d}$. Taking $X=\bbS$, we have $D_d h_{\bbS}(\bbS)\simeq \Sigma^\infty {B\Sigma_d}_+$ and $\pi_0\left(D_d h_{\bbS}(\bbS)\right)\cong \bbZ$. Therefore, applying $\pi_0$ to the fibration sequence~\eqref{eq:split} yields an exact sequence
	\begin{equation}\label{eq:SES}
		\bbZ\to R(d)\xrightarrow{\pi_0(p_{d-1})} R(d-1).
	\end{equation}
	We claim that this sequence is in fact short exact. It will be the left column of~\eqref{diag:R(n)}.

	We will provide two proofs that the sequence above is short exact. Recall that $h_\bbS\simeq \Sigma^\infty\Omega^\infty$, so $h_\bbS(\bbS)\simeq \Sigma^\infty\Omega^\infty\Sigma^\infty S^0$. Since $\Sigma^\infty$ is a left adjoint, composing with~$\Sigma^\infty$ commutes with $P_d$. It follows that the Taylor tower of the functor $h_{\bbS}$ evaluated at $\bbS$ is the same as the Taylor tower of the functor $\Sigma^\infty\Omega^\infty\Sigma^\infty$ evaluated at the space $S^0$.

The following lemma is well-known, but we did not find an explicit statement of it in the literature. It was in a preprint version of~\cite{Goodwillie03}, but was not included in the published version (but see~\cite[Example 1.20]{Goodwillie03}, which is closely related).
\begin{Lem}
The Taylor tower of the functor $\Sigma^\infty
\Omega^\infty\Sigma^\infty$ splits. This means that for all $d$, the natural map $P_d\Sigma^\infty\Omega^\infty\Sigma^\infty \to P_{d-1}\Sigma^\infty\Omega^\infty\Sigma^\infty$ has a section.  
\end{Lem}
\begin{proof}
The lemma is a consequence of the Snaith splitting, so let us recall what that is. It is well-known that for connected pointed spaces $X$ there is a natural equivalence
    \begin{equation}\label{eq:Snaith}
    \Sigma^\infty\Omega^\infty\Sigma^\infty X \simeq \bigvee_{n=1}^\infty (\Sigma^\infty X)^{\otimes n}_{h\Sigma_n}.
    \end{equation}
    This is known as the Snaith splitting. The original references are~\cite{Snaith74,Kahn1978}, a more general version is proved in~\cite{CohenMayTaylor78}, and a different, elegant proof was given in~\cite{CohenR80}. More precisely, the references above construct a certain functor~$C$, equipped with a natural transformation $C\to \Omega^\infty\Sigma^\infty$ that is an equivalence on connected spaces, and such that $\Sigma^\infty C(X)$ is naturally equivalent to the right-hand side of \eqref{eq:Snaith} for all $X$. This implies that the Taylor tower of $\Sigma^\infty C$ splits. The fact that the map $\Sigma^\infty C(X)\to \Sigma^\infty\Omega^\infty X$ is an equivalence for connected $X$ implies that the induced map of Taylor approximations $P_d\Sigma^\infty C\to P_d\Sigma^\infty\Omega^\infty\Sigma^\infty $ is an equivalence of functors (the proof is similar to~\cite[Proposition 5.1]{Goodwillie91}, but easier). It follows that the Taylor tower of $\Sigma^\infty\Omega^\infty\Sigma^\infty$ splits as well.
\end{proof}

    It follows that the fiber sequence~\eqref{eq:split} splits, and therefore it induces a split short exact sequence on $\pi_0$.

	We now give another proof that \eqref{eq:SES} is short exact which will, moreover, establish that the homomorphism at the top of \eqref{diag:R(n)} is multiplication by $d!$. 

	The functor $\partial_d\colon \Fun(\Sp^c, \Sp)\to \Sp$ factors through the category of spectra with an action of $\Sigma_d$. It follows that the map 
	\[
		\partial_d\colon \Hom(P_dh_{\bbS}, P_dh_{\bbS})\to \Hom_{\Sp}(\partial_dh_\bbS,\partial_dh_\bbS)\simeq\Hom_{\Sp}(\bbS,\bbS)\simeq\bbS
	\]
	factors through $\Hom_{\Sp}(\bbS,\bbS)^{h\Sigma_d}$. The map fits into the following diagram
	\[\begin{tikzcd}[ampersand replacement=\&]
		{\bbS_{h\Sigma_d}} \& {\bbS_{h\Sigma_d}} \\
		{\Hom(P_dh_{\bbS}, P_dh_{\bbS})} \& {\Hom_{\Sp}(\bbS,\bbS)^{h\Sigma_d}} \& {\Hom_{\Sp}(\bbS,\bbS)\simeq\bbS} \\
		{\Hom(P_{d-1}h_{\bbS}, P_{d-1}h_{\bbS})} \& {\Hom_{\Sp}(\bbS,\bbS)^{t\Sigma_d}}
		\arrow[from=1-1, to=2-1]
		\arrow[from=2-1, to=3-1]
		\arrow[from=2-1, to=2-2]
		\arrow[from=2-2, to=2-3]
		\arrow[from=2-2, to=3-2]
		\arrow[from=3-1, to=3-2]
		\arrow["\simeq", from=1-1, to=1-2]
		\arrow[from=1-2, to=2-2]
	\end{tikzcd}\]
	Here the square at the bottom is a special case of the Kuhn--McCarthy pullback square (\Cref{prop:tatesquare}) applied to the functor $P_dh_{\bbS}$ and evaluated at $\bbS$. To see this, use the identification $\Hom(P_dh_{\bbS}, P_dh_{\bbS})\simeq P_dh_{\bbS}(\bbS)$, and see~\cite[Proposition~4.14]{AroneChing15} which, together with its proof, shows that the top map in the Kuhn--McCarthy square is indeed induced by our map $\partial_d$.

	It follows that the map $\bbS_{h\Sigma_d}\to \bbS$ from the top left corner to the far right spot in the diagram is the transfer map, whose effect on $\pi_0$ is the homomorphism $\bbZ\xrightarrow{d!}\bbZ$. In particular, this homomorphism is injective. It follows that the map $S_{h\Sigma_d}\to \Hom(P_dh_{\bbS},P_dh_{\bbS})$ induces a monomorphism on $\pi_0$. But this is the same as the map $D_dh_{\bbS}(\bbS)\to P_dh_{\bbS}(\bbS)$ in \eqref{eq:split}. Note also that $\pi_{-1}(\bbS_{h\Sigma_d})=0$, and it follows that the map $p_{d-1}\colon P_dh_{\bbS}(\bbS)\to P_{d-1}h_{\bbS}(\bbS)$ induces a surjective homomorphism on $\pi_0$. We have proved (for the second time) that the fibration sequence \eqref{eq:split} induces a short exact sequence on $\pi_0$. Furthermore, we have proved that the map $\partial_d\colon \Hom(P_dh_{\bbS},P_dh_{\bbS})\to \bbS$ induces multiplication by $d!$ from the kernel of $\pi_0(p_{d-1})$ (as defined in \eqref{diag:R(n)}) to $\pi_0(\Hom_{\Sp}(\partial_dh_{\bbS}, \partial_dh_{\bbS}))\cong \bbZ$.

	We have shown that the left column of \eqref{diag:R(n)} is a short exact sequence and that the upper horizontal homomorphism is multiplication by $d!$. By induction on $d$ it follows that $\theta_d$ is a monomorphism for all $d$, and $R(d)\cong \bbZ^d.$ For the basis of the induction one can take the case $d=0$, in which case $\theta_0$ is the homomorphism from the trivial group to itself. Or one can begin with $d=1$, in which case $\theta_1$ is the isomorphism $\pi_0(\bbS)\xrightarrow{\cong}\bbZ$ given by degree. 
\end{proof}

\begin{Rem}
	Now that we have shown that $R(d)\cong \bbZ^d$, our next task is to find an explicit basis for $R(d)$. We have just defined an injective homomorphism $\prod_{i=1}^d\theta_d^i\colon R(d)\to \bbZ^d$. The following lemma will help us to identify a basis of $R(d)$.
\end{Rem}

\begin{Lem}\label{lem:basis_recognition}
	Suppose that $b_1, \ldots, b_d$ are elements of $R(d)$ that satisfy the following for all $i, j\le d$:
	\[
		\theta_d^i(b_j)=\left\{\begin{array}{cc} i! & i= j \\ 0 & i< j\end{array}\right. .
	\]
	Then $b_1, \ldots, b_d$ is a basis of $R(d)$.
\end{Lem}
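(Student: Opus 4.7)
The plan is to proceed by induction on $d$, using the commutative diagram~\eqref{diag:R(n)} from \Cref{prop:injective} as the main tool. Recall that the left column of that diagram is a short exact sequence
\[
0 \to \mathbb{Z} \to R(d) \xrightarrow{\pi_0(P_{d-1})} R(d-1) \to 0
\]
and that, under $\theta_d^d$, the embedded $\mathbb{Z}$ is identified with multiplication by $d!$ into the last factor of $\prod_{i=1}^{d}\mathbb{Z}$.

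For the base case $d=1$, we have $R(1)\cong\mathbb{Z}$ and $\theta_1^1$ is an isomorphism, so $\theta_1^1(b_1)=1$ forces $b_1$ to be a generator. For the inductive step, I would first show that $\pi_0(P_{d-1})(b_1),\ldots,\pi_0(P_{d-1})(b_{d-1})$ form a basis of $R(d-1)$. Since $\theta_{d-1}^i\circ\pi_0(P_{d-1})=\theta_d^i$ for $i\le d-1$ by the commutativity of~\eqref{diag:R(n)}, these $d-1$ elements satisfy the hypothesis of the lemma at level $d-1$, and the inductive hypothesis applies.

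Next, I would analyze $b_d$. By assumption, $\theta_d^i(b_d)=0$ for all $i<d$, i.e., for all $i\le d-1$. Via the identity $\theta_{d-1}^i\circ\pi_0(P_{d-1})=\theta_d^i$, this means $\theta_{d-1}(\pi_0(P_{d-1})(b_d))=0$, and injectivity of $\theta_{d-1}$ (\Cref{prop:injective}) forces $\pi_0(P_{d-1})(b_d)=0$. Hence $b_d$ lies in the kernel of $\pi_0(P_{d-1})$, i.e., in the copy of $\mathbb{Z}$ at the top of the left column of~\eqref{diag:R(n)}. Since $\theta_d^d$ sends this $\mathbb{Z}$ isomorphically onto $d!\mathbb{Z}\subseteq\mathbb{Z}$ and $\theta_d^d(b_d)=d!$, the element $b_d$ is a generator of this kernel $\mathbb{Z}$.

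To conclude, I would combine the two steps: $b_d$ generates the kernel of $\pi_0(P_{d-1})$ while $b_1,\ldots,b_{d-1}$ lift a basis of the quotient $R(d-1)$. A standard splitting argument for the short exact sequence then yields that $b_1,\ldots,b_d$ is a $\mathbb{Z}$-basis of $R(d)$. There is no real obstacle here; the argument is purely a bookkeeping exercise in the diagram~\eqref{diag:R(n)} once \Cref{prop:injective} is in hand. The only mild subtlety is to verify the compatibility $\theta_{d-1}^i\circ\pi_0(P_{d-1})=\theta_d^i$ for $i\le d-1$, which is exactly the commutativity of the lower square in~\eqref{diag:R(n)} and was already used in its construction.
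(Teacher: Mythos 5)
Your proposal is correct and follows essentially the same route as the paper's proof: induction on $d$ through the split short exact sequence $0\to\bbZ\to R(d)\to R(d-1)\to 0$ of \Cref{prop:injective}, using the commutativity of~\eqref{diag:R(n)} to apply the inductive hypothesis to the images $\pi_0(P_{d-1})(b_1),\ldots,\pi_0(P_{d-1})(b_{d-1})$ and the identification of $\theta_d^d$ on the kernel with multiplication by $d!$ to see that $b_d$ generates the kernel. The only cosmetic difference is that you make explicit (via injectivity of $\theta_{d-1}$) why $b_d$ lies in $\ker(\pi_0(P_{d-1}))$, a step the paper leaves implicit.
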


\begin{proof}
	By \Cref{prop:injective} we have a split short exact sequence
	\[
		0\to \bbZ \to R(d)\xrightarrow{\pi_0(P_{d-1})} R(d-1) \to 0.
	\]
	It is enough to show that $b_d$ is in the image of a generator of $\bbZ$ and the images of $b_1, \ldots, b_{d-1}$ form a basis of $R(d-1)$. Let $\bar b_i$ be the image of $b_i$ in $R(d-1)$. By the commutativity of the bottom half of \eqref{diag:R(n)}, $\theta_{d-1}^i(\bar b_j)=\theta_d^i(b_j)$ for $i, j\le d-1$. Arguing by induction on $d$, we can conclude that $\bar b_1, \ldots, \bar b_{d-1}$ form a basis for $R(d-1)$. Once again, for a basis of the induction one can take the case $d=0$ or $d=1$, which are both immediate.

	Now consider $b_d$. By assumption, $\theta^i_d(b_d)=0$ for $i<d$. It follows that $b_d\in \ker(\pi_0(p_{d-1}))$ and therefore $b_d$ is in the image of the homomorphism $\bbZ\to R(d)$, that is, in the upper left corner of \eqref{diag:R(n)}. Furthermore, the assumption $\theta^d(b_d)=d!$ means that $b_d$ is the image of a generator of $\bbZ$. 
\end{proof}

\begin{Rem}
	Now we are ready to construct an explicit basis of $R(d)$. Recall that $h_{\bbS}(m)$ is the value of the $m$-th cross-effect (or equivalently the $m$-th co-cross-effect) of the contravariant functor $x\mapsto h_x$ at $(\bbS, \ldots,\bbS)$; see \cref{sec:cross-rep}.
\end{Rem}

\begin{Def}
    Define $\lambda_m\colon h_{\bbS}\to h_{\bbS}$ to be the following composition
	\[
		h_{\bbS}\to h_{\bbS^{\oplus m}}\to h_{\bbS}(m)\to h_{\bbS^{\oplus m}}\to h_{\bbS}.
	\]
	Here the first and the last map are induced by the fold and the diagonal maps between $\bbS$ and $\bigoplus_m\bbS$. The second and the third map are a special case of the natural map from $F(x_1\oplus\cdots\oplus x_m)$ to $\crosseffect_m F(x_1, \ldots, x_m)$ and back, with $F(x)=h_x$; see \eqref{eq:crosstococross}.
\end{Def}

\begin{Rem}\label{rem:lambda}
    By \Cref{lem:cross-effect}, $\lambda_m$ is equivalent to the composition
    \[
		h_{\bbS}\to h_{\bbS^{\oplus m}}\xrightarrow{\underset{t\in [m]}{\bigcirc}(1-h_{\psi([m]\setminus \{t\})})}  h_{\bbS^{\oplus m}} \to h_{\bbS}.
	\]
	Let us remind the reader that $h_{\psi([m]\setminus \{t\})}$ is the idempotent map $h_{\bbS^{\oplus m}}\to h_{\bbS^{\oplus m}}$ induced by collapsing to a point the $t$-th summand of $\underbrace{\bbS\oplus\cdots\oplus\bbS}_{m}$. 
\end{Rem}

\begin{Rem}
	The maps $\lambda_m$ are natural transformations from $h_\bbS$ to itself. To shorten notation, let us also denote by $\lambda_m$ the induced map $P_d(\lambda_m)\colon P_dh_{\bbS}\to P_dh_{\bbS}$ for all $d\ge m$. Let us denote by $[\lambda_m]$ the homotopy class of $\lambda_m$, considered as an element of $R(d)$.

	We want to show that $[\lambda_1], \ldots, [\lambda_d]$ form a basis of $R(d)$. It follows from \Cref{lem:basis_recognition} that to do this we need to analyze the induced maps $\partial_i\lambda_m \colon \partial_i h_{\bbS}\to \partial_ih_{\bbS}$. We already observed that $\partial_ih_{\bbS}\simeq \mathbb D(\bbS^{\otimes i})\simeq \bbS$ and therefore the homotopy type of~$\partial_i\lambda_m$ is determined by a single integer --- the degree.
\end{Rem}

\begin{Lem}\label{lem:derivatives}
	The degree of $\partial_i \lambda_m$ is $|\surj(i, m)|$. In particular, $\partial_i \lambda_m$ is null for $i<m$ and $\partial_i \lambda_i$ has degree $i!$.
\end{Lem}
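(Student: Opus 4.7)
The plan is to compute $\partial_i\lambda_m$ directly from the explicit presentation in \cref{rem:lambda}, using the identification $\partial_i h_x\simeq \mathbb{D}(x^{\otimes i})$ that was recalled in the proof of \cref{prop:hderivatives}. Under this identification, applying $\partial_i$ to a map of representables $h_\phi$ induced by a map $\phi\colon x\to y$ in $\Sp^c$ yields the Spanier--Whitehead dual of $\phi^{\otimes i}$. All the relevant objects and maps will then be finite direct sums of copies of~$\bbS$, and the computation reduces to tracking a canonical combinatorial basis.

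First, I will identify $\partial_i h_{\bbS^{\oplus m}}\simeq \bbS^{\oplus m^i}$ with the free module on the set $[m]^{[i]}$ of functions $f\colon [i]\to [m]$. Under this identification:
\begin{itemize}
\item The map $\partial_i h_\nabla\colon \bbS\to \bbS^{\oplus m^i}$ induced by the fold map $\nabla\colon \bbS^{\oplus m}\to\bbS$ is dual to $\nabla^{\otimes i}$, which on basis vectors is again a fold; dualizing gives the \emph{diagonal} $1\mapsto \sum_{f\in[m]^{[i]}} e_f$.
\item The map $\partial_i h_\Delta\colon \bbS^{\oplus m^i}\to \bbS$ induced by the diagonal $\Delta\colon\bbS\to\bbS^{\oplus m}$ is dual to $\Delta^{\otimes i}$, which on basis vectors is a diagonal; dualizing gives the \emph{fold} $e_f\mapsto 1$ for every $f$.
\item For each $U\subseteq [m]$, the map $\partial_i h_{\psi_U}$ is dual to $(\psi_U)^{\otimes i}$, which fixes $e_f$ when $f([i])\subseteq U$ and sends $e_f$ to $0$ otherwise; this map is therefore itself the idempotent on $\bbS^{\oplus m^i}$ which is the identity on the summands indexed by $f$ with $f([i])\subseteq U$ and zero on the rest.
\end{itemize}

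The second step is to interpret the middle factor of $\lambda_m$. By \cref{rem:lambda}, applying $\partial_i$ to the composition
$h_{\bbS^{\oplus m}}\xrightarrow{\bigcirc_{t\in[m]}(1-h_{\psi_{[m]\setminus\{t\}}})} h_{\bbS^{\oplus m}}$
gives an idempotent on $\bbS^{\oplus m^i}$ which, by \cref{lem:product-to-sum} (or directly from the bullet points above), acts as the identity on those summands indexed by $f\in [m]^{[i]}$ with $f([i])\not\subseteq [m]\setminus\{t\}$ for every $t\in [m]$ --- that is, those $f$ which are \emph{surjective} --- and acts as zero on all other summands.

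Putting the three steps together, $\partial_i\lambda_m\colon \bbS\to\bbS$ is the composite
\[
\bbS \xrightarrow{\;1\mapsto \sum_{f} e_f\;} \bbS^{\oplus m^i} \xrightarrow{\;\text{project onto } f\text{ surjective}\;} \bbS^{\oplus m^i} \xrightarrow{\;e_f\mapsto 1\;} \bbS,
\]
which sends $1$ to $\sum_{f\in\surj([i],[m])} 1 = |\surj(i,m)|$. This is the claimed degree, and the two special cases follow immediately since $|\surj(i,m)|=0$ for $i<m$ and $|\surj(i,i)|=i!$.

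I expect the main (purely bookkeeping) obstacle to be keeping the contravariance of $h_{(-)}$ straight with the covariance of $\partial_i$: concretely, verifying that the diagonal/fold get swapped in the correct way when one passes from $\bbS^{\oplus m}$ to $\bbS^{\oplus m^i}$ and dualizes. Once the identification $\partial_i h_{\bbS^{\oplus m}}\simeq\bbS^{\oplus [m]^{[i]}}$ is set up carefully (using, e.g., the Spanier--Whitehead duality employed in \cref{prop:hderivatives}), every map in sight becomes a concrete map of finite free modules over~$\bbS$ and the computation is forced.
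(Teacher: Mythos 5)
Your proposal is correct and follows essentially the same route as the paper: both use \cref{rem:lambda} together with $\partial_i h_x\simeq\mathbb{D}(x^{\otimes i})$ to identify $\partial_i\lambda_m$ with a map $\bbS\to\bbS^{\oplus m^i}\to\bbS^{\oplus m^i}\to\bbS$ that diagonally includes, projects onto the summands indexed by surjections $\num{i}\to\num{m}$, and folds, giving degree $|\surj(i,m)|$. The only cosmetic difference is that you dualize each factor separately while the paper writes down the undualized composite and takes its Spanier--Whitehead dual once at the end; the contravariance bookkeeping you flag is handled correctly in your version.
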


\begin{proof}
	Using \Cref{rem:lambda} and the identification $\partial_i h_x\simeq \mathbb D (x^{\otimes i})$, we see that the map $\partial_i\lambda_m\colon h_{\bbS}\to h_{\bbS}$ is the Spanier--Whitehead dual of the following composition of maps
	\begin{equation}\label{eq:d_ilambda_m}
		\bbS^{\otimes i}\xrightarrow{\Delta_m^{\otimes i}} (\bbS^{\oplus m})^{\otimes i} \xrightarrow{\underset{t\in [m]}{\bigcirc}\left(1-\psi([m]\setminus\{t\})^{\otimes i}\right)} (\bbS^{\oplus m})^{\otimes i} \xrightarrow{\nabla_m^{\otimes i}} \bbS^{\otimes i}
	\end{equation}
	where $\Delta_m^{\otimes i}$ is the $i$-th smash power of the diagonal map $\bbS\to \bigoplus_m \bbS$, $\nabla_m^{\otimes i}$ is the $i$-th smash power of the fold map $\bigoplus_m \bbS \to \bbS$, and the map in the middle is the composition of maps of the form $1-\psi([m]\setminus\{t\})^{\otimes i}$, where $1$ is the identity on $(\bbS^{\oplus m})^{\otimes i}$ and $\psi([m]\setminus\{t\})^{\otimes i}$ is the $i$-th smash power of the self map of $\bbS^{\oplus m}$ that collapses the $t$-th copy of $\bbS$ to a point.

	There is an obvious identification $(\bbS^{\oplus m})^{\otimes i}\cong \bbS^{\oplus m^i}$, where we identify $m^i$ with the set of functions $\num{i}\to \num{m}$. Under this identification, the map $\Delta_m^{\otimes i}$ in \eqref{eq:d_ilambda_m} becomes the diagonal map $\bbS\to \bbS^{\oplus m^i}$ and the map $\nabla_i^{\otimes m}$ in the same line becomes the folding map $\bbS^{\oplus m^i}\to \bbS$. Given a $t\in [m]$, the map $\psi([m]\setminus\{t\})^{\otimes i}$ becomes the map $\bbS^{\oplus m^i}\to \bbS^{\oplus m^i}$ that collapses to a point all the copies of $\bbS$ that are labeled by maps $\num{i}\to\num{m}$ whose image contains $t$. It follows that $1-\psi([m]\setminus\{t\})^{\otimes i}$ is equivalent to the map $\bbS^{\oplus m^i}\to \bbS^{\oplus m^i}$ that collapses to a point all the copies of $\bbS$ that are labeled by maps $\num{i}\to\num{m}$ whose image does not contain $t$. It follows that $\underset{t\in [m]}{\bigcirc}\left(1-\psi([m]\setminus\{t\})^{\otimes i}\right)$ is equivalent to the map $\bbS^{\oplus m^i}\to \bbS^{\oplus m^i}$ that collapses to a point all the copies of $\bbS$ that are labeled by non-surjective maps $\num{i}\to\num{m}$.

	We conclude that the composition of maps on line~\eqref{eq:d_ilambda_m} is equivalent to the composition
	\[
		\bbS \to \bigoplus_{m^i}\bbS \to \bigoplus_{m^i} \bbS \to \bbS
	\]
	where the first map is the diagonal, the middle map collapses to a point all copies of~$\bbS$ labeled by non-surjective maps from $\num{i}$ to $\num{m}$, and the third map is the fold map. It is clear that this composition has degree $|\surj(i, m)|$ and therefore $\partial_i\lambda_m$, which is the Spanier--Whitehead dual of this composition, also has degree $|\surj(i, m)|$.
\end{proof}

The motivation for defining the elements $[\lambda_i]$ stems from the following lemma. 

\begin{Lem}\label{lem:basis}
	The elements $[\lambda_1]\ldots, [\lambda_d]$ form an additive basis of $R(d)$.    
\end{Lem}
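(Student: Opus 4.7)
The plan is to apply the basis recognition criterion of \Cref{lem:basis_recognition} to the elements $b_j \coloneqq [\lambda_j]$. Recall that this criterion reduces the problem to verifying that $\theta_d^i([\lambda_j]) = i!$ when $i = j$ and $\theta_d^i([\lambda_j]) = 0$ when $i < j$, for all $1 \le i,j \le d$.

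First, I would unwind the definitions to see that $\theta_d^i([\lambda_j])$ is precisely the degree of the self-map $\partial_i \lambda_j \colon \partial_i h_{\bbS} \to \partial_i h_{\bbS}$ under the identification $\partial_i h_{\bbS} \simeq \bbS$. Next, I would invoke \Cref{lem:derivatives}, which computes this degree to be exactly $|\surj(i,j)|$. Since a surjection from a set of cardinality $i$ onto a set of cardinality $j$ cannot exist when $i < j$, we obtain $\theta_d^i([\lambda_j]) = 0$ in that range, while when $i = j$ we have $|\surj(i,i)| = i!$, as desired.

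With both hypotheses of \Cref{lem:basis_recognition} verified, we conclude that $[\lambda_1], \ldots, [\lambda_d]$ form an additive basis of $R(d)$.

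There is no serious obstacle: all the real work has been carried out in \Cref{prop:injective} (which identifies $R(d) \cong \bbZ^d$ and produces the basis recognition lemma) and in \Cref{lem:derivatives} (which computes the effect of $\lambda_m$ on derivatives). The present lemma is just the bookkeeping that combines these two results. The slight care needed is to observe that $|\surj(i,j)|$ vanishes for $i < j$, which is immediate, and that the matrix $(|\surj(i,j)|)_{1\le i,j\le d}$ is lower triangular with diagonal entries $i!$, mirroring the structure of the Burnside homomorphism $\phi$ from \Cref{cons-burnside-homomorphism}.
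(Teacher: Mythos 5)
Your proof is correct and is essentially identical to the paper's: both apply the basis recognition criterion of \Cref{lem:basis_recognition} to the elements $[\lambda_j]$, using \Cref{lem:derivatives} to see that $\partial_i\lambda_j$ has degree $|\surj(i,j)|$, which vanishes for $i<j$ and equals $i!$ for $i=j$.
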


\begin{proof}
	By \Cref{lem:basis_recognition} it is enough to prove that $\lambda_j$ induces multiplication by~$j!$ on~$\partial_j$ and induces the zero map on $\partial_i$ for $i<j$. This is a special case of \Cref{lem:derivatives}.
\end{proof}

Now we are ready to prove the main result of this section. Recall that $\mu(i, j, l)$ is the number of good subsets of $\num{i}\times\num{j}$ of cardinality $l$.

\begin{Thm}\label{thm:burnside-ring-isomorphism}
    The ring $R(d)$ is isomorphic to the quotient of the polynomial ring $\bbZ[[\lambda_2], \ldots, [\lambda_d]]$ by the relations
    \[
		[\lambda_i][\lambda_j]=\sum_{l=1}^d\mu(i,j,l)[\lambda_i][\lambda_j].
    \]    
    In particular, there is an isomorphism $R(d) \cong A(d)$ between $R(d)$ and the Goodwillie--Burnside ring given by sending $[\lambda_i]$ to $x_i$. 
\end{Thm}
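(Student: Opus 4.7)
The plan is to leverage the injective group homomorphism $\theta_d \colon R(d) \hookrightarrow \prod_{i=1}^d \bbZ$ constructed in \cref{prop:injective} and to identify it with the Burnside homomorphism $\phi \colon A(d) \hookrightarrow \prod_{i=1}^d \bbZ$ of \cref{cons-burnside-homomorphism}. Once this identification is in place, everything else will follow from \cref{lem:basis}, \cref{lem:derivatives}, and the presentation of $A(d)$ obtained in \cref{cor:polynomial}.

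First I would promote $\theta_d$ to a \emph{ring} homomorphism. Each component $\theta_d^i$ is induced by the $i$-th Goodwillie derivative $\partial_i \colon \Exc{d}(\Sp^c,\Sp) \to \Sp$, which is symmetric monoidal by \cref{lem:geometric-derivatives}. Since multiplication in $R(d)=\pi_0 \End(P_d h_\bbS)$ coincides with Day convolution of endomorphisms of the monoidal unit, symmetric monoidality of $\partial_i$ (together with $\partial_i(P_d h_\bbS) \simeq \bbS$ from \cref{prop:hderivatives}) forces $\theta_d^i$ to preserve products. Hence so does the product map $\theta_d = \prod_i \theta_d^i$, where the target carries the component-wise ring structure of $\prod_{i=1}^d \bbZ$.

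Next, \cref{lem:basis} gives that $[\lambda_1], \ldots, [\lambda_d]$ is an additive basis of $R(d)$, while \cref{lem:derivatives} computes
\[
    \theta_d([\lambda_m]) = \sum_{i=1}^d |\surj(i,m)|\, y_i = \phi(x_m).
\]
Therefore the abelian group isomorphism $\Psi \colon A(d) \xrightarrow{\sim} R(d)$, $x_i \mapsto [\lambda_i]$, satisfies $\theta_d \circ \Psi = \phi$. A short diagram chase then upgrades $\Psi$ to a ring isomorphism: for $a,b \in A(d)$, both $\theta_d(\Psi(ab))=\phi(ab)=\phi(a)\phi(b)$ and $\theta_d(\Psi(a)\Psi(b))=\theta_d\Psi(a)\cdot\theta_d\Psi(b)=\phi(a)\phi(b)$, and the injectivity of $\theta_d$ (\cref{prop:injective}) yields $\Psi(ab)=\Psi(a)\Psi(b)$. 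The presentation of $R(d)$ then transfers from the presentation of $A(d)$ given in \cref{cor:polynomial}, producing the multiplication formula $[\lambda_i][\lambda_j] = \sum_l \mu(i,j,l)\,[\lambda_l]$.

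I do not anticipate a substantive obstacle: the hard computation (the degree of $\partial_i \lambda_m$) was carried out in \cref{lem:derivatives}, and the combinatorial ring theory of $A(d)$ is already encoded in \cref{thm:fundamental-ses-a(n)}. The only point warranting care is ensuring that $\theta_d$ is multiplicative, which rests entirely on the symmetric monoidality of the Goodwillie derivatives together with the fact that $P_d h_\bbS$ is the monoidal unit of $\Exc{d}(\Sp^c,\Sp)$.
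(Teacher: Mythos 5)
Your proposal is correct and follows essentially the same route as the paper: both arguments rest on $\theta_d$ being a ring homomorphism (via symmetric monoidality of the derivatives), the additive basis $[\lambda_1],\ldots,[\lambda_d]$ from \cref{lem:basis}, the degree computation $\theta_d([\lambda_m])=\phi(x_m)$ from \cref{lem:derivatives}, and the injectivity/uniqueness statement of \cref{thm:fundamental-ses-a(n)} to transfer the ring structure. Your explicit diagram chase with $\Psi$ is just an unpacking of the paper's appeal to the uniqueness of the ring structure on $A(d)$ compatible with $\phi$.
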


\begin{proof}
	We will show that there is an isomorphism $R(d)\xrightarrow{\cong} A(d)$ that takes $[\lambda_i]$ to~$x_i$ for $i=1, \ldots, d$. The first part of the theorem then follows from \Cref{cor:polynomial}.

	By \Cref{lem:basis}, $R(d)$ is the free abelian group on the set $[\lambda_1], \ldots, [\lambda_d]$. Let $\bbZ^d$ denote the product ring, with generators $y_1, \ldots, y_d$. Goodwillie derivatives induce a ring homomorphism $R(d)\to \bbZ^d$. By \Cref{lem:derivatives}, this homomorphism sends $[\lambda_j]$ to $\sum_{i=1}^d |\surj(i, j)| y_j$. By \Cref{thm:fundamental-ses-a(n)}, there is a unique ring structure on $R(d)$ that makes this map into a ring homomorphism, and this ring structure is isomorphic to~$A(d)$.
\end{proof}

\begin{Rem}
    Because of this theorem, we will now cease to use $R(d)$ and instead only use the notation $A(d)$. 
\end{Rem}

\begin{Prop}\label{prop:comparison-map}
	For each $1 \le k \le d$, the comparison map of \Cref{def:comparison-map}
	\[
		\rho \colon \Spc(\Exc{d}(\Sp^c,\Sp)^c) \to \Spec(A(d))
	\]
	sends $\cat P(\num{k},0,1)$ to $\mathfrak p(\num{k},0)$ and $\cat P(\num{k},p,h)$ to $ \mathfrak p(\num{k},p)$ for all $h>1$. 
\end{Prop}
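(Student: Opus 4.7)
The plan is to deduce both cases simultaneously via the naturality of the comparison map $\rho$ with respect to the geometric tt-functor $\partial_k\colon \Exc{d}(\Sp^c,\Sp)\to\Sp$, combined with the classical comparison map for finite spectra. Recall from \cite{Balmer10b} that for any tt-functor $F\colon \cat T \to \cat U$ between rigidly-compactly generated tt-categories, the diagram
\[
\begin{tikzcd}
\Spc(\cat U^c) \ar[r,"\Spc(F)"] \ar[d,"\rho_{\cat U}"'] & \Spc(\cat T^c) \ar[d,"\rho_{\cat T}"] \\
\Spec(\End_{\cat U}(\unit)) \ar[r] & \Spec(\End_{\cat T}(\unit))
\end{tikzcd}
\]
commutes, where the bottom horizontal arrow is induced by the ring homomorphism $\End_{\cat T}(\unit)\to\End_{\cat U}(\unit)$ coming from $F$.

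Applied to $F=\partial_k$, I would identify the three other arrows in this square. First, by \cref{lem:injectivity-of-partial-k}, the top arrow $\Spc(\partial_k)$ sends $\cat C_{p,h}\in\Spc(\Sp^c)$ to $\cPd(\num{k},p,h)$, so that $\rho\circ\Spc(\partial_k)$ is precisely the map whose values the proposition records. Second, the left arrow $\rho_{\Sp}$ is recalled in \cref{exa:comparison-map-for-spectra}: it sends $\cat C_{p,h}$ to $(p)\subset\bbZ$ for $h>1$ and sends $\cat C_{0,1}$ to $(0)$. Third, and most importantly, I would identify the bottom arrow. Under the isomorphism $A(d)\cong \pi_0\End_{\Exc{d}(\Sp^c,\Sp)}(P_dh_{\bbS})$ of \cref{thm:burnside-ring-isomorphism} (sending $x_j\leftrightarrow [\lambda_j]$) and the canonical isomorphism $\pi_0\End_{\Sp}(\bbS)\cong\bbZ$, the ring homomorphism induced by $\partial_k$ sends $[\lambda_j]$ to the integer $|\surj(k,j)|$ by \cref{lem:derivatives}; this is exactly the Burnside homomorphism $\phi_k$ of \cref{def:burnside-homomorphism}. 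Consequently $\Spec(\phi_k)$ sends $(p)$ to $\mathfrak p(\num{k},p)$ and $(0)$ to $\mathfrak p(\num{k},0)$ directly from \cref{def:prime_ideals}.

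Chasing around the commutative square then yields $\rho(\cPd(\num{k},p,h))=\mathfrak p(\num{k},p)$ for $h>1$ and $\rho(\cPd(\num{k},0,1))=\mathfrak p(\num{k},0)$, as desired. There is no serious obstacle: the identification of the ring map induced by $\partial_k$ with the Burnside homomorphism~$\phi_k$ is essentially how the isomorphism $R(d)\cong A(d)$ was engineered in \cref{sec:comparison-map}, and all other inputs are citations to earlier results.
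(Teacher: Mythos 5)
Your proposal is correct and follows essentially the same route as the paper: naturality of Balmer's comparison map applied to $\partial_k$, identification of the induced ring homomorphism $A(d)\to\bbZ$ with the Burnside homomorphism $\phi_k$ via \cref{lem:derivatives} and \cref{thm:burnside-ring-isomorphism}, and a diagram chase using \cref{exa:comparison-map-for-spectra} and \cref{def:prime_ideals}. The only difference is cosmetic: the paper states the conclusion directly from the commuting square without spelling out the chase.
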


\begin{proof}
	It follows from \Cref{lem:derivatives} that the functor $\partial_k\colon \Exc{d}(\Sp^c, \Sp)\to \Sp$ induces the homomorphism $\phi_k\colon A(d)\to \bbZ$ that satisfies $\phi_k([\lambda_m])=|\surj(k, m)|$, which was used to define the prime ideal $\mathfrak p(\num{k},(p))$; see \Cref{def:prime_ideals}. 

	By naturality of the comparison map (\cite[Theorem 5.3(c)]{Balmer10b}) we deduce that the diagram 
	\[\begin{tikzcd}[column sep = 4em, ampersand replacement=\&]
		{\Spc(\Sp^c)} \& {\Spc(\Exc{d}(\Sp^c,\Sp)^c)} \\
		{\Spec(\bbZ)} \& {\Spec(A(d))}
		\arrow["\rho"', from=1-1, to=2-1]
		\arrow["{\Spc(\partial_k)}", from=1-1, to=1-2]
		\arrow["\rho", from=1-2, to=2-2]
		\arrow["{\Spec(\phi_k)}"', from=2-1, to=2-2]
	\end{tikzcd}\]
	commutes. The result then follows from \Cref{exa:comparison-map-for-spectra} and the definitions of the various prime ideals. 
\end{proof}

\newpage
\part{The spectrum of \texorpdfstring{$d$}{d}-excisive functors}\label{part:III}

We determined the underlying set of the Balmer spectrum $\Spc(\Exc{d}(\Sp^c,\Sp)^c)$ for any $d \ge 1$ in \cref{thm:spec-as-a-set}. Our next goal is to completely compute its topology. Recall that each prime tt-ideal $\cat P \in \Spc(\Exc{d}(\Sp^c,\Sp)^c)$ is of the form $\cat P=\cPd(\num{k},p,h)$ for some triple $(k,p,h)$ consisting of an integer $1 \le k \le d$, a prime number $p$ or $p=0$, and a chromatic height $1 \le h \le \infty$. As we will see in \cref{prop:spc_posetreduction}, the topology of the Balmer spectrum is completely determined by the inclusions among prime tt-ideals
    \[
        \cPd(\num{k},p,h) \overset{?}{\subseteq} \cPd(\num{l},q,h').
    \]
Consequently, our primary goal is to describe precisely when such an inclusion occurs in terms of a numerical formula involving the above variables.

We achieve this in three main steps, each of which requires a different set of techniques:
    \begin{enumerate}
        \item First, we study the chromatic blueshift behaviour of the Tate-derivatives $\partial_k t_d i_d\colon \Sp\to\Sp$ introduced in \cref{def:tate-derivatives}. Using results from \cite{AroneChing15}, this problem can be translated into an analogous blueshift question in stable equivariant homotopy theory for the family $\Fnt$ of non-transitive subgroups of products of symmetric groups $\Sigma_{n}$. While the spectrum $\Spc(\Sp_{\Sigma_{n}}^c)$ is only known for $n \leq 7$ by work of Balmer--Sanders \cite{BalmerSanders17} ($n \leq 3$) and Kuhn--Lloyd \cite{KuhnLloyd2024} ($n\leq 7$), we are able to give a complete answer (\cref{thm:tateblueshift}) by reducing to the main result of \cite{BHNNNS19}.
        \item We then explain how the blueshift behaviour of the Tate-derivatives provides ``elementary'' inclusions among the prime tt-ideals of $\Exc{d}(\Sp^c,\Sp)^c$; see \cref{lem:basic-tate-inclusion}. Combining general tt-geometric techniques developed in~\cite{BalmerSanders17} together with our computation of the spectrum of the Goodwillie--Burnside ring (\cref{thm:burnside-goodwillie-spectrum}), we then show in \cref{thm:formula-for-geombluetake2} that all inclusions among prime tt-ideals are given by certain minimal chains of elementary inclusions.
        \item It then remains to understand the nature of these minimal chains, a combinatorial problem that is---in light of \cref{rem:spectral-mackey}---ultimately controlled by the structure of the poset $\Epi{d}$. This in turn translates into an elementary number-theoretic problem about the existence of $p$-power partitions, whose solution is given in \cref{prop:ppowerchains}. The final answer which precisely describes the inclusions among the prime tt-ideals of $\Exc{d}(\Sp^c,\Sp)^c$ is then assembled in \cref{thm:posetstructure}.
    \end{enumerate}

In \cref{sec:applications}, we express  the topology of $\Spc(\Exc{d}(\Sp^c,\Sp)^c)$ more explicitly in terms of type functions and state the resulting classification of tt-ideals (\cref{thm:tt-ideal-classification}). We then conclude with two further results: a calculus analogue (\cref{cor:floydinequality}) of transchromatic Smith--Floyd theory inspired by \cite{KuhnLloyd2024} and a computation (\cref{thm:spc_integral}) of the Balmer spectrum of the category $\Exc{d}(\Sp^c,\Mod_{\HZ})$ obtained by changing coefficients along $\Sp\to\Mod_{\HZ}$ which is inspired by \cite{PatchkoriaSandersWimmer22}.

\section{Blueshift of Tate-derivatives}\label{sec:tateblueshift}

In this section we study the chromatic behaviour of the Tate-derivatives on the category of $d$-excisive functors, which were introduced in \Cref{def:tate-derivatives}:
    \begin{equation}\label{eq:dti}
        \partial_lt_di_d\colon \Sp \to \Sp, \quad A \mapsto \partial_l(X \mapsto (A \otimes X^{\otimes d})^{t\Sigma_d}).
    \end{equation}
Our goal is to determine the precise blueshift at each prime $p$, i.e., the extent to which the functor \eqref{eq:dti} shifts chromatic height. We can summarize our approach in two steps:
    \begin{enumerate}
        \item We use results in \cite{AroneChing15} to express the Tate-derivatives in terms of geometric fixed point functors for families of non-transitive subgroups of products of symmetric groups. This translates our question to a problem in stable equivariant homotopy theory.
        \item We reduce the problem further to the case of cyclic groups, for which we can apply the main theorems of \cite{Kuhn04} and \cite{BHNNNS19} to resolve the analogous blueshift question.
    \end{enumerate}
The answer is given in \cref{thm:tateblueshift} below.

\subsection*{Preliminaries on stable equivariant homotopy theory}\label{ssec:seht}

\begin{Not}
	For a finite group $G$, we denote the symmetric monoidal stable $\infty$-category of \emph{genuine $G$-spectra} by $\Sp_G$. It is rigidly-compactly generated by the orbits $\Sigma^\infty G/H_+$ associated to the (conjugacy classes of) subgroups $H \le G$. For a construction of $\Sp_G$ and further discussion of its basic properties, see \cite{MathewNaumannNoel17} and \cite{BalmerSanders17}. Each group homomorphism $f\colon H\to G$ induces a geometric functor $f^*\colon \Sp_G\to\Sp_H$ which has a lax symmetric monoidal right adjoint $f_*$. In particular, for the unique homomorphism $p\colon G \to e$ to the trivial group, we have the \emph{inflation} functor $\infl_e^G\coloneqq p^*\colon \Sp\to\Sp_G$ whose right adjoint $(-)^G\coloneqq p_*$ is the \emph{(categorical) fixed points} functor.
\end{Not}

\begin{Def}\label{def:families}
    Let $G$ be a finite group. A \emph{family of subgroups} $\cF$ is a collection of subgroups of $G$ which is closed under conjugation and passage to subgroups.
\end{Def}

\begin{Exa}\label{ex:nontransitivefamily}
    Let $G = \Sigma_d$ be the symmetric group on $d$ letters. A subgroup $H \subseteq \Sigma_d$ is called \emph{non-transitive} if the induced permutation $H$-action on a set of $d$-elements is non-transitive. The collection of non-transitive subgroups of $\Sigma_d$ forms a family $\Fnt(d)$ which will play a distinguished role in what follows.
\end{Exa}

\begin{Rem}
	Let $d=\sum_{i=0}^m a_i p^i$ be the expansion of $d$ in base $p$, so that $0 \le a_i < p$ for all $i$. Let $C_p^{\wr i} \coloneqq C_p \wr \ldots \wr C_p$ denote the $i$-fold wreath product of cyclic groups of order $p$. The $p$-Sylow subgroups of $\Sigma_d$ are of the form
    \[
        S_{d,p} \cong \prod_{i=0}^m (C_p^{\wr i})^{\times a_i}.
    \]
    (See \cite[p.~176]{Rotman95} or \cite{Kaloujnine48}.) It follows that $\Sigma_d$ has a transitive $p$-Sylow subgroup if and only if $d$ is a power of $p$. This is the underlying reason for the sparsity of blueshift numbers we will encounter in the description of the topology later on.
\end{Rem}

\begin{Rem}\label{rem:closure-prop-for-families}
	We can pull back a family $\cF$ of subgroups of $G$ along any homomorphism $f\colon H \to G$ to obtain a family
    \[
        f^{-1}\cF \coloneqq \SET{K \subseteq H}{f(K) \in \cF}
    \]
	of subgroups of $H$. One also readily observes that a union or intersection of a collection of families of subgroups of $G$ is again a family of subgroups of $G$.
\end{Rem}

\begin{Def}\label{def:universalspaces}
	We let $E\cF$ denote the universal $G$-space for the family $\cF$, which is characterized (up to homotopy equivalence) by 
    \[
        E\cF^H \simeq 
            \begin{cases}
                \ast & H \in \cF \\
                \varnothing & H \notin \cF.
            \end{cases}
    \]
	We define the pointed $G$-space $\tilde{E}\cF$ via the cofiber sequence of pointed spaces
    \begin{equation}\label{eq:EF-cofiber}
        E\cF_+ \to S_G^0 \to \tilde{E}\cF.
    \end{equation}
	It is characterized by
    \[
		\tilde{E}\cF^H   \simeq   
            \begin{cases}
                S^0 & H \notin \cF \\
                \ast & H \in \cF.
            \end{cases}
    \]
\end{Def}

\begin{Exa}\label{ex:universalspaces}
     If $\Ftriv$ is the family consisting of only the trivial subgroup of $G$, then we obtain the familiar universal $G$-space $E\Ftriv \simeq EG$. If $\Fall$ is the family of all subgroups of $G$, then $E\Fall \simeq S_G^0$ and $\tilde{E}\Fall \simeq \ast$.
\end{Exa}

\begin{Exa}\label{exa:universal-space-for-sigma-d}
   Let $\overline \rho_d$ denote the reduced standard $(d-1)$-dimensional real representation of $\Sigma_d$ and write $S^{\overline \rho_d}$ for the corresponding $\Sigma_d$-representation sphere, i.e., the one-point compactification of $\overline \rho_d$ with induced $\Sigma_d$-action. Let $S^{n\overline{\rho}_d}$ be its $n$-fold smash power. The spaces $S^{n\overline{\rho}_d}$ form a directed system as $n$ varies, and 
    \[
		S^{\infty \overline \rho_d} \coloneqq\colim_n S^{n\overline{\rho}_d}
    \]
    is a model for the $\Sigma_d$-space $\tilde E \Fnt(d)$ for the family $\Fnt(d)$ of \cref{ex:nontransitivefamily}.
\end{Exa}

\begin{Lem}\label{lem:universalspace_properties}
	Let $G$ and $H$ be finite groups.
    \begin{enumerate}
        \item Let $f\colon H \to G$ be a group homomorphism and $\cF$ a family of subgroups of $G$. Then there are equivalences of $H$-spaces
            \[
                f^*(E\cF_+) \simeq (Ef^{-1}\cF)_+ \quad \text{and} \quad f^*(\tilde{E}\cF) \simeq (\tilde{E}f^{-1}\cF).
            \]
        \item Consider two families $\cF_1,\cF_2$ of subgroups of $G$. Then there are equivalences of $G$-spaces
            \[
                (E\cF_1)_+ \wedge (E\cF_2)_+ \simeq E(\cF_1 \cap \cF_2)_+ \quad \text{and} \quad (\tilde{E}\cF_1) \wedge (\tilde{E}\cF_2) \simeq \tilde{E}(\cF_1 \cup \cF_2).
            \]
    \end{enumerate}
\end{Lem}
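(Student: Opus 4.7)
The strategy in both parts is to verify the defining fixed-point criterion from \cref{def:universalspaces} and then invoke the uniqueness (up to equivariant homotopy equivalence) of universal spaces. Thus the plan is to reduce everything to a routine calculation of fixed points on each side, using that the fixed-point functor $(-)^K$ on pointed $G$-spaces is symmetric monoidal with respect to the smash product and that for a homomorphism $f\colon H\to G$ and a subgroup $K\subseteq H$ there is a natural equivalence $(f^*X)^K \simeq X^{f(K)}$, since $K$ acts on $X$ through $f(K)\subseteq G$.

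For part (a), I would fix a subgroup $K\subseteq H$ and compute: on the one hand, $(f^*(E\cF_+))^K \simeq (E\cF)^{f(K)}_+$, which is $*_+$ if $f(K)\in\cF$ (that is, $K\in f^{-1}\cF$) and $\varnothing_+$ otherwise. On the other hand, by definition $(E f^{-1}\cF)^K$ takes the same values. The analogous calculation for $\tilde E\cF$ is immediate from the cofiber sequence~\eqref{eq:EF-cofiber} applied levelwise to fixed points, together with the observation that $f^*$ preserves cofiber sequences of pointed $G$-spaces; alternatively, one applies the same fixed-point argument directly using the description of $\tilde E\cF^H$. By the uniqueness of universal spaces for a family, these fixed-point calculations identify $f^*(E\cF_+)$ with $(Ef^{-1}\cF)_+$ and $f^*(\tilde E\cF)$ with $\tilde E f^{-1}\cF$.

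For part (b), I would again proceed by fixing $K\subseteq G$ and taking $K$-fixed points on both sides. For the first equivalence, using the identification $(X_+)\wedge (Y_+)\simeq (X\times Y)_+$ and the fact that $(-)^K$ preserves products, one computes $\bigl((E\cF_1)_+ \wedge (E\cF_2)_+\bigr)^K \simeq (E\cF_1)^K_+ \times_+ (E\cF_2)^K_+$, which contracts to a point precisely when $K\in\cF_1$ and $K\in\cF_2$, i.e.\ when $K\in\cF_1\cap\cF_2$, and is empty otherwise; this matches $E(\cF_1\cap\cF_2)_+^K$. For the second equivalence, using that $(-)^K$ is symmetric monoidal for the smash product of pointed $G$-spaces, one has $(\tilde E\cF_1\wedge\tilde E\cF_2)^K\simeq (\tilde E\cF_1)^K\wedge(\tilde E\cF_2)^K$, which equals $S^0\wedge S^0\simeq S^0$ exactly when $K\notin\cF_1$ and $K\notin\cF_2$ (that is, $K\notin\cF_1\cup\cF_2$) and is contractible otherwise; this matches the characterization of $\tilde E(\cF_1\cup\cF_2)$.

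There is no serious obstacle: both parts are essentially bookkeeping once the fixed-point characterizations of $E\cF$ and $\tilde E\cF$ are in hand. The only mild subtlety is the compatibility of fixed points with restriction along $f\colon H\to G$ in part (a), which we handle by working subgroup-by-subgroup, and the use of the symmetric monoidality of $(-)^K$ in part (b), which allows fixed points to be computed factorwise on a smash product.
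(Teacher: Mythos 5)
Your proof is correct and is exactly the paper's approach: the paper's entire proof reads ``Both pairs of identities can be readily checked by testing on fixed points,'' and your write-up simply supplies the routine details (the identification $(f^*X)^K\simeq X^{f(K)}$ and the compatibility of $(-)^K$ with products and smash products), then appeals to the defining fixed-point characterizations of $E\cF$ and $\tilde E\cF$ from \cref{def:universalspaces}.
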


\begin{proof}
    Both pairs of identities can be readily checked by testing on fixed points. %
\end{proof}

\begin{Rem}\label{rem:EF-idempotent-triangle}
	Applying $\Sigma^\infty$ to the cofiber sequence \eqref{eq:EF-cofiber} we obtain an exact triangle
	\[
		\Sigma^\infty E\cF_+ \to \Sigma^\infty S^0_G \to \Sigma^\infty \tilde{E}\cF 
	\]
	in $\Sp_G$ which can be identified with the idempotent triangle (\cref{rem:balmer-favi-properties})
	\[
		e_Y \to \unit \to f_Y 
	\]
	associated to the Thomason subset $Y\coloneqq\bigcup_{H \in \cF} \supp(\Sigma^\infty G/H_+) \subseteq \Spc(\Sp_G^c)$; cf.~\cite[Example 5.14]{BalmerSanders17}. By slight abuse of notation, we will drop the $\Sigma^\infty$ and write $E\cF_+$ and $\tilde{E}\cF$ for the corresponding suspension spectra in $\Sp_G$. Note that the stable versions of the identities of \cref{lem:universalspace_properties} hold as well, since $\Sigma^\infty$ commutes with the smash product $\wedge$ and change of groups $f^*$. Alternatively, they follow from the general behaviour of Balmer--Favi idempotents using the above identification; cf.~\cref{prop:idempotents-under-tt-functors} and \cite[Lemma 1.27]{bhs1}.
\end{Rem}

\begin{Rem}
	We now define geometric fixed points with respect to a family of subgroups. When applied to the family $\cF=\cP(G)$ of all proper subgroups of $G$, the definition recovers the usual geometric fixed points for $G$.
\end{Rem}

\begin{Def}\label{def:geometricfixedpoints}
    The \emph{geometric fixed points} of a $G$-spectrum $X$ with respect to a family $\cF$ of subgroups of $G$ is the non-equivariant spectrum
        \[
            \Phi^{\cF}(X) \coloneqq (X \otimes \tilde{E}\cF)^{G}.
        \]
\end{Def}

\begin{Lem}\label{lem:geomfixedpoints_changeoffamily}
    Suppose $\cF_1 \subseteq \cF_2$ are families of subgroups of $G$. There is a monoidal natural transformation $\Phi^{\cF_1} \to \Phi^{\cF_2}$ of lax symmetric monoidal functors.
\end{Lem}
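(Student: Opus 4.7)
The plan is to construct the desired natural transformation at the level of underlying $G$-spectra via a canonical map $\tilde{E}\cF_1 \to \tilde{E}\cF_2$ of commutative algebras, and then transport this through the lax symmetric monoidal functor $(-)^G$.

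First, I would use the tt-geometric description from \cref{rem:EF-idempotent-triangle}, which identifies $\tilde{E}\cF_i$ with the right idempotent $f_{Y_i}$ in $\Sp_G$ associated to the Thomason subset $Y_i \coloneqq \bigcup_{H \in \cF_i} \supp(\Sigma^{\infty}G/H_+)$. The inclusion $\cF_1 \subseteq \cF_2$ immediately gives $Y_1 \subseteq Y_2$, so combining \cref{lem:universalspace_properties}(b) with the identification in \cref{rem:EF-idempotent-triangle} yields $\tilde{E}\cF_1 \otimes \tilde{E}\cF_2 \simeq \tilde{E}\cF_2$. I define the map $\tilde{E}\cF_1 \to \tilde{E}\cF_2$ as the composite
\[
    \tilde{E}\cF_1 \simeq \tilde{E}\cF_1 \otimes \unit \xrightarrow{\id \otimes \eta} \tilde{E}\cF_1 \otimes \tilde{E}\cF_2 \simeq \tilde{E}\cF_2,
\]
where $\eta\colon \unit \to \tilde{E}\cF_2$ is the unit of the idempotent algebra.

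Next, I would promote this to a map of commutative algebra objects. Both $\tilde{E}\cF_i$ are smashing idempotent commutative algebras, and for such objects the space of commutative algebra morphisms between them is either empty or contractible (this is the standard rigidity of idempotent algebras in a symmetric monoidal $\infty$-category). Since the composite above provides a map, it is canonically a map of commutative algebras.

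Finally, applying the lax symmetric monoidal functor $(-)^G\colon \Sp_G \to \Sp$ (which is lax symmetric monoidal as the right adjoint of the symmetric monoidal inflation functor $\infl_e^G$) to $X \otimes (-)$ of the above map gives the natural transformation
\[
    \Phi^{\cF_1}(X) = (X \otimes \tilde{E}\cF_1)^G \to (X \otimes \tilde{E}\cF_2)^G = \Phi^{\cF_2}(X).
\]
Each $\Phi^{\cF_i}$ is lax symmetric monoidal because it is the composite of $(-) \otimes \tilde{E}\cF_i$, which is lax symmetric monoidal via the commutative algebra structure on $\tilde{E}\cF_i$ (using that the multiplication is an equivalence), followed by $(-)^G$. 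The transformation is monoidal precisely because $\tilde{E}\cF_1 \to \tilde{E}\cF_2$ is a map of commutative algebras. There is no serious obstacle here; the only subtlety is the $\infty$-categorical coherence needed to lift the naive map of underlying objects to a map of algebras, which the idempotent framework handles automatically.
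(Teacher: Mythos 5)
Your proposal is correct and follows essentially the same route as the paper: the paper's proof simply asserts the existence of the map $\tilde{E}\cF_1 \to \tilde{E}\cF_2$ of equivariant commutative ring spectra and applies the lax symmetric monoidal fixed-points functor. You supply the construction of that ring map explicitly via the idempotent formalism of \cref{rem:EF-idempotent-triangle} and the rigidity of idempotent algebras, which is a reasonable way to fill in the detail the paper leaves implicit.
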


\begin{proof}
    The desired natural transformation is obtained from the map $\tilde{E}\cF_1 \to \tilde{E}\cF_2$ of equivariant commutative ring spectra by applying the lax symmetric monoidal functor of fixed points. 
\end{proof}

\begin{Def}\label{def:tateconstruction}
    Let $\cF$ be a family of subgroups of $G$. The \emph{$\cF$-Tate construction} on a $G$-spectrum $X$ is defined as the $\cF$-geometric fixed points
    \[
		X^{t\cF} \coloneqq \Phi^{\cF}(\underline{X})
	\]
    of the \emph{Borel-completion} $\underline{X}\coloneqq F(EG_+,X)$ of $X$. For a non-equivariant spectrum~$Y$, we write
    \[ Y^{t\cF} \coloneqq (\infl_e^G Y)^{{t\cF}}\]
    for the $\cF$-Tate construction on $Y$ equipped with a trivial $G$-action via inflation.
\end{Def}

\begin{Exa}\label{exa:usual-tate}
	Taking the trivial family $\cF = \Ftriv$, we obtain the usual Tate construction $X^{tG} \coloneqq (\underline{X} \otimes \tilde{E}G)^G$.
\end{Exa}

\begin{Lem}\label{lem:geomfixedpoints_functoriality}
	Let $f \colon H \to G$ be a group homomorphism and $\cF$ a family of subgroups of $G$. Then:
	\begin{enumerate}
	\item There is a monoidal natural transformation
			\[
				\Phi^{\cF} \to \Phi^{f^{-1}\cF}\circ f^*
			\]
		of symmetric monoidal functors $\Sp_G \to \Sp$. 
	\item There is a monoidal natural transformation
		\[
		(-)^{t\cF} \to f^*(-)^{t(f^{-1}\cF)}
		\]
		of lax symmetric monoidal functors $\Sp_G \to \Sp$.
	\item In particular, precomposing with $\infl_e^G$, there is a monoidal natural transformation
		\[
			(-)^{t\cF}  \to (-)^{t(f^{-1}\cF)}
		\]
		of lax symmetric monoidal functors $\Sp \to \Sp$.
	\end{enumerate}
\end{Lem}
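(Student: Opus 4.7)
The plan is to construct all three natural transformations from a single common ingredient: the unit $\eta\colon \id \to f_*f^*$ of the symmetric monoidal adjunction $f^*\dashv f_*$, combined with the identification $f^*\tilde{E}\cF \simeq \tilde{E}(f^{-1}\cF)$ supplied by \cref{lem:universalspace_properties}(a) (in its stable form from \cref{rem:EF-idempotent-triangle}).

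For part (a), I would define the transformation $\Phi^{\cF}(X) \to \Phi^{f^{-1}\cF}(f^*X)$ as the composite
\begin{align*}
    (X \otimes \tilde{E}\cF)^G
        &\xrightarrow{\eta^G} \bigl(f_*f^*(X \otimes \tilde{E}\cF)\bigr)^G \\
        &\simeq \bigl(f^*(X \otimes \tilde{E}\cF)\bigr)^H \\
        &\simeq \bigl(f^*X \otimes f^*\tilde{E}\cF\bigr)^H \\
        &\simeq \bigl(f^*X \otimes \tilde{E}(f^{-1}\cF)\bigr)^H,
\end{align*}
where the second equivalence uses the adjunction identity $(f_*Y)^G \simeq Y^H$, the third uses that $f^*$ is symmetric monoidal, and the fourth is \cref{lem:universalspace_properties}(a). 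Each step is a monoidal natural transformation: the unit of a symmetric monoidal adjunction is monoidal, and the remaining identifications respect the monoidal structure (with the last being an equivalence of commutative algebras).

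For part (b), I would apply (a) to $X = F(EG_+, Y)$ and then apply $\Phi^{f^{-1}\cF}$ to the composite morphism in $\Sp_H$
\[
    f^*F(EG_+, Y) \to F(f^*EG_+, f^*Y) \to F(EH_+, f^*Y).
\]
The first arrow is the assembly morphism arising because $f^*$ is a symmetric monoidal left adjoint, and the second is induced by a natural $H$-equivariant map $EH_+ \to f^*EG_+$; the latter exists because $f^*EG_+$ is a model for $E\cF'_+$ with $\cF' = \SET{K \subseteq H}{K \subseteq \ker f}$, which contains the trivial subgroup, so the universal property of $EH$ supplies the map. The result is the desired lax monoidal transformation $Y^{t\cF} \to (f^*Y)^{t(f^{-1}\cF)}$.

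Part (c) follows immediately by precomposing (b) with $\infl_e^G\colon \Sp \to \Sp_G$ and using $f^*\circ \infl_e^G \simeq \infl_e^H$, which is the instance of the functoriality $(g\circ f)^* \simeq f^*\circ g^*$ applied to $e \hookrightarrow H \xrightarrow{f} G$. The main technical point throughout is the bookkeeping of monoidality, which goes through cleanly because every constituent map arises either from the unit of a symmetric monoidal adjunction, from the symmetric monoidality of a change-of-groups functor, or from the algebra structure on equivariant universal spaces recorded in \cref{lem:universalspace_properties}(b).
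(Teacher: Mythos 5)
Your proposal is correct and follows essentially the same route as the paper: part (a) via the unit of the $(f^*,f_*)$-adjunction combined with $(f_*Y)^G\simeq Y^H$, monoidality of $f^*$, and $f^*\tilde{E}\cF\simeq\tilde{E}(f^{-1}\cF)$; part (b) by applying (a) to the Borel completion and composing with the lax comparison $f^*\ihom{EG_+,-}\to\ihom{EH_+,f^*(-)}$ built from the map $EH_+\to f^*EG_+$; part (c) from $f^*\circ\infl_e^G\simeq\infl_e^H$. The only cosmetic difference is that the paper constructs the map $EH_+\to f^*EG_+\simeq E\cF[{\le}\ker f]_+$ explicitly via the smash-product identity of \cref{lem:universalspace_properties}(b) rather than quoting the universal property directly.
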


\begin{proof}
    The natural transformation in part (a) is given by the composite
	\[
		\Phi^{\cF}(X) = (\tilde{E}\cF \otimes X)^G \to (f_*f^*(\tilde{E}\cF \otimes X))^G \simeq (f^*(\tilde{E}\cF) \otimes f^*(X))^H \simeq \Phi^{f^{-1}\cF}(f^*(X))
	\]
    where the middle map is the unit of the $(f^*,f_*)$-adjunction, the first equivalence follows from the relation $(f_*X)^G \simeq X^H$ for any $X \in \Sp_H$, and the last from \cref{lem:universalspace_properties}(a). For part (b), let $N\coloneqq \ker f$ denote the kernel of $f$ and observe that $f^*(EG_+) \simeq E\cF[{\le}N]_+$ is the universal $H$-space for the family of all subgroups contained in $N$, again by \cref{lem:universalspace_properties}(a). Since $\Ftriv \subseteq \cF[{\le}N]$, \cref{lem:universalspace_properties}(b) implies there is a canonical morphism $\theta\colon EH_+ \to E\cF[{\le}N]_+$ given by
	\[
		EH_+ \simeq EH_+ \otimes E\cF[{\le}N]_+ \to \unit \otimes E\cF[{\le}N]_+ \simeq E\cF[{\le}N]_+
	\]
	which induces a map
	\[
		\ihom{E\cF[{\le}N]_+,Y} \xrightarrow{\ihom{\theta,1}} \ihom{EH_+,Y}
	\]
    for any $H$-spectrum $Y$. The natural transformation in (b) is then induced from the one in part (a) using the natural transformation
    \begin{equation}\label{eq:closed-aux}
		f^*\ihom{EG_+,X} \to \ihom{f^*(EG_+),f^*(X)}\xrightarrow{\ihom{\theta,1}} \ihom{EH_+,f^*(X)}.
    \end{equation}
    We note in passing that when $f\colon H\hookrightarrow G$ is the inclusion of a subgroup, the first map in \eqref{eq:closed-aux} is an equivalence by \cite[(3.12)]{BalmerDellAmbrogioSanders16} and the second map is also an equivalence since in this case $f^*(EG_+)\simeq EH_+$ (i.e.,~$\cF[{\le}N]=\Ftriv$). Part (c) follows from part (b) since $f^* \circ \infl_e^G \simeq \infl_e^H$.
\end{proof}

\begin{Rem}\label{rem:transferargument}
    Let $\cF$ be a family of subgroups of $G$ and consider some $H \in \cF$ of index $[G:H]$ in $G$. If $X$ is a $G$-spectrum on which $[G:H]$ is invertible, then a standard transfer argument shows that $X^{t\cF} = 0$. Indeed, we may apply \cite[Corollary A.9]{GlasmanLawson20pp} with $\cF' = \Fall$ to see that $X^{t\cF} \simeq X^{t\Fall} = 0$. For example, if $X$ is a $p$-local $G$-spectrum (such as $X=\infl_e^G Y$ for a $p$-local spectrum $Y$) and $p$ does not divide $[G:H]$ then $X^{t\cF}=0$.
\end{Rem}

\subsection*{Derivatives of the Tate construction}\label{ssec:tatederivatives}

Let $d>l>0$ be positive integers. Our next goal is to express the Tate-derivative $\partial_l t_d i_d\colon \Sp\to\Sp$ of \cref{def:tate-derivatives} in terms of the equivariant Tate constructions of \cref{def:tateconstruction}. This will be based on a description of the Tate-derivatives due to \cite{AroneChing15}. We first introduce some notation.

\begin{Def}\label{def:generalsymmetricgroup}
    For a partition $\lambda = (d_1,\ldots,d_l) \vdash d$ of $d$ of length $l$, we let 
    \[
		\Sigma_{\lambda} \coloneqq \Sigma_{d_1} \times \ldots \times \Sigma_{d_l}
    \]
    denote the corresponding product of symmetric groups. The projection onto the $i$-th factor will be denoted by $\pi_i \colon \Sigma_{\lambda} \twoheadrightarrow \Sigma_{d_i}$. Furthermore, we write 
	\begin{equation}\label{eq:nontransitivefamily}
		\Fnt(\lambda) \coloneqq \pi_1^{-1}\Fnt(d_1) \cup \ldots \cup \pi_l^{-1}\Fnt(d_l)
	\end{equation}
    for the family of subgroups $K$ of $\Sigma_{\lambda}$ with the property that at least one of the projections $\pi_iK \subseteq \Sigma_{d_i}$ is non-transitive. Note that $\Fnt(\lambda)$ does indeed form a family by \Cref{rem:closure-prop-for-families}.
\end{Def}

\begin{Exa}\label{ex:nontransitive}
    Consider $\lambda = (2,2) \vdash 4$. Then $\Fnt(\lambda)$ consists of the three subgroups $e, \Sigma_2 \times e, e \times \Sigma_2$ in $\Sigma_2 \times \Sigma_2$. In particular, the diagonal $\Sigma_2$ is not part of the family. 
\end{Exa}

We now recall a construction from \cite{AroneChing15} that appears in their description of the Tate-derivatives.

\begin{Def}\label{def:aronechingspaces}
    Let $\lambda = (d_1,\ldots, d_l) \vdash d$ be a partition of $d$ of length $l$. Continuing \cref{exa:universal-space-for-sigma-d}, we define the $\Sigma_{\lambda}$-space
        \[  
            S^{\infty(d-l)}_{(\lambda)} \coloneqq \colim_{L \to \infty} S^{L\overline \rho_{d_1}} \wedge \ldots \wedge S^{L\overline \rho_{d_l}}
        \]
    where $\Sigma_{\lambda}$ acts on each smash factor $S^{L\overline \rho_{d_i}}$ through its projection to $\Sigma_{d_i}$. We also write $S^{\infty(d-l)}_{(\lambda)}$ for the associated $\Sigma_{\lambda}$-suspension spectrum.
\end{Def}

\begin{Lem}\label{lem:universalspace_identification} 
    Let $\lambda = (d_1,\ldots,d_l) \vdash d$ be a partition of $d$ of length $l$. Then we have an equivalence of $\Sigma_{\lambda}$-spectra 
        \[ 
            \textstyle S^{\infty(d-l)}_{(\lambda)} \simeq \bigotimes_{i=1}^l S^{\infty \overline \rho_{d_i}} \simeq \tilde{E}\Fnt(\lambda) 
        \] 
    where $\Fnt(\lambda)$ is the family \eqref{eq:nontransitivefamily} of $\Sigma_{\lambda}$. 
\end{Lem}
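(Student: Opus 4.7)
\medskip

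\noindent\textbf{Proof plan.}

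The plan is to prove the two equivalences in succession. For the first equivalence $S^{\infty(d-l)}_{(\lambda)} \simeq \bigotimes_{i=1}^l S^{\infty \overline \rho_{d_i}}$, I would simply observe that this is essentially by definition once one commutes the colimit past the smash product. Explicitly, since smash product of pointed $\Sigma_\lambda$-spaces (equivalently, the tensor product of $\Sigma_\lambda$-spectra after applying $\Sigma^\infty$) preserves filtered colimits in each variable, we have
\[
    S^{\infty(d-l)}_{(\lambda)} = \colim_{L \to \infty} \bigwedge_{i=1}^l S^{L\overline \rho_{d_i}} \;\simeq\; \bigotimes_{i=1}^l \colim_{L\to\infty} S^{L\overline \rho_{d_i}} \;\simeq\; \bigotimes_{i=1}^l S^{\infty\overline \rho_{d_i}}.
\]

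For the second equivalence, I would assemble it from the earlier lemmas as follows. By \cref{exa:universal-space-for-sigma-d}, we know that as a $\Sigma_{d_i}$-spectrum there is an equivalence $S^{\infty\overline\rho_{d_i}} \simeq \tilde{E}\Fnt(d_i)$. By construction in \cref{def:aronechingspaces}, the $\Sigma_\lambda$-action on the smash factor $S^{\infty\overline\rho_{d_i}}$ is exactly the one obtained by restriction along the projection $\pi_i\colon \Sigma_\lambda \twoheadrightarrow \Sigma_{d_i}$, so we may write it as $\pi_i^*(S^{\infty\overline\rho_{d_i}}) \simeq \pi_i^*(\tilde{E}\Fnt(d_i))$. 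Applying part (a) of \cref{lem:universalspace_properties} then yields the identification
\[
    \pi_i^*(\tilde{E}\Fnt(d_i)) \;\simeq\; \tilde{E}\bigl(\pi_i^{-1}\Fnt(d_i)\bigr)
\]
of $\Sigma_\lambda$-spectra. Finally, iterating part (b) of \cref{lem:universalspace_properties} across the tensor product and using the defining formula \eqref{eq:nontransitivefamily} for $\Fnt(\lambda)$, we obtain
\[
    \bigotimes_{i=1}^l \tilde{E}\bigl(\pi_i^{-1}\Fnt(d_i)\bigr) \;\simeq\; \tilde{E}\!\left(\bigcup_{i=1}^l \pi_i^{-1}\Fnt(d_i)\right) \;=\; \tilde{E}\Fnt(\lambda),
\]
which completes the proof.

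This proof is essentially a bookkeeping exercise combining the base case identification from \cref{exa:universal-space-for-sigma-d} with the functoriality (under pullback) and monoidality (with respect to union of families) properties of $\tilde{E}(-)$ recorded in \cref{lem:universalspace_properties}. There is no substantive obstacle; the only mild subtlety is ensuring that passing to suspension spectra is harmless, which is guaranteed by the discussion in \cref{rem:EF-idempotent-triangle} (namely, $\Sigma^\infty$ commutes with $\wedge$ and with pullback $f^*$, so the space-level identities of \cref{lem:universalspace_properties} pass immediately to the stable category).
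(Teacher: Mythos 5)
Your proof is correct and follows essentially the same route as the paper: the first equivalence is the symmetric monoidality of $\Sigma^\infty$ together with commuting the (diagonal, cofinal in $\bbN^l$) colimit past the smash product, and the second is the combination of \cref{exa:universal-space-for-sigma-d} with parts (a) and (b) of \cref{lem:universalspace_properties}, exactly as the paper's one-line proof indicates. Your write-up just supplies the bookkeeping the paper leaves implicit.
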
 

\begin{proof}
	The first equivalence holds because the equivariant suspension spectrum functor is symmetric monoidal. The second equivalence then follows from \cref{exa:universal-space-for-sigma-d} and \cref{lem:universalspace_properties}. 
\end{proof}

\begin{Prop}\label{prop:tatederivatives_equivariantformula}
    Let $A$ be a spectrum. Then there is an equivalence
        \[
            \partial_l(X \mapsto (A \otimes X^{\otimes d})^{h\Sigma_d}) \simeq \prod_{\lambda \vdash d\colon |\lambda|=l} A^{t\Fnt(\lambda)}
        \]
    where the product ranges over partitions of $d$ of length $l$ and $\Fnt(\lambda)$ is the family defined in \eqref{eq:nontransitivefamily}.
\end{Prop}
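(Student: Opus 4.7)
The plan is to compute the $l$-th cross-effect of $F(X) = (A \otimes X^{\otimes d})^{h\Sigma_d}$, multilinearize it in each variable via iterated Kuhn--McCarthy, and identify the result as a product of $\cF$-Tate constructions over non-transitive subgroup families, in the spirit of Arone--Ching. Concretely, I would first expand $F(X_1\oplus\cdots\oplus X_l) = (A\otimes (X_1\oplus\cdots\oplus X_l)^{\otimes d})^{h\Sigma_d}$ using the $\Sigma_d$-equivariant decomposition $(X_1\oplus\cdots\oplus X_l)^{\otimes d} \simeq \bigoplus_{f\colon \num{d}\to \num{l}} X_{f(1)}\otimes\cdots\otimes X_{f(d)}$, with $\Sigma_d$ acting by permutation of tensor factors. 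Using the idempotent description of \cref{lem:cross-effect} (see also the analysis of \cref{prop:day-convolution}) to extract the summands corresponding to surjective $f$'s, decomposing by $\Sigma_d$-orbits (indexed by partitions $\lambda=(d_1,\ldots,d_l)\vdash d$ of length $l$ with stabilizer $\Sigma_\lambda$), and applying Frobenius reciprocity $(\Ind_{\Sigma_\lambda}^{\Sigma_d}Y)^{h\Sigma_d}\simeq Y^{h\Sigma_\lambda}$, this produces
$$
\crosseffect_l F(X_1,\ldots, X_l) \simeq \bigoplus_{\lambda} (A\otimes X_1^{\otimes d_1}\otimes\cdots\otimes X_l^{\otimes d_l})^{h\Sigma_\lambda}.
$$

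Next I would compute $\partial_l F = \crosseffect_l(P_l F)(\bbS,\ldots,\bbS)$ by multilinearizing the above expression in each variable $X_i$ and evaluating at $(\bbS,\ldots,\bbS)$. For each $\lambda$-summand, I plan to apply \cref{prop:tatesquare} (Kuhn--McCarthy) iteratively in each variable: the linearization of $X_i\mapsto (Y\otimes X_i^{\otimes d_i})^{h\Sigma_{d_i}}$ at $X_i = \bbS$ is governed by the generalized Tate construction over non-transitive subgroups of $\Sigma_{d_i}$, whose relevant universal space is $\tilde E\Fnt(d_i)\simeq S^{\infty\overline\rho_{d_i}}$ from \cref{exa:universal-space-for-sigma-d}. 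Iterating through all $l$ variables and invoking \cref{lem:universalspace_properties}(b) to identify $\tilde E\Fnt(d_1)\wedge\cdots\wedge\tilde E\Fnt(d_l) \simeq \tilde E\Fnt(\lambda)$ (which is essentially \cref{lem:universalspace_identification} for the union family $\Fnt(\lambda) = \bigcup_i\pi_i^{-1}\Fnt(d_i)$), each $\lambda$-summand collapses to $A^{t\Fnt(\lambda)}$. Assembling over $\lambda$ then yields the claimed product decomposition.

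The hard part will be the multilinearization step: the iterated Kuhn--McCarthy procedure across $l$ variables has to be matched against a \emph{single} Tate construction with respect to the union family $\Fnt(\lambda)$ from \eqref{eq:nontransitivefamily}. Concretely, one must verify that the sequential linearizations, which individually produce Tate constructions for the families $\pi_i^{-1}\Fnt(d_i)$ inside $\Sigma_\lambda$, assemble along the smash-product identity $\tilde E \cF_1 \wedge \tilde E \cF_2 \simeq \tilde E(\cF_1\cup \cF_2)$ into the Tate construction for the union family. A careful tracking of the recollements in each variable, together with the change-of-group functoriality of the $\cF$-Tate construction from \cref{lem:geomfixedpoints_functoriality}, should make this identification work; once it is in place, the remainder of the proof is routine assembly.
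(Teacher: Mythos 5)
Your strategy is genuinely different from the paper's. The paper does not reprove the derivative computation at all: it cites Arone--Ching's formula $\partial_l\bigl((A\otimes X^{\otimes d})^{h\Sigma_d}\bigr)\simeq\prod_{\lambda}\colim_L(A\otimes S^{L(d-l)})^{h\Sigma_\lambda}$ and then merely rewrites each factor as $A^{t\Fnt(\lambda)}$ using the identification $\colim_L\bigotimes_iS^{L\overline\rho_{d_i}}\simeq\tilde{E}\Fnt(\lambda)$ of \cref{lem:universalspace_identification} (plus a commutation of the filtered colimit past the fixed points). You propose instead to rederive the Arone--Ching input from scratch. Your first step --- expanding the $l$-th cross-effect, discarding the summands indexed by non-surjective functions, grouping by $\Sigma_d$-orbits, and applying the induction/fixed-points adjunction --- is correct and standard.

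The gap is in the multilinearization step. \cref{prop:tatesquare} identifies only the \emph{top} layer of the Taylor tower of $X\mapsto(Y\otimes X^{\otimes n})^{h\Sigma_n}$: it says that $P_{n-1}$ of this functor is the ordinary Tate construction $(Y\otimes X^{\otimes n})^{t\Sigma_n}$, i.e.\ the Tate construction for the trivial family $\Ftriv$. What you need in each variable is $P_1$. Your assertion that the linearization of $X_i\mapsto(Y\otimes X_i^{\otimes d_i})^{h\Sigma_{d_i}}$ is ``governed by the generalized Tate construction over non-transitive subgroups'' is the correct answer, but it does not follow from \cref{prop:tatesquare}: for $d_i\geq 3$ the family $\Fnt(d_i)$ is strictly larger than $\Ftriv$, and one cannot iterate Kuhn--McCarthy down the tower because the intermediate stages $P_kF$ for $1<k<d_i-1$ are no longer of the form $(\,\cdot\,\otimes X^{\otimes k'})^{h\Sigma_{k'}}$. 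The honest proof of the single-variable statement $P_1\bigl((Y\otimes X^{\otimes n})^{h\Sigma_n}\bigr)(\bbS)\simeq Y^{t\Fnt(n)}$ runs through the formula $P_1F\simeq\colim_k\Omega^kF(\Sigma^k-)$, which produces the colimit of representation spheres $S^{\infty\overline\rho_n}\simeq\tilde{E}\Fnt(n)$ of \cref{exa:universal-space-for-sigma-d}, together with a justification for passing the colimit inside the fixed points (dualizability of representation spheres). That is precisely the content of the Arone--Ching result the paper imports, so as written your argument assumes its key input at the point where you invoke ``iterated Kuhn--McCarthy.'' The subsequent assembly across the $l$ variables via $\tilde{E}\cF_1\wedge\tilde{E}\cF_2\simeq\tilde{E}(\cF_1\cup\cF_2)$, which you rightly flag as the delicate point, is fine once the single-variable statement is in place. (A minor indexing remark: the $\Sigma_d$-orbits of surjections $\num{d}\to\num{l}$ are classified by \emph{ordered} tuples $(d_1,\ldots,d_l)$ of fiber sizes, i.e.\ compositions rather than unordered partitions; this only affects multiplicities in the product, to which the downstream height arguments are insensitive.)
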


\begin{proof}
    This result is essentially a reformulation of \cite[Proposition 5.2 and Remark 5.3]{AroneChing15}. There the authors write $K_lA$ for the $l$-th derivative of the functor $X \mapsto (A\otimes X^{\otimes d})^{h\Sigma_d}$ and describe it by the formula
        \[
            \partial_l(X \mapsto (A \otimes X^{\otimes d})^{h\Sigma_d})  \simeq \prod_{\lambda \vdash d\colon |\lambda|=l} \colim_{L \to \infty}(A \otimes S^{L(d-l)})^{h\Sigma_{\lambda}}.
        \]
    Here, $S^{L(d-l)}$ is the spectrum with $\Sigma_d$-action constructed in \cite[Definition 5.1]{AroneChing15}. Restricting the action along $\Sigma_{\lambda} \subseteq \Sigma_d$ for some partition $\lambda = (d_1,\ldots,d_l)\vdash d$, we may identify $S^{L(d-l)}$ with $S^{L\overline \rho_{d_1}} \otimes \ldots \otimes S^{L\overline \rho_{d_l}}$ as spectra with $\Sigma_{\lambda}$-action. To obtain the proposition, we can thus rewrite the factors in the product as follows:
        \begin{align*}
            \colim_{L \to \infty}(A \otimes S^{L(d-l)})^{h\Sigma_{\lambda}} & \simeq \colim_{L \to \infty}\ihom{E\Sigma_{\lambda+}, (\infl_e^{\Sigma_{\lambda}} A) \otimes \textstyle\bigotimes_{i=1}^lS^{L\overline \rho_{d_i} }}^{\Sigma_{\lambda}} \\
            & \simeq \colim_{L \to \infty}(\ihom{E\Sigma_{d+},\infl_e^{\Sigma_{\lambda}} A} \otimes \textstyle\bigotimes_{i=1}^lS^{L\overline \rho_{d_i} })^{\Sigma_{\lambda}} \\
            & \simeq (\ihom{E\Sigma_{d+},\infl_e^{\Sigma_{\lambda}} A} \otimes S^{\infty(d-l)}_{(\lambda)})^{\Sigma_{\lambda}} \\
            & \simeq (\underline{A} \otimes \tilde{E}\Fnt(\lambda))^{\Sigma_{\lambda}} \\
            & \simeq \Phi^{\Fnt(\lambda)}(\underline{A}) = A^{t\Fnt(\lambda)}
        \end{align*}
    where the penultimate equivalence substitutes the identification of \cref{lem:universalspace_identification}.
\end{proof}

\subsection*{Blueshift}\label{ssec:blueshift}

If $M$ is a spectrum, we can restrict the homology theory represented by~$M$ to $p$-local finite spectra. Its kernel will then be a thick subcategory of $\Sp_{(p)}^c$, so we can use the thick subcategory theorem to define a notion of height for $M$:

\begin{Def}\label{def:height}
    If $M$ is a spectrum, then its \emph{height} at the prime $p$ is defined as
        \[
            \height_p(M) \coloneqq \inf{\SET{h \in \bbN \cup\{-1,\infty\}}{M \otimes \cat C_{p,h+1} = 0}}
        \]
    where $\cat C_{p,h+1}$ is the prime given by the vanishing of $K(p,h)$, as in \cref{exa:balmer-spectrum-spectra}. 
\end{Def}

\begin{Exa}\label{ex:edgeheight}
    By definition, we have $\height_p(0) = -1$ and $\height_p(\bbS) = \infty$ for all primes $p$. In fact, any spectrum that is not $p$-local has infinite $p$-height using the convention that $\inf(\varnothing) = \infty$.
\end{Exa}

\begin{Rem}
    Let $\smash[b]{L_{p,h}^f}$ be the finite localization on $\Sp$ which localizes away from~$\cat C_{p,h+1}$ (\cref{exa:chromatic-truncation}). If we further set $L_{p,-1}^f X \coloneqq 0$ and $L_{p,\infty}^f X \coloneqq X_{(p)}$ for all $X \in \Sp$, we can rewrite \cref{def:height} as
        \[
            \height_p(M) \coloneqq \inf{\SET{h \in \bbN \cup\{-1,\infty\}}{M \simeq L_{p,h}^f(M)}}.
        \]
    This implies that the notion of height defined here is compatible with, and in fact extends, the one appearing in \cite{BHNNNS19} using the fact that the telescope conjecture holds for ring spectra (\cite[Lemma 2.3]{land2022purity}). In particular, we see that $\height_p(L_{p,h}^f\bbS) = h$ and $\height_p(K(p,h)) = h$ for all $h$.
\end{Rem}

\begin{Lem}\label{lem:height_properties}
	Let $p$ be a prime. Height has the following properties:
	\begin{enumerate} 
		\item If $R \to S$ is a map of ring spectra, then $\height_p(R) \ge \height_p(S)$.
		\item If $M, N$ are spectra, then $\height_p(M \oplus N) = \sup(\height_p(M),\height_p(N))$.
	\end{enumerate}
\end{Lem}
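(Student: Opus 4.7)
For part (a), the plan is to observe that a map of ring spectra $R \to S$ makes $S$ into an $R$-module, and then use that smashing with an $R$-acyclic object kills any $R$-module. Concretely, set $h \coloneqq \height_p(R)$; we want to show $\height_p(S) \le h$, i.e., that $S \otimes x \simeq 0$ for every $x \in \cat C_{p,h+1}$. The key identity is
\[
    S \otimes x \simeq S \otimes_R (R \otimes x),
\]
where $R \otimes x$ is regarded as a left $R$-module via the left tensor factor. By definition of $h = \height_p(R)$, we have $R \otimes x \simeq 0$, and hence $S \otimes x \simeq 0$, as desired. (If $h = \infty$, the inequality is trivial.) Note that it is harmless if $\height_p(S) = \infty$ on the grounds that $S$ is not $p$-local, because in that case $\height_p(R) = \infty$ as well, since the $p$-localization of $R \to S$ must factor $S \to S_{(p)}$ if $R \simeq R_{(p)}$.

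For part (b), the plan is to reduce directly to the defining property of $\height_p$. Since smash product commutes with coproducts and since $\cat C_{p,h+1}$ is closed under the tensor action, we have for each $x \in \cat C_{p,h+1}$ the identity
\[
    (M \oplus N) \otimes x \simeq (M \otimes x) \oplus (N \otimes x),
\]
which vanishes if and only if both summands vanish. Equivalently, writing $h_M \coloneqq \height_p(M)$ and $h_N \coloneqq \height_p(N)$, we have $M \simeq L_{p,h_M}^f M$ and $N \simeq L_{p,h_N}^f N$, and since $L_{p,h}^f$ is exact and the localizations are compatible along the natural transformations $L_{p,h}^f \to L_{p,h'}^f$ for $h \le h'$, it follows that $M \oplus N$ is $L_{p,\sup(h_M,h_N)}^f$-local and that no smaller finite localization works. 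This yields $\height_p(M \oplus N) = \sup(h_M, h_N)$.

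Both parts are essentially formal; there is no real obstacle. The only delicate point is part (a), where one must take care that the module-theoretic manipulation $S \otimes x \simeq S \otimes_R R \otimes x$ is valid; this is standard for a map of ring spectra and requires no additional hypotheses on $R$ or $S$ beyond the existence of the unit and multiplication maps.
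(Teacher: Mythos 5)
Your proof is correct and follows essentially the same route as the paper, whose entire argument for (a) is that $S$ is an $R$-module, so $R\otimes x=0$ forces $S\otimes x\simeq S\otimes_R(R\otimes x)=0$, and whose argument for (b) is likewise just unwinding the definitions via $(M\oplus N)\otimes x\simeq (M\otimes x)\oplus(N\otimes x)$. Your extra remarks on the $p$-locality edge case and the $L^f_{p,h}$ reformulation are harmless but not needed.
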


\begin{proof}
    Both statements follow from unwinding the definitions. For instance, for the first one, observe that $S$ is an $R$-module, so $R\otimes X = 0$ implies $S \otimes X = 0$ for any spectrum $X$.  
\end{proof}

\begin{Def}\label{def:heightidempotent}
	Let $d \ge 1$ be an integer and $p$ a prime. For any $h \ge 0$, we define a $d$-excisive functor via inflation (\cref{def:inflation}) from the finite local sphere of height~$h$:
	\[
		\cL_{p,h}^f \coloneqq i_dL_{p,h}^f\bbS.
	\]
\end{Def}

\begin{Def}\label{def:ppowerpartition}
	A \emph{$p$-power partition} of $d$ of length $l$ is a partition 
 	\[
		\lambda = (d_1,\ldots,d_l) \vdash d
	\]
	of $d$ of length $l$ such that each $d_i$ is a power of $p$. In this definition, we include $p^0=1$ as a power of $p$. We will write $\cP_p(d;l)$ for the set of such partitions.
\end{Def}

We are ready to state and prove the main theorem of this section, which captures the chromatic behaviour of the Tate-derivatives. 

\begin{Thm}\label{thm:tateblueshift}
    Let $p$ be a prime and fix integers $d \geq l > 0$ and $h \geq 0$. Then $\partial_lt_d\cL_{p,h}^f$ is contractible unless $d>l$ and there exists a $p$-power partition of $d$ of length $l$. If such a partition exists, then 
	\[
		\height_p(\partial_lt_d\cL_{p,h}^f) = h-1.
	\]
\end{Thm}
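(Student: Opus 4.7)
The argument proceeds in several steps. First, when $l = d$, the functor $t_d \cL_{p,h}^f$ is itself $(d-1)$-excisive (as observed just after \cref{prop:tatesquare}), so its $d$-th derivative vanishes; this accounts for the ``$d > l$'' clause. For $l < d$, the fiber sequence
\[
(L_{p,h}^f\bbS \otimes X^{\otimes d})_{h\Sigma_d} \to (L_{p,h}^f\bbS \otimes X^{\otimes d})^{h\Sigma_d} \to (L_{p,h}^f\bbS \otimes X^{\otimes d})^{t\Sigma_d}
\]
combined with the fact that the orbit functor is $d$-homogeneous (so that its $l$-th derivative vanishes for $l<d$) identifies $\partial_l t_d \cL_{p,h}^f$ with the $l$-th derivative of the homotopy fixed points functor. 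Invoking \cref{prop:tatederivatives_equivariantformula} then produces
\[
\partial_l t_d \cL_{p,h}^f \simeq \prod_{\lambda \vdash d,\ |\lambda| = l} (L_{p,h}^f \bbS)^{t \Fnt(\lambda)},
\]
translating the problem into stable equivariant homotopy theory.

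The second step rules out partitions that are not $p$-power partitions. If $\lambda = (d_1, \ldots, d_l)$ has some $d_i$ which is not a power of $p$, then the Sylow $p$-subgroup of $\Sigma_{d_i}$ is non-transitive, so a Sylow $p$-subgroup $S_\lambda$ of $\Sigma_\lambda$ projects non-transitively onto the $i$-th factor and therefore lies in $\pi_i^{-1}\Fnt(d_i) \subseteq \Fnt(\lambda)$. Since $[\Sigma_\lambda : S_\lambda]$ is coprime to $p$ and $L_{p,h}^f \bbS$ is $p$-local, the transfer argument of \cref{rem:transferargument} forces the corresponding factor to vanish. By \cref{lem:height_properties}(b), only $p$-power partitions can contribute; in particular, when no $p$-power partition of $d$ of length $l$ exists, the entire product is contractible, as required.

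For the remaining case, fix a $p$-power partition $\lambda = (p^{a_1}, \ldots, p^{a_l})$. The factorization $\tilde E \Fnt(\lambda) \simeq \bigwedge_i \pi_i^\ast \tilde E \Fnt(p^{a_i})$ furnished by \cref{lem:universalspace_properties} presents the factor $(L_{p,h}^f \bbS)^{t \Fnt(\lambda)}$ as a generalized Tate construction built from the pieces associated to the individual symmetric groups $\Sigma_{p^{a_i}}$. A further Sylow transfer argument reduces $\Sigma_\lambda$ to its Sylow $p$-subgroup $S_\lambda = \prod_i C_p^{\wr a_i}$, and restricting along a suitably chosen elementary abelian $p$-subgroup inside $S_\lambda$ brings the computation into the range of the main theorem of \cite{BHNNNS19}, which extends Kuhn's blueshift theorem \cite{Kuhn04} to arbitrary families on finite abelian $p$-groups. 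That theorem pinpoints the blueshift as exactly one chromatic step, yielding $\height_p((L_{p,h}^f\bbS)^{t\Fnt(\lambda)}) = h - 1$. Applying \cref{lem:height_properties}(b) over the nonempty set of surviving partitions then gives the stated height.

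The delicate step is the last one. One must produce an abelian $p$-subgroup $A \subseteq S_\lambda \subseteq \Sigma_\lambda$ whose pulled-back family $\iota^{-1}\Fnt(\lambda)$ realizes the blueshift of $\Fnt(\lambda)$ on the nose after $p$-localization, so that the blueshift number produced by \cite{BHNNNS19} is exactly~$1$ and not larger or smaller. Achieving this requires careful bookkeeping of how the individual families $\pi_i^{-1}\Fnt(p^{a_i})$ pull back to subgroups of $S_\lambda$, together with matching the resulting combinatorial data to the hypotheses of the abelian blueshift theorem.
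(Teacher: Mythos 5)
Your first three steps track the paper's proof: the $l=d$ case via $(d-1)$-excisiveness of $t_d$, the passage to homotopy fixed points and the product decomposition of \cref{prop:tatederivatives_equivariantformula}, and the Sylow transfer argument killing the non-$p$-power partitions are all exactly as in the paper. The gap is in the final step, which you yourself flag as ``delicate'' but do not carry out, and the strategy you sketch for it would not suffice as stated. Restricting along a subgroup $\iota\colon A \hookrightarrow \Sigma_\lambda$ produces (via \cref{lem:geomfixedpoints_functoriality}) a ring map $(L_{p,h}^f\bbS)^{t\Fnt(\lambda)} \to (L_{p,h}^f\bbS)^{t\,\iota^{-1}\Fnt(\lambda)}$, and by \cref{lem:height_properties}(a) a map of ring spectra only bounds the height of the \emph{source} from \emph{below} by the height of the target. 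So no matter how cleverly you choose the abelian subgroup and match up families, this route alone can only prove $\height_p((L_{p,h}^f\bbS)^{t\Fnt(\lambda)}) \geq h-1$; it cannot rule out that the height is larger. (Nor can you upgrade the restriction to a split injection: the Sylow subgroup $\prod_i C_p^{\wr a_i}$ is non-abelian once some $a_i \geq 2$, and the further restriction to an abelian subgroup is not split by a transfer.)

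The missing ingredient is the \emph{upper} bound, which the paper obtains from a map going the other way: the inclusion of families $\Ftriv \subseteq \Fnt(\lambda)$ gives (by \cref{lem:geomfixedpoints_changeoffamily}) a ring map $(L_{p,h}^f\bbS)^{t\Sigma_\lambda} \to (L_{p,h}^f\bbS)^{t\Fnt(\lambda)}$ out of the \emph{ordinary} Tate construction, whose height is at most $h-1$ by Kuhn's blueshift theorem; \cref{lem:height_properties}(a) then caps the height of the target. For the lower bound the paper does restrict to a subgroup, but a much more specific one than a generic elementary abelian subgroup: the cyclic group $\langle\sigma\rangle$ generated by the product of the long cycles of the factors $\Sigma_{p^{e_i}}$. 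The point of this choice is that $\Fnt(\lambda)\cap\langle\sigma\rangle$ is exactly the family of \emph{proper} subgroups of a nontrivial cyclic $p$-group, so the target is the ordinary geometric fixed points $\Phi^{\langle\sigma\rangle}$ of the Borel completion, which is precisely the case computed in \cite{BHNNNS19}; no further ``bookkeeping'' of families is needed. Combining the two bounds pins the height at $h-1$.
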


\begin{proof}
    Let $\lambda = (d_1,\ldots,d_l)$ be a partition of $d$ of length $l$. The Tate construction~$t_d(F)$ of any $d$-excisive functor $F$ is $(d-1)$-excisive, so the theorem holds for~$d=l$. Therefore, we may assume $l<d$ for the remainder of the proof. The vertical maps in the Tate square of \cref{prop:tatesquare} are $\partial_i$-equivalences for all $i<d$, so we may replace the Tate fixed points by homotopy fixed points in our formula. In light of \cref{prop:tatederivatives_equivariantformula} and \cref{lem:height_properties}(b), it then suffices to verify the claim for
    \[
		(L_{p,h}^f\bbS)^{t\Fnt(\lambda)} \coloneqq \Phi^{\Fnt(\lambda)}(\underline{\infl_e^{\Sigma_\lambda}L_{p,h}^f\bbS})
	\]
    in place of the Tate-derivative $\partial_lt_d\cL_{p,h}^f$. 
    
    We begin with the vanishing claim. To this end, suppose $\lambda$ is not a $p$-power partition, so that there exists some $d_i$ which is not a power of $p$, say $d_1$. Let $S$ be a $p$-Sylow subgroup of $\Sigma_{d_1}$, which is then a non-transitive subgroup of $\Sigma_{d_1}$ by \cref{ex:nontransitivefamily}. It follows that $S'\coloneqq S \times \prod_{i=2}^l\Sigma_{d_i} \subseteq \Sigma_{\lambda}$ is contained in $\Fnt(\lambda)$. Since $L_{p,h}^f \bbS$ is $p$-local and $[\Sigma_{\lambda}:S']$ is prime to $p$, it follows from \cref{rem:transferargument} that $(L_{p,h}^f\bbS)^{t\Fnt(\lambda)} = 0 $. This proves the vanishing part of the theorem.
    
    For the rest of this proof, we will assume that $\lambda$ is a $p$-power partition of $d$ of length $l$, say $d_i = p^{e_i}$ for some~$e_i \geq 0$. Note that, since $l<d$ by assumption, at least one of the~$e_i$ must be positive. Write $\Ftriv$ for the family on $\Sigma_{\lambda}$ consisting only of the trivial subgroup and recall that $X^{t\Ftriv} = X^{t\Sigma_{\lambda}}$ is the usual Tate construction (\cref{exa:usual-tate}). \Cref{lem:geomfixedpoints_changeoffamily} therefore supplies a map of commutative ring spectra
	\[
		(L_{p,h}^f\bbS)^{t\Sigma_{\lambda}} = (L_{p,h}^f\bbS)^{t\Ftriv} \to (L_{p,h}^f\bbS)^{t\Fnt(\lambda)}.
	\]
    Kuhn's blueshift theorem \cite[Theorem 1.5]{Kuhn04} shows that the height of the domain is at most $h-1$, so via \cref{lem:height_properties}(a) we get the upper bound
	\begin{equation}\label{eq:tateheight_upperbound}
		\height_p((L_{p,h}^f\bbS)^{t\Fnt(\lambda)}) \leq h-1.
	\end{equation}
    As for the lower bound, let $\sigma_i$ be the long cycle in $\Sigma_{p^{e_i}}$ and consider the cyclic $p$-subgroup $\langle \sigma \rangle$ of $\Sigma_{\lambda}$ generated by $\sigma = (\sigma_1,\ldots,\sigma_l) \in \Sigma_{\lambda}$. In particular, the order of $\langle \sigma \rangle$ is the maximum of the $p^{e_i}$. By construction, $\pi_i\langle \sigma \rangle = \langle \sigma_i \rangle$ is a transitive subgroup in $\Sigma_{p^{e_i}}$ and, as at least one of the $e_i$'s is positive, $\langle \sigma \rangle$ is not the trivial group. Therefore, $\langle \sigma \rangle \notin \Fnt(\lambda)$.

    With this preparation in hand, we can apply \cref{lem:geomfixedpoints_functoriality} once more to obtain a map of commutative ring spectra
	\[
		(L_{p,h}^f\bbS)^{t\Fnt(\lambda)} \to (L_{p,h}^f\bbS)^{t(\Fnt(\lambda)\cap \langle \sigma \rangle)}.
	\]
    The projection of every proper subgroup of $\langle \sigma \rangle$ has order less than $p^{e_i}$ and thus cannot act transitively on $\Sigma_{p^{e_i}}$, so it is not a member of  $\Fnt(\lambda)$. We conclude that $\Fnt(\lambda) \cap \langle \sigma \rangle$ is precisely the family of proper subgroups of $\langle \sigma \rangle$. Hence
    \[
		(L_{p,h}^f\bbS)^{t(\Fnt(\lambda)\cap \langle \sigma \rangle)} = \Phi^{\langle \sigma \rangle}(\underline{\infl_e^{\langle \sigma \rangle}L_{p,h}^f\bbS}).
	\]
    By \cite[Theorem 1.5]{BHNNNS19}, the height of the latter ring spectrum is $h-1$, so 
	\begin{equation}\label{eq:tateheight_lowerbound}
		\height_p((L_{p,h}^f\bbS)^{t\Fnt(\lambda)}) \geq h-1
	\end{equation}
    using \cref{lem:height_properties}(a) again. Combining~\eqref{eq:tateheight_upperbound} and~\eqref{eq:tateheight_lowerbound}, we obtain the desired formula for the height of $(L_{p,h}^f\bbS)^{t\Fnt(\lambda)}$ and hence for the height of $\partial_lt_d\cL_{p,h}^f$. 
\end{proof}

\begin{Rem}\label{rem:heightinsensitive}
	The theorem shows that the height-shifting behaviour of $\partial_lt_di_d$ is independent of the input height $h$.
\end{Rem}

\section{The topology of the Balmer spectrum}\label{sec:topology}

We now determine the topology of the spectrum of $\Exc{d}(\Sp^c,\Sp)^c$. The architecture of our proof is modelled on the strategy given in the context of equivariant stable homotopy theory in \cite{BalmerSanders17}. First we establish that the topology is entirely determined by the inclusions among prime tt-ideals and then proceed to understanding this poset structure. The key idea is that these inclusions are controlled by the blueshift behaviour of the Tate-derivatives computed in the previous section, together with the combinatorics of chains of $p$-power partitions. 

\begin{Prop}\label{prop:spc_posetreduction}
    Every closed subset of $\Spc(\Exc{d}(\Sp^c,\Sp)^c)$ is a finite union of irreducible closed subsets. In particular, the topology of $\Spc(\Exc{d}(\Sp^c,\Sp)^c)$ is determined by the inclusions among prime tt-ideals. 
\end{Prop}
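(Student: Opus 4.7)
The plan is to reduce the statement to the analogous property of $\Spc(\Sp^c)$ via induction on $d$, with the ``in particular'' clause following automatically. Since spectral spaces are sober, every irreducible closed subset is the closure of a unique generic point, and the closure operation is given explicitly by $\overline{\{\cat P\}} = \{\cat Q : \cat Q \subseteq \cat P\}$; hence finitely many irreducible components yield a description of any closed subset entirely in terms of inclusions among prime tt-ideals.

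For the base case $d = 1$, I use the equivalence $\Exc{1}(\Sp^c,\Sp) \simeq \Sp$ of \cref{exa:1-excisive-functors} and aim to show that every closed subset $C \subseteq \Spc(\Sp^c)$ has only finitely many maximal elements in the specialization order. For each prime $p$, $C$ intersects the totally ordered chain $\{\cat C_{p,h}\}_{1 \le h \le \infty}$ in a specialization-closed subset with a well-defined maximum $\cat C_{p,h_p}$ (or is empty). If $h_p = 1$ for some $p$, then $\cat C_{0,1}$ is the unique maximum of $C$; otherwise, if infinitely many primes $p$ contribute a nontrivial chain maximum with $h_p \in \{2, 3, \ldots, \infty\}$, then writing $C = \bigcap_{a \in T}\supp(a)$ as an intersection of basic closed subsets, every $a \in T$ would satisfy $a_{(p)} \neq 0$ at infinitely many primes. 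For finite spectra this forces $a$ to be rationally nontrivial (since a rationally trivial finite spectrum has vanishing $p$-localization at almost all primes), and hence $\cat C_{0,1} \in \supp(a)$ for every $a \in T$, giving $\cat C_{0,1} \in C$ --- contradicting the existence of maxima strictly below it. A finite set of maxima then yields the irreducible decomposition $C = \bigcup_i \overline{\{\cat P_i\}}$ by sobriety.

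For the inductive step from $d-1$ to $d$, I use the open--closed decomposition derived from \cref{prop:pn-induced-spectra,lem:support-representables}: $\Spc(\Exc{d}(\Sp^c,\Sp)^c) = U \sqcup F$, where $U \cong \Spc(\Exc{d-1}(\Sp^c,\Sp)^c)$ is the open image of $\Spc(P_{d-1})$ and $F = \supp(P_dh_{\bbS}(d))$ is the closed complement. The splitting $\partial_d \circ i_d \simeq \Id_{\Sp}$ of \cref{lem:partial-splits-inflation} realizes $\Spc(\partial_d)$ as a continuous section of $\Spc(i_d)$, hence a homeomorphism onto $F$; in particular, $F \cong \Spc(\Sp^c)$ as a topological space. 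For a closed subset $C \subseteq \Spc(\Exc{d}(\Sp^c,\Sp)^c)$, the intersections $C \cap U$ and $C \cap F$ are closed in $U$ and $F$ respectively, and admit finite irreducible decompositions via the inductive hypothesis (for $C \cap U$) and the base case (for $C \cap F$). Taking ambient closures of the $U$-pieces (preserving irreducibility) and observing that closed subsets of the closed subspace $F$ remain closed in the ambient space, these assemble into a finite irreducible decomposition of $C$.

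The main obstacle is the base case --- specifically, the dichotomy for finite spectra between those that are rationally nontrivial (with $p$-localization nonzero at every prime) and those that are rationally trivial torsion (with $p$-localization nonzero at only finitely many primes); this is what ultimately rules out infinitely many maximal elements in a closed subset of $\Spc(\Sp^c)$.
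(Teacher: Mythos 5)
Your proof is correct, but it follows a genuinely different route from the paper's. The paper transfers the property in one step along the finite surjective map $\Spc(\partial)\colon\coprod_{1\le k\le d}\Spc(\Sp^c)\to\Spc(\Exc{d}(\Sp^c,\Sp)^c)$ of \cref{lem:cover}, following \cite[Proposition~6.1]{BalmerSanders17}: a closed subset pulls back to the coproduct, decomposes there into finitely many irreducible closed subsets because $\Spc(\Sp^c)$ has this property (cited from \cite[Corollary~9.5(e)]{Balmer10a}), and the closures of the images of these pieces give the decomposition downstairs. You instead induct on $d$ via the open--closed decomposition $U\sqcup F$ furnished by \cref{prop:pn-induced-spectra} and \cref{lem:support-representables}, treating the closed stratum $F\cong\Spc(\Sp^c)$ and the open stratum $U\cong\Spc(\Exc{d-1}(\Sp^c,\Sp)^c)$ separately and passing to ambient closures of the components of $C\cap U$; unrolled, this exhibits the same $d$ copies of $\Spc(\Sp^c)$ that the surjection produces all at once, and the bookkeeping (closed subsets of the closed stratum stay closed, closures of irreducibles stay irreducible and stay inside $C$) is handled correctly. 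The other genuine difference is that you reprove the $d=1$ case rather than citing it, and your argument there is sound: the dichotomy that a rationally trivial finite spectrum is $p$-locally trivial for almost all $p$ is exactly what rules out infinitely many maximal points in a closed subset of $\Spc(\Sp^c)$, and sobriety converts finitely many maximal points into a finite irreducible decomposition. What each approach buys: the paper's version is shorter and applies verbatim whenever a spectrum is covered by finitely many spectra with the desired property (a fact reused elsewhere), while yours is self-contained, makes the structure of $\Spc(\Sp^c)$ explicit, and previews the recollement that drives the later topological computations.
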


\begin{proof}
	The proof that every closed subset is a finite union of irreducible closed subsets is similar to the proof of \cite[Proposition~6.1]{BalmerSanders17}. The key ingredient (\cref{lem:cover}) is that $\Spc(\Exc{d}(\Sp^c,\Sp)^c)$ is covered by finitely many spectra for which the statement is correct; see \cite[Corollary~9.5(e)]{Balmer10a}. The second statement follows from the first because the irreducible closed subsets correspond to the tt-primes and are given by $\overline{\{\cat P\}} = \SET{\cat Q}{\cat Q \subseteq \cat P}$.
\end{proof}

\begin{Rem}
	In the proofs of the following results, we will repeatedly use the fact that the map $\Spc(F)$ on Balmer spectra induced by a tt-functor $F$ preserves inclusions among prime tt-ideals; see \cref{Rem:basic-balmer-properties}.
\end{Rem}

\begin{Prop}\label{prop:verticalinclusions}
	For every pair of integers $1 \le l \le d$, primes $p,q$, and chromatic integers $1 \le h,h' \le \infty$, the following are equivalent:
	\begin{enumerate}
		\item $\cPd(\num{l},q,h') \subseteq \cPd(\num{l},p,h)$;
		\item $\cat C_{q,h'} \subseteq \cat C_{p,h};$
		\item $h' \ge h$ and, either $h = 1$ or $q = p$. 
	\end{enumerate}
\end{Prop}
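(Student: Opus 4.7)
The plan is to reduce the proof to the classical description of $\Spc(\Sp^c)$ recalled in Example~\ref{exa:balmer-spectrum-spectra} by exploiting the splitting $\partial_l \circ i_d \simeq \mathrm{Id}_{\Sp}$ established in Lemma~\ref{lem:partial-splits-inflation}. The equivalence (b)$\Leftrightarrow$(c) is just a restatement of the topology of $\Spc(\Sp^c)$, depicted in Figure~\ref{fig:balmer-sp-of-sp}: the primes $\cat C_{p,h}$ for $h>1$ are incomparable across different $p$, with $\cat C_{0,1}$ a single generic point below everything, and the closure goes upward as $h$ increases. So the bulk of the work is to prove (a)$\Leftrightarrow$(b), and the point is that this equivalence is essentially tautological at a fixed layer $l$.

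For (a)$\Rightarrow$(b), I would apply the continuous map $\Spc(i_d)\colon \Spc(\Exc{d}(\Sp^c,\Sp)^c) \to \Spc(\Sp^c)$, which, as in any tt-setting (cf.~Remark~\ref{Rem:basic-balmer-properties}), preserves inclusions among prime tt-ideals. By Lemma~\ref{lem:Spc(i_d)}, this map sends $\cPd(\num{l},q,h')$ to $\cat C_{q,h'}$ and $\cPd(\num{l},p,h)$ to $\cat C_{p,h}$, so the containment in (a) forces the containment in (b).

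For the converse (b)$\Rightarrow$(a), I would apply the induced map $\Spc(\partial_l)\colon \Spc(\Sp^c) \hookrightarrow \Spc(\Exc{d}(\Sp^c,\Sp)^c)$. By Lemma~\ref{lem:injectivity-of-partial-k}, this is an embedding that sends $\cat C_{p,h}$ to $\cPd(\num{l},p,h)$; as it preserves inclusions, (b) yields (a). In fact, the two maps $\Spc(i_d)$ and $\Spc(\partial_l)$ exhibit $\{\cPd(\num{l},p,h)\}_{p,h}$ as homeomorphic to $\Spc(\Sp^c)$, so the poset of inclusions among primes at layer $l$ is identified with that of $\Spc(\Sp^c)$, giving (a)$\Leftrightarrow$(c) directly as well.

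There is no real obstacle here: all the ingredients have been assembled in the preceding discussion of the set-theoretic structure of $\Spc(\Exc{d}(\Sp^c,\Sp)^c)$. The substantive content of the proposition is simply that, restricted to a single layer, the spectrum looks exactly like $\Spc(\Sp^c)$. The genuinely difficult inclusions, which mix different layers $k$ and $l$ and invoke the blueshift behaviour of the Tate-derivatives established in Theorem~\ref{thm:tateblueshift}, will be addressed in the subsequent results.
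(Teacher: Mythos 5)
Your proposal is correct and follows essentially the same route as the paper: the equivalence (a)$\Leftrightarrow$(b) is obtained from the splitting $\partial_l\circ i_d\simeq \Id_{\Sp}$ via the inclusion-preserving maps $\Spc(i_d)$ and $\Spc(\partial_l)$ (Lemmas~\ref{lem:Spc(i_d)} and~\ref{lem:injectivity-of-partial-k}), and (b)$\Leftrightarrow$(c) is read off from the known topology of $\Spc(\Sp^c)$.
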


\begin{proof}
	$(a) \iff (b)$ follows from \cref{lem:injectivity-of-partial-k} and \cref{lem:Spc(i_d)} while $(b) \iff (c)$ follows from the computation of the spectrum of $\Sp^c$.
\end{proof}

\begin{Cor}\label{cor:inclusion_chromatic}
	Suppose $\cPd(\num{k},q,h') \subseteq \cPd(\num{l},p,h)$ for integers $1\le l,k \le d$, primes $p,q$ and chromatic integers $1 \le h,h' \le \infty$. Then $\cat C_{q,h'} \subseteq \cat C_{p,h}$ in $\Sp^c$ and so $h' \ge h$. If $h > 1$, then $p = q$. 
\end{Cor}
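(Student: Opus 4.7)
The plan is to push the assumed inclusion forward along the inflation functor $i_d\colon \Sp\to\Exc{d}(\Sp^c,\Sp)$ and then read off the constraints from the known topology of $\Spc(\Sp^c)$.

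More precisely, \cref{lem:Spc(i_d)} tells us that the continuous map
\[
    \Spc(i_d)\colon \Spc(\Exc{d}(\Sp^c,\Sp)^c)\to \Spc(\Sp^c)
\]
sends every prime of the form $\cPd(\num{j},r,m)$ to $\cat C_{r,m}$, regardless of the value of $j$. Since maps of Balmer spectra induced by tt-functors preserve inclusions among prime tt-ideals (\cref{Rem:basic-balmer-properties}), applying $\Spc(i_d)$ to the assumed inclusion $\cPd(\num{k},q,h')\subseteq \cPd(\num{l},p,h)$ yields
\[
    \cat C_{q,h'}\subseteq \cat C_{p,h}
\]
in $\Sp^c$. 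This is the first assertion of the corollary.

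The remaining claims are purely a statement about the topology of $\Spc(\Sp^c)$. By the Hopkins--Smith thick subcategory theorem as recalled in \cref{exa:balmer-spectrum-spectra} (and illustrated in \cref{fig:balmer-sp-of-sp}), the nontrivial specialisations in $\Spc(\Sp^c)$ form chains
\[
    \cat C_{p,\infty}\subseteq \cdots \subseteq \cat C_{p,h}\subseteq \cdots \subseteq \cat C_{p,2}\subseteq \cat C_{0,1},
\]
one for each prime $p$, all meeting at the generic point $\cat C_{0,1}$. Consequently, the inclusion $\cat C_{q,h'}\subseteq \cat C_{p,h}$ forces $h'\ge h$, and whenever $h>1$ the target $\cat C_{p,h}$ lies strictly above the generic point in a unique prime-indexed column, so we must have $p=q$. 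This yields the remaining conclusions and completes the argument.

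No serious obstacle arises: the corollary is an immediate consequence of \cref{lem:Spc(i_d)} combined with the description of $\Spc(\Sp^c)$ that has already been invoked in \cref{prop:verticalinclusions}.
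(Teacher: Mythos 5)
Your proof is correct and is essentially the paper's argument: the paper also deduces the corollary by applying $\Spc(i_d)$ (via \cref{lem:Spc(i_d)} and the inclusion-preservation of \cref{Rem:basic-balmer-properties}) and then reading off $h'\ge h$ and $p=q$ (for $h>1$) from the known topology of $\Spc(\Sp^c)$, the latter being exactly the content of the equivalence (b)$\iff$(c) in \cref{prop:verticalinclusions}.
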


\begin{proof}
	This is an immediate consequence of \cref{lem:Spc(i_d)} and \cref{prop:verticalinclusions}.
\end{proof}

\begin{Rem}\label{rem:pprimary}
	For any pair of primes $p,q$, we have $\cPd(\num{l},p,1)=\cPd(\num{l},q,1)$ in $\Exc{d}(\Sp^c,\Sp)^c$ since $\cat C_{p,1}=\cat C_{q,1}$ in $\Sp^c$. Hence, to determine the inclusions
	\[
		\cPd(\num{k},q,h') \subseteq \cPd(\num{l},p,h)
	\]
	among tt-primes, \cref{cor:inclusion_chromatic} allows us to restrict attention to the case $p=q$.
\end{Rem}

\begin{Rem}
	We now bring to bear the constraints on such inclusions supplied by the comparison map to the spectrum of the Goodwillie--Burnside ring (studied in \cref{sec:goodwillie-burnside,sec:comparison-map}):
\end{Rem}

\begin{Prop}\label{prop:inclusions_formalproperties}
	Fix a prime number $p$. Suppose that $\cPd(\num{k},p,h') \subseteq \cPd(\num{l},p,h)$ for integers $1 \le k,l \le d$ and chromatic integers $1 \le h,h' \le \infty$. Then:
	\begin{enumerate}
		\item $p - 1 \mid k-l \ge 0$; and
		\item if $h' = 1$ then $h = 1$ and $k = l$, so that the two tt-primes are equal. 
	\end{enumerate}
\end{Prop}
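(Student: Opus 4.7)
The plan is to combine the inclusion-reversing comparison map $\rho\colon \Spc(\Exc{d}(\Sp^c,\Sp)^c) \to \Spec(A(d))$ of \cref{prop:comparison-map} with the description of $\Spec(A(d))$ in \cref{thm:burnside-goodwillie-spectrum}. The inequality $k - l \ge 0$ in (a) is immediate from \cref{cor:inclusion-layers}, so the substance lies in the divisibility $p-1 \mid k-l$ and in~(b).

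To extract the divisibility, I would push the hypothesized inclusion through $\rho$ to obtain $\rho(\cPd(\num{l},p,h)) \subseteq \rho(\cPd(\num{k},p,h'))$ in $\Spec(A(d))$, and identify each side using \cref{prop:comparison-map} as $\mathfrak p(\num{-},p)$ (maximal) when the chromatic integer exceeds $1$, and as $\mathfrak p(\num{-},0)$ (minimal) when it equals $1$. A short case analysis on $(h,h')$ then yields the claim. The case $h>1$, $h'=1$ is excluded a priori by the inequality $h'\ge h$ supplied by \cref{cor:inclusion_chromatic}. If $h, h' > 1$, the containment of maximal ideals $\mathfrak p(\num{l},p) \subseteq \mathfrak p(\num{k},p)$ forces equality, which gives $p-1 \mid k-l$ by \cref{thm:burnside-goodwillie-spectrum}(b). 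If $h=1$ and $h'>1$, the containment $\mathfrak p(\num{l},0) \subseteq \mathfrak p(\num{k},p)$ is governed by \cref{thm:burnside-goodwillie-spectrum}(c)(iii), which again forces $\mathfrak p(\num{l},p) = \mathfrak p(\num{k},p)$ and hence the divisibility. Finally, if $h = h' = 1$, both sides are minimal primes in the sense of \cref{thm:burnside-goodwillie-spectrum}(a) and must therefore coincide, so $k = l$ and the divisibility holds trivially.

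Part (b) is essentially a corollary of the analysis above: if $h' = 1$, then $h'\ge h$ from \cref{cor:inclusion_chromatic} forces $h = 1$, placing us in the last case, which delivers $k = l$. The equality of the two tt-primes is then automatic, since $\cat C_{q,1}$ is independent of the prime $q$. I do not anticipate a genuine obstacle here, as all the ingredients were already prepared in \cref{sec:goodwillie-burnside,sec:comparison-map}; the only care required is accurate bookkeeping of which tt-primes of $\Exc{d}(\Sp^c,\Sp)^c$ the comparison map sends to maximal versus to minimal ideals of $A(d)$.
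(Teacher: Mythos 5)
Your proof is correct and follows essentially the same route as the paper: combine \cref{cor:inclusion-layers} for $k\ge l$, \cref{cor:inclusion_chromatic} for $h'\ge h$, and the inclusion-reversing comparison map of \cref{prop:comparison-map} together with \cref{thm:burnside-goodwillie-spectrum} for the divisibility and part (b). Your version merely spells out the case analysis on which of $h,h'$ equal $1$ (and hence whether $\rho$ lands on a minimal or maximal prime of $A(d)$) more explicitly than the paper does, which is a point worth being careful about.
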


\begin{proof}
	\Cref{cor:inclusion-layers} implies $k \ge l$ while \cref{prop:comparison-map} implies that we have an inclusion $\mathfrak p(\num{k},p) \supseteq \mathfrak p(\num{l},q)$ in the Goodwillie--Burnside ring and hence $p-1\mid k-l$ by \cref{thm:burnside-goodwillie-spectrum}. This verifies (a). For (b), note that $\cPd(\num{k},p,1) \subseteq \cPd(\num{l},p,h)$ implies $h=1$ by \cref{cor:inclusion_chromatic}. It follows from \cref{prop:comparison-map} that $\mathfrak p(\num{k},0) \supseteq \mathfrak p(\num{l},0)$ and hence $k=l$ by \Cref{thm:burnside-goodwillie-spectrum}.
\end{proof}

\begin{Rem}\label{rem:spc_reduction}
	In summary, to determine the topology of $\Spc(\Exc{d}(\Sp^c,\Sp)^c)$, it is enough to determine the minimal number $\beth \ge 0$ such that
	\begin{equation}\label{eq:blue-inclusion}
		\cPd(\num{k},p,h+\beth) \subseteq \cPd(\num{l},p,h)
	\end{equation}
    whenever $p-1 \mid k-l \ge 0$ and for any $1 \leq h \leq \infty$. Moreover, since the map 
	\[
		\Spc(P_k)\colon \Spc(\Exc{k}(\Sp^c,\Sp)) \hookrightarrow \Spc(\Exc{d}(\Sp^c,\Sp))
	\]
	is an open embedding (\cref{prop:pn-induced-spectra}), the inclusion \eqref{eq:blue-inclusion} is equivalent to the inclusion
	\[
		\cat P_k(\num{k},p,h+\beth) \subseteq \cat P_k(\num{l},p,h).
	\]
	 This leads to:
\end{Rem}

\begin{Def}\label{def:geomblue}
	For each $p-1 \mid k-l\ge 0$ and $h \ge 1$, define
	\[
		\blue_p(k,l;h) \coloneqq \min\SET{\beth}{\cPd(\num{k},p,h+\beth) \subseteq \cPd(\num{l},p,h)}.
	\]
	It will follow from \cref{prop:sum-inclusion} below that the collection of such inclusions is nonempty, so that it is a well-defined natural number. Moreover, as noted above, it doesn't depend on the ambient $d \ge k$. These \emph{geometric blueshift numbers} determine the topology of the spectrum.
\end{Def}

\begin{Exa}\label{exa:blue-k=l}
	It follows from \cref{prop:verticalinclusions} that $\blue_p(l,l;h)=0$ for all $l$.
\end{Exa}

\begin{Rem}
	Our next goal is to relate the geometric blueshift to the blueshift of the Tate-derivatives.
\end{Rem}

\begin{Def}\label{def:tate-blue}
	For each prime number $p$, integers $k\ge l\ge 1$ and $h \ge 1$, the \emph{Tate blueshift number} is defined as
	\[ 
		\tblue_{p}(k,l;h) \coloneqq \height_p(\cL_{p,h-1}^f) - \height_p(\partial_lt_{k}\cL_{p,h-1}^f) 
	\]
	with $\height_p(0)=-1$. (Recall \cref{def:height}.) Thus:
	\[
		\tblue_{p}(k,l;h) 
		=        \begin{cases}
                            (h -1)- \height_p(\partial_lt_{k}\cL_{p,h-1}^f) & \text{if } \partial_lt_{k}\cL_{p,h-1}^f \neq 0; \\
                            h & \text{if } \partial_lt_{k}\cL_{p,h-1}^f = 0.
                 \end{cases}
	\]
	By definition, we have
	\begin{equation}\label{eq:tate-kernel}
		\cat C_{p,h-\tblue_p(k,l;h)} =
		\ker(-\otimes \partial_l t_k \cL_{p,h-1}^f : \Sp_{(p)}^c \to \Sp)
	\end{equation}
	with the convention that $\cat C_{p,0}$ is the whole category of finite $p$-local spectra. 
\end{Def}

The next lemma establishes the first relation between the two types of blueshift:

\begin{Lem}\label{lem:basic-tate-inclusion}
	If $h > \tblue_p(k,l;h)$, then 
	\[
		\cPd(\num{k},p,h) \subseteq \cPd(\num{l},p,h-\tblue_p(k,l;h)).
	\]
\end{Lem}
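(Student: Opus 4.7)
The plan is to reduce to the top-degree case $d = k$ and then leverage the Kuhn--McCarthy pullback square together with the Tate-derivative formula and the blueshift theorem of the previous section.

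First, I would use \cref{prop:pn-induced-spectra} to reduce to the case $d = k$. The open embedding $\Spc(P_k) \colon \Spc(\Exc{k}(\Sp^c,\Sp)^c) \hookrightarrow \Spc(\Exc{d}(\Sp^c,\Sp)^c)$ preserves the prime tt-ideals of the form $\cat P(\num{j}, p, h)$ for $j \le k$. Since the primes appearing in the statement all have first index at most $k$, the desired inclusion in $\Exc{d}(\Sp^c,\Sp)$ is equivalent to the analogous inclusion in $\Exc{k}(\Sp^c,\Sp)$. So I may assume $d = k$ throughout, and it suffices to show that for a compact $F \in \Exc{k}(\Sp^c,\Sp)^c$ with $\partial_k F \in \cat C_{p,h}$, one has $\partial_l F \in \cat C_{p,\,h - \tblue_p(k,l;h)}$.

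Second, I would bring in the Kuhn--McCarthy square from \cref{prop:tatesquare}. For such an $F$, it is a pullback
\[
\begin{tikzcd}[ampersand replacement=\&]
F \ar[r] \ar[d] \& \ihom{D_k h_\bbS, F} \ar[d] \\
P_{k-1} F \ar[r] \& t_k F,
\end{tikzcd}
\]
whose right-hand column identifies, via \cref{prop:ihom-dn}, with the natural map from the $h\Sigma_k$- to the $t\Sigma_k$-construction on $\partial_k F \otimes X^{\otimes k}$. Applying the symmetric monoidal colimit-preserving functor $\partial_l$ (for $l < k$, via \cref{lem:geometric-derivatives}) and invoking \cref{prop:tatederivatives_equivariantformula} yields the key identification
\[
\partial_l t_k F \simeq \partial_l \ihom{D_k h_\bbS, F} \simeq \prod_{\lambda \vdash k,\ |\lambda|=l} (\partial_k F)^{t\Fnt(\lambda)},
\]
where $\partial_k F$ is equipped with its natural $\Sigma_\lambda$-action inherited from the $\Sigma_k$-action on cross-effects.

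Third, I would use \cref{thm:tateblueshift} to control the chromatic height of the target of this map. Since $\partial_k F$ is a compact $p$-local spectrum lying in $\cat C_{p,h}$, the thick subcategory theorem implies that it sits in the thick ideal of $\Sp^c_{(p)}$ generated by any fixed type-$h$ generator $V(h)$, hence $\partial_k F$ is a retract of some $W \otimes V(h)$ for a compact $W$. As Tate constructions with respect to families of subgroups of $\Sigma_\lambda$ commute with smashing by compact non-equivariant spectra, $(\partial_k F)^{t\Fnt(\lambda)}$ is a retract of $W \otimes V(h)^{t\Fnt(\lambda)}$. Since the Tate blueshift behaviour is insensitive to the input height (\cref{rem:heightinsensitive}), combining \cref{thm:tateblueshift} with \eqref{eq:tate-kernel} gives that $(\partial_k F)^{t\Fnt(\lambda)}$ is annihilated by any compact spectrum in $\cat C_{p,\,h-\tblue_p(k,l;h)}$, i.e., has $p$-height at most $h - 1 - \tblue_p(k, l; h)$.

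Fourth, I would propagate this chromatic bound to $\partial_l F$ itself. Using that $\partial_l$ is symmetric monoidal and that $\partial_l F \otimes \partial_l t_k \cL_{p,h-1}^f \simeq \partial_l(F \circledast t_k \cL_{p,h-1}^f)$, a chase of the Day-convolved Kuhn--McCarthy square (in which all four corners become $(k-1)$-excisive thanks to $\partial_k F \in \cat C_{p,h}$) together with the kernel characterization of \eqref{eq:tate-kernel} forces $\partial_l F \otimes \partial_l t_k \cL_{p,h-1}^f = 0$, hence $\partial_l F \in \cat C_{p,\,h-\tblue_p(k,l;h)}$, which is precisely the desired inclusion.

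The main obstacle is the last step: the map $\partial_l F \to \prod_\lambda (\partial_k F)^{t\Fnt(\lambda)}$ extracted from the $\partial_l$-applied pullback square is not manifestly injective in Morava $K$-theory, so deducing the chromatic vanishing of $\partial_l F$ from that of the target requires exploiting the global structure of the Kuhn--McCarthy recollement $\Homog_k \to \Exc{k} \to \Exc{k-1}$ rather than pointwise estimates. This is where the compactness of $F$ and the dualizability results of \cref{cor:rigid-compact} enter essentially, allowing the bound to transport across the Day convolution with the Tate idempotent $t_k \cL_{p,h-1}^f$.
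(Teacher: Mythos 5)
Your proposal assembles the right ingredients but never closes the logical chain, and you say so yourself in the final paragraph: a height bound on the target $\partial_l t_k F$ does not transfer to $\partial_l F$, since the comparison map out of the Kuhn--McCarthy square need not be injective in Morava $K$-theory. Steps two and three are therefore a detour that does not reach the conclusion. (They are also not fully licensed by what you cite: \cref{prop:tatederivatives_equivariantformula} is proved only for inflated spectra with \emph{trivial} $\Sigma_k$-action, whereas $\partial_kF$ carries a generally non-trivial action, so the identification $\partial_l t_kF\simeq\prod_\lambda(\partial_kF)^{t\Fnt(\lambda)}$ is not available as stated; likewise a compact spectrum in the thick ideal of a type-$h$ generator need not be a retract of a single $W\otimes V(h)$.)

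The gap dissolves once the argument is run in the opposite order, which is what the paper does: smash with the idempotent \emph{first}. For $x\in\cPd(\num{k},p,h)$ one has $\partial_k(\cL^f_{p,h-1}\circledast x)\simeq L^f_{p,h-1}\bbS\otimes\partial_k(x)=0$ by \cref{lem:geometric-derivatives} and \cref{lem:partial-splits-inflation}, hence $D_k\unit\circledast\cL^f_{p,h-1}\circledast x=0$ and therefore $t_k(\cL^f_{p,h-1}\circledast x)=0$ by Warwick duality (\cref{rem:warwick}). Since $x$ is dualizable (\cref{cor:rigid-compact}), $t_k(\cL^f_{p,h-1}\circledast x)\simeq t_k(\cL^f_{p,h-1})\circledast x$, so applying the symmetric monoidal functor $\partial_l$ gives $\partial_l t_k\cL^f_{p,h-1}\otimes\partial_l(x)=0$, and \eqref{eq:tate-kernel} then yields $\partial_l(x)\in\cat C_{p,h-\tblue_p(k,l;h)}$ directly. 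This is essentially your step four, but you never write down the identification $t_k(\cL^f_{p,h-1}\circledast x)\simeq t_k(\cL^f_{p,h-1})\circledast x$ that makes the ``transport across Day convolution'' precise; without it, the vanishing of $\partial_l(x)\otimes\partial_l t_k\cL^f_{p,h-1}$ is asserted rather than proved. No computation of $\partial_l t_kF$ for general $F$, and no height estimate on it, is needed.
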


\begin{proof}
	Let $x \in \cPd(\num{k},p,h)$, i.e., $\partial_k(x) \in \cat C_{p,h}$, meaning $K(p,h-1)_*\partial_k(x) = 0$. Equivalently, we have
    \[
        0 = L_{p,h-1}^f\bbS \otimes \partial_k(x) \simeq \partial_k(i_d(L_{p,h-1}^f\bbS) \circledast x),
    \]
	hence $D_k\unit \circledast i_d(L_{p,h-1}^f\bbS) \circledast x = 0$. Since $t_k(-) \simeq \ihom{P_{k-1}\unit,\Sigma D_k\unit \circledast -}$, we deduce that $t_k (i_d(L_{p,h-1}^f\bbS) \circledast x) = 0$. Consequently,
    \[
        0 = \partial_lt_k (i_d L_{p,h-1}^f\bbS \circledast x) \simeq \partial_l(t_k(i_dL_{p,h-1}^f\bbS) \circledast x) \simeq \partial_l(t_k(i_dL_{p,h-1}^f\bbS)) \otimes \partial_l(x).
    \]
	It follows that $\partial_l(x) \in \cat C_{h-\tblue_p(k,l;h)}$ by the definition of Tate blueshift numbers~\eqref{eq:tate-kernel}. In other words, $x \in \cPd(\num{l},h-\tblue_p(k,l;h)).$
\end{proof}

\begin{Rem}\label{rem:tate-were-computed}
	We computed the Tate blueshifts in \cref{thm:tateblueshift}; namely,
	\[
		\tblue_p(k,l;h) = \begin{cases}
		1 & \text{if there exists a $p$-power partition of $k$ of length $l$}\\
		h & \text{otherwise}
		\end{cases}
	\]
	In particular, it does not depend on $h$ when there exists a $p$-power partition of $k$ of length $l$ and, in such a situation, we will drop the $h$ from the notation. Although the reader may wish to substitute the value $1$ in what follows, we have opted to keep the discussion in terms of $\tblue_p(k,l)$ for conceptual clarity. In any case, in order to amplify the connection between the geometric blueshift and the Tate blueshift we need to introduce the following auxiliary notion:
\end{Rem}

\begin{Def}
    Let $p$ be a prime number and pick integers $k > l \ge 1$. A \emph{chain of $p$-power partitions} between $k$ and $l$ is a sequence of integers
	\[
		(k=l_s > l_{s-1} > \cdots > l_1 > l_0 = l) 
	\]
	with $s>0$ such that for each consecutive pair $(l_\alpha,l_{\alpha-1})$ there exists a $p$-power partition of $l_\alpha$ of length $l_{\alpha-1}$ in the sense of \cref{def:ppowerpartition}. We call $s$ the \emph{length} of the chain. We write $\Chp(k,l)$ for the set of chains of $p$-power partitions between~$k$ and $l$; in symbols:
	\[
		\Chp(k,l) = \SET{k = l_s > l_{s-1} > \ldots > l_1 > l_0 = l}{s >0 \text{ and } \forall \alpha\colon \cP_p(l_\alpha;l_{\alpha-1})\neq \emptyset}.
	\]
\end{Def}

\begin{Rem}\label{rem:p-power-and-divisibility}
	If there exists a $p$-power partition of $k$ of length $l$ then $p-1 \mid k-l > 0$. The converse is not true, in general. (Consider, for example, $p=2$, $k=3$, $l=1$.) However, if $p-1 \mid k-l > 0$ then there exists a chain of $p$-power partitions from $k$ to~$l$, that is, $\Chp(k,l) \neq \emptyset$. Indeed, the statement is true when $k-l = p-1$, since we can write $k=p+\sum_{\alpha=1}^{l-1} p^0$. The general case follows from this, by using the chain
	\[
		(k > k-(p-1) > \ldots > k-a(p-1) = l) \in \Chp(k,l)
	\]
    where
    $a$ the natural number such that $k-l = a(p-1)$.
\end{Rem}

\begin{Prop}\label{prop:sum-inclusion}
	If $(k=l_s > \cdots > l_0 = l) \in \Chp(k,l)$ is a chain of $p$-power partitions, then
	\[
		\cPd(k,p,h+\sum_{\alpha=1}^s \tblue_p(l_\alpha,l_{\alpha-1})) \subseteq \cPd(l,p,h)
	\]
	for each $h \ge 1$.
\end{Prop}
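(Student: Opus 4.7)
The plan is to iterate Lemma~\ref{lem:basic-tate-inclusion} along the chain. The key preliminary observation is that, since by hypothesis there exists a $p$-power partition of $l_\alpha$ of length $l_{\alpha-1}$ for every consecutive pair in the chain, Theorem~\ref{thm:tateblueshift} together with Remark~\ref{rem:tate-were-computed} yields $\tblue_p(l_\alpha,l_{\alpha-1};h')=1$ for every $h'\ge 1$, independently of the ambient height. In particular, the sum appearing in the statement reduces to $\sum_{\alpha=1}^{s}\tblue_p(l_\alpha,l_{\alpha-1}) = s$, the length of the chain.

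With this in hand, I would argue by induction on $s$. For the base case $s=1$, the chain is simply $(k=l_1>l_0=l)$. Applying Lemma~\ref{lem:basic-tate-inclusion} at height $h+1$---whose hypothesis $h+1>\tblue_p(k,l;h+1)=1$ is automatic since $h\ge 1$---directly gives
\[
\cPd(k,p,h+1)\subseteq\cPd(l,p,h),
\]
as required. For the inductive step, split the chain of length $s$ at its top link, separating off $(k=l_s>l_{s-1})$ and the tail $(l_{s-1}>\cdots>l_0=l)$, which has length $s-1$. The inductive hypothesis applied to the tail at height $h$ produces the inclusion $\cPd(l_{s-1},p,h+s-1)\subseteq\cPd(l,p,h)$, while Lemma~\ref{lem:basic-tate-inclusion} applied to the top link at height $h+s$---whose hypothesis $h+s>1$ holds because $h\ge 1$ and $s\ge 1$---produces $\cPd(k,p,h+s)\subseteq\cPd(l_{s-1},p,h+s-1)$. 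Composing these two inclusions closes the induction.

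There is no substantive obstacle: the only point requiring attention is verifying that the hypothesis $h'>\tblue_p=1$ of Lemma~\ref{lem:basic-tate-inclusion} holds at each stage, which, under the indexing $h_\alpha=h+\alpha$, amounts to the trivial inequality $h+\alpha\ge 2$ for $\alpha=1,\ldots,s$. One could equivalently bypass the explicit induction and simply string together the $s$ elementary inclusions directly, relying on the fact that each $\tblue_p(l_\alpha,l_{\alpha-1})=1$.
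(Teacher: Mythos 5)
Your proof is correct and follows essentially the same route as the paper: apply \cref{thm:tateblueshift} (via \cref{rem:tate-were-computed}) to see each $\tblue_p(l_\alpha,l_{\alpha-1})=1$ independently of the height, obtain the elementary inclusions from \cref{lem:basic-tate-inclusion}, and concatenate them along the chain. The only difference is cosmetic — you spell out the induction on $s$ and the hypothesis check $h'>\tblue_p$ explicitly, where the paper simply says ``iteratively combining these inclusions.''
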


\begin{proof}
	By definition, for each $\alpha=1,\ldots,s$, there exists a $p$-power partition of $l_{\alpha}$ of length $l_{\alpha-1}$, hence by \cref{thm:tateblueshift}, $\tblue_p(l_{\alpha},l_{\alpha-1};h)$ is independent of $h$; see \cref{rem:tate-were-computed}. It then follows from \cref{lem:basic-tate-inclusion} that we have an inclusion of prime ideals
	\[
		\cPd(l_\alpha,p,h' + \tblue_p(l_\alpha,l_{\alpha-1})) \subseteq \cPd(l_{\alpha-1},p,h')
	\]
	for all $1 \le \alpha \le s$ and all heights $h' \ge 1$. Iteratively combining these inclusions, we deduce that
	\[
		\cPd(k,p,h+\sum_{\alpha=1}^s \tblue_p(l_\alpha,l_{\alpha-1})) \subseteq \cPd(l,p,h)
	\]
	as desired.
\end{proof}

\begin{Rem}\label{rem:geom-well-defined}
	It follows from \cref{rem:p-power-and-divisibility} and \cref{prop:sum-inclusion} that the set of inclusions in \cref{def:geomblue} is nonempty, hence the geometric blueshift is well-defined for all $h \ge 1$. Although we have not defined the geometric blueshift for $h=\infty$, it follows that $\cPd(\num{k},p,n) \subseteq \cPd(\num{l},p,\infty)$ if and only if $n=\infty$ and $p-1 \mid k-l \ge 0$.
\end{Rem}

\begin{Def}\label{def:p-distance}
	Define the \emph{$p$-distance} $\delta_p(k,l)$ between $k>l$ as the smallest length of a chain of $p$-power partitions between $k$ and $l$, or $\infty$ if no such chain exists; in symbols:
    \[
        \delta_p(k,l) \coloneqq \inf(\SET{s}{(l_s>\ldots>l_0) \in \Chp(k,l)}).
    \]
	We also set $\delta_p(l,l)\coloneqq 0$.
\end{Def}

\begin{Thm}\label{thm:formula-for-geombluetake2}
	For each $p-1 \mid k-l\ge 0$ and $h \ge 1$, we have
	\begin{equation}\label{eq:formula-for-geombluetake2}
		\blue_p(k,l;h) = \delta_p(k,l).
	\end{equation}
	In particular, the geometric blueshift $\blue_p(k,l;h)$ does not depend on $h$.
\end{Thm}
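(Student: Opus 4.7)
The equality separates into two inequalities.

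\emph{Upper bound} $\blue_p(k,l;h) \le \delta_p(k,l)$: If $\delta_p(k,l) = \infty$ there is nothing to prove, so assume it is finite and fix a minimal chain $(k = l_s > l_{s-1} > \cdots > l_0 = l) \in \Chp(k,l)$ with $s = \delta_p(k,l)$. Each consecutive pair $(l_\alpha, l_{\alpha-1})$ admits, by construction, a $p$-power partition of $l_\alpha$ of length $l_{\alpha-1}$, so \cref{rem:tate-were-computed} (which records the content of \cref{thm:tateblueshift}) gives $\tblue_p(l_\alpha, l_{\alpha-1}) = 1$ for all $\alpha$. \Cref{prop:sum-inclusion} then yields the inclusion $\cPd(\num{k}, p, h+s) \subseteq \cPd(\num{l}, p, h)$, hence the upper bound.

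\emph{Lower bound} $\blue_p(k,l;h) \ge \delta_p(k,l)$: I would argue by induction on $k$. The case $k = l$ is \cref{exa:blue-k=l}. For $k > l$, by \cref{prop:pn-induced-spectra} I may assume $d = k$, and consider an inclusion $\cat P_k(\num{k}, p, h+\beth) \subseteq \cat P_k(\num{l}, p, h)$ with $\beth = \blue_p(k,l;h)$. By \cref{thm:finite-localization-p-n} and \cref{lem:support-representables} the left-hand prime lies in the closed subspace $\supp(P_k h_{\bbS}(k))$, whereas the right-hand prime lies in the complementary open piece $\Spc(\Exc{k-1}(\Sp^c,\Sp)^c)$; the inclusion thus encodes a gluing datum across the recollement \eqref{eq:local-duality}. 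The central claim is that any such cross-inclusion factors as
\begin{equation*}
    \cat P_k(\num{k}, p, h+\beth) \subseteq \cat P_k(\num{m}, p, h+\beth-1) \subseteq \cat P_k(\num{l}, p, h)
\end{equation*}
for some integer $l \le m < k$ such that $k$ admits a $p$-power partition of length $m$. Granting this, the second inclusion lives in the open piece $\Spc(\Exc{k-1}(\Sp^c,\Sp)^c)$, and the inductive hypothesis yields $\delta_p(m,l) \le \beth - 1$; extending this chain by the partition from $k$ to $m$ gives $\delta_p(k,l) \le \beth$, as required.

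\emph{Main obstacle.} The factorization claim is the heart of the argument and rests on two ingredients. First, the comparison map to the Goodwillie--Burnside ring (\cref{prop:inclusions_formalproperties}, using \cref{thm:burnside-goodwillie-spectrum}) rules out most combinatorial possibilities, forcing $p-1 \mid k-m$ and $p-1 \mid m-l$ on any candidate intermediate $m$. Second, the gluing between the two pieces of the recollement is governed by the Tate construction $t_k$: by \cref{thm:tateblueshift} the Tate-derivative $\partial_m t_k \cL_{p,h'-1}^f$ is non-vanishing precisely when $k$ admits a $p$-power partition of length $m$, and in that case \cref{lem:basic-tate-inclusion} produces exactly the elementary inclusions $\cat P_k(\num{k}, p, h'+1) \subseteq \cat P_k(\num{m}, p, h')$ underlying the chain. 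The hard technical point -- to be verified by analysing the specialization order in $\Spc(\Exc{k}(\Sp^c,\Sp)^c)$ via test objects built from iterated Tate constructions applied to $i_k L_{p,\bullet}^f\bbS$ -- is that no exotic cross-inclusions exist beyond the ones accounted for by such chains of elementary Tate-inclusions. The independence of $h$ asserted in the theorem is then automatic from the equality, since $\delta_p(k,l)$ manifestly depends only on $k$ and $l$.
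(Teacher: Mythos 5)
Your upper bound is correct and is exactly the paper's argument: minimal chains of $p$-power partitions plus \cref{prop:sum-inclusion} and the Tate blueshift computation of \cref{thm:tateblueshift}. Your reduction to $d=k$ and the overall inductive skeleton for the lower bound also match the paper. However, the lower bound as written has a genuine gap: the entire content of the hard direction is concentrated in your ``central claim'' that every inclusion $\cat P_k(\num{k},p,h+\beth)\subseteq\cat P_k(\num{l},p,h)$ factors through an intermediate prime $\cat P_k(\num{m},p,h+\beth-1)$ with $k$ admitting a $p$-power partition of length $m$, and you do not prove it. The existence of the elementary Tate inclusions $\cat P_k(\num{k},p,h'+1)\subseteq\cat P_k(\num{m},p,h')$ does not by itself imply that an arbitrary cross-inclusion must factor through one of them; ruling out ``exotic'' inclusions is precisely what has to be established, and gesturing at ``test objects built from iterated Tate constructions'' does not constitute an argument. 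Note also that your factorization claim is only true a posteriori because $\delta_p(k,l)\le 2$ (\cref{prop:ppowerchains}); proving it directly would essentially require already knowing the answer.

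The paper's actual route is different and worth noting. It proves the contrapositive non-inclusion statement (P($d$)): $\cPd(\num{d},p,h+\delta_p(d,l)-1)\not\subseteq\cPd(\num{l},p,h)$. To do so it chooses a single compact test object $z$ whose support in the chromatically truncated category equals $\overbar{\{\cPd(\num{l},p,h)\}}\cap V_{\le h+n}$ (this uses quasi-compactness of the finite truncated spectrum and \cite[Proposition 2.14]{Balmer05a}), then verifies the Tate vanishing $t_d(\cL^f_{h+n-1}\circledast z)=0$ by checking $\partial_i(t_d(\cL^f_{h+n-1}))\otimes\partial_i z=0$ for each $i<d$ — here the inductive hypothesis (P($i$)) enters, together with the triangle inequality $\delta_p(d,l)\le\delta_p(d,i)+\delta_p(i,l)$, to force $\partial_i z\in\cat C_{p,n+h-1}$. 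The vanishing of the Tate construction then implies, via the disconnectedness criterion of \cite[Proposition 2.29]{PatchkoriaSandersWimmer22}, that $\supp(z_{\le h+n})$ splits as $Z_1\sqcup Z_2$ with $Z_2$ contained in the open piece, and the putative inclusion $\cPd(\num{d},p,n+h)\subseteq\cPd(\num{l},p,h)$ would contradict this splitting. You would need to supply an argument of comparable substance (or this one) to close the gap.
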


\begin{proof}
	The $k=l$ case is immediate from \cref{exa:blue-k=l}, so we may assume $k > l$. Recall from \cref{rem:p-power-and-divisibility} that $p-1\mid k-l>0$ implies $\Chp(k,l)\neq \emptyset$, so $\delta_p(k,l)$ is a well-defined natural number. By \cref{prop:sum-inclusion} and \cref{thm:tateblueshift}, we have
	\[
		\blue_p(k,l;h) \le \sum_{\alpha=1}^s \tblue_p(l_\alpha,l_{\alpha-1}) = s
	\]
	for any $(l_s > \cdots > l_0)\in \Chp(k,l)$. Hence
	\[
		\blue_p(k,l;h) \le \delta_p(k,l).
	\]
	It remains to prove that $\blue_p(k,l;h) \ge \delta_p(k,l)$ or, in other words, to prove that if $n<\delta_p(k,l)$ then $\cPd(\num{l},p,h+n)\not\subseteq \cPd(\num{k},p,h)$. Due to the vertical inclusions among tt-primes (\cref{prop:verticalinclusions}), it suffices to consider the extremal case $n = \delta_p(k,l)-1$. Moreover, recall that we can assume $d=k$ (\cref{rem:spc_reduction}). In other words, it suffices to prove the following claim:
	\begin{itemize}
		\item[(P($d$))] 
		For all $p-1 \mid d-l \ge 0$ and $1 \le h <\infty$,
		\[
			\cPd(\num{d},p,h+\delta_p(d,l)-1) \not\subseteq \cPd(\num{l},p,h).
		\]
	\end{itemize}

	We prove this by strong induction on $d$. The base case $d=1$ holds trivially, so let $d>1$ and suppose (P($i$)) has been verified for all $1 \le i < d$. Set $n\coloneqq \delta_p(d,l)-1$ for notational convenience and write $\Exc{d} = \Exc{d}(\Sp^c,\Sp)$. Recall that $\cL^f_{h+n-1} \coloneqq i_dL^f_{p,h+n-1}\bbS \in \Exc{d}$ is the right idempotent for the finite localization
	\begin{equation}\label{eq:truncation}
		(-)_{\le h+n}\colon\Exc{d} \to \cL^f_{h+n-1}\circledast \Exc{d} =: (\Exc{d})_{\le h+n}
	\end{equation}
	which truncates below height $h+n$; see \cref{exa:chromatic-truncation} and \cref{def:inflation}. That is, 
	\[
		\Spc( (\Exc{d})_{\le h+n}^c) \hookrightarrow \Spc(\Exc{d}^c)
	\]
	is a homeomorphism onto $V_{\le h+n} \coloneqq \SET{\cPd(\num{k},p,m)}{1 \le m \le h+n} \subseteq \Spc(\Exc{d}^c)$. Since the space $V_{\le h+n}$ is finite, it follows that there exists $z \in \Exc{d}^c$ such that 
	\begin{equation}\label{eq:auxiliaryobject}
		\supp(z) \cap V_{\le h+n} = \overbar{\{ \cPd(\num{l},p,h)\}} \cap V_{\le h+n}.
	\end{equation}
    Indeed, the complement $W$ of $\overbar{\{ \cPd(\num{l},p,h)\}} \cap V_{\le h+n}$ in $V_{\le h+n}$ is finite and open, hence also finite and open when viewed as a subset of $\Spc(\Exc{d}^c)$. It follows from \cite[Proposition 2.14]{Balmer05a} that there exists $z \in \Exc{d}^c$ with $\supp(z) = W^c$ in $\Spc(\Exc{d}^c)$. Intersecting with $V_{\le h+n}$ then gives \cref{eq:auxiliaryobject}.

    In order to prove (P($d$)) we will first prove the following auxiliary claim:
	\begin{equation}\label{eq:geometricblueshift_auxclaim}
		t_d(\cL_{h+n-1}^f \circledast z) = 0.
	\end{equation}
	We can test this on derivatives (\cref{lem:conservativity}). In fact, since the Tate construction~$t_d$ of a $d$-excisive functor is $(d-1)$-excisive, it is enough to establish 
	\[
		\partial_i(t_d(\cL^f_{h+n-1}\circledast z))=0
	\]
	for $1 \le i \le d-1$. Since $\partial_i$ is symmetric monoidal and $z$ is dualizable, we need to show
	\begin{equation}\label{eq:partialvanishing}
		\partial_i(t_d(\cL^f_{h+n-1}))\otimes \partial_i z = 0
	\end{equation}
	in $\Sp$ for each $1\le i \le d-1$. This is trivially true if $d$ does not have a $p$-power partition of length $i$ by \cref{thm:tateblueshift}. Thus, assume $d$ has a $p$-power partition of length $i$. Then, also invoking \cref{thm:tateblueshift}, we have
	\[
		\ker(-\otimes \partial_i t_d \cL^f_{n+h-1})\cap \Spc(\Sp_{(p)}^c) = \cat C_{p,n+h-\tblue(d,i)} = \cat C_{p,n+h-1}
	\]
	and~\eqref{eq:partialvanishing} would follow from the claim that
	\begin{equation}\label{eq:partialvanishing2}
		\partial_i z \in \cat C_{p,n+h-1}.
	\end{equation}
	In order to verify \eqref{eq:partialvanishing2} and hence \eqref{eq:geometricblueshift_auxclaim}, we distinguish two cases. Suppose first that $\cPd(\num{i},p,n+h) \not\subseteq \cPd(\num{l},p,h)$. Then 
		\[
			\cPd(\num{i},p,n+h) \notin \overbar{\{ \cPd(\num{l},p,h)\}} \cap V_{\le h+n} = \supp(z) \cap V_{\le h+n},
		\]
	so in particular $\cPd(\num{i},p,n+h) \notin \supp(z)$. This is equivalent to the statement that $z \in \cPd(\num{i},p,n+h)$, hence $\partial_i z \in \cat C_{p,h+n} \subseteq \cat C_{p,h+n-1}$, as desired. 
        
	On the other hand, if $\cPd(\num{i},p,n+h)\subseteq \cPd(\num{l},p,h)$ then \cref{prop:inclusions_formalproperties} implies $p-1 \mid i-l \ge 0$. Then, invoking the  inductive  hypothesis (P($i$)),  we have
	\begin{equation}\label{eq:ind-invocation}
		\cat P_i(\num{i},p,h+\partial_p(i,l)-1) \not\subseteq \cat P_i(\num{l},p,h).
	\end{equation}
	The desired claim~\eqref{eq:partialvanishing2} is equivalent to $z \in \cPd(\num{i},p,n+h-1)$ or, in other words, to $\cPd(\num{i},p,n+h-1) \not\in \supp(z)$. If this were not the case then we would have
	\[
		\cPd(\num{i},p,n+h-1) \in \supp(z) \cap V_{\le n+h}
	\]
	which would imply
	\[
		\cat P_i(\num{i},p,n+h-1) \subseteq \cat P_i(\num{l},p,h).
	\]
	This would then imply that 
	\[
		\cat P_i(\num{i},p,h+\delta_p(i,l)-1) \not\subseteq \cat P_i(\num{i},p,n+h-1)
	\]
	because of~\eqref{eq:ind-invocation}, so that $h+\delta_p(i,l)-1 < n+h-1$ by \cref{prop:verticalinclusions}. Substituting the definition $n=\delta_p(d,l)-1$ this would mean that
	\[
		\delta_p(i,l)+1 < \delta_p(d,l) \le \delta_p(d,i)+\delta_p(i,l).
	\]
	Hence $1 < \delta_p(d,i)=1$ which is a contradiction. This completes the verification of~\eqref{eq:geometricblueshift_auxclaim}.

	Recall that the Tate construction $t_d\colon \Exc{d}\to \Exc{d}$ is the Tate construction associated to the Thomason closed subset $\supp(P_dh_{\bbS}(d))$ whose open complement is $X_{d-1} \coloneqq \SET{\cPd(\num{k},p,m)}{1\le k \le d-1}\subseteq \Spc(\Exc{d}^c)$. We can also consider the Tate construction $t_Y\colon (\Exc{d})_{\le h+n} \to (\Exc{d})_{\le h+n}$ on the truncated category associated to the preimage $Y\coloneqq \supp(P_dh_\bbS(d))\cap V_{\le h+n} = \supp(P_dh_\bbS(d)_{\le h+n})$. Since the right adjoint of the localization~\eqref{eq:truncation} is conservative, the Tate vanishing \eqref{eq:geometricblueshift_auxclaim} implies the vanishing
	\begin{equation}\label{eq:tatevanishing2}
		t_Y(z_{\le h+n})=0
	\end{equation}
	by \cref{rem:Utate}. By \cite[Proposition 2.29]{PatchkoriaSandersWimmer22}, the Tate vanishing \eqref{eq:tatevanishing2} translates into a disconnectedness statement for $\supp(z_{\le h+n})$: namely, we have a decomposition
	\[
		\supp(z_{\le h+n}) = Z_1 \sqcup Z_2
	\]
	in $\Spc((\Exc{d})_{\le h+n}^c)\cong V_{\le h+n}$ with $Z_1  \cap  X_{d-1} =\emptyset$ and $Z_2 \subseteq X_{d-1}$. Therefore, we have $\cPd(\num{l},p,h) \in Z_2$. If $\cPd(\num{d},p,n+h) \subseteq \cPd(\num{l},p,h)$ then we would have $\cPd(\num{d},p,n+h) \in Z_2$ which contradicts $Z_2 \subseteq X_{d-1}$. This establishes (P($d$)) for each $l < d$, and it is trivially true for $l=d$.
\end{proof}
    
\subsection*{The combinatorics of $p$-power partition chains}

\begin{Rem}
	Armed with \cref{thm:formula-for-geombluetake2}, we see that the task of giving an explicit description of the topology of the Balmer spectrum reduces to understanding the combinatorics of chains of $p$-power partitions. We have already seen (\cref{rem:p-power-and-divisibility}) that such a chain between $k>l$ exists (that is, $\Chp(k,l)\neq \emptyset$) if and only if $k-l$ is divisible by $p-1$. We will discover the surprising fact that in this case there always exists a chain of length at most $2$ (see \cref{prop:ppowerchains} below).
\end{Rem}

\begin{Def}
	Let $p$ be a prime and consider integers $k \geq l \ge 1$. We define the \emph{weight} of a $p$-power partition $k = \sum_{i=0}^ra_ip^i$ of $k$ (with $a_i \geq 0$ for all $i$) as the sum of the coefficients: $w(a) \coloneqq \sum_{i=0}^ra_i$ for $a = (a_0,\ldots,a_r)$.
\end{Def}

\begin{Rem}\label{rem:weight-properties}
	The weight has the following two properties:
    \begin{enumerate}
		\item\label{it:weight-congruence} Since $p^i\equiv 1$ mod $(p-1)$, it follows that $\sum_{i=0}^ra_ip^i\equiv w(a)$ mod $(p-1)$.
		\item\label{it:weight-minimizing} Among all $p$-power partitions of $k$, the expansion in base $p$ is the unique one that minimizes its weight. Indeed, suppose $k = \sum_{i=0}^ra_ip^i$ is some $p$-power partition of $k$. If one of the coefficients $a_j \geq p$, then the $p$-power partition $k = \sum_{i=0}^ra_i'p^i$ with 
            \[
				a_i' \coloneqq 
					\begin{cases}
						a_j-p & \text{if } i = j \\
						a_j+1 & \text{if } i = j+1 \\
						a_i & \text{otherwise}
					\end{cases}
            \]
        has smaller weight. Because the base $p$ expansion of any integer is unique, the claim follows.
    \end{enumerate}
	Motivated by the second property, we define:
\end{Rem}

\begin{Def}\label{def:spk}
	For any prime $p$ and integer $k \ge 1$, let $s_p(k)$ be the weight of the base $p$ expansion of $k$, i.e., the sum of the coefficients of the base $p$ expansion of $k$.
\end{Def}

\begin{Rem}
	The following two lemmas provide the number-theoretic input to our formula for the geometric blueshift numbers $\blue_{p}(k,l)$, treating separately the cases when $l \geq s_p(k)$ and $l < s_p(k)$. Moreover, the first lemma completes our observations in \cref{rem:p-power-and-divisibility}.
\end{Rem}

\begin{Lem}\label{lem:subclaimA}
	There exists a $p$-power partition of $k$ of length $l$ (that is, one can write $k$ as a sum of $l$ powers of $p$) if and only if $p-1 \mid k-l \ge 0$ and $l \ge s_p(k)$.
\end{Lem}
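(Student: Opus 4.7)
My plan is to verify each of the two implications separately, using the two properties of the weight function already recorded in Remark \ref{rem:weight-properties}, and then exploiting a single elementary splitting move to produce partitions of arbitrary admissible length.

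For the forward direction, assume that $k = \sum_{i=0}^r a_i p^i$ is a $p$-power partition of length $l = \sum_i a_i$. The inequality $k\ge l$ is automatic because each summand $p^i\ge 1$. The divisibility $p-1\mid k-l$ is immediate from Remark \ref{rem:weight-properties}\eqref{it:weight-congruence} applied to $(a_0,\dots,a_r)$. Finally, $l\ge s_p(k)$ follows from the weight-minimality of the base $p$ expansion recorded in Remark \ref{rem:weight-properties}\eqref{it:weight-minimizing}.

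For the reverse direction, suppose $p-1\mid k-l\ge 0$ and $l\ge s_p(k)$. Starting from the base $p$ expansion of $k$, we have a $p$-power partition of $k$ of length $s_p(k)$, and I want to successively enlarge it to length $l$. The key elementary move is the \emph{splitting operation}: whenever the current $p$-power partition contains a summand $p^i$ with $i\ge 1$, replace it by $p$ copies of $p^{i-1}$; this changes the length by $+(p-1)$ while leaving the sum $k$ unchanged. Since $l\equiv k \equiv s_p(k)\pmod{p-1}$, we may write $l = s_p(k) + m(p-1)$ for a unique integer $m\ge 0$, and after performing the splitting operation $m$ times we obtain a $p$-power partition of $k$ of length $l$, provided each of the $m$ splits is legitimate.

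Hence the only point to verify is that the splitting operation can be applied $m$ times consecutively, that is, that the partition never consists of $1$s alone before we are done. But a $p$-power partition all of whose parts equal $1$ must have length equal to $k$, and at each intermediate stage the length is at most $s_p(k) + (m-1)(p-1) = l-(p-1) < l \le k$, so there is always some $p^i$ with $i\ge 1$ available to split. This completes the reverse direction. No genuine obstacle is anticipated here; the argument is a short combinatorial exercise, and the work done in Remark \ref{rem:weight-properties} already captures essentially all the content of the forward direction.
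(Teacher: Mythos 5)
Your proof is correct and follows essentially the same route as the paper: the forward direction is exactly the congruence and weight-minimality observations from \cref{rem:weight-properties}, and the reverse direction uses the identical splitting move (replacing a part $p^i$, $i\ge 1$, by $p$ copies of $p^{i-1}$) applied to the base $p$ expansion. If anything, your verification that a part with $i\ge 1$ is available at every intermediate stage (since the length stays strictly below $k$) is slightly more careful than the paper's one-line justification.
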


\begin{proof}
	($\Rightarrow$): Suppose $k$ can be written as a sum of $l$ powers of $p$, say $k = \sum_{j=1}^lp^{e(j)}$ for some $e(j) \geq 0$. We have $k \geq l$ and $\smash{k-l = \sum_{j=1}^l(p^{e(j)}-1)}$ is divisible by $p-1$. Property~(b) of \cref{rem:weight-properties} implies that $l \geq s_p(k)$.

	($\Leftarrow$): For the converse, let $k = \sum_{i=0}^r a_ip^i$ be the expansion of $k$ in base $p$. Property~(a) of \cref{rem:weight-properties} shows that $0 \equiv k-l \equiv s_p(k) - l$ mod $(p-1)$. If $l = s_p(k)$, then the expansion in base $p$ gives a $p$-power expansion of length $l$. Consider then the case $l > s_p(k)$. Starting from the expansion of $k$ in base $p$, we have to argue that we can increase the length of the corresponding $p$-power partition 
    \[
        (\underbrace{p^r,\ldots,p^r}_{a_r},\underbrace{p^{r-1},\ldots,p^{r-1}}_{a_{r-1}}, \ldots, \underbrace{p^0,\ldots,p^0}_{a_0})  \vdash k
    \]
    by multiples of $(p-1)$ until we reach $l$. The case $k=l$ is clear, so assume $k>l$ and pick some $j>0$ with $a_j >0$, which exists because $k > p-1$ by our divisibility assumption on $k-l$. Setting 
    \[
        a_i' \coloneqq 
            \begin{cases}
                a_j-1 & \text{if } i = j \\
                a_{j-1}+p & \text{if } i = j-1 \\
                a_i & \text{otherwise}
            \end{cases}
    \]
	gives a presentation $k = \sum_{i=0}^ra_i'p^i$ of $k$ whose corresponding $p$-power partition is of length $s_p(k) + (p-1)$. Continuing this process if necessary until we reach weight $l$, we obtain the claim.
\end{proof}

\begin{Lem}\label{lem:subclaimB}
    If $0< l < s_p(k)$ and $p-1$ divides $k-l$, then there exists an integer $i<k$ that is congruent to $k$ and $l$ modulo $p-1$ and which satisfies $i \geq s_p(k)$ and $l \geq s_p(i)$.
\end{Lem}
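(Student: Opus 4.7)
The plan is to exhibit $i$ explicitly in the form $i \coloneqq l \cdot p^m$ for a suitable nonnegative integer $m$. Two of the four required conditions on $i$ are then automatic: $s_p(i) = s_p(l p^m) = s_p(l) \leq l$, and $p^m \equiv 1 \pmod{p-1}$ gives $i \equiv l \pmod{p-1}$, which together with $p-1 \mid k-l$ also yields $i \equiv k \pmod{p-1}$. The entire task thus reduces to locating $m$ with $s_p(k) \leq l p^m < k$.

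The natural choice, and the one I would start with, is to take $m$ minimal such that $l p^m \geq s_p(k)$. This is well-defined because $l \geq 1$, and the hypothesis $l < s_p(k)$ forces $m \geq 1$. By minimality, $l p^{m-1} < s_p(k)$, and hence $i = p \cdot l p^{m-1} < p \cdot s_p(k)$. The lower bound $s_p(k) \leq i$ holds by construction, and in the generic regime $k \geq p \cdot s_p(k)$ the upper bound $i < k$ is then immediate.

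The heart of the argument, and the main obstacle I foresee, is the degenerate regime $k < p \cdot s_p(k)$. The plan there is to exploit the base-$p$ expansion $k = \sum_{j \geq 0} a_j p^j$ via the identity
\[
    p \cdot s_p(k) - k = (p-1) a_0 - \sum_{j \geq 2}(p^j - p) a_j.
\]
Since $p^j - p \geq p(p-1)$ for $j \geq 2$ and $a_0 \leq p-1$, positivity of this expression forces $a_j = 0$ for all $j \geq 2$; a short check using the standing hypotheses rules out $k < p$, so $a_1 \geq 1$. Hence $k = a_0 + a_1 p$ and $s_p(k) = a_0 + a_1 \leq 2(p-1)$. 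Combining $l < s_p(k)$, $l \equiv s_p(k) \pmod{p-1}$, and $l \geq 1$, the only possibility is $l = s_p(k) - (p-1)$. Taking $m = 1$, the required inequalities then reduce to the elementary identities
\[
    lp - s_p(k) = (p-1)(l-1) \geq 0 \quad \text{and} \quad k - lp = (p-1)(p - a_0) > 0,
\]
confirming that $i = lp$ lies in $[s_p(k), k-1]$. The essential subtlety is thus the structural constraint on $k$ extracted from the degenerate inequality and the resulting pinning down of $l$; the remaining verifications are bookkeeping.
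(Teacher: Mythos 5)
Your proof is correct, but your witness is genuinely different from the one in the paper. The paper sets $i \coloneqq p^r + (x-1)$, where $p^r$ is the leading term of the base-$p$ expansion of $k$ and $x$ is the least positive residue of $k$ modulo $p-1$; there the condition $l \geq s_p(i)$ is immediate (since $s_p(i)=x$ is the least positive residue of $l$) and the delicate inequality is $i \geq s_p(k)$, which is handled by a case split on $r\geq 2$ versus $r=1$. You instead take $i \coloneqq l\,p^m$ with $m$ minimal so that $lp^m \geq s_p(k)$, which makes $l \geq s_p(i)$ and $i \geq s_p(k)$ automatic and concentrates all the difficulty in the upper bound $i<k$; your resolution of the degenerate case $k < p\cdot s_p(k)$ --- showing it forces $k=a_0+a_1p$, hence $s_p(k)\leq 2(p-1)$, hence $l = s_p(k)-(p-1)$, and then verifying $s_p(k) \leq lp < k$ directly --- is sound (and the two constructions really do produce different integers in general, e.g.\ $p=3$, $k=13$, $l=1$ gives $i=9$ in the paper and $i=3$ for you). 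The trade-off is essentially which inequality one chooses to make trivial by construction; your version has the mild aesthetic advantage that three of the four conditions need no case analysis at all, at the cost of a slightly longer computation in the exceptional regime.
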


\begin{proof}
    Consider $k = \sum_{i=0}^r a_ip^i$, the expansion of $k$ in base $p$ with $a_r \neq 0$. If $k < p$, then $s_p(k) = k$ and $p-1$ cannot divide $k-l$, so we may assume $k \geq p$, i.e., $r >0$. Let $1 \le x \leq p-1$ be the integer that is congruent to $k$ modulo $p-1$ and set $i \coloneqq p^r + (x-1)$. We will verify that this choice of $i$ satisfies the conditions given in the statement. Since $r > 0$, we have
        \[
            i \equiv x \equiv k \equiv l \mod p-1.
        \]
    Next, $s_p(i) = x$ is the smallest positive integer congruent to $l$ modulo $p-1$, hence $l \geq s_p(i)$. By construction, $k \geq i$; in fact, since $k \neq i$ as $s_p(k) > l \geq s_p(i)$, we get $k > i$. It remains to see that $i \geq s_p(k)$, for which we introduce a case distinction. Suppose first that $r \geq 2$. Then $i \geq p^r \geq (r+1)(p-1) \geq s_p(k)$. If $r = 1$ instead, then $s_p(k) \leq 2(p-1)$. If $s_p(k) < p$, then $x = s_p(k)$ and thus $i = p + x > s_p(k)$. Finally, if $p \leq s_p(k) \leq 2(p-1)$, then $x = s_p(k) - (p-1)$, so $i = p + (x-1) = s_p(k)$. We have shown that $i \geq s_p(k)$ in all cases.
\end{proof}

\begin{Prop}\label{prop:ppowerchains}
    Let $p$ be a prime and consider integers $k \geq l \geq 1$. Then:
        \[
            \delta_p(k,l) = 
                \begin{cases}
					0 & \text{if } k=l;\\
                    1 & \text{if } p-1 \mid k-l>0 \text{ and } l \geq s_p(k); \\
                    2 & \text{if } p-1 \mid k-l>0 \text{ and } l < s_p(k); \\
                    \infty & \text{otherwise}.
                \end{cases}
        \]
\end{Prop}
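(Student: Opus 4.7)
The plan is to obtain the proposition directly from Lemmas~\ref{lem:subclaimA} and~\ref{lem:subclaimB}, together with one elementary observation about divisibility by $p-1$. First I will handle the ``otherwise'' case. Suppose $(k = l_s > \cdots > l_0 = l) \in \Chp(k,l)$ is any chain of $p$-power partitions. For each index $\alpha$ there exists a $p$-power partition of $l_\alpha$ of length $l_{\alpha-1}$, so the necessary direction of Lemma~\ref{lem:subclaimA} forces $p-1 \mid l_\alpha - l_{\alpha-1}$. Summing telescopically yields $p-1 \mid k-l$. Contrapositively, if $p-1 \nmid k-l$ then $\Chp(k,l) = \emptyset$ and $\delta_p(k,l) = \infty$, as desired.

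The case $k=l$ gives $\delta_p(l,l)=0$ by definition. When $p-1 \mid k-l > 0$ and $l \geq s_p(k)$, Lemma~\ref{lem:subclaimA} supplies a $p$-power partition of $k$ of length $l$, producing the length-one chain $(k > l) \in \Chp(k,l)$, so $\delta_p(k,l) \leq 1$. Since $k > l$ precludes a chain of length zero, we conclude $\delta_p(k,l) = 1$.

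For the remaining (and main) case $p-1 \mid k-l > 0$ with $l < s_p(k)$, Lemma~\ref{lem:subclaimA} prohibits a $p$-power partition of $k$ of length $l$ and hence a chain of length one, giving $\delta_p(k,l) \geq 2$. To realize the value $2$, I plan to invoke Lemma~\ref{lem:subclaimB} to produce an integer $i < k$ congruent to both $k$ and $l$ modulo $p-1$ and satisfying $i \geq s_p(k)$ and $l \geq s_p(i)$. A quick check shows $i > l$ (since $i \geq s_p(k) > l$), so the triple $(k > i > l)$ is a genuine strictly decreasing sequence. Applying Lemma~\ref{lem:subclaimA} twice then provides a $p$-power partition of $k$ of length $i$ and one of $i$ of length $l$, assembling into a length-two chain in $\Chp(k,l)$.

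No serious obstacle is anticipated: the content of the proposition sits entirely in Lemmas~\ref{lem:subclaimA} and~\ref{lem:subclaimB}, and the surprising statement---that two steps always suffice---is essentially a clean packaging of the number-theoretic Lemma~\ref{lem:subclaimB}. The only mild care needed is the verification that the intermediate integer $i$ lies strictly between $l$ and $k$, which is immediate from the inequalities guaranteed by Lemma~\ref{lem:subclaimB}.
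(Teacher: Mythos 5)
Your proposal is correct and follows exactly the route the paper takes: the paper's proof of \cref{prop:ppowerchains} is simply ``This follows from \cref{lem:subclaimA} and \cref{lem:subclaimB},'' and your write-up supplies precisely the intended details (the telescoping divisibility argument for the $\infty$ case, the length-one chain from \cref{lem:subclaimA}, and the length-two chain through the intermediate $i$ from \cref{lem:subclaimB}). The verification that $l < i < k$ is the only point requiring care, and you handle it correctly.
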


\begin{proof}
	This follows from \cref{lem:subclaimA} and \cref{lem:subclaimB}.
\end{proof}

Combining everything together we have:

\begin{Thm}\label{thm:posetstructure}
	Let $p,q$ be prime numbers, $1\le k,l\le d$ integers, and suppose $1\leq h,h' \leq \infty$. There is an inclusion $\cPd(\num{k},p,h') \subseteq \cPd(\num{l},q,h)$ if and only if the following conditions hold:
        \begin{enumerate}
			\item $p-1\mid k-l\geq 0$; 
            \item $h' \ge h + \delta_p(k,l)$; and
            \item if $h > 1$, then $p=q$.
        \end{enumerate}
\end{Thm}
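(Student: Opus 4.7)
\medskip

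The plan is to assemble this characterization from the results already established in this section, treating it essentially as a repackaging theorem. The substantive work has been done: conditions (a) and (c) are encoded in the formal constraints recorded in \cref{prop:inclusions_formalproperties} and \cref{cor:inclusion_chromatic}, while condition (b) is precisely the definition of the geometric blueshift $\blue_p(k,l;h)$ evaluated via \cref{thm:formula-for-geombluetake2} and \cref{prop:ppowerchains}. I would begin by reducing to the case $p=q$: if $h>1$, this is forced by \cref{cor:inclusion_chromatic}; if $h=1$, the invariance $\cPd(\num{l},p,1)=\cPd(\num{l},q,1)$ observed in \cref{rem:pprimary} lets us replace $q$ by $p$ freely. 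Thus in both remaining directions we may assume $p=q$, and condition (c) is built into this reduction.

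For the forward direction, assume the inclusion $\cPd(\num{k},p,h') \subseteq \cPd(\num{l},p,h)$ holds. Then \cref{prop:inclusions_formalproperties}(a) gives $p-1\mid k-l \ge 0$, establishing (a). For (b), the definition of $\blue_p(k,l;h)$ together with the identity $\blue_p(k,l;h) = \delta_p(k,l)$ from \cref{thm:formula-for-geombluetake2} yields $h' - h \ge \delta_p(k,l)$ in the case $h' < \infty$. The case $h'=\infty$ is covered by \cref{rem:geom-well-defined}, which also rules out the pathological situation in which $\delta_p(k,l)=\infty$ (when either $k<l$ or $p-1\nmid k-l$); in those cases no inclusion can exist, consistently with (a) and (b) together failing. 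The edge behaviour at $h=1$ is handled cleanly by \cref{prop:inclusions_formalproperties}(b), which forces the two primes to coincide precisely when $h'=1$.

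For the backward direction, suppose (a), (b), (c) hold. Having reduced to $p=q$, \cref{thm:formula-for-geombluetake2} guarantees the ``minimal'' inclusion
\[
\cPd(\num{k},p,h+\delta_p(k,l)) \subseteq \cPd(\num{l},p,h).
\]
Since $h' \ge h + \delta_p(k,l)$ by (b), the vertical inclusions of \cref{prop:verticalinclusions} yield $\cPd(\num{k},p,h') \subseteq \cPd(\num{k},p,h+\delta_p(k,l))$, and composing the two gives the desired containment. The explicit formula for $\delta_p(k,l)$ appearing in the statement is then nothing but the content of \cref{prop:ppowerchains}.

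Because all the heavy lifting---the non-abelian blueshift theorem \cref{thm:tateblueshift}, the inductive tt-geometric argument in \cref{thm:formula-for-geombluetake2}, and the number-theoretic \cref{prop:ppowerchains}---has been done, no step in this proof is a genuine obstacle. The only care required is bookkeeping at the boundary cases $h=1$ (where the prime label becomes irrelevant) and $h=h'=\infty$ (where the finite localization language used to define $\blue_p$ must be replaced by the description in \cref{rem:geom-well-defined}); both are routine given the preparatory results.
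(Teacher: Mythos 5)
Your proof is correct and takes essentially the same route as the paper, whose own proof simply cites \cref{thm:formula-for-geombluetake2}, \cref{prop:ppowerchains}, \cref{cor:inclusion_chromatic}, and \cref{prop:inclusions_formalproperties}, with \cref{rem:geom-well-defined} for the $h=\infty$ case. The extra bookkeeping you supply---the reduction to $p=q$ via \cref{rem:pprimary} and the use of \cref{prop:verticalinclusions} to pass from the minimal inclusion at height $h+\delta_p(k,l)$ to arbitrary $h'$---is exactly what the paper leaves implicit.
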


\begin{proof}
	This is the culmination of \cref{thm:formula-for-geombluetake2}, \cref{prop:ppowerchains}, \cref{cor:inclusion_chromatic}, and \cref{prop:inclusions_formalproperties} bearing in mind  \cref{rem:geom-well-defined} for the $h=\infty$ case.
\end{proof}

\begin{Rem}
	A complete depiction of the spectrum for $d=3$ is displayed in \cref{fig:exc3} on page \pageref{fig:exc3}. For larger $d$ such pictures start to become unwieldy and are more easily drawn one prime at a time. For example, the $d=4$ case is displayed in \cref{fig:exc4} below. The mathematical justification for working $p$-locally is as follows: Recall from \cref{rem:p-local-derivative} that the $p$-localization $\Exc{d}(\Sp^c,\Sp)\to\Exc{d}(\Sp^c,\Sp)_{(p)}$ is a finite localization which induces an identification
	\[
		\Spc(\Exc{d}(\Sp^c,\Sp)_{(p)}^c) \xrightarrow{\sim} \SET{\cPd(\num{k},p,n)}{\text{all } k, n} \subseteq \Spc(\Exc{d}(\Sp^c,\Sp)^c).
	\]
	Moreover, since all nontrivial inclusions occur $p$-locally (\cref{rem:pprimary}), the spectra of the $p$-localizations completely describe the spectrum of the unlocalized category.
\end{Rem}

\begin{figure}[h!]
\includegraphics{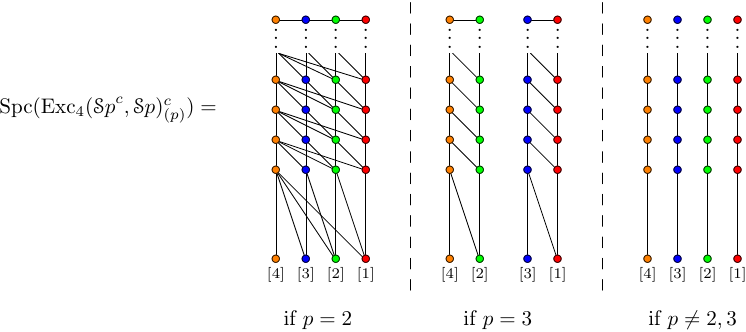}\caption{The $p$-local part of the Balmer spectrum of 4-excisive functors from finite spectra to spectra.}\label{fig:exc4}
\end{figure}

\section{Applications}\label{sec:applications}

\subsection*{The classification of tt-ideals}

We can now give the classification of thick ideals in $\Exc{d}(\Sp^c,\Sp)^c$. Throughout this section we fix $d \ge 1$ and abbreviate $\Exc{d}(\Sp^c,\Sp)^c$ by $\Exc{d}^c$ when convenient.

\begin{Prop}\label{prop:thomason-closed}
	Let $Z =\overline{\{\cat P_1\}} \cup\cdots\cup \overline{\{\cat P_N\}}$ be a closed subset of $\Spc(\Exc{d}^c)$ (see \cref{prop:spc_posetreduction}). Assume that this is irredundant, that is, $\cat P_i \not\subseteq \cat P_j$ for all $1 \le i \neq j \le N$. Let $\cat P_i = \cPd(\num{k_i},p_i,m_i)$ for all $i=1,\ldots,N$. Then the complement of $Z$ is quasi-compact if and only if all the chromatic integers $m_1,\ldots,m_N$ are finite.
\end{Prop}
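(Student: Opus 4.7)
The proof will split into two directions, and throughout I use that a closed subset $Z \subseteq \Spc(\Exc{d}(\Sp^c,\Sp)^c)$ has quasi-compact complement if and only if $Z = \supp(a)$ for some compact object $a$; this follows from the fact that basic opens $\Spc \setminus \supp(a)$ are closed under finite unions via $\supp(a) \cap \supp(b) = \supp(a \otimes b)$.

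For the $(\Rightarrow)$ direction, I will argue by contradiction. Assume some $m_j = \infty$ and suppose $Z = \supp(a)$ for a compact $a$. Applying the geometric derivative $\partial_{k_j}$ of \cref{lem:geometric-derivatives}, we obtain that $\supp_{\Sp^c}(\partial_{k_j}(a)) = \Spc(\partial_{k_j})^{-1}(Z)$ is Thomason closed in $\Spc(\Sp^c)$, and it contains $\cat C_{p_j,\infty}$ because $\Spc(\partial_{k_j})(\cat C_{p_j,\infty}) = \cat P_j \in Z$ by \cref{lem:injectivity-of-partial-k}. The classical Hopkins--Smith classification shows that every Thomason closed subset of $\Spc(\Sp^c)$ is a finite union of $\overline{\{\cat C_{q,n}\}}$ with $n$ finite; inspection of the poset $\Spc(\Sp^c)$ then reveals that any such subset containing $\cat C_{p_j,\infty}$ automatically contains $\cat C_{p_j,h}$ for some finite $h$. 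Pulling back along $\Spc(\partial_{k_j})$, we obtain $\cPd(\num{k_j},p_j,h) \in Z$ for some finite $h$, and hence $\cPd(\num{k_j},p_j,h) \subseteq \cat P_i$ for some $i$. By \cref{thm:posetstructure}, replacing the finite $h$ by $\infty$ preserves all three conditions (a)--(c) of the inclusion, so $\cat P_j = \cPd(\num{k_j},p_j,\infty) \subseteq \cat P_i$ as well. The case $i = j$ is ruled out by finiteness of $h$, so $i \neq j$, contradicting irredundancy.

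For the $(\Leftarrow)$ direction, I will proceed by induction on $d$, constructing for each $\cat P_i$ a compact object $a_i \in \Exc{d}(\Sp^c,\Sp)^c$ with $\supp(a_i) = \overline{\{\cat P_i\}}$ and taking $a := \bigoplus_i a_i$. The base $d = 1$ reduces to the thick subcategory theorem: for $m < \infty$, a $p$-local finite spectrum of type $m - 1$ has support $\overline{\{\cat C_{p,m}\}}$. For the inductive step at $\cat P = \cPd(\num{k},p,m)$, consider two cases. If $k < d$, the closure $\overline{\{\cat P\}}$ is entirely contained in the open subspace $\Spc(\Exc{d-1}(\Sp^c,\Sp)^c)$ embedded via \cref{prop:pn-induced-spectra}, so the induction hypothesis yields the desired object in $\Exc{d-1}(\Sp^c,\Sp)^c$, which we push forward along the fully faithful finite-localization inclusion $\Exc{d-1}(\Sp^c,\Sp) \hookrightarrow \Exc{d}(\Sp^c,\Sp)$ (preserving compactness and support). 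If $k = d$, pick a $p$-local finite $x \in \Sp^c$ of type $m - 1$ and form $\kappa_d(x) \in \Exc{d}(\Sp^c,\Sp)^c$, the $d$-homogeneous functor $X \mapsto x \otimes X^{\otimes d}$ of \cref{rem:adjoint-to-cross-effects}; compactness here follows from \cref{thm:barr-beck} together with the separability of $P_d h_{\bbS}(d)$ (\cref{lem:hSi-is-separable}). Since $\partial_d(\kappa_d(x)) = x$ and $\partial_l(\kappa_d(x)) = 0$ for $l < d$, the support of $\kappa_d(x)$ is exactly the layer-$d$ part $\{\cPd(\num{d},p,h) : h \geq m\}$ of $\overline{\{\cat P\}}$. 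For each $l < d$ with $p - 1 \mid d - l$, the induction hypothesis then produces $b_l \in \Exc{d-1}(\Sp^c,\Sp)^c$ with $\supp(b_l) = \overline{\{\cPd(\num{l},p,m + \delta_p(d,l))\}}$; finiteness of $m + \delta_p(d,l)$ is guaranteed by $\delta_p(d,l) \leq 2$ (\cref{prop:ppowerchains}). Setting $a_i := \kappa_d(x) \oplus \bigoplus_l b_l$ and invoking both the triangle inequality $\delta_p(d,l') \leq \delta_p(d,l) + \delta_p(l,l')$ (obtained by concatenating $p$-power partition chains) and the explicit closure formula of \cref{thm:posetstructure}, one verifies $\supp(a_i) = \overline{\{\cat P\}}$ by direct inspection of both inclusions.

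The main obstacle is the support bookkeeping in the $(\Leftarrow)$ inductive step: each $b_l$ has support that is itself a closure and thus extends to further layers below $l$, so one must verify that these lower contributions fit inside $\overline{\{\cat P\}}$ without overshooting. The essential tool is the triangle inequality for $\delta_p$ together with the precise values from \cref{prop:ppowerchains}. The edge case $m = 1$, where $\cPd(\num{k},p,1)$ is independent of $p$ and its closure involves contributions from varying primes, requires minor modifications (e.g.\ taking $x = \bbS$) but proceeds along the same lines.
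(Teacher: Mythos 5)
Your $(\Rightarrow)$ direction is correct and is a legitimate alternative to the paper's argument: instead of writing $Z$ as a nested intersection of closed sets and extracting a finite subcover, you push $Z=\supp(a)$ forward along $\partial_{k_j}$ and use the known structure of Thomason closed subsets of $\Spc(\Sp^c)$. That part is fine.

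The $(\Leftarrow)$ direction, however, contains a fatal error: you have reversed the specialization order on $\Spc(\Exc{d}(\Sp^c,\Sp)^c)$. By the paper's conventions, $\overline{\{\cat P\}}=\SET{\cat Q}{\cat Q\subseteq\cat P}$, and \cref{cor:inclusion-layers} says that $\cPd(\num{i},q,n)\subseteq\cPd(\num{k},p,m)$ forces $i\ge k$. So the closure of a layer-$k$ prime consists of primes in layers $\ge k$, i.e., closures spill \emph{upward} toward layer $d$ (consistent with $\supp(P_dh_{\bbS}(k))=\SET{\cPd(\num{i},p,h)}{i\ge k}$ in \cref{lem:support-representables} being closed). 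Concretely, for $k<d$ and $p-1\mid d-k$ one has $\cPd(\num{d},p,m+\delta_p(d,k))\in\overline{\{\cPd(\num{k},p,m)\}}$, so this closure is \emph{not} contained in the image of the open embedding of \cref{prop:pn-induced-spectra} (which is layers $1,\ldots,d-1$), and your case $k<d$ collapses. Symmetrically, for $k=d$ the closure $\overline{\{\cPd(\num{d},p,m)\}}$ lies entirely in layer $d$, so the correction terms $b_l$ you adjoin have supports $\overline{\{\cPd(\num{l},p,m+\delta_p(d,l))\}}$ that are not contained in $\overline{\{\cat P\}}$ at all; your $a_i$ would have strictly too large a support. (There are secondary problems as well: the fully faithful inclusion $\Exc{d-1}(\Sp^c,\Sp)\hookrightarrow\Exc{d}(\Sp^c,\Sp)$, being the right adjoint of a nontrivial finite localization, does not preserve compact objects — e.g.\ $P_{d-1}h_{\bbS}$ is not compact in $\Exc{d}(\Sp^c,\Sp)$ — and an object of $\Exc{d-1}(\Sp^c,\Sp)$ has vanishing $d$-th derivative, so it could never account for the layer-$d$ part of the closure of a lower-layer prime.)

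The paper's $(\Leftarrow)$ argument avoids constructing objects altogether: it uses that the complements of the basic closed sets $\supp(x)$ form a basis of quasi-compact opens, covers $\Spc(\Exc{d}(\Sp^c,\Sp)^c)$ by the $d$ embedded copies of $\Spc(\Sp^c)$ via \cref{lem:cover}, and observes that $\varphi_l^{-1}(\overline{\{\cPd(\num{k_i},p_i,m_i)\}})$ is either empty or $\overline{\{\cat C_{p_i,n_i}\}}$ with $n_i$ finite, hence has quasi-compact complement in $\Spc(\Sp^c)$ by the Hopkins--Smith classification. If you want to repair your approach, that reduction to $\Spc(\Sp^c)$ is the missing idea; an explicit object-by-object construction of $\overline{\{\cat P\}}$ as a support would in any case have to produce objects with prescribed nonzero derivatives in \emph{all} layers $\ge k$, which your induction on $d$ is not set up to do.
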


\begin{proof}
	We follow the approach of \cite[Proposition 10.1]{BalmerSanders17}.
	
	($\Leftarrow$) Suppose the $m_1,\ldots,m_N$ are finite. We claim that $Z$ has a quasi-compact complement. Since the supports of compact objects form a basis of closed sets, it suffices to prove that whenever $Z=\cap_{i\in I}\supp(x_i)$ for a collection $\{x_i\}_{i \in I}$ of objects in $\Exc{d}^c$, there exists a finite subset $J \subseteq I$ such that $Z=\cap_{i\in J}\supp(x_i)$. For each $l\in [d]=\{1, \ldots, d\}$, let $\varphi_l\colon\Spc(\Sp^c)\to\Spc(\Exc{d}^c)$ be the map on spectra induced by the $l$th derivative $\partial_l \colon \Exc{d}\to\Sp$. Since $\Spc(\Exc{d}^c)=\cup_{1\le l\le d} \im \varphi_l$ by \cref{lem:cover}, it suffices to prove that for each $l$ there exists a finite subset $J_l \subseteq I$ such that 
	\[
		\cap_{j \in J_l} \supp(x_j) \cap \im \varphi_l \subseteq Z\cap \im \varphi_l
	\]
	(since we can then take $J \coloneqq \cup_{1 \le l \le d} J_l$). Moreover, since $\varphi_l$ is an embedding of $\Spc(\Sp^c)$ onto $\im \varphi_l$ (\cref{lem:injectivity-of-partial-k}), this reduces to the claim that
	\[
		\varphi_l^{-1}(\cap_{j \in J_l} \supp(x_j)) \subseteq \varphi_l^{-1}(Z).
	\]
	Now it follows from \cref{thm:posetstructure} that $\varphi_l^{-1}(\overline{\{\cPd(\num{k_i},p_i,m_i)\}})$ is either empty or of the form $\overline{\{\cat C_{p_i,n_i}\}}$ for some finite $n_i \ge m_i$. This closed subset of $\Spc(\Sp^c)$ has quasi-compact complement by \cite[Corollary 9.5(d)]{Balmer10b}. We conclude that indeed there is a finite $J_l\subseteq I$ such that \[ \cap_{j \in J_l} \supp(\partial_l x_j) \subseteq \varphi_l^{-1}(Z)\] as desired.

	($\Rightarrow$) Conversely, suppose that one of the $m_i=\infty$. Write $Z=Z' \cup \overline{\{\cPd(\num{k},p,\infty)\}}$, regrouping the other irreducibles under $Z'$. Since $\cat C_{p,\infty} = \cap_{1\le n < \infty} \cat C_{p,n}$, it follows that $\cPd(\num{k},p,\infty) =\cap_{1\le n < \infty}\cPd(\num{k},p,n)$. Hence $Z=\cap_{1\le n <\infty}(Z' \cup \overline{\{\cPd(\num{k},p,n)\}})$. If the complement of $Z$ is quasi-compact then there is a finite $n$ such that $Z=Z' \cup \overline{\{\cPd(\num{k},p,n)\}}$. But since $\cPd(\num{k},p,n) \not\subseteq \cPd(\num{k},p,\infty)$, it follows that $\cPd(\num{k},p,n) \in Z'$ and hence that $\cPd(\num{k},p,\infty)\in Z'$. But this implies that $\cPd(\num{k},p,\infty)$ is contained in one of the other irreducibles of $Z$, contradicting the assumption that the $\cat P_1,\ldots,\cat P_N$ were irredundant.
\end{proof}

\begin{Cor}\label{cor:desc-of-thomasons}
	The Thomason subsets of $\Spc(\Exc{d}^c)$ are the arbitrary unions of $\overline{\{\cPd(\num{k},p,m)\}} = \SET{\cat Q}{\cat Q \subseteq \cPd(\num{k},p,m)}$ for arbitrary $1\le k\le d$, $p$ prime and \emph{finite} chromatic integer $1 \le m <\infty$.
\end{Cor}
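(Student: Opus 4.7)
The plan is to deduce the corollary directly from \cref{prop:spc_posetreduction} together with \cref{prop:thomason-closed}. By definition, a Thomason subset is a union of closed subsets with quasi-compact complement, so it suffices to characterize which closed subsets have quasi-compact complement and then take arbitrary unions.

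First I would observe that \cref{prop:spc_posetreduction} writes every closed subset as a \emph{finite} union of irreducible closed subsets $\overline{\{\cat P_i\}}$, and by passing to a subset of indices we can always arrange the decomposition to be irredundant (i.e.\ $\cat P_i \not\subseteq \cat P_j$ for $i \neq j$). Then \cref{prop:thomason-closed} says that such an irredundant finite union $\overline{\{\cat P_1\}} \cup \cdots \cup \overline{\{\cat P_N\}}$ with $\cat P_i = \cPd(\num{k_i},p_i,m_i)$ has quasi-compact complement if and only if each $m_i < \infty$. Combining these two facts, the closed subsets with quasi-compact complement are precisely the finite unions $\overline{\{\cPd(\num{k_1},p_1,m_1)\}} \cup \cdots \cup \overline{\{\cPd(\num{k_N},p_N,m_N)\}}$ with each $m_i$ finite.

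Taking arbitrary unions of such closed subsets and collecting all the irreducible components that appear, a Thomason subset is therefore an arbitrary union of sets of the form $\overline{\{\cPd(\num{k},p,m)\}} = \SET{\cat Q}{\cat Q \subseteq \cPd(\num{k},p,m)}$ with $1 \le k \le d$, $p$ a prime (or $0$), and $1 \le m < \infty$ finite. Conversely, each singleton $\overline{\{\cPd(\num{k},p,m)\}}$ with $m < \infty$ is already a closed subset with quasi-compact complement by \cref{prop:thomason-closed} (the irredundancy condition is trivially satisfied for one element), and hence is a Thomason subset, so arbitrary unions of these are again Thomason.

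There is essentially no obstacle here: the content is entirely in \cref{prop:spc_posetreduction} and \cref{prop:thomason-closed}, and the corollary is a bookkeeping statement assembling them. The only minor care needed is to pass to an irredundant decomposition before invoking \cref{prop:thomason-closed}, which is automatic since removing any $\overline{\{\cat P_i\}} \subseteq \overline{\{\cat P_j\}}$ from the union does not change the set.
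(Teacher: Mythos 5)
Your proof is correct and follows exactly the route the paper intends: the paper's own proof is the one-line remark that the corollary is immediate from \cref{prop:thomason-closed}, \cref{prop:spc_posetreduction} and \cref{thm:spec-as-a-set}, and your write-up simply spells out that deduction (with \cref{thm:spec-as-a-set} used implicitly when you identify each $\cat P_i$ with some $\cPd(\num{k_i},p_i,m_i)$). The passage to an irredundant decomposition and the $N=1$ case for the converse are exactly the right points of care.
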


\begin{proof}
	This is an immediate consequence of \cref{prop:thomason-closed}, \cref{prop:spc_posetreduction} and \cref{thm:spec-as-a-set}.
\end{proof}

\begin{Not}\label{not:PNinf}
	Let $\mathbb{P}$ denote the set of prime numbers and let
	\[
		\mathbb{N}_{\infty} \coloneqq \{0,1,2,\ldots\}\cup\{\infty\}
	\]
	with the natural strict ordering $<$ with the understanding that $\infty \nless \infty$ and $\infty + n =\infty$ for any finite $n$.
\end{Not}

\begin{Def}\label{def:admissible}
    Let $d \ge 1$ be an integer. Say that a function $f\colon [d] \times \mathbb{P} \to \mathbb{N}_{\infty}$ is \emph{admissible} if it satisfies the following two conditions:
	\begin{enumerate}
		\item $f(k,p) \le \delta_p(k,l) + f(l,p)$ for all $p-1 \mid k-l\ge 0$; and
		\item if $f(k,p)=0$ for some prime $p$ then $f(k,q)=0$ for every prime $q$.
	\end{enumerate}
	Recall that the function $\delta_p(k,l)$ was computed in \cref{prop:ppowerchains}.
\end{Def}

\begin{Thm}[Classification of tt-ideals]\label{thm:tt-ideal-classification}
	Let $d \ge 1$ be an integer. There is a one-to-one correspondence between the set of admissible functions $[d]\times \mathbb{P} \to \mathbb{N}_\infty$ and the Thomason subsets of $\Spc(\Exc{d}(\Sp^c,\Sp)^c)$ which maps $f$ to
	\[
		Y_f \coloneqq \SET{\cPd(\num{k},p,m)}{m > f(k,p)}.
	\]
	Consequently, there is a one-to-one correspondence between admissible functions and tt-ideals in $\Exc{d}(\Sp^c,\Sp)^c$ which maps $f$ to
	\[
		\cat I_f \coloneqq \SET{x \in \Exc{d}^c}{\partial_k(x) \in \cat C_{p,f(k,p)} \text{ for each } (k,p)\in [d]\times\mathbb{P} \text{ such that } f(k,p)>0}.
	\]
\end{Thm}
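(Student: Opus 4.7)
The plan is to prove the Thomason bijection first, since the second bijection will then follow from Balmer's classification (\cref{rem:balmer-classification}) together with the observation that the condition $\supp(a)\subseteq Y_f$ translates, via the characterization of primes in \cref{def:prime-in-exc} and the nesting $\cat C_{p,m}\subseteq \cat C_{p,m'}$ for $m\geq m'$, into the single strongest constraint $\partial_k(a)\in \cat C_{p,f(k,p)}$ for each $(k,p)$ with $f(k,p)>0$.

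For the main bijection, I would construct the inverse map explicitly. Given a Thomason subset $Y$, define
\[
    f_Y(k,p) \coloneqq \sup\SET{m\in\mathbb{N}_\infty}{\cPd(\num{k},p,m)\notin Y}
\]
with the convention $\sup\emptyset=0$. Using that closure $\overline{\{\cat P\}}$ consists of primes contained in $\cat P$, together with the vertical inclusions from \cref{prop:verticalinclusions}, the fibrewise set $T(k,p)=\SET{m}{\cPd(\num{k},p,m)\in Y}$ is upward closed in $\mathbb{N}_\infty$. Moreover, \cref{prop:thomason-closed} applied to an irredundant decomposition of any Thomason closed subset witnessing $\cPd(\num{k},p,\infty)\in Y$ forces some finite integer to already lie in $T(k,p)$. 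Consequently $\cPd(\num{k},p,m)\in Y$ if and only if $m>f_Y(k,p)$, so $f\mapsto Y_f$ and $Y\mapsto f_Y$ are mutually inverse bijections at the level of underlying sets of primes.

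The substance is then to match admissibility with the Thomason condition. For the forward direction, specialization-closedness of $Y_f$ reduces via \cref{thm:posetstructure} to the following implication: if $p-1\mid k-l\geq 0$ and $m\geq m'+\delta_p(k,l)$ with $m'>f(l,p)$ (or $m'=1$ and a possibly distinct prime $q$ is permitted), then $m>f(k,p)$; this is precisely admissibility (a), combined with (b) to handle the height-$1$ case via the identification $\cPd(\num{l},p,1)=\cPd(\num{l},q,1)$ from \cref{rem:pprimary}. I would then exhibit the representation
\[
    Y_f = \bigcup_{(k,p):\,f(k,p)<\infty}\overline{\{\cPd(\num{k},p,f(k,p)+1)\}}
\]
and invoke \cref{cor:desc-of-thomasons} to confirm $Y_f$ is Thomason. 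The reverse implication is a direct unwinding: to verify admissibility (a) for $f_Y$, assuming $f_Y(l,p)<\infty$, start from $\cPd(\num{l},p,f_Y(l,p)+1)\in Y$, apply \cref{thm:posetstructure} to deduce $\cPd(\num{k},p,f_Y(l,p)+1+\delta_p(k,l))\subseteq \cPd(\num{l},p,f_Y(l,p)+1)$, and read off $f_Y(k,p)\leq f_Y(l,p)+\delta_p(k,l)$ by specialization-closedness; condition (b) follows immediately from \cref{rem:pprimary}.

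The step I expect to require the most care is the precise bookkeeping of the $+1$ shifts in the definition of $Y_f$ against the $\delta_p$-shift appearing in \cref{thm:posetstructure}, particularly in the boundary cases $f(l,p)=0$ (where the monoidal-unit-prime ambiguity of \cref{rem:pprimary} kicks in) and $f(k,p)=\infty$ (where the supremum conventions must be handled with care). Once this poset-level bookkeeping is set up, the remainder is a formal unwinding.
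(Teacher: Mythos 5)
Your proposal is correct and follows essentially the same route as the paper's proof: a fibrewise decomposition of a Thomason subset into upward-closed sets of chromatic heights, the finiteness constraint from \cref{prop:thomason-closed}/\cref{cor:desc-of-thomasons} forcing each nonempty fibre to contain a prime of finite height, the matching of admissibility (a) and (b) against \cref{thm:posetstructure} and \cref{rem:pprimary}, the representation of $Y_f$ as a finite-height union of closures, and the passage to tt-ideals via Balmer's classification. The bookkeeping you flag (the $+1$ shifts and the boundary cases $f=0$ and $f=\infty$) is handled in the paper exactly as you anticipate.
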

\begin{proof}
	We will use repeatedly that Thomason subsets are closed under specialization. With this in mind, let $Y\subseteq \Spc(\Exc{d}^c)$ be a Thomason subset. Then 
	\begin{equation}\label{eq:Ydecomp}
		Y=\bigcup_{(k,p)\in[d]\times \mathbb{P}} X(k,p)
	\end{equation}
	where $X(k,p)\coloneqq \SET{\cPd(\num{k},p,n)}{ \cPd(\num{k},p,n) \in Y}$. The nontrivial inclusion~$\subseteq$ in~\eqref{eq:Ydecomp} follows from \cref{thm:spec-as-a-set}. Then consider a pair $(k,p) \in {\num{d}\times \mathbb{P}}$. If $X(k,p)$ is nonempty then $\cPd(\num{k},p,\infty) \in X(k,p)$. Since $Y$ is Thomason, \cref{cor:desc-of-thomasons} implies that $\cPd(\num{k},p,\infty) \subseteq \cPd(\num{l},q,h) \in Y$ for some $(l,q)\in \num{l}\times\mathbb{P}$ and \emph{finite} $h$. It then follows from \cref{thm:posetstructure} that $\cPd(\num{k},p,h+\delta_p(k,l)) \subseteq \cPd(\num{l},q,h)$ and, moreover, that $h+\delta_p(k,l)$ is finite. This establishes that if $X(k,p)$ is nonempty then there is a finite $n\ge 1$ such that $\cPd(\num{k},p,n)\in X(k,p)$. It follows that every nonempty $X(k,p)$ is of the form $\SET{\cPd(\num{k},p,n)}{n_{k,p} \le n \le \infty}$ for some (uniquely determined) finite $n_{k,p} \ge 1$.

	We then define a function $f\colon \num{d}\times \mathbb{P}\to \bbN_{\infty}$ by setting $f(k,p)=\infty$ when $X(k,p)=\emptyset$ and $f(k,p)=n_{k,p}-1$ when $X(k,p)$ is nonempty. With this definition, $X(k,p) = \SET{\cPd(\num{k},p,n)}{n > f(k,p)}$. We claim that the function $f$ is admissible. Indeed, condition (b) follows simply from the fact that $\cPd(\num{k},p,1)=\cPd(\num{k},q,1)$ for any two primes $p$ and $q$. On the other hand, condition~(a) follows from \cref{thm:posetstructure}, including the cases where one side of the inequality is equal to infinity. 
 
	In this way we obtain an injective map from the set of Thomason subsets to the set of admissible functions. We claim that every admissible function arises in this way. Indeed, given an admissible function $f$, consider the subset
	\[
		Y \coloneqq \bigcup_{\substack{(l,q)\in \num{d}\times \mathbb{P}:\\f(l,q)<\infty}} \overline{\{\cPd(\num{l},q,f(l,q)+1)\}}.
	\] 
	It is Thomason by \cref{cor:desc-of-thomasons}. We claim that $\cPd(\num{k},p,n)\in Y$ if and only if $n > f(k,p)$. The $(\Leftarrow)$ direction is immediate from the definition of $Y$. On the other hand, if $\cPd(\num{k},p,n)\in Y$ then $\cPd(\num{k},p,n) \subseteq \cPd(\num{l},q,f(l,q)+1)$ for some $(l,q)$ such that $f(l,q)<\infty$. \Cref{thm:posetstructure} then implies that $p-1\mid k-l \ge 0$ and that $n > f(l,q)+\delta_p(k,l)$ and, moreover, that $p=q$ if $f(l,q) \neq 0$. Since $f$ is admissible, it follows that $n > f(k,p)$ as desired.

	This establishes the claimed bijection between admissible functions and Thomason subsets. The translation from Thomason subsets to tt-ideals is \cite[Theorem~4.10]{Balmer05a} just unravelling the definitions.
\end{proof}

\begin{Rem}
	The above bijective correspondence is order-preserving when the set of admissible functions $[d]\times \mathbb{P} \to \mathbb{N}_{\infty}$ is endowed with the pointwise order induced from the order on $\mathbb{N}_{\infty}$.
\end{Rem}

\begin{Rem}\label{rem:typefunctions}
	Following the perspective of \cite{bgh_balmer}, we can define the \emph{type function} of a compact $d$-excisive functor $x \in \Exc{d}(\Sp^c,\Sp)^c$ as the function which collects the types of the derivatives $\partial_k x \in \Sp^c$ of $x$ at each prime $p$, i.e., the function $\type_x\colon\num{d}\times\mathbb{P} \to \mathbb{N}_\infty$ defined by
        \[
            \type_x(k,p) \coloneqq \inf\{h \in \mathbb{N}_\infty \mid K(p,h)_*\partial_k x \neq 0\}
        \]
    with the convention that $\inf \emptyset = \infty$. We can extend the notion from objects $x$ to tt-ideals $\cat I \subseteq \Exc{d}(\Sp^c,\Sp)^c$ as follows:
        \[
            \type_{\cat I}\colon [d]\times \mathbb{P} \to \mathbb{N}_\infty, \quad (l,p) \mapsto \inf\{\type_x(l,p)\mid x\in \cat I\}.
        \]
	One readily checks from the proof of \cref{thm:tt-ideal-classification} that the function $\cat I \mapsto \type_{\cat I}$ which sends a tt-ideal to its type function is the inverse of the bijection $f \mapsto \cat I_f$.
\end{Rem}

\begin{Def}\label{def:padmissible}
	The classification of tt-ideals simplifies $p$-locally. Say that a function $f\colon [d]\to \mathbb{N}_{\infty}$ is \emph{$p$-admissible} if 
	\[
		f(k) \le \delta_p(k,l) + f(l)
	\]
    for all $p-1\mid k-l \ge 0$. 
\end{Def}

\begin{Cor}\label{cor:plocal-tt-ideal-classification}
	Let $d \ge 1$ be an integer and $p$ a prime. There is a one-to-one correspondence between the set of $p$-admissible functions $[d] \to \mathbb{N}_{\infty}$ and the Thomason subsets of $\Spc(\Exc{d}(\Sp^c,\Sp)^c_{(p)})$ which maps $f$ to $Y_f \coloneqq \SET{\cPd(\num{k},p,m)}{m > f(k)}$.
\end{Cor}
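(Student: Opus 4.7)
This follows by specializing the argument of \cref{thm:tt-ideal-classification} to the $p$-local setting. By the identification of \cref{rem:p-local-derivative}, the $p$-local spectrum consists of the primes $\cPd(\num{k},p,m)$ for $1 \le k \le d$ and $1 \le m \le \infty$, with inclusion relations governed by the case $p=q$ of \cref{thm:posetstructure}. The two directions of the bijection proceed as follows.

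In the forward direction, given a $p$-admissible $f$, I would verify that $Y_f$ is Thomason. Closure under specialization is immediate from $p$-admissibility: if $\cPd(\num{l},p,n) \subseteq \cPd(\num{k},p,m)$ with $m > f(k)$, then \cref{thm:posetstructure} yields $p-1 \mid l-k \ge 0$ and $n \ge m + \delta_p(l,k) > f(k) + \delta_p(l,k) \ge f(l)$, so $\cPd(\num{l},p,n) \in Y_f$. For quasi-compactness of the complement, I would mimic the ($\Leftarrow$) direction of \cref{prop:thomason-closed}, writing $Y_f = \bigcup_{k\,:\,f(k)<\infty} \overline{\{\cPd(\num{k},p,f(k)+1)\}}$ and covering the $p$-local spectrum by the images of $\Spc(\partial_l)$ restricted to $\Spc(\Sp_{(p)}^c)$, thereby reducing to the classical quasi-compactness statement \cite[Corollary~9.5(d)]{Balmer10b}.

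For the reverse direction, given a Thomason $Y$, I would define $f(k) := n_k - 1$, where $n_k \ge 1$ is the smallest finite integer with $\cPd(\num{k},p,n_k) \in Y$, or $f(k) := \infty$ if no such integer exists. The $p$-local analog of \cref{cor:desc-of-thomasons} (proved by the same argument) guarantees such a finite $n_k$ exists whenever the $k$-slice of $Y$ is nonempty; combined with the fact that $\cPd(\num{k},p,n) \subseteq \cPd(\num{k},p,m)$ iff $n \ge m$ (\cref{prop:verticalinclusions}), specialization-closure of $Y$ then gives $Y = Y_f$. To verify $p$-admissibility, for $p-1 \mid k-l \ge 0$ with $f(l) < \infty$, the prime $\cPd(\num{l},p,f(l)+1)$ lies in $Y$, hence by \cref{thm:posetstructure} and closure under specialization so does $\cPd(\num{k},p,f(l)+1+\delta_p(k,l))$, which forces $f(k) \le f(l) + \delta_p(k,l)$.

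The two constructions $f \mapsto Y_f$ and $Y \mapsto f$ are manifestly mutually inverse. The main technical obstacle is the quasi-compactness verification in the forward direction, which requires a careful adaptation of the argument of \cref{prop:thomason-closed} to the $p$-local setting; all remaining steps follow directly from the topological description in \cref{thm:posetstructure}.
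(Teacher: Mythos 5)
Your proposal is correct and follows essentially the route the paper intends: the paper states this corollary without proof as the $p$-local specialization of \cref{thm:tt-ideal-classification}, and your argument is precisely that theorem's proof (slice decomposition, minimal finite height in each nonempty slice, admissibility from \cref{thm:posetstructure}, and Thomason-ness via the quasi-compactness criterion of \cref{prop:thomason-closed}) re-run inside $\Spc(\Exc{d}(\Sp^c,\Sp)^c_{(p)})$. The adaptations you flag do go through, since the $p$-local spectrum is an open subspace via the finite localization of \cref{rem:p-local-derivative}, so point-closures and quasi-compactness restrict as needed.
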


\begin{Exa}
	Note that the empty Thomason subset  corresponds to the constant $p$-admissible function $f \equiv \infty$ while the whole space  corresponds to the constant function $f \equiv 0$.
\end{Exa}

\begin{Rem}
	In the situation of \cref{cor:plocal-tt-ideal-classification}, we observe that Thomason subsets are automatically closed. Hence every every tt-ideal of $\smash{\Exc{d}(\Sp^c,\Sp)^c_{(p)}}$ is generated by a single compact object; see \cite[Lemma~2.3]{Sanders13}. Since \cref{rem:targetcat} supplies a geometric equivalence
		\[
			\Exc{d}(\Sp^c,\Sp)_{(p)} \simeq \Exc{d}(\Sp^c,\Sp_{(p)})
		\]
	we thus obtain the statement of our main theorem on page \pageref{thmmain}.
\end{Rem}

\begin{Rem}
	The behaviour of the function $\delta_p(k,l)$ depends on the behaviour of the function $s_p(k)$ which can be quite erratic. Further information about the latter function can be found in \cite[Section 3.2]{AlloucheShallit03}. Its generating function has a remarkable closed form formula; see \cite{AdamsWattersRuskey09}.
\end{Rem}

\subsection*{Transchromatic Smith and Floyd theory in functor calculus}
A famous theorem of P.A.~Smith \cite{Smith41} states that if $H$ is a subgroup of a $p$-group $G$ and $X$ is a finite-dimensional based $G$-CW-complex, then  
	\[
		\tilde H_*(X^H;\Fp) = 0 \implies \tilde H_*(X^G;\Fp) = 0. 
	\]
This was later extended by Floyd \cite{Floyd52}, who showed that if $H$ is a subgroup of a $p$-group $G$ and $X$ is a finite-dimensional $G$-CW complex with $\dim_{\Fp}H_*(X^H;\Fp)$ finite, then 
	\[
		\dim_{\Fp}H_*(X^H;\Fp) \ge \dim_{\Fp}H_*(X^G;\Fp).
	\]
Noting that mod $p$ homology corresponds to $K(p,\infty)$, i.e., Morava $K$-theory at height $\infty$, Hausmann and Kuhn--Lloyd \cite{KuhnLloyd2024} considered versions of the Smith and Floyd theorems for Morava $K$-theory $K(p,n)$. They showed that understanding such ``transchromatic'' Smith and Floyd theorems is equivalent to understanding the topology of the Balmer spectrum of the category of compact $G$-spectra. Our aim in this section is to relate the topology of the Balmer spectrum of compact $d$-excisive functors to analogs of the Smith and Floyd inequalities in functor calculus.

\begin{Def}\label{def:smithandfloyd}
	Let $d \ge 1$ be an integer and $p$ a prime. For each $1 \le k,l\leq d$ and $0\le n,h \leq \infty$, we say that
    \begin{itemize}
		\item $\Smith_{d,p}(k,l;n,h)$ holds if: for all $x \in \Exc{d}(\Sp^c,\Sp)^{c}$, we have
		\[
			K(p,n)_*(\partial_kx) =0\implies K(p,h)_*(\partial_lx) =0;
		\]
		\item $\Floyd_{d,p}(k,l;n,h)$ holds if: for all $x \in \Exc{d}(\Sp^c,\Sp)^{c}$, we have 
		\[
			\dim_{K(p,n)_*}K(p,n)_*(\partial_kx) \geq \dim_{K(p,h)_*}K(p,h)_*(\partial_lx).
		\]
    \end{itemize}
\end{Def}

\begin{Rem}\label{rem:smith-floyd-p-local}
	Recall from \cref{rem:p-local-derivative} that  $p$-localization 
		\[
			\Exc{d}(\Sp^c,\Sp)\to\Exc{d}(\Sp^c,\Sp)_{(p)}
		\]
	is the finite localization associated to $i_d(f_{p,\infty})$. Hence \cref{lem:partial-splits-inflation} implies that the Goodwillie derivative $\partial_k$ commutes with $p$-localization. Since~$K(p,n)$ is itself $p$-local, it follows that $K(p,n)_*(\partial_k x)\simeq K(p,n)_*(\partial_k(x_{(p)}))$ where~$x_{(p)}$ denotes the $p$-localization of~$x$. Thus, the statements in \cref{def:smithandfloyd} could be equivalently formulated by instead letting $x$ range over all compact $p$-local functors $x \in \Exc{d}(\Sp^c,\Sp)_{(p)}^c$. To see this, just recall that if $x \in \Exc{d}(\Sp^c,\Sp)_{(p)}^c$ then $x \oplus \Sigma x$ is the $p$-localization of a compact $x' \in \Exc{d}(\Sp^c,\Sp)^c$.
\end{Rem}

\begin{Prop}\label{prop:smithfloyd_equivalence}
    For all $1\le k,l\leq d$ and $0 \le n,h \leq \infty$, we have
		\[
			\Smith_{d,p}(k,l;n,h) \iff \Floyd_{d,p}(k,l;n,h).
		\]
\end{Prop}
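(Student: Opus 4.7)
The direction $\Floyd_{d,p}(k,l;n,h) \Rightarrow \Smith_{d,p}(k,l;n,h)$ is immediate: if $K(p,n)_*(\partial_k x) = 0$ for some compact $x$, then $\dim_{K(p,n)_*} K(p,n)_*(\partial_k x) = 0$, and the Floyd inequality forces $\dim_{K(p,h)_*} K(p,h)_*(\partial_l x) = 0$, which in turn gives $K(p,h)_*(\partial_l x) = 0$ because $K(p,h)_*$ is a graded field.

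For the converse, I plan to argue by induction on the finite natural number $a \coloneqq \dim_{K(p,n)_*} K(p,n)_*(\partial_k x)$. Finiteness holds because $\partial_k x$ is a compact spectrum by \cref{lem:geometric-derivatives} and \cref{cor:rigid-compact}, and $K(p,n)_*$ is a graded field. The base case $a = 0$ is exactly the $\Smith$ hypothesis applied to $x$ itself. For the inductive step with $a \ge 1$, the plan is to construct a cofiber sequence $z \to x \to w$ in $\Exc{d}(\Sp^c,\Sp)^c$ satisfying two properties: (i) $\dim_{K(p,n)_*} K(p,n)_*(\partial_k w) = a - 1$, so that the inductive hypothesis bounds $\dim_{K(p,h)_*} K(p,h)_*(\partial_l w) \le a - 1$; and (ii) $\dim_{K(p,h)_*} K(p,h)_*(\partial_l z) \le 1$. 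Subadditivity of Morava $K$-theoretic dimension on the long exact sequence associated to the cofiber sequence under the tt-functor $\partial_l$ then yields $\dim_{K(p,h)_*} K(p,h)_*(\partial_l x) \le (a-1) + 1 = a$, which is the Floyd inequality for $x$.

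The principal obstacle is the realization step: given a nonzero class in $K(p,n)_*(\partial_k x)$, producing a morphism in $\Exc{d}(\Sp^c, \Sp)^c$ whose $k$-th Goodwillie derivative is nonzero on that class, while simultaneously controlling both the $k$-th and $l$-th derivatives so that conditions (i) and (ii) hold. The natural candidate is to take $z = i_d V$ for a judicious compact $p$-local spectrum $V$, since then $\partial_k z = V = \partial_l z$ by \cref{lem:partial-splits-inflation}, reducing the bookkeeping to a choice of $V$ (with prescribed Morava $K$-theoretic dimensions) and the construction of a morphism $i_dV \to x$ in $\Exc{d}(\Sp^c, \Sp)$. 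Finding such a morphism whose derivative witnesses the prescribed $K(p,n)_*$-class is where the substantive work lies; I expect the argument to exploit the $v_n$-self-maps of $\partial_k x$ furnished by the Hopkins--Smith periodicity theorem in combination with the multiplicative structure on $\Exc{d}(\Sp^c,\Sp)$ established earlier, in analogy with the argument of Kuhn and Lloyd \cite{KuhnLloyd20pp} in the equivariant setting. Once this realization is secured, the inductive scheme closes and the Floyd inequality follows for arbitrary compact $x \in \Exc{d}(\Sp^c, \Sp)^c$.
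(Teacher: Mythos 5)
Your easy direction ($\Floyd \Rightarrow \Smith$) is correct and matches the paper. The converse, however, is not a proof but a strategy with an unresolved hole exactly where you flag it, and I do not believe the inductive scheme closes as stated. Three concrete problems. First, a map $i_dV \to x$ corresponds by adjunction to a map of spectra $V \to \crosseffect_1(x)(\bbS) = x(\bbS)$, and the induced map $V = \partial_k i_d V \to \partial_k x$ only sees the part of $\partial_k x$ accessible from the Taylor tower of $x$ at $\bbS$; there is no reason an arbitrary nonzero class in $K(p,n)_*(\partial_k x)$ is in the image, and realization of Morava $K$-homology classes by maps from finite complexes is already delicate before imposing that the map lift to $\Exc{d}(\Sp^c,\Sp)$. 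Second, even granting such a map, the long exact sequence only gives $\dim K(p,n)_*(\partial_k w) \geq a - \dim K(p,n)_*(\partial_k z)$; the connecting homomorphism can make it strictly larger than $a-1$, so your condition (i) is not automatic. Third, your condition (ii) forces $\dim_{K(p,h)_*}K(p,h)_*(V) \le 1$ while $K(p,n)_*(V)$ must still support the class you want to hit; these requirements are in tension and you give no candidate $V$.

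The paper avoids all of this by proving the contrapositive instead: assuming $\dim_{K(p,n)_*}K(p,n)_*(\partial_k x) < \dim_{K(p,h)_*}K(p,h)_*(\partial_l x)$, it produces a single compact witness falsifying $\Smith_{d,p}(k,l;n,h)$, namely $y = e_m x^{\circledast m}$ for a suitable idempotent $e_m \in \Z_{(p)}[\Sigma_m]$ acting on the $m$-fold Day convolution power of $x$. Since $\partial_l$ and $K(p,h)_*$ are monoidal, $K(p,h)_*(\partial_l(e_m x^{\circledast m})) \cong e_m(K(p,h)_*(\partial_l x))^{\otimes m}$, so the whole construction reduces to pure multilinear algebra: by J.~Smith's idempotent technology as packaged in Kuhn--Lloyd, one can choose $m$ and $e_m$ so that $e_m V_*^{\otimes m} \neq 0$ but $e_m W_*^{\otimes m} = 0$ whenever $\dim W_* < \dim V_*$. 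No realization of homology classes by maps, no cofiber sequences, and no exact dimension bookkeeping are needed --- only retracts of $x^{\circledast m}$, which exist because idempotents split (\cref{rem:idempotents-split}). If you want to salvage your approach you would have to solve the realization problem, which is essentially as hard as the theorem itself; I recommend switching to the contrapositive-plus-idempotents argument.
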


\begin{proof}
    The ($\Longleftarrow$) implication is clear. In order to establish the converse, we will prove the contrapositive: Suppose there exists some $x \in \Exc{d}(\Sp^c,\Sp)^{c}$ with 
        \[
			\dim_{K(p,n)_*}K(p,n)_*(\partial_kx) < \dim_{K(p,h)_*}K(p,h)_*(\partial_lx).
        \]
    We may replace $x$ with its $p$-localization (\cref{rem:smith-floyd-p-local}). We will employ the methods of J.~Smith \cite[Appendix C]{Ravenel92}, as expanded upon in the work of Kuhn--Lloyd \cite{KuhnLloyd2024}, to construct a compact $p$-local $d$-excisive functor $y$ from $x$ which falsifies the corresponding statement $\Smith_d(k,l;n,h)$. Note that, for any $m \ge 1$, the symmetric group acts on $x^{\circledast m}$ by permuting the factors, giving rise to a map of rings
        \[
            \Z_{(p)}[\Sigma_m] \to \pi_0\End(x^{\circledast m}).
        \]
    In particular, any idempotent $e \in \Z_{(p)}[\Sigma_m]$ induces a self-map $e\colon x^{\circledast m} \to x^{\circledast m}$, and we write $ex^{\circledast m}$ for the corresponding retract of $x^{\circledast m}$. The monoidality of~$\partial_l(-)$ and $K(p,h)_*(-)$ imply that this construction is compatible with its algebraic counterpart: 
        \[
            K(p,h)_*(\partial_l(ex^{\circledast m})) \cong e(K(p,h)_*(\partial_lx))^{\otimes m}.
        \]
    Set $V_* \coloneqq K(p,h)_*(\partial_lx)$ as a graded module over $K(p,h)_*$ and similarly $W_* \coloneqq K(p,n)_*(\partial_kx)$ as a graded $K(p,n)_*$-module. The results of \cite[Section 6.4]{KuhnLloyd2024} provide a choice of integer $m$ and idempotent $e_m \in \Z_{(p)}[\Sigma_m]$ such that\footnote{There is a subtlety at the prime $p=2$ related to the exotic multiplicative structure of $K(n)$, which however can be `filtered away' as in \cite[Section 6.3]{KuhnLloyd2024}.} 
        \[
            e_mV_*^{\otimes m} \neq 0 \quad \text{and} \quad e_mW_*^{\otimes m} = 0.
        \]
    It follows that $y\coloneqq e_mx^{\circledast m}$ witnesses the failure of $\Smith_d(k,l;n,h)$, as desired. 
\end{proof}

\begin{Rem}
   The relation with the topology of the Balmer spectrum is the following: 
\end{Rem}

\begin{Prop}\label{prop:smith-topology}
	Let $1 \le k,l \le d$ and $0 \le n,h \le \infty$. Then $\Smith_{d,p}(k,l;n,h)$ holds if and only if $\cPd(\num{k},p,n+1) \subseteq \cPd(\num{l},p,h+1)$ in $\Exc{d}(\Sp^c,\Sp)^c$.
\end{Prop}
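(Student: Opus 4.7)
The plan is that the proof is essentially a tautological unwinding of definitions, since both sides of the claimed equivalence are statements quantified over compact $d$-excisive functors. No genuine tt-geometric input is needed beyond the definition of the prime tt-ideals $\cPd(\num{k},p,h)$.

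First, I would translate the membership condition for each prime. From \cref{def:prime-in-exc} together with the description of $\Spc(\Sp^c)$ recalled in \cref{exa:balmer-spectrum-spectra}, a compact $d$-excisive functor $x$ lies in $\cPd(\num{k},p,n+1)$ precisely when $\partial_k(x) \in \cat C_{p,n+1}$, which, using the paper's indexing convention $\cat C_{p,h} = \ker K(p,h-1)_*$, is equivalent to $K(p,n)_*\partial_k(x) = 0$. Similarly, $x \in \cPd(\num{l},p,h+1)$ if and only if $K(p,h)_*\partial_l(x) = 0$.

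Second, since every prime tt-ideal is by construction a subset of $\Exc{d}(\Sp^c,\Sp)^c$, the inclusion $\cPd(\num{k},p,n+1) \subseteq \cPd(\num{l},p,h+1)$ means exactly that for every compact object $x$, the implication $x \in \cPd(\num{k},p,n+1) \Rightarrow x \in \cPd(\num{l},p,h+1)$ holds. Composing with the previous step gives the Smith implication $\Smith_{d,p}(k,l;n,h)$ word for word as formulated in \cref{def:smithandfloyd}, and running the argument in reverse recovers the inclusion from the Smith implication.

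The only point requiring a little care is the boundary values $n,h \in \{0,\infty\}$. Here I would appeal to the paper's conventions, as fixed in \cref{exa:balmer-spectrum-spectra} and in the proof of \cref{cor:bijhyp}: $K(p,0) = \HQ$ and $K(p,\infty) = \HFp$, with $\cat C_{p,1}=\ker\HQ_*$ and $\cat C_{p,\infty}=\ker\HFp_*$, together with the reading $\cPd(\num{k},p,\infty+1) = \cPd(\num{k},p,\infty)$. With these conventions the translation in the previous paragraphs still yields the equivalence of the two formulations, so there is no substantive obstacle; the real content of transchromatic Smith theory in functor calculus lies in the topology of the Balmer spectrum furnished by \cref{thm:posetstructure}, for which this proposition is merely the dictionary.
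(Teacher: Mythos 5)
Your proof is correct and is essentially the paper's own argument: both reduce the statement to an unwinding of definitions, with the paper phrasing the final step via $\supp(x)$ and the basis of closed sets while you phrase it directly as set-theoretic membership in the primes. The handling of the boundary conventions for $n,h\in\{0,\infty\}$ is also consistent with the paper's indexing $\cat C_{p,h}=\ker K(p,h-1)_*$.
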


\begin{proof}
	Unravelling the definitions one sees that $\Smith_{d,p}(k,l;n,h)$ is equivalent to the statement that for every compact $x$, $\cPd(\num{l},p,h+1) \in \supp(x)$ implies $\cPd(\num{k},p,n+1) \in \supp(x)$. This is equivalent to $\cPd(\num{k},p,n+1)\subseteq\cPd(\num{l},p,h+1)$ since the supports of compact objects form a basis of closed sets for the Balmer topology.
\end{proof}

\begin{Cor}\label{cor:floydinequality}
    Consider integers $1 \leq l \leq k \leq d$ satisfying $p-1 \mid k-l$, a height $0\le h\le \infty$, and let $n \geq h +\delta_p(k,l)$. For any compact $x \in \Exc{d}(\Sp^c,\Sp)^c$, the following inequality holds:
        \[
            \dim_{K(p,n)_*}K(p,n)_*(\partial_kx) \geq \dim_{K(p,h)_*}K(p,h)_*(\partial_lx).
        \]
\end{Cor}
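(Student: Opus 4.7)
The plan is to assemble three results from the preceding subsections in sequence; the heavy lifting has already been done.

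First, by \cref{prop:smithfloyd_equivalence}, the Floyd inequality $\Floyd_{d,p}(k,l;n,h)$ is equivalent to the Smith vanishing statement $\Smith_{d,p}(k,l;n,h)$. So it suffices to establish the latter under the hypothesis $n \geq h + \delta_p(k,l)$. Next, \cref{prop:smith-topology} reformulates $\Smith_{d,p}(k,l;n,h)$ as the tt-geometric inclusion $\cPd(\num{k},p,n+1) \subseteq \cPd(\num{l},p,h+1)$ of prime ideals of $\Exc{d}(\Sp^c,\Sp)^c$.

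Finally, this inclusion is verified by invoking \cref{thm:posetstructure} with $q = p$, $h' = n+1$, and ambient chromatic height $h+1$ on the right. The three conditions of that theorem become: (a) $p-1 \mid k-l \geq 0$, which holds by hypothesis since $1 \le l \le k$; (b) $n+1 \geq (h+1) + \delta_p(k,l)$, which is precisely our assumption $n \geq h + \delta_p(k,l)$ (interpreting $\infty + 1 = \infty$ in the edge case $n = \infty$); and (c) if $h+1 > 1$, then the primes on both sides must agree, which is automatic since we have taken both to be $p$. When $h = 0$ (so that $h+1 = 1$), condition (c) is vacuous by \cref{rem:pprimary}, and the argument still goes through.

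There is no genuine obstacle here: the conceptual content resides in the full computation of the topology of $\Spc(\Exc{d}(\Sp^c,\Sp)^c)$ accomplished in \cref{thm:posetstructure}, combined with the general equivalence between Smith-type vanishing and Floyd-type dimension inequalities established via the Jeff Smith idempotent technology of \cref{prop:smithfloyd_equivalence}. The corollary is thus a direct translation of the geometric blueshift numbers $\delta_p(k,l)$ (described combinatorially in \cref{prop:ppowerchains}) into transchromatic estimates for the Morava $K$-theory dimensions of Goodwillie derivatives of compact $d$-excisive functors.
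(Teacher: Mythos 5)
Your proposal is correct and follows exactly the route the paper takes: reduce Floyd to Smith via \cref{prop:smithfloyd_equivalence}, translate Smith into the inclusion $\cPd(\num{k},p,n+1)\subseteq\cPd(\num{l},p,h+1)$ via \cref{prop:smith-topology}, and verify that inclusion with \cref{thm:posetstructure}. The only (harmless) quibble is that when $h=0$ condition (c) of \cref{thm:posetstructure} is vacuous simply because its hypothesis $h+1>1$ fails — no appeal to \cref{rem:pprimary} is needed — and in any case you have taken both primes equal to $p$, so (c) holds regardless.
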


\begin{proof}
	This is a direct consequence of \cref{prop:smithfloyd_equivalence}, \cref{prop:smith-topology} and \cref{thm:posetstructure}.
\end{proof}

\subsection*{Calculus with coefficients}\label{ssec:calculuswithcoefficients}

The aim of this subsection is to explain how our computation of the spectrum of $\Exc{d}(\Sp^c,\Sp)^c$ can be extended to other  coefficient categories $\cat D$ with a particular focus on the case $\cat D = \Mod_{\HZ}$. In the context of stable equivariant homotopy theory, this is the subject of \cite{PatchkoriaSandersWimmer22,bhs1,BCHNP2023Quillen}.

\begin{Rem}\label{rem:generalcoefficients}
    Let $\cat D$ be a rigidly-compactly generated tt-$\infty$-category and fix some integer $d \ge 1$. The category $\Exc{d}(\Sp^c,\cat D)$ of reduced $d$-excisive functors from finite spectra to $\cat D$ may then again be equipped with the symmetric monoidal structure afforded by localized Day convolution, and as such forms a rigidly-compactly generated \mbox{tt-$\infty$-category}. As explained in \cref{rem:targetcat}, there is a canonical symmetric monoidal equivalence $\Exc{d}(\Sp^c,\cat D) \simeq \Exc{d}(\Sp^c,\Sp) \otimes \cat D$ and we obtain $\cat D$-linear derivatives $\partial_k^{\cat D} \colon \Exc{d}(\Sp^c,\cat D) \to \cat D$ for all $k \in [d]$. 
    
    Let $i_d \colon \Sp \to \Exc{d}(\Sp^c,\Sp)$ be the functor defined in \cref{def:inflation} and write~$i_d^{\cat D}$ for its $\cat D$-linear analogue. For any $1 \leq k \leq d$, we thus obtain a commutative diagram 
	\[\begin{tikzcd}
		\Sp \ar[d,"F"'] \ar[r,"i_d"] & \Exc{d}(\Sp^c,\Sp) \ar[d,"F_d"'] \ar[r,"\partial_k"] & \Sp \ar[d,"F"] \\
		\cat D \ar[r,"i_d^{\cat D}"] & \Exc{d}(\Sp^c,\cat D) \ar[r,"\partial_k^{\cat D}"] & \cat D
		\end{tikzcd}\]
    in which the vertical functors are induced by base-change along the canonical geometric functor $F\colon \Sp \to \cat D$ (\cref{rem: F_A}). Since the top composite is an equivalence (\cref{lem:partial-splits-inflation}), by base-change the bottom horizontal composite is one as well. This setup allows us to generalize \cref{thm:spec-as-a-set}:
\end{Rem}

\begin{Prop}\label{prop:setspc_coefficients}
    Suppose $\cat D$ is a rigidly-compactly generated tt-$\infty$-category. The ($\cat D$-linear) derivatives induce a bijection
        \[
            \varphi^{\cat D}\coloneqq \Spc((\partial_k^{\cat D})_{1\leq k \leq d})\colon \coprod_{1\leq k \leq d}\Spc(\cat D^c) \xrightarrow{\sim} \Spc(\Exc{d}(\Sp^c,\cat D)^c)
        \]
    whose restriction to each component is an embedding.
\end{Prop}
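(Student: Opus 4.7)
The plan is to mirror the proof of \cref{thm:spec-as-a-set} (the case $\cat D=\Sp$), reducing each ingredient to the $\Sp$-linear case by base-change along the canonical geometric functor $F\colon\Sp\to\cat D$ of \cref{rem:generalcoefficients}. First I would verify that the $\cat D$-linear derivatives $\{\partial_k^{\cat D}\}_{1\le k\le d}$ are jointly conservative on $\Exc{d}(\Sp^c,\cat D)$. The proof of \cref{lem:conservativity} goes through verbatim once one checks the $\cat D$-linear analogue of \cref{prop:n-homogeneous-functors}, namely $\Homog_d(\Sp^c,\cat D)\simeq \Fun(B\Sigma_d,\cat D)$; this follows from the local duality formalism of \cite{MathewNaumannNoel17} applied to the dualizable algebra $P_dh_{\bbS}(d)$ (equivalently, by tensoring the equivalence of \cref{prop:n-homogeneous-functors} with $\cat D$ using $\Exc{d}(\Sp^c,\cat D)\simeq \Exc{d}(\Sp^c,\Sp)\otimes\cat D$). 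Combined with the fact that each $\partial_k^{\cat D}$ is symmetric monoidal and colimit-preserving (\cref{lem:geometric-derivatives} base-changed), \cite[Theorem~1.3]{barthel2023surjectivity} yields the surjectivity of $\varphi^{\cat D}$.

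Next I would verify that each component map $\Spc(\partial_k^{\cat D})\colon \Spc(\cat D^c)\to \Spc(\Exc{d}(\Sp^c,\cat D)^c)$ is a topological embedding. Base-changing \cref{lem:partial-splits-inflation} along the commutative diagram in \cref{rem:generalcoefficients} shows that $\partial_k^{\cat D}\circ i_d^{\cat D}\simeq \Id_{\cat D}$, so $\Spc(i_d^{\cat D})$ is a continuous retraction of $\Spc(\partial_k^{\cat D})$; hence the latter is an embedding exactly as in \cref{lem:injectivity-of-partial-k}. Writing $\cPd^{\cat D}(\num{k},\cat Q)\coloneqq (\partial_k^{\cat D})^{-1}(\cat Q)$ for the image of $\cat Q\in \Spc(\cat D^c)$, this retraction also shows that if $\cPd^{\cat D}(\num{k},\cat Q)=\cPd^{\cat D}(\num{l},\cat Q')$ then $\cat Q=\cat Q'$.

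It then remains to see that the images of different components are disjoint, i.e.\ if $\cPd^{\cat D}(\num{k},\cat Q)=\cPd^{\cat D}(\num{l},\cat Q)$ then $k=l$. For this I would test on the base-changed generators $F_d(P_dh_{\bbS}(j))\in\Exc{d}(\Sp^c,\cat D)^c$: using the commutative diagram of \cref{rem:generalcoefficients} together with \cref{prop:hderivatives}, one obtains
\[
\partial_i^{\cat D}(F_d(P_dh_{\bbS}(j)))\simeq F(\bbS^{\oplus|\surj(i,j)|})\simeq \unit_{\cat D}^{\oplus|\surj(i,j)|}.
\]
Since $\unit_{\cat D}\notin\cat Q$ but $0\in\cat Q$, this shows $F_d(P_dh_{\bbS}(j))\in \cPd^{\cat D}(\num{i},\cat Q)$ if and only if $i<j$, which distinguishes the components and yields the $\cat D$-linear analogue of \cref{cor:inclusion-layers}. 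Putting the three ingredients together proves that $\varphi^{\cat D}$ is a bijection whose restriction to each component is an embedding.

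The main obstacle is conceptual rather than computational: ensuring that the arguments of Part~II which involved specific features of $\Sp$ (notably \cref{lem:conservativity}, \cref{lem:partial-splits-inflation}, and \cref{prop:hderivatives}) genuinely base-change to arbitrary rigidly-compactly generated $\cat D$. All three statements do, but this ultimately rests on the good compatibility of Goodwillie derivatives, inflation, and $d$-homogeneous classification with the Lurie tensor product---i.e.\ on the naturality afforded by \cref{rem:generalcoefficients}. Once this is in hand the argument is essentially formal.
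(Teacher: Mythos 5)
Your proposal is correct and follows the same overall architecture as the paper's proof: surjectivity from joint conservativity of the $\partial_k^{\cat D}$ via \cite[Theorem~1.3]{barthel2023surjectivity}, and embedding on each component from the splitting $\partial_k^{\cat D}\circ i_d^{\cat D}\simeq\Id$. The one step where you diverge is the disjointness of the images of the components: the paper deduces this by chasing the square \eqref{eq:dlinearsquare} down to the already-established $\Sp$-case via $\Spc(F_d)$, whereas you re-derive the $\cat D$-linear analogues of \cref{prop:hderivatives} and \cref{cor:inclusion-layers} by computing $\partial_i^{\cat D}(F_d(P_dh_{\bbS}(j)))\simeq\unit_{\cat D}^{\oplus|\surj(i,j)|}$ directly. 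Both work; the paper's route is shorter because it recycles \cref{thm:spec-as-a-set}, while yours is self-contained in $\cat D$ and incidentally records the useful fact that the base-changed generators detect the component index in any coefficient category.
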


\begin{proof}
    By induction on the $\cat D$-linear Taylor tower, we see that the functors $\partial_k^{\cat D}$ are jointly conservative, so $\varphi^{\cat D}$ is surjective by \cite[Theorem 1.3]{barthel2023surjectivity}. Consider the following commutative diagram 
	\[\begin{tikzcd}
		\coprod_{1\leq k \leq d}\Spc(\cat D^c) \ar[r,"\varphi^{\cat D}"] & \Spc(\Exc{d}(\Sp^c,\cat D)^c) \ar[d,"\Spc(i_d^{\cat D})"] \\
		\Spc(\cat D^c) \ar[r,equals] \ar[u,"j_k"'] & \Spc(\cat D^c).
		\end{tikzcd}\]
    Here $j_k$ denotes the inclusion of the $k$-th component and the bottom map is the identity because $\partial_k^{\cat D} \circ i_d^{\cat D} \simeq \id$ by \cref{rem:generalcoefficients}. This implies that $\Spc(\partial_k^{\cat D}) = \varphi^{\cat D} \circ j_k$ is an embedding (as in the proof of \cref{lem:injectivity-of-partial-k}) and it remains to show that the images of the different components are disjoint.

    To this end, consider the following commutative diagram
	\begin{equation}\label{eq:dlinearsquare}
        \begin{tikzcd}
		\coprod_{1\leq k \leq d}\Spc(\cat D^c) \ar[r,"\varphi^{\cat D}"] \ar[d,"\coprod\Spc(F)"'] & \Spc(\Exc{d}(\Sp^c,\cat D)^c) \ar[d,"\Spc(F_d)"] \\ 
		\coprod_{1\leq k \leq d}\Spc(\Sp^c) \ar[r,"\varphi","\sim"'] & \Spc(\Exc{d}(\Sp^c,\Sp)^c).
		\end{tikzcd}
        \end{equation}
    Since the composite through the lower left corner separates components, so does~$\varphi^{\cat D}$ as desired. 
\end{proof}

\begin{Rem}\label{rem:noetherian}
	Since a finite union of noetherian subspaces is a noetherian space, \cref{prop:setspc_coefficients} implies that $\Spc(\Exc{d}(\Sp^c,\cat D)^c)$ is noetherian whenever $\Spc(\cat D^c)$~is. 
\end{Rem}

\begin{Rem}\label{rem:integralcoefficients}
    We now completely determine the topology of $\Spc(\Exc{d}(\Sp^c,\cat D)^c)$ in the special case that $\cat D = \Mod_{\HZ}$. This argument is meant as a proof of concept; we will return to a more systematic discussion elsewhere. 
\end{Rem}

\begin{Rem}
    Repeating the arguments given in, and leading up to, \cite[Corollary 4.11]{PatchkoriaSandersWimmer22}, one can show that there is a geometric equivalence
    \[
		\Exc{d}(\Sp^c,\Mod_{\HZ}) \simeq \Mod_{\Exc{d}(\Sp^c,\Sp)}(i_d\HZ),
    \]
    induced by base-change along $\bbS \to \HZ$.
\end{Rem}

\begin{Not}
    Henceforth, in functors such as $\partial$ and $i_d$, we will abbreviate superscripts `$\Mod_{\HZ}$' by `$\Z$'. Let $1 \leq k \leq d$ and write $\cPd^{\Z}(\num{k},\mathfrak p)$ for the prime tt-ideal in $\Exc{d}(\Sp^c,\Mod_{\HZ})^c$ that corresponds to $\mathfrak p \in \Spec(\Z) \cong \Spc(\Mod_{\HZ}^c)$ under the bijection of \cref{prop:setspc_coefficients}. Explicitly, this means that 
	\[
		\cPd^{\Z}(\num{k},\mathfrak p) = \{x \in \Exc{d}(\Sp^c,\Mod_{\HZ})^c \mid \kappa(\mathfrak p) \otimes \partial_k^{\Z}(x) = 0\},
	\]
    where $\kappa(\mathfrak p)$ denotes the residue field of $\Z$ at $\mathfrak p$.
\end{Not}

\begin{Thm}\label{thm:spc_integral}
    Let $1 \le k,l \le d$ be integers and consider two prime ideals $\mathfrak p, \mathfrak q \in \Spec(\Z)$. Then there is an inclusion $\cPd^{\Z}(\num{k},\mathfrak p) \subseteq \cPd^{\Z}(\num{l},\mathfrak q)$ if and only if one of the following two conditions is satisfied:
        \begin{enumerate}
            \item $\mathfrak p = (p)$ for some prime $p$, $\mathfrak q = (p)$ or $\mathfrak q = (0)$, and $p-1 \mid k-l \geq 0$;
            \item $\mathfrak p = (0) = \mathfrak q$ and $k=l$.
        \end{enumerate}
    Moreover, $\Spc(\Exc{d}(\Sp^c,\Mod_{\HZ})^c)$ is noetherian, so the topology is determined by these inclusions. Finally, base-change $\Sp\to\Mod_{\HZ}$ induces a map
		\[
			\Spc(\Exc{d}(\Sp^c,\Mod_{\HZ})^c) \to \Spc(\Exc{d}(\Sp^c,\Sp)^c)
		\] 
   which is a homeomorphism onto its image. It maps $\cPd^{\Z}(\num{k},(p))$ to $\cPd(\num{k},p,\infty)$ and maps $\cPd^{\Z}(\num{k},(0))$ to $\cPd(\num{k},0,1).$
\end{Thm}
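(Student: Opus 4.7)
The plan is to combine the general setup of \cref{rem:generalcoefficients} and \cref{prop:setspc_coefficients} with the explicit description of $\Spc(\Exc{d}(\Sp^c,\Sp)^c)$ from \cref{thm:posetstructure}. First, since $\Spc(\Mod_{\HZ}^c) \cong \Spec(\Z)$ is noetherian, \cref{rem:noetherian} ensures noetherianity of $\Spc(\Exc{d}(\Sp^c,\Mod_{\HZ})^c)$ as well; hence its topology is determined by the specialization (= inclusion) order among prime tt-ideals, reducing the problem to a poset-theoretic one.

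Next, I would compute the map $\Spc(F_d)$ on underlying sets by chasing the commutative square \eqref{eq:dlinearsquare}. The bottom horizontal arrow $\Spc(F)\colon\Spec(\Z)\to\Spc(\Sp^c)$ is the standard comparison, sending $(p)$ to $\cat C_{p,\infty}$ (detected by $\bbF_p$-homology) and $(0)$ to $\cat C_{0,1}$ (detected by rational homology). Combined with the bijection of \cref{prop:setspc_coefficients}, this identifies $\Spc(F_d)(\cPd^{\Z}(\num{k},(p))) = \cPd(\num{k},p,\infty)$ and $\Spc(F_d)(\cPd^{\Z}(\num{k},(0))) = \cPd(\num{k},0,1)$, and injectivity of $\Spc(F_d)$ follows immediately from the formula. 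A straightforward case analysis using \cref{thm:posetstructure} then determines which inclusions occur among the image points: for the pair $(h',h) = (\infty,\infty)$ the clause ``$h>1\Rightarrow p=q$'' forbids crossing distinct primes; for $(h',h) = (\infty,1)$ no prime-matching constraint arises and $\infty\geq 1+\delta_p(k,l)$ is automatic; for $(h',h) = (1,1)$ the inequality $1\geq 1+\delta_p(k,l)$ forces $k=l$; and $(h',h)=(1,\infty)$ yields no inclusion. This reproduces exactly the list of conditions (a) and (b).

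The subtle step, which I expect to be the main obstacle, is upgrading this set-level identification to a homeomorphism onto the image. Since both spaces are noetherian and $\Spc(F_d)$ is automatically continuous and specialization-preserving, it will be enough to show that every inclusion among image points \emph{lifts} to an inclusion in the domain, i.e., that $\Spc(F_d)$ reflects specializations into its image. To do this I plan to transport the tt-geometric arguments of \cref{part:III} into the $\HZ$-linear setting: because $F_d$ is symmetric monoidal and colimit-preserving and commutes with the inflation, Tate, and Goodwillie derivative functors (up to the natural comparisons), the elementary blueshift inclusions produced by \cref{lem:basic-tate-inclusion} and \cref{prop:sum-inclusion} carry over to $\Exc{d}(\Sp^c,\Mod_{\HZ})$. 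These directly supply the required inclusions $\cPd^{\Z}(\num{k},(p)) \subseteq \cPd^{\Z}(\num{l},(p))$ whenever $p-1 \mid k-l \ge 0$, and the inclusions into $\cPd^{\Z}(\num{l},(0))$ then follow from the fact that rationalization collapses all nonzero chromatic data. The remaining exclusions (no crossing distinct primes, no inclusion $\cPd^{\Z}(\num{k},(0)) \subseteq \cPd^{\Z}(\num{l},(p))$) are already guaranteed on the codomain side by the case analysis and descend to the domain by the preservation half.

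Finally, once the bijection onto the image is promoted to an order-isomorphism with the inclusions of conditions (a)–(b), noetherianity of both sides forces it to be a homeomorphism onto its image. The stratification, costratification, and telescope conjecture statements can then be deduced from noetherianity of $\Spc(\Exc{d}(\Sp^c,\Mod_{\HZ})^c)$ together with the explicit description of its primes as pullbacks of primes in $\Sp$ and $\Mod_{\HZ}$, following the template of the analogous equivariant results in \cite{PatchkoriaSandersWimmer22}.
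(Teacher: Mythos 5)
Your outline follows the paper's proof quite closely: noetherianity from \cref{rem:noetherian}; the ``only if'' direction by pushing an inclusion forward along $\Spc(F_d)$ using the square \eqref{eq:dlinearsquare}, identifying the images as $\cPd(\num{k},p,\infty)$ and $\cPd(\num{k},0,1)$, and then invoking the constraints on inclusions in $\Spc(\Exc{d}(\Sp^c,\Sp)^c)$; and the ``if'' direction by re-running the Tate-theoretic argument \emph{inside} $\Exc{d}(\Sp^c,\Mod_{\HZ})$ rather than attempting to reflect inclusions formally (which, as you correctly note, is the real obstacle). The reduction of $\mathfrak q=(0)$ to $\mathfrak q=(p)$ and the concatenation over chains of $p$-power partitions (\cref{rem:p-power-and-divisibility}) are also as in the paper.

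There is, however, one genuine gap in the converse direction, precisely where you write that the elementary inclusions of \cref{lem:basic-tate-inclusion} and \cref{prop:sum-inclusion} ``carry over'' because $F_d$ commutes with inflation, the Tate construction and the derivatives. Running the argument of \cref{lem:basic-tate-inclusion} in the $\HZ$-linear category with $i_d^{\Z}(\Fp)$ in place of $i_d(L_{p,h-1}^f\bbS)$ reduces the desired inclusion $\cPd^{\Z}(\num{k},(p))\subseteq\cPd^{\Z}(\num{l},(p))$ (for $k$ admitting a $p$-power partition $\lambda$ of length $l$) to the \emph{non-vanishing} of $\Fp\otimes\partial_lt_ki_d(\bbS)$. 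Base-change compatibility converts the $\HZ$-linear Tate-derivative into this sphere-level object, but it cannot by itself produce a non-vanishing statement; and this particular non-vanishing is a mod-$p$ (``height $\infty$'') assertion that is \emph{not} among the results proved earlier --- \cref{thm:tateblueshift} only computes the height of $\partial_lt_k\cL_{p,h}^f$ for finite $h$, and the sphere-level inclusion $\cPd(\num{k},p,\infty)\subseteq\cPd(\num{l},p,\infty)$ from \cref{thm:posetstructure} does not descend to the $\HZ$-linear primes. The paper supplies the missing input directly: by \cref{prop:tatederivatives_equivariantformula}, $\Fp^{t\Fnt(\lambda)}$ is a retract of $\partial_lt_ki_d(\Fp)$, and the change-of-family ring map of \cref{lem:geomfixedpoints_functoriality} lands in $\Fp^{tC}$ for a non-trivial cyclic $p$-group $C$, which is non-zero; unitality of these ring maps then forces $\Fp\otimes\partial_lt_ki_d(\bbS)\neq 0$. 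With this short computation added, your argument closes, and the final upgrade from an order-isomorphism onto the image to a homeomorphism is indeed automatic from noetherianity.
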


\begin{proof}
    The spectrum of $\Exc{d}(\Sp^c,\Mod_{\HZ})^c$ is noetherian by \cref{rem:noetherian}, so in order to understand its topology, it is enough to determine all inclusions among prime tt-ideals. Suppose first that we are given an inclusion $\cPd^{\Z}(\num{k},\mathfrak p) \subseteq \cPd^{\Z}(\num{l},\mathfrak q)$. The commutative square \eqref{eq:dlinearsquare} shows that $\Spc(F) \colon \Spc(\Mod_{\HZ}^c) \to \Spc(\Sp^c)$ satisfies 
        \[
            \Spc(F)(\cPd^{\Z}(\num{k},\mathfrak p)) = \cPd(\num{k},\Spc(F)(\mathfrak p)),
        \]
    where $\Spc(F)(\mathfrak p)$ can be identified with a pair $(p,h) \in \mathbb{P} \times \{1,\infty\}$. Since $\Spc(F_d)$ preserves inclusions (\cref{Rem:basic-balmer-properties}), we thus obtain an inclusion
        \[
			\cPd(\num{k},\Spc(F)(\mathfrak p)) \subseteq \cPd(\num{l},\Spc(F)(\mathfrak q))
        \]
    of prime tt-ideals in $\Exc{d}(\Sp^c,\Sp)^c$. Our classification theorem\footnote{In fact, we do not require the full strength of this theorem; it is enough to combine \cref{cor:inclusion_chromatic,rem:pprimary,prop:inclusions_formalproperties}.} \cref{thm:posetstructure} then implies that either condition $(a)$ or $(b)$ must hold. 

	For the converse, assume first that $(b)$ is satisfied. In this case, we have an equality $\cPd^{\Z}(\num{k},\mathfrak p)  = \cPd^{\Z}(\num{l},\mathfrak q)$ as desired. Similarly, if $(a)$ holds with $\mathfrak q = (0)$, then $\cPd^{\Z}(\num{l},\mathfrak q') \subseteq \cPd^{\Z}(\num{l},\mathfrak q)$ for any $\mathfrak q' \in \Spec(\Z)$, in particular for $\mathfrak p = (p)$. In order to show $\cPd^{\Z}(\num{k},\mathfrak p) \subseteq \cPd^{\Z}(\num{l},\mathfrak q)$, we are therefore reduced to verifying the following claim:
        \[
            p-1 \mid k-l \geq 0 \implies \cPd^{\Z}(\num{k},\mathfrak p) \subseteq \cPd^{\Z}(\num{l},\mathfrak p).
        \]
	To this end, suppose first that there exists a $p$-power partition $\lambda \vdash k$ of length $l$. Consider some $x \in \cPd^{\Z}(\num{k},\mathfrak p)$, i.e., a compact functor $x \in \Exc{d}(\Sp^c,\Mod_{\HZ})$ with
        \begin{equation}\label{eq:spc_integral1}
            0 = \Fp \otimes \partial_k^{\Z}(x) \simeq \partial_k^{\Z}(i_d^{\Z}(\Fp) \circledast x).
        \end{equation}
    Let $t_k^{\Z}$ be the Tate construction (\cref{def:tate-functor}) internal to $\Exc{d}(\Sp^c,\Mod_{\HZ})$ with respect to the subset 
        \[
            Y_k \coloneqq \SET{\cPd^{\Z}(\num{i},\mathfrak p)}{i \geq k} = \Spc(F_d)^{-1}(\SET{\cPd(\num{i},p,h)}{i \geq k}).
        \]
    Note that $Y_k$ is a Thomason subset since it is the pullback of a Thomason subset along a spectral map. As in the proof of \cref{lem:basic-tate-inclusion}, we then deduce formally from~\eqref{eq:spc_integral1} that $t_k^{\Z}(i_d^{\Z}(\Fp) \circledast x) = 0$. Using the compatibility of the Tate construction with base-change (\cref{prop:idempotents-under-tt-functors}) as well as \cref{rem:generalcoefficients} and the dualizability of~$x$, we compute
        \begin{align*}
            0 = \partial_l^{\Z}t_k^{\Z}(i_d^{\Z}(\Fp) \circledast x) & \simeq (\partial_l^{\Z}t_k^{\Z}i_d^{\Z}(\Fp)) \otimes \partial_l^{\Z}(x) \\
            & \simeq (\partial_l^{\Z}t_k^{\Z}i_d^{\Z}(\Z \otimes \bbS/p))) \otimes \partial_l^{\Z}(x) \\
            & \simeq (\partial_l^{\Z}t_k^{\Z}F_di_d(\bbS/p)) \otimes \partial_l^{\Z}(x) \\
            & \simeq F(\partial_lt_k(i_d(\bbS/p))) \otimes \partial_l^{\Z}(x) \\
            & \simeq (\Fp \otimes \partial_lt_ki_d(\bbS)) \otimes \partial_l^{\Z}(x).
        \end{align*}
    In order to show that $x \in \cPd^{\Z}(\num{l},\mathfrak p)$, it thus suffices to prove that $\Fp \otimes \partial_lt_ki_d(\bbS)$ is non-trivial, so that it contains $\Fp$ as a retract. To this end, observe that we have ring maps
        \[
            \partial_lt_ki_d(\bbS) \to \partial_lt_ki_d(\Fp) \quad \text{and} \quad  \Fp^{t\Fnt(\lambda)} \to \Fp^{tC},
        \]
    where $C$ is a non-trivial cyclic $p$-group and the second map is the one we constructed in the proof of \cref{thm:tateblueshift}. On the one hand, the target of the second map is non-zero, hence $\Fp^{t\Fnt(\lambda)}$ is also non-zero. On the other hand, \cref{prop:tatederivatives_equivariantformula} implies that $\Fp^{t\Fnt(\lambda)}$ is a retract of $\partial_lt_ki_d(\Fp)$, so the target of the first map is non-zero and $\Fp$-linear, and we conclude that $\Fp \otimes \partial_lt_ki_d(\bbS) \neq 0$, as claimed. 
 
    This finishes the proof under the assumption that there is a $p$-power partition of $k$ of length $l$. In general, our assumption that $p-1 \mid k-l \geq 0$ guarantees the existence a chain of $p$-power partitions (see \cref{rem:p-power-and-divisibility}) and we conclude by concatenating the inclusions just established.
\end{proof}

\begin{Rem}
	The $d=3$ case is shown in \Cref{fig:zarizki_a3z}. Observe that it is homeomorphic to the top and bottom chromatic layers of \cref{fig:exc3}. The relations between these spaces are completely analogous to the situation in equivariant stable homotopy theory established in \cite{PatchkoriaSandersWimmer22}.
\end{Rem}

\begin{figure}[h!]%
\includegraphics{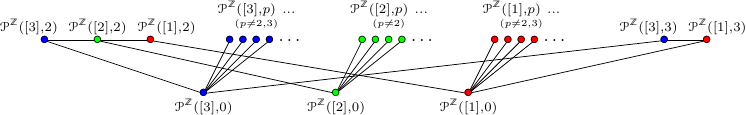}\caption{The Balmer spectrum $\Spc(\Exc{3}(\Sp^c,\Mod_{\HZ})^c)$.}\label{fig:zarizki_a3z}
\end{figure}

\begin{Cor}\label{cor:spc_integral}
	Let $d \ge 1$. Say that a subset $Y \subseteq \num{d}\times \Spec(\Z)$ is \emph{admissible} if it satisfies the following closure properties:
	\begin{enumerate}
		\item If $(l,0)\in Y$ then $(k,p)\in Y$ for all $p-1\mid k-l \ge 0$.
		\item If $(l,p) \in Y$ then $(k,p)\in Y$ for all $p-1\mid k-l \ge 0$.
	\end{enumerate}
	There is an inclusion-preserving bijection between the set of admissible subsets of $\num{d}\times\Spec(\Z)$  and the collection of tt-ideals of $\Exc{d}(\Sp^c,\Mod_{\HZ})^c$ given by 
	\begin{align*}
		Y &\mapsto \SET{ x \in \Exc{d}(\Sp^c,\Mod_{\HZ})^c}{\partial^{\Z}_k(x) \in \mathfrak{p} \text{ if } (k,\mathfrak{p}) \not\in Y} \\
	\intertext{with inverse}
		\cat C &\mapsto \SET{(k,\mathfrak{p}) \in \num{d}\times\Spec(\Z)}{\partial^{\Z}_k(x) \not\in \mathfrak{p} \text{ for some } x \in \cat C}.
	\end{align*}
\end{Cor}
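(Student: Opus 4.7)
The plan is to combine the Balmer classification theorem with the computation of the spectrum carried out in \cref{thm:spc_integral}. By \cref{rem:balmer-classification}, tt-ideals of $\Exc{d}(\Sp^c,\Mod_{\HZ})^c$ correspond bijectively (in an inclusion-preserving way) to Thomason subsets of $\Spc(\Exc{d}(\Sp^c,\Mod_{\HZ})^c)$. Since \cref{thm:spc_integral} shows that this spectrum is noetherian, every open subset is quasi-compact, and hence Thomason subsets coincide with specialization-closed subsets.

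Next, I would invoke \cref{prop:setspc_coefficients} to identify the underlying set of the spectrum with $\num{d}\times\Spec(\Z)$ via $(k,\mathfrak p)\mapsto \cPd^{\Z}(\num{k},\mathfrak p)$. Under this identification, a subset $Y\subseteq\num{d}\times\Spec(\Z)$ is specialization-closed precisely when: whenever $(l,\mathfrak q)\in Y$ and $\cPd^{\Z}(\num{k},\mathfrak p)\subseteq \cPd^{\Z}(\num{l},\mathfrak q)$, we also have $(k,\mathfrak p)\in Y$ (this is just the description $\overline{\{\cat P\}}=\{\cat Q\mid\cat Q\subseteq\cat P\}$ of closures in the Balmer spectrum).

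The core step is then to read off the specializations of a given point directly from \cref{thm:spc_integral}. When $\mathfrak q=(0)$, the specializations of $(l,\mathfrak q)$ consist of $(l,(0))$ itself (by (b) of the theorem) together with all $(k,(p))$ for any prime $p$ satisfying $p-1\mid k-l\ge 0$ (by (a) of the theorem); when $\mathfrak q=(p)$ for a prime $p$, the specializations are exactly the $(k,(p))$ with $p-1\mid k-l\ge 0$. These two observations translate word-for-word into the admissibility conditions (a) and (b) of the corollary, yielding the claimed bijection between admissible subsets of $\num{d}\times\Spec(\Z)$ and Thomason subsets.

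Finally, I would verify that the two maps stated in the corollary implement the composite bijection (Thomason subset $\leftrightarrow$ tt-ideal). Under Balmer's classification, a Thomason subset $Y$ corresponds to the tt-ideal $\{x\mid \supp(x)\subseteq Y\}$, and the condition $\cPd^{\Z}(\num{k},\mathfrak p)\notin\supp(x)$ is equivalent to $x\in\cPd^{\Z}(\num{k},\mathfrak p)$, i.e.~to $\partial^{\Z}_k(x)\in\mathfrak p$; this gives the displayed formula $Y\mapsto\{x\mid\partial^{\Z}_k(x)\in\mathfrak p \text{ if }(k,\mathfrak p)\notin Y\}$. The inverse sends $\cat C$ to $\supp(\cat C)=\bigcup_{x\in\cat C}\supp(x)$, which in the coordinates $(k,\mathfrak p)$ is exactly $\{(k,\mathfrak p)\mid\partial^{\Z}_k(x)\notin\mathfrak p\text{ for some }x\in\cat C\}$. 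No step here presents a real obstacle — the only piece of substance is the translation between specialization closure and the admissibility conditions, and this is a direct unwinding of \cref{thm:spc_integral}.
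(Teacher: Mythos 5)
Your proposal is correct and follows essentially the same route as the paper: identify the underlying set via \cref{prop:setspc_coefficients}, use noetherianity to reduce Thomason subsets to specialization-closed ones, translate the specialization relations of \cref{thm:spc_integral} into the admissibility conditions, and conclude with Balmer's classification of tt-ideals. The unwinding of the explicit formulas at the end matches the paper's (briefer) appeal to the same correspondence.
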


\begin{proof}
	As a set, we can identify $\Spc(\Exc{d}(\Sp^c,\Mod_\HZ)^c)$ with $[d]\times \Spec(\bbZ)$ by \cref{prop:setspc_coefficients}. Since the space is noetherian, a subset is Thomason if and only if it is specialization closed. According to \cref{thm:spc_integral}, a subset is specialization closed precisely when it has the closure properties $(a)$ and $(b)$. In other words, the admissible subsets are precisely the Thomason subsets. With this in hand, the rest is just a formulation of the correspondence between Thomason subsets and tt-ideals \cite[Theorem~4.10]{Balmer05a}.
\end{proof}

\begin{Rem}\label{rem:integral_bistratification}
    By base-change along the initial geometric functor $F\colon \Sp \to \Mod_{\HZ}$, we have that $P_{d-1}^{\Z}\colon \Exc{d}(\Sp^c,\Mod_{\HZ}) \to \Exc{d-1}(\Sp^c,\Mod_{\HZ})$ is a finite localization, while $\partial_d^{\Z}\colon \Exc{d}(\Sp^c,\Mod_{\HZ}) \to \Mod_{\HZ}$ is finite \'etale (this follows either by direct verification, or from \Cref{rem:top-derivative-etale} and \cite[Example 5.7]{Sanders22}). We are thus in a situation where we can apply the techniques of \cite{bhs1} and \cite{BCHS2023pp} to show that $\Spc(\Exc{d}(\Sp^c,\Mod_{\HZ})^c)$ is stratified and costratified over its spectrum and that the telescope conjecture holds for $\Exc{d}(\Sp^c,\Mod_{\HZ})$.
\end{Rem}

\begin{Exa}
	We can also consider $\Exc{d}(\Sp^c,\Mod_{\HbbF})$ for a field $\mathbb{F}$. It follows from the above techniques that $\Spc(\Exc{d}(\Sp^c,\HQ)^c)$ is the discrete space with $d$ points, and can be identified via $\Z\to\mathbb{Q}$ with the subspace of $\Spc(\Exc{d}(\Sp^c,\Mod_{\HZ})^c)$ consisting of $\SET{\cPd^\Z(\num{k},0)}{1\le k \le d}$. On the other hand, $\Spc(\Exc{d}(\Sp^c,\Mod_{\HFp})^c)$ identifies with the subspace $\SET{\cPd^\Z(\num{k},p)}{1\le k\le d}$ of $\Spc(\Exc{d}(\Sp^c,\Mod_{\HZ})^c)$. In other words, it is the set $\{1,2,\ldots,d\}$ equipped with the topology given by $k \in \smash{\overbar{\{l\}}}$ if and only if $p-1 \mid k-l \ge 0$. Finite spectral spaces correspond to finite posets (via the map which sends a space to its specialization poset, see \cite[1.1.16]{DickmannSchwartzTressl19}). From this perspective, it is the poset $(\num{d},\le_p)$ where $k\le_p l$ means that $p-1 \mid k-l \ge 0$.
\end{Exa}

\newpage
\addtocontents{toc}{\vspace{\normalbaselineskip}}
\begin{landscape}
\appendix
\section{Equivariant homotopy and Goodwillie calculus dictionary}\label{appendix}
For readers familiar with equivariant homotopy theory, we include here a `dictionary' for translating between equivariant homotopy theory and Goodwillie calculus. When $d=2$ there is an equivalence of categories $\Sp_{C_2} \simeq \Exc{2}(\Sp^c,\Sp)$ \cite{Glasman18pp} and in this case the analogies described by the table are genuine correspondences.

\smallskip

\begin{adjustbox}{width=1.36\textwidth}
\begin{tabular}[b!]{@{}l|l@{}}
\toprule
Equivariant homotopy & Goodwillie calculus \\ \midrule
$\Sp_G$ & $\Exc{d}(\Sp^c,\Sp)$\\ \hdashline
Burnside ring $A(G)$ & Goodwillie--Burnside ring  $A(d) $ \\ \hdashline
Conjugacy class of subgroups $H \le G$ & Integer $1 \le i \le d$ \\\hdashline
Compact generator $\Sigma^{\infty}G/H_+$ & Compact generator $P_dh_s(i)$ \\\hdashline
Fixed points functor $(-)^H \colon \Sp_G \to \Sp$ & Cross-effect $\crosseffect_i(-)(\bbS,\ldots,\bbS) \colon \Exc{d}(\Sp^c,\Sp) \to \Sp$ \\\hdashline
Restriction to the trivial group $\res^G_e\colon \Sp_G \to \Sp$ & $d$-th cross-effect $\crosseffect_d(-)(\bbS,\ldots,\bbS) \colon \Exc{d}(\Sp^c,\Sp) \to \Sp$  \\\hdashline
`Partial' geometric fixed points $\tilde \Phi^H \colon \Sp_G \to \Sp_{W_G H}$ &    $P_i \colon \Exc{d}(\Sp^c,\Sp) \to \Exc{i}(\Sp^c,\Sp)$ \\\hdashline
Geometric fixed points $\Phi^H = \res^{W_G H}_e \circ \tilde \Phi^H \colon \Sp_G \to \Sp$ & $i$-th derivative $\partial_i = \crosseffect_i P_i(-)(\bbS,\ldots,\bbS) \colon \Exc{d}(\Sp^c,\Sp) \to \Sp$ \\\hdashline
Inflation $\inf_1^G \colon \Sp \to \Sp_G$ & $i_d \colon \Sp \to \Exc{d}(\Sp^c,\Sp)$ \\\hdashline
Free $G$-spectra & $d$-homogeneous functors \\\hdashline
Tom Dieck splitting: $\displaystyle (\mathbb{S}_G^0)^G \simeq \bigoplus_{(H)} \Sigma^{\infty}(S^0)_{hW_GH}$ & Snaith splitting: $\displaystyle \crosseffect_1P_dh_{\bbS} \simeq \bigoplus_{1 \le i \le d} \Sigma^{\infty}(S^0)_{h\Sigma_i}$ \\\hdashline
   \makecell[l]{Double coset formula:\\ $ \displaystyle
    \Sigma^{\infty}G/H_+ \otimes \Sigma^{\infty}G/K_+ \simeq \bigoplus_{g \in H \backslash G/K} \Sigma^{\infty}G/(H^g \cap K)_+$}
     &  \makecell[l]{Good subset decomposition:\\$\displaystyle P_dh_{\bbS}(i) \circledast P_dh_{\bbS}(j) \simeq \bigoplus_{\substack{\mathcal{U} \subseteq \num{i} \times \num{j} \\ \
\mathcal{U} \text{ good} \\|\mathcal{U}| \le d}} P_dh_s(|U|)$} \\ \hdashline
\makecell[l]{Tate square: \\
$\begin{tikzcd}[ampersand replacement=\&]
	X \& {\tilde EG \otimes X} \\
	{F(EG_+,X)} \& {\tilde EG \otimes F(EG_+,X)}
	\arrow[from=1-1, to=1-2]
	\arrow[from=1-1, to=2-1]
	\arrow[from=2-1, to=2-2]
	\arrow[from=1-2, to=2-2]
 \arrow["\ulcorner"{anchor=center, pos=0.125}, draw=none, from=1-1, to=2-2]
\end{tikzcd}$}& \makecell[l]{Kuhn--McCarthy square:\\ %
$\begin{tikzcd}[ampersand replacement=\&]
	{P_dF(X)} \& {(\partial_dF \otimes X^{\otimes d})^{h\Sigma_d}} \\
	{P_{d-1}F(X)} \& {(\partial_dF \otimes X^{\otimes d})^{t\Sigma_d}}
	\arrow[from=1-1, to=2-1]
	\arrow[from=1-1, to=1-2]
	\arrow[from=2-1, to=2-2]
	\arrow[from=1-2, to=2-2]
	\arrow["\ulcorner"{anchor=center, pos=0.125}, draw=none, from=1-1, to=2-2]
\end{tikzcd}$}\\
\bottomrule
\end{tabular}
\end{adjustbox}

\end{landscape}

\newpage
\bibliographystyle{alpha}\bibliography{tt-geo}
\end{document}